\tikzstyle{plan}=[draw, rounded corners,align=center]
\tikzstyle{line} = [draw, -latex']
\numberwithin{equation}{section}
\DeclareMathOperator*{\PP}{\mathbb{P}}
\DeclareMathOperator*{\WW}{\mathbb{W}}
\newcommand{\I}{\mathcal{I}}
\newcommand{\V}{\mathcal{V}}
\newcommand{\B}{\mathrm{B}}
\newcommand{\D}{\mathrm{D}}
\newcommand{\A}{\mathrm{A}}
\newcommand{\W}{\mathrm{W}}
\newcommand{\Y}{\mathrm{Y}}
\newcommand{\Z}{\mathrm{Z}}
\newcommand{\K}{\mathrm{K}}
\newcommand{\X}{\mathrm{X}}
\newtheorem{theorem}{Theorem}[section]
\newtheorem{lemma}[theorem]{Lemma}
\theoremstyle{definition}
\newtheorem{remark}[theorem]{Remark}
\newtheorem{proposition}[theorem]{Proposition}
\newtheorem{corollary}[theorem]{Corollary}
\newtheorem{definition}[theorem]{Definition}
\newtheorem{conjecture}[theorem]{Conjecture}
\newcommand{\nlr}[4]{#3\mathrel{\mathop{\centernot\longleftrightarrow}\limits_{#1}^{#2}} #4}
\newcommand\xleftrightarrow[2][]{%
  \ext@arrow 9999{\longleftrightarrowfill@}{#1}{#2}}
\newcommand\longleftrightarrowfill@{%
  \arrowfill@\leftarrow\relbar\rightarrow}
  \newcommand*{\rom}[1]{\expandafter\@slowromancap\romannumeral #1@}
\newlist{steps}{enumerate}{1}
\setlist[steps, 1]{label = Step \arabic*:}
\title{Sharp asymptotics of disconnection time of large cylinders by simple and biased random walks}
\author{Xinyi Li\\Peking University\\xinyili@bicmr.pku.edu.cn \and Yu Liu\\Peking University\\liuyu8300@stu.pku.edu.cn \and Yuanzheng Wang\\MIT\\yuanz210@mit.edu}
\begin{document}

\maketitle

\begin{abstract}
	We investigate the asymptotic disconnection time of a large discrete cylinder $(\mathbb{Z}/N\mathbb{Z})^{d}\times \mathbb{Z}$, $d\geq 2$, by simple and biased random walks. For simple random walk, we derive a sharp asymptotic lower bound that matches the upper bound from \cite{Szn09a}. For biased walks, we obtain bounds that asymptotically match in the principal order when the bias is not too strong, which greatly improves non-matching bounds from \cite{Win08}. As a crucial tool in the proof, we also obtain a ``very strong'' coupling between the trace of random walk on the cylinder and random interlacements, which is of independent interest. 
\end{abstract}
\setcounter{page}{1}
\setcounter{section}{-1}
\tableofcontents
\section{Introduction}\label{sec:intro}

This paper studies the disconnection time $T_{N}$ of a large discrete cylinder $(\mathbb{Z}/N\mathbb{Z})^{d}\times \mathbb{Z}$, $d\geq 2$ by the trace of a random walk. This ``termite in the wooden beam" problem was first considered by Dembo and Sznitman in \cite{DS06} in which they proved that $T_N$ asymptotically grows like $N^{2d+o(1)}$. Later on, with the introduction of the model of random interlacements and the discovery of its connection with the trace of random walk in cylinders and tori, the asymptotics of disconnection time have been greatly improved and various phenomena regarding disconnection are better understood; see \cite{belius2011cover,DS08,Szn08,Szn09b,sznitman2009random,Szn09a,Windisch10random}.
In particular, Sznitman obtained in \cite{Szn09a} a conjecturally tight asymptotic upper bound for disconnection by a simple random walk. The asymptotic disconnection time of a biased walk (with an upward drift along the $\mathbb{Z}$-direction of strength $N^{-d\alpha}$ with $\alpha>0$) was first investigated by Windisch, who showed in \cite{Win08} that for $d\geq 2$, when $\alpha>1$, the disconnection time $T_{N}$ is still of order $N^{2d+o(1)}$, while for $\alpha<1$, $T_{N}$ becomes (stretched) exponential in $N$. For the latter case Windisch also gave upper and lower bounds that do not match (see \eqref{eq:Windisch} and \eqref{eq:Windisch d=2} for precise statements).

In this paper, we derive a sharp asymptotic lower bound in the simple random walk case that matches the upper bound in \cite{Szn09a} and (when combined with the upper bound) give the limiting law of $T_N/N^{2d}$. For biased random walk with $d\geq 2$, we significantly improve \cite{Win08} by providing precise asymptotics when $\alpha \geq 1$ and offering bounds that asymptotically match in the principal order when the bias is not too strong, that is, when $1/d<\alpha<1$. 
\smallskip

Before stating our main results, let us first present the model and notation in a more precise fashion and briefly introduce the model of random interlacements, which plays a crucial role in the analysis of this problem. 

For $d \geq 2$ and $N \geq 1$, we consider the discrete cylinder 
\begin{equation}
\mathbb E=\mathbb{T}\times\mathbb{Z},
\end{equation}
where $\mathbb{T}$ denotes the $d$-dimensional discrete torus $(\mathbb{Z}/N\mathbb{Z})^{d}$. The cylinder $\mathbb E$ is equipped with the $\ell^{\infty}$-distance $|\cdot|_{\infty}$ and the natural product graph structure, and all vertices $x_1,x_2\in \mathbb E$ with $|x_1-x_2|_{\infty}=1$ are connected with an edge. The (discrete-time) random walk with upward drift $\Delta\in[0,1)$ along the $\mathbb{Z}$-direction is the (discrete-time) Markov chain $(X_{n})_{n \geq 0}$ on $\mathbb E$ with transition probability
\begin{equation}\label{eq:def of transition probability}
p(x_1,x_2)=\frac{1+\Delta \cdot \pi_{\mathbb{Z}}(x_2-x_1)}{2d+2}\mathbbm{1}_{|x_1-x_2|_{\infty}=1},
\end{equation}
where $\pi_{\mathbb{Z}}$ denotes the projection from $\mathbb{E}$ to $\mathbb{Z}$. Note that when $\Delta=0$, the random walk is exactly the simple random walk on $\mathbb{T}$. A finite subset $S$ of $\mathbb E$ is said to disconnect $\mathbb E$ if for large $M$, $\mathbb{T}\times [M,\infty)$ and $\mathbb{T}\times (-\infty, -M]$ are contained in distinct connected components of $\mathbb E\setminus S$. The central object of interest is the disconnection time 
\begin{equation}\label{eq:def of T_N}
T_{N}:=\inf\left\{n\geq 0:X_{[0,n]}\text{ disconnects }\mathbb E\right\}.   
\end{equation}
Let $P_{0}^{N,\alpha}$ denote the law of the random walk on $(\mathbb{Z}/N\mathbb{Z})^{d}\times \mathbb{Z}$ started from the origin $(0,0,\dots,0)$ with drift $\Delta=N^{-d\alpha},\alpha\geq 0$ (see \Cref{subsec:prelim} regarding conventions on notation). When $\alpha=0$, we simply write $P_{0}^{N}$ for short. We use $\mathbb{W}$ to denote the Wiener measure and write
\begin{equation}\label{eq:def of first achiving time}
\zeta^{\mu}(u):=\inf\left\{t \geq 0:\sup_{v\in\mathbb{R}}L^{\mu}(v,t)\geq u\right\},\quad u \geq 0,\, \mu \in \mathbb{R},
\end{equation}
where for every $\mu\in\mathbb{R}$, $L^{\mu}(v,t)$ is a jointly continuous version of the local time of a Brownian motion with drift $\mu$ and we call $\zeta^\mu(u)$ the ``record-breaking time", that is,  the first time the maximum of the local time reaches $u$. When $\mu=0$, we simply write $\zeta^{\mu}$ and $L^{\mu}$ as $\zeta$ and $L$, in which case the explicit distribution of $\zeta$ is known; see \eqref{eq:laplace of zeta} for details.

We now turn to random interlacements. This model, first introduced by Sznitman in \cite{Szn10}, plays a central role in the study of the percolative properties of the trace of random walks. 
Heuristically, the interlacement set $\I^{u}$ is the trace of a Poissonian cloud of bi-infinite transient $\mathbb{Z}^{d+1}$-valued trajectories modulo time-shift whose intensity measure is governed by the level parameter $u>0$ (here $d+1$, with $d \geq 2$, plays the role of $d \geq 3$ in \cite{Szn10}). 
The complement of $\I^{u}$, denoted by $\V^{u}$, is called the vacant set at level $u$. 
It is known that there exist several positive critical thresholds $0<\overline{u}\leq u_{*} \leq u_{**}<\infty$ regarding the percolation phase transition of the vacant set, which delimit the strongly super-critical regime, existence of infinite cluster and the strongly sub-critical regime, respectively. 
Recently in a series of extraordinary works \cite{DCGR+23c,DCGR+23a,DCGR+23b} by Duminil-Copin, Goswami, Rodriguez, Severo and Teixeira, it is proved that these three critical parameters are actually equal, that is,
\begin{equation}\label{eq:unique critial parameter}
\overline{u}=u_{*}=u_{**}.
\end{equation}
In fact, all results in this paper involve expressions of $\overline{u}$ only for the lower bound and expressions of $u_{**}$ only for the upper bound, and we apply \eqref{eq:unique critial parameter} to show that these bounds do match. 
We refer readers to \Cref{subsec:interlacements} for a more detailed introduction of random interlacements. We also mention that \eqref{eq:unique critial parameter} also plays a key role in three intimately related problems, namely sharp asymptotics of fragmentation time of large tori (see \cite{TW11,10.1214/15-AAP1165,DCGR+23b}), sharp asymptotic probability of the disconnection of a macroscopic body (see \cite{li2017lower,Szn17,nitzschner2020solidification}) and bulk deviations for random walk and random interlacements \cite{AM23,Szn21,Szn23a,Szn23b}.

\subsection{Main results}
In the simple random walk case, we derive an asymptotic lower bound on the tail distribution of the disconnection time $T_{N}$.
\begin{theorem}\label{thm:sharp lower bound}
For all $s>0$, we have 
\begin{equation}\label{eq:sharp lower bound}
\liminf_{N\to \infty}P_{0}^{N}\left[\frac{T_{N}}{N^{2d}}\geq s\right]\geq \WW\left[\zeta\left(\frac{\overline{u}}{\sqrt{d+1}}\right)\geq s\right].
\end{equation}
\end{theorem}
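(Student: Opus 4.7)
The plan is to combine the ``very strong'' coupling between the walk trace and random interlacements advertised in the abstract (and established later in the paper, which I treat here as a black box) with the robust percolative properties of the vacant set $\V^u$ at levels $u < \overline u$ granted by \eqref{eq:unique critial parameter}. Set $T := \lfloor sN^{2d}\rfloor$. Under $P_0^N$, the $\mathbb Z$-projection $Y_n := \pi_{\mathbb{Z}}(X_n)$ is a lazy simple random walk on $\mathbb Z$ with per-step variance $1/(d+1)$. By Donsker's theorem, $(Y_{\lfloor tN^{2d}\rfloor}/N^d)_{t \geq 0}$ converges in law to $B := W/\sqrt{d+1}$; consequently its discrete occupation time $\tilde L_Y(h,T) := \#\{0 \leq n \leq T : Y_n = h\}$ satisfies
\begin{equation*}
\tilde L_Y(\lfloor yN^d\rfloor, T)/N^d \;\Rightarrow\; L^B(y, s) = \sqrt{d+1}\, L^W\!\left(y\sqrt{d+1}, s\right) \quad \text{jointly in } y \text{ on compacts},
\end{equation*}
so in particular $\max_h \tilde L_Y(h,T)/N^d \Rightarrow \sqrt{d+1}\,\sup_z L^W(z, s)$.

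Next, I would partition $\mathbb Z$ into consecutive intervals $I_k$ of length $\ell_N := \lfloor N^{d-\delta}\rfloor$ for a small $\delta > 0$, with centres $h_k$, and set $S_k := \mathbb T \times I_k$. Inside each slightly fattened slab the coupling lemma yields, with $P_0^N$-probability $1 - o(1)$, that $X_{[0,T]} \cap S_k \subseteq \I^{u_k} \cap S_k$ where $u_k := (1 + o(1))\,\tilde L_Y(h_k, T)/N^d$, the interlacements across distinct slabs moreover being coupled jointly on the same probability space---this is the content of the ``very strong'' part. Fix $\eta > 0$ and let $\mathcal E_\eta := \{\max_k u_k < \overline u - \eta\}$. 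On $\mathcal E_\eta$, every $u_k$ lies strictly below $u_* = \overline u$ by \eqref{eq:unique critial parameter}, so each $\V^{u_k}$ is strongly supercritical: within $S_k$, its intersection with $S_k$ contains a crossing cluster linking the two $\mathbb Z$-faces of $S_k$ with probability $1 - o(1)$, uniformly in $k$. A standard slab-by-slab renormalisation argument (in the spirit of \cite{Szn09a}) glues consecutive crossings together and produces, with probability $1-o(1)$, a bi-infinite connected path in $\mathbb E \setminus X_{[0,T]}$, preventing disconnection; hence on $\mathcal E_\eta$, $T_N > T$ with probability $1 - o(1)$.

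Combining these inputs and using the scaling of the first paragraph,
\begin{equation*}
\liminf_{N \to \infty} P_0^N\!\left[\frac{T_N}{N^{2d}} \geq s\right] \;\geq\; \lim_{N \to \infty} P_0^N[\mathcal E_\eta] \;=\; \WW\!\left[\,\sup_z L^W(z,s) < \frac{\overline u - \eta}{\sqrt{d+1}}\,\right],
\end{equation*}
where the last equality uses the weak convergence above together with the continuity of the distribution of $\sup_z L^W(z,s)$ (itself a consequence of the continuity of the Brownian local time). Sending $\eta \downarrow 0$, the right-hand side converges to $\WW[\sup_z L^W(z,s) \leq \overline u/\sqrt{d+1}] = \WW[\zeta(\overline u/\sqrt{d+1}) \geq s]$ (by \eqref{eq:def of first achiving time} and the a.s.\ continuity of $t \mapsto \sup_z L^W(z,t)$), which proves \eqref{eq:sharp lower bound}.

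The main obstacle is twofold. First, one needs the coupling in the second paragraph to be quantitative enough that the $\Theta(N^\delta)$ slabwise error terms do not accumulate under a union bound, which is precisely what makes a \emph{very strong} (rather than merely strong) coupling necessary; this is arguably the key new technical input of the paper. Second, the slab-gluing step genuinely relies on the equality \eqref{eq:unique critial parameter}: for each $u_k < \overline u$ one must know not only that the vacant cluster is infinite but also that it is robust enough to admit crossings of slabs and gluing across slab boundaries simultaneously, which follows from the strongly supercritical characterisation of the regime $u < u_*$.
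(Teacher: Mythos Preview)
Your proposal correctly identifies the heuristic and the Brownian scaling, but there is a genuine gap at the ``standard slab-by-slab renormalisation'' step, which is in fact the crux of the paper. The strongly-percolative property (specifically, local uniqueness) is \emph{not monotone}: even if a coupling gives $\mathbb{E}\setminus X_{[0,T]}\supseteq \V^{u_k}$ in each slab, you cannot transfer the gluing property from $\V^{u_k}$ to the larger walk-vacant set. Concretely, the crossings you locate in $\V^{u_k}\cap S_k$ and $\V^{u_{k+1}}\cap S_{k+1}$ are two large clusters of $\mathbb{E}\setminus X_{[0,T]}$, but connecting them requires local uniqueness for the walk's vacant set itself, which does not follow from any one-sided domination. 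Your appeal to \cite{Szn09a} is for the \emph{upper} bound, where the relevant event (strong non-percolation) \emph{is} monotone---this is precisely why that paper obtained the sharp upper bound while the matching lower bound remained open. The introduction of the present paper flags this explicitly.

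There is also a mismatch with what the very-strong coupling actually delivers: Theorem~\ref{thm:very strong coupling S} couples walk excursions in a single box $\B$ of side at most $N/4$ with one interlacement process, not the walk trace in a torus-wrapping slab $\mathbb{T}\times I_k$ jointly to a global interlacement as you assume. The paper instead circumvents the monotonicity obstacle by defining good$(\beta,\gamma)$ and fine$(\gamma)$ properties of mesoscopic $L$-boxes \emph{directly in terms of the walk's own excursions} (Definitions~\ref{def:good(beta-gamma)}--\ref{def:bad boxes}), showing via an isoperimetric argument (Proposition~\ref{prop:geometric argument}) that $\{T_N\le\underline S_N\}$ forces a $d$-dimensional surface of abnormal boxes, and then ruling out such surfaces: soft-local-time decoupling for bad boxes (Section~\ref{sec:bad(beta-gamma)}), and the very-strong coupling plus interlacement occupation-time bounds for poor boxes (Section~\ref{sec:bad local time}). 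The Appendix sketches an alternative closer in spirit to your outline, using volume-based uniqueness of the largest cluster as in \cite{RS13,TW11} to repair the gluing; even there, the argument is substantially more delicate than ``standard''.
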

In \cite[Corollary 4.6]{Szn09a}, Sznitman proved the following upper bound.
\begin{equation}\label{eq:sharp upper bound}
\limsup_{N\to \infty}P_{0}^{N}\left[\frac{T_{N}}{N^{2d}}\geq s\right]\leq \WW\left[\zeta\left(\frac{u_{**}}{\sqrt{d+1}}\right)\geq s\right].
\end{equation}
Thanks to \eqref{eq:unique critial parameter}, we can combine \eqref{eq:sharp lower bound} and \eqref{eq:sharp upper bound}, and obtain the precise weak limit for the renormalized disconnection time. 

\begin{theorem}\label{thm:sharp asymptotics}
Under the law $P_0^{N}$, as $N\rightarrow \infty$, we have
\begin{equation}\label{eq:sharp aymptotics}
\frac{T_{N}}{N^{2d}}\Longrightarrow \zeta\left(\frac{u_{*}}{\sqrt{d+1}}\right)\overset{d}{=}\frac{u_{*}^2}{d+1}\zeta(1).
\end{equation}
\end{theorem}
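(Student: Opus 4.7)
The plan is to recognize that this statement is essentially a direct corollary of Theorem~0.1, Sznitman's upper bound \eqref{eq:sharp upper bound}, the equality of critical parameters \eqref{eq:unique critial parameter}, and Brownian scaling; no new probabilistic input is required.

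First, I would combine the three available ingredients. Theorem~0.1 gives
\begin{equation*}
\liminf_{N \to \infty} P_0^N\left[\frac{T_N}{N^{2d}} \geq s\right] \geq \WW\left[\zeta(\overline{u}/\sqrt{d+1}) \geq s\right],
\end{equation*}
while \eqref{eq:sharp upper bound} gives the matching upper bound with $\overline{u}$ replaced by $u_{**}$. Invoking $\overline{u} = u_{**} = u_{*}$ from \eqref{eq:unique critial parameter}, the liminf and the limsup coincide, so
\begin{equation*}
\lim_{N \to \infty} P_0^N\left[\frac{T_N}{N^{2d}} \geq s\right] = \WW\left[\zeta(u_{*}/\sqrt{d+1}) \geq s\right]
\end{equation*}
for every $s > 0$. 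Because $\zeta(u)$ has a continuous distribution on $[0,\infty)$ (as noted immediately after \eqref{eq:def of first achiving time}), this pointwise convergence of tail probabilities holds at every continuity point of the limit CDF, and the portmanteau theorem then upgrades it to the weak convergence $T_N/N^{2d} \Longrightarrow \zeta(u_{*}/\sqrt{d+1})$ under $P_0^N$, which is the first equality in \eqref{eq:sharp aymptotics}.

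Second, for the distributional identity $\zeta(u_{*}/\sqrt{d+1}) \overset{d}{=} \frac{u_{*}^2}{d+1}\zeta(1)$, I would appeal to Brownian scaling. If $B$ is a standard Brownian motion and $c > 0$, then $\widetilde{B}_t := c\,B_{t/c^2}$ is again standard, and the occupation-time formula identifies its local time as $\widetilde{L}(v,t) = c\, L(v/c,\, t/c^2)$ in distribution. Passing to the supremum in $v$ yields $\sup_v \widetilde L(v,t) \overset{d}{=} c \sup_v L(v,\, t/c^2)$, and inverting the first-passage relation in the definition \eqref{eq:def of first achiving time} gives $\zeta(c) \overset{d}{=} c^2\,\zeta(1)$. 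Taking $c = u_{*}/\sqrt{d+1}$ produces the claimed identity.

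Since Theorem~0.1 carries all the real technical content and \eqref{eq:sharp upper bound} together with \eqref{eq:unique critial parameter} are external inputs, there is no substantive obstacle in this corollary; the only minor point of care is verifying that the limiting distribution has no atoms, which is immediate from the explicit Laplace-transform formula for $\zeta$ mentioned in the paper.
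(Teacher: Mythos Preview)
Your proposal is correct and matches the paper's approach exactly: the paper simply states that combining \eqref{eq:sharp lower bound}, \eqref{eq:sharp upper bound}, and \eqref{eq:unique critial parameter} yields the weak limit, without spelling out the portmanteau step or the Brownian-scaling identity. Your write-up is in fact more detailed than the paper's, which treats Theorem~\ref{thm:sharp asymptotics} as an immediate corollary with no separate proof.
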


We now turn to the biased walk. 
Recall the bounds obtained by Windisch (see \cite[Theorem 1.1]{Win08}): for $d\geq 3$ and every $\delta > 0$, with $P_{0}^{N,\alpha}$-probability tending to $1$ as $N\to\infty$, 
\begin{equation}\label{eq:Windisch}
\begin{split}
2d-\delta\leq &\;\frac{\log T_{N}}{\log N} \leq 2d+\delta,\quad \alpha>1; \\
d(1-\alpha-\varphi(\alpha))-\delta\leq &\;\frac{\log\log T_{N}}{\log N} \leq d(1-\alpha)+\delta,\quad 0<\alpha<1,
\end{split}
\end{equation}
where $\varphi(\alpha)$ is a {\it strictly positive} piece-wise linear function on $(0,1)$; see \cite[(1.5)]{Win08} for its precise definition. In addition, for $d=2$, this work provides a similar upper bound as in $d\geq 3$ case, and a lower bound that as $N\rightarrow \infty$, with $P_{0}^{N,\alpha}$-probability tending to $1$,
\begin{equation}\label{eq:Windisch d=2}
    T_N\geq \exp(cN^{2({1-2\alpha})}), \quad 0 < \alpha < 1/2.
\end{equation}

In the next theorem, we obtain  the limiting distribution of $T_{N}/N^{2d}$, for $\alpha \geq 1$ (which is exactly the same as in \eqref{eq:sharp aymptotics} when $\alpha>1$ and in a similar form but with local time of drifted Brownian motion involved when $\alpha=1$), while for $1/d<\alpha<1$ we obtain bounds that match in the principal order, in line with the upper bound in the second line of \eqref{eq:Windisch} (but more precise). These asymptotics are valid for all $d\geq 2$. 
This is a significant improvement of \eqref{eq:Windisch} when the drift is not too strong.

\begin{theorem} \label{thm:biased walk aymptotics}
Under the law $P_{0}^{N,\alpha}$, as $N\rightarrow \infty$, we have
\begin{align}
    \frac{T_{N}}{N^{2d}}&\Longrightarrow \zeta\left(\frac{u_{*}}{\sqrt{d+1}}\right),\quad \alpha>1;\label{eq:distribution limit large alpha}\\ 
\frac{T_{N}}{N^{2d}}&\Longrightarrow \zeta^{\frac{1}{\sqrt{d+1}}}\left(\frac{u_{*}}{\sqrt{d+1}}\right),\quad \alpha=1;\label{eq:distribution limit alpha=1}\\
\frac{\log T_{N}}{N^{d(1-\alpha)}}&\Longrightarrow \frac{u_{*}}{d+1},\quad 1/d<\alpha<1.\label{eq:distribution limit small alpha}
\end{align}
\end{theorem}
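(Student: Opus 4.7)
\smallskip\noindent\textit{Plan.} The approach is to route all three regimes through the ``very strong'' coupling (developed earlier in the paper) that identifies the trace of the cylinder walk on each slab $\mathbb{T}\times\{h\}$ with a random interlacement on $\mathbb{Z}^{d+1}$ of level $\ell_N(h,\cdot)/N^d$, up to an arbitrarily small error, where $\ell_N(h,\cdot)$ denotes the local time at height $h$ of the $\mathbb{Z}$-projection $Y=\pi_{\mathbb{Z}}(X)$. Via this coupling, disconnection of $\mathbb E$ by time $t$ is, up to an arbitrarily small error in probability, the event $\{\max_h \ell_N(h,t)\geq u_{*} N^d\}$, with the gap between the upper-bound threshold $u_{**}$ and the lower-bound threshold $\overline{u}$ collapsed by \eqref{eq:unique critial parameter} as in Theorem~\ref{thm:sharp asymptotics}.

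For $\alpha\geq 1$, the relevant time scale is the diffusive $N^{2d}$. The projection $Y$ is a $\pm 1$ walk on $\mathbb{Z}$ with step mean $\Delta/(d+1)=N^{-d\alpha}/(d+1)$ and step variance $\sim 1/(d+1)$, so its integrated drift over time $sN^{2d}$ is $sN^{d(2-\alpha)}/(d+1)$: this is $o(N^d)$ when $\alpha>1$ and equals $sN^d/(d+1)$ when $\alpha=1$. A Donsker-type invariance principle therefore gives, for the rescaled projection $\tilde Y_s=N^{-d}Y_{\lfloor sN^{2d}\rfloor}$, a Brownian limit with diffusion coefficient $1/\sqrt{d+1}$ and drift $0$ (for $\alpha>1$) or $1/(d+1)$ (for $\alpha=1$). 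Rescaling local times by $N^d$ and using the Brownian scaling identity $L^{\tilde Y}(v,s)=\sqrt{d+1}\,L^{\mu}(\sqrt{d+1}\,v,s)$ with $\mu=0$ or $\mu=1/\sqrt{d+1}$, the disconnection criterion $\max_h\ell_N(h,sN^{2d})/N^d\geq u_{*}$ becomes $\max_v L^{\mu}(v,s)\geq u_{*}/\sqrt{d+1}$, producing \eqref{eq:distribution limit large alpha} and \eqref{eq:distribution limit alpha=1} along the same route as Theorem~\ref{thm:sharp asymptotics}.

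The regime $1/d<\alpha<1$ is qualitatively different because $Y$ is ballistic with speed $\Delta/(d+1)$: by time $t$ it visits $\asymp t\Delta$ distinct heights, and the local time at each visited height is approximately geometric with parameter $\asymp\Delta/(d+1)$. A standard extreme-value computation places the maximum of $\asymp t\Delta$ such geometric random variables around $(d+1)\Delta^{-1}\log(t\Delta)$, with concentration; equating this with $u_{*}N^d$ and using $\Delta=N^{-d\alpha}$ yields $\log t\sim u_{*}N^{d(1-\alpha)}/(d+1)$, exactly \eqref{eq:distribution limit small alpha}. To make the heuristic rigorous, I would fix $\epsilon>0$ and set $t_{\pm}=\exp\bigl((u_{*}/(d+1)\pm\epsilon)N^{d(1-\alpha)}\bigr)$. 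For the lower bound on $T_N$, a union bound over heights combined with tail estimates for the geometric local-time field gives $\max_h\ell_N(h,t_{-})<\overline{u}N^d$ with probability $\to 1$; via the coupling, the trace at every slab is then contained in a subcritical interlacement whose vacant set percolates horizontally, forbidding disconnection. For the upper bound, at time $t_{+}$ some height $h$ satisfies $\ell_N(h,t_{+})>u_{**}N^d$, and the coupling in the opposite direction then places a supercritical interlacement inside the trace on that slab, whose vacant set fails to percolate, so that the slab disconnects $\mathbb E$.

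The main obstacle I anticipate is precisely this exponential regime: the coupling must be applied with uniformly small error across the $\asymp t_{\pm}\Delta$ slabs visited over an exponentially long trajectory, and the extreme-value analysis must accommodate the non-independence of local times at nearby heights (coming from shared backward excursions against the drift) as well as the transverse mixing on the torus cross-section during each slab visit. Sharp tail estimates for excursion lengths of the drifted $\mathbb{Z}$-projection, combined with a sufficiently quantitative and uniform version of the coupling, will be the key technical inputs; the restriction $\alpha>1/d$ should reflect precisely the regime in which the coupling and the extreme-value picture simultaneously remain sharp.
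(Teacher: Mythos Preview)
Your reduction to the one-dimensional local-time profile and the upper-bound strategy (couple the trace on a slab to a supercritical interlacement once the local time there exceeds $u_{**}N^d$) are essentially the paper's. The gap is in the lower bound, and it is structural rather than technical.

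First, the coupling you invoke is \emph{local}: it matches the trace inside a fixed mesoscopic box with an interlacement inside that box, with a different realization for each box. So ``the trace at every slab is contained in a subcritical interlacement whose vacant set percolates'' does not give a global top-to-bottom path; you must patch together the large vacant clusters in adjacent boxes. This requires the local-uniqueness part of the strongly-percolative property, which is \emph{not} monotone under enlarging the vacant set---the paper flags this explicitly as the reason a naive domination argument fails for the lower bound. Second, and more seriously for $1/d<\alpha<1$: even if you resolve the patching (the appendix sketches one way, via identifying the \emph{unique} cluster of density $>\tfrac{3}{4}\theta(u)$), your union bound over heights blows up. By time $t_{-}=\exp\bigl((\tfrac{u_{*}}{d+1}-\epsilon)N^{d(1-\alpha)}\bigr)$ the walk visits $\exp\bigl(cN^{d(1-\alpha)}\bigr)$ heights, while the per-box error from the coupling and from the percolative estimates is only stretched-exponentially small in $N$. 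The appendix states this failure explicitly (Remark~A.1): the direct approach breaks for $\alpha\in(0,1-1/d)$.

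The paper's actual lower bound does not route through a direct ``disconnection $\Leftrightarrow$ max local time $\geq u_{*}N^d$'' equivalence. Instead it coarse-grains at scale $L=[N^\psi]$ with $\psi\in(1/d,\alpha\wedge 1)$, and shows by a geometric (isoperimetric) argument that $\{T_N\leq\underline S_N\}$ forces a $d$-dimensional surface of ``abnormal'' $L$-boxes inside some mesoscopic box $\mathsf B$ of side $N/\log^3 N$. The surface is then split into boxes where the excursion-level coupling to interlacements fails (``bad'') and boxes where the excursion count is too large (``poor''); the former are handled by soft-local-time decoupling into i.i.d.\ excursions, the latter by the very-strong coupling plus occupation-time bounds. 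The two conditions on $\psi$ are exactly what make the combinatorics close: $\psi>1/d$ gives $|\mathcal C_1|\log N=o(\mathrm{cap}(\mathsf B))$ so the capacity-scale coupling error absorbs the surface entropy, and $\psi<\alpha$ lets the $\exp(cN^{d(1-\alpha)})$ choices for the location of $\mathsf B$ be absorbed as well. Your proposal does not contain this mechanism, and without it the lower bound for $1/d<\alpha\leq 1-1/d$ does not go through.
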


Let us remark that within the present framework we are not able to give better bounds than \eqref{eq:Windisch} in the presence of an extremely strong bias (i.e., when $\alpha\leq 1/d$) which  radically changes the nature of this problem and poses new and essential difficulties; see the short discussion at the end of \Cref{subsec:sketch} and \Cref{rem:finalremark} for more details. 

We also mention that our approach is capable of deriving sharp bounds for more general drifts, say, of the form $ CN^{-d\alpha}(1+o(1))$ for $1/d<\alpha \leq 1$ or $N^{-d}\log^\beta N$ with $\beta$ a fixed positive constant. (The interest in probing the latter case is to observe in a quantitative fashion the transition of the local time from being polynomial in $N$ to (stretched) exponential in $N$.) We leave details to curious readers. 

\subsection{Sketch of proofs}\label{subsec:sketch}

We start by going through the intuitions behind the results \Cref{thm:sharp lower bound,thm:sharp asymptotics,thm:biased walk aymptotics}. After that, we  explain the outline of the proof for the lower bound in the simple random walk case that is contained in \Cref{sec:Geometric,sec:bad(beta-gamma),sec:bad local time}, and then move on to the biased case lower bound by discussing necessary additional technical details that are incorporated in \Cref{sec:lower bound in biased case}, and finally sketch the proof of the upper bounds in both simple and biased walks that is detailed in \Cref{sec:upper bound}. 

We now explain the ideas behind the simple random walk results \eqref{eq:sharp lower bound}-\eqref{eq:sharp aymptotics}. 
The main underlying intuition for $T_{N}$ ``living in scale $N^{2d}$" is that it takes about $N^{2d}$ steps to cover a positive proportion of the torus at some height $z$, whose cardinality is of order $N^{d}$. 
To rigorously formalize this intuition, one compares the random walk trajectories with random interlacements.
As demonstrated in \cite[Theorem 1.1]{Szn09b} and \cite[Section 4]{Szn09a}, for a small box $B$ with side-length $N^{\psi}$ ($0<\psi<1$) centered at height $z$, the strong mixing property of random walk implies the trace left by the walk in $B$
resembles a sample of interlacements, with intensity proportional to the ``average local time'' of the small box. 
In addition, it also suggests that the average local time in a different box $B'$ at same height $z$ is approximately the same as that of $B$, a phenomenon commonly referred to as ``spatial regularity''. 

In light of this, the percolative property for the complement of the trace left in $B$ can then be indicated by 
the local time on $\mathbb{T}\times \{z\}$. 
For example, 
when the average local time is smaller than $\overline{u}$ (resp.~larger than $u_{**}$), the intensity of the corresponding interlacements in box $B$ is also smaller than $\overline{u}$ (resp.~larger than $u_{**}$), meaning the vacant set of interlacements lies in the ``strongly percolative" (resp.~``strongly non-percolative") regime. 
This results in the complement of the random walk trajectory being ``well-connected'' (resp.~``well-fragmented"). In short, disconnection should happen when the average local time of some level $\mathbb{T}\times\{z\}$ exceeds $u_{*}$ (which equals $\overline{u}$ and $u_{**}$ thanks to \eqref{eq:unique critial parameter}).
Therefore, estimating the asymptotics of disconnection time $T_N$ boils down to 
analyzing the local time profile of a simple random walk in one dimension, which, after appropriate scaling, is further associated with that of a one-dimensional standard Brownian motion. 

The above intuition also applies to biased walks and hints at the results \eqref{eq:distribution limit large alpha}-\eqref{eq:distribution limit small alpha}.
(However, we are only able to verify it for $\alpha > 1/d$; see the short discussion at the end of this subsection as well as \Cref{rem:finalremark}.) This time we need to analyze a one-dimensional biased random walk, and a major phase transition happens when $\alpha=1$ such that $N^{-d\alpha}$ is the reciprocal of $N^{d}$, the size of the base $(\mathbb{Z}/N\mathbb{Z})^{d}$.

We remark here that although Sznitman has discussed the domination between simple random walk trajectories on a cylinder and interlacements both in the lower bound case (see \cite[Theorem 1.1]{Szn09b}) and the upper bound case (see \cite[Section 4]{Szn09a}), such domination can only help derive a sharp upper bound. The reason is that the ``strongly percolative" property is not monotone. 
To be more specific, given a set $V \subseteq \mathbb Z^{d+1}$ that satisfies this property, the ``strongly-percolative'' property may not hold for a larger set $V'$ containing $V$.
However, the ``strongly non-percolative'' property used in obtaining the upper bound of $T_N$ is actually monotone, making the coupling in \cite[Section 5]{Szn09a} sufficient for deducing \eqref{eq:sharp upper bound}.\\ 

Let us now sketch the proof of lower bound on the disconnection time $T_N$ in the simple random walk case. We remark that the proofs for biased walks (see equations \eqref{eq:lower bound large alpha} to \eqref{eq:lower bound small alpha} below) with $\alpha > 1/d$ will subsequently follow a similar strategy with certain technical adjustments. Many of the techniques below have drawn inspirations from the approaches presented in \cite{Szn09b,Szn09a,Szn17}. The formal definitions of several key ingredients (such as $\underline S_N$, $\text{good}(\beta, \gamma)$, $\text{fine}(\gamma)$, etc.) will be provided in subsequent sections.

As discussed in \Cref{sec:Geometric,sec:bad(beta-gamma),sec:bad local time,sec:lower bound in biased case}, we essentially want to show that, if for every height $z \in \mathbb Z$, the number of distinct visits of the simple random walk $X_{\cdot}$ to $\mathbb{T}\times \{z\}$ is no more than a certain level, then with high probability disconnection cannot happen. More precisely, for $\delta > 0$, we define a record-breaking time (see \Cref{sec:one-dimensional walks} for details)
\begin{equation}
\underline S_N := \inf\left\{n: \mbox{ there exists }z\in\mathbb{Z}, \mbox{ s.t. }X_{[0, n]} \mbox{ has more than }\frac{\overline{u}-\delta}{d+1}\cdot N^{d} \mbox{ distinct visits to }\mathbb{T}\times \{z\}\right\},
\end{equation}
we claim that with high probability, the disconnection time $T_{N}$ is no less than $\underline{S}_{N}$. Therefore the problem can be reduced to analyzing a one-dimensional lazy random walk and its local time. 

To achieve this, one conducts a coarse-graining framework and considers disjoint boxes $B$ of side-length $N^{\psi}$ (with $\psi\in(1/d,1\land \alpha)$) in the cylinder $\mathbb E$. For each box we consider a slightly larger box $D$ and a much larger box $U$ with $B \subseteq D \subseteq U$; see \eqref{eq:def of boxes at origin}-\eqref{eq:def of translated boxes} for precise definitions.
Thanks to the recurrence of this walk, for each box $B$ one can consider an infinite number of successive excursions $W_{\ell}^{D},\ell\geq 1$ from $D$ to $\partial U$. 
For two fixed constants $\beta>\gamma$ in $(\overline{u}-\delta,\overline{u})$, we define the following events (see  \Cref{def:good(beta-gamma),def:bad local time} for formal definition):
\begin{equation}
    \begin{split}
{\rm good}(\beta, \gamma)&\; := \left\{D\setminus \cup_{\ell\leq \gamma\cdot\text{cap}(D)}W_{\ell}^{D} \mbox{ is strongly percolative}\right\}, \text{ and }\\
{\rm fine}(\gamma)&\;:= \left\{X_{[0,\underline{S}_{N}]} \mbox{ contains no more than }\gamma\cdot\text{cap}(D) \mbox{ excursions }W_{\ell}^D\right\},    
    \end{split}
\end{equation}
where $\text{cap}(D)$ refers to the capacity of set $D$ (see  \eqref{eq:equilibrium measure} for definition). 
Roughly, the first event encapsulates a ``strongly percolative'' property for the complement of the first $\gamma\cdot\text{cap}(D)$ excursions of the random walk, drawing inspiration from a similar definition for random interlacements in sub-critical regime, while the second event requires that the local time of $(X_n)_{n\geq 0}$ at box $B$ before time $\underline{S}_{N}$ is not too large. We refer to the complement of these two events as $\text{bad}(\beta,\gamma)$ and $\text{poor}(\gamma)$. With these events in mind, we define (see \Cref{def:bad boxes}) for box $B$ a property named
\begin{equation}
\text{normal}(\beta, \gamma):= \text{good}(\beta,\gamma) \cap \text{fine}(\gamma).
\end{equation}
A box with this property is favorable for us because, on one hand, it occurs with high probability, and on the other hand, it facilitates the construction of a connected path in the complement of $X_{[0,\underline{S}_{N}]}$ in $\mathbb E$ between the box $B$ and an adjacent box $B'$ of the same size.

In view of the definition of ``normal" boxes, through a geometric argument \Cref{prop:geometric argument}, the event $T_{N}\leq \underline{S}_{N}$ can happen only if one can find a box $\mathsf{B}$ of side-length $[N/\log^{3}N]$ containing a ``$d$-dimensional'' coarse-grained surface of abnormal boxes. We will prove that the occurrence of any of such surfaces has an extremely low probability (see \Cref{prop:bad(beta-gamma),prop:bad local time} respectively). On the other hand, the combinatorial complexity of such a surface can also be bounded from above. Indeed, the choices of the box $\mathsf{B}$ is polynomial in $N$ despite the fact that the cylinder $\mathbb{E}$ is an infinite set, since $X_{[0,\underline{S}_{N}]}$ can be confined in a cylinder of height $O(N^{2d})$ with high probability. The restriction $\psi>1/d$ then helps to bound the combinatorial complexity of the $d$-dimensional set given $\mathsf{B}$. 

In the proof of \Cref{prop:bad(beta-gamma)}, the unlikeliness of a $d$-dimensional surface of ``bad'' boxes follows from a similar argument as in \cite{Szn17}, leveraging the ``good decoupling" properties of the excursions $W_{\ell}^{D}$ when the boxes are sufficiently far apart. 
Combining also the soft local time techniques in \cite{PT15}, especially in the form developed in \cite[Section 2]{CGP13}, one can couple excursions of random walk with independent collections of i.i.d.~excursions (see \Cref{prop:coupling W and widetildeZ,prop:coupling between widehatZ and W}), and then subsequently with excursions constructed by random interlacements, see \Cref{{lem:coupling between widetildeZ and Z}}. 

On the other hand, proving  \Cref{prop:bad local time}, the unlikeliness of ``poor'' boxes $B_x$, with $x$ indexed by a $d$-dimensional set $\mathcal C_1$, requires a ``very strong'' coupling. 
Here, the excursions of $X_{[0,\underline{S}_{N}]}$ going from $\mathsf{B}$ to a larger concentric box $\mathsf{D}$ with side-length $[N/20]$ are dominated by excursions in $\I^{u'}\cap\mathsf{D}$ for a certain $u'\in (\overline{u} -\delta, \gamma)$, as listed in \Cref{prop:very strong coupling}. 
Under this coupling, the event that $\left\{N_{\underline{S}_{N}}(B_x)>\gamma\cdot \text{cap}(D_x)\right\}$ holds for each $x\in \mathcal C_1$ indicates an excessive number of excursions in the global set $\I^{u'}\cap\mathsf{B}$. The probability of this event is of order $\exp(-c\text{cap}(\mathsf B))$ (which is $\exp(-N^{d-1}/\log^{c}N))$ here) as determined by the exponential Chebyshev's inequality for the occupation time of continuous-time random interlacements in \cite{Szn17}.

It is important to note that this coupling differs from those of \cite[Theorem 1.1]{Szn09b} and \cite{Szn09a}, as it requires the domination of excursions rather than just the range of excursions, and has a much smaller error term. 
Nevertheless, the proof remains quite similar, with significant optimization of the error terms inspired by \cite{Bel13}.
We also remark that the proof will be incorporated in \Cref{sec:couplings}, where we develop a more general version of the coupling. \\

We then sketch the proof of lower bound on the disconnection time $T_N$ of biased walks. We shall prove that, for every $\delta > 0$, the lower bound of $T_N$ satisfies
\begin{align}
\liminf_{N\to \infty}\,&P_{0}^{N,\alpha}\left[\frac{T_{N}}{N^{2d}}\geq s\right]\geq \WW\left[\zeta\left(\frac{\overline{u}-\delta}{\sqrt{d+1}}\right)\geq s\right],\quad &\alpha>1; \label{eq:lower bound large alpha}\\
\liminf_{N\to \infty}\,&P_{0}^{N,\alpha}\left[\frac{T_{N}}{N^{2d}}\geq s\right]\geq \WW\left[\zeta^{\frac{1}{\sqrt{d+1}}}\left(\frac{\overline{u}-\delta}{\sqrt{d+1}}\right)\geq s\right],\quad &\alpha=1;\label{eq:lower bound alpha=1}\\
\lim_{N\to\infty}&P_{0}^{N,\alpha}\left[\log T_{N}\geq \frac{\overline{u}-2\delta}{d+1}\cdot N^{d(1-\alpha)}\right]=1, \quad &\frac{1}{d}< \alpha < 1,\label{eq:lower bound small alpha}
\end{align}
after which we can send $\delta$ to zero and use the continuity of Brownian local time to get the desired bound. The proof resembles that of \eqref{eq:sharp lower bound}, and we now briefly explain how to adapt the proof of \eqref{eq:sharp lower bound} to prove \eqref{eq:lower bound large alpha}-\eqref{eq:lower bound small alpha}. The details are contained in \Cref{sec:lower bound in biased case}. 

First, since the biased walk is no longer recurrent, we need to introduce infinitely many auxiliary biased walks started from a uniform distribution on $\mathbb{T}\times\{z\}$ where $z$ is sufficiently negative, and define the excursions $W_{\ell}^{D}, \ell\geq 1$ by extracting the excursions from both the original walk and the supplementary walks in order. Second, the excursions $W_{\ell}^{D}, {\ell\geq 1}$ are now excursions of biased random walks. This requires addressing some technicalities when comparing $W_{\ell}^{D}, \ell\geq 1$ with
i.i.d.~biased excursions and further with i.i.d.~unbiased excursions (see \Cref{prop:coupling W and widetildeW,lem:coupling widetildeW with widetildeZ}). The condition $N^{\psi}<N^{d\alpha}$ is important here so that typically the walk cannot ``feel" the drift inside small box $B$ (recall the side-length of $B$ is $N^{\psi}$). 
Third, when $\alpha<1$, the random time $\underline{S}_{N}$ is now of order $\exp(\frac{\overline{u}-\delta}{d+1}\cdot N^{d(1-\alpha)})$, and typically the trace of $X_{[0, \underline S_N]}$ can no longer be restricted in a cylinder with height polynomial in $N$. 
In this case, we will lose an exponential factor in the combinatorial complexity of box $\mathsf{B}$, which can still be absorbed, thanks to the requirement that $\psi<\alpha$. Note that the assumption $\alpha>1/d$ is crucial to allow us to pick a suitable $\psi$.\\

We now turn to the upper bound (including the simple random walk case). We shall prove that for every $\delta>0$, 
\begin{align}
    \limsup_{N\to \infty}\,&P_{0}^{N,\alpha}\left[\frac{T_{N}}{N^{2d}}\geq s\right]\leq \WW\left[\zeta\left(\frac{u_{**}+\delta}{\sqrt{d+1}}\right)\geq s\right], &\quad \alpha>1; \label{eq:upper bound large alpha}\\
    \limsup_{N\to \infty}\,&P_{0}^{N,\alpha}\left[\frac{T_{N}}{N^{2d}}\geq s\right]\leq \WW\left[\zeta^{\frac{1}{\sqrt{d+1}}}\left(\frac{u_{**}+\delta}{\sqrt{d+1}}\right)\geq s\right], &\quad \alpha=1; \label{eq:upper bound alpha=1}\\
    \lim_{N\to\infty}&P_{0}^{N,\alpha}\left[\log T_{N}\leq \frac{u_{**}+2\delta}{d+1}\cdot N^{d(1-\alpha)}\right]=1, &\quad \frac{1}{d} < \alpha < 1,\label{eq:upper bound small alpha}
\end{align}
after which we again send $\delta$ to zero and use the continuity of Brownian local time to obtain the desired bound. This time, the key step is to show that, if for some level $\mathbb{T}\times \{z\}$, the number of distinct visits of the random walk $X$ to $\mathbb{T}\times \{z\}$ exceeds a certain level, then with high probability disconnection will happen. In other words, writing
\begin{equation}
\overline S_N(z) = \inf\left\{n: X_{[0, n]} \mbox{ has more than }\frac{u_{**}+\delta}{d+1}\cdot N^{d} \mbox{ distinct visits to }\mathbb{T}\times \{z\} \right\},
\end{equation}
then we claim that with high probability, the disconnection time $T_{N}$ is no more than $\overline{S}_{N}(z)$ for any $z\in\mathbb{Z}$ (see \Cref{prop:overlineS and T} and \Cref{cor:overlineS and Tz}), thus again reducing the problem to the analysis of a one-dimensional lazy random walk.  

To illustrate this idea, we fix $z = 0$ as an example. 
The key observation is that, conditioned on the event $\left\{\overline S_N(0) < \infty\right\}$, the biased walk $(X_n)_{n\geq 0}$ which originally has upward drift now has a drift towards $\mathbb T \times \{0\}$ before time $\overline{S}_{N}(0)$, and after that it again has the same upward drift (see \Cref{lem:conditional law}). 
One then use the ``strong'' coupling (see \Cref{{prop:strong coupling}}), which states that for every box $\overline{B}$ with side-length $[N^{1/3}]$ around $\mathbb T\times \{0\}$, the trace of $X_{[0,\overline{S}_{N}(0)]}$ in $\overline{B}$ dominate $\I^{u_{**}+\delta/2}\cap \overline{B}$ with high probability (larger than $1-N^{-10d}$), to prove the ``strongly non-percolative" property by definition of $u_{**}$. 
In other words, one shows that with high conditional probability, every small box $\overline{B}$ around height $0$ is ``strongly non-percolative", thus creating a flat ``fence" for the complement of $X_{[0,\overline{S}_{N}(0)]}$. Note that the ``strong" coupling here, which is similar to that in \cite{Szn09a} and \cite[Theorem 1.1]{Szn09b}, has a similar statement as the ``very strong" coupling in the proof of lower bounds, except that we prove stochastic domination in the opposite direction, and this coupling will also be treated as a special case of the general coupling in \Cref{sec:couplings}. That said, unlike the ``very strong" coupling in the proof of general lower bounds, it suffices for the error term of the ``strong" coupling here to be a sufficiently small negative polynomial of $N$, say $N^{-10d}$, since we only need to give a union bound on polynomially many (in $N$) boxes at the same height.  

Let us quickly remark that there are two places in the proof where the assumption $\alpha>1/d$ is crucially used. The first one is \Cref{sec:lower bound in biased case} where one needs to carefully pick the size of the mesoscopic boxes $\mathsf{B}$, as explained earlier in this subsection. The second one is to ensure spatial regularity in the derivation of the very strong coupling between the trace of biased walk and interlacement sets (see \Cref{lem:horizontal mixing}). See \Cref{rem:finalremark} for more discussions.

Before ending this subsection, we also remark here that there is actually an alternative approach inspired by \cite{RS13} to obtain the lower bound \eqref{eq:sharp lower bound} for simple random walk. Unfortunately this approach does not extend to the small $\alpha$'s that the approach in the main body of text is able to treat. Hence we do not pursue it in detail but rather sketch it in \Cref{sec:appendix}.

\subsection{Organization of the article}

We set up notation and preliminaries in \Cref{sec:preliminary}. Then in \Cref{sec:one-dimensional walks} we analyze one-dimensional simple or biased lazy random walk and prove asymptotics with respect to $\underline{S}_{N}$ and $\overline{S}_{N}$. 

\Cref{sec:Geometric,sec:bad(beta-gamma),sec:bad local time,sec:lower bound in biased case} are devoted to proving $\underline S_N \leq T_N$. 
In \Cref{sec:Geometric} we first introduce the coarse-graining setup and then with the help of a geometric argument (see \Cref{prop:geometric argument}) reduce the event $\underline S_N \geq T_N$ to two cases, \Cref{prop:bad(beta-gamma)} (surface of $\text{bad}(\beta,\gamma)$ boxes) and \Cref{prop:bad local time} (surface of $\text{poor}(\gamma)$ boxes). 
\Cref{sec:bad(beta-gamma),sec:bad local time} are devoted to the two situations respectively. We construct a chain of couplings in \Cref{subsec:soft local time} that incorporate various ingredients from \cite{PT15,Szn17}, and prove \Cref{prop:bad(beta-gamma)} in \Cref{subsec:proof of prop bad(beta-gamma)}. We state the ``very strong" coupling in \Cref{prop:very strong coupling}, and use occupation-time bounds from \cite{Szn17} to prove \Cref{prop:bad local time} in \Cref{subsec:continuous-time interlacements}. Then in \Cref{sec:lower bound in biased case}, the above procedures in \Cref{sec:Geometric,sec:bad(beta-gamma),sec:bad local time} are generalized to the biased walk case respectively in \Cref{subsec:adapting geometric,subsec:adapting bad(beta-gamma),subsec:adapting bad local time}.

In \Cref{sec:upper bound}, we show that for all $z\in\mathbb{Z}$, the estimate $T_N\leq \overline S_N(z)$ holds with high probability. This inequality relies on the ``strong" coupling; see \Cref{prop:strong coupling}. 

\Cref{sec:couplings} gives a generalized version of the ``very strong" coupling between random walks on the cylinder and random interlacements (see \Cref{thm:very strong coupling S,thm:very strong coupling excursion}). The outline for proving \Cref{thm:very strong coupling S,thm:very strong coupling excursion}, the proof details, and their modifications for \Cref{prop:very strong coupling,prop:very strong coupling bias,prop:strong coupling} are respectively given in \Cref{subsec:proof of very strong coupling theorem,subsec:proof of horizontal indep etc.,subsec:adapted proof very strong coupling}. 

\Cref{sec:denouement} concludes the proofs of our main theorems, where some remarks on the disconnection time of the case $0<\alpha\leq 1/d$ and our result are also incorporated. 

We have included various notation tables in \Cref{sec:symbol} for readers' convenience. Finally, \Cref{sec:appendix} provides the sketch of simple proof of \Cref{thm:sharp lower bound}.

We now explain the convention concerning constants. Throughout the text $c,c',\widetilde{c},C,C',\widetilde{C}\cdots$ denote positive constants changing from place to place that only depend on the dimension $d$. Numbered constants $c_0, c_1,\dots$ refer to constants whose value is fixed at their first appearance. Dependence on additional parameters appears at the first appearance (unless otherwise specified). For instance $c(\delta)$ will stand for a positive constant depending on both $d$ and $\delta$. \\

{\bf Acknowledgements}: The authors thank Zhan Shi for inspiring discussions and Maximilian Nitzschner for helpful comments on an earlier draft. The authors are supported by National Key R\&D Program of China (No.\ 2021YFA1002700 and No.\ 2020YFA0712900) and NSFC (No.\ 12071012).

\section{Notation and Preliminaries}\label{sec:preliminary}
In this section we introduce the basic notation and collect various facts concerning simple random walks, potential theory, and random interlacements. Throughout, we tacitly assume that $d\geq 2$.

\subsection{Notation}\label{subsec:prelim}
We write $\mathbb{N}=\{0,1,2,\dots\}$ for the set of natural numbers. Given a non-negative real number $a$, we denote by $[a]$ the integer part of $a$, and for real numbers $b$ and $c$, we write $b\land c $ or $b\lor c$ for the respective minimum and maximum between $b$ and $c$.

The $d$-dimensional torus $\mathbb{T}=(\mathbb{Z}/N\mathbb{Z})^{d}$ can be embedded into $\mathbb{Z}^{d}$ so that there is a one-to-one correspondence between $\mathbb{T}$ and the box $\{0,1,\dots,N-1\}^{d}$. In the rest of this paper, we arbitrarily choose such an embedding, and any point $x$ in the cylinder can be denoted by the coordinate $x= (u, v) = (u_1,u_2,\dots,u_{d}, v)$ with $u_1,u_2,\dots,u_{d}\in \{0,1,\dots,N-1\}$ and $v\in\mathbb{Z}$. Without causing ambiguity, we will simply write $0$ for the origin $(0,0,\dots,0)$. The projections $\pi_{i}$, $i=1,2,\dots,d+1$, from $\mathbb E$ to the $d$-dimensional hyperplanes of $\mathbb E$ are the mappings from $\mathbb E$ to $(\mathbb{Z}/N\mathbb{Z})^{d-1}\times\mathbb{Z}$ when $i=1,2,\dots,d$, or to $(\mathbb{Z}/N\mathbb{Z})^{d}$ when $i=d+1$. Specifically, the projection $\pi_i$ are defined by omitting the $i$-th component of the coordinate $(u_1,u_2,\dots,u_{d}, v)$, and will play an important role in \Cref{sec:Geometric}. Additionally, we write $\pi_{\mathbb T}$ and $\pi_{\mathbb Z}$ for the respective canonical projections from ${\mathbb E} = \mathbb T \times \mathbb Z$ onto $\mathbb T$
and $\mathbb Z$. Note that $\pi_{\mathbb T}$ is equivalent to $\pi_{d+1}$ indeed, while we use these two symbols in different contexts for clarity. 

We let $|\cdot|$ and $|\cdot|_{\infty}$ respectively stand for Euclidean and $\ell^{\infty}$-distances on $\mathbb{Z}^{d+1}$ or for the corresponding distances induced on the cylinder $\mathbb E$. We say that two points $x,y$ of $\mathbb{Z}^{d+1}$ or $\mathbb E$ are neighbors, if $|x-y|=1$.
We denote  by $B(x,r)$ and $S(x,r)$ the closed $|\cdot|_{\infty}$-ball and $|\cdot|_{\infty}$-sphere with radius $r\geq 0$ and center $x$ in $\mathbb{Z}^{d+1}$ or $\mathbb E$. 
For a subset $A$ of $\mathbb{Z}^{d+1}$ or $\mathbb{E}$, we write $A\subset\subset\mathbb{Z}^{d+1}$ or $A\subset\subset \mathbb E$ to indicate that $A$ is a finite subset of $\mathbb{Z}^{d+1}$ or $\mathbb E$.

For $A,B$ subsets of $\mathbb{Z}^{d+1}$ or $\mathbb E$, we write $A+B$ for the set of elements $x+y$ with $x$ in $A$ and $y$ in $B$, and $d(A,B):=\inf\{|x-y|_{\infty}: x\in A, y\in B\}$ for the natural $\ell^{\infty}$-distance between $A$ and $B$. When $A=\{x\}$ is a singleton, we simply write $d(x,B)$ for short. 
If we are further given $A\subseteq B$, then we denote with $\partial_{B}A$ the relative outer boundary of $A$ in $B$ and $\partial_{B}^{\text{int}}A$ the relative internal boundary of $A$ in $B$: 
\begin{equation}\label{def:relative boundary}
    \partial_{B}A:=\{x\in B\setminus A:\exists x'\in A, |x-x'|=1\},\quad \partial_{B}^{\text{int}}A:=\{x\in A:\exists x'\in B\setminus A, |x-x'|=1\}.
\end{equation}
When $B=\mathbb{Z}^{d+1}$ or $B=\mathbb{E}$, we simply write $\partial A$ and $\partial^{\text{int}}A$. 
Given $A,B,U$ subsets of $\mathbb{Z}^{d+1}$ or $\mathbb E$, we say that $A$ and $B$ are connected in $U$ and write $A\overset{U}{\longleftrightarrow}B$ when there exists a nearest-neighbour path with values in $U$ which starts in $A$ and ends in $B$. If there exists no such path, we say that $A$ and $B$ are not connected in $U$, denoted by $\nlr{}{U}{A}{B}$.  

Denote by $\text{supp}(\mu)$ the support of a point measure $\mu$ so that
\begin{equation}\label{eq:def of support}
\mu=\sum_{x\in\text{supp}(\mu)}\delta_{x}.
\end{equation}
Note that $\text{supp}(\mu)$ may be a multiset. In the rest of this paper, we will often consider $\mu$ as a $\sigma$-finite delta measure on the space of excursions from $K$ to the boundary of $U$, where $K\subseteq U$ are two finite subsets of $\mathbb{E}$ or $\mathbb{Z}^{d+1}$. In this case, $\text{supp}(\mu)$ is a multiset of excursions from $K$ to $\partial U$. We will also introduce some Poisson point processes as random measures supported on these $\sigma$-finite delta measures on the space of excursions. 

\subsection{Random walk on cylinders and lattices}\label{subsec:rwcylinder}
Throughout this work, the simple or biased random walk on $\mathbb E$ is denoted by $(X_n)_{n\geq 0}$. We write $(Y_{n})_{n\geq 0}:=(\pi_{\mathbb{T}}(X_{n}))_{n\geq 0}$ and $(Z_{n})_{n\geq 0}:=(\pi_{\mathbb{Z}}(X_{n}))_{n\geq 0}$ for the respective $\mathbb{T}$- and $\mathbb{Z}$-projections for this walk. For each $x$ in $\mathbb E$, we denote by $P_{x}^{N,\alpha}$ the law on $\mathbb E^{\mathbb{N}}$ of a random walk with upward drift $N^{-d\alpha}$ along the $\mathbb Z$-direction started at $x$, and write $E_{x}^{N,\alpha}$ for the corresponding expectation. Moreover, when $\mu$ is a measure on $\mathbb E$, we write
$P_{\mu}^{N,\alpha}$ and $E_{\mu}^{N,\alpha}$ for the measure $\sum_{x\in \mathbb E}\mu(x)P_{x}^{N,\alpha}$ (which is not necessarily a probability measure) and
its corresponding expectation (that is, the integral with respect to the measure $P_{\mu}^{N,\alpha}$). When $\alpha=\infty$, that is, when there is no drift, we simply write $P_{x}^{N},E_{x}^{N},P_{\mu}^{N},E_{\mu}^{N}$ for short.

Let $\mathscr T_{\mathbb{E}}$ and $\mathscr T_{\mathbb{Z}^{d+1}}$ respectively stand for the set of nearest-neighbor $\mathbb E$-valued and $\mathbb{Z}^{d+1}$-valued trajectories with time indexed by $\mathbb N$. 
When $F$ is a subset of $\mathbb E$, or of $\mathbb Z^{d+1}$, we denote by $\mathscr T_{F}$ the countable set of nearest neighbor $(F\cup \partial F)$-valued trajectories which remain constant after a finite time. 
The canonical shift on $\mathbb E^{\mathbb N}$ or $(\mathbb{Z}^{d+1})^{\mathbb{N}}$ is denoted by $(\theta_{n})_{n\geq 0}$, that is, $\theta_{n}$ stands for the map from $\mathbb E^{\mathbb N}$ into $\mathbb E^{\mathbb N}$ or from $(\mathbb{Z}^{d+1})^{\mathbb{N}}$ to $(\mathbb{Z}^{d+1})^{\mathbb{N}}$ such that $\theta_{n}(w)(\cdot )=w(\cdot+n)$ for $w \in \mathbb E ^{\mathbb N}$ or $(\mathbb{Z}^{d+1})^{\mathbb{N}}$. 

Given a subset $K$ of $\mathbb E$, we denote with $H_{K}$, $\widetilde{H}_{K}$ and $T_{K}$,  the entrance time, hitting time, and exit time from $K$, that is,
\begin{equation}\label{eq:hitting time}
H_{K}:=\inf\{n\geq 0:X_{n}\in K\},\ \widetilde{H}_{K}:=\inf\{n\geq 1:X_{n}\in K\},\ \text{and}\ T_{K}:=\inf\{n\geq 0:X_{n}\notin K\}.
\end{equation}
In the case of a singleton $K=\{x\}$, we simply write $H_{x}$, $\widetilde{H}_{x}$ and $T_{x}$. 

For two sets $K\subseteq U$ in $\mathbb E$, we then define the successive times of return to $K$ and departure from $U$ for a simple or biased random walk $(X_n)_{n\geq 0}$ as 
\begin{equation}\label{eq:def of return and departure}
\begin{split}
R_{1}^{K,U}=H_{K},\quad  &R_{k}^{K,U}=D_{k-1}^{K.U}+H_{K}\circ\theta_{D_{k-1}^{K,U}} \quad \mbox{ for }k\geq 2,\\ 
&D_{k}^{K,U}=R_{k}^{K,U}+T_{U}\circ\theta_{R_{k}^{K,U}} \quad \mbox{ for }k\geq 1.
\end{split}
\end{equation}
The number of excursions of the walk from $K$ to the complement of $U$ is defined as 
\begin{equation}\label{eq:def of N_K,U}
N^{K,U}:=\sup\{k\geq 1:D_{k}^{K,U} <\infty\}.
\end{equation} 
When considering simple or biased random walk on $\mathbb Z^{d+1}$, we keep the same notation as in \eqref{eq:hitting time}-\eqref{eq:def of N_K,U}.

We now discuss some potential theory with respect to the simple random walk on $\mathbb{Z}^{d+1}$ or $\mathbb{E}$. For each $x\in\mathbb{Z}^{d+1}$ and $\Delta\in [0,1]$, we denote by $P_{x}^{\Delta}$ and $E_{x}^{\Delta}$ the respective law and expectation of biased random walk on $\mathbb{Z}^{d+1}$ with upward drift $\Delta$ along the $(d+1)$-th direction. Moreover, when $\mu$ is a measure on $\mathbb{Z}^{d+1}$, we also write $P_{\mu}$ and $E_{\mu}$ for the measure $\Sigma_{x\in\mathbb{Z}}\mu(x)P_{x}$ (which is not necessarily a probability measure) and its corresponding expectation. When $\Delta=0$, that is, there is no drift, we simply write $P_{x},E_{x},P_{\mu},E_{\mu}$ for short. Note that the notation $P_{x}^{N,\alpha}$ refers to the random walk on the cylinder with upward drift $N^{-d\alpha}$, while $P_{x}^{\Delta}$ denotes the law of biased walk on $\mathbb Z^{d+1}$ with drift $\Delta$.  

Given $\varnothing \neq K\subset\subset\mathbb{Z}^{d+1}$ and $U\supseteq K$, the equilibrium measure and capacity of $K$ relative to $U$ are defined by 
\begin{equation}\label{eq:equilibrium measure}
e_{K,U}(x)=
\begin{cases}
P_{x}\left[\widetilde{H}_{K}>T_{U}\right],\quad &x\in K,\\
0, \quad &x\notin K,
\end{cases}
\quad\text{and}\quad \text{cap}_{U}(K)=\sum_{x\in K}e_{K,U}(x) (\leq |K|).
\end{equation}
The normalized equilibrium measure of a non-epmty $K$ with respect to $U$ is defined as 
\begin{equation}\label{eq:def of normalized equilibrium measure}
\overline{e}_{K,U}(x)=\frac{e_{K,U}(x)}{\text{cap}_{U}(K)},\quad x\in \mathbb{Z}^{d+1}.
\end{equation}
In addition, The Green's function of the walk on $\mathbb{Z}^{d+1}$ killed outside $U$ is defined as 
\begin{equation}\label{eq:def of green function}
g_{U}(x,x')=E_{x}\left[\sum_{n\geq 0}\mathbbm{1}\{X_{n}=x',n<T_{U}\}\right],\quad \text{for }x,x'\in\mathbb{Z}^{d+1}.
\end{equation}
When $U=\mathbb{Z}^{d+1}$, we drop $U$ from the notation in \eqref{eq:equilibrium measure}-\eqref{eq:def of green function}.

In the special case $K=[0,L)^{d+1}$ is a box with side-length $L$, it holds that (see \cite[(2.16)]{lawler2012intersections}) 
\begin{equation}\label{eq:equilibrium measure of a box}
\overline{e}_{K}(x) \geq \frac{c}{L^{d}},\quad\text{for any }x\in \partial^{\text{int}}K,   
\end{equation}
and that 
\begin{equation}\label{eq:capacity of a box}
cL^{d-1}\leq \text{cap}(K) \leq c'L^{d-1}.    
\end{equation}
Moreover, there exists a constant $c_{0}=c_{0}(d)$ such that (see \cite[Theorem 1.5.4]{lawler2012intersections})
\begin{equation}\label{eq:decay of green function}
g(x,y)\sim c_{0}|x-y|^{1-d},\quad \text{as }|x-y|\to\infty.
\end{equation}

Furthermore, one has a variational characterization of the capacity, 
which is convenient for deriving lower bounds on capacity:
\begin{equation}\label{eq:variational characterization}
\text{cap}(K)=\left(\inf_{\nu}E(\nu)\right)^{-1}, \mbox{ for } 
E(\nu):=\sum_{x,y}\nu(x)\nu(y)g(x,y),
\end{equation}
where $\nu$ ranges over all probability measure supported on $K$.

\subsection{Random interlacements}\label{subsec:interlacements}

We now recall some notation and results concerning random interlacements, and refer to \cite{DRS14} for a more detailed introduction. We denote by $W$ the space of doubly infinite, nearest-neighbor $\mathbb{Z}^{d+1}$-valued trajectories which tend to infinity at positive and negative infinite times.
We further denote by $W^{*}$ the space of equivalence classes of trajectories in $W$ modulo time-shift. That is, $W^*=W/\sim$, where for $w,w'\in W$, $w\sim w'$ means that $w(\cdot)=w'(\cdot+k)$ for some $k\in\mathbb{Z}$. The canonical projection from $W$ onto $W^{*}$ is denoted by $\pi^{*}$. We also write with  $\mathcal{W}$ its respective canonical $\sigma$-algebra and denote by $(X_n^{\pm})_{n\in \mathbb Z}$ the canonical coordinates.

We also consider $W_{+}$ the space of nearest-neighbor $\mathbb Z^{d+1}$-valued
trajectories defined for non-negative times and tending to infinity and let $\mathcal W^+$ stand for the canonical $\sigma$-algebra. 
For $K\subset\subset\mathbb{Z}^{d+1}$,
we denote by $W_{K}$ (resp.~$W_{K}^{*}$) the subset of $W$ (resp.~$W^{*}$) of trajectories modulo time-shift that intersect $K$. That is, 
\begin{equation}
    W_K := \{w \in W: \mbox{for some } n \in \mathbb Z, X_n^{\pm}(\omega)\in K\}\quad\text{and}\quad W_K^* := \pi^*(W_K).
\end{equation}

We then consider the space of point measures on the product space $W^* \times \mathbb R_+$:
\begin{equation}\label{eq:def of Omega}
\begin{split}
\Omega=\bigg\{&\omega=\sum_{i\geq 0}\delta_{(w_{i}^{*},u_{i})}:\text{with }(w_{i}^{*}, u_i) \in W^{*}\times \mathbb R_+\text{ for each }i\geq 0,\text{ and }\\
&\quad \omega(W_{K}^{*}\times\mathbb{R}^{+})<\infty, \text{ for any non-empty }K\subset\subset\mathbb{Z}^{d+1}\text{ and }u\geq 0\bigg\},\\
\end{split}
\end{equation}
where the space $\Omega$ is endowed with the canonical $\sigma$-algebra. The random interlacements can be constructed as a Poisson point process on the space $W^{*}\times\mathbb{R}^{+}$ supported on $\Omega$ with intensity measure $\nu(dw^{*})du$, where $du$ denotes the Lebesgue measure and $\nu$ is a certain translation-invariant $\sigma$-finite measure on $W^{*}$ (see \cite[Theorem 1.1]{Szn10}). 
We denote by $\PP$ the law of random interlacements.

Given $\omega\in\Omega$, $K\subset\subset\mathbb{Z}^{d+1}$ and $u\geq 0$, we define the point measure $\mu_{K,u}(\omega)$ on $W_{+}$ collecting the onward part of trajectories $w_{i}^{*}$ with label $u_{i}\leq u$ that enter $K$ in the cloud $\omega$, i.e.,
\begin{equation}
\mu_{K,u}(\omega)=\sum_{i\geq 0}\mathbbm{1}\{w_{i}^{*}\in W_{K}^{*}, u_{i}\leq u\}\delta_{(w^{*}_i)_{K,+}},\quad\text{if }\omega=\sum_{i\geq 0}\delta_{(w_{i}^{*},u_{i})}.
\end{equation}
Here, for a $w^{*}\in W_{K}^{*}$, the onward part is denoted by $(w^{*})_{K,+}$, which is the unique element of $W_{+}$ that follows $w^{*}$ step by step from the first time it enters $K$. The key property of these point measures is that for any $K\subset\subset\mathbb{Z}^{d+1}$ and $u\geq 0$,
under $\PP$, $\mu_{K,u}$ is a Poisson point process on $(W_{+}, \mathcal W^+)$ with intensity measure $u P_{e_{K}}$. Then, given $\omega \in \Omega$ and $u\geq 0$, the random interlacements and its vacant set at level $u$ are now defined as the random subsets of $\mathbb{Z}^{d+1}$:
\begin{equation}\label{eq:def of interlacement}
\I^{u}(\omega)=\bigcup_{u_{i}\leq u}\text{range}(w_{i}^{*}),\quad \text{and}\quad \V^{u}(w)=\mathbb{Z}^{d+1}\setminus \I^{u}(\omega), \quad \text{for }\omega = \sum_{i\geq 0}\delta_{(w_{i}^{*},u_{i})}.
\end{equation}

We now turn to some facts concerning the percolative properties of the vacant set $\V^{u}$ as parameter $u$ varies, and explain the critical parameters $\overline{u}$ and $u_{**}$ in detail. Here we refer to \cite{DCGR+23b} for the definition of these quantities. Given $u>v>0$ and $R>0$, we introduce two events, namely,
\begin{equation}\label{eq:def of existence}
\text{Exist}^{\mathcal V}(R,u):=\big\{\text{there exists a cluster in }\V^{u}\cap B(0,R)\text{ with diameter at least }\frac{R}{5}\big\},  
\end{equation}
\begin{equation}\label{eq:def of uniqueness}
\begin{split}
\text{Unique}^{\mathcal V}(R,u,v):=\big\{&\text{any two clusters in }\V^{u}\cap B(0,R)\text{ having diameter at least }\frac{R}{10}\text{ are }\\ &\text{connected to each other in }\V^{v}\cap B(0,2R)\big\}.
\end{split}
\end{equation}
Note that $\text{Exist}^{\mathcal V}(R,u)$ is monotone in $u$, and $\text{Unique}^{\mathcal V}(R,u,v)$ is monotone in $v$, but a priori we do not know whether $\text{Unique}^{\mathcal V}(R,u,v)$ is monotone in $u$. We say the vacant set of random interlacements \textit{strongly percolates} at levels $u,v$, if there exist constants $c=c(u,v,d)$ and $C=C(u,v,d)$ in $(0,\infty)$ such that for every $R\geq 1$, 
\begin{equation}\label{eq:def of strongly percolate}
\PP\left[\text{Exist}^{\mathcal V}(R,u)\right]\geq 1-Ce^{-R^{c}},\quad\text{and}\quad \PP\left[\text{Unique}^{\mathcal V}(R,u,v)\right]\geq 1-Ce^{-R^{c}}.
\end{equation}
We then define the critical value 
\begin{equation}\label{eq:def of ubar}
\begin{split}
\overline{u}=\sup\big\{s>0:&\text{ the vacant set of random interlacements strongly percolates }\\ &\text{ at levels }u,v\text{ for every }u>v\text{ in }(0,s)\big\},
\end{split}
\end{equation}
and refer to $(0,\overline{u})$ the strongly percolative regime of the vacant set of random interlacements. 

On the other side, we say the vacant set of random interlacements is \textit{strongly non-percolative} at level $u$, if there exist constants $c=c(u,d)$ and $C=C(u,d)$ in $(0,\infty)$ such that for every $R\geq 1$,
\begin{equation}\label{eq:def of strongly non-percolate}
\PP\left[0\overset{\V^{u}}{\longleftrightarrow}S(0,R)\right]\leq Ce^{-R^{c}}.    
\end{equation}
The strongly non-percolative property is monotone in $u$, and we can then define the critical value 
\begin{equation}\label{eq:def of u**}
u_{**}=\inf\big\{u>0: \mathcal V^u\text{ is strongly non-percolative at level }u\big\},    
\end{equation}
and also refer to $(u_{**}, \infty)$ the strongly non-percolative regime of the vacant set. We also remark that the definition of $u_{**}$ was later relaxed in \cite{PT15}, which shows that it is sufficient to require the infimum of annulus-crossing probability to fall below an explicit constant.

We end this subsection with the definition of the excursions in random interlacements, which will be a primary focus of study in the following sections. We consider a box $U$ in $\mathbb{Z}^{d+1}$ and a non-empty set $D\subseteq U$. By \eqref{eq:def of Omega}, we know
that for all $\omega\in \Omega$ and all $u \geq 0$, $\omega(W_{D}^{*}\times[0,u])<\infty$ and $\omega(W_{D}^{*}\times\mathbb{R}^{+})=\infty$ holds. Moreover, almost surely, the labels $u_{i}$ that appear in the point measure $\omega$ are all distinct, and each $w_{i}^{*}$
that belongs to $W_{D}^{*}$ only contains finitely many excursions from $D$ to $\partial U$, since the bilateral trajectory of an element of $W$ only spends
finite time in any finite subset of $\mathbb{Z}^{d+1}$. Thus, given $\omega=\sum_{i>0}\delta_{(w_{i}^{*},u_{i})}$ in $\Omega$, we can sort the infinite sequence of excursions
from $D$ to $\partial U$ by lexicographical order, i.e.~first by value of $u_{i}$ in increasing order, and then by the order of appearance inside a given trajectory $w_{i}^{*}\in W_{D}^{*}$. In this way we obtain a sequence of 
random variables on $\Omega$:
\begin{equation}\label{eq:def of interlacement excursions}
\begin{split}
\big(Z_{\ell}^{D,U}(\omega)\big)_{\ell\geq 1}=&\Big(w_{1}^{*}\big[R_1(w_{1}^{*}),D_1(w_{1}^{*})\big),\dots,\big.\\
&\quad \big.w_{1}^{*}\big[R_{N^{D,U}(w_{1}^{*})}(w_{1}^{*}),D_{N^{D,U}(w_{1}^{*})}(w_{1}^{*})\big), \, w_{2}^{*}\big[R_1(w_{2}^{*}),D_1(w_{2}^{*})\big),\dots\Big).
\end{split}
\end{equation} 
For every $u>0$, we denote by $N_{u}^{D,U}$ the total number of excursions from $D$ to $\partial U$, that is,
\begin{equation}\label{eq:number of excursions in Iu}
N_{u}^{D,U}=\sum_{i:u_i\leq u}N^{D,U}(w_{i}^{*}).
\end{equation}
We will also use $N_u(D)$ in place of $N_{u}^{D,U}$ for short if there is no ambiguity.

\subsection{Radon-Nikodym derivatives}\label{subsec:RN}
In these subsection we collect some properties on the Radon-Nikodym derivatives of biased random walks with respect to simple random walks that will be frequently used in this paper. For an excursion $e=(x_{0},x_{1},\cdots,x_{n})$ in $\mathbb{E}$ or $\mathbb{Z}^{d+1}$, we define its length and its height as
\begin{equation}\label{eq:def of length and height}
\ell(e)=n\quad\text{and}\quad h(e)=|x_{0,d+1}-x_{n,d+1}|,    
\end{equation}
where we recall that for a point $x\in\mathbb{Z}^{d+1}$, $x_{d+1}$ stands for $(d+1)$-th coordinate of $x$.
We use $\text{up}(e)$ and $\text{down}(e)$ to respectively denote the number of upward steps and downward steps in $e$, that is, 
\begin{equation}\label{eq:count up and down}
    \text{up}(e) = \sum_{i =0}^{n-1}\mathbbm 1\{x_{i,d+1} < x_{i+1,d+1}\}\quad\text{and}\quad \text{down}(e) = \sum_{i =0}^{n-1}\mathbbm 1\{x_{i,d+1} > x_{i+1,d+1}\}.
\end{equation}
Then we have the following relations regarding the number of upward and downward steps:
\begin{equation}\label{eq:relationships between up and down}
\text{up}(e)+\text{down}(e)\leq\ell(e)\quad\text{and}\quad\big|\text{up}(e)-\text{down}(e)\big|=h(e).
\end{equation}  
We also write
\begin{align}
p(e):=P_{x_{0}}^{N}[X_{[0,n]}=e],&\quad\text{and}\quad p^{\text{bias}}(e):=P_{x_{0}}^{N,\text{bias}=\Delta}[X_{[0,n]}=e],&&\quad\text{for }e\subseteq\mathbb{E}; \label{eq:p and pbias cylinder}\\
p(e):=P_{x_{0}}[X_{[0,n]}=e],&\quad\text{and}\quad p^{\text{bias}}(e):=P_{x_{0}}^{\Delta}[X_{[0,n]}=e],&&\quad\text{for }e\subseteq\mathbb{Z}^{d+1}.  \label{eq:p and pbias Zd}
\end{align}

By the standard Radon-Nykodym derivative, \eqref{eq:relationships between up and down} and the fact that $1-\Delta^{2}\leq 1$, the ratio between $p$ and $p^{\text{bias}}$ (no matter whether $e$ belongs to $\mathbb{E}$ or $\mathbb{Z}^{d+1}$) satisfies
\begin{equation}\label{eq:RN bias to unbias bound}
    (1-\Delta^{2})^{\frac{\ell(e)}{2}}\cdot \left(\frac{1-\Delta}{1+\Delta}\right)^{\frac{h(e)}{2}} \leq \frac{p^{\rm bias}(e)}{p(e)} \leq \left(\frac{1+\Delta}{1-\Delta}\right)^{\frac{h(e)}{2}}. 
\end{equation}

\subsection{Properties of random walk}\label{subsec:random walk preliminary}

In this subsection, we gather several important properties of the one-dimensional biased random walk, which are useful in \Cref{sec:one-dimensional walks,sec:upper bound,sec:denouement}. The notation used here will differ from other parts for clarity. For $\Delta\in [0,1)$ and $x\in\mathbb{Z}$, let $\mathsf P^{\Delta}_x$ be the law of a one-dimensional biased random walk, defined as the discrete-time Markov chain $(w_{n})_{n\geq 0}$ on $\mathbb Z$ with initial position $x$ and transition probability 
\begin{equation}\label{def:general biased 1drw}
    p(x_1, x_2) = \frac{1 + \Delta \cdot (x_2 - x_1)}{2}\mathbbm{1}_{|x_2-x_1|_{\infty}=1}.
\end{equation}
When $\Delta=0$, we may write $\mathsf{P}_{x}$ for short. For each trajectory $w = (w_0, w_1, \cdots)$ in $\mathbb Z$, let $L_k^z$ represent the local time at time $k$ and position $z\in \mathbb Z$, that is, 
\begin{equation}
    L_k^z = L_k^z(w) = \sum_{i = 0}^k\mathbbm{1}{\{w_i = z\}}.
\end{equation}
For every $z\in\mathbb{Z}$ and positive integers $k$ and $\ell$, following the strong Markov property on the first hitting time of $z$, the local time satisfies
\begin{equation}\label{eq:probability of multiple hits}
\mathsf{P}_{0}^{\Delta}\left[L_{k}^{z}\geq \ell\right]\leq \mathsf{P}_{0}^{\Delta}\left[L^{0}_{\infty}\geq \ell\right]=(1-\Delta)^{\ell-1}.  
\end{equation}
This distribution will be useful in estimates regarding $\underline{S}_{N}$ and $\overline{S}_{N}$, especially in the case $\alpha<1$ (see e.g.~the proof of \Cref{prop:asymptotics_alpha<1} and \Cref{sec:denouement}).
We further define for a positive constant $\ell$ and trajectory $w$ the first time when the local time at $z$ of the trajectory $w$ reaches $\ell$. 
\begin{equation}\label{def:1d record breaking time}
    S(\ell,z) = S(\ell, z, w) := \inf\{k\geq 0: L_k^z (w) \geq \ell\},
\end{equation}
and call $\inf_{z\in\mathbb Z}S(\ell, z)$ the one-dimensional {\it record-breaking time} with parameter $\ell$.

In the following context, we show that the record-breaking time of random walk converges weakly to its Brownian local time counterpart when $\Delta = N^{-1}$. This convergence is of great importance for concluding our final results (for case $\alpha=1$) once we have compared $T_N$ with $\overline S_N$ and $\underline S_N(z)$, since the latter can be seen as the record-breaking time of lazy one-dimensional random walk; we refer to \Cref{prop:asymptotics_alpha=1} and \Cref{sec:denouement} for details.

Recall that $\mathbb W, \mathbb E_{W}$ denote the law and the expectation of standard Wiener process $\{W_t\}_{t\geq 0}$. For every $\mu \in \mathbb R$, 
$L^{\mu}(v, t)$ and $\zeta^{\mu}(u)$ are the jointly continuous version of the local time of $W_t + \mu t$ and its record-breaking time respectively (see \eqref{eq:def of first achiving time}).

\begin{lemma}\label{lem:weak converge 1d rw}
    For every $s > 0$ and $0 <  \widetilde{u} < u$, the random times $S(u,z)$ and $\zeta^{1}(u)$ satisfy
    \begin{align}
        \lim_{N\rightarrow \infty}\mathsf P^{N^{-1}}_0\left[\inf_{z\in \mathbb Z}S(uN, z) > sN^2\right] &= \WW\left[\zeta^{1}(u) > s\right]; \quad\label{eq:weak convergence}\\
        \limsup_{L\rightarrow \infty}\limsup_{N\rightarrow \infty}\mathsf P_0^{N^{-1}}\left[\inf\limits_{z = \lfloor \ell N/L \rfloor, |\ell|\leq L^2} S(\widetilde u N, z)> sN^2 \right]&\leq \mathbb W[\zeta^1(u)> s].\label{eq:weak convergence truncated}
    \end{align}
\end{lemma}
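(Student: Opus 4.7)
The proof proceeds by diffusive rescaling together with convergence of the local-time profile. Under $\mathsf P_0^{N^{-1}}$, a single step has mean $N^{-1}$ and variance $1-N^{-2}$, so the rescaled process
\[
W^N_t := \tfrac{1}{N}\, w_{\lfloor N^2 t\rfloor}
\]
converges weakly in the Skorohod space $D([0,\infty),\mathbb{R})$ to a Brownian motion with unit upward drift, that is, the process $(W_t+t)_{t\geq 0}$ under $\WW$. A standard extension of Donsker's invariance principle to include local times (cf.\ Borodin's work on local times of random walks, or Revuz--Yor Chapter XIII for the Brownian side) yields joint weak convergence of $W^N$ together with the rescaled local-time profile
\[
\Lambda^N(v,t) := \tfrac{1}{N}\, L^{\lfloor Nv\rfloor}_{\lfloor N^2 t\rfloor},\qquad v\in\mathbb{R},\ t\geq 0,
\]
to $\bigl(W_t+t,\,L^{1}(v,t)\bigr)$, with the local-time component converging uniformly on compact subsets of $(v,t)\in\mathbb{R}\times[0,\infty)$.

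Next I would use the tautological identity
\[
\bigl\{\inf_{z\in\mathbb{Z}} S(uN,z) > sN^2\bigr\} = \bigl\{\sup_{v\in\mathbb{R}} \Lambda^N(v,s) < u\bigr\},
\]
to recast \eqref{eq:weak convergence} as the convergence $\mathsf P_0^{N^{-1}}[\sup_v \Lambda^N(v,s) < u] \to \WW[\sup_v L^1(v,s) < u]$, which coincides with $\WW[\zeta^1(u)>s]$ by the very definition of $\zeta^1$. To reduce the supremum over the infinite range $v\in\mathbb{R}$ to one over a compact set, I would first prove a spatial tail bound: a Hoeffding/Bernstein estimate applied to the drifted walk shows that for any $\varepsilon>0$ there exists $K=K(\varepsilon,s)$ such that, uniformly in $N$, the walk stays inside $[-KN,KN]$ throughout $[0,sN^2]$ with probability at least $1-\varepsilon$, so that $\Lambda^N(v,s)=0$ for $|v|>K$ on this event. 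The supremum thus reduces to a compact-set supremum of a function converging uniformly, and since the law of $\sup_v L^1(v,s)$ has no atoms (the Brownian local-time field is jointly continuous with a diffuse occupation measure), the continuous mapping theorem delivers \eqref{eq:weak convergence}.

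For \eqref{eq:weak convergence truncated}, the restricted event is
\[
\bigl\{\Lambda^N(\ell/L,\,s) < \widetilde u \text{ for every } |\ell|\leq L^2\bigr\},
\]
which is implied by (and hence has probability at least as large as) its full-supremum analogue. By the finite-dimensional marginal convergence of $\Lambda^N$, the probability tends as $N\to\infty$ to $\WW\bigl[L^1(\ell/L,s) < \widetilde u\text{ for all }|\ell|\leq L^2\bigr]$. Letting $L\to\infty$ and using the uniform continuity of $v\mapsto L^1(v,s)$ together with the almost-sure boundedness of its support, this probability decreases to $\WW[\sup_v L^1(v,s) \leq \widetilde u]$; the strict inequality $\widetilde u < u$ finally dominates this by $\WW[\sup_v L^1(v,s) < u] = \WW[\zeta^1(u)>s]$.

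The main technical input I expect to need is the joint weak convergence of the rescaled walk together with the full local-time profile in a topology strong enough to control the global supremum, alongside the uniform-in-$N$ spatial confinement of the walk in $[-KN,KN]$; once these ingredients are in place, both conclusions follow from the continuous mapping theorem and a standard finite-dimensional approximation.
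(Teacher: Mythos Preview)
Your overall strategy is sound and the conclusions would follow, but the approach differs from the paper's in an important way, and the main technical input you invoke is not quite as ``standard'' as you suggest.

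The paper does \emph{not} appeal to a local-time invariance principle for the biased walk directly. Instead it performs an exponential change of measure: with $\tanh\theta_N=N^{-1}$, the biased walk under $\mathsf P_0^{N^{-1}}$ is obtained from the simple random walk $U$ under $\mathsf P_0$ via the tilt $e^{\theta_N U_n}/\mathsf E[e^{\theta_N U_n}]$. This reduces \eqref{eq:weak convergence} to computing
\[
e^{-s/2}\,\mathsf E\bigl[e^{\theta_N U_{sN^2}}\mathbbm{1}\{\sup_z L^z_{sN^2}(U)<uN\}\bigr],
\]
where now $U$ is a \emph{fixed} simple random walk. The joint weak convergence of $\bigl(\theta_N U_{sN^2},\,N^{-1}\sup_z L^z_{sN^2}(U)\bigr)$ to $\bigl(W_s,\,\sup_v L(v,s)\bigr)$ is exactly the content of R\'ev\'esz's result \cite{reversz81}; uniform integrability of the exponential weight then passes to the limit, and Girsanov on the Brownian side converts the $e^{W_s-s/2}$ weight into the drifted-Brownian probability. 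For \eqref{eq:weak convergence truncated} the paper applies the same tilting, splits into two terms, and invokes the simple-random-walk analogue \cite[(4.31)]{Szn09a} together with Cauchy--Schwarz to control the tilted first term.

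Your direct route requires a local-time invariance principle for a \emph{triangular array} of walks (the step law depends on $N$ through the drift $N^{-1}$). Borodin's and R\'ev\'esz's classical theorems are stated for a fixed step distribution, so the citation does not cover the case at hand without further work. The result you need is certainly true, but the most natural way to prove it is precisely the measure-change argument the paper carries out explicitly; in that sense your ``standard extension'' hides the key step rather than bypassing it. The paper's route has the additional advantage that it only needs convergence of the pair (position, supremum of local time), not the full local-time profile uniformly on compacts, which is what R\'ev\'esz actually provides.
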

    \begin{proof}[Proof of \Cref{lem:weak converge 1d rw}]
        Let $\theta = \theta_N $ satisfy $\tanh \theta_N =  N^{-1}$. By a standard calculation, we see that $\theta N$ tends $1$ as $N$ goes to infinity. 
        Let $U_n,{n\geq 0}$ be the simple random walk with law $\mathsf P_0$. Write $\mathsf{E}$ for the  expectation, we define $(V_n)_{n\geq 1}$ as a series of random variables such that for every measurable function $g:\mathbb R \rightarrow \mathbb R_+$, 
        \begin{equation}\label{eq:def of transformed iid}
            \mathsf E[g(V_n)] = \frac{\mathsf E[g(U_n)e^{\theta U_n}]}{\mathsf E[e^{\theta U_n}]}. 
        \end{equation}
        By induction and conditional expectation, under $\mathsf P_{0}$, $(V_n)_{n \geq 1}$ has the same law as the biased walk under $\mathsf P_0^{N^{-1}}$. 
        Then for all $s > 0$ and $u > 0$, \begin{equation}\label{eq:compare rw bm local time1}
            \mathsf P^{N^{-1}}_0\left[\sup_{z\in \mathbb Z}L_{sN^2}^z < uN\right] 
            \overset{\eqref{eq:def of transformed iid}}{=}
            \mathsf E\Big[e^{\theta U_{sN^2}}\mathbbm{1}_{\left\{\sup\limits_{z\in\mathbb Z} L^z_{sN^2}(U) < uN\right\}}\Big]\cdot\mathsf E\Big[e^{\theta U_{sN^2}}\Big]^{-1}.
        \end{equation}
        The denominator satisfies
        \begin{equation}\label{eq:compare rw bm local time2}
            \mathsf E[e^{\theta U_{sN^2}}] = (\cosh \theta)^{sN^2} = \left(1 + \frac{1}{N^2 - 1}\right)^{sN^2/2} \xrightarrow[]{N\rightarrow \infty}e^{s/2}.
        \end{equation}
        Then by a more general version of invariance principle (see \cite{reversz81}),
        \begin{equation}\label{eq:compare rw bm local time4}
            \left(\theta U_{sN^2},\, \frac{1}{N}\sup\limits_{z\in \mathbb Z}L^z_{sN^2}(U)\right) \text{ converges in distribution to }\left(W_s,\, \sup\limits_{v\in \mathbb R}L(v, s)\right).
        \end{equation}
        Note that the function $e^{\theta U_{sN^2}}\mathbbm{1}_{\left\{\sup_{z\in\mathbb Z} L^z_{sN^2}(U) < uN\right\}}$ is uniformly integrable since $\mathsf{E}\left[e^{2\theta U_{sN^{2}}}\right]$ converges to $e^{2s}$ as $N$ tends to infinity. 
        Combining \eqref{eq:compare rw bm local time2} and \eqref{eq:compare rw bm local time4}, we have 
        \begin{equation}\label{eq:limit result}
            \lim_{N\rightarrow \infty}
            \mathsf E\Big[e^{\theta U_{sN^2}}\mathbbm{1}_{\left\{\sup\limits_{z\in\mathbb Z} L^z_{sN^2}(U) < uN\right\}}\Big]\cdot {\mathsf E[e^{\theta U_{sN^2}}]}^{-1}
            {=} e^{-s/2}\cdot
            \mathbb E_{W}\Big[e^{W_s}\mathbbm{1}_{\left\{\sup\limits_{v\in\mathbb R}L(v, s) < u\right\}}\Big].
        \end{equation} 
        By \eqref{eq:compare rw bm local time1}, \eqref{eq:limit result} and Girsanov's theorem,
        \begin{equation}\label{eq:compare rw bm local time6}
            \lim_{N\to\infty}\mathsf P^{N^{-1}}_0\Big[\sup_{z\in \mathbb Z}L_{sN^2}^z < uN\Big] =
            \mathbb E_{W} \Big[\mathbbm{1}_{\left\{\sup\limits_{v\in \mathbb R}L(v,s) < u\right\}}e^{W_s - \frac{1}{2}s} \Big] = 
            \mathbb W\left[\sup_{v\in \mathbb R}L^1(v,s) < u\right].
        \end{equation}
    Recalling the definition \eqref{eq:def of first achiving time} of $\zeta^{\mu}(u)$, \eqref{eq:weak convergence} then follows.

    For the second claim \eqref{eq:weak convergence truncated}, combining \eqref{eq:def of transformed iid} and \eqref{eq:compare rw bm local time2} yields
    \begin{equation}\label{eq:tranfer to non bias}
    \mathsf P_0^{N^{-1}}\left[\inf\limits_{z = \lfloor \ell N/L \rfloor, |\ell|\leq L^2} S(\widetilde uN, z)> sN^2 \right]=e^{-s/2}\mathsf E\Big[e^{\theta U_{sN^2}}\mathbbm{1}_{\big\{ \inf\limits_{z = \lfloor \ell N/L \rfloor, |\ell|\leq L^2}S( \widetilde u N, z)>sN^2\big\}}\Big].    
    \end{equation}
    The right hand side of the above equation can be bounded by \begin{equation}\label{eq:comparison between truncated and normal}
    \begin{split}
    e^{-s/2}\mathsf E\Big[e^{\theta U_{sN^2}}\mathbbm{1}_{\big\{\inf\limits_{\footnotesize\substack{z = \lfloor \ell N/L \rfloor,\\ |\ell|\leq L^2}}S( \widetilde u N, z)>\inf\limits_{z\in \mathbb Z}S(u,z)\big\}}\Big]+e^{-s/2}\mathsf E\Big[e^{\theta U_{sN^2}}\mathbbm{1}_{\big\{\inf\limits_{z\in \mathbb Z}S(uN,z)> sN^2\big\}}\Big]:=\text{\rom{1} + \rom{2}}.
    \end{split}   
    \end{equation}
For the first term, by (a slightly modified version of) (4.31) in \cite{Szn09a} (see also \Cref{lem:overline S confine region small bias}), we have
\begin{equation}\label{eq:1 without bias vanish}
\limsup_{L\to\infty}\limsup_{N\to\infty}\mathsf{P}\left[\inf\limits_{z = \lfloor \ell N/L \rfloor, |\ell|\leq L^2}S( \widetilde uN, z)>\inf\limits_{z\in \mathbb Z}S(uN,z)\right]=0;  \end{equation}
Then thanks to Cauchy-Schwartz inequality and the fact that $\mathsf{E}\left[e^{2\theta U_{sN^{2}}}\right]$ is uniformly bounded, 
\begin{equation}\label{eq:1 vanish}
\limsup_{L\to\infty}\limsup_{N\to\infty}\text{\rom{1}}=0.    
\end{equation}
As for the second term, it follows from \eqref{eq:weak convergence} and \eqref{eq:def of transformed iid}-\eqref{eq:compare rw bm local time2} that
\begin{equation}\label{eq:compare rw bm local time7}
    \lim_{N\rightarrow \infty} \text{\rom{2}}
    =e^{-s/2}\cdot\mathsf E\Big[e^{\theta U_{sN^2}}\Big]\cdot\mathsf P^{N^{-1}}_0\left[\sup_{z\in \mathbb Z}L_{sN^2}^z < uN\right]=\mathbb W[\zeta^1(u)>s]. 
\end{equation}
The second equation \eqref{eq:weak convergence truncated} then follows from plugging \eqref{eq:comparison between truncated and normal},\eqref{eq:1 vanish} and \eqref{eq:compare rw bm local time7} into \eqref{eq:tranfer to non bias}.
\end{proof}

At the end of this section, let us also mention the distribution for $\zeta^{\mu}(u)$ when $\mu = 0$. 
By \cite{B84} and \cite[Proposition~5]{E90}, the Laplace transform of $\zeta(u)$ defined in \eqref{eq:def of zeta} can be written as 
\begin{equation}\label{eq:laplace of zeta}
    \mathbb E_W\left[\exp\left({-\frac{\theta^2 }{2}\zeta(u)}\right)\right] = \frac{\theta u}{[\sinh (\theta u / 2)]^2}\cdot\frac{I_1(\theta u / 2)}{I_0(\theta u / 2)},\quad\mbox{ for }\theta, u > 0,
\end{equation}
where the function $I_{\nu}$ is the modified Bessel function of index $\nu$; cf.~\cite[page 60]{Olver74}.

\subsection{Hitting distribution estimates}\label{subsec:hitting distribution}
In this subsection, we provide some estimates on hitting probabilities of simple and biased random walks. These estimates will play important roles in \Cref{sec:bad(beta-gamma),sec:lower bound in biased case,sec:couplings}. 
\begin{lemma}\label{prop:hitting distribution two boxes simple}
For any $\eta\in (0,1)$, if $L\geq 1$ and $K\geq c_{1}(\eta)\geq 2$, then for any non-empty $A\subseteq B(0,L)$, finite $U\supseteq B(0,KL)$, $x\in\partial U$ and $y\in \partial^{\text{int}} A$, if 
\begin{equation}\label{eq:positive hitting probability simple}
P_{x}\left[H_{A}<\widetilde{H}_{\partial U}\right]>0,
\end{equation}
then we have
\begin{equation}\label{eq:hitting distribution two boxes simple}
\left(1-\frac{\eta}{10}\right)\overline{e}_{A}(y)\leq P_{x}\left[X_{H_{A}}=y\mid H_{A}<\widetilde{H}_{\partial U}\right]\leq \left(1+\frac{\eta}{10}\right)\overline{e}_{A}(y). \end{equation}
\end{lemma}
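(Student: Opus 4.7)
The plan is a two-step reduction followed by invocation of the classical hitting-from-infinity asymptotic for simple random walk on $\mathbb{Z}^{d+1}$ ($d+1 \geq 3$).

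First, I set $V := B(0, KL/2)$, so that $A \subseteq B(0, L) \subseteq V \subseteq B(0, KL) \subseteq U$. Since any trajectory from $x \in \partial U$ reaching $A$ before returning to $\partial U$ must first enter $V$, applying the strong Markov property at $H_V$ gives
\begin{equation*}
P_x[X_{H_A} = y, H_A < \widetilde H_{\partial U}] = \sum_{z \in \partial^{\text{int}} V} P_x[X_{H_V} = z,\, H_V < \widetilde H_{\partial U}]\cdot P_z[X_{H_A} = y, H_A < T_U]
\end{equation*}
for each $y \in A$, and analogously without the event $\{X_{H_A} = y\}$. Taking the ratio exhibits $P_x[X_{H_A} = y \mid H_A < \widetilde H_{\partial U}]$ as a convex combination of $\{P_z[X_{H_A} = y \mid H_A < T_U] : z \in \partial^{\text{int}} V\}$, so it suffices to prove \eqref{eq:hitting distribution two boxes simple} uniformly for every $z \in \partial^{\text{int}} V$ and $y \in \partial^{\text{int}} A$.

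For such $z$ and $y$, I would replace the conditioning $\{H_A < T_U\}$ by $\{H_A < \infty\}$ by writing
\begin{equation*}
P_z[X_{H_A} = y, H_A < T_U] = P_z[X_{H_A} = y, H_A < \infty] - P_z[X_{H_A} = y, T_U < H_A < \infty].
\end{equation*}
The first term equals $\overline e_A(y) P_z[H_A < \infty](1 + O(1/K))$ by the classical ``hitting from infinity'' asymptotic, since $|z| \geq KL/2$ and $A \subseteq B(0, L)$. The second term, by strong Markov at $T_U$, equals $E_z[\mathbbm{1}\{T_U < H_A\} P_{X_{T_U}}[X_{H_A} = y, H_A < \infty]]$; applying the same asymptotic at the exit point $X_{T_U} \in \partial U$ (where $|X_{T_U}| \geq KL$, so the error is again uniformly $O(1/K)$) yields $\overline e_A(y) P_z[T_U < H_A < \infty](1 + O(1/K))$. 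Subtracting gives $P_z[X_{H_A} = y, H_A < T_U] = \overline e_A(y) P_z[H_A < T_U](1 + O(1/K))$, which on dividing by $P_z[H_A < T_U]$ and choosing $c_1(\eta) = C/\eta$ for a sufficiently large constant $C$ depending only on $d$ yields the two-sided bound.

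The classical quantitative hitting-from-infinity result, $P_z[X_{H_A} = y \mid H_A < \infty] = \overline e_A(y)(1 + O(L/|z|))$ uniformly in $y \in \partial^{\text{int}} A$, itself follows from combining the Green's function asymptotics \eqref{eq:decay of green function} with the reversibility identity $P_z[X_{H_A} = y, H_A < \infty] = \sum_{w \sim y,\, w \in A^c}\frac{1}{2(d+1)} g_{A^c}(w, z)$ and the last-exit decomposition $P_z[H_A < \infty] = \sum_{y'} g(z, y') e_A(y') = g(0, z)\,\text{cap}(A)(1 + O(L/|z|))$. The main technical obstacle is to maintain the multiplicative (rather than purely additive) form of the error uniformly in $y \in \partial^{\text{int}} A$: a direct Green's function expansion of $g_{A^c}(w, z) = g(w, z) - E_w[g(X_{H_A}, z); H_A < \infty]$ produces only an additive correction of order $g(0, z)/K$, which would translate to a multiplicative error $O(1/(K\overline e_A(y)))$ that could blow up when $\overline e_A(y)$ is small. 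To preserve the multiplicative form, one must invoke a boundary-Harnack-type estimate yielding $g_{A^c}(w, z)/[g(0, z) P_w[H_A = \infty]] = 1 + O(L/|z|)$ uniformly for $w \in A^c$ adjacent to $A$, so that summing over neighbors $w$ of $y$ produces a factorization through $e_A(y) = \sum_{w \sim y,\, w \in A^c} \frac{1}{2(d+1)} P_w[H_A = \infty]$.
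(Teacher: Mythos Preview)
The paper's proof is a one-line citation to \cite[Lemma 2.3]{Szn17}. Your proposal instead gives a self-contained argument that is essentially a reconstruction of the proof behind that cited lemma: the convex-combination reduction via an intermediate box, the subtraction to pass from $\{H_A<T_U\}$ to $\{H_A<\infty\}$, and the quantitative hitting-from-infinity asymptotic with multiplicative error.

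Your outline is correct, and your closing paragraph correctly singles out the genuinely delicate point (the multiplicative form of the error in the hitting-from-infinity estimate, which does require a boundary-Harnack-type input). One step you should make explicit is in the subtraction: writing the two terms as $\overline e_A(y)P_z[H_A<\infty](1+\epsilon_1)$ and $\overline e_A(y)P_z[T_U<H_A<\infty](1+\epsilon_2)$ with $|\epsilon_i|\le C/K$, their difference is
\[
\overline e_A(y)\Bigl(P_z[H_A<T_U] + O(1/K)\,P_z[H_A<\infty]\Bigr),
\]
so to conclude $P_z[X_{H_A}=y\mid H_A<T_U]=\overline e_A(y)(1+O(1/K))$ you need the ratio $P_z[H_A<\infty]/P_z[H_A<T_U]$ bounded by a dimensional constant. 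With your choice $V=B(0,KL/2)$ this holds: for $z\in\partial^{\mathrm{int}}V$ and $w\in\partial U$ one has $|w|\ge 2|z|(1-o(1))$, so the Green's function asymptotics give $P_z[T_U<H_A<\infty]\le \sup_{w\in\partial U}P_w[H_A<\infty]\le (2^{-(d-1)}+o(1))P_z[H_A<\infty]$, whence $P_z[H_A<T_U]\ge \tfrac13 P_z[H_A<\infty]$ once $K$ is large. This is the only missing line; otherwise your argument is complete and matches the standard route that the paper is implicitly invoking.
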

\begin{proof}
This is a direct corollary of \cite[Lemma 2.3]{Szn17}.
\end{proof}

By considering the last visit point (to $D$), we obtain the following corollary.
\begin{corollary}\label{lem:hitting distribution three boxes simple}
For any $\eta\in(0,1)$, if $L\geq 1$ and $K\geq c_{1}(\eta)\geq 2$, then for any non-empty $A\subseteq B(0,L)$, $B(0,KL)\subseteq D\subseteq B(0,2KL)$ and finite $U\supseteq B(0,10KL)$, $x\in \partial D$ and $y\in \partial^{\text{int}} A$, we have
\begin{equation}\label{eq:hitting distribution three boxes simple}
\left(1-\frac{\eta}{10}\right)\overline{e}_{A}(y)\leq P_{x}\left[X_{H_{A}}=y\mid H_{A}<H_{\partial U}\right]\leq \left(1+\frac{\eta}{10}\right)\overline{e}_{A}(y). \end{equation}
\end{corollary}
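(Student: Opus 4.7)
The plan is to run a last-exit decomposition at the last visit to $\partial D$ strictly before $H_A$, and then invoke \Cref{prop:hitting distribution two boxes simple} with the enclosing set $U$ replaced by $D$ itself. Set $\tau := \sup\{k < H_A : X_k \in \partial D\}$, which is well-defined on $\{H_A<\infty\}$ since $X_0 = x \in \partial D$. The key geometric observation is that the segment $(X_{\tau+1}, \ldots, X_{H_A})$ is forced to lie entirely in $D$: otherwise the walk would have to cross $\partial D$ again in order to reach $A \subseteq B(0,L) \subseteq D$, contradicting the maximality of $\tau$. Since $D \subseteq B(0,2KL)$ while $\partial U$ lies beyond $B(0,10KL)$, the constraint $H_A < H_{\partial U}$ is automatically satisfied on this final leg.

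Decomposing over the value of $k$ and of $z := X_\tau \in \partial D$, and applying the strong Markov property at the deterministic time $k$, I would obtain
\begin{equation*}
P_x[X_{H_A}=y,\,H_A<H_{\partial U}] \;=\; \sum_{z\in \partial D} g_{*}(x,z)\cdot P_z[X_{H_A}=y,\,H_A<\widetilde H_{\partial D}],
\end{equation*}
where $g_{*}(x,z) := \sum_{k\geq 0}P_x[X_k = z,\,k < H_A \wedge H_{\partial U}]$ records the visits to $z$ before either hitting $A$ or leaving $U$. The inner factor $P_z[X_{H_A}=y,\,H_A<\widetilde H_{\partial D}]$ is exactly what \Cref{prop:hitting distribution two boxes simple} controls when applied with $U$ replaced by $D$ (legitimate since $D$ is finite, $D \supseteq B(0,KL)$, and $K \geq c_1(\eta)$); for every $z$ with $P_z[H_A<\widetilde H_{\partial D}]>0$ this yields
\begin{equation*}
\left(1-\tfrac{\eta}{10}\right)\overline{e}_A(y) \;\leq\; \frac{P_z[X_{H_A}=y,\,H_A<\widetilde H_{\partial D}]}{P_z[H_A<\widetilde H_{\partial D}]} \;\leq\; \left(1+\tfrac{\eta}{10}\right)\overline{e}_A(y),
\end{equation*}
while the $z$'s with $P_z[H_A<\widetilde H_{\partial D}]=0$ contribute nothing to the sum.

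Plugging these two-sided bounds into the decomposition pulls a uniform factor of $\overline{e}_A(y)$ out of the sum and leaves the unnormalized quantity $\sum_z g_{*}(x,z) P_z[H_A<\widetilde H_{\partial D}]$. Summing the displayed identity over $y\in\partial^{\mathrm{int}}A$ identifies this unnormalized quantity as $P_x[H_A<H_{\partial U}]$, so dividing through produces \eqref{eq:hitting distribution three boxes simple} on the nose. The only mildly delicate verification I anticipate is the geometric simplification of the inner factor to the form appearing above---namely, that $z \in \partial D$, $X_{[1,H_A-1]} \cap \partial D = \emptyset$, and $X_{H_A}=y\in A$ together force $X_{[1,H_A]}\subseteq D$---but this is settled by ruling out the two impossible cases for $X_{\tau+1}$ (landing on $\partial D$ or further out in $D^c$). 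No tool beyond the strong Markov property and the preceding lemma is required.
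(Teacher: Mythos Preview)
Your proposal is correct and matches the paper's approach exactly: the paper's entire proof is the one-line remark ``by considering the last visit point (to $D$)'', which is precisely the last-exit decomposition at $\partial D$ that you carry out, followed by an application of \Cref{prop:hitting distribution two boxes simple} with $D$ playing the role of the outer set. Your careful handling of the geometric point (that the post-$\tau$ segment is confined to $D$ and hence cannot reach $\partial U$) and of the normalization step is exactly what the paper leaves implicit.
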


We then deduce the biased version of \Cref{lem:hitting distribution three boxes simple} using estimates of Radon-Nikodym derivatives.

\begin{lemma}\label{lem:hitting distribution three boxes bias}
Consider $A=B(0,L)$, $B(0,KL)\subseteq D\subseteq B(0,2KL)$ and $B(0,10KL)\subseteq U\subseteq B(0,20KL)$. For any $\eta\in (0,1)$, if $L\land K\geq c_{2}(\eta)>100$ and $\Delta^{-1}\geq KL(K+L)$, then for every $x\in\partial D$ and $y\in\partial^{\text{int}} A$, we have 
\begin{equation}\label{eq:hitting distribution three boxes bias}
\left(1-\frac{\eta}{10}\right)\overline{e}_{A}(y)\leq P_{x}^{\Delta}\left[X_{H_{A}}=y\mid H_{A}<H_{\partial U}\right]\leq \left(1+\frac{\eta}{10}\right)\overline{e}_{A}(y).    
\end{equation}
\end{lemma}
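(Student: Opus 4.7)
The plan is to transfer the simple-random-walk estimate \Cref{lem:hitting distribution three boxes simple} to the biased setting via the Radon--Nikodym derivative bound \eqref{eq:RN bias to unbias bound}. For any nearest-neighbor trajectory $\gamma$ from $x\in\partial D$ to $y\in\partial^{\text{int}}A$ staying in $U$, the absolute height change satisfies $h(\gamma) = |y_{d+1}-x_{d+1}| \leq 2KL+L \leq 3KL$, so the hypothesis $\Delta \leq (KL(K+L))^{-1}$ yields $\Delta\,h(\gamma) \leq 3/(K+L)$. Consequently the height factor $\left((1+\Delta)/(1-\Delta)\right)^{h(\gamma)/2}$ appearing on both sides of \eqref{eq:RN bias to unbias bound} lies in $1\pm O(1/(K+L))$, which is within $1\pm\eta/50$ once $K,L\geq c_2(\eta)$ is sufficiently large. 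The only term of \eqref{eq:RN bias to unbias bound} that is not immediately controlled is the length-dependent factor $(1-\Delta^2)^{\ell(\gamma)/2}$ in the lower bound, since $\ell(\gamma)$ is a priori unbounded.

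\textbf{Length truncation.} Fix $M = C(\eta)(KL)^2\log K$ with $C(\eta)$ large. On trajectories of length at most $M$ one has $\ell(\gamma)\Delta^2 \leq C(\eta)\log K/(K+L)^2 \leq \eta/50$ (for $c_2(\eta)$ chosen large enough), so the length factor lies in $[1-\eta/50,1]$; combined with the height factor, $p^{\text{bias}}(\gamma)/p(\gamma)\in [1-\eta/20,\,1+\eta/20]$ uniformly in $\gamma$ and $y$. For the truncation error, Cauchy--Schwarz and a per-step second-moment computation give
\[
P_x^{\Delta}[T_U > M] \;\leq\; \left(E_x\!\left[\left(\tfrac{dP_x^{\Delta}}{dP_x}\Big|_{\mathcal F_M}\right)^{\!2}\right]\right)^{\!1/2}\bigl(P_x[T_U > M]\bigr)^{1/2} \;=\; \bigl(1+\tfrac{\Delta^2}{d+1}\bigr)^{M/2}\bigl(P_x[T_U > M]\bigr)^{1/2},
\]
where the first factor is $O(1)$ under our constraints and the second decays like $e^{-cM/(KL)^2} = K^{-cC(\eta)}$ by the standard box-exit bound for the simple random walk. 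Meanwhile \eqref{eq:capacity of a box} together with \eqref{eq:decay of green function} and a first-entry decomposition gives the lower bound $P_x[H_A<H_{\partial U}] \geq c\,K^{1-d}$, so for $C(\eta)$ large enough the truncation leak is $\leq (\eta/100)\cdot P_x[H_A<H_{\partial U}]$.

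\textbf{Assembly and main obstacle.} Decompose $P_x^{\Delta}[X_{H_A}=y,\, H_A<H_{\partial U}]$ into length-$\leq M$ and length-$>M$ parts; by the uniform Radon--Nikodym control the former is $(1\pm\eta/20)$ times its simple-random-walk analogue, while the latter is dominated by the truncation error above. The same decomposition for the denominator $P_x^{\Delta}[H_A<H_{\partial U}]$, followed by taking the ratio, yields $P_x^{\Delta}[X_{H_A}=y\mid H_A<H_{\partial U}] = (1\pm\eta/15)\,P_x[X_{H_A}=y\mid H_A<H_{\partial U}]$. A final application of \Cref{lem:hitting distribution three boxes simple} with tolerance $\eta/4$ replaces the simple-random-walk conditional law by $\overline{e}_{A}(y)$, compounding all multiplicative errors into $1\pm\eta/10$. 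The main technical obstacle is the delicate balancing in the choice of $M$: it must be large enough for the simple-random-walk exit tail $e^{-cM/(KL)^2}$ to beat the inverse capacity factor $K^{d-1}$, yet small enough that $M\Delta^2 = O(\log K/(K+L)^2)$ remains $\ll \eta$. This balance is precisely why the hypothesis takes the stronger form $\Delta^{-1}\geq KL(K+L)$ rather than merely $\Delta^{-1}\geq KL$ (the extra factor $K+L$ supplies the needed headroom), and why $c_2(\eta)$ must be allowed to diverge as $\eta\to 0$.
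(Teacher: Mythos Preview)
Your strategy---Radon--Nikodym transfer, length truncation, exponential exit-time tail, comparison with \Cref{lem:hitting distribution three boxes simple}---is exactly the paper's, but your choice of truncation threshold $M=C(\eta)(KL)^2\log K$ contains a genuine gap. The resulting tail bound $P_x[T_U>M]\leq K^{-cC(\eta)}$ decays only in $K$, not in $L$. This suffices to make the truncation leak small compared to the \emph{denominator} $P_x[H_A<H_{\partial U}]\geq cK^{1-d}$, which is all you verify; but it is \emph{not} small compared to the pointwise \emph{numerator} $P_x[X_{H_A}=y,\,H_A<H_{\partial U}]\geq c\,\overline e_A(y)\,K^{1-d}\geq c/(K^{d-1}L^d)$. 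In the regime $K=c_2(\eta)$ fixed and $L\to\infty$, your leak $K^{-cC(\eta)}$ is a constant while the numerator is $\asymp L^{-d}$, so the step ``taking the ratio yields $(1\pm\eta/15)$ times the SRW conditional'' fails for the lower bound: the long-path contribution to the numerator can swamp the short-path part.

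The fix is immediate: replace $\log K$ by $\log(KL)$ in the definition of $M$. Then the tail becomes $(KL)^{-cC(\eta)}$, which beats $(K^{d-1}L^d)^{-1}$ once $C(\eta)$ is large, while $M\Delta^2\leq C(\eta)\log(KL)/(K+L)^2$ is still $o(1)$ for $K,L\geq c_2(\eta)$. The paper instead takes the threshold $KL/\Delta\;(\geq (KL)^2(K+L))$, which yields the stronger tail $e^{-c(K+L)}$ and makes the comparison with $(KL)^{-d}$ trivially hold; it also makes the short-path length factor $(1-\Delta^2)^{\ell/2}\geq 1-CKL\Delta$ without any logarithmic loss. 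As a minor remark, your Cauchy--Schwarz bound for $P_x^\Delta[T_U>M]$ is correct but unnecessary: on $\{T_U>M\}$ the walk stays in $U$, so the height change is at most $40KL$ and the Radon--Nikodym upper bound already gives $P_x^\Delta[T_U>M]\leq(1+\eta/50)P_x[T_U>M]$ directly.
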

\begin{proof}
Recall \Cref{subsec:RN} for the notation $\ell(e),h(e),\text{up}(e),\text{down}(e),p(e), p^{\text{bias}}(e)$. 
Fix $x\in \partial D$. We define the set of excursions from $x$ to $\partial^{\text{int}} A$ that does not touch $\partial U$ as 
\begin{equation}\label{eq:def of Se three boxes}
\begin{split}
\Sigma_{\text{excur}}:&=\{e= (x_{0},x_{1},\cdots,x_{n}):\mbox{ for each }0\leq i\leq n-1,|x_{i}-x_{i+1}|_{\infty}=1, \\
&\quad \quad x_{0}=x,x_{n}\in\partial^{\text{int}} A, \mbox{ and for }1\leq i\leq n-1,  x_{i}\in U\setminus A\}. \end{split}
\end{equation}
For every $y\in \partial^{\text{int}} A$, we further define $\Sigma_{\text{excur}}(y)$ as the set of excursions in $\Sigma_{\text{excur}}$ that ends at $y$. Since the excursions in $\Sigma_{\text{excur}}$ are fully contained in $U \subset B(0, 20KL)$,
\begin{equation}\label{eq:bound on h and ell three boxes}
\quad h(e)\leq CKL,\quad\text{ for }e\in\Sigma_{\text{excur}}.
\end{equation}  

According to the length of excursions, we further divide $\Sigma_{\text{excur}}$ into
\begin{equation}\label{eq:def of Sshort and Slong three boxes}
\Sigma_{\rm short}:=\left\{e\in \Sigma_{\text{excur}}:\ell(e)\leq \frac{KL}{\Delta}\right\},\quad\text{and}\quad
\Sigma_{\rm long}:=\left\{e\in \Sigma_{\text{excur}}:\ell(e)>\frac{KL}{\Delta}\right\},  
\end{equation}
and also define $\Sigma_{\text{short}}(y)$ and $\Sigma_{\text{long}}(y)$ as the intersection of $\Sigma_{\text{excur}}(y)$ with $\Sigma_{\text{short}}$ and $\Sigma_{\text{long}}$ respectively.

We now control the Radon-Nikodym derivative of some excursion under $P_{x}^{\Delta}$ with respect to $P_{x}$ using properties stated in \Cref{subsec:RN}. Note that since $L\land K\geq c_{2}(\eta)$ and $\Delta^{-1}\geq KL(K+L)$, we have $(\Delta KL)^{-1}\geq K\geq c_{2}(\eta)$. 
Combining \eqref{eq:RN bias to unbias bound} and \eqref{eq:bound on h and ell three boxes} then yields
\begin{equation}\label{eq:RN bias to unbias upper three boxes}
\frac{p^{\rm bias}(e)}{p(e)}\leq \left(\frac{1+\Delta}{1-\Delta}\right)^{CKL} 
\leq 1+CKL\Delta\leq 1+\frac{C}{c_{2}(\eta)},\quad \mbox{ for all }e\in \Sigma_{\text{excur}}.
\end{equation}
Similarly, combining \eqref{eq:RN bias to unbias bound}, \eqref{eq:bound on h and ell three boxes} and \eqref{eq:def of Sshort and Slong three boxes} yields that for all $e\in\Sigma_{\text{short}}$,
\begin{equation}\label{eq:RN bias to unbias lower Sshort three boxes}
\frac{p(e)}{p^{\rm bias}(e)}\leq \left(1-\Delta^{2}\right)^{-\frac{CKL}{\Delta}}\left(1+\frac{C}{c_{2}(\eta)}\right)\leq 1+CKL\Delta+\frac{C}{c_{2}(\eta)}\leq 1+\frac{C}{c_{2}(\eta)}. 
\end{equation}

The next ingredient is to give an upper bound of $P_{x}[\Sigma_{\rm long}]$. Note that under $P_{x}$, in each direction, the simple random walk makes a  $+1,0,-1$ move with probability $\frac{1}{2d+2},\frac{d}{d+1}.\frac{1}{2d+2}$ respectively. It then follows from \cite[Lemma 1.1]{Szn99} that
\begin{equation}\label{eq:uniform second moment three boxes}
\sup_{x\in \partial D}E_{x}[T_{U}]\leq C(KL)^{2}.  
\end{equation}
Consequently, by Kha\'{s}minskii's lemma (see \cite{Khas59}), we have
\begin{equation}\label{eq:khasminskii1}
\sup_{x\in\partial D}E_{x}\left[\exp\left(\frac{cT_{U}}{(KL)^{2}}\right)\right]\leq C.   
\end{equation}
It then follows from exponential Chebyshev's inequality  and the fact that $\Delta^{-1}\geq KL(K+L)$ that
\begin{equation}\label{eq:Slong measure upper bound three boxes}
\sum_{e\in \Sigma_{\text{long}}}p(e)=P_{x}\left[\frac{KL}{\Delta}<H_{A}<T_{U}\right]\leq P_{x}\left[\frac{KL}{\Delta}<T_{U}\right]\leq\frac{E_{x}\left[\exp\left(\frac{cT_{U}}{(KL)^{2}}\right)\right]}{\exp\left(c(\Delta KL)^{-1}\right)}\leq Ce^{-c(K+L)}.  
\end{equation}
For the biased case, combining also the estimate of Radon-Nikodym derivative in \eqref{eq:RN bias to unbias upper three boxes}, 
\begin{equation}\label{eq:Slong measure upper bound bias three boxes}
\sum_{e\in \Sigma_{\text{long}}}p^{\text{bias}}(e)\leq \left(1+\frac{C}{c_{2}(\eta)}\right)\cdot e^{-c(K+L)}\leq Ce^{-c(K+L)}.  
\end{equation}

We then move on towards bounding $P_{x}[X_{H_{A}}=y,H_{A}<H_{\partial U}]$ from below. Recall that $A=B(0,L)$, $B(0,KL)\subseteq D\subseteq B(0,2KL)$ and $B(0,10KL)\subseteq U\subseteq B(0,20KL)$. Therefore, when $K\geq c_{2}(\eta)$ is large, we have
\begin{equation}\label{eq:hitting probability three boxes}
P_{x}[H_{A}<H_{\partial U}]\geq \frac{c}{K^{d+1-2}}=\frac{c}{K^{d-1}}.   
\end{equation}
Then by \Cref{lem:hitting distribution three boxes simple} and \eqref{eq:equilibrium measure of a box}, when $K\geq c_{2}(\eta)\geq c_{1}(1/2)$, for any $y\in\partial^{\text{int}} A$, we have
\begin{equation}\label{eq:hitting probability lower bound three boxes}
\sum_{e\in\Sigma_{\text{excur}}(y)}p(e)=P_{x}[X_{H_{A}}=y,H_{A}<H_{\partial U}]\geq \frac{c\overline{e}_{A}(y)}{K^{d-1}}\geq \frac{c}{(KL)^{d}}.   
\end{equation}

We finally prove \eqref{eq:hitting distribution three boxes bias} by comparing it with \eqref{eq:hitting distribution three boxes simple}. Indeed, when $K\geq c_{2}(\eta)\geq c_{1}(\eta/2)$, 
\begin{equation}\label{eq:hitting distribution three boxes simple stronger}
\left(1-\frac{\eta}{20}\right)\overline{e}_{A}(y)\leq P_{x}\left[X_{H_{A}}=y\mid H_{A}<H_{\partial U}\right]\leq \left(1+\frac{\eta}{20}\right)\overline{e}_{A}(y).    
\end{equation} 
\
It then follows that
\begin{equation}\label{eq:hitting distribution step1}
\begin{split}
&\frac{P_{x}^{\Delta}[X_{H_{A}}=y\mid H_{A}<H_{\partial U}]}{P_{x}[X_{H_{A}}=y\mid H_{A}<H_{\partial U}]}
\geq \frac{\sum_{e\in \Sigma_{\text{short}}(y)}p^{\text{bias}}(e)}{\sum_{e\in \Sigma_{\text{excur}}(y)}p(e)}\cdot\frac{\sum_{e\in \Sigma_{\text{excur}}}p(e)}{\sum_{e\in \Sigma_{\text{excur}}}p^{\text{bias}}(e)}\\
=&\frac{\sum_{e\in \Sigma_{\text{short}}(y)}p^{\text{bias}}(e)}{\sum_{e\in \Sigma_{\text{short}}(y)}p(e)}\cdot\frac{\sum_{e\in \Sigma_{\text{short}}(y)}p(e)}{\sum_{e\in \Sigma_{\text{excur}}(y)}p(e)}\cdot\frac{\sum_{e\in \Sigma_{\text{excur}}}p(e)}{\sum_{e\in \Sigma_{\text{excur}}}p^{\text{bias}}(e)} = {\rm \rom{1}} \cdot {\rm \rom{2} }\cdot {\rm \rom{3}}.
\end{split}
\end{equation}
By \eqref{eq:RN bias to unbias lower Sshort three boxes} and \eqref{eq:RN bias to unbias upper three boxes} respectively, \rom{1} $\geq 1-C/c_{2}(\eta)$ and \rom{3} $\geq 1-C/c_{2}(\eta)$. Using \eqref{eq:Slong measure upper bound three boxes} and \eqref{eq:hitting probability lower bound three boxes}, with $L\land K\geq c_{2}(\eta)$,
\begin{equation}\label{eq:proportion of Sshort}
{\rm \rom{2}} \geq1-\frac{\sum_{e\in\Sigma_{\text{long}}(y)}p(e)}{\sum_{e\in\Sigma_{\text{excur}}(y)}p(e)}\geq 1-\frac{c(KL)^{d}}{\text{exp}(c(K+L))}\geq 1-\frac{C}{c_{2}(\eta)}. 
\end{equation}
The lower bound in \eqref{eq:hitting distribution three boxes bias} follows from combining \eqref{eq:hitting distribution three boxes simple stronger}, \eqref{eq:hitting distribution step1} and lower bounds for \rom{1}, \rom{2} and \rom{3} above. 
Similarly, by using \eqref{eq:RN bias to unbias upper three boxes}, \eqref{eq:RN bias to unbias lower Sshort three boxes} and \eqref{eq:proportion of Sshort}, the upper bound in \eqref{eq:hitting distribution three boxes bias} follows from the upper bound in \eqref{eq:hitting distribution three boxes simple stronger}.
\end{proof}

We finally come to the biased version of \Cref{prop:hitting distribution two boxes simple}. 
By considering the first visit point (to a medium-size box $D$), it is essentially the corollary of \Cref{lem:hitting distribution three boxes bias}. We omit the proof. 
\begin{proposition}\label{prop:hitting distribution two boxes bias}
Suppose the drift of biased random walk on the $(d+1)$-th direction is $\Delta\in[0,1]$. Consider $A=B(0,L)$ and $B(0,KL)\subseteq U\subseteq B(0,2KL)$. For any $\eta\in (0,1)$, if $L\land K\geq c_{2}(\eta)\geq 100$ and $\Delta^{-1}\geq KL(K+L)$, then for every $x\in\partial U$ such that 
\begin{equation}\label{eq:positive hitting probability bias}
P_{x}^{\Delta}\left[H_{A}<\widetilde{H}_{\partial U}\right]>0,
\end{equation}
we have
\begin{equation}\label{eq:hitting distribution two boxes bias}
\left(1-\frac{\eta}{10}\right)\overline{e}_{A}(y)\leq P_{x}^{\Delta}\left[X_{H_{A}}=y\mid H_{A}<\widetilde{H}_{\partial U}\right]\leq \left(1+\frac{\eta}{10}\right)\overline{e}_{A}(y). \end{equation}    
\end{proposition}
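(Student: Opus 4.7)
First I would set up a concentric medium-sized box $D$ strictly between $A$ and $U$, so as to bring the three-box biased estimate \Cref{lem:hitting distribution three boxes bias} to bear. Specifically, take $D = B(0, K_0 L)$ with $K_0 = K/10$ (rounding the value when $K$ is not divisible by $10$, at the cost of enlarging $c_2(\eta)$ by a harmless $O(1)$ factor). The hypothesis $B(0, KL) \subseteq U \subseteq B(0, 2 KL)$ of \Cref{prop:hitting distribution two boxes bias} then matches exactly the three-box nesting $B(0, K_0 L) \subseteq D \subseteq B(0, 2 K_0 L)$ and $B(0, 10 K_0 L) \subseteq U \subseteq B(0, 20 K_0 L)$ required by \Cref{lem:hitting distribution three boxes bias}, while the drift bound $\Delta^{-1} \geq KL(K+L)$ dominates $\Delta^{-1} \geq K_0 L (K_0 + L)$. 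Choosing the constant $c_2(\eta)$ in \Cref{prop:hitting distribution two boxes bias} to be (at least) $10 \cdot c_2^{\text{Lem 2.6}}(\eta/3) \lor 100$, all hypotheses of \Cref{lem:hitting distribution three boxes bias} are met with the sharper parameter $\eta' = \eta/3$, yielding for every $z \in \partial D$ and $y \in \partial^{\text{int}} A$
\[
\bigl(1 - \tfrac{\eta}{30}\bigr) \overline e_A(y) \;\leq\; P_z^{\Delta}\bigl[X_{H_A} = y \bigm| H_A < H_{\partial U}\bigr] \;\leq\; \bigl(1 + \tfrac{\eta}{30}\bigr) \overline e_A(y).
\]

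Next I would decompose the walk starting from $x \in \partial U$ by its first entrance into $\partial D$. Since $A \subset D$ and $x \in \partial U$ lies outside $D$, every trajectory realising $\{H_A < \widetilde H_{\partial U}\}$ must visit $\partial D$ first, so on this event $H_{\partial D} < H_A < \widetilde H_{\partial U}$. Applying the strong Markov property at $H_{\partial D}$ — and using that for $z \in \partial D \subset U$ the first return to $\partial U$ of the shifted walk coincides with its first hit $H_{\partial U}$ — gives
\[
P_x^{\Delta}\bigl[X_{H_A} = y,\, H_A < \widetilde H_{\partial U}\bigr] = \sum_{z \in \partial D} P_x^{\Delta}\bigl[X_{H_{\partial D}} = z,\, H_{\partial D} < \widetilde H_{\partial U}\bigr] \cdot P_z^{\Delta}\bigl[X_{H_A} = y,\, H_A < H_{\partial U}\bigr].
\]
Plugging the previous display into each summand factors $\overline e_A(y)$ out of the sum, so that with the normalising quantity $\mathcal Z := \sum_{z \in \partial D} P_x^{\Delta}[X_{H_{\partial D}} = z,\, H_{\partial D} < \widetilde H_{\partial U}] \cdot P_z^{\Delta}[H_A < H_{\partial U}]$ one obtains
\[
\bigl(1 - \tfrac{\eta}{30}\bigr) \overline e_A(y)\, \mathcal Z \;\leq\; P_x^{\Delta}\bigl[X_{H_A} = y,\, H_A < \widetilde H_{\partial U}\bigr] \;\leq\; \bigl(1 + \tfrac{\eta}{30}\bigr) \overline e_A(y)\, \mathcal Z.
\]

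To conclude, I would sum the above over $y \in \partial^{\text{int}} A$ and use $\sum_y \overline e_A(y) = 1$ to obtain the matching sandwich $(1 - \eta/30)\mathcal Z \leq P_x^{\Delta}[H_A < \widetilde H_{\partial U}] \leq (1 + \eta/30)\mathcal Z$; the positivity assumption $P_x^{\Delta}[H_A < \widetilde H_{\partial U}] > 0$ then forces $\mathcal Z > 0$, so dividing the two sandwiches is legitimate and produces
\[
P_x^{\Delta}\bigl[X_{H_A} = y \bigm| H_A < \widetilde H_{\partial U}\bigr] \;\in\; \Bigl[\tfrac{1 - \eta/30}{1 + \eta/30},\; \tfrac{1 + \eta/30}{1 - \eta/30}\Bigr]\, \overline e_A(y) \;\subseteq\; \bigl[1 - \tfrac{\eta}{10},\, 1 + \tfrac{\eta}{10}\bigr]\, \overline e_A(y)
\]
for all $\eta \in (0,1)$ (since $2/29 < 1/10$), which is exactly \eqref{eq:hitting distribution two boxes bias}. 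The only mild obstacle is the arithmetic of choosing $K_0$ when $K$ is not divisible by $10$: this is handled by taking $K_0 = \lfloor K/10 \rfloor$ and absorbing the $O(1)$ slack into the constant $c_2(\eta)$ — a purely bookkeeping matter that does not affect the heart of the argument, which is a single application of the strong Markov property combined with \Cref{lem:hitting distribution three boxes bias}.
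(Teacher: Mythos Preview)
Your proposal is correct and follows exactly the route the paper indicates: introduce an intermediate box $D$, apply the strong Markov property at the first visit to $\partial D$, and invoke \Cref{lem:hitting distribution three boxes bias} with a slightly sharper parameter. The bookkeeping with $K_0 = \lfloor K/10\rfloor$ and the final ratio estimate $(1+\eta/30)/(1-\eta/30) \leq 1+\eta/10$ are both fine.
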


\section{Analysis of record-breaking times of biased random walk}\label{sec:one-dimensional walks}

In this section, we prove the asymptotics of the local time profile of one-dimensional random walk which is of great importance to the analysis of distribution of disconnection time $T_N$. 
Precisely, we are going to provide the asymptotics for $\underline{S}_{N}$ and $\overline{S}_{N}$ already mentioned in the sketch of proof (also see formal definition \eqref{eq:def of underlineSz}), for the case when bias $N^{-d\alpha}$ satisfies $\alpha>1$ (weak bias case),  $\alpha=1$, and $1/d<\alpha<1$ (strong bias case), which corresponds to \Cref{prop:asymptotics_alpha>1,prop:asymptotics_alpha=1,prop:asymptotics_alpha<1} respectively.

Let us first clarify the two random times $\overline S_N$ and $\underline S_N$ appearing in \Cref{subsec:sketch}. 
For the vertical component $(Z_n)_{n\geq 0}$ of random walk $(X_n)_{n\geq 0}$, we write $\rho_{k}$, $k\geq 0$ for the times of successive shifts of $(Z_n)_{n\geq 0}$, and use $(\widehat{Z}_n)_{n\geq 0}$ for its time-changed version, that is, 
\begin{equation}\label{eq:def of Zhat}
\begin{split}
\rho_{0}=0,\quad &\rho_{k}=\inf\left\{n>\rho_{k-1}:Z_{n}\neq Z_{\rho_{k-1}}\right\},\,k\geq 1,\\
&\widehat{Z}_{k}:=Z_{\rho_{k}},\,k\geq 0.
\end{split}
\end{equation}
It is not difficult to observe that, for any $\alpha > 0$, under law $P_{0}^{N,\alpha}$, the time-changed process $(\widehat{Z}_n)_{n\geq 0}$ has the same distribution as the canonical process under $\mathsf P_0^{N^{-d\alpha}}$ defined in \Cref{subsec:random walk preliminary}, that is, a biased non-lazy one-dimensional random walk with drift $N^{-d\alpha}$.

For large $N$, the sequence $\rho_k$ defined in \eqref{eq:def of Zhat} has the same distribution as the partial sums of i.i.d.~geometric variables on $\{1,2,\cdots\}$ with success probability $1/(d+1)$. The strong law of large numbers then gives that 
\begin{equation}\label{eq:limit of rho_k}
    \lim\limits_{k\rightarrow \infty}\frac{\rho_k}{k} = d +1,\quad \text{a.s.}
\end{equation}
The local time of $\widehat{Z}$ is defined as 
\begin{equation}\label{eq:def of local time of Zhat}
\widehat{L}_{k}^{z}:=\sum_{0\leq m\leq k}\mathbbm{1}\left\{\widehat{Z}_{m}=z\right\},\quad \text{for every }k\geq 0\text{ and }z\in\mathbb{Z}.
\end{equation}
With this, we then consider a specific record-breaking time, that is, the first time when the number of distinct visits of the walk 
$X$ to height $z$ in the cylinder, i.e., $\mathbb T \times \{z\}$, reaches an amount $uN^d/(d+1)$ for $u>0$:
\begin{equation}\label{eq:def of Sz}
{S}_{N}(\omega,u,z):=\inf\Big\{\rho_k\geq 0:\widehat{L}_{k}^{z}\geq\frac{u}{d+1}N^{d}\Big\}.
\end{equation}
Recall the critical value $\overline{u}$ in \eqref{eq:def of ubar}. For $\delta\in(0,\overline{u})$, we introduce 
\begin{align}
&\underline{S}_{N}(z)=\underline{S}_{N}(\omega, \delta,z):=S_{N}(\omega,\overline{u}-\delta,z);  \label{eq:def of underlineSz}\\
&\overline{S}_{N}(z)=\overline{S}_{N}(\omega, \delta,z):=S_{N}(\omega,u_{**}+\delta,z). \label{eq:def of overlineSz}
\end{align}
We further introduce the respective infimum of $\underline{S}_{N}(z)$ and $\overline S_N(z)$ over all $z \in \mathbb Z$:
\begin{align}\label{eq:def of underlineS and overlineS}
\underline{S}_{N}=\underline{S}_{N}(\omega, \delta):=\inf_{z\in\mathbb{Z}}\underline{S}_{N}(\omega,\delta,z),\quad  \overline{S}_{N}=\overline{S}_{N}(\omega, \delta):=\inf_{z\in\mathbb{Z}}\overline{S}_{N}(\omega,\delta,z).
\end{align}
Note that these variables all depend on a priori fixed parameter $\delta$. However, since we consistently work with a fixed value of $\delta$ throughout this paper except in \Cref{sec:denouement}, where we will compare $\overline S_N$ with a truncated version with parameter $\delta$ slightly changed and will send $\delta$ to zero, this dependency is not explicitly expressed in the notation.

\subsection{The weak bias case}\label{subsec:large alpha}

We first deal with the weak bias case, i.e., $\alpha > 1$.
\begin{proposition}\label{prop:asymptotics_alpha>1}
    For every fixed $\delta > 0$ and $1 < \alpha \leq \infty$,
    \begin{align}
    \limsup_{N\rightarrow \infty}P_{0}^{N,\alpha}\left[\frac{\overline{S}_{N}}{N^{2d}} \geq s \right]&\leq \WW\left[\zeta\left(\frac{u_{**}+\delta}{\sqrt{d+1}}\right)\geq s\right];\label{eq:asymp_for_overlineS_alpha>1}\\
    \liminf_{N\rightarrow \infty}P_{0}^{N,\alpha}\left[\frac{{\underline S}_{N}}{N^{2d}} \geq s \right]&\geq \WW\left[\zeta\left(\frac{\overline{u}-\delta}{\sqrt{d+1}}\right)\geq s\right]. \label{eq:asymp_for_underlineS_alpha>1}
    \end{align}
\end{proposition}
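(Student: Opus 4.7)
My plan is to reduce both bounds to a joint invariance principle for the time-changed process $(\widehat Z,\widehat L)$ followed by a Brownian-scaling identity and a spatial truncation; since $\alpha>1$ the drift is negligible at the diffusive scale and the scaling limit is a driftless Brownian motion.

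\textbf{Reduction and invariance principle.} The increments of $\rho_k$ are i.i.d.~geometric with mean $d+1$, so an exponential Chebyshev bound gives $P_0^{N,\alpha}[|\rho_{[sN^{2d}/(d+1)]}-sN^{2d}|>\varepsilon N^{2d}]\to 0$ for every $\varepsilon>0$. Combined with monotonicity of $\widehat L_k^z$ in $k$, this lets me replace (up to a vanishing error) the events $\{\underline S_N\geq sN^{2d}\}$ and $\{\overline S_N\geq sN^{2d}\}$ by $\{\sup_{z\in\mathbb Z}\widehat L_{k_N^*(s)}^z<(\overline u-\delta)N^d/(d+1)\}$ and $\{\sup_{z\in\mathbb Z}\widehat L_{k_N^*(s)}^z<(u_{**}+\delta)N^d/(d+1)\}$ respectively, with $k_N^*(s):=[sN^{2d}/(d+1)]$. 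Under $P_0^{N,\alpha}$, $\widehat Z$ has the law $\mathsf P_0^{N^{-d\alpha}}$ of \Cref{subsec:random walk preliminary}; when $\alpha>1$ the tilt parameter $\theta_N$ satisfying $\tanh\theta_N=N^{-d\alpha}$ obeys $\theta_N N^d\sim N^{d(1-\alpha)}\to 0$, so the Radon--Nikodym computation in the proof of \Cref{lem:weak converge 1d rw} combined with the classical invariance principle of \cite{reversz81} yields the joint weak convergence
\[
\Big(\frac{\widehat Z_{[tN^{2d}]}}{N^d},\ \frac{\widehat L_{[tN^{2d}]}^{[vN^d]}}{N^d}\Big)_{t\geq 0,\,v\in\mathbb R}\ \Longrightarrow\ \bigl(W_t,\,L(v,t)\bigr)_{t\geq 0,\,v\in\mathbb R},
\]
where $W$ is a \emph{driftless} standard Brownian motion and $L$ is its jointly continuous local time.

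\textbf{Truncation and conclusion.} For \eqref{eq:asymp_for_overlineS_alpha>1}, the monotone inclusion $\{\sup_{z\in\mathbb Z}\widehat L_{k_N^*(s)}^z<(u_{**}+\delta)N^d/(d+1)\}\subseteq\{\sup_{|z|\leq LN^d}\widehat L_{k_N^*(s)}^z<(u_{**}+\delta)N^d/(d+1)\}$ combined with the above convergence, then $L\to\infty$ (using that the Brownian local time is supported on the compact range of $W$), and the scaling identity $\zeta(\lambda u)\overset{d}{=}\lambda^2\zeta(u)$ yield
\[
\limsup_{N\to\infty}P_0^{N,\alpha}\bigl[\tfrac{\overline S_N}{N^{2d}}\geq s\bigr]\leq \WW\bigl[\sup_{v\in\mathbb R}L(v,\tfrac{s}{d+1})<\tfrac{u_{**}+\delta}{d+1}\bigr]=\WW\bigl[\zeta\bigl(\tfrac{u_{**}+\delta}{\sqrt{d+1}}\bigr)\geq s\bigr].
\]
For \eqref{eq:asymp_for_underlineS_alpha>1} I use the reverse inclusion
\[
\Big\{\sup_{|z|\leq LN^d}\widehat L_{k_N^*(s)}^z<\tfrac{(\overline u-\delta)N^d}{d+1}\Big\}\cap\Big\{\max_{k\leq k_N^*(s)}|\widehat Z_k|\leq LN^d\Big\}\subseteq\Big\{\sup_{z\in\mathbb Z}\widehat L_{k_N^*(s)}^z<\tfrac{(\overline u-\delta)N^d}{d+1}\Big\},
\]
combined with the same weak convergence and the fact that $\WW[\sup_{t\leq s/(d+1)}|W_t|>L]\to 0$ as $L\to\infty$, to obtain the desired $\liminf$ bound (via the analogous Brownian-scaling identity).

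The main technical content sits in the joint invariance principle for $(\widehat Z,\widehat L)$ under the tilted measure $\mathsf P_0^{N^{-d\alpha}}$; however for $\alpha>1$ this is essentially immediate from the Girsanov-type argument already executed in the proof of \Cref{lem:weak converge 1d rw}, since the tilt vanishes at the relevant diffusive scale. The only remaining care is bookkeeping the two truncation inclusions in opposite directions for $\overline S_N$ and $\underline S_N$, with the confinement event in the lower-bound direction absorbed via the boundedness of Brownian paths on compact time intervals.
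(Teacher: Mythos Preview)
Your proposal is correct and takes a genuinely different route from the paper. The paper's argument is modular: it first invokes \cite[Corollary~4.6]{Szn09a} and \cite[Proposition~7.1]{Szn09b} to obtain both inequalities in the unbiased case $\alpha=\infty$, and then proves a separate comparison lemma (Lemma~\ref{lem:asympt_srw_to_biasedwalk}) showing that for $\alpha>1$ the Radon--Nikodym derivative of $P_0^{N,\alpha}$ with respect to $P_0^N$, restricted to the typical event $\{|Z_{sN^{2d}}|<N^{d+\varepsilon}\}$, converges to $1$ uniformly; this transfers the $\alpha=\infty$ asymptotics to $\alpha>1$ without reopening the invariance principle. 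Your approach instead reproves the invariance principle directly under $\mathsf P_0^{N^{-d\alpha}}$ by running the Girsanov computation of Lemma~\ref{lem:weak converge 1d rw} with the observation that $\theta_N N^d\to 0$ forces the limiting drift to vanish, and then handles the supremum over $z$ via the two opposite truncation inclusions. What you gain is a self-contained argument that does not rely on the cited results for $\alpha=\infty$ and that mirrors the structure of the $\alpha=1$ proof (Proposition~\ref{prop:asymptotics_alpha=1}); what the paper gains is a clean separation between the Brownian limit (outsourced) and the drift comparison (a three-line Radon--Nikodym estimate). Both are valid; note only that your reduction from $\{\underline S_N\geq sN^{2d}\}$ to the event involving $\widehat L_{k_N^*(s)}$ implicitly requires a final continuity step in $s$ (as in the paper's ``taking $\widetilde s\to s$'' at the end of the proof of Proposition~\ref{prop:asymptotics_alpha=1}).
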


We remark that based on the strong invariance of simple walk local time in \cite{CR83}, \cite[Corollary 4.6]{Szn09a} offered the proof for the first inequality \eqref{eq:asymp_for_overlineS_alpha>1} for simple random walk, which corresponds to the $\alpha = \infty$ case here. A similar procedure (or \cite[Proposition~7.1]{Szn09b} with minor adaptation) yields \eqref{eq:asymp_for_underlineS_alpha>1} for simple random walk. Therefore, the conclusion follows from the next lemma.

\begin{lemma}\label{lem:asympt_srw_to_biasedwalk} 
For every $s >0$, $\delta > 0$ and $1 < \alpha \leq \infty$, 
\begin{align}
    \limsup_{N\rightarrow \infty} P_{0}^{N,\alpha}\left[\frac{\overline{S}_{N}}{N^{2d}} \geq s \right]&\leq \limsup_{N\rightarrow \infty}P_{0}^{N}\left[\frac{\overline{S}_{N}}{N^{2d}} \geq s \right],\label{eq:RN_derivative_overlineS}\\
    \liminf_{N\rightarrow \infty} P_{0}^{N,\alpha}\left[\frac{\underline{S}_{N}}{N^{2d}} \geq s \right]&\geq \liminf_{N\rightarrow \infty}P_{0}^{N}\left[\frac{\underline{S}_{N}}{N^{2d}} \geq s  \right].\label{eq:RN_derivative_underlineS}
\end{align}
\end{lemma}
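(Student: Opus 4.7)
\textit{Plan.} The idea is to perform a Radon--Nikodym change of measure from $P_0^{N,\alpha}$ to $P_0^N$ on a suitably stopped $\sigma$-algebra, exploiting the fact that when $\alpha>1$ the drift $\Delta=N^{-d\alpha}$ is small enough that the relevant derivative equals $1+o(1)$ uniformly. The case $\alpha=\infty$ is trivial (then $P_0^{N,\alpha}=P_0^N$), so fix $1<\alpha<\infty$. Setting $T:=\lceil sN^{2d}\rceil$, both events $A_N:=\{\overline{S}_N/N^{2d}\ge s\}$ and $B_N:=\{\underline{S}_N/N^{2d}\ge s\}$ are $\mathcal{F}_T$-measurable.

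For a truncation parameter $M>0$, I would introduce the stopping time $\tau_M:=\inf\{n\ge 0 : |Z_n|>MN^d\}$ and work on $\mathcal{F}_{\tau_M\wedge T}$. On this $\sigma$-algebra any sample path $e=X_{[0,\tau_M\wedge T]}$ satisfies $\ell(e)\le T=O(N^{2d})$ and $h(e)\le MN^d+1$. Feeding these into the two-sided bound \eqref{eq:RN bias to unbias bound} and using the Taylor expansion of $\log(1\pm\Delta)$, the Radon--Nikodym derivative $\mathrm{RN}:=p^{\mathrm{bias}}(e)/p(e)$ satisfies
\begin{equation*}
1-\epsilon_N(M)\le \mathrm{RN}\le 1+\epsilon_N(M),\qquad \epsilon_N(M):=C\bigl(MN^{d(1-\alpha)}+sN^{2d(1-\alpha)}\bigr),
\end{equation*}
with $\epsilon_N(M)\to 0$ as $N\to\infty$ for every fixed $M$, precisely because $\alpha>1$. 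In parallel, under $P_0^N$ the vertical projection $(Z_n)$ is a lazy one-dimensional simple random walk with $\mathrm{Var}(Z_n)=n/(d+1)$, so a standard maximal inequality gives $P_0^N[\tau_M\le T]\le Ce^{-cM^2/s}$ uniformly in $N$.

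To conclude \eqref{eq:RN_derivative_overlineS}, I would decompose
\begin{equation*}
P_0^{N,\alpha}[A_N]=P_0^{N,\alpha}[A_N\cap\{\tau_M>T\}]+P_0^{N,\alpha}[A_N\cap\{\tau_M\le T\}].
\end{equation*}
The first summand is $\mathcal{F}_{\tau_M\wedge T}$-measurable, so by the upper side of the RN bound it is at most $(1+\epsilon_N(M))P_0^N[A_N]$; the second is at most $P_0^{N,\alpha}[\tau_M\le T]\le (1+\epsilon_N(M))P_0^N[\tau_M\le T]=O(e^{-cM^2/s})$ by the same device. Taking $\limsup_N$ and then $M\to\infty$ yields \eqref{eq:RN_derivative_overlineS}. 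Inequality \eqref{eq:RN_derivative_underlineS} is entirely symmetric: from
\begin{equation*}
P_0^{N,\alpha}[B_N]\ge P_0^{N,\alpha}[B_N\cap\{\tau_M>T\}]\ge (1-\epsilon_N(M))\bigl(P_0^N[B_N]-P_0^N[\tau_M\le T]\bigr),
\end{equation*}
take $\liminf_N$ then $M\to\infty$.

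The only mild subtlety is the two-sided control of $\mathrm{RN}$: the extra factor $(1-\Delta^2)^{\ell(e)/2}$ on the \emph{lower} side of \eqref{eq:RN bias to unbias bound} forces the condition $\ell(e)\Delta^2=O(sN^{2d(1-\alpha)})\to 0$, which is exactly the hypothesis $\alpha>1$. This threshold is also the one separating the polynomial regime of $T_N$ from the (stretched) exponential one, so it is natural that the comparison to simple random walk breaks down precisely at $\alpha=1$.
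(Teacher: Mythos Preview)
Your proof is correct and follows essentially the same Radon--Nikodym comparison as the paper: truncate the vertical displacement so that both the $h(e)$- and $\ell(e)$-contributions in \eqref{eq:RN bias to unbias bound} vanish when $\alpha>1$, and show the truncation event is negligible. The only cosmetic differences are that the paper works with the complementary event $\{\overline S_N<sN^{2d}\}$, truncates on the terminal height $|Z_{sN^{2d}}|<N^{d+\varepsilon}$ with a fixed $\varepsilon<d(\alpha-1)$ rather than via your stopping time $\tau_M$, and only needs one side of the RN bound per inequality; your stopping-time formulation is equally valid and arguably more symmetric.
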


\begin{proof}
    We are going to estimate the Radon-Nykodim derivative for typical trajectories between measures $P_{0}^{N,\alpha}$ and $P_{0}^{N}$, using the estimate in \Cref{subsec:RN}. We only provide the proof of \eqref{eq:RN_derivative_overlineS} here, and \eqref{eq:RN_derivative_underlineS} follows similarly. 
    
    For every  fixed $s > 0$ and $\delta >0$ appeared in the definition of $\overline S_N$, and every $0<\varepsilon <d(\alpha - 1)$, 
    \begin{equation}\label{eq:separate typical trajectories}
        \begin{split}
        P_{0}^{N}\left[\overline{S}_{N} < sN^{2d}\right] &\leq P_{0}^{N}\left[\overline{S}_{N} < s N^{2d}, |Z_{s N^{2d}}| < N^{d + \varepsilon}\right]+P_{0}^{N}\left[|Z_{s N^{2d}}| \geq N^{d + \varepsilon}\right]=\text{\rom{1}}+\text{\rom{2}}.
        \end{split}
    \end{equation}
    By classical large deviation bounds, \rom{2} converges to 0 as $N$ approaches infinity. 
    Note that the event in \rom{1} only depends on the trajectory of $(X_n)_{n\geq 0}$ before time $sN^{2d}$. Recall \Cref{subsec:RN} for the notation $\ell(e),h(e),\text{up}(e),\text{down}(e),p(e),p^{\text{bias}}(e)$. We slightly abuse the notation and define
    \begin{equation}\label{eq:def of Se big alpha}
    \Sigma_{\text{excur}}:=\left\{e=(0,e_{1},e_{2},\dots,e_{sN^{2d}})\in\mathbb{E}^{sN^{2d}}:\overline{S}_{N}<sN^{2d},\left|\widehat{Z}_{sN^{2d}}\right|<N^{d+\varepsilon}\right\}.    
    \end{equation}
    Then for any $e\in\Sigma_{\text{excur}}$, $\ell(e)=sN^{2d}$ and $h(e)=|\widehat{Z}_{sN^{2d}}|<N^{d+\varepsilon}$. 
    Using \eqref{eq:RN bias to unbias bound}, we have \begin{equation}\label{eq:radon nykodim upper bound}
        \frac{p(e)}{p^{\rm bias}(e)}
        \leq \left(1 - \frac{1}{N^{2d\alpha}}\right)^{-\frac{1}{2} s N^{2d}}\left(1 + \frac{1}{N^{d\alpha}}\right)^{\frac{1}{2}N^{d + \varepsilon}}\left(1  - \frac{1}{N^{d\alpha}}\right)^{-\frac{1}{2}N^{d + \varepsilon}}.
    \end{equation}
    When $\alpha > 1$, by choosing $\varepsilon < d(\alpha - 1)$, the upper bound converges to 1 uniformly in $e$ as $N$ tends to infinity. 
    Then \eqref{eq:radon nykodim upper bound} yields 
    \begin{equation}\label{eq:get result asymp S_N}
        \begin{split}
        \liminf_{N\to\infty}\text{\rom{1}}= \liminf_{N\rightarrow \infty} \sum_{e\in \Sigma_{\text{excur}}}p(e)\leq \liminf_{N\rightarrow \infty} \sum_{e\in \Sigma_{\text{excur}}}p^{\text{bias}}(e)\leq \liminf_{N\rightarrow \infty} P_{0}^{N,\alpha}\left[\frac{\overline{S}_{N}}{N^{2d}} < s \right],
        \end{split}
    \end{equation}
    which, combined with \eqref{eq:separate typical trajectories}, immediately implies \eqref{eq:RN_derivative_overlineS}.
\end{proof}

\subsection{The \texorpdfstring{$\alpha=1$}{alpha=1} case}\label{subsec:alpha=1}

For the case $\alpha = 1$, the asymptotics for $\overline S_N$ and $\underline S_N$ is the same as that of \Cref{prop:asymptotics_alpha>1}, except that
the $\zeta$ in the right-hand side is replaced by the drifted version $\zeta^{1/\sqrt{d+1}}$ defined in \eqref{eq:def of first achiving time}. We recall the results in \Cref{lem:weak converge 1d rw} about the biased random walk in \Cref{subsec:random walk preliminary}.

\begin{proposition}\label{prop:asymptotics_alpha=1}
    For every $s >0$, $\delta > 0$,
    \begin{align}
        \limsup_{N\rightarrow \infty}P_{0}^{N,1}\left[\frac{\overline{S}_{N}}{N^{2d}} \geq s \right]&\leq \WW\left[\zeta^{1/\sqrt{d+1}}\left(\frac{u_{**} + \delta}{\sqrt{d+1}}\right)\geq s\right],\label{eq:asymp_for_overlineS_alpha=1}\\
        \liminf_{N\rightarrow \infty}P_{0}^{N,1}\left[\frac{\underline{S}_{N}}{N^{2d}} \geq s \right]&\geq
        \WW\left[\zeta^{1/\sqrt{d+1}}\left(\frac{\overline{u}-\delta}{\sqrt{d+1}}\right)\geq s\right]\label{eq:asymp_for_underlineS_alpha=1}.
    \end{align}
\end{proposition}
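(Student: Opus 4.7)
The plan is to reduce \Cref{prop:asymptotics_alpha=1} to \Cref{lem:weak converge 1d rw}. The crucial observation is that, under $P_0^{N,1}$, the time-changed vertical process $(\widehat Z_k)_{k \geq 0}$ has precisely the law of the canonical process under $\mathsf P_0^{N^{-d}}$, namely a one-dimensional biased random walk with drift $N^{-d}$. Setting $M := N^d$ therefore matches the drift $M^{-1}$ appearing in \Cref{lem:weak converge 1d rw}. In these terms, the one-dimensional record-breaking times
\[
\overline k^* := \inf_{z \in \mathbb Z} S\!\left((u_{**}+\delta)\, M/(d+1),\, z\right),\qquad \underline k^* := \inf_{z \in \mathbb Z} S\!\left((\overline u - \delta)\, M/(d+1),\, z\right),
\]
with $S$ as in \eqref{def:1d record breaking time}, satisfy $\overline S_N = \rho_{\overline k^*}$ and $\underline S_N = \rho_{\underline k^*}$ by \eqref{eq:def of underlineSz}--\eqref{eq:def of overlineSz}.

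Next I would trade the $\rho$-clock for the step count via a law of large numbers. Since, as noted in \eqref{eq:limit of rho_k}, $\rho_k$ is a partial sum of i.i.d.\ $\mathrm{Geom}(1/(d+1))$ increments, the event
\[
A_\epsilon^N := \bigl\{(d+1-\epsilon)\, k \leq \rho_k \leq (d+1+\epsilon)\, k\ \text{for every}\ 1 \leq k \leq 2sM^2\bigr\}
\]
has probability tending to $1$ as $N \to \infty$, for every fixed $\epsilon > 0$. Monotonicity of $\rho$ then yields, on $A_\epsilon^N$, the inclusions $\{\overline S_N \geq sN^{2d}\} \subseteq \{\overline k^* \geq sM^2/(d+1+\epsilon)\}$ and $\{\underline k^* \geq sM^2/(d+1-\epsilon)\} \subseteq \{\underline S_N \geq sN^{2d}\}$ (the extreme case $\underline k^* > 2sM^2$ is covered by the trivial bound $\rho_j \geq j$). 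Feeding these inclusions into \eqref{eq:weak convergence} of \Cref{lem:weak converge 1d rw} applied to $\widehat Z$ under $\mathsf P_0^{M^{-1}}$ gives
\begin{align*}
\limsup_N P_0^{N,1}\bigl[\overline S_N \geq sN^{2d}\bigr] &\leq \mathbb W\bigl[\zeta^1((u_{**}+\delta)/(d+1)) \geq s/(d+1+\epsilon)\bigr],\\
\liminf_N P_0^{N,1}\bigl[\underline S_N \geq sN^{2d}\bigr] &\geq \mathbb W\bigl[\zeta^1((\overline u - \delta)/(d+1)) > s/(d+1-\epsilon)\bigr],
\end{align*}
for every $\epsilon > 0$.

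Finally, I would convert $\zeta^1$ into $\zeta^{1/\sqrt{d+1}}$ using Brownian scaling: for every $\lambda > 0$, $\widetilde B_t := B_{\lambda^2 t}/\lambda$ is again a standard Brownian motion, and a direct computation of local times under this rescaling yields the distributional identity $\zeta^{\mu\lambda}(u) \overset{d}{=} \lambda^{-2}\zeta^{\mu}(\lambda u)$. Taking $\mu = 1$ and $\lambda = 1/\sqrt{d+1}$ gives $\zeta^{1/\sqrt{d+1}}(u) \overset{d}{=} (d+1)\,\zeta^1(u/\sqrt{d+1})$, so the right-hand sides in the previous display become $\mathbb W[\zeta^{1/\sqrt{d+1}}((u_{**}+\delta)/\sqrt{d+1}) \geq (d+1)s/(d+1+\epsilon)]$ and $\mathbb W[\zeta^{1/\sqrt{d+1}}((\overline u - \delta)/\sqrt{d+1}) > (d+1)s/(d+1-\epsilon)]$. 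Sending $\epsilon \to 0$ and invoking the absolute continuity of the law of $\zeta^{1/\sqrt{d+1}}(v)$ (which holds by a Girsanov change of measure from the explicit driftless case \eqref{eq:laplace of zeta}, and which allows one to identify $\geq s$ and $> s$ in both limits) yields both \eqref{eq:asymp_for_overlineS_alpha=1} and \eqref{eq:asymp_for_underlineS_alpha=1}. The only real delicacy I anticipate is the bookkeeping of the small parameter $\epsilon$ through the scaling identity and the absolute-continuity argument needed to interchange strict and non-strict inequalities; the essential probabilistic content is carried entirely by \Cref{lem:weak converge 1d rw}.
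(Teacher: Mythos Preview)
Your approach is essentially the same as the paper's: both reduce to \Cref{lem:weak converge 1d rw} by identifying $(\widehat Z_k)$ with the canonical walk under $\mathsf P_0^{N^{-d}}$, trade the $\rho$-clock for the step count via the law of large numbers \eqref{eq:limit of rho_k}, and then apply the Brownian scaling $\zeta^{1/\sqrt{d+1}}(u) \overset{d}{=} (d+1)\,\zeta^1(u/\sqrt{d+1})$ together with continuity in $s$.

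There is, however, one slip in your execution. The event $A_\epsilon^N$ as written does \emph{not} have probability tending to $1$: already at $k=1$ the constraint $(d+1-\epsilon) \leq \rho_1 \leq (d+1+\epsilon)$ pins the single geometric variable $\rho_1$ to the value $d+1$, which happens with probability $(d/(d+1))^d/(d+1)$, bounded away from $1$ uniformly in $N$. The fix is immediate---your inclusions in fact only require the bounds on $\rho_k$ at two specific indices of order $M^2$, where the LLN does apply---but it is cleaner to proceed as the paper does: fix $0<\widetilde s<s$ and bound
\[
P_0^{N,1}\bigl[\overline S_N \geq sN^{2d}\bigr] \;\leq\; P_0^{N,1}\bigl[\rho_{\widetilde s N^{2d}/(d+1)} > sN^{2d}\bigr] \;+\; P_0^{N,1}\bigl[\overline S_N \geq \rho_{\widetilde s N^{2d}/(d+1)}\bigr].
\]
The first term vanishes by the LLN at the single index $\widetilde s N^{2d}/(d+1)$, while the second equals $\mathsf P_0^{N^{-d}}\bigl[\inf_z S\bigl((u_{**}+\delta)N^d/(d+1),z\bigr) \geq \widetilde s N^{2d}/(d+1)\bigr]$ and is handled directly by \eqref{eq:weak convergence}; one then sends $\widetilde s \to s$.
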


\begin{proof}
    We only prove \eqref{eq:asymp_for_overlineS_alpha=1} and \eqref{eq:asymp_for_underlineS_alpha=1} follows from similar arguments. 
    For every $0 < \widetilde{s} < s$ and $\delta >0$, we have
    \begin{equation}\label{eq:weak conv proof1}
    P_{0}^{N,1}\left[\overline{S}_{N} \geq s N^{2d}\right]
        \leq  P_{0}^{N,1}\left[\rho_{\widetilde sN^{2d}/(d+1)} > sN^{2d}\right] + P_{0}^{N,1}\left[\overline{S}_{N} \geq \rho_{\widetilde sN^{2d}/(d+1)}\right],
    \end{equation}
    By \eqref{eq:limit of rho_k}, the first term on the right side tends to zero as $N$ approaches infinity. In addition, following the same notation as in \Cref{lem:weak converge 1d rw} with $N$ replaced by $N^d$ as well as the scaling property of drifted Brownian motion, we have 
    \begin{equation}\label{eq:weak conv proof2}
    \begin{split}
        &\limsup_{N\to \infty}P_{0}^{N,1}\left[\overline{S}_{N} \geq \rho_{\widetilde sN^{2d}/(d+1)}\right]= \limsup_{N\rightarrow \infty} \mathsf P^{N^{-d}}\left[\inf_{z\in \mathbb Z}S\Big(\frac{u_{**} + \delta}{d + 1}N^{d}, z\Big)\geq \frac{\widetilde s}{d + 1}N^{2d}\right] \\
        {=} &\WW\left[\zeta^{1}\left(\frac{u_{**} + \delta}{d + 1}\right)\geq \frac{\widetilde s}{d + 1}\right]=\WW\left[\zeta^{\frac{1}{\sqrt{d+1}}}\left(\frac{u_{**}+\delta}{\sqrt{d+1}}\right)\geq \widetilde{s}\right].
        \end{split}
    \end{equation}
    The last term in \eqref{eq:weak conv proof2} is continuous in $\widetilde{s}$ thanks to the continuity of local time of drifted Brownian motion, and therefore taking $\widetilde s \rightarrow s$ then concludes the proof.
    
\end{proof}

\subsection{The strong bias case}\label{subsec:small alpha}
Finally, for random walk with law $P_{0}^{N,\alpha}$ where $\alpha \in (1/d, 1)$, we estimate $\underline S_N$ with the help of \eqref{eq:probability of multiple hits}.

\begin{proposition}\label{prop:asymptotics_alpha<1}
    For every $\delta > 0$ and $\alpha > 0$, 
    \begin{equation}\label{eq:asymptotics_alpha<1}
    \lim_{N\to\infty}P_{0}^{N,\alpha}\left[\log\underline{S}_{N}\geq 
    \frac{\overline{u}-2\delta}{d+1}\cdot N^{d(1-\alpha)}    \right]=1.
    \end{equation}
\end{proposition}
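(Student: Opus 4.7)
The plan is to bound the probability of the complementary event $\{\underline S_N < T\}$ with $T:=\exp\bigl(\tfrac{\overline u - 2\delta}{d+1}N^{d(1-\alpha)}\bigr)$ by a direct union bound combining the one-dimensional hitting estimate \eqref{eq:probability of multiple hits} with a deterministic constraint on the range of $\widehat Z$. The key observation is that under $P_0^{N,\alpha}$, the time-changed projection $\widehat Z$ from \eqref{eq:def of Zhat} has the same law as the biased one-dimensional walk with drift $\Delta = N^{-d\alpha}$ introduced in \Cref{subsec:random walk preliminary}, so the estimates of that subsection apply verbatim to $\widehat L^z$.

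First I would unwind the definition \eqref{eq:def of underlineSz}: the event $\{\underline S_N < T\}$ asserts the existence of $z\in\mathbb Z$ and $k\geq 0$ with $\rho_k < T$ and $\widehat L_k^z\geq \ell$, where $\ell := \tfrac{\overline u - \delta}{d+1}N^d$. Since $\rho_k\geq k$ (each shift of $Z$ requires at least one step of $X$), any such $k$ satisfies $k<T$; and because $\widehat Z_0=0$ with $|\widehat Z_{i+1}-\widehat Z_i|=1$, the condition $\widehat L_k^z\geq 1$ forces $|z|\leq k < T$. Hence the union over $z$ collapses to a finite set of size at most $2T+1$.

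Next, for each fixed $z$ in that range, applying \eqref{eq:probability of multiple hits} (via the strong Markov property at the first visit to $z$) gives
\[
P_0^{N,\alpha}\bigl[\widehat L_\infty^z \geq \ell\bigr] \leq (1-\Delta)^{\ell-1} \leq \exp\bigl(-\tfrac{\overline u - \delta}{d+1}N^{d(1-\alpha)} + N^{-d\alpha}\bigr).
\]
A union bound over $\{|z|<T\}$ together with the choice of $T$ then yields
\[
P_0^{N,\alpha}[\underline S_N < T] \leq (2T+1)\,\exp\bigl(-\tfrac{\overline u - \delta}{d+1}N^{d(1-\alpha)}(1+o(1))\bigr) = \exp\bigl(-\tfrac{\delta}{d+1}N^{d(1-\alpha)}(1+o(1))\bigr),
\]
which tends to zero when $\alpha<1$. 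For $\alpha\geq 1$ the exponent $\tfrac{\overline u - 2\delta}{d+1}N^{d(1-\alpha)}$ is $O(1)$, so the claim is already a trivial consequence of \Cref{prop:asymptotics_alpha>1,prop:asymptotics_alpha=1}, which show $\underline S_N$ is of order $N^{2d}$ in probability.

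There is no real obstacle in this proof: the one-dimensional memoryless bound \eqref{eq:probability of multiple hits} already does the work, and the only point deserving attention is the truncation of the union bound to the deterministic window $|z|<T$, which follows from $\rho_k\geq k$ together with the nearest-neighbor nature of $\widehat Z$. Note that the argument makes no use of the critical threshold $\overline u$ as such; only the separation $(\overline u - \delta) - (\overline u - 2\delta) = \delta$ between the target levels in $\ell$ and $\log T$ is needed to absorb the combinatorial factor $2T+1$.
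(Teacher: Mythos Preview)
Your proof is correct and follows essentially the same approach as the paper: a union bound over the level $z$ combined with the one-dimensional local-time estimate \eqref{eq:probability of multiple hits}. Your use of the trivial inequality $\rho_k \geq k$ to pass from $X$-time to $\widehat Z$-index is a mild simplification over the paper's appeal to the law of large numbers \eqref{eq:limit of rho_k}, which introduces an extra error term $P_0^{N,\alpha}[\rho_{\overline K}/\overline K < (d+1)/2]$; you also explicitly dispose of the case $\alpha\geq 1$ via \Cref{prop:asymptotics_alpha>1,prop:asymptotics_alpha=1}, whereas the paper's displayed bound \eqref{eq:union bound for alpha<1} only yields decay for $\alpha<1$.
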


\begin{proof}
    We write
    \begin{equation}
    \overline K = \frac{2}{d + 1}\cdot\exp\left(\frac{\overline{u}-2\delta}{d+1}\cdot N^{d(1-\alpha)}\right)\quad \mbox{and}\quad\overline{\ell} = \frac{\overline u - \delta}{d + 1}N^d.
    \end{equation}
    Since the random walk starts from origin, all points $z$ with $\lvert\pi_{\mathbb{Z}}(z)\rvert > \overline K$ have local time $\widehat{L}^z_{\overline K} = 0$. Therefore, 
    \begin{equation}\label{eq:union bound underline SN}
    \begin{split}
        P_{0}^{N,\alpha}\left[\log\underline{S}_{N}< 
        \frac{\overline{u}-2\delta}{d+1} N^{d(1-\alpha)}\right]
        P_{0}^{N,\alpha}\left[\sup_{z\in \mathbb{Z}\cap [-\overline K, \overline K]}\widehat{L}^z_{\overline K}\geq \overline{\ell}\right] +
        P_{0}^{N,\alpha}\left[\frac{\rho_{\overline K}}{\overline K} < \frac{d+1}{2}\right].
        \end{split}
    \end{equation}
    By \eqref{eq:limit of rho_k}, the second term converges to zero as $N$ goes to infinity. While for the first term,
    it follows from \eqref{eq:probability of multiple hits} that
    \begin{equation}\label{eq:union bound for alpha<1}
    \begin{split}
        P_{0}^{N,\alpha}\left[\sup_{z\in \mathbb{Z}\cap [-\overline K, \overline K]}\widehat{L}^z_{\overline K}\geq \overline{\ell}\right]
        &\leq (2\overline K + 1)\sup_{z \in \mathbb Z \cap [-\overline K, \overline K]}P_{0}^{N,\alpha}\left[\widehat L_{\overline K}^z\geq \overline{\ell}\right]\\
        &\leq C\exp\left(\frac{\overline{u}-2\delta}{d+1}N^{d(1-\alpha)} - \left(\frac{\overline{u}-\delta}{d+1}N^d-1\right)\log \left(1 - \frac{1}{N^{d\alpha}}\right)\right)\\
        &=C\exp\left(-\frac{\delta}{d+1}N^{d(1-\alpha)}+o\left(N^{d(1-\alpha)}\right)\right),
        \end{split}
    \end{equation}
    and the conclusion hence follows. 
    \end{proof}

\section{A geometric argument}\label{sec:Geometric}

In this section, we prove for the random time $\underline S_N$ defined in \eqref{eq:def of underlineS and overlineS}, with probability tending to $1$ as $N\rightarrow \infty$, the event $T_{N}\geq \underline{S}_{N}$ holds  (see \Cref{prop:underlineS and T}). We first address the case of simple random walk and will extend this to the biased walk case in \Cref{subsec:adapting geometric}. 

We begin with the coarse-graining setup. We introduce a series of concentric boxes in \eqref{eq:def of boxes at origin}, and give the definitions of $\text{good}(\beta,\gamma)$, $\text{fine}(\gamma)$ and their intersection, normal$(\beta, \gamma)$ (whose complements are respectively $\text{bad}(\beta,\gamma)$, $\text{poor}(\gamma)$ and abnormal$(\beta, \gamma)$) respecitively in \Cref{def:good(beta-gamma),def:bad local time,def:bad boxes}. We then show in \Cref{lem:good connectivity} that the vacant sets in neighbouring normal$(\beta,\gamma)$ boxes have good connectivity property. Based on that, we prove in \Cref{prop:geometric argument} that on the event $T_{N}\leq\underline{S}_{N}$, there must exist a ``$d$-dimensional coarse-grained surface'' of abnormal$(\beta, \gamma)$ boxes with the help of a geometric argument. We remark that this geometric argument, based a discrete intermediate value trick and isoperimetric inequality, is similar to the ones in \cite[Section 5]{Win08} whose root can be traced back to \cite[Appendix A]{JA96}.

In \Cref{prop:bad(beta-gamma),prop:bad local time}, we further split the event in \Cref{prop:geometric argument} into two cases: having a $d$-dimensional surface of $\text{bad}(\beta, \gamma)$ boxes or $\text{poor}(\gamma)$ boxes. The proofs are deferred to \Cref{sec:bad(beta-gamma),sec:bad local time}, respectively, and to \Cref{sec:lower bound in biased case} for the biased walk case. We end this section with the proof of \Cref{prop:underlineS and T} assuming \Cref{prop:bad(beta-gamma),prop:bad local time}.\\

Our main result of this section is as follows.

\begin{proposition}\label{prop:underlineS and T}
For every $\delta>0$, we have 
\begin{equation}\label{eq:underlineS and T}
\lim_{N\to \infty}P_{0}^{N}[T_{N}\geq \underline{S}_{N}]=1.
\end{equation}
\end{proposition}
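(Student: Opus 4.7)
My plan is to combine the geometric reduction (Proposition \ref{prop:geometric argument}) with the probability bounds on bad/poor surfaces (Propositions \ref{prop:bad(beta-gamma)} and \ref{prop:bad local time}), together with a confinement argument for the vertical range of $X_{[0,\underline S_N]}$ that keeps the combinatorial cost of mesoscopic box choices polynomial in $N$. More precisely, I would fix $\delta>0$ and choose constants $\overline u -\delta<\gamma<\beta<\overline u$, together with a scale exponent $\psi\in(1/d,1)$, so that the coarse-graining by boxes $B_x,D_x,U_x$ of side-lengths of order $N^\psi$ is calibrated to the parameters entering $\underline S_N$. The event $\{T_N<\underline S_N\}$ is then contained (up to a negligible event that we isolate first) in the event that there exists a mesoscopic box $\mathsf B$ of side-length $[N/\log^3 N]$ inside which the coarse-graining produces a $d$-dimensional ``surface'' of abnormal$(\beta,\gamma)$ boxes, by the geometric argument of Proposition \ref{prop:geometric argument}.

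Next, I would decompose this surface event according to which type of defect dominates: any abnormal box is either $\text{bad}(\beta,\gamma)$ or $\text{poor}(\gamma)$, so the surface must contain a sub-surface (still $d$-dimensional, by an isoperimetric/pigeonhole argument on the two-coloring of the coarse-grained surface) consisting entirely of boxes of one type. This yields the inclusion
\begin{equation*}
\{T_N<\underline S_N\}\subseteq \mathcal E_{\rm conf}^c\,\cup\, \bigcup_{\mathsf B}\bigl(\mathcal S^{\rm bad}_{\mathsf B}\cup \mathcal S^{\rm poor}_{\mathsf B}\bigr),
\end{equation*}
where $\mathcal E_{\rm conf}$ is the confinement event described below, $\mathcal S^{\rm bad}_{\mathsf B}$ (respectively $\mathcal S^{\rm poor}_{\mathsf B}$) is the event that a coarse-grained $d$-dimensional set of $\text{bad}(\beta,\gamma)$ (respectively $\text{poor}(\gamma)$) boxes is contained in $\mathsf B$, and the union runs over mesoscopic boxes $\mathsf B$ with base in $\mathbb T$ and height of order $N^{2d}$.

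A union bound then gives $P_0^N[T_N<\underline S_N]\le P_0^N[\mathcal E_{\rm conf}^c]+\sum_{\mathsf B}P_0^N[\mathcal S^{\rm bad}_{\mathsf B}]+\sum_{\mathsf B}P_0^N[\mathcal S^{\rm poor}_{\mathsf B}]$, and I would control each piece as follows. For $\mathcal E_{\rm conf}$, since $\underline S_N$ is of order $N^{2d}$ (cf.\ \Cref{prop:asymptotics_alpha>1}), standard estimates for the one-dimensional lazy walk $\widehat Z$ show that $X_{[0,\underline S_N]}$ lies in a slab of height at most $N^{d+\varepsilon}$ with probability tending to $1$; hence $\mathcal E_{\rm conf}^c$ is negligible, and the number of relevant $\mathsf B$ is at most $N^{C}$. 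For each $\mathsf B$, Propositions \ref{prop:bad(beta-gamma)} and \ref{prop:bad local time} bound $P_0^N[\mathcal S^{\rm bad}_{\mathsf B}]$ and $P_0^N[\mathcal S^{\rm poor}_{\mathsf B}]$ by super-polynomially (in fact stretched-exponentially) small quantities of the form $\exp(-N^{c})$, which easily absorb both the polynomial number of choices of $\mathsf B$ and the combinatorial complexity of the coarse-grained $d$-dimensional set inside $\mathsf B$ — the latter is bounded, by a standard ``animal counting'' argument, by $\exp(C N^{d-1}\log N /N^{d\psi})$, which is sub-exponential in $N^{(d-1)-d\psi}$ times a logarithm and is absorbed precisely because $\psi>1/d$ ensures $d(1-\psi)<d-1$ in the right way.

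The main obstacle, and where most of the work of the paper is concentrated, is of course proving Propositions \ref{prop:bad(beta-gamma)} and \ref{prop:bad local time} themselves: the first requires coupling the excursions $W_\ell^D$ with interlacement excursions via soft local time (\Cref{sec:bad(beta-gamma)}), and the second requires the ``very strong'' coupling of \Cref{prop:very strong coupling} together with the exponential Chebyshev estimate for the occupation time of continuous-time interlacements. Once those are granted, the present proof is essentially a bookkeeping exercise: verify that the parameters $(\beta,\gamma,\psi)$ and the scale of $\mathsf B$ can be chosen consistently so that all three competing exponential rates — the geometric complexity, the bad-surface probability, and the poor-surface probability — point in the same direction.
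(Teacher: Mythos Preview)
Your proposal is correct and matches the paper's proof at the end of Section~\ref{sec:Geometric}: confine $\underline S_N\le N^{5d}$ via Proposition~\ref{prop:asymptotics_alpha>1}, apply the geometric reduction Proposition~\ref{prop:geometric argument}, pigeonhole the abnormal surface into an all-bad or all-poor subset $\mathcal C_1$, and then union-bound over the pair $(\mathsf B,\mathcal C_1)$ using the $\exp(-M|\mathcal C_1|\log N)$ decay (any $M$) from Propositions~\ref{prop:bad(beta-gamma)}--\ref{prop:bad local time} against at most $\exp(C|\mathcal C_1|\log N)$ choices. Two small corrections: those propositions are stated for a \emph{fixed} $\mathcal C_1$, so the union over $\mathcal C_1$ is a separate explicit step rather than something they already deliver; and your combinatorial count $\exp(CN^{d-1}\log N/N^{d\psi})$ has the wrong exponent (the correct order is $|\mathcal C_1|\log N\sim N^{d(1-\psi)}$, not $N^{d-1-d\psi}$), while the condition $\psi>1/d$ is used inside the proof of Proposition~\ref{prop:bad local time} (to make $|\mathcal C_1|\log N=o(\mathrm{cap}(\mathsf B))$), not at this bookkeeping level.
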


We now introduce the system of boxes that will play an important role in the subsequent analysis. Choose an arbitrary $\psi\in(1/d,1)$ and let $L=[N^{\psi}]$. We will consider boxes with side-length $L$ on the cylinder $\mathbb{E}$, each of which is associated with a series of concentric boxes. More precisely, we write
\begin{equation}\label{eq:def of boxes at origin}
\begin{split}
&B_{0}=[0,L)^{d+1},\quad D_{0}=[-3L,4L)^{d+1},\quad\check{D}_{0}=[-4L,5L)^{d+1},\\ 
&U_{0}=\left[-L[\log N]+1,L[\log N]-1\right)^{d+1},\,\check{U}_{0}=\left[-L([\log N]+1)+1,L([\log N]+1)-1\right)^{d+1}.
\end{split}
\end{equation}
The above series of concentric boxes then satisfy the following inclusion relationship
\begin{equation}\label{eq:inclusion of concentric boxes}
B_{0}\subseteq D_{0}\subseteq \check{D}_{0}\subseteq U_{0}\subseteq \check{U}_{0}.
\end{equation}
We also consider the translates of these boxes (i.e., $x+B_{0},x+D_{0}$, etc.)
\begin{equation}\label{eq:def of translated boxes}
B_{x}\subseteq D_{x}\subseteq \check{D}_{x}\subseteq U_{x}\subseteq \check{U}_{x},\quad\text{for every }x\in \mathbb E.  
\end{equation}
Very often, for convenience, we will refer to the boxes $B_{x}$, $x\in \mathbb E$, as $L$-boxes, and write $B,D,\check{D},U,\check{U}$ as $B_{x},D_{x},\check{D}_{x}, U_{x},\check{U}_{x}$ with $x\in \mathbb E$ for short, when no confusion arises.
We remark that the construction of series of boxes is quite similar to that in Section 3 of \cite{Szn17} (see \cite[(3.9) and (3.10)]{Szn17}), while we adapt the box size $L$, and replace the large constant $K$ in \cite{Szn17} with $[\log N]$ in \eqref{eq:def of boxes at origin} to simplify some technicalities  here.

Given an $L$-box $B$, we write the successive times of return to $D$ and departure from $U$ as (recall \eqref{eq:def of return and departure} for definitions of return times and departure times and notation) 
\begin{equation}\label{eq:def of return and departure for boxes}
R_{1}^{D,U}<D_{1}^{D,U}<R_{2}^{D,U}<D_{2}^{D,U}<\cdots<R_{k}^{D,U}<D_{k}^{D,U}<\cdots.   
\end{equation}
We write $R_{k}^{D}$ and $D_{k}^{D}$ for short. Note that in this section we only work on the recurrent simple random walk on $\mathbb E$, and thus the times $R_{k}^{D},D_{k}^{D},{k\geq 1}$ are all $P_{0}^{N}$-a.s.~finite.
The successive excursions from $D$ to $\partial U$ in the random walk $(X_n)_{n\geq 0}$ are then defined as
\begin{equation}\label{eq:def of simple random walk excursions}
\{W_{\ell}^{D,U}\}_{\ell\geq 1}:=
\left\{X_{[R_\ell^{D},D_\ell^{D})}\right\}_{\ell\geq 1}. 
\end{equation}
We also write $W_{\ell}^{D}$, $\ell\geq 1$ as a shorthand for $W_{\ell}^{D,U}$, $\ell\geq 1$. Moreover, for a real number $t\geq 1$, we define $W_{t}^{D}$ to be $W_{\ell}^{D}$ with $\ell=[t]$. We write the number of excursions from $D$ to $\partial U$ in the simple random walk $(X_n)_{n\geq 0}$ before time $\underline{S}_{N}$ as
\begin{equation}\label{eq:def of N_SN}
N_{\underline{S}_{N}}(D):=\sup\{k\geq 0:D_{k}^{D}\leq \underline{S}_{N}+1\}.
\end{equation}
We then consider two favorable events with respect to different types of excursions. 
For a fixed series of boxes $B\subseteq D\subseteq U$ and two constants $a, b$, let $X = \{X_1^{D, U}, \dots, X_{\ell}^{D, U}, \dots\}$ stand for an arbitrary type of excursions from $D$ to $\partial U$, we define 
\begin{equation}\label{def:exist event}
    \begin{split}
       \text{Exist}(B, X, a) := &\Big\{\mbox{There exists a connected subset with diameter at least }\frac{L}{5}\Big.\\
       &\quad \text{ in }B\setminus \Big(\text{range}\big(X_1^{D, U}\big)\cup\cdots\cup\text{range}\big(X_{a\cdot \text{cap}(D)}^{D, U}\big)\Big)\Big\}, \quad \text{ and}
    \end{split}
\end{equation}
\begin{equation}\label{def:unique event}
\begin{split}
        \text{Unique}(B,X,a,b) := &\Big
        \{\mbox{For any }L\text{-box } B'=Le + B, |e|=1, \mbox{ any two connected sets with }\\
        &\ \ \mbox{diameter at least }\frac{L}{10}\mbox{ in }B\setminus \Big(\text{range}\big(X_1^{D, U}\big)\cup\cdots\cup\text{range}\big(X_{a\cdot \text{cap}(D)}^{D, U}\big)\Big)
        \\ 
        &\ \ \mbox{and }B'\setminus \Big(\text{range}\big(X_1^{D', U'}\big)\cup\cdots\cup\text{range}\big(X_{a\cdot \text{cap}(D)}^{D', U'}\big)\Big)\text{ are connected in }\\ 
        &\ \  D\setminus \Big(\text{range}\big(X_1^{D, U}\big)\cup\cdots\cup\text{range}\big(X_{b\cdot \text{cap}(D)}^{D, U}\big)\Big) \Big\},
\end{split}
\end{equation}
where $\text{range}(X_{\ell}^{D, U})$ denotes the set of points visited by $X_{\ell}^{D, U}$, and is seen as an empty set when $\ell<1$, and $D',U'$ are concentric boxes of $B'$. We then denote the complements of these events by
\begin{equation}\label{def:fail event}
    \text{fail}_1(B, X, a) = \text{Exist}(B, X, a)^c, \quad \text{and}\quad\text{fail}_2(B, X, a, b) = \text{Unique}(B, X, a, b)^c.
\end{equation}
In the following, we fix constants $\beta>\gamma$ in $(\overline{u}-\delta,\overline{u})$.
\begin{definition}[Good boxes]\label{def:good(beta-gamma)}
    Given an $L$-box $B$, we say $B$ is $\text{good}(\beta,\gamma)$ if both events $\text{Exist}(B, W, \beta)$ and $\text{Unique}(B, W, \beta, \gamma)$ hold.
    If $B$ is not $\text{good}(\beta,\gamma)$, we say that it is $\text{bad}(\beta,\gamma)$.
\end{definition}

\begin{definition}[Fine boxes]\label{def:bad local time}
Given an $L$-box $B$ and its associated concentric boxes $D$ and $U$, 
we say $B$ is $\text{fine}(\gamma)$ if 
\begin{equation}
    N_{\underline{S}_{N}}(D)\leq \gamma\cdot\text{cap}(D), \text{ that is,}\quad D_{\gamma\cdot\text{cap}(D)}^{D}\leq \underline{S}_{N}+1,
\end{equation}
where we recall \eqref{eq:def of N_SN} that $N_{\underline S_N}(D)$ denotes the number of excursions before time $\underline S_N$. Otherwise, we say that it is $\text{poor}(\gamma)$.
\end{definition}

\begin{definition}[Normal boxes]\label{def:bad boxes}
We say $B$ is normal$(\beta, \gamma)$ if $B$ is both $\text{good}(\beta,\gamma)$ and $\text{fine}(\gamma)$. Otherwise, the box $B$ is abnormal$(\beta, \gamma)$.
\end{definition}

Let us briefly discuss the above three events. Here, the events $\text{good}(\beta,\gamma)$ and $\text{fine}(\gamma)$ play similar roles to the notions of $\text{good}(\alpha,\beta,\gamma)$ and the event $N_{u}(D)<\gamma\cdot\text{cap}(D)$ in \cite{Szn17} respectively. In addition, in the definition of good$(\beta, \gamma)$, the events $\text{Exist}(B, W, \beta)$ and $\text{Unique}(B, W, \beta, \gamma)$ in \eqref{def:exist event} and \eqref{def:unique event} resemble the events $\text{Exist}^{\mathcal V}(R,u)$ and $\text{Unique}^{\mathcal V}(R,u,v)$ in \eqref{eq:def of existence} and \eqref{eq:def of uniqueness} respectively. Similarly, the ``existence'' condition is monotone in $\beta$, while the ``uniqueness'' condition  is only monotone in $\gamma$ but not in $\beta$.

The next lemma shows that the events $\text{good}(\beta,\gamma)$ and $\text{fine}(\gamma)$ lead to good connectivity of the vacant set of the simple random walk $X_{\cdot}$ on $\mathbb E$.

\begin{lemma}\label{lem:good connectivity}
Let $B^{i}$, $0\leq i\leq n$, be a sequence of neighbouring $L$-boxes, that is, for each $0\leq i\leq n-1$, there exist coordinate vectors $e_{i}$ such that $B^{i+1}= Le^{i}+B^{i}$, and denote by $D^{i}$ the $D$-type box attached to $B^{i}$. 
If for all $0\leq i\leq n$, $B^i$ is $\text{normal}(\beta,\gamma)$, then there exists a path in $\big(\bigcup_{i=0}^{n}D^{i}\big)\cap \big(\mathbb{E}\setminus X_{[0,\underline{S}_{N}]}\big)$ starting in $B^{0}$ and ending in $B^{n}$. 
\end{lemma}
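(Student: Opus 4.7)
The plan is to build the desired path by chaining together small ``anchor'' connected sets $C^i \subseteq B^i$, using the three ingredients of $\text{normal}(\beta,\gamma)$: Exist provides the anchors, Unique bridges them across neighbouring box boundaries, and fine ensures everything stays in $\mathbb E \setminus X_{[0,\underline S_N]}$.

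First, I would use $\text{fine}(\gamma)$ on each $B^i$: since $N_{\underline S_N}(D^i) \leq \gamma \cdot \text{cap}(D^i)$, the trace $X_{[0,\underline S_N]} \cap D^i$ is contained in $\bigcup_{\ell \leq \gamma\cdot\text{cap}(D^i)} \text{range}(W_\ell^{D^i, U^i})$, and hence (since $\beta > \gamma$) also in $\bigcup_{\ell \leq \beta\cdot\text{cap}(D^i)} \text{range}(W_\ell^{D^i, U^i})$. A possibly incomplete $(N_{\underline S_N}(D^i)+1)$-th excursion is painlessly absorbed by the strict inequality $\beta > \gamma$ for large enough $N$, since $\text{cap}(D^i)$ grows with $N$.

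Next, from $\text{Exist}(B^i, W, \beta)$ I would extract for every $i$ a connected set $C^i \subseteq B^i \setminus \bigcup_{\ell \leq \beta\cdot\text{cap}(D^i)} \text{range}(W_\ell^{D^i, U^i})$ of diameter at least $L/5$; the previous step then gives $C^i \subseteq B^i \setminus X_{[0,\underline S_N]}$. For each $i \in \{0,\ldots,n-1\}$ I would then apply $\text{Unique}(B^i, W, \beta, \gamma)$ with $B' = B^{i+1}$: the anchors $C^i$ and $C^{i+1}$ have diameter $\geq L/5 \geq L/10$ and sit in precisely the complements required by the Unique event (this is exactly what Exist delivers on each side), so the event furnishes a path in $D^i \setminus \bigcup_{\ell \leq \gamma\cdot\text{cap}(D^i)} \text{range}(W_\ell^{D^i, U^i}) \subseteq D^i \setminus X_{[0,\underline S_N]}$ linking $C^i$ to $C^{i+1}$.

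Concatenating these $n$ bridges with the anchors $C^0,\ldots,C^n$ yields a single connected path in $\big(\bigcup_{i=0}^{n} D^i\big) \cap \big(\mathbb E \setminus X_{[0,\underline S_N]}\big)$ from a point in $B^0$ to a point in $B^n$, which is the lemma. There is no real obstacle: the argument is a direct combination of the three ingredients of normality, and the only small bookkeeping point is the off-by-one from a possibly unfinished excursion at time $\underline S_N$, which is dealt with by the strict inequality $\beta > \gamma$.
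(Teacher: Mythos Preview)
Your proposal is correct and follows essentially the same approach as the paper's proof: extract the anchors $C^i$ from $\text{Exist}(B^i,W,\beta)$, bridge consecutive anchors via $\text{Unique}(B^i,W,\beta,\gamma)$, and use $\text{fine}(\gamma)$ to conclude that both the anchors and the bridges avoid $X_{[0,\underline S_N]}$. You are in fact slightly more careful than the paper in handling the possibly incomplete final excursion via the strict inequality $\beta>\gamma$, but otherwise the two arguments coincide.
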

\begin{proof}
For every $0\leq i\leq n$, since $B^{i}$ is $\text{good}(\beta,\gamma)$, 
$B^i\setminus\big(\text{range}(W^{D^i}_1)\cup\cdots\cup\text{range}(W^{D^i}_{\beta\cdot\text{cap}(D^{i})})\big)$
contains a connected subset $C^{i}$ with diameter at least $L/5$. Again by the uniqueness property of $\text{good}(\beta,\gamma)$, for every $0\leq i<n$, $C^{i}$ and $C^{i+1}$ are connected in 
$D^i\setminus\big(\text{range}(W^{D^i}_1)\cup\cdots\cup\text{range}(W^{D^i}_{\gamma\cdot\text{cap}(D^{i})})\big)$, which is a subset of $\mathbb{E}\setminus X_{[0,\underline{S}_{N}]}$ using the definition of $\text{fine}(\gamma)$. Therefore, $\mathbb{E}\setminus X_{[0,\underline{S}_{N}]}$ contains a connected component which further contains $\bigcup_{i=0}^{n}C^{i}$.
\end{proof}

Given \Cref{lem:good connectivity}, we now come to the main geometric proposition of this section. Recall that $\pi_{i}$, $1\leq i \leq d+1$, stand for the projection from $\mathbb E$ to its corresponding $d$-dimensional hyperplane in $\mathbb E$ and $\pi_{\mathbb{Z}}$ denotes the projection to $\mathbb{Z}$ (see  \Cref{subsec:prelim} for formal definition).

\begin{proposition}\label{prop:geometric argument}
There exists a constant $c_{3}=c_{3}(\psi)>0$ such that for all $N\geq c_{3}$, on the event $\{T_{N}\leq\underline{S}_{N}\}$, there exists a box $\mathsf{B}$ with side-length $[N/\log^{3}N]$ and $\mathcal C$ which is a subset of $\mathbb E$ such that for some $\pi_{*}\in\{\pi_{i}\}_{1\leq i\leq d+1}$ and a positive constant $c_{4}=c_{4}(d)$,
\begin{align}
    d(0,\mathsf{B})&\geq N/10;\label{eq:large distance from mathsfB}\\
    |\pi_{*}(\mathcal{C})|&\geq c_{4}\left(\frac{N}{L\log^{3}N}\right)^{d};\label{eq:d-dim surface of bad boxes}\\
    \{B_{x}\}_{x\in \mathcal{C}} \text{ are disjoint } &  \text{abnormal}(\beta, \gamma) \text{ boxes contained in }\mathsf{B}.\label{eq:bad boxes property}
\end{align}
Moreover, on the event $\{T_{N}\leq\underline{S}_{N}\leq N^{5d}\}$, we further have
\begin{equation}\label{eq:vertical component of mathsfB}
\pi_{\mathbb{Z}}(\mathsf{B})\subseteq [-N^{10d},N^{10d}].
\end{equation}
\end{proposition}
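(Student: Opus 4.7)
The plan is to translate the disconnection condition $T_N\leq\underline S_N$ into a structural statement on the coarse lattice $\Lambda:=L\mathbb Z^{d+1}$, viewed as the vertex set of a graph $G$ in which $x\sim y$ iff $|x-y|_\infty=L$. By~\Cref{lem:good connectivity}, $G$-paths of $\text{normal}(\beta,\gamma)$ boxes translate to paths in $\mathbb E\setminus X_{[0,\underline S_N]}$. Let $\mathcal A_+$ denote the $G$-component of $\{x\in\Lambda:B_x\text{ is normal}(\beta,\gamma)\}$ that contains all $x$ with $\pi_{\mathbb Z}(x)$ sufficiently large; such $B_x$ are disjoint from $X_{[0,\underline S_N]}$ and hence trivially normal, so $\mathcal A_+$ is always non-empty. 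On $\{T_N\leq\underline S_N\}$, however, $\mathcal A_+$ cannot reach arbitrarily negative heights, so the outer $G$-boundary $\mathcal C_{\mathrm{bar}}:=\partial_G^{\mathrm{out}}\mathcal A_+$ is non-empty and composed exclusively of $\text{abnormal}(\beta,\gamma)$ boxes---any normal $G$-neighbour of $\mathcal A_+$ would itself lie in $\mathcal A_+$. Moreover $\mathcal C_{\mathrm{bar}}$ separates top from bottom in $G$, so each coarse horizontal position carries at least one element of $\mathcal C_{\mathrm{bar}}$, yielding $|\pi_{d+1}(\mathcal C_{\mathrm{bar}})|\geq c(N/L)^d$.

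For the vertical confinement~\eqref{eq:vertical component of mathsfB}, I would observe that any box $B_x$ whose concentric box $U_x$ is disjoint from $X_{[0,\underline S_N]}$ has no excursions $W_\ell^{D_x}$ and is therefore simultaneously $\text{good}(\beta,\gamma)$ and $\text{fine}(\gamma)$, hence normal. On $\{\underline S_N\leq N^{5d}\}$, the bound $|Z_n|\leq n\leq N^{5d}$ for $n\leq\underline S_N$ then forces every abnormal box to lie in $\mathbb T\times[-N^{5d}-CL\log N,\,N^{5d}+CL\log N]\subset\mathbb T\times[-N^{10d},N^{10d}]$ for $N\geq c_3(\psi)$, and the mesoscopic box $\mathsf B$ produced below inherits this confinement.

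The core of the proof, following the discrete-intermediate-value-plus-isoperimetric strategy going back to \cite[Appendix~A]{JA96} and adapted to cylinders in \cite[Section~5]{Win08}, is to extract a $d$-dimensional chunk of $\mathcal C_{\mathrm{bar}}$ fitting inside a single box $\mathsf B$ of side $M:=[N/\log^3 N]$. I would first apply a discrete intermediate value step to the coarse profile $f(z):=|\{\mathbf k:(\mathbf k,z)\in\mathcal A_+\}|$, which transitions from $(N/L)^d$ at large positive $z$ to $0$ at large negative $z$, picking a critical height $z^*$ with $f(z^*)\in[\tfrac14(N/L)^d,\tfrac34(N/L)^d]$ and a coarse slab $V$ of vertical thickness $M$ around $z^*$ in which both $\mathcal A_+$ and its complement occupy a constant fraction of the volume $(N/L)^d(M/L)$. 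The edge-isoperimetric inequality in the discrete torus-times-interval then guarantees $|\mathcal C_{\mathrm{bar}}\cap V|$ is at least of order $(N/L)^d$. I would then partition $\mathbb T$ into $O((\log^3 N)^d)$ horizontal $M$-sub-cubes, discard (at cost of a constant factor absorbed into $c_4$) those within distance $N/10$ of $\pi_{\mathbb T}(0)$ to secure~\eqref{eq:large distance from mathsfB}, and apply pigeonhole to find a sub-cube $\mathsf T_j$ such that $\pi_{d+1}(\mathcal C_{\mathrm{bar}}\cap(\mathsf T_j\times V))$ has cardinality $\geq c_4(N/(L\log^3 N))^d$. Setting $\mathsf B:=\mathsf T_j\times V$, choosing one representative $x\in\mathcal C_{\mathrm{bar}}\cap\mathsf B$ per $\pi_{d+1}$-value to form $\mathcal C$, and $\pi_*:=\pi_{d+1}$ then delivers~\eqref{eq:d-dim surface of bad boxes}-\eqref{eq:bad boxes property}; the disjointness in~\eqref{eq:bad boxes property} is automatic from $\mathcal C\subset\Lambda$.

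The main obstacle is the quantitative version of the isoperimetric step: I need to show that the coarse $G$-edge boundary of $\mathcal A_+\cap V$ inside the slab not only has the size predicted by isoperimetry but also projects under $\pi_{d+1}$ onto a positive-density subset of the coarse horizontal torus, rather than collapsing onto a few horizontal layers. This is where the restriction $\psi>1/d$ is crucial: it keeps the ratio $M/N=\log^{-3}N$ vanishingly small, making the slab wide relative to its thickness, so that any minimal $G$-cut separating the top cap of $V$ from the bottom cap within $V$ is essentially horizontal and projects efficiently, recovering the target $c_4(N/(L\log^3 N))^d$ without incurring a further polylogarithmic loss.
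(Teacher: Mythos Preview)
Your proposal has two genuine gaps.

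First, the claim that boxes $B_x$ with $U_x$ disjoint from $X_{[0,\underline S_N]}$ are ``trivially normal'' is false for the $\text{good}(\beta,\gamma)$ half: the excursions $W_\ell^{D_x}$ in~\eqref{eq:def of simple random walk excursions} are extracted from the \emph{entire} recurrent walk $(X_n)_{n\geq 0}$, so the first $\beta\cdot\text{cap}(D_x)$ of them always exist, and whether $\text{Exist}(B_x,W,\beta)\cap\text{Unique}(B_x,W,\beta,\gamma)$ holds is unrelated to~$\underline S_N$. Such boxes are automatically $\text{fine}(\gamma)$ but need not be $\text{good}(\beta,\gamma)$, hence need not be normal; your $\mathcal A_+$ is therefore not guaranteed to contain the far top. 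The paper handles this by adjoining the far top and bottom to $\mathcal C_{\text{Top}}$ and $\mathcal C_{\text{Bottom}}$ by fiat and then using \emph{emptiness} (rather than normality) to extend paths through those regions.

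Second, and more seriously, your pigeonhole step requires $|\pi_{d+1}(\mathcal C_{\text{bar}}\cap V)|\geq c(N/L)^d$, but isoperimetry in the slab only controls $|\mathcal C_{\text{bar}}\cap V|$, not its horizontal projection. Nothing you have written rules out $\mathcal A_+\cap V$ looking like (one half of the coarse torus)$\times$(the full vertical extent of $V$), whose $G$-boundary in $V$ has $\pi_{d+1}$-projection of order $(N/L)^{d-1}$, far too small. You correctly flag this as ``the main obstacle'', but your proposed resolution is wrong on both counts: the condition $\psi>1/d$ plays no role whatsoever in this proposition (it enters only later, in~\eqref{eq:compare C1 and capacity of mathsfB}), and a wide--thin slab does not force cuts to be horizontal when the set being cut is not required to contain the full top cap of~$V$.

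The paper sidesteps both issues by reversing the order of operations: it \emph{first} restricts to a thin sub-cylinder $E_M=T_M\times\mathbb Z$ of coarse horizontal radius $M=[N/(10L\log^3 N)]$, centered at $v^*=([N/(4L)],0,\dots,0)$ so that~\eqref{eq:large distance from mathsfB} is automatic, and \emph{then} slides a genuine $(d{+}1)$-dimensional cube $B(x,M)$ along the $\mathbb Z$-axis of this thin cylinder. The intermediate-value trick applied to $p(x)=|\mathcal C_{\text{Top}}\cap B(x,M)|/|B(x,M)|$ locates a cube $B(x_*,M)$ with balanced density, and isoperimetry in a \emph{cube} (not a slab) then yields \emph{some} projection $\pi_*\in\{\pi_1,\dots,\pi_{d+1}\}$ with $|\pi_*(\partial_{B(x_*,M)}(\mathcal C_{\text{Top}}\cap B(x_*,M)))|\geq c'M^d$, which is exactly~\eqref{eq:d-dim surface of bad boxes}. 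No pigeonhole over horizontal sub-cubes is needed, and $\pi_*$ is not forced to equal~$\pi_{d+1}$.
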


From now on we call $\mathcal C$ the set of base points. We remark here that \eqref{eq:large distance from mathsfB} is a technical condition, which will be useful in estimating the hitting distribution on a box $\check D \subseteq \mathsf B$ for a random walk starting from a point far away in $\mathsf B$ (see e.g.~\Cref{prop:coupling W and widetildeZ}). 
Moreover, as already stated in the sketch of proof \Cref{subsec:sketch}, the condition \eqref{eq:vertical component of mathsfB} is necessary to control the combinatorial complexity of selecting the box $\mathsf{B}$ (see the discussions below \eqref{eq:union bound on C1}). 

\begin{proof}
We define
\begin{equation}\label{eq:def of M, TM and EM}
    M=\left[\frac{N}{10L\log^{3}N}\right],\quad v^* = \Big(\Big[\frac{N}{4L}\Big],0,\dots,0\Big)  \in \mathbb Z^{d},\quad T_{M}=B(v^*, M),\quad E_{M}=T_{M}\times\mathbb{Z}.
\end{equation}
For the small cylinder $E_M$, we still use $\pi_{i},i=1,\dots,d+1$ to denote the projection of $E_{M}$ onto the $i$-th coordinate hyperplane. We focus on the set of $L$-boxes $\{B_{x}:x\in LE_{M}\}$, whose disjoint union is $\big([-ML,(M+1)L)^{d}+ Lv^*\big)\times\mathbb{Z}$, and therefore there exists $c_3(\psi)>0$ such that when $N\geq c_{3}(\psi)$, the distance between the origin and the union of these boxes are larger than $N/6$.   
For each $x\in LE_{M}$, we say $x$ is normal (resp., abnormal) if its corresponding box $B_{x}$ is normal$(\beta, \gamma)$ (resp., abnormal$(\beta, \gamma)$) in $\mathbb{E}$. We also say $x,y\in LE_{M}$ are neighbouring if $B_{x}$ and $B_{y}$ are neighbouring $L$-boxes.

By \Cref{prop:asymptotics_alpha>1}, $\underline{S}_{N}$ is finite almost surely. Therefore, there exists a (random) large positive constant $\Gamma>100\max\{\underline{S}_{N},L\}$, such that for all $x\in T_{M}\times\big((-\infty,-\Gamma]\cup[\Gamma,\infty)\big)$, $B_{x}$ is not visited by $X_{[0,\underline{S}_{N}]}$. We then say $x\in T_{M}\times\big((-\infty,-2\Gamma]\cup[2\Gamma,\infty)\big)$ are empty vertices and say corresponding $L$-boxes $B_{x}$ are empty $L$-boxes. We respectively denote the connected component of normal vertices in $E_{M}$ that contains $T_{M}\times[2\Gamma,\infty)$ or $T_{M}\times(-\infty,-2\Gamma]$ as $\mathcal{C}_{\rm Top}$ or $\mathcal{C}_{\rm Bottom}$, where $T_{M}\times[2\Gamma,\infty)$ and $T_{M}\times(-\infty,-2\Gamma]$ themselves are both seen as connected components. 

Now by \Cref{lem:good connectivity}, if a sequence of neighbouring $L$-boxes $B^{i},0\leq i\leq n$ satisfies that $B^{i}$ is normal if $j\leq i\leq k$ and $B^{i}$ is empty if $i\leq j$ or $i\geq k$, then there exists a path in $\big(\bigcup_{i=j}^{k}D^{i}\big)\cap \big(\mathbb{E}\setminus X_{[0,\underline{S}_{N}]}\big)$ starting in $B^{j}$ and ending in $B^{k}$, which can further be extended into a path starting in $B^{0}$ and ending in $B^{n}$ by emptiness of $B^{i}$, $i\leq j$ and $i\geq k$. Therefore, on the event $\{T_{N}\leq\underline{S}_{N}\}$, $\mathcal{C}_{\rm Top}$ and $\mathcal{C}_{\rm Bottom}$ cannot belong to the same infinite connected component.
We then define the function
\begin{equation}\label{eq:def of function p}
p(x)=\frac{|\mathcal{C}_{\rm Top}\cap B(x,M)|}{|B(x,M)|},\quad x\in \{v^*\}\times\mathbb{Z}. 
\end{equation}
Then $p(x)$ equals $1$ for $\pi_{\mathbb Z}(x)\geq 2\Gamma+M$ and equals $0$ for $\pi_{\mathbb Z}(x)\leq -2\Gamma-M$. Moreover, when $|\pi_{\mathbb Z}(x)-\pi_{\mathbb Z}(y)|=1$, with $\Delta$ standing for the symmetric difference, we have
\begin{equation}\label{eq:difference of p}
|p(x)-p(y)|\leq \frac{|B(x,M)\Delta B(y,M)|}{|B(0,M)|}\leq \frac{c}{M}.
\end{equation}
Thus by a discrete intermediate value trick, for $N\geq c_{3}(\psi)$, there exists $x_{*}\in \{v^{*}\}\times[-2\Gamma-M,2\Gamma+M]$ such that
\begin{equation}\label{eq:volume of CTop in box x*}
\left|p(x_{*})-\frac{1}{2}\right|\leq \frac{c}{M}\leq \frac{1}{4}.    
\end{equation}

Recall in \Cref{sec:preliminary} that we denote by $\partial_{B(x_{*},M)}A$ the relative outer boundary of a set $A\subseteq B(x_{*},M)$ in the box $B(x_{*},M)$, then all the vertices in $\partial_{B(x_{*},M)}(\mathcal{C}_{\rm Top}\cap B(x_{*},M))$ must be abnormoal. Combining \eqref{eq:volume of CTop in box x*} with the isoperimetric inequality (A.3)-(A.6) in \cite{JA96} implies that there exists $\pi_{*}\in\{\pi_{i}\}_{1\leq i\leq d+1}$ satisfying 
\begin{equation}
\pi_{*}\left(\partial_{B(x_{*},M)}(\mathcal{C}_{\rm Top}\cap B(x_{*},M))\right)\geq c'|\mathcal{C}_{\rm Top}\cap B(x_{*},M)|^{\frac{d}{d+1}}\geq c'|B(x_{*},M)|^{\frac{d}{d+1}}=c'\cdot M^{d}.
\end{equation}
We conclude the proof of \eqref{eq:large distance from mathsfB}-\eqref{eq:bad boxes property} by choosing a box $\mathsf{B}$ containing $LB(x_{*},M)$ and $\mathcal{C}$ the set of vertices in $\partial_{B(x_{*},M)}(\mathcal{C}_{\rm Top}\cap B(x_{*},M))$ such that its corresponding $L$-boxes $B_x, x\in\mathcal C$ do not intersect. If we further have $\underline{S}_{N}\leq N^{5d}$, then we may take $\Gamma$ as $N^{7d}$, yielding the last claim \eqref{eq:vertical component of mathsfB}.
\end{proof}

With this proposition, the proof of \Cref{prop:underlineS and T} can be reduced to the following two parts: 

\begin{proposition}\label{prop:bad(beta-gamma)}
For two fixed constants $\beta > \gamma$ in $(\overline{u}- \delta, \overline{u})$, consider a fixed box $\mathsf{B}$ with side-length $[N/\log^{3}N]$ that satisfies $d(0,\mathsf{B})\geq N/10$, a set of base points $\mathcal C$ such that $\{B_{x}\}_{x\in\mathcal{C}}$ are disjoint abnormal$(\beta, \gamma)$ boxes contained in $\mathsf B$ and a projection $\pi_{*}\in\{\pi_{i}\}_{1\leq i\leq d+1}$ which satisfies \eqref{eq:d-dim surface of bad boxes}. Then for any subset $\mathcal{C}_{1}$ of $\mathcal{C}$ with
\begin{equation}\label{eq:size of C1}
|\mathcal{C}_{1}|=\left[\frac{1}{3}c_{4}\left(\frac{N}{L\log^{3}N}\right)^{d}\right],
\end{equation}
we have 
\begin{equation}\label{eq:bad(beta-gamma)}
\lim_{N\to\infty}\frac{1}{|\mathcal{C}_{1}|\log N}\log P_{0}^{N}\Big[\bigcap_{x\in\mathcal{C}_{1}}\big\{B_x\text{ is bad}(\beta,\gamma)\big\}\Big]=-\infty. 
\end{equation}
\end{proposition}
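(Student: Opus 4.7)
The plan is to reduce the event that every $B_x$, $x\in\mathcal{C}_1$, is bad$(\beta,\gamma)$ to a corresponding ``bad'' event on the side of random interlacements at a level slightly below $\overline u$, where one can invoke strong percolativity, and then to combine this with a decoupling that effectively treats the many boxes $B_x$ independently. The strategy closely parallels the analysis of ``bad boxes'' in \cite[Section 4]{Szn17} for disconnection of a macroscopic body.

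First I would fix auxiliary parameters $\gamma<u''<u'<\beta$, all lying in $(\overline u-\delta,\overline u)$. The main step is then to construct, via soft local time \cite{PT15} in the refined form of \cite[Section 2]{CGP13}, a coupling of the excursions $(W_\ell^{D_x})_\ell$, $x\in\mathcal{C}_1$, with independent i.i.d.~target families indexed by $x\in\mathcal{C}_1$, which will in turn be coupled with interlacement excursions at level $u'$. Outside an event of probability at most $C|\mathcal{C}_1|\exp(-L^{c})$ for some $c>0$, one gets
\begin{equation*}
\bigcup_{\ell\leq\beta\cdot\text{cap}(D_x)}\text{range}(W_\ell^{D_x})\;\subseteq\;\mathcal{I}^{u'}\cap D_x \qquad\text{for every }x\in\mathcal{C}_1,
\end{equation*}
and similarly with $(\beta,u')$ replaced by $(\gamma,u'')$. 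These are precisely the upcoming \Cref{prop:coupling W and widetildeZ} and \Cref{lem:coupling between widetildeZ and Z} in \Cref{subsec:soft local time}. The hypothesis $d(0,\mathsf{B})\geq N/10$ is used to guarantee that by the time the walk first reaches any $D_x\subseteq\mathsf{B}$ it is well-mixed on the scale of $D_x$, so that the relevant hitting distribution can be replaced by the normalized equilibrium measure in the sense of \Cref{lem:hitting distribution three boxes simple}; the disjointness of the $L$-boxes $\{B_x\}_{x\in\mathcal{C}_1}$ inside $\mathsf{B}$ is used to arrange the required independence of the target families across $x$.

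By the inclusion above and the monotonicity of the events Exist and Unique under enlarging the range of excursions, if $B_x$ is bad$(\beta,\gamma)$ for the walk then on the coupling event $B_x$ also fails the interlacement analogue of good$(u',u'')$ built out of $\mathcal{I}^{u'}$ and $\mathcal{I}^{u''}$. Since $u'',u'<\overline u$, the definition \eqref{eq:def of ubar} of $\overline u$ together with strong percolativity \eqref{eq:def of strongly percolate} gives that each such interlacement-side failure has probability at most $Ce^{-L^{c}}$ for a fixed $x$. Using the independence of the target families across $x\in\mathcal{C}_1$, the simultaneous failure over all $x\in\mathcal{C}_1$ has probability at most $\bigl(Ce^{-L^{c}}\bigr)^{|\mathcal{C}_1|}$; adding back the coupling error yields
\begin{equation*}
P_0^N\!\left[\bigcap_{x\in\mathcal{C}_1}\{B_x\text{ bad}(\beta,\gamma)\}\right]\;\leq\;\bigl(Ce^{-L^{c}}\bigr)^{|\mathcal{C}_1|}+C|\mathcal{C}_1|e^{-L^{c}}.
\end{equation*}
Since $L=[N^{\psi}]$ with $\psi>1/d$ fixed and $|\mathcal{C}_1|$ is polynomial in $N$, the ratio $L^{c}/\log N$ diverges, so taking the logarithm of this bound and dividing by $|\mathcal{C}_1|\log N$ drives the limit to $-\infty$, as claimed. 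The real obstacle, and the reason the proof occupies a full section, lies entirely in the first step: producing a coupling whose target families are genuinely independent across many boxes whose enclosing boxes $U_x$ overlap, with an error exponentially small in a positive power of $L$. This is exactly what the soft-local-time machinery in the form of \cite[Section 2]{CGP13} is designed to deliver, and once granted the remainder of the argument is the short computation above.
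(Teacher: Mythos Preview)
There is a genuine gap at the monotonicity step. You claim that from the one-sided inclusions $\bigcup_{\ell\leq\beta\,\mathrm{cap}(D_x)}\mathrm{range}(W_\ell^{D_x})\subseteq\mathcal I^{u'}\cap D_x$ and its $(\gamma,u'')$-analogue, the implication ``$B_x$ is $\mathrm{bad}(\beta,\gamma)$ for the walk $\Rightarrow$ $B_x$ fails the interlacement analogue of $\mathrm{good}(u',u'')$'' follows by ``monotonicity of Exist and Unique under enlarging the range of excursions''. This is correct for $\mathrm{fail}_1=\mathrm{Exist}^c$, but it is \emph{false} for $\mathrm{fail}_2=\mathrm{Unique}^c$. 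The event $\mathrm{Unique}(B,\cdot,a,b)$ involves two vacant sets with opposite monotonicities: the existence of the two large clusters is taken in the complement of the $a$-level excursions, while their connection is tested in the complement of the $b$-level excursions. If $C_1\subseteq B\setminus W^\beta\text{-range}$ and $C_2\subseteq B'\setminus(W')^\beta\text{-range}$ witness $\mathrm{fail}_2(B,W,\beta,\gamma)$, your inclusion only gives $B\setminus W^\beta\text{-range}\supseteq B\cap\mathcal V^{u'}$, so $C_1$ need not lie in $B\cap\mathcal V^{u'}$ at all, and the interlacement $\mathrm{fail}_2$ cannot be inferred. This non-monotonicity is exactly the obstruction singled out in the paper's introduction (``the strongly percolative property is not monotone''), and is the reason a one-sided domination coupling as in \cite{Szn09b} cannot deliver the lower bound.

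The paper repairs this by using the \emph{two-sided} sandwich of \Cref{prop:coupling between widehatZ and W}: on $G_B^\lambda(W,\widetilde Z)$ both $\{W_\ell^D\}_{\ell\leq m}\subseteq\{\widetilde Z_\ell^D\}_{\ell\leq(1+\zeta)m}$ and the reverse inclusion hold. One then deduces $\mathrm{fail}_1(B,W,\beta)\subseteq\mathrm{fail}_1(B,\widetilde Z,\beta(1+\zeta))$ from one direction, and $\mathrm{fail}_2(B,W,\beta,\gamma)\subseteq\mathrm{fail}_2\bigl(B,\widetilde Z,\beta/(1+\zeta),\gamma(1+\zeta)\bigr)$ using the \emph{reverse} direction at the $\beta$-slot and the forward direction at the $\gamma$-slot; see the display preceding \eqref{eq:plug in prob of widehatbad}. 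Two smaller points: (i) your displayed inclusion with $u'<\beta$ is numerically the wrong way round, since the walk has $\beta\,\mathrm{cap}(D)$ excursions while $\mathcal I^{u'}$ typically has only $u'\,\mathrm{cap}(D)<\beta\,\mathrm{cap}(D)$; (ii) disjointness of the $B_x$'s does not yield independent target families across $\mathcal C_1$, because the enclosing boxes $\check U_x$ have side-length of order $L\log N$ and overlap --- the paper first sparsifies to $\mathcal C_2$ with pairwise distances $\geq 10L\log N$ (cf.\ \eqref{eq:sparsity of C2}--\eqref{eq:size of C2}) and, crucially, treats the coupling failures $G_{B_x}^\lambda(W,\widetilde Z)^c$ as \emph{independent} events rather than via a union bound, which is what makes the final estimate \eqref{eq:prob of bad(beta-gamma) C2} of the strength required by \eqref{eq:bad(beta-gamma)}.
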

As discussed at this beginning of this section, we will give the proof in \Cref{sec:bad(beta-gamma)}.

\begin{proposition}\label{prop:bad local time}
For two fixed constants $\beta > \gamma$ in $(\overline{u}- \delta, \overline{u})$, we still consider a fixed box $\mathsf{B}$ with side-length $[N/\log^{3}N]$ that satisfies $d(0, \mathsf B)\geq N/10$, a set of base points $\mathcal C$ such that $\{B_{x}\}_{x\in\mathcal{C}}$ are disjoint abnormal$(\beta, \gamma)$ boxes in $\mathsf B$ and a projection $\pi_{*}\in\{\pi_{i}\}_{1\leq i\leq d+1}$ satisfying \eqref{eq:d-dim surface of bad boxes}. Then for any subset $\mathcal{C}_{1}$ of $\mathcal{C}$ with
\begin{equation}\label{eq:C1 disjoint}
|\mathcal{C}_{1}|=\left[\frac{1}{3}c_{4}\left(\frac{N}{L\log^{3}N}\right)^{d}\right], \mbox{ and }\pi_{*}(x)\neq \pi_{*}(y)\text{\ for all different }x,y\in\mathcal{C}_{1}.
\end{equation}
we have 
\begin{equation}\label{eq:bad local time}
\lim_{N\to\infty}\frac{1}{|\mathcal{C}_{1}|\log N}\log P_{0}^{N}\Big[\bigcap_{x\in \mathcal C_1}\big\{B_{x}\text{ is poor}(\gamma)\big\}\Big]=-\infty.  
\end{equation}
\end{proposition}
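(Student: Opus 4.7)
The plan is to invoke the ``very strong'' coupling in \Cref{prop:very strong coupling} to reduce the estimate to a large-deviation bound for the occupation time of a continuous-time random interlacement, in the spirit of \cite[Section 3]{Szn17}. Let $\mathsf{D}$ denote the concentric box of $\mathsf{B}$ with side-length $[N/20]$, and pick $u'\in(\overline{u}-\delta,\gamma)$ (which is non-empty by the hypothesis on $\beta,\gamma$). By \Cref{prop:very strong coupling}, on an enlarged probability space one can couple the walk $X$ with an interlacement $\mathcal{I}^{u'}$ so that, with probability at least $1-\exp(-c\,\mathrm{cap}(\mathsf{B}))$, every excursion of $X_{[0,\underline{S}_{N}]}$ from $\mathsf{B}$ to $\partial\mathsf{D}$ is realized as an excursion of $\mathcal{I}^{u'}$ from $\mathsf{B}$ to $\partial\mathsf{D}$. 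Since $D_x\subseteq\mathsf{B}$ and $U_x\subseteq\mathsf{D}$ for every $x\in\mathcal{C}_1$, this domination descends to the finer excursions from $D_x$ to $\partial U_x$; in particular, on the coupling event, $B_x$ being $\mathrm{poor}(\gamma)$ forces $N_{u'}^{D_x,U_x}(\mathcal{I}^{u'})>\gamma\,\mathrm{cap}(D_x)$ simultaneously for every $x\in\mathcal{C}_1$.

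The second step is to bound, under $\mathbb{P}$, the probability that the previous inequality holds for all $x\in\mathcal{C}_1$ at once. Each such excursion deposits at least one unit of local time on $D_x$, so the event implies that the total occupation time of $\bigcup_{x\in\mathcal{C}_1}D_x$ by $\mathcal{I}^{u'}$ exceeds $\gamma\sum_{x\in\mathcal{C}_1}\mathrm{cap}(D_x)$, while its expectation under $\mathcal{I}^{u'}$ is dominated by $u'\sum_{x\in\mathcal{C}_1}\mathrm{cap}(D_x)\cdot(1+o(1))$. Applying the exponential Chebyshev inequality for occupation times of continuous-time random interlacements in $\mathsf{B}$ developed in \cite[Section 3]{Szn17}, and using $u'<\gamma$, yields a bound of at most $\exp(-c(\gamma-u')\,\mathrm{cap}(\mathsf{B}))=\exp(-cN^{d-1}/\log^{C}N)$. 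Together with the coupling error from the first step, the left-hand side of \eqref{eq:bad local time} is at most $\exp(-cN^{d-1}/\log^{C}N)$.

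To conclude, the restriction $\psi>1/d$ implies $|\mathcal{C}_1|\leq CN^{d(1-\psi)}/\log^{3d}N$ with $d(1-\psi)<d-1$, whence $|\mathcal{C}_1|\log N=o(N^{d-1}/\log^{C}N)$. The stretched-exponential bound of the previous paragraph is therefore much smaller than $\exp(-k\,|\mathcal{C}_1|\log N)$ for every fixed $k>0$, giving \eqref{eq:bad local time}. The main obstacle is the very strong coupling in the first step: a naive soft-local-time coupling between walk and interlacement excursions produces an error only polynomially small in $N$, while here we need to push it down to a quantity stretched-exponentially small in $\mathrm{cap}(\mathsf{B})$. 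This is the content of \Cref{sec:couplings}, obtained by iterating the soft-local-time construction of \cite{PT15,CGP13} with optimizations inspired by \cite{Bel13}. The projection condition in \eqref{eq:C1 disjoint} itself plays no explicit role in this proposition, but it will be crucial when it is later combined with \Cref{prop:bad(beta-gamma)} via a union bound to control the combinatorial complexity of the set of base points in the proof of \Cref{prop:underlineS and T}.
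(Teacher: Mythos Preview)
Your overall architecture (invoke \Cref{prop:very strong coupling} to reduce to a statement about $\mathcal{I}^{u'}$, then appeal to an occupation-time large-deviation bound from \cite{Szn17}) coincides with the paper's. The reduction step and the final comparison $|\mathcal{C}_1|\log N=o(N^{d-1}/\log^C N)$ via $\psi>1/d$ are both correct. However, the heart of the argument, your second paragraph, contains two genuine gaps.

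\textbf{The occupation-time deviation is not a deviation.} You claim that on the event $\bigcap_x\{N_{u'}^{D_x,U_x}>\gamma\,\mathrm{cap}(D_x)\}$ the total occupation time of $\bigcup_x D_x$ exceeds $\gamma\sum_x\mathrm{cap}(D_x)$, and that this is a large deviation because the mean is $u'\sum_x\mathrm{cap}(D_x)(1+o(1))$. But the expected (continuous-time) occupation time of $\bigcup_x D_x$ by $\mathcal{I}^{u'}$ equals $u'\,|\bigcup_x D_x|\asymp u'\,|\mathcal{C}_1|L^{d+1}$, which is of order $L^{2}$ times larger than $\gamma\sum_x\mathrm{cap}(D_x)\asymp |\mathcal{C}_1|L^{d-1}$; the inequality you wrote is therefore satisfied with probability tending to $1$, not decaying. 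The functional whose mean is $u'\sum_x\mathrm{cap}(D_x)$ is the \emph{$e_{D_x}$-weighted} local time $\sum_x\int e_{D_x}(Z_t)\,dt$, but for that functional the assertion ``each excursion deposits at least one unit'' is false (an excursion may spend arbitrarily little time on $\partial^{\mathrm{int}}D_x$, where $e_{D_x}$ is supported). This is exactly why the paper introduces the $\widehat{\mathrm{regular}}(\gamma,\theta)$/$\widehat{\mathrm{irregular}}(\gamma,\theta)$ dichotomy (\Cref{def:regular theta}): on $\widehat{\mathrm{regular}}$ the first $\gamma\,\mathrm{cap}(D)$ excursions do contribute at least $\theta\,\mathrm{cap}(D)$ weighted local time, which then feeds into the occupation-time bound of \cite[Theorem 4.2]{Szn17} (\Cref{prop:regular}); the $\widehat{\mathrm{irregular}}$ case is handled separately via decoupling (\Cref{prop:irregular}).

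\textbf{The exponent is not $\mathrm{cap}(\mathsf{B})$, and the projection condition is essential.} Even once the correct weighted functional is in place, the exponential Chebyshev bound from \cite{Szn17} produces an exponent proportional to $\mathrm{cap}\big(\bigcup_{x\in\mathcal{C}_3}D_x\big)$ for a suitable sparse $\mathcal{C}_3\subseteq\mathcal{C}_1$, \emph{not} $\mathrm{cap}(\mathsf{B})$. A priori this union could have small capacity (e.g.\ if the $D_x$ clustered along a lower-dimensional set). The paper shows in \Cref{lem:capacity of DS} that $\mathrm{cap}\big(\bigcup_{x\in\mathcal{C}_3}D_x\big)\gtrsim |\mathcal{C}_3|^{(d-1)/d}L^{d-1}\asymp N^{d-1}/\log^{C}N$, and the proof of that lemma uses precisely the hypothesis $\pi_*(x)\neq\pi_*(y)$ from \eqref{eq:C1 disjoint} to control the Green-function energy via the variational characterization of capacity. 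So your final sentence is mistaken: the projection condition is not merely bookkeeping for a later union bound, it is what makes the interlacement estimate in this proposition go through.
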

As discussed at this beginning of this section, we will give the proof in \Cref{sec:bad local time}.
At the end of this section, we complete the proof of \Cref{prop:underlineS and T} assuming the two propositions above.
\begin{proof}[Proof of \Cref{prop:underlineS and T} assuming \Cref{prop:bad(beta-gamma),prop:bad local time}] 
Note that 
\begin{equation}\label{eq:underlineS and T and N5d}
    \lim_{N\rightarrow \infty} P_{0}^{N}[T_N \leq \underline{S}_N] \leq \lim_{N\to\infty}P_{0}^{N}[\underline{S}_{N} > N^{5d}] + \lim_{N\to\infty}P_{0}^{N}[T_{N}\leq\underline{S}_{N}\leq N^{5d}] = {\rm \rom{1}} + {\rm \rom{2}}.
\end{equation}
By 
\Cref{prop:asymptotics_alpha>1}, \rom{1}$=0$. Therefore, it suffices to prove \rom{2}$=0$. 

By \Cref{prop:geometric argument}, for $N\geq c_{3}(\psi)$, on the event $\{T_{N}\leq\underline{S}_{N}\leq N^{5d}\}$, there exists a box $\mathsf{B}$ with side-length $[N/\log^{3}N]$, a set of base points $\mathcal C$ and a projection $\pi_{*}\in\{\pi_{i}\}_{1\leq i\leq d+1}$ satisfying \eqref{eq:large distance from mathsfB}-\eqref{eq:vertical component of mathsfB}. Then by the definition of abnormal$(\beta,\gamma)$ and \eqref{eq:d-dim surface of bad boxes}, there must exist a subset $\mathcal{C}_{1}$ of $\mathcal{C}$ such that \eqref{eq:C1 disjoint} holds, and the corresponding $L$-boxes $\{B_{x}\}_{x\in\mathcal{C}_{1}}$ are all $\text{bad}(\beta,\gamma)$ or all $\text{poor}(\gamma)$. Using a union bound gives
\begin{equation}\label{eq:union bound on C1}
P_{0}^{N}[T_{N}\leq\underline{S}_{N}\leq N^{5d}]\leq\sum_{\mathcal{C}_{1}}P_{0}^{N}\Big[\bigcap_{x\in\mathcal{C}_{1}}\big\{B_x\text{ is bad}(\beta,\gamma)\big\}\Big]+\sum_{\mathcal{C}_{1}}P_{0}^{N}\Big[\bigcap_{x\in\mathcal{C}_{1}}\big\{B_x\text{ is poor}(\gamma)\big\}\Big].
\end{equation}

The number of choices of a box $\mathsf{B}$ that satisfies \eqref{eq:vertical component of mathsfB} is no more than $CN^{20d}$. In addition, given $\mathsf{B}$, the possible ways of selecting a set $\mathcal{C}_{1}$ with fixed cardinality as in \eqref{eq:C1 disjoint} is no more than $\left(N^{d+1}\right)^{|\mathcal{C}_{1}|}$.
Therefore, the total possible ways of choosing a base points set $\mathcal{C}_{1}$ satisfying \eqref{eq:C1 disjoint} is no more than
\begin{equation}\label{eq:number of C1}
CN^{20d}\cdot N^{(d+1)|\mathcal{C}_{1}|}=CN^{20d}\cdot e^{C|\mathcal{C}_{1}|\log N}=e^{C|\mathcal{C}_{1}|\log N}.
\end{equation}
Plugging \eqref{eq:number of C1} as well as \eqref{eq:bad(beta-gamma)} and \eqref{eq:bad local time} on the probabilities of two atypical events into the union bound \eqref{eq:union bound on C1} yields \rom{2}$=0$, which concludes the proof given \eqref{eq:underlineS and T and N5d}.
\end{proof}

\section{Unlikeliness of surfaces of \texorpdfstring{$\text{bad}(\beta,\gamma)$}{bad(beta, gamma)} boxes}\label{sec:bad(beta-gamma)}
The main goal of this section is to prove \Cref{prop:bad(beta-gamma)}, which states that for a simple random walk, the probability that there exists a ``$d$-dimensional'' coarse-grained surface of bad$(\beta, \gamma)$ boxes decays rapidly as described in \eqref{eq:bad(beta-gamma)}. 
We remark here that, this result will be extended to biased random walk for establishing an analogue of \Cref{prop:underlineS and T} for the biased walk case. The necessary adaptations will be detailed in  \Cref{subsec:adapting bad(beta-gamma)}.  

Before providing an in-depth introduction of this section, let us recall the definition of concentric boxes $B_{x}\subseteq\cdots\subseteq\check{U}_{x}$ for $x\in\mathbb{E}$ in \eqref{eq:def of boxes at origin}-\eqref{eq:def of translated boxes}, and the definition of random walk excursions in \eqref{eq:def of simple random walk excursions}.
We also recall the definition of the set $\mathcal{C}_{1}$ of base points introduced in \Cref{prop:bad(beta-gamma)}.
For any $x \in \mathcal C_2$, $x'=Le + x$ for some coordinate vector $e$, we abbreviate boxes such as $D_x,D'_x$ as $D,D'$ respectively in the following introduction part, and we call $D, D'$ or $U,U'$ a pair of neighbouring boxes.
Additionally, we will focus on a subset $\mathcal{C}_{2}$ of $\mathcal{C}_{1}$, which will be defined formally in \eqref{eq:sparsity of C2} to make its corresponding $L$-boxes far from each other. In the following, we will develop a sequence of couplings, which comprises the following ingredients.
\begin{itemize}
    \item \Cref{prop:coupling W and widetildeZ} employs the soft local time techniques developed in \cite{PT15,CGP13} to ``decouple" the excursions centered around different $L$-boxes $B_x, x\in \mathcal C_2$. 
    More specifically, we introduce a coupling between the excursions $W_{\ell}^{\check{D}}$ from $\check{D}$ to $\partial\check{U}$ in the trajectory of the simple random walk $(X_{n})_{n\geq 0}$ and a certain collection of i.i.d.~random walk excursions $\widetilde Z_{\ell}^{\check D}$ from $\check{D}$ to $\partial\check{U}$, where $\check{D},\check{U}$ runs over a collection $\check{D}_{x},\check{U}_{x}, {x\in\mathcal{C}_{2}}$. The latter collection of $\widetilde{Z}$-type excursions are independent from each other as $x$ varies. 
    The reason for introducing the sequence of excursions 
    $W_{\ell}^{\check{D}},{\ell\geq 1}$ from $\check{D}$ to $\partial \check{U}$ is that this sequence contains the information of excursions within smaller boxes $D,U$ and their neighboring boxes $D',U'$. 
    That is, 
    both excursions $W_{\ell}^{D}$ (from $D$ to $\partial U$) and $W_{\ell}^{D'}$ (from $D'$ to $\partial U'$) involved in the  definition of $\text{good}(\beta,\gamma)$ boxes (see \eqref{def:unique event}) are contained in $W_{\ell}^{\check{D}},{\ell\geq 1}$. 
    \item \Cref{prop:coupling between widehatZ and W} then exploits the above coupling to gather information on the excursions going through the smaller boxes $D, U$ and $D', U'$.
    More specifically, this second coupling will connect the excursions of random walk $W^{D}_{\ell}, W^{D'}_{\ell}$ (from $D$ to $\partial U$ and $D'$ to $\partial U'$) extracted from the same excursions $W_{\ell}^{\check{D}},{\ell\geq 1}$ with successive excursions $\widetilde{Z}_{\ell}^{D}$ and $\widetilde{Z}_{\ell}^{D'}$ (from $D$ to $\partial U$ and $D'$ to $\partial U'$), extracted from the same sequence  $\widetilde{Z}_{\ell}^{\check{D}},{\ell\geq 1}$.
    \item \Cref{lem:coupling between widetildeZ and Z} compares the excursions  $\widetilde{Z}_{\ell}^{D}$ and $\widetilde{Z}_{\ell}^{D'}, {\ell\geq 1}$ with excursions of random interlacements $Z_{\ell}^{D}$ and $Z_{\ell}^{D'}$ (from $D$ to $\partial U$ and $D'$ to $\partial U'$). This follows from a similar procedure to the combination of \Cref{prop:coupling W and widetildeZ,prop:coupling between widehatZ and W}, where we first compare the excursions of $\widetilde Z_{\ell}^{\check D}$ with $Z_{\ell}^{\check D}$ using soft local time techniques, and then extract the excursions travelling through small boxes $D, U$ and $D', U'$. 
\end{itemize}

With these three coupling results, a certain proportion of bad$(\beta, \gamma)$ boxes in \eqref{eq:bad(beta-gamma)} yields an intersection of independent events on random interlacements (each of which is expressed in terms of $Z_{\ell}^{D_{x}}$ and $Z_{\ell}^{D_{x'}}, {\ell\geq 1}$ for some fixed $x\in\mathcal{C}_{2}$), each of which has small probability given \eqref{eq:def of existence}, \eqref{eq:def of uniqueness} and \eqref{eq:def of ubar} in the strongly-percolative regime. Therefore, proving unlikeliness of surface of $\text{bad}(\beta,\gamma)$ boxes is reduced to a standard large deviation result of sum of Bernoulli variables. 

It is noteworthy that the series of coupling draws inspiration from \cite[Section 5]{Szn17}, and several proofs in this section can even be adapted straightforwardly from those presented in that work. Consequently, we omit some technical parts for the sake of brevity. Instead, we detail on how to adjust the corresponding proofs in \cite{Szn17}, and clarify the correspondence between notation of this work and \cite{Szn17}. We remark that our notation for the constant $L$ 
and the system of boxes $B,D,\check{D},U,\check{U}$ is always in line with that in \cite{Szn17}, except that the requirement that $L$ is of order $\big[(N\log N)^{\frac{1}{d-1}}\big]$ in \cite{Szn17} is now replaced with the choice of $L=[N^{\psi}],\psi\in(1/d,1)$, and the constant $K$ in \cite[(3.9)-(3.10)]{Szn17} is replaced by $\log N$ (see \eqref{eq:def of boxes at origin}). Note that these changes do not affect the proofs, while substituting $\log N$ for $K$ actually facilitates some of the arguments.

The organization of this section is as follows. We first specify our notation, and then carry out the three couplings 
in \Cref{subsec:soft local time}. In \Cref{subsec:proof of prop bad(beta-gamma)} we adopt all the three couplings to prove \Cref{prop:bad(beta-gamma)}. \\

We now introduce notation used in this section. We will focus on the fixed box $\mathsf{B}$, the set of base points $\mathcal{C}_{1}$ and the projection $\pi_{*}$, as in the statement of \Cref{prop:bad(beta-gamma)}. We recall $W_{\ell}^{\check{D}} = W_{\ell}^{\check{D},\check{U}}, {\ell \geq 1}$ as excursions from $\check{D}$ to $\partial\check{U}$ of simple random walk $(X_{n})_{n\geq 0}$, similar as in \eqref{eq:def of return and departure for boxes} and \eqref{eq:def of simple random walk excursions}, and $W_{t}^{\check{D}}  = W_{[t]}^{\check{D}}$ for $t\geq 1$. 
We define a collection of i.i.d.~sequence of excursions $\widetilde Z_{\ell}^{\check D_x}$, ${\ell}\geq 1$, $x\in \mathcal C_1$ such that 
\begin{equation}\label{eq:def of widetildeZ}
\begin{split}
&\text{for each }x\in \mathcal{C}_{1}, \{\widetilde{Z}_{\ell}^{\check{D}_{x}}\}_{\ell\geq 1}\text{ are i.i.d.~excursions having the law as }X_{\cdot \land T_{\check U_x}} \text{ under }P_{\overline e_{\check D_x}},\\
&\text{ and }\{\widetilde{Z}_{\ell}^{\check{D}_{x}}\}_{\ell\geq 1} \text{ are independent as }x \text{ varies over }\mathcal{C}_{1}.
\end{split}
\end{equation}
and let $\widetilde{Z}_{t}^{\check{D}}$ stand for $\widetilde{Z}_{[t]}^{\check{D}}$ when $t\geq 1$, which is in line with the notation $W_{t}^{\check{D}}$.

We then introduce the sparse subset $\mathcal{C}_{2}$ of the set $\mathcal{C}_{1}$ as the maximal subset that satisfies 
\begin{equation}\label{eq:sparsity of C2}
\inf\{d(x,y):x,y\in\mathcal{C}_{2},x\neq y\}\geq 10L\log N.
\end{equation}
Note that when $x\neq y$ belong to $\mathcal{C}_{2}$, the corresponding $\check{U}_{x}$, $\check{U}_{y}$ (recall \eqref{eq:def of boxes at origin} and \eqref{eq:def of translated boxes}) satisfy
\begin{equation}\label{eq:sparsity of boxes in mathcalC2}
d(\check{U}_{x},\check{U}_{y})\geq 5L\log N.
\end{equation}
In addition, by \eqref{eq:size of C1}, \eqref{eq:sparsity of C2} and the fact that $B_{x},x\in\mathcal{C}_{1}$ are disjoint, we have for some $c,C>0$, 
\begin{equation}\label{eq:size of C2}
\begin{split}
\frac{|\mathcal{C}_{2}|}{|\mathcal{C}_{1}|}\geq\frac{c}{\log^{d+1} N}, \quad\text{ and }\quad|\mathcal{C}_{2}|\geq c\cdot C\cdot\left(\frac{N}{L\log^{3}N}\right)^{d}\cdot\left(\frac{1}{\log N}\right)^{d+1}.
\end{split}
\end{equation}

For each subset $\mathcal S$ of $\mathsf B$, we write for the unions of its corresponding boxes:
\begin{equation}\label{eq:def of checkD and checkU C2}
D(\mathcal S)=\bigcup_{x\in \mathcal S}D_{x},
\quad 
\check{D}(\mathcal S):=\bigcup_{x\in\mathcal S}\check{D}_{x}\quad \text{and} \quad \check{U}(\mathcal S):=\bigcup_{x\in\mathcal S}\check{U}_{x},
\end{equation}
which are subsets of $\mathbb{E}$. Depending on context, we may also regard the above unions as subsets of $\mathbb{Z}^{d+1}$.

\subsection{Three couplings}\label{subsec:soft local time}
Following the strategy in Section 5 of \cite{Szn17}, we now provide the coupling between $W_{\ell}^{\check{D}_{x}}$ and $\widetilde{Z}_{\ell}^{\check{D}_{x}}$ for each $x\in\mathcal{C}_{2}$. 

For each $\check D = \check D_x$, $x\in \mathcal C_2$, we let $\left(\big(n_{\check{D}}(0,t)\big)_{t\geq 0},\big\{\widetilde{Z}_{\ell}^{\check{D}}\big\}_{\ell\geq 1}\right)$ stand for independent pairs of independent variables, with $\big(n_{\check{D}}(0,t)\big)_{t\geq 0}$ distributed as a Poisson counting process of intensity $1$, and $\big\{\widetilde{Z}_{\ell}^{\check{D}}\big\}_{\ell\geq 1}$ defined in \eqref{eq:def of widetildeZ}. We also write $n_{\check{D}}(a,b)=n_{\check{D}}(0,b)-n_{\check{D}}(0,a)$.

\begin{proposition}\label{prop:coupling W and widetildeZ}
There exists a coupling $\mathbb{Q}_{W,\widetilde{Z}}$ of the law $P_{0}^{N}$ and the law of $\big((n_{\check{D_x}}(0,t))_{t\geq 0},\{\widetilde{Z}_{\ell}^{\check{D}_x}\}_{\ell\geq 1}\big)$, $x \in \mathcal{C}_{2}$ such that, for every $\eta\in (0, \frac{1}{2})$, $N\geq c_{5}(\psi,\eta)$, $\lambda\in(0,\infty)$ and $\check{D}=\check{D}_{x},x\in\mathcal{C}_{2}$, on the event 
\begin{equation}\label{eq:def of widetildeE}
\begin{split}
E^{\lambda}_{\check D}(W, \widetilde Z):=\big\{& n_{\check{D}}(m,(1+\eta)m)<2\eta m,\,\, (1-\eta)m<n_{\check{D}}(0,m)<(1+\eta)m,\\ &\text{for all }m\geq \lambda\cdot\text{cap}(\check{D})\big\},
\end{split}
\end{equation}
for all $m\geq \lambda\cdot\text{cap}(\check{D})$ we have\footnote{Here, similar to the convention after \eqref{def:exist event}, the sets on the left hand side of \eqref{eq:inclusion checkD_1} and \eqref{eq:inclusion checkD_2} are empty when $(1-\eta)m<1$. We always honour this convention in the rest of this work, see e.g.~\Cref{prop:coupling between widehatZ and W,lem:coupling between widetildeZ and Z}, \Cref{def:widehatgood}, \Cref{prop:coupling W and widetildeW,lem:coupling widetildeW with widetildeZ,prop:coupling W and widetildeZ bias,prop:coupling W and widehatZ bias} and \Cref{sec:couplings}.} 
\begin{align}
&\left\{\widetilde{Z}_{1}^{\check{D}}\ ,\dots,\ \widetilde{Z}_{(1-\eta)m}^{\check{D}}\right\}\subseteq \left\{W_{1}^{\check{D}},\dots,W_{(1+3\eta)m}^{\check{D}}\right\};  \label{eq:inclusion checkD_1} \\
&\left\{W_{1}^{\check{D}},\dots,W_{(1-\eta)m}^{\check{D}}\right\}\subseteq \left\{\widetilde{Z}_{1}^{\check{D}}\ ,\dots,\ \widetilde{Z}_{(1+3\eta)m}^{\check{D}}\right\}.   \label{eq:inclusion checkD_2}
\end{align}
Moreover, for every $\lambda\in(0,\infty)$, the events $E^{\lambda}_{\check D_x}(W, \widetilde Z)$ are independent as $x$ varies over $\mathcal C_2$, and for every $\check{D}=\check{D}_{x},x\in\mathcal{C}_{2}$ (note that all the boxes $\check{D}_{x}$, $x\in \mathbb E$ have the same capacity and the probability of $E_{\check D}^{\lambda}(W, \widetilde Z)$ does not depend on the choice of $\check{D}$)
\begin{equation}\label{eq:prob of bad E}
\limsup_{N\to\infty}\frac{1}{\text{cap}(\check{D})}\log\mathbb{Q}_{W,\widetilde{Z}}\left[E^{\lambda}_{\check D}(W, \widetilde Z)^{c}\right] < -c_{6}(\lambda,\eta)<0.   
\end{equation}
\end{proposition}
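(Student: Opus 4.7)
The plan is to construct both excursion sequences as selections from a family of Poisson point processes on trajectory space via the soft local time method of \cite{PT15,CGP13}, closely following the strategy of \cite[Section 5]{Szn17}. The argument has three components: a soft-local-time sandwich based on a hitting distribution estimate, a decoupling across $\mathcal{C}_{2}$ based on the sparsity \eqref{eq:sparsity of boxes in mathcalC2}, and a Poisson concentration bound for the rate-one counting processes $n_{\check D_x}$.

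For each $x \in \mathcal{C}_{2}$, I would introduce an independent Poisson point process $\eta_{x}$ on $\mathscr{T}_{\check U_{x}} \times \mathbb R_{+}$ with intensity $P_{\overline{e}_{\check D_{x}}}\!\left[X_{\cdot \wedge T_{\check U_{x}}} \in dw\right] \otimes dt$. The counting process $n_{\check D_{x}}(0,\cdot)$ in \eqref{eq:def of widetildeE} is realized as the number of points of $\eta_{x}$ with second coordinate in $[0,\cdot]$, which is Poisson of rate one. The i.i.d.~excursions $\widetilde Z_{\ell}^{\check D_{x}}$ are then obtained directly as the trajectory marginals of $\eta_{x}$ read in increasing order of the second coordinate, which by the marking theorem matches the law in \eqref{eq:def of widetildeZ}. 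For the dependent sequence $W_{\ell}^{\check D_{x}}$ coming from the single walk $(X_{n})_{n \geq 0}$, I would use the state-dependent soft local time construction: select trajectories from $\eta_{x}$ with density
\begin{equation*}
G^{W}_{\ell}(w) \;\propto\; P_{Y_{\ell}}\!\left[X_{H_{\check D_{x}}} = w(0) \,\big|\, H_{\check D_{x}} < \widetilde H_{\partial \check U_{x}}\right],
\end{equation*}
where $Y_{\ell}$ is the position of the walk when it reaches $\partial \check U_{x}$ for the $\ell$-th time. By \cite[Proposition~4.1]{PT15} (or \cite[Proposition~2.1]{CGP13}), the inclusions \eqref{eq:inclusion checkD_1}--\eqref{eq:inclusion checkD_2} on the event $E^{\lambda}_{\check D}(W,\widetilde Z)$ then reduce to the uniform ratio bound $G^{W}_{\ell}(w) / \overline{e}_{\check D_{x}}(w(0)) \in [1 - \eta/10,\, 1+\eta/10]$. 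This is precisely the content of \Cref{prop:hitting distribution two boxes simple} applied with $K = [\log N]$, valid once $N \geq c_{5}(\psi,\eta)$ so that $[\log N] \geq c_{1}(\eta)$.

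The independence of the events $E^{\lambda}_{\check D_{x}}(W,\widetilde Z)$ across $x \in \mathcal{C}_{2}$ is handled by exploiting the sparsity \eqref{eq:sparsity of boxes in mathcalC2}: between successive visits of the walk to two different $\check U_{x}$ and $\check U_{y}$, the walk must traverse an annulus of width at least $5L\log N$, and \Cref{prop:hitting distribution two boxes simple} again shows that its entrance distribution at $\check D_{y}$ is $(1 \pm \eta/10)$-close to $\overline{e}_{\check D_{y}}$ independently of the preceding path. This allows one to realize the PPPs $\eta_{x}$ and their soft-local-time read-outs as independent objects, while the resulting approximation error on the two-sided inclusion is absorbed in the slack $\eta$ of \eqref{eq:def of widetildeE}. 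Finally, the tail bound \eqref{eq:prob of bad E} is a standard Cram\'er estimate for the rate-one Poisson process $n_{\check D}$: discretizing $m \geq \lambda\,\text{cap}(\check D)$ over dyadic scales, each one-sided fluctuation of size $\eta m$ of a Poisson variable of mean $m$ has probability at most $e^{-c(\eta) m}$, and summing yields $\exp(-c_{6}(\lambda,\eta)\,\text{cap}(\check D))$, which diverges with $N$ thanks to \eqref{eq:capacity of a box}.

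The main technical obstacle is carrying out the two soft-local-time constructions (for $W$ and for $\widetilde Z$) \emph{simultaneously} and while preserving independence across the collection $\mathcal{C}_{2}$, since the walk $(X_{n})_{n\geq 0}$ is a single trajectory whose excursions through different $\check U_{x}$ are a priori coupled through the path outside $\bigcup_{x} \check U_{x}$. Decoupling them requires the sparsity \eqref{eq:sparsity of C2} to be propagated through every change-of-box transition, which is precisely the reason for the large separation $10L\log N$ imposed when defining $\mathcal{C}_{2}$; once this is in place, the remaining steps essentially mirror \cite[Section 5]{Szn17} with the modified box sizes $L = [N^{\psi}]$ and outer radius $[\log N] L$.
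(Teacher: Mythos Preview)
Your approach is essentially the same as the paper's: apply \Cref{prop:hitting distribution two boxes simple} with $A=\check D$, outer radius $\sim L\log N$, feed the resulting ratio bound into the soft local time machinery of \cite{CGP13}, and control the complement of $E^{\lambda}_{\check D}$ by Poisson large deviations. Two points are worth correcting.

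First, you overcomplicate the independence of the events $E^{\lambda}_{\check D_{x}}(W,\widetilde Z)$. Look again at the definition \eqref{eq:def of widetildeE}: this event is expressed \emph{solely} in terms of the rate-one Poisson counting process $n_{\check D_{x}}(\cdot,\cdot)$, with no reference to the walk or to the excursions $W^{\check D}_{\ell}$. Since the $n_{\check D_{x}}$ are taken independent across $x\in\mathcal C_{2}$ by construction, the events are trivially independent; sparsity \eqref{eq:sparsity of boxes in mathcalC2} plays no role here. (Sparsity is needed elsewhere, namely to ensure that the hitting distribution on $\check D_{x}$ from $\partial\check U(\mathcal C_{2})$ is close to $\overline e_{\check D_{x}}$ regardless of which $\check U_{y}$ the walk exited last, so that the soft-local-time densities satisfy the uniform ratio bound; but this concerns the validity of the inclusions \eqref{eq:inclusion checkD_1}--\eqref{eq:inclusion checkD_2}, not the independence of the $E^{\lambda}_{\check D_{x}}$.)

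Second, you omit a detail the paper singles out: the soft-local-time ratio bound must also hold for the \emph{first} entrance into each $\check D_{x}$, and for that entrance the walk is coming from the origin $0$, not from $\partial\check U(\mathcal C_{2})$. This is where condition \eqref{eq:large distance from mathsfB}, $d(0,\mathsf B)\geq N/10$, is used: it guarantees that $0$ is at distance $\gg L\log N$ from every $\check D_{x}$, so \Cref{prop:hitting distribution two boxes simple} applies to the first excursion $W_{1}^{\check D}$ as well.
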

\begin{proof}[Proof of \Cref{prop:coupling W and widetildeZ}]\label{remark:technical condition for first excursion}
We begin with an estimate on the hitting distribution of simple random walk, and then use soft local time techniques to construct the coupling. 
Taking $A=\check{D}$, $L=10[N^{\psi}]$ and $K=\log N/15$ in \Cref{prop:hitting distribution two boxes simple}, we get that for any $\eta\in(0,\frac{1}{2})$, there exists a positive constant $c_{5}(\psi,\eta)$ such that for any $\check D = \check D_x$, $x\in \mathcal C_2$, $y\in \check D$ and $z \in\partial\check{U}(\mathcal{C}_{2})$, if $N\geq c_{5}(\psi,\eta)(\geq c_{1}(\eta))$, then 
\begin{equation}\label{eq:hitting distribution}
\left(1-\frac{\eta}{3}\right)\overline{e}_{\check{D}}(y)\leq P_{z}\left[X_{H_{\check{D}(\mathcal C_2)}}=y\mid X_{H_{\check{D}(\mathcal C_2)}}\in\check{D}\right]\leq \left(1+\frac{\eta}{3}\right)\overline{e}_{\check{D}}(y).
\end{equation}

The inclusions \eqref{eq:inclusion checkD_1} and \eqref{eq:inclusion checkD_2} all follow directly from \eqref{eq:hitting distribution} and soft local time techniques (c.f.~\cite[Lemma 2.1]{CGP13}). Here we use the requirement \eqref{eq:large distance from mathsfB} so that \eqref{eq:hitting distribution} applies to the first trajectory $W_{1}^{\check{D}}$. The proof of \eqref{eq:prob of bad E} follows from a union bound and the standard exponential Chebyshev's inequality for Poisson variables.
\end{proof}

Let us now turn to the second coupling, which refines the first coupling as $D,D'$ and $U,U'$ are respectively subsets of $\check{D}$ and $\check{U}$. 

From the infinite sequence of i.i.d.~excursions $\widetilde{Z}_{k}^{\check{D}},\, {k\geq 1}$, we can extract successive excursions $\widetilde{Z}_{\ell}^{D},\,{\ell\geq 1}$ and $\widetilde{Z}_{\ell}^{D'}, \,{\ell\geq 1}$ which are respectively excursions from $D$ to $\partial U$ and $D'$ to $\partial U'$. Note that for a given $\check{D}$, the sequence $\widetilde{Z}_{\ell}^{D},\,{\ell\geq 1}$ is a priori not an independent sequence, and two sequences $\widetilde{Z}_{\ell}^{D},\,{\ell\geq 1}$ and $\widetilde{Z}_{\ell}^{D'},\,{\ell\geq 1}$ are typically mutually dependent. However, under $\mathbb{Q}_{W,\widetilde{Z}}$, the collections $\big\{\widetilde{Z}_{\ell}^{D_{x}},\widetilde{Z}_{\ell}^{D_{x}'}\big\}_{\ell\geq 1}$, as $x$ varies over $\mathcal{C}_{2}$, are independent.
We first estimate the number of excursions $\widetilde{Z}^D$ and $\widetilde Z^{D'}$ extracted from the first $m$ excursions $\widetilde{Z}^{\check D}$, where $m\geq \lambda\cdot\text{cap}(\check{D})$ for some constant $\lambda$.

\begin{lemma}\label{lem:bad F}
Fix $\kappa\in (0,\frac{1}{2})$. For any $B=B_{x},x\in\mathcal{C}_{2}$ and a constant $\lambda\in (0,\infty)$, we define the events 
\begin{align}
F_{B}^{\lambda}(\widetilde Z):=&\bigg\{\text{for all }m\geq \lambda\cdot\text{\rm cap}(\check{D}), \widetilde{Z}_{1}^{\check{D}},\dots,\widetilde{Z}_{m}^{\check{D}}\text{ contain at least }(1-\kappa)m\frac{\text{\rm cap}(D)}{\text{\rm cap}(\check{D})}\\ &\text{ and at most }(1+\kappa)m\frac{\text{\rm cap}(D)}{\text{\rm cap}(\check{D})}\text{ excursions from }D\text{ to }\partial U\bigg\},\quad\text{and}\nonumber \\ 
&F_{B,+}^{\lambda}(\widetilde Z):=F_{B}^{\lambda}(\widetilde Z)\bigcap_{B'\text{ neighbouring }B}F_{B'}^{\lambda}(\widetilde Z). \label{eq:def of widetildeF}
\end{align}
where $D,\check{D},U,\check{U}$ are concentric with $B$ and $D',U'$ are concentric with a $B'$ neighbouring $B$. Noting that the probability of $F_{B,+}^{\lambda}(\widetilde{Z})$ does not depend on the specific choice of $B$, under the coupling $\mathbb{Q}_{W,\widetilde{Z}}$ in \Cref{prop:coupling W and widetildeZ}, for every $\lambda\in (0,\infty)$, it holds that 
\begin{equation}\label{eq:prob of bad F}
\limsup_{N\to\infty}\frac{1}{\text{\rm cap}(\check{D})}\log\mathbb{Q}_{W,\widetilde{Z}}\left[F_{B,+}^{\lambda}(\widetilde Z)^{c}\right] < -c_{7}(\lambda,\kappa)<0.   
\end{equation}
\end{lemma}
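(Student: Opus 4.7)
The key structural observation is that, writing $N^{(k)}$ for the number of sub-excursions from $D$ to $\partial U$ extracted from the $k$-th i.i.d.\ excursion $\widetilde Z_k^{\check D}$, the random variables $(N^{(k)})_{k \geq 1}$ are i.i.d.\ under $\mathbb{Q}_{W,\widetilde Z}$ (inherited from the i.i.d.\ property of $\widetilde Z_k^{\check D}$). Setting $\mu_0 := \text{cap}(D)/\text{cap}(\check D)$, the event $F_B^\lambda(\widetilde Z)$ asks precisely that $\sum_{k=1}^m N^{(k)}$ lies in $[(1-\kappa) m \mu_0, (1+\kappa) m \mu_0]$ for every $m \geq \lambda \cdot \text{cap}(\check D)$. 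My plan is to: (i) identify the mean $\mu_N := \mathbb{E}[N^{(1)}]$ as $(1+o(1))\mu_0$; (ii) establish an exponential tail bound for $N^{(1)}$; (iii) apply a Cram\'er--Chernoff bound to the partial sum at each fixed $m$; (iv) promote this to a supremum over $m \geq \lambda\cdot \text{cap}(\check D)$ via dyadic discretisation using the monotonicity of $m \mapsto \sum_{k=1}^m N^{(k)}$; (v) union-bound over the $2(d+1)+1$ boxes appearing in $F_{B,+}^\lambda$.

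For (i) and (ii), one natural route is via the interlacement identity $P_{\bar e_{\check D}}[H_D < \infty] = \mu_0$, which follows by equating the intensities of $\check D$-visiting and $D$-visiting trajectories in level-$u$ random interlacements. Decomposing $N^{(1)}$ by the strong Markov property at the successive exit times from $U$, one bounds each additional sub-excursion by $q := \sup_{z \in \partial U} P_z[H_D < T_{\check U}]$. Since $\partial U$ sits at distance $\asymp L\log N$ from $D$ but only at distance $\asymp L$ from $\partial \check U$, the Green's-function decay \eqref{eq:decay of green function} on $\mathbb Z^{d+1}$ yields $q \leq C(\log N)^{-(d-1)}$. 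This delivers $P_{\bar e_{\check D}}[H_D < T_{\check U}] = \mu_0 \cdot (1 - O(q))$ and hence $\mu_N/\mu_0 \to 1$ as $N \to \infty$, and, by iterating the return estimate,
\begin{equation*}
\mathbb{Q}_{W,\widetilde Z}\!\left[N^{(1)} > t\right] \leq q^t \leq C(\log N)^{-(d-1) t}, \quad \text{for every integer } t \geq 1,
\end{equation*}
which is far stronger than sub-exponential for $N$ large.

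With (i) and (ii) in hand, a standard Chernoff bound for i.i.d.\ sums yields, for each fixed $m$,
\begin{equation*}
\mathbb{Q}_{W,\widetilde Z}\!\left[\Big|\sum_{k=1}^m N^{(k)} - m\mu_N\Big| > \tfrac{\kappa}{3}\, m\mu_0 \right] \leq 2 e^{-c(\kappa)\, m}.
\end{equation*}
For $N$ large enough that $|\mu_N - \mu_0|<\kappa\mu_0/3$, this deviation event contains the failure of $F_B^\lambda$ at step $m$. Monotonicity of $m \mapsto \sum_{k=1}^m N^{(k)}$ then lets me reduce the supremum over $m \geq \lambda \cdot \text{cap}(\check D)$ to a dyadic cover: on each block $[2^j, 2^{j+1})$ the deviation is controlled up to constants by the two endpoint deviations, and summing the resulting geometric series over $j$ with $2^j \geq \lambda\cdot \text{cap}(\check D)$ gives $\mathbb{Q}_{W,\widetilde Z}[F_B^\lambda(\widetilde Z)^c] \leq C\exp(-c'(\lambda,\kappa)\text{cap}(\check D))$. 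A final union bound over the $2(d+1)+1$ translates of this estimate (for $B$ and each of its neighbours, with identical constants by translation invariance) absorbs into the constant; since $\text{cap}(\check D) \asymp L^{d-1} \to \infty$, \eqref{eq:prob of bad F} follows.

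The principal delicacy lies in step (i): the matching $\mu_N / \mu_0 \to 1$ requires some care in $\mathbb Z^{d+1}$-potential theory and exploits in an essential way that $\check U$ is much larger than $\check D$ (here by a factor of $\log N$). The choice $K = [\log N]$ in \eqref{eq:def of boxes at origin}, rather than the large fixed constant of \cite{Szn17}, provides substantial slack and in fact simplifies several intermediate estimates. The remaining ingredients -- i.i.d.\ structure, exponential tails, Chernoff, dyadic discretisation, and the union over boxes -- are entirely standard and parallel closely the analogous arguments in \cite[Section 5]{Szn17}.
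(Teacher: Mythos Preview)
Your proposal is correct and takes essentially the same approach as the paper, which simply refers to \cite[Section~5]{Szn17}: exploit the i.i.d.\ structure of $(N^{(k)})_{k\ge1}$, identify the mean via the sweeping identity $P_{\bar e_{\check D}}[H_D<\infty]=\text{cap}(D)/\text{cap}(\check D)$ together with the return bound $q\le C(\log N)^{-(d-1)}$, derive exponential tails, and conclude by Chernoff plus a union bound over $m\ge\lambda\,\text{cap}(\check D)$ and over the finitely many neighbouring boxes. Your write-up is in fact somewhat more self-contained than the paper's, and your remark that replacing the fixed constant $K$ of \cite{Szn17} by $[\log N]$ gives extra slack matches the paper's own observation.
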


\begin{proof}[Proof of \Cref{lem:bad F}]
The proof of \Cref{lem:bad F} follows in a similar way as the proof of \cite[(5.15)]{Szn17} (more specifically, the paragraphs from Lemma 5.2 till the end of proof of Proposition 5.1). The coupling $\mathbb{Q}_{W,\widetilde{Z}}$ corresponds to the coupling $\mathbb{Q}^{\mathcal{C}}$ in \cite{Szn17}. The notation $\widetilde{Z}_{\ell}$ for i.i.d.~excursions is in line with the same notation in \cite{Szn17}, and the number $m_{0}$ in \cite[(5.10)]{Szn17} is now chosen as $\lambda\cdot\text{cap}(\check{D})$, a polynomial function in $L$. Substituting the sufficiently large constant $K$ in \cite{Szn17} by $\log N$ (which tends to infinity) indeed facilitates the proof when $N$ tends to infinity.
\end{proof}

Under the favourable events $E^{\lambda}_{\check D}(W, \widetilde Z)$ and ${F}_{B,+}^{\lambda}(\widetilde Z)$, we can now couple the excursions $W_{\ell}^{D}$, $W_{\ell}^{D'}$ with excursions $\widetilde{Z}_{\ell}^{D}$, $\widetilde{Z}_{\ell}^{D'}$, which are independent as $x$ varies over $\mathcal{C}_{2}$.
\begin{proposition}\label{prop:coupling between widehatZ and W}
Fix $\eta,\kappa\in (0,\frac{1}{2})$ and $\lambda\in(0,\infty)$. Recall the coupling $\mathbb{Q}_{W,\widetilde{Z}}$ and the events $E^{\lambda}_{\check D}(W, \widetilde Z)$, ${F}_{B,+}^{\lambda}(\widetilde Z)$ defined in \Cref{prop:coupling W and widetildeZ} and \Cref{lem:bad F}. 
Let 
\begin{equation}\label{eq:def of event G}
G_B^{\lambda}(W, \widetilde Z):=E^{\lambda}_{\check D}(W, \widetilde Z) \cap F_{B,+}^{\lambda}(\widetilde Z).  
\end{equation}
Then the events $G_{B}^{\lambda}(W, \widetilde Z)$ (where $B$ actually represents some $B_{x}$) are independent as $x$ varies over $\mathcal{C}_{2}$, and 
\begin{equation}\label{eq:prob of bad G}
\limsup_{N\to\infty}\frac{1}{\text{cap}(\check{D})}\log\mathbb{Q}_{W,\widetilde{Z}}\left[G_B^{\lambda}(W, \widetilde Z)^{c}\right] < -c_{8}(\lambda,\eta,\kappa)<0.   
\end{equation}
Moreover, for every $N\geq c_{9}(\psi,\eta)(\geq c_{5}(\psi,\eta))$, under the event $G_B^{\lambda}(W, \widetilde Z)$, for every $\ell\geq\frac{\lambda}{1-\eta}\cdot\text{cap}(D)$, and any $B'$ neighbouring $B$ and its associated $D'$, we have
\begin{equation}\label{eq:coupling widehatZD and WD}    
    \left\{
\begin{array}{rll}
     {\rm (i)}&\;\; \Big\{\widetilde{Z}_{1}^{D},\ \dots, \ \widetilde{Z}_{\ell}^{D}\Big\} \subseteq \Big\{W_{1}^{D},\dots,W_{(1+\zeta)\ell}^{D}\Big\}; \\
     {\rm (ii)}&\;\;\Big\{W_{1}^{D},\dots,W_{\ell}^{D}\Big\} \subseteq \left\{\widetilde{Z}_{1}^{D},\ \dots,\ \widetilde{Z}_{(1+\zeta)\ell}^{D}\right\},
\end{array}
\right.\quad\text{and}
\end{equation}
\begin{equation}\label{eq:coupling widehatZD' and WD'}   
    \left\{
    \begin{array}{rll}
         {\rm (i)}&\;\; \left\{\widetilde{Z}_{1}^{D'},\ \dots,\ \widetilde{Z}_{\ell}^{D'}\right\} \subseteq \left\{W_{1}^{D'},\dots,W_{(1+\zeta)\ell}^{D'}\right\}; \\
        {\rm (ii)}&\;\;\left\{W_{1}^{D'},\dots,W_{\ell}^{D'}\right\} \subseteq \left\{\widetilde{Z}_{1}^{D'},\ \dots,\ \widetilde{Z}_{(1+\zeta)\ell}^{D'}\right\},
    \end{array}
    \right.
\end{equation}
where we set $\zeta$ as 
\begin{equation}\label{eq:def of zeta}
1+\zeta=\frac{1+\kappa}{1-\kappa}\cdot\frac{(1+4\eta)^{2}}{(1-2\eta)^{2}}.
\end{equation}
\end{proposition}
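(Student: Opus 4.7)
The plan is to harvest the two couplings already in hand. The independence of the $G_{B_x}^{\lambda}$ as $x$ varies over $\mathcal{C}_{2}$ is immediate: by \Cref{prop:coupling W and widetildeZ} the events $E_{\check D_x}^{\lambda}(W,\widetilde Z)$ are independent as $x$ varies, and $F_{B_x,+}^{\lambda}(\widetilde Z)$ is measurable with respect to $\{\widetilde Z_\ell^{\check D_y}\}_{\ell\geq 1,\, y\in \mathcal C_2,\,|x-y|_\infty \leq 10L\log N}$, but the sparsity \eqref{eq:sparsity of C2} together with \eqref{eq:def of widetildeZ} guarantees that the only $y$ involved is $y=x$ itself (since $B_{x'}$ for $x'$ a neighbour of $x$ still sits inside $\check D_x$, so the counts defining $F_{B_x,+}^{\lambda}$ only involve the $\widetilde Z^{\check D_x}$-sequence). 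The probability bound \eqref{eq:prob of bad G} follows from a union bound combined with \eqref{eq:prob of bad E} and \eqref{eq:prob of bad F}, taking $c_8(\lambda,\eta,\kappa)=\frac{1}{2}\min\{c_6(\lambda,\eta),c_7(\lambda,\kappa)\}$.

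The content is the inclusions \eqref{eq:coupling widehatZD and WD}--\eqref{eq:coupling widehatZD' and WD'}. The strategy is a two-scale sandwich: move between $W^D$ and $\widetilde Z^D$ by first passing to the $\check D$-scale where $E^{\lambda}$ gives set containment of excursion families, and then converting back using the $F^{\lambda}$-estimate that controls how many $D$-sub-excursions live inside a given number of $\check D$-excursions. More precisely, fix $\ell\geq \tfrac{\lambda}{1-\eta}\mathrm{cap}(D)$ and set
\begin{equation*}
m_0 \;:=\; \frac{\ell}{1-\kappa}\cdot\frac{\mathrm{cap}(\check D)}{\mathrm{cap}(D)}, \qquad m_0' \;:=\; \frac{1+3\eta}{1-\eta}\,m_0, \qquad M \;:=\; \frac{(1+3\eta)^2}{(1-\eta)^2}\,m_0.
\end{equation*}
The choice of $\ell$ ensures $m_0\geq \lambda\,\mathrm{cap}(\check D)$, so both $F_{B}^{\lambda}$ and $E^{\lambda}$ apply at the scales $m_0,m_0',M$. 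By $F_B^\lambda$ the first $m_0$ excursions $\widetilde Z^{\check D}$ contain at least $(1-\kappa)m_0\,\mathrm{cap}(D)/\mathrm{cap}(\check D)=\ell$ excursions of type $\widetilde Z^D$, so $\widetilde Z_1^D,\dots,\widetilde Z_\ell^D$ are all sub-excursions inside $\{\widetilde Z_1^{\check D},\dots,\widetilde Z_{m_0}^{\check D}\}$. Applying $E^\lambda$ yields $\{\widetilde Z_1^{\check D},\dots,\widetilde Z_{m_0}^{\check D}\}\subseteq\{W_1^{\check D},\dots,W_{m_0'}^{\check D}\}$ (as multisets of trajectories), and since the operation ``extract the $D$-to-$\partial U$ sub-excursions in chronological order'' is monotone in the underlying multiset of $\check D$-trajectories, this gives $\{\widetilde Z_1^D,\dots,\widetilde Z_\ell^D\}\subseteq\{W_1^D,\dots,W_{k'}^D\}$, where $k'$ is the number of $D$-sub-excursions inside the first $m_0'$ excursions $W^{\check D}$.

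To bound $k'$ we run the sandwich in reverse: $E^\lambda$ provides $\{W_1^{\check D},\dots,W_{m_0'}^{\check D}\}\subseteq\{\widetilde Z_1^{\check D},\dots,\widetilde Z_M^{\check D}\}$ and $F_B^\lambda$ then gives $k'\leq (1+\kappa)M\,\mathrm{cap}(D)/\mathrm{cap}(\check D)=\tfrac{1+\kappa}{1-\kappa}\tfrac{(1+3\eta)^2}{(1-\eta)^2}\ell$. Absorbing the ceiling loss from replacing real indices $m_0, m_0', M$ by their integer parts into the slack between $(1+3\eta,1-\eta)$ and $(1+4\eta,1-2\eta)$ (which is justified once $N\geq c_9(\psi,\eta)$ because $\mathrm{cap}(D),\mathrm{cap}(\check D)\asymp L^{d-1}\to\infty$, and the ratio $\mathrm{cap}(\check D)/\mathrm{cap}(D)$ stabilises), we obtain $k'\leq (1+\zeta)\ell$, which is \eqref{eq:coupling widehatZD and WD}(i). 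Exchanging the roles of $\widetilde Z$ and $W$ in exactly the same argument delivers \eqref{eq:coupling widehatZD and WD}(ii). For the neighbouring-box statement \eqref{eq:coupling widehatZD' and WD'} one uses the clause $F_{B'}^\lambda\subseteq F_{B,+}^\lambda$ of \eqref{eq:def of widetildeF}; since $D'$ and $U'$ are still inside $\check D_x$ (as $B'=Le+B$ and $\check D$ has a buffer of width $4L$), the same two-scale extraction argument applies verbatim with $(D,U)$ replaced by $(D',U')$.

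The one nontrivial bookkeeping task, which is the sole real obstacle, is keeping track of the tower of multiplicative constants and ceiling errors so that the final blow-up factor is exactly $(1+\zeta)$ as in \eqref{eq:def of zeta}; the strengthening of $3\eta$ to $4\eta$ (and symmetrically of $1-\eta$ to $1-2\eta$) is the buffer that absorbs all such rounding losses once $N$ is large enough.
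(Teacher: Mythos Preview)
Your proposal is correct and follows the same two-scale sandwich strategy as the paper, which in turn defers to \cite[(5.17)--(5.19)]{Szn17}. The independence and the tail bound are handled exactly as in the paper (combine \eqref{eq:prob of bad E} and \eqref{eq:prob of bad F}), and your derivation of \eqref{eq:coupling widehatZD and WD}(i) via the chain $F_B^\lambda\to E^\lambda\to E^\lambda\to F_B^\lambda$ with the explicit constants $m_0,m_0',M$ is precisely the argument behind \cite[(5.17)--(5.19)]{Szn17}, including the deliberate slack $(1+3\eta,1-\eta)\mapsto(1+4\eta,1-2\eta)$ to absorb integer-part errors.

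One small imprecision: for \eqref{eq:coupling widehatZD and WD}(ii) you write ``exchanging the roles of $\widetilde Z$ and $W$'', but the roles are not literally symmetric because $F_B^\lambda$ is formulated only for the $\widetilde Z^{\check D}$-sequence, not for $W^{\check D}$. The fix is immediate and uses the same ingredients: to locate the first $\ell$ excursions $W^D$ inside some initial segment of the $W^{\check D}$-sequence, first apply $E^\lambda$ (direction \eqref{eq:inclusion checkD_1}) and then the lower bound in $F_B^\lambda$ to see that the first $(1+3\eta)m$ excursions $W^{\check D}$ contain at least $(1-\kappa)(1-\eta)m\,\mathrm{cap}(D)/\mathrm{cap}(\check D)$ $D$-sub-excursions; choosing $m$ so that this equals $\ell$ gives the required $k$, after which the second half of your (i)-argument (now $E^\lambda$ in direction \eqref{eq:inclusion checkD_2} followed by the upper bound in $F_B^\lambda$) yields the same blow-up factor $\tfrac{1+\kappa}{1-\kappa}\tfrac{(1+3\eta)^2}{(1-\eta)^2}\leq 1+\zeta$. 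This is still two applications of $E^\lambda$ and two of $F_B^\lambda$, just in a different order, so your overall scheme is sound.
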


\begin{proof}
The proof follows from combining \Cref{prop:coupling W and widetildeZ} and \Cref{lem:bad F}. The independence of the events $G_{B}^{\lambda}(W, \widetilde Z)$ follows from those of $E^{\lambda}_{\check D}(W, \widetilde Z)$ and ${F}_{B,+}^{\lambda}(\widetilde Z)$. The proof of \eqref{eq:prob of bad G} follows directly from \eqref{eq:prob of bad E} and \eqref{eq:prob of bad F}. The proofs of \eqref{eq:coupling widehatZD and WD} and \eqref{eq:coupling widehatZD' and WD'} are analogous to those of (5.12) and (5.13) in \cite{Szn17} (more specifically, the paragraphs containing (5.17)-(5.19)). Here the excursions $W_{\ell}^{D}$ and $W_{\ell}^{D'}$ in this work correspond to the random interlacements excursions $Z_{\ell}^{D}$ and $Z_{\ell}^{D'}$ in \cite{Szn17}, while the excursions $\widetilde{Z}_{\ell}^{D}$ and $ \widetilde{Z}_{\ell}^{D'}$ extracted from i.i.d.~excursions within the larger box correspond to excursions $\widehat Z_{\ell}^D$ and $\widehat Z_{\ell}^{D'}$ in the same reference. 
The coupling $\mathbb{Q}_{W,\widetilde{Z}}$ and the constants $\eta$, $\kappa$ and $\zeta$ in our statement match the coupling $\mathbb{Q}^{\mathcal{C}}$ and the constants $\delta$, $\kappa$ and $\widehat{\delta}$ respectively in \cite{Szn17}. 
In \eqref{eq:def of event G}, the events $G_B^{\lambda}(W, \widetilde Z), E^{\lambda}_{\check D}(W, \widetilde Z)$ and ${F}_{B,+}^{\lambda}(\widetilde Z)$ respectively correspond to the events $\widetilde{G}_{B}$, $\widetilde{U}_{\check{D}}^{m_{0}}$ and $\widetilde{U}_{\check{D}}^{m_{0}}\setminus \widetilde{G}_{B}$ in \cite[(5.6) and (5.10)]{Szn17}, and the bound $\ell\geq\frac{\lambda}{1-\eta}\cdot\text{cap}(D)$ corresponds to the bound $m\geq m_{0}/(1-\delta)$ in \cite{Szn17}.
\end{proof}

We then give the third coupling of the excursions $\widetilde{Z}_{\ell}^{D},\widetilde{Z}_{\ell}^{D'}$ with the excursions of random interlacements. Recall that we have defined $Z_{\ell}^{D,U},\,{\ell\geq 1}$ in \eqref{eq:def of interlacement excursions} as interlacement excursions from $D$ to $\partial U$. In accordance with our former notation, we simply write $Z_{\ell}^{D}$ in place of $Z_{\ell}^{D,U}$, and let $Z_{t}^{D}$ stand for $Z_{[t]}^{D}$ when $t\geq 1$.

\begin{proposition}\label{lem:coupling between widetildeZ and Z}
There exists a coupling $\mathbb{Q}_{\widetilde{Z},Z}$ between the law of $\big((n_{\check{D}_{x}}(0,t))_{t\geq 0},\{\widetilde{Z}_{\ell}^{\check{D}_{x}}\}_{\ell\geq 1}\big),x\in\mathcal{C}_{2}$ (see \Cref{prop:coupling W and widetildeZ}) and $\PP$, the law of random interlacements such that, for every $\lambda\in(0,\infty)$ and $\widehat{\zeta}\in(0,\frac{1}{2})$, there exist events $H_{B_{x}}^{\lambda }(\widetilde Z, Z)$ that are independent as $x$ varies over $\mathcal C_2$ and a constant $c_{10}(\widehat{\zeta})$ satisfying for each fixed box  $B=B_{x},x\in\mathcal{C}_{2}$, 
\begin{equation}\label{eq:prob of bad H}
\limsup_{N\to\infty}\frac{1}{\text{\rm cap}(\check{D})}\log\mathbb{Q}_{\widetilde{Z},Z}\left[H_B^{\lambda }(\widetilde Z, Z)^c\right] < -c_{11}(\lambda,\widehat{\zeta})<0.
\end{equation}
Moreover, under the event $H_B^{\lambda }(\widetilde Z, Z)$, for every $N\geq c_{10}(\widehat{\zeta})$ and $\ell\geq \lambda\cdot\text{\rm cap}(D)$, and any $B'$ neighbouring $B$ and its associated $D'$, we have 
\begin{equation}\label{eq:coupling widehatZD and ZD}
\left\{
\begin{array}{rll}
     {\rm (i)}&\;\; \left\{\widetilde{Z}_{1}^{D},\dots,\widetilde{Z}_{\ell}^{D}\right\}   \subseteq \left\{Z_{1}^{D},\dots,Z_{(1+\widehat{\zeta})\ell}^{D}\right\}; \\
     {\rm (ii)}&\;\;\Big\{Z_{1}^{D},\dots,Z_{\ell}^{D}\Big\}  \subseteq \left\{\widetilde{Z}_{1}^{D},\dots,\widetilde{Z}_{(1+\widehat{\zeta})\ell}^{D}\right\},
\end{array}
\right.\quad\text{and}
\end{equation}
\begin{equation}\label{eq:coupling widehatZD' and ZD'}  
    \left\{
\begin{array}{rll}
     {\rm (i)}&\;\; \left\{\widetilde{Z}_{1}^{D'},\dots,\widetilde{Z}_{\ell}^{D'}\right\} \subseteq \left\{Z_{1}^{D'},\dots,Z_{(1+\widehat{\zeta})\ell}^{D'}\right\};\\
     {\rm (ii)}&\;\;\left\{Z_{1}^{D'},\dots,Z_{\ell}^{D'}\right\} \subseteq \left\{\widetilde{Z}_{1}^{D'},\dots,\widetilde{Z}_{(1+\widehat{\zeta})\ell}^{D'}\right\}.
\end{array}
\right.
\end{equation}
\end{proposition}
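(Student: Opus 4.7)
The plan is to mirror the construction of \Cref{prop:coupling W and widetildeZ} and \Cref{prop:coupling between widehatZ and W}, with the simple random walk trajectory on $\mathbb E$ replaced by the Poisson cloud of interlacement trajectories on $\mathbb Z^{d+1}$. First, I would couple the i.i.d.~sequences $\{\widetilde Z_\ell^{\check D_x}\}_{\ell \geq 1}$ (together with the auxiliary Poisson counters $n_{\check D_x}$) with the interlacement excursions $\{Z_\ell^{\check D_x}\}_{\ell \geq 1}$ by a soft local time construction identical in form to that of \Cref{prop:coupling W and widetildeZ}. The two key inputs are: (i) that successive interlacement excursions at $\check D_x$ start with distribution within $(1\pm \eta/3)$ of $\overline e_{\check D_x}$---the first excursion of each trajectory $w_i^*$ enters $\check D_x$ with distribution proportional to $e_{\check D_x}$ by the construction of $\mu_{\check D_x,u}$ in \Cref{subsec:interlacements}, while subsequent re-entries from $\partial \check U_x$ satisfy the same approximation by \Cref{prop:hitting distribution two boxes simple} (with the role of $K$ played by $\log N$, thanks to \eqref{eq:sparsity of boxes in mathcalC2}); and (ii) that the number $N_u^{\check D_x}$ of interlacement excursions at $\check D_x$ below level $u$ is, up to sharp Poissonian concentration, linear in $u$ with slope of order $\text{cap}(\check D)$, which is exactly what the counter $n_{\check D_x}$ is designed to track.

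Second, I would extract the sub-excursions $\widetilde Z_\ell^{D},\widetilde Z_\ell^{D'}$ and $Z_\ell^{D}, Z_\ell^{D'}$ exactly as in \Cref{prop:coupling between widehatZ and W}, and define $H^\lambda_B(\widetilde Z, Z)$ as the intersection of the soft-local-time success event at scale $\check D$ (the analogue of $E^\lambda_{\check D}(W,\widetilde Z)$ from \eqref{eq:def of widetildeE}) with an extraction-proportion event at scales $D$ and $D'$ (the analogue of $F^\lambda_{B,+}(\widetilde Z)$ from \eqref{eq:def of widetildeF}), applied this time to the interlacement excursions. The inclusions \eqref{eq:coupling widehatZD and ZD}--\eqref{eq:coupling widehatZD' and ZD'} would then follow by the same bookkeeping as in the proof of \Cref{prop:coupling between widehatZ and W}, with the combined error factor $1+\widehat\zeta$ playing the role of $1+\zeta$ in \eqref{eq:def of zeta}. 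The probability bound \eqref{eq:prob of bad H} is obtained from Poisson concentration (for the number of trajectories and excursions) together with a union bound of the two contributions, each exponentially small in $\text{cap}(\check D)$.

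The main obstacle will be the independence of $H^\lambda_{B_x}(\widetilde Z, Z)$ across $x \in \mathcal C_2$. Unlike in \Cref{prop:coupling W and widetildeZ}, where independence was hard-wired by taking independent $n_{\check D_x}$ and $\{\widetilde Z_\ell^{\check D_x}\}$, here a single interlacement trajectory $w_i^*$ can \emph{a priori} contribute excursions at several distinct $\check D_x$, creating non-trivial correlations. To resolve this I would split the Poisson cloud into trajectories visiting exactly one $\check D_x$ (which are jointly independent by the coloring/restriction property of Poisson point processes) and trajectories visiting at least two; the sparsity \eqref{eq:sparsity of boxes in mathcalC2}, combined with the Green's function decay \eqref{eq:decay of green function} and a union bound over pairs $x,y \in \mathcal C_2$, shows that the expected capacity carried by ``multi-visit'' trajectories below any fixed level is negligible compared to $\text{cap}(\check D)$, so that these can be discarded into the error term at cost exponentially small in $\text{cap}(\check D)$. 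Once this decoupling step is in place, the remainder of the argument is a direct transcription of \Cref{prop:coupling W and widetildeZ} and \Cref{prop:coupling between widehatZ and W} into the interlacement setting.
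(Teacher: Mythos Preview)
Your first two paragraphs correctly capture the construction---soft local time at the $\check D$ scale followed by an $F$-type extraction event at scales $D,D'$---and this matches the paper, which simply invokes \cite[Proposition~5.1]{Szn17} with the constant $K$ there replaced by $\log N$ and $m_0$ by $\lambda\,\text{cap}(\check D)$. The gap is in your third paragraph.

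Your plan to discard multi-visit trajectories does not work: in this regime almost \emph{every} interlacement trajectory through $\check D_x$ is multi-visit. Summing the hitting bound $c\,L^{d-1}/d(x,y)^{d-1}$ over dyadic shells of $y\in\mathcal C_2\setminus\{x\}$ inside $\mathsf B$ (with separation $\sim L\log N$ by \eqref{eq:sparsity of boxes in mathcalC2}, hence $\lesssim 2^{k(d+1)}$ points at scale $2^k L\log N$), a trajectory exiting $\check U_x$ hits in expectation of order $N^{2(1-\psi)}/\log^{c}N\gg 1$ other boxes $\check D_y$; your discarding step would throw away essentially all excursions rather than a negligible fraction of them. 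The actual mechanism for independence in \cite{Szn17} (via \cite[Lemma~2.1]{CGP13}) is structural, not quantitative: the soft local time construction \emph{starts} from independent Poisson point processes $\eta_x$, one per $x\in\mathcal C_2$, and reads off both $\{\widetilde Z_\ell^{\check D_x}\}$ (by ordering the second coordinate) and $\{Z_\ell^{\check D_x}\}$ (via the soft local time curve) from $\eta_x$. The good event $H_{B_x}^{\lambda}$---Poisson-clock regularity intersected with $F_{B_x,+}^{\lambda}$---is measurable with respect to $\eta_x$ alone, so independence across $\mathcal C_2$ is automatic. The correlations between boxes live entirely in the soft-local-time increments, which depend on the entrance law of each successive $Z$-excursion; but \Cref{prop:hitting distribution two boxes simple} pins these increments uniformly within $(1\pm\eta/3)$ of $\overline e_{\check D_x}$ \emph{regardless} of what the trajectory did at other boxes, and that uniform bound is all the coupling needs to guarantee the inclusions on the $\eta_x$-measurable good event.
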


\begin{proof}
\Cref{lem:coupling between widetildeZ and Z} is an adapted version of Proposition 5.1 in \cite{Szn17}, and we explain the adaptations needed here.

For fixed constants $\lambda\in(0,\infty)$ and $\widehat{\zeta}\in(0,\frac{1}{2})$, we replace $K$ in \cite[(3.7),(3.9)]{Szn17} by $\log N$ (see \eqref{eq:def of boxes at origin}), and set $m_{0}$ in \cite[Section 5]{Szn17} as $\lambda\cdot\text{cap}(\check{D})$ (which is a polynomial function of $L$). We fix constants $\eta,\kappa,\widehat{\delta}$  which satisfy the relation (5.14) in the reference, so that $\widehat{\delta}$ is equal to $\widehat{\zeta}$. After fixing all these constants, for any $B=B_{x},x\in\mathcal{C}_{2}$, we then adjust accordingly the definition of event $\widetilde{G}_{B}$ in (5.10) of the reference into the event $\widetilde H_B^{\lambda}(\widetilde Z, Z)$ here. 

We now choose our coupling $\mathbb{Q}_{\widetilde{Z},Z}$  and event $H^{\lambda }_B(\widetilde Z, Z)$ in \Cref{lem:coupling between widetildeZ and Z} as the adapted $\mathbb{Q}^{\mathcal{C}}$ and the adapted $\widetilde{G}_{B}$ (whose definition depends on $\lambda$). The independence of $\widetilde H_B^{\lambda}(\widetilde Z, Z)$ as $B=B_{x}$ varies follows from that of the events $\widetilde{G}_{B}$'s. The estimate \eqref{eq:prob of bad H} and inclusions \eqref{eq:coupling widehatZD and ZD}-\eqref{eq:coupling widehatZD' and ZD'} then follow respectively from the adapted (5.15) and (5.12)-(5.13) in \cite{Szn17}.
Indeed, it is not difficult to check that the minor adaptations do not affect the original proof. In particular, by the bounds (5.20), (5.27) and (5.30) in \cite{Szn17}, the probability that $\widetilde{G}_{B}$ fails decays exponentially in $m_{0}$, which, in our adapted version, means decaying exponentially in $-\text{cap}(\check{D})$ and streched-exponentially in $L$. This bound, which is stronger than the original bound (5.15) in the reference, will be taken as \eqref{eq:prob of bad H}. It is also worth mentioning that the adapted version of the condition (5.11) in \cite{Szn17} now holds as long as $N$ is sufficiently large, since $K$ has been substituted into $\log N$.
\end{proof}

\begin{remark}\label{rem:improve coupling lemma}
Note that in the statement of \Cref{lem:coupling between widetildeZ and Z}, we couple $\{Z_{\ell}^{D_{x}}\}_{\ell\geq 1},\{Z_{\ell}^{D_{x'}}\}_{\ell\geq 1}$ with $\{\widetilde{Z}_{\ell}^{D_{x}}\}_{\ell\geq 1},\{\widetilde{Z}_{\ell}^{D_{x'}}\}_{\ell\geq 1}$ \emph{simultaneously} for all $x\in\mathcal{C}_{2}$ (here $\bigcup_{x\in\mathcal{C}_{2}}\check{U}_{x}\subseteq\mathsf{B}$ is seen as a subset of $\mathbb{Z}^{d+1}$), in the sense that we require that ${H}_{B_x}^{\lambda}(\widetilde Z, Z)$'s are independent as $x$ varies. However, as we shall see, the independence property is unnecessary in the following proofs of this section, but will be useful in \Cref{sec:bad local time}; see the proof of \Cref{prop:irregular}.
\end{remark}

\subsection{Bounding the probability of a surface of bad boxes}\label{subsec:proof of prop bad(beta-gamma)}

In this section, we conclude the proof of \Cref{prop:bad(beta-gamma)}. Note that in view of \Cref{prop:coupling between widehatZ and W}, under the coupling $\mathbb{Q}_{W,\widetilde{Z}}$, if the events  ${G}_{B_x}^{\lambda}(W, \widetilde Z)$'s hold simultaneously for every $x\in\mathcal{C}_{2}$, then the estimate for the probability that $B_x$ is bad$(\beta, \gamma)$ for all $x\in \mathcal C_1$ in \eqref{eq:bad(beta-gamma)} can be expressed in terms of an intersection of independent events (which will then be defined as $\widehat{\text{bad}}(\widehat{\beta},\widehat{\gamma})$), each of which is characterized by the excursions $\widetilde{Z}_{\ell}^{D},\widetilde{Z}_{\ell}^{D'}$ for some $L$-neighbouring $D, D'$. 
The probability of this event can then be bounded from above using \Cref{lem:coupling between widetildeZ and Z} as well as the property of $\overline{u}$ in the strong percolative regime (see \eqref{eq:def of ubar}).

Since the excursions $\widetilde{Z}_{\ell}^{\check{D}_{x},\check{U}_{x}},\,{\ell\geq 1}$ have the same law for different $x$, it suffices to establish the bound for one arbitrary set of boxes. For this reason, we still write $B,D,B',D'$ in place of a specific choice $B_{x},D_{x},B_{x'},D_{x'}$ in the following. Let us first define the ``good'' event with respect of excursions $\widetilde{Z}_{\ell}^{D},\widetilde{Z}_{\ell}^{D'}$. Recall the existence and uniqueness events in \eqref{def:exist event} and \eqref{def:unique event}. 

\begin{definition}[Definition of $\widehat{\text{good}}$ boxes]\label{def:widehatgood}
For any two constants $0<\widehat{\gamma}<\widehat{\beta}$ and an $L$-box $B$ (which is seen as a box in $\mathbb{Z}^{d+1}$ here), we define $\widehat{\text{good}}(\widehat{\beta},\widehat{\gamma})$ event as the intersection of events $\text{Exist}(B, \widetilde{Z}, \widehat{\beta})$ and $\text{Unique}(B, \widetilde{Z}, \widehat{\beta}, \widehat{\gamma})$. 
When $B$ is not $\widehat{\text{good}}(\widehat{\beta},\widehat{\gamma})$, we say $B$ is $\widehat{\text{bad}}(\widehat{\beta},\widehat{\gamma})$.
\end{definition}
In the following, we will consider the complements of $\text{good}$ and $\widehat{\text{good}}$ events in turn with help of the coupling between $Z$ and $\widetilde{Z}$, with parameters changed each step.

We then bound the probability of $\widehat{\text{bad}}(\widehat{\beta},\widehat{\gamma})$ with the help of the coupling \Cref{lem:coupling between widetildeZ and Z}, where $\widehat{\beta}>\widehat{\gamma}$ are two arbitrary constants in $(0,\overline{u})$. Recall that $L=[N^{\psi}]$ for some fixed $1/d<\psi<1$. 

\begin{lemma}\label{lem:widehatgood(beta-gamma)}
Let $B$ be an $L$-box. For any $\widehat{\beta}>\widehat{\gamma}$ in $(0,\overline{u})$, there exists a constant $c_{12}=c_{12}(\psi,\widehat{\beta},\widehat{\gamma})$ such that, 
\begin{equation}\label{eq:prob of widehatbad}
\liminf_{N\to\infty}\frac{1}{\log N}\log\left(-\log\PP\left[B\text{ is }\widehat{\text{\rm bad}}(\widehat{\beta},\widehat{\gamma})\right]\right)> c_{12}>0.    
\end{equation}
\end{lemma}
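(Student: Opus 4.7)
The plan is to transfer the event that $B$ is $\widehat{\text{bad}}(\widehat\beta,\widehat\gamma)$, formulated in terms of the i.i.d.\ excursions $\widetilde Z$ from \eqref{eq:def of widetildeZ}, into an event about the interlacement vacant set $\V^u$ at appropriate levels $u\in(0,\overline u)$, so that the strongly percolative property \eqref{eq:def of strongly percolate} built into the definition \eqref{eq:def of ubar} of $\overline u$ directly controls its probability. First I would pick a small $\widehat\zeta\in(0,1/2)$ and three levels $u^{*},u^{\#},v^{\#}$ in $(0,\overline u)$ such that $u^{*}$ and $u^{\#}$ are close to $\widehat\beta$ with $u^{*}>\widehat\beta>u^{\#}$, while $v^{\#}$ is close to $\widehat\gamma$ with $v^{\#}>\widehat\gamma$ and $u^{\#}>v^{\#}$; such a choice is possible for $\widehat\zeta$ small enough since $\widehat\gamma<\widehat\beta<\overline u$. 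I then apply \Cref{lem:coupling between widetildeZ and Z} with parameters $\widehat\zeta$ and $\lambda:=\widehat\gamma$, obtaining an event $H^{\lambda}_B(\widetilde Z,Z)$ whose complement has probability of order $\exp(-cL^{d-1})$ (by \eqref{eq:prob of bad H} combined with \eqref{eq:capacity of a box}), on which the inclusions \eqref{eq:coupling widehatZD and ZD}-\eqref{eq:coupling widehatZD' and ZD'} hold for every $\ell\geq\widehat\gamma\cdot\text{cap}(D)$.

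Next, I introduce a Poisson-concentration event $F_N$ that controls $|N_u^{D,U}-u\cdot\text{cap}(D)|$ at the three relevant levels; standard exponential Chebyshev for Poisson random variables yields $\PP[F_N^c]\leq\exp(-cL^{d-1})$. On $H^{\lambda}_B\cap F_N$ I translate $\widehat{\text{bad}}(\widehat\beta,\widehat\gamma)$ into a bad event for interlacements. In the existence case, inclusion (i) of \eqref{eq:coupling widehatZD and ZD} taken at $\ell=\widehat\beta\cdot\text{cap}(D)$ gives $B\setminus\bigcup_{\ell\leq\widehat\beta\cdot\text{cap}(D)}\text{range}(\widetilde Z^D_\ell)\supseteq B\cap\V^{u^{*}}$, so the failure of $\text{Exist}(B,\widetilde Z,\widehat\beta)$ forces the non-existence of a connected component of diameter $\geq L/5$ in $B\cap\V^{u^{*}}$; since $u^{*}<\overline u$, this event has probability at most $C\exp(-L^c)$ by \eqref{eq:def of strongly percolate}. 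In the uniqueness case, inclusion (ii) and $F_N$ place the two offending connected subsets inside $B\cap\V^{u^{\#}}$ and $B'\cap\V^{u^{\#}}$, while inclusion (i) and $F_N$ transfer the non-connection at level $\widehat\gamma$ into a non-connection inside $D\cap\V^{v^{\#}}$; this is the failure of a uniqueness-type event at levels $u^{\#}>v^{\#}$ both in $(0,\overline u)$, which is again bounded by $C\exp(-L^c)$ via \eqref{eq:def of strongly percolate}.

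Combining these estimates yields $\PP[B\text{ is }\widehat{\text{bad}}(\widehat\beta,\widehat\gamma)]\leq C'\exp(-L^{c'})$ for some $c'=c'(\psi,\widehat\beta,\widehat\gamma)>0$. Since $L=[N^{\psi}]$, this gives $-\log\PP\geq cN^{\psi c'}$ for $N$ large, so $\liminf_{N\to\infty}\frac{1}{\log N}\log(-\log\PP[\,\cdot\,])\geq\psi c'$, establishing \eqref{eq:prob of widehatbad} with any $c_{12}<\psi c'$. The main technical obstacle lies in the uniqueness case: the diameter thresholds $L/5$ and $L/10$ in \eqref{def:exist event}-\eqref{def:unique event} need to be reconciled with the thresholds $R/10$ in the strong-percolation definitions \eqref{eq:def of existence}-\eqref{eq:def of uniqueness}. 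This requires applying \eqref{eq:def of strongly percolate} at a suitable radius $R$ (e.g., $R=L$ centered at the midpoint of $B$ and $B'$, so that $B\cup B'$ lies inside a ball of radius $L$ and the enlarged ball of radius $2L$ is contained in $D$), possibly together with a chaining argument across overlapping balls of radius $\sim L$ covering $B$, $B'$, and $D$ to promote the $L/10$-diameter subsets to clusters meeting the $R/10$-threshold.
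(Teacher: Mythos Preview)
Your approach is essentially the same as the paper's. Both proofs use the coupling $\mathbb{Q}_{\widetilde Z,Z}$ from \Cref{lem:coupling between widetildeZ and Z} to transfer $\widehat{\text{bad}}(\widehat\beta,\widehat\gamma)$ into events about the interlacement excursions $Z^D_\ell$, and then invoke Poisson concentration together with the strongly percolative property \eqref{eq:def of strongly percolate}. The only organizational difference is that the paper packages the second step as a citation: it lands on $\text{fail}_1\big(B,Z,\widehat\beta(1+\widehat\zeta)\big)$ and $\text{fail}_2\big(B,Z,\widehat\beta/(1+\widehat\zeta),\widehat\gamma(1+\widehat\zeta)\big)$ and then appeals to (the proof of) \cite[Theorem 3.3]{Szn17}, which is precisely where the Poisson concentration and the reconciliation of the $L/5$, $L/10$ thresholds with the $R/5$, $R/10$ thresholds of \eqref{eq:def of existence}--\eqref{eq:def of uniqueness} are carried out. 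You have unpacked that citation explicitly and correctly identified the threshold-matching issue; your suggested fix (center at the midpoint of $B$ and $B'$ with $R=L$, so that $B\cup B'\subseteq B(\cdot,R)$ and $B(\cdot,2R)\subseteq D$) works. One minor point: taking $\lambda=\widehat\gamma$ places you exactly at the boundary of the range where \eqref{eq:coupling widehatZD and ZD} applies; the paper takes $\lambda<\widehat\gamma/10$ to have slack, and you should do the same.
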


\Cref{lem:widehatgood(beta-gamma)} is in some sense a baby version of \Cref{prop:bad(beta-gamma)}, whose idea is quite similar to that of Theorem 6.1 in \cite{Szn17}. 
\begin{proof}
Let us fix a sufficiently small positive $\lambda$ and $\widehat{\zeta}\in(0,1/10)$ such that 
\begin{equation}\label{eq:choice of hatzeta}
\lambda<\frac{\widehat{\gamma}}{10}, \quad \overline{u}>(1+\widehat{\zeta})\widehat{\beta}\quad\text{and}\quad\frac{\widehat{\beta}}{1+\widehat{\zeta}}>(1+\widehat{\zeta})\widehat{\gamma}.  
\end{equation}
Recall the coupling $\mathbb{Q}_{\widetilde{Z},Z}$ in \Cref{lem:coupling between widetildeZ and Z} between $\widetilde{Z}$ (i.i.d.~excursions) and $Z$ (interlacements excursions), on the event ${H}_{B}^{\lambda}(\widetilde Z, Z)$, by definitions \eqref{def:exist event}-\eqref{def:fail event}, we have
\begin{equation}\label{eq:fail widetildeZ to Z}
\begin{split}
\text{fail}_1\left(B,\widetilde{Z}, \widehat{\beta}\right)&\overset{\eqref{eq:coupling widehatZD and ZD}{\rm (i)} \text{ and } \eqref{eq:choice of hatzeta}}{\subseteq }\text{fail}_1\left(B,Z, \widehat \beta(1 +\widehat \zeta)\right);\\
\text{fail}_2\left(B,\widetilde Z, \widehat \beta, \widehat \gamma\right) &\overset{\eqref{eq:coupling widehatZD and ZD}, \eqref{eq:coupling widehatZD' and ZD'}(\rm ii)\text{ and }\eqref{eq:choice of hatzeta}}{\subseteq} \text{fail}_2\Big(B,Z, \frac{\widehat{\beta}}{1+\widehat \zeta}, \widehat \gamma(1+\widehat \zeta)\Big).
\end{split}
\end{equation}
Therefore, combining the definition of $\widehat{\text{bad}}$ event \Cref{def:widehatgood}, 
\begin{equation}\label{eq:bad to fail Z}
\PP\left[B\text{ is }\widehat{\text{bad}}(\widehat{\beta},\widehat{\gamma})\right]\leq \mathbb{Q}_{\widetilde{Z},Z}\left[H_{B}^{\lambda}(\widetilde{Z},Z)^c\right]+\PP\left[\text{fail}_1(B,Z, \widehat \beta(1 +\widehat \zeta))\right]+\PP\Big[\text{fail}_2(B,Z, \frac{\widehat \beta}{1+\widehat \zeta}, \widehat \gamma(1+\widehat \zeta))\Big].    
\end{equation}

Now by the same proof of \cite[Theorem 3.3]{Szn17}, we can obtain a stronger version of the original statement, that is, 
\begin{equation}\label{eq:prob of Fail1 widehatZ and Z}
\begin{split}
\liminf_{N\to\infty}\frac{1}{\log N}&\log\Big(-\log\PP\big[\text{fail}_1\big(B,Z, \widehat \beta(1 +\widehat \zeta)\big)\big]\Big)>c(\psi,\widehat \beta,\widehat \gamma)>0; \quad \\
\liminf_{N\to\infty}\frac{1}{\log N}&\log\Big(-\log\PP\Big[\text{fail}_2\Big(B,Z, \frac{\widehat \beta}{1+\widehat \zeta}, \widehat \gamma(1+\widehat \zeta) \Big)\Big]\Big)>c(\psi,\widehat \beta,\widehat \gamma)>0. 
\end{split}
\end{equation}
Indeed, we can use standard exponential Chebyshev's inequality for Poisson variables to bound the first three terms in (3.21) of \cite{Szn17}, and then use \eqref{eq:def of strongly percolate} to bound the last term in (3.21) of \cite{Szn17}, where our choice of $\widehat{\zeta}$ in \eqref{eq:choice of hatzeta} comes into play. Combining \eqref{eq:prob of bad H},\eqref{eq:bad to fail Z} and \eqref{eq:prob of Fail1 widehatZ and Z} then yields \eqref{eq:prob of widehatbad}.
\end{proof}

We now complete the proof of \Cref{prop:bad(beta-gamma)}, that is,  a $d$-dimensional coarse-grained surface of bad$(\beta, \gamma)$ boxes  exists only with very small probability.
\begin{proof}[Proof of \Cref{prop:bad(beta-gamma)}]
Since $\mathcal{C}_{2}$ is a subset of $\mathcal{C}_{1}$, to prove \eqref{eq:bad(beta-gamma)}, it suffices to prove 
\begin{equation}\label{eq:bad(beta-gamma) C2}
\lim_{N\to\infty}\frac{1}{|\mathcal{C}_{1}|\log N}\log P_{0}^{N}\Big[\bigcap_{x\in\mathcal{C}_{2}}\big\{B_x\text{ is bad}(\beta,\gamma)\big\}\Big]=-\infty.    
\end{equation}
For fixed  $\overline{u}-\delta<\gamma<\beta<\overline{u}$,
we choose positive $\eta=\eta(\beta,\gamma)$, $\kappa=\kappa(\beta,\gamma)$,  $\lambda=\lambda(\eta,\gamma)$ and $\zeta$ so that 
\begin{equation}\label{eq:choice of zeta}
\overline{u}>(1+\zeta)\beta,\quad\frac{\beta}{1+\zeta}>(1+\zeta)\gamma,\quad \text{and}\quad\lambda<\frac{(1-\eta)\gamma}{10}.
\end{equation}
In this way, the constants satisfy the requirements of \Cref{prop:coupling between widehatZ and W}.

Now if all the boxes $B_x$, $x\in\mathcal{C}_{2}$ are $\text{bad}(\beta,\gamma)$, then under the coupling $\mathbb{Q}_{W,\widetilde{Z}}$, there exists a subset $\mathcal C_3$ of $\mathcal{C}_{2}$ with cardinality $[|\mathcal{C}_{2}|/3]$, such that the $L$-boxes in $B_{x},{x\in \mathcal C_3}$ either all fail to satisfy ${G}_{B_x}^{\lambda}(W,\widetilde Z)$ (recall \eqref{eq:def of event G}), or are all $\text{bad}(\beta,\gamma)$ while fulfilling ${G}_{B_x}^{\lambda}(W,\widetilde Z)$. For a fixed $\mathcal C_3$, we denote 
\begin{equation}
    \text{bad}_1(\mathcal C_3) := \bigcap_{x\in \mathcal C_3}{G}_{B_x}^{\lambda}(W,\widetilde Z)^c, \quad \text{bad}_2(\mathcal C_3) := \bigcap_{x\in \mathcal C_3} \left(\{B_x \text{ is bad}(\beta, \gamma)\}\cap {G}_{B_x}^{\lambda}(W,\widetilde Z)\right).
\end{equation}
We bound the probabilities of the above two events respectively, and then use a union bound on $\mathcal C_3$ to conclude. 

We first deal with the event $\text{bad}_1(\mathcal C_3)$. By \Cref{prop:coupling between widehatZ and W}, 
combining the estimate \eqref{eq:prob of bad G}, we have for sufficiently large $N\geq N(\psi,\beta,\gamma,\lambda,\zeta)$, for each $B = B_x$, $x\in \mathcal C_3$,
\begin{equation}\label{eq:plug in prob of bad G}
\mathbb{Q}_{W,\widetilde{Z}}\left[{G}_{B}^{\lambda}(W,\widetilde Z)^{c}\right]\leq \exp\left(-c_{8}\text{cap}(\check{D})\right)\leq \exp\left(-cc_{8}L^{d-1}\right)=\exp\left(-cc_{8}N^{(d-1)\psi}\right).
\end{equation}
Note that under the coupling $\mathbb{Q}_{W,\widetilde{Z}}$, the events ${G}_{B_x}^{\lambda}(W,\widetilde Z)$ are independent as $x$ varies over $\mathcal{C}_{2}$. 
Therefore, it follows that for  sufficiently large $N$,
\begin{equation}\label{eq:prob of M1S}
\mathbb{Q}_{W,\widetilde{Z}}\big[\text{bad}_1(\mathcal C_3)\big]\leq \mathbb{Q}_{W,\widetilde{Z}}\left[{G}_{B}^{\lambda}(W,\widetilde Z)^{c}\right]^{|\mathcal C_3|}\overset{\eqref{eq:plug in prob of bad G}}{\leq} e^{-cc_{8}N^{(d-1)\psi}|\mathcal C_3|}\leq e^{-cc_{8}N^{(d-1)\psi}|\mathcal C_2|}. 
\end{equation}

We now turn to $\text{bad}_{2}(\mathcal C_3)$. Under the coupling $\mathbb{Q}_{W,\widetilde{Z}}$, for a fixed box $B=B_{x}, {x\in \mathcal C_3}$, on the event ${G}_{B}^{\lambda}(W,\widetilde Z)$, if $B$ is $\text{bad}(\beta,\gamma)$, then $\text{fail}_1(B,W, \beta)\cup \text{fail}_2(B,W, \beta, \gamma)$ happens. 
Following a similar analysis as the proof of \Cref{lem:widehatgood(beta-gamma)}, combining the coupling \eqref{eq:coupling widehatZD and WD}-\eqref{eq:coupling widehatZD' and WD'} between $\widetilde Z^D, \widetilde Z^{D'}$ and $W^D, W^{D'}$ and the definition of $\lambda$ in \eqref{eq:choice of zeta},
\begin{equation}
\begin{split}
    \text{fail}_1(B,W, \beta) &\overset{\eqref{eq:coupling widehatZD and WD}\text{(ii) and }\eqref{eq:choice of zeta}}{\subseteq }\text{fail}_1(B,\widetilde Z, \beta(1+\zeta)); \\ 
    \text{fail}_2(B,W, \beta, \gamma) &\overset{\eqref{eq:coupling widehatZD and WD},\eqref{eq:coupling widehatZD' and WD'}\text{(i) and }\eqref{eq:choice of zeta}}{\subseteq}  \text{fail}_2\Big(B,\widetilde{Z}, \frac{\beta}{(1 + \zeta)}, \gamma(1+\zeta)\Big).
\end{split}
\end{equation}
Therefore, if $B$ is $\text{bad}(\beta,\gamma)$, then $B$ is either  $\widehat{\text{bad}}(\beta(1+\zeta),\gamma)$ or $\widehat{\text{bad}}(\beta/(1+\zeta),\gamma(1+\zeta))$.
Furthermore, by \Cref{lem:widehatgood(beta-gamma)} and our choice of $\zeta$ in \eqref{eq:choice of zeta}, for sufficiently large $N\geq N(\psi,\beta,\gamma,\lambda,\zeta)$, we have (with $c_{12}'=c_{12}\big(\beta(1+\zeta),\gamma\big)\land c_{12}\big(\beta/(1+\zeta),\gamma(1+\zeta)\big)>0$)
\begin{equation}\label{eq:plug in prob of widehatbad}
\mathbb{Q}_{W,\widetilde{Z}}\left[B\text{ is }\widehat{\text{bad}}(\beta(1+\zeta),\gamma)\text{ or }\widehat{\text{bad}}\Big(\frac{\beta}{(1 + \zeta)},\gamma(1+\zeta)\Big)\right]\leq e^{-N^{c_{12}'}}.
\end{equation}
Therefore, using the independence between $\widetilde{Z}_{\ell}^{\check{D}_{x}}$ for different $x\in\mathcal{C}_{2}$, it follows that for sufficiently large $N$,
\begin{equation}\label{eq:prob of M2S}
\begin{split}
\mathbb{Q}_{W,\widetilde{Z}}[\text{bad}_{2}(\mathcal C_3)]&\leq \mathbb{Q}_{W,\widetilde{Z}}\Big[B\text{ is }\widehat{\text{bad}}(\beta(1+\zeta),\gamma)\text{ or }\widehat{\text{bad}}\Big(\frac{\beta}{(1 + \zeta)},\gamma(1+\zeta)\Big)\Big]^{|\mathcal C_3|}\\
&\leq e^{-N^{c_{12}'}|\mathcal C_3|}\leq e^{-N^{c_{12}'}|\mathcal C_2|}. 
\end{split}
\end{equation}

Combining \eqref{eq:prob of M1S} and \eqref{eq:prob of M2S} yields that for sufficiently large $N$, we have for some sufficiently small positive constant $c(\psi,\beta,\gamma,\lambda,\zeta)$,
\begin{equation}\label{eq:prob of bad(beta-gamma) C2}
\begin{split}
\quad P_{0}^{N}\Big[\bigcap_{x\in\mathcal{C}_{2}}\big\{B_x\text{ is bad}(\beta,\gamma)\big\}\Big]
&\leq \sum_{\mathcal C_3}\left(\mathbb{Q}_{W,\widetilde{Z}}[\text{bad}_{1}(\mathcal C_3)]+\mathbb{Q}_{W,\widetilde{Z}}[\text{bad}_{2}(\mathcal C_3)]\right)\\
&\leq 2^{|\mathcal{C}_{2}|}\left(e^{-cc_{8}N^{(d-1)\psi}|\mathcal{C}_{2}|}+e^{-cN^{c_{12}'}|\mathcal{C}_{2}|}\right)\\
&\leq e^{-N^{c(\psi,\beta,\gamma,\lambda,\zeta)}|\mathcal{C}_{2}|}.
\end{split}
\end{equation}
Plugging \eqref{eq:size of C2} into \eqref{eq:prob of bad(beta-gamma) C2} then gives the result.
\end{proof}

\section{Unlikeliness of surfaces of \texorpdfstring{$\text{poor}(\gamma)$}{large(gamma)} boxes}\label{sec:bad local time}
The main goal of this section is to prove \Cref{prop:bad local time}, that is, to control the probability that there exists a ``$d$-dimensional'' coarse-grained surface of poor$(\gamma)$ boxes for simple random walk. The adaptations to the biased walk case will be presented in \Cref{subsec:adapting bad local time}.

We first recall that $\mathsf{B}$ is a box with side-length $[N/\log^{3}N]$ on the cylinder $\mathbb{E}$ introduced in \Cref{prop:geometric argument}, and define $\mathsf{D}$ as the concentric box of $\mathsf{B}$ with side-length $[N/20]$. We write the successive times of return to $\mathsf{B}$ and departure from $\mathsf{D}$ as (recall \eqref{eq:def of return and departure} for notation) 
\begin{equation}\label{eq:def of return and departure for mathsfB}
R_{1}^{\mathsf{B},\mathsf{D}}<D_{1}^{\mathsf{B},\mathsf{D}}<R_{2}^{\mathsf{B},\mathsf{D}}<D_{2}^{\mathsf{B},\mathsf{D}}<\cdots<R_{k}^{\mathsf{B},\mathsf{D}}<D_{k}^{\mathsf{B},\mathsf{D}}<\cdots, 
\end{equation}
and write $R_{k}^{\mathsf{B}}$ and $D_{k}^{\mathsf{B}}$ as shorthand. Since this section only concerns the (recurrent) simple random walk on $\mathbb{E}$, the stopping times $R_{k}^{\mathsf{B}},D_{k}^{\mathsf{B}}, {k\geq 1}$ are all $P_{0}^{N}$-a.s.~finite. 
We then define the successive excursions from $\mathsf{B}$ to $\partial \mathsf{D}$ in the random walk $(X_{n})_{n\geq 0}$ as $W_{\ell}^{\mathsf{B}} = X_{[R_\ell^{\mathsf{B}},D_\ell^{\mathsf{B}})}$, $\ell \geq 1$ and use $Z_{\ell}^{\mathsf{B}} = Z_{\ell}^{\mathsf{B},\mathsf{D}}$, $\ell \geq 1$ for the excursions from $\mathsf{B}$ to $\partial\mathsf{D}$ (where $\mathsf{B}$ and $\mathsf{D}$ are also seen as subsets of $\mathbb{Z}^{d+1}$) of random interlacements. 
We denote by $N_{\underline{S}_{N}}(\mathsf{B})$ the number of excursions from $\mathsf{B}$ to $\partial \mathsf{D}$ in the trajectory of the simple random walk before time $\underline{S}_{N}$ as in \eqref{eq:def of N_SN}, and 
recall $N_u(\mathsf B) = N_{u}^{\mathsf{B},\mathsf{D}}$ of the number of excursions from $\mathsf{B}$ to $\partial\mathsf{D}$ in the random interlacements $\mathcal{I}^{u}$, as  defined
in \eqref{eq:number of excursions in Iu}.

The proof of \Cref{prop:bad local time} relies heavily on a stochastic domination control of the random walk excursions $W_{\ell}^{\mathsf{B},\mathsf{D}}$ in terms of the corresponding excursions $Z_{\ell}^{\mathsf{B},\mathsf{D}}$ of random interlacements (see \Cref{prop:very strong coupling}). Roughly speaking, the coupling says that, with extremely high probability, the random walk excursions completed before time $\underline{S}_{N}$ are contained in the excursions of random interlacements $\mathcal{I}^{u'}$, where $u'$ is a constant in $(\overline{u}-\delta,\gamma)$ (see \eqref{eq:def of u'}). We often refer to this coupling as the ``very strong'' coupling in this work.

Given this coupling, the proof of \Cref{prop:bad local time} is then reduced to proving a similar claim with regard to random interlacements, that is, the probability that the interlacements $\mathcal I^{u'}$ leave more than $\gamma\cdot\text{cap}(D)$ excursions in a $d$-dimensional subset of boxes $D_{x},\,x\in\mathcal{C}_{2}$ is extremely small; see \Cref{prop:bad local time interlacements}. In order to conclude \Cref{prop:bad local time interlacements}, we introduce the continuous-time random interlacements to benefit from the occupation-time bounds developed in \cite[Theorem 4.2]{Szn09b}. 
\Cref{def:regular theta} is introduced to rule out some atypical events on the exponential clocks when converting discrete-time random interlacements to its continuous-time counterpart, for which the events $\widehat{\text{regular}}(\gamma, \theta)$ and $\widehat{\text{irregular}}(\gamma, \theta)$ are introduced.

The proof of \Cref{prop:bad local time interlacements} is given in \Cref{subsec:continuous-time interlacements}, and can be derived once the following two cases are analyzed:
\begin{itemize}
    \item \Cref{prop:irregular}: the probability that the atypical event $\widehat{\text{irregular}}(\gamma, \theta)$ happens for a positive proportion of boxes on a $d$-dimensional coarse-grained surface is extremely small. Here we will use a similar techinique as in \Cref{sec:bad(beta-gamma)}, based on the ``decoupling" result \Cref{lem:coupling between widetildeZ and Z}.
    \item \Cref{prop:regular}: the probability that $\widehat{\text{regular}}(\gamma, \theta)$ holds but the occupation-time bound fails for a positive proportion of boxes on a $d$-dimensional coarse-grained surface is extremely small. Here we will use the occupation-time bounds in \cite[Section 4]{Szn17}.
\end{itemize}
Note that \Cref{subsec:continuous-time interlacements} only contains results on continuous-time random interlacements, and can be skipped upon first reading. 
In addition, we finally remind the readers that in this section, the $d$-dimensional set of base points $\mathcal C_1$ also satisfies the condition \eqref{eq:C1 disjoint}, which will be useful when analyzing the capacity of the union of boxes $\bigcup_{x\in\mathcal{C}_{1}}D_{x}$. 

\subsection{Reduction to the analysis of interlacements}\label{subsec:very strong coupling}

Recall that $\gamma$ is a fixed constant in $(\overline{u}-\delta,\overline{u})$, and $P_{0}^{N}$ and $\PP$ stands for the law of simple random walk starting from 0 and random interlacements respectively. We first state a stochastic domination between the excursions of random walk and interlacements, whose proof is postponed until \Cref{sec:couplings}, where a more general coupling result \Cref{thm:very strong coupling S} will be established and help us conclude the proof here.
\begin{proposition}\label{prop:very strong coupling}
For every $\delta >0$, define $u'$ in $(\overline{u}-\delta,\gamma)$ as
\begin{equation}\label{eq:def of u'}
u'=u'(\delta,\gamma)=\frac{1}{2}\left(\overline{u}-\delta+\gamma\right).    
\end{equation}
Then one can construct on an auxiliary space $(\underline{\Omega},\underline{\mathcal{F}})$ a coupling $\underline{Q}$ of the cylinder random walk and random interlacements with marginal distributions $P_{0}^{N}$ and $\PP$ respectively, such that there exists a positive constant $c_{13}=c_{13}(\overline{u},\delta,\gamma)$ satisfying 
\begin{equation}\label{eq:very strong coupling mathsfB}
\lim_{N\to\infty}\frac{1}{\text{cap}(\mathsf{B})}\log \underline{Q}\left[\{W_{\ell}^{\mathsf{B}}\}_{\ell\leq N_{\underline{S}_{N}}(\mathsf{B})} \nsubseteq \{Z_{\ell}^{\mathsf{B}}\}_{\ell\leq N_{u'}(\mathsf{B})}\right]< -c_{13}<0.  
\end{equation}
\end{proposition}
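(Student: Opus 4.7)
The plan is to specialize the general coupling \Cref{thm:very strong coupling S}, proved in \Cref{sec:couplings}, to the configuration $\mathsf B\subseteq\mathsf D$. That theorem is set up to produce, on a suitable enlarged probability space, a joint coupling of the cylinder walk with $\PP$ whose defect probability is of order $\exp(-c\cdot\text{cap}(\mathsf B))$; specializing it here will give \eqref{eq:very strong coupling mathsfB} once the walk's time horizon $\underline S_N$ and the interlacement level $u'$ are matched correctly.

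The calibration will proceed through an intermediate level $u''\in(\overline u-\delta,u')$ in two ingredients. First, I would establish that under $P_{0}^{N}$, with probability at least $1-\exp(-c\cdot\text{cap}(\mathsf B))$, the excursion count satisfies $N_{\underline S_N}(\mathsf B)\le u''\cdot\text{cap}(\mathsf B)$. Heuristically, by the correspondence between the trace of the walk on a horizontal slab and random interlacements of intensity equal to the normalized slice-local-time, the number of excursions through $\mathsf B$ during a time window on which the slice-local-time is capped by $(\overline u-\delta)N^d/(d+1)$ has mean $(\overline u-\delta)\cdot\text{cap}(\mathsf B)(1+o(1))$ and concentrates exponentially; the slack $u''-(\overline u-\delta)>0$ absorbs this concentration error. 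Second, applying \Cref{thm:very strong coupling S} at level $u''$ will produce a coupling $\underline Q$ under which, on an event of $\underline Q$-probability at least $1-\exp(-c\cdot\text{cap}(\mathsf B))$, the first $u''\cdot\text{cap}(\mathsf B)$ walk excursions $\{W_\ell^{\mathsf B}\}$ are contained, excursion-by-excursion, in the first $u''\cdot\text{cap}(\mathsf B)$ interlacement excursions $\{Z_\ell^{\mathsf B}\}$. Finally, since $u''<u'$ and $N_{u'}(\mathsf B)$ is Poisson with mean of order $u'\cdot\text{cap}(\mathsf B)\to\infty$, a standard exponential Chebyshev bound yields $u''\cdot\text{cap}(\mathsf B)\le N_{u'}(\mathsf B)$ with failure probability $\exp(-c\cdot\text{cap}(\mathsf B))$; chaining the three events then gives \eqref{eq:very strong coupling mathsfB}.

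The main obstacle is the first ingredient: upgrading the one-dimensional local-time cap baked into $\underline S_N$ to an excursion-count bound on the geometric box $\mathsf B$ with deviation exponentially small in $\text{cap}(\mathsf B)\asymp(N/\log^3 N)^{d-1}$. This calls for strong horizontal mixing of the walk on each slice in order to establish spatial regularity at scale $\text{diam}(\mathsf B)\asymp N/\log^3 N$, together with the sharpened soft-local-time estimates inspired by \cite{Bel13} announced in the introduction; the large annulus ratio $\text{diam}(\mathsf D)/\text{diam}(\mathsf B)\asymp\log^3 N$ is crucial to make the hitting distribution on $\mathsf B$ from $\partial\mathsf D$ quantitatively close to $\overline e_{\mathsf B}$. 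The remaining steps, by contrast, are either direct Poisson concentration or a clean invocation of the general coupling theorem.
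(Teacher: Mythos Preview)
Your plan to specialize \Cref{thm:very strong coupling S} is right, but you have misidentified what needs adapting. The time-horizon matching that you call the ``main obstacle'' is in fact trivial: since $\underline S_N=\inf_z S_N(\omega,\overline u-\delta,z)\le S_N(\omega,\overline u-\delta,\mathrm z_c)=\mathrm S_N(\overline u-\delta)$ for $\mathrm z_c=\pi_{\mathbb Z}(\text{center of }\mathsf B)$, one has $N_{\underline S_N}(\mathsf B)\le N_{\mathrm S_N(\overline u-\delta)}(\mathsf B)$ deterministically. The upper-bound inclusion of \Cref{thm:very strong coupling S} with $u=\overline u-\delta$ and $u_2=u'$ then gives \eqref{eq:very strong coupling mathsfB} in one line; your intermediate $u''$, the separate excursion-count estimate, and the Poisson step are all redundant, as they are already absorbed into the proof of \Cref{thm:very strong coupling S} via \Cref{lem:S and underlineK and overlineK}. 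Incidentally, your ``second ingredient'' misstates the theorem: it does not couple a \emph{fixed} number of walk excursions to the \emph{same} fixed number of interlacement excursions.

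The genuine adaptation, which your proposal does not mention, is that \Cref{thm:very strong coupling S} is proved for the walk started from the uniform measure $q_0$ on $\mathbb T\times\{0\}$, whereas \Cref{prop:very strong coupling} requires the law $P_0^N$. Only one link in the chain of couplings behind \Cref{thm:very strong coupling S} feels this difference, namely the handling of the first excursion $W_1^{\A}$ (\Cref{prop:Handling the first excursion}): when the walk starts at the origin rather than uniformly, the excursion may fail to be absorbed by the Poisson cloud $\{\widetilde\W_\ell^\A\}$ because none of those excursions need pass through $0$. The paper fixes this by weakening the requirement to the truncated form $(W_1^\A)^{\mathrm{tr}}$ after first entrance into $\mathsf B$, and uses the geometric hypothesis $d(0,\mathsf B)\ge N/10$ from \Cref{prop:geometric argument} together with \eqref{eq:hitting distribution torus simple} and \eqref{eq:lower hitting} to show that, conditioned on hitting $\mathsf B$, the entrance law of $W_1^\A$ on $\mathsf B$ is $(1+O(\log^{-1}N))\overline e_{\mathsf B,\widetilde\A}$; this is enough to couple $(W_1^\A)^{\mathrm{tr}}$ into the Poisson cloud with error $\exp(-c\,\text{cap}(\mathsf B))$. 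That single modification is the content of \Cref{subsubsec:simple}; everything else in the chain goes through verbatim.
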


We then define the events $\widehat{\text{fine}}(\widehat{\gamma})$ and $\widehat{\text{poor}}(\widehat{\gamma})$, acting as the interlacements counterpart of \Cref{def:bad local time}. Recall that for two non-empty sets $D\subseteq U \subseteq \mathbb Z^{d+1}$, $N_{u}(D)$=$N_{u}^{D,U}$ denotes the number of excursions of $\mathcal I^u$ from $D$ to $\partial U$.

\begin{definition}[Definition of $\widehat{\text{fine}}(\widehat{\gamma})$]\label{def:fine box}
Recall the constant $u'$ defined in \eqref{eq:def of u'}. For each constant $\widehat{\gamma}>u'$, we say an $L$-box $B$, associated with concentric boxes $D$ and $U$, is $\widehat{\text{fine}}(\widehat{\gamma})$, if it satisfies $N_{u'}(D)\leq \widehat{\gamma}\cdot\text{cap}(D)$. In addition, when $B$ is not $\widehat{\text{fine}}(\widehat{\gamma})$, we say it is $\widehat{\text{poor}}(\widehat{\gamma})$.
\end{definition}

In view of \Cref{prop:very strong coupling}, the proof of \Cref{prop:bad local time} can be reduced to  the following proposition regarding interlacements.

\begin{proposition}\label{prop:bad local time interlacements}
Recall the box $\mathsf{B}$ (which is seen as a subset of $\mathbb{Z}^{d+1}$ here), the projection $\pi_{*}\in\{\pi_{i}\}_{1\leq i\leq d+1}$, and the set of base points $\mathcal{C}_{1}$ in \Cref{prop:bad local time}, then
\begin{equation}\label{eq:bad local time interlacements}
\lim_{N\to\infty}\frac{1}{|\mathcal{C}_{1}|\log N}\log \PP\Big[\bigcap_{x\in \mathcal C_1}\big\{B_{x}\text{ is }\widehat{\text{poor}}(\gamma)\big\}\Big]=-\infty.    
\end{equation}
\end{proposition}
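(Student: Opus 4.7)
The plan is to reduce Proposition~\ref{prop:bad local time interlacements} to the occupation-time large-deviation bound for continuous-time random interlacements of \cite[Theorem~4.2]{Szn17}, after absorbing the randomness of the holding-time clocks via a regular/irregular dichotomy.

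I would first pass from the discrete random interlacements $\mathcal{I}^{u'}$ to their continuous-time counterpart $\widetilde{\mathcal{I}}^{u'}$, which attaches an independent $\text{Exp}(1)$ sojourn to each discrete visit. Fixing a small $\theta\in\bigl(0,(\gamma-u')/(4\gamma)\bigr)$, I would declare an $L$-box $B$ to be $\widehat{\mathrm{regular}}(\gamma,\theta)$ when the total continuous-time sojourn of $\widetilde{\mathcal{I}}^{u'}$ inside $D$ is at least $c(1-\theta)L^{2}N_{u'}(D)$ (where $cL^{2}$ is the Kha\'{s}minskii-type lower bound on the mean sojourn per excursion from $D$ to $\partial U$, cf.~\eqref{eq:khasminskii1}), and $\widehat{\mathrm{irregular}}$ otherwise. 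Writing
\[
\bigcap_{x\in\mathcal{C}_{1}}\{B_{x}\text{ is }\widehat{\text{poor}}(\gamma)\}\subseteq \mathcal{E}_{\mathrm{irreg}}\cup \mathcal{E}_{\mathrm{reg}},
\]
where $\mathcal{E}_{\mathrm{irreg}}$ is the event that at least $c|\mathcal{C}_{1}|$ of the $B_{x}$ are $\widehat{\mathrm{irregular}}$, and $\mathcal{E}_{\mathrm{reg}}$ the event that at least $c|\mathcal{C}_{1}|$ are both $\widehat{\text{poor}}(\gamma)$ and $\widehat{\mathrm{regular}}(\gamma,\theta)$, reduces the proof to two separate bounds, encapsulated as Propositions~5.4 and~5.5 in the main text.

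For $\mathcal{E}_{\mathrm{irreg}}$, the irregularity of a single box is a Chernoff-type deviation in a sum of $\Theta(L^{d-1})$ i.i.d.~exponentials and thus has probability $\leq e^{-cL^{d-1}}$. I would extract a sparse subset $\mathcal{C}_{2}\subseteq\mathcal{C}_{1}$ as in \eqref{eq:sparsity of C2} and apply the coupling $\mathbb{Q}_{\widetilde{Z},Z}$ of \Cref{lem:coupling between widetildeZ and Z} simultaneously over $\mathcal{C}_{2}$; here \Cref{rem:improve coupling lemma} is essential because the coupling events $H^{\lambda}_{B_{x}}(\widetilde{Z},Z)$ must be independent across $\mathcal{C}_{2}$. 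The irregularity of $B_{x}$ then becomes an event on the i.i.d.~excursions $\widetilde{Z}^{\check{D}_{x}}$ and their holding times, up to the error \eqref{eq:prob of bad H}; combining independence across $\mathcal{C}_{2}$ with the per-box exponential bound and a union bound over size-$\Theta(|\mathcal{C}_{2}|)$ subsets yields super-polynomial decay. For $\mathcal{E}_{\mathrm{reg}}$, the definition of $\widehat{\mathrm{regular}}(\gamma,\theta)$ forces the sojourn of $\widetilde{\mathcal{I}}^{u'}$ in each involved $D_{x}$ to exceed $c(1-\theta)\gamma L^{2}\text{cap}(D_{x})$; since the $D_{x}$ are disjoint, the total occupation time in $\bigcup_{x\in\mathcal{C}_{1}}D_{x}$ exceeds $c'|\mathcal{C}_{1}|L^{d+1}=\Theta(N^{d+\psi}(\log N)^{-O(1)})$, a genuine deviation since the mean is only $u'|\mathcal{C}_1|L^{d+1}$ and $u'<\gamma$. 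I would apply exponential Chebyshev via \cite[Theorem~4.2]{Szn17}: the cost term is $O(u'\text{cap}(\bigcup_{x}D_{x}))\leq O(\text{cap}(\mathsf{B}))=O(N^{d-1})$, negligible compared with the gain $\Theta(N^{d+\psi})$, and since $N^{d+\psi}/(|\mathcal{C}_{1}|\log N)\to\infty$ for any $\psi>0$ (as $|\mathcal{C}_{1}|\log N=\Theta(N^{d(1-\psi)}\log N)$), super-polynomial decay follows.

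The main obstacle I anticipate is calibrating $\widehat{\mathrm{regular}}(\gamma,\theta)$ so that it simultaneously (i) supplies a quantitative one-sided comparison between the discrete excursion count $N_{u'}(D)$ and the continuous-time sojourn, sharp enough to put $\mathcal{E}_{\mathrm{reg}}$ in the deviation regime of \cite[Theorem~4.2]{Szn17}, and (ii) is measurable with respect to the i.i.d.~field $\widetilde{Z}^{\check{D}_{x}}$ and its holding times, so that \Cref{lem:coupling between widetildeZ and Z} can decouple the irregularity events across $\mathcal{C}_{2}$. The role of the injectivity of $\pi_{*}$ on $\mathcal{C}_{1}$ enters only mildly, through the elementary capacity bound $\text{cap}(\bigcup_{x\in\mathcal{C}_{1}}D_{x})\leq\text{cap}(\mathsf{B})$ that keeps the exponential-Chebyshev cost term small.
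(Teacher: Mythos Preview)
Your overall architecture—pass to continuous time, split into regular/irregular, handle irregular boxes by decoupling through the sparse set $\mathcal{C}_2$ and \Cref{lem:coupling between widetildeZ and Z}, and handle regular boxes via the occupation-time bound of \cite[Theorem~4.2]{Szn17}—matches the paper's strategy, and the irregular branch is essentially right.

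The regular branch, however, contains a genuine gap. The decay rate that exponential Chebyshev (equivalently \cite[Theorem~4.2]{Szn17}) yields for a constant-fraction excess of the continuous-time occupation $L_{u'}(K)$ is $\exp(-c\,\text{cap}(K))$, \emph{not} $\exp(-c|K|)$: the Laplace parameter $a$ is constrained by $a\|G\mathbbm{1}_K\|_\infty<1$, which for $K=\bigcup_x D_x$ forces $a=O(L^{-2})$, and after optimisation the exponent is of order $\text{cap}(K)$ rather than $|K|$. This is exactly the content of Proposition~\ref{prop:occupation-time bounds}. Your ``gain $N^{d+\psi}$ versus cost $N^{d-1}$'' accounting therefore overstates the gain by the missing factor $a\sim L^{-2}$.

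This also reverses the role you assign to the $\pi_*$-injectivity. The inequality $\text{cap}(\bigcup_x D_x)\le\text{cap}(\mathsf B)$ is just monotonicity of capacity and needs no hypothesis; what the argument actually requires is a \emph{lower} bound on $\text{cap}\big(\bigcup_{x\in\mathcal{C}_3}D_x\big)$, and it is precisely the injectivity of $\pi_*$ on $\mathcal{C}_1$ (inherited by $\mathcal{C}_3$) that supplies it, via the variational estimate of Lemma~\ref{lem:capacity of DS}, giving $\text{cap}(D(\mathcal{C}_3))\gtrsim N^{d-1}/\log^{O(1)}N$. Only with this lower bound does the rate beat $|\mathcal{C}_1|\log N\sim N^{d(1-\psi)}$, and that comparison is exactly where the constraint $\psi>1/d$ enters. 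As written, your argument would appear to succeed for every $\psi>0$, which is a symptom of the overcounted rate.
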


We first complete the proof of \Cref{prop:bad local time} assuming \Cref{prop:bad local time interlacements}, while the proof for the latter proposition will be postponed to \Cref{subsec:continuous-time interlacements}. Here, we will rely on the crucial assumption $\psi>1/d$, which ensures that $\text{cap}(\mathsf{B})>|\mathcal{C}_{1}|\log N$.
\begin{proof}[Proof of \Cref{prop:bad local time} given \Cref{prop:very strong coupling,prop:bad local time interlacements}]
Since the side-length of $\mathsf{B}$ is $[N/\log^{3}N]$, by \eqref{eq:capacity of a box}, for some $c>0$ we have 
\begin{equation}\label{eq:capacity of mathsfB}
\text{cap}(\mathsf{B}) \geq c\cdot\left(\frac{N}{\log^{3}N}\right)^{d-1}.
\end{equation}
Then by \eqref{eq:size of C1} we have
\begin{equation}\label{eq:size of C1 in N}
|\mathcal{C}_{1}|=\left[\frac{1}{3}c_{4}\left(\frac{N}{L\log^{3}N}\right)^{d}\right]\leq \left[\frac{1}{3}c_{4}\cdot\frac{N^{d(1-\psi)}}{\log^{3d}N}\right]. 
\end{equation}
Recalling the assumption $\psi>1/d$, it follows that 
\begin{equation}\label{eq:compare C1 and capacity of mathsfB}
|\mathcal{C}_{1}|\log N=o(\text{cap}(\mathsf{B})).
\end{equation}
Therefore, by \Cref{prop:very strong coupling}, \eqref{eq:bad local time} is equivalent to
\begin{equation}\label{eq:from poor to widehatpoor}
\begin{aligned}
\lim_{N\to\infty}\frac{1}{|\mathcal{C}_{1}|\log N}\log \underline{Q}\Big[\bigcap_{x\in\mathcal{C}_{1}}\{B_{x}\text{ is poor}(\gamma)\}\cap \Big\{\{W_{\ell}^{\mathsf{B}}\}_{\ell\leq N_{\underline{S}_{N}}(\mathsf{B})} \subseteq \{Z_{\ell}^{\mathsf{B}}\}_{\ell\leq N_{u'}(\mathsf{B})}\Big\}\Big]=-\infty.   
\end{aligned}
\end{equation}
Note that the events in \eqref{eq:from poor to widehatpoor} imply that all $L$-boxes in $\{B_{x}\}_{x\in\mathcal{C}_{1}}$ are $\widehat{\text{poor}}(\gamma)$, we can conclude by employing \Cref{prop:bad local time interlacements}.
\end{proof}

\subsection{Continuous-time random interlacements}\label{subsec:continuous-time interlacements}
In this subsection we will use the occupation-time bounds on continuous-time random interlacements in \cite{Szn17} to prove \Cref{prop:bad local time interlacements}. This subsection only contains results on continuous-time interlacements, and is the only part of this paper that involves continuous-time random walks.

In this subsection, we always assume that all the simple random walks in random interlacements are continuous-time simple random walks with unit jump rate. More precisely, the random times between jumps of a walk are i.i.d.~exponential variables with expectation $1$, which does not affect the proof of \Cref{prop:bad local time interlacements}. Every random walk excursion will be parameterized via a continuous parameter (e.g.~the parameter $t$ in $Z_{\ell}^{D}(t)$). With a slight abuse of notation, we still keep all notation with respect to the discrete-time random interlacements (e.g., $\PP,N_{u}(D)$, $\widehat{\text{poor}}(\gamma)$).

Following the strategy of \cite{Szn17}, we first define the event concerning the exponential clocks. Recall that $e_{D}$ stands for the equilibrium measure of $D$ in \eqref{eq:equilibrium measure}, and $B$, $D$ and $U$ refer to the concentric boxes defined in \eqref{eq:def of translated boxes}.

\begin{definition}[Definition of $\widehat{\text{regular}}(\gamma,\theta)$]\label{def:regular theta}
For two positive constants $\gamma>\theta>0$, we say an $L$-box $B$ is $\widehat{\text{regular}}(\gamma,\theta)$, if
\begin{equation}
\sum_{1\leq\ell\leq \gamma\cdot\text{cap}(D)}\int_{0}^{T_{U}}e_{D}\left(Z_{\ell}^{D}(t)\right)\,dt\geq \theta\cdot\text{cap}(D).  
\end{equation}
When $B$ is not $\widehat{\text{regular}}(\gamma,\theta)$, we say $B$ is $\widehat{\text{irregular}}(\gamma,\theta)$.
\end{definition}

Note that in the above definition, $\int_{0}^{T_{U}}e_{D}\left(Z_{\ell}^{D}(t)\right)\,dt$ indicates the weighted version of the continuous local time of an excursion $Z_{\ell}^{D}$, and a box is $\widehat{\text{regular}}(\gamma,\theta)$ if it does not accumulate too much local time during the first $\gamma\cdot\text{cap}(D)$ excursions obtained from continuous-time interlacements.

To conclude \Cref{prop:bad local time interlacements}, we split into the following two results as explained in the introduction of this section. The two propositions will be proved in \Cref{subsubsec:irregular,subsubsec:regular} respectively. In the rest of this section, we fix $\theta\in(u',\gamma)$, and recall the sparse set $\mathcal{C}_{2}\subseteq \mathcal{C}_{1}$ as defined in \eqref{eq:sparsity of C2} and \eqref{eq:size of C2}. Additionally, by \eqref{eq:C1 disjoint}, we also add condition to $\mathcal{C}_{2}$ such that
\begin{equation}\label{eq:C2 disjoint}
\pi_{*}(x)\neq \pi_{*}(y),\quad \text{for all different }x,y\in\mathcal{C}_{2}.
\end{equation}

\begin{proposition}\label{prop:irregular}
For each subset $\mathcal C_3$ of $\mathcal{C}_{2}$ with $|\mathcal C_3|=[|\mathcal{C}_{2}|/3]$, 
\begin{equation}\label{eq:irregular}
\lim_{N\to\infty}\frac{1}{|\mathcal{C}_{2}|\log^{d+2}N}\log \PP\Big[\bigcap_{x\in \mathcal C_3}\big\{B_{x}\text{ is }\widehat{\text{irregular}}(\gamma,\theta)\big\}\Big]=-\infty. 
\end{equation}
\end{proposition}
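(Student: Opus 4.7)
The plan is to use Proposition 4.4 (the coupling between $\widetilde Z$ and $Z$) to decouple the interlacement excursions in distinct boxes $\check D_x$, $x\in\mathcal C_2$, and thereby reduce to a product of single-box events over an i.i.d.\ family. The single-box event is a large-deviation event for the weighted occupation time that I plan to control by an exponential Chebyshev bound in the spirit of \cite[Theorem 4.2]{Szn09b} and \cite[Section 4]{Szn17}.

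Concretely, since $\theta<\gamma$ I fix $\widehat\zeta\in(0,1/2)$ and $\lambda\in(0,\infty)$ so that $\gamma/(1+\widehat\zeta)>\theta$ and $\lambda<\gamma/(1+\widehat\zeta)$, and invoke Proposition 4.4 to obtain a coupling $\mathbb Q_{\widetilde Z,Z}$ together with events $H_{B_x}^{\lambda}(\widetilde Z,Z)$, independent as $x$ varies over $\mathcal C_2$ and satisfying $\mathbb Q_{\widetilde Z,Z}[H_{B_x}^{\lambda}(\widetilde Z,Z)^c]\le\exp(-c\,\mathrm{cap}(\check D))$. On $H_{B_x}^{\lambda}$, applying inclusion (i) of \eqref{eq:coupling widehatZD and ZD} with $\ell=\gamma/(1+\widehat\zeta)\cdot\mathrm{cap}(D_x)$ shows that the first $\ell$ i.i.d.\ excursions $\widetilde Z_1^{D_x},\dots,\widetilde Z_\ell^{D_x}$ sit inside the first $\gamma\cdot\mathrm{cap}(D_x)$ interlacement excursions; hence $B_x$ being $\widehat{\mathrm{irregular}}(\gamma,\theta)$ forces the i.i.d.\ event
\[
A_x:=\Big\{\sum_{\ell\le\gamma/(1+\widehat\zeta)\cdot\mathrm{cap}(D_x)}\int_0^{T_{U_x}}e_{D_x}\big(\widetilde Z_\ell^{D_x}(t)\big)\,dt<\theta\cdot\mathrm{cap}(D_x)\Big\}.
\]
A union bound together with the independence of the $H_{B_x}^{\lambda}$ then gives
\[
\PP\Big[\bigcap_{x\in\mathcal C_3}\{B_x\text{ is }\widehat{\mathrm{irregular}}(\gamma,\theta)\}\Big]\le|\mathcal C_3|\,e^{-c\,\mathrm{cap}(\check D)}+\mathbb Q_{\widetilde Z,Z}\Big[\bigcap_{x\in\mathcal C_3}A_x\Big].
\]

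Because the collections $\{\widetilde Z^{\check D_x}_\ell\}_{\ell\ge 1}$ are independent across $x\in\mathcal C_2\supseteq\mathcal C_3$, the events $A_x$ are mutually independent and the last probability factorises into $\prod_{x\in\mathcal C_3}\mathbb Q_{\widetilde Z,Z}[A_x]$. For a fixed $x$, using the identity $\sum_y g(x,y)e_D(y)=1$ for $x\in D$ together with the standard comparison of the killed Green function $g_U$ with the free one $g$ (the correction being $O(\log^{-(d-1)}N)$ because $U$ is $\log N$ times larger than $D$), the per-excursion mean of the weighted occupation time under $P_{\overline e_D}$ equals $1+o(1)$. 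Hence the mean of the sum appearing in $A_x$ is $(1+o(1))\gamma/(1+\widehat\zeta)\cdot\mathrm{cap}(D_x)$, which strictly exceeds $\theta\cdot\mathrm{cap}(D_x)$ for large $N$. An exponential Chebyshev estimate for the sum of i.i.d.\ weighted occupation times, obtained along the same lines as the occupation-time bounds of \cite[Section 4]{Szn17}, then yields $\mathbb Q_{\widetilde Z,Z}[A_x]\le\exp(-c(\gamma,\theta,\widehat\zeta)\,\mathrm{cap}(D_x))$.

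Putting the two ingredients together with $\mathrm{cap}(D_x)\asymp\mathrm{cap}(\check D)\asymp L^{d-1}=N^{\psi(d-1)}$ and $|\mathcal C_3|\asymp|\mathcal C_2|$, I obtain
\[
\PP\Big[\bigcap_{x\in\mathcal C_3}\{B_x\text{ is }\widehat{\mathrm{irregular}}(\gamma,\theta)\}\Big]\le\exp\big(-c'\,|\mathcal C_2|\,N^{\psi(d-1)}\big),
\]
and since $\psi>1/d$ makes $N^{\psi(d-1)}$ beat any power of $\log N$, dividing by $|\mathcal C_2|\log^{d+2}N$ the right-hand side tends to $-\infty$, which gives \eqref{eq:irregular}. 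I expect the main obstacle to be the exponential Chebyshev bound on the i.i.d.\ weighted occupation time: the per-excursion weight is not a priori bounded since $T_U$ can be large, so one must carefully control its Laplace transform (equivalently the two-point function $\sum_{x,y}e_D(x)e_D(y)g_U(x,y)$ and its higher analogues), adapting the machinery of \cite[Section 4]{Szn17}. The decoupling step, by contrast, is essentially a black-box application of Proposition 4.4 once the parameters $\lambda,\widehat\zeta$ are fixed.
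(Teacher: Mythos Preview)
Your overall strategy---use the $(\widetilde Z,Z)$-coupling of \Cref{lem:coupling between widetildeZ and Z} to pass from interlacement excursions to the $\widetilde Z$-family and then exploit independence across $x\in\mathcal C_3$---matches the paper's. But there are two genuine gaps.

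\textbf{The $\widetilde Z_\ell^{D_x}$ are not i.i.d.} They are extracted from the i.i.d.\ sequence $\{\widetilde Z_k^{\check D_x}\}_{k\ge1}$, and a single $\check D$-excursion may contain several correlated $D$-to-$\partial U$ sub-excursions; the paper notes this explicitly just after \Cref{prop:coupling W and widetildeZ}. In particular their starting law is not $\overline e_{D}$. Hence the sum appearing in $A_x$ is \emph{not} a sum of i.i.d.\ terms, and your ``exponential Chebyshev estimate for the sum of i.i.d.\ weighted occupation times'' does not apply as written. The paper avoids this by applying \Cref{lem:coupling between widetildeZ and Z} a \emph{second} time, now using direction (ii) of \eqref{eq:coupling widehatZD and ZD}: on $H_{B_x}^\lambda$ the event $A_x$ is pushed back to the interlacement event $\{B_x\text{ is }\widehat{\mathrm{irregular}}(\gamma/(1+\widehat\zeta)^2,\theta)\}$, whose single-box probability is then controlled by \Cref{lem:irregular} (the adaptation of \cite[Theorem 3.3]{Szn17}). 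You correctly anticipate that a Laplace-transform control is the delicate step, but the resolution is this second coupling back to $Z$, not a fresh i.i.d.\ analysis of $\widetilde Z_\ell^D$.

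\textbf{Your union bound on $\{H_{B_x}^c\}$ is too crude.} The first term $|\mathcal C_3|\,e^{-c\,\mathrm{cap}(\check D)}$ has logarithm of order $-N^{\psi(d-1)}$, and after dividing by $|\mathcal C_2|\log^{d+2}N\asymp N^{d(1-\psi)}/\log^{c}N$ this tends to $-\infty$ only when $(2d-1)\psi\ge d$, which is strictly stronger than the standing assumption $\psi>1/d$. In particular your claimed final bound $\exp(-c'|\mathcal C_2|N^{\psi(d-1)})$ does not follow from your displayed inequality. The paper fixes this by pigeonholing to a sub-subset $\mathcal C_4\subset\mathcal C_3$ of size $[|\mathcal C_3|/3]$ on which \emph{either} all $H_{B_x}^\lambda$ fail \emph{or} all hold; in the first case the independence of the events $H_{B_x}^\lambda$ (guaranteed by \Cref{lem:coupling between widetildeZ and Z}) gives $e^{-c|\mathcal C_4|\,\mathrm{cap}(\check D)}$, and the combinatorial cost $2^{|\mathcal C_2|}$ of choosing $\mathcal C_4$ is absorbed into the exponent.
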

\begin{proposition}\label{prop:regular}
For each subset $\mathcal C_3$ of $\mathcal{C}_{2}$ with $|\mathcal C_3|=[|\mathcal{C}_{2}|/3]$, 
\begin{equation}\label{eq:regular}
\lim_{N\to\infty}\frac{1}{|\mathcal{C}_{2}|\log^{d+2}N}\log \PP\Big[\bigcap_{x\in \mathcal C_3}\big\{B_{x}\text{ is }\widehat{\text{regular}}(\gamma,\theta)\text{ and } \widehat{\text{poor}}(\gamma)\big\}\Big]=-\infty. 
\end{equation}
\end{proposition}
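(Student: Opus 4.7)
The plan is to reduce the event in \eqref{eq:regular} to a large deviation event for the total weighted occupation time of $\mathcal{I}^{u'}$ in $K := \bigcup_{x \in \mathcal C_3} D_x$, and then apply the exponential Chebyshev's inequality for occupation times of continuous-time random interlacements (\cite[Theorem 4.2]{Szn09b}, as employed in \cite{Szn17}). Fix $x \in \mathcal C_3$. If $B_x$ is both $\widehat{\rm poor}(\gamma)$ and $\widehat{\rm regular}(\gamma,\theta)$, then $N_{u'}(D_x) > \gamma \cdot \text{cap}(D_x)$ forces the first $\gamma\cdot\text{cap}(D_x)$ excursions $Z_\ell^{D_x}$ to carry label $\leq u'$, so
\begin{equation*}
T_x := \sum_{y \in D_x} e_{D_x}(y)\,L^{u'}(y)\;\geq\;\sum_{1\leq\ell\leq\gamma\cdot\text{cap}(D_x)}\int_0^{T_{U_x}} e_{D_x}\!\left(Z_\ell^{D_x}(t)\right)dt\;\geq\;\theta\cdot\text{cap}(D).
\end{equation*}
By \eqref{eq:sparsity of boxes in mathcalC2} the boxes $\{D_x\}_{x \in \mathcal C_2}$ are pairwise disjoint (since $5L\log N \gg L$), so, defining $f := \sum_{x \in \mathcal C_3} e_{D_x}\mathbbm 1_{D_x}$ and summing over $\mathcal C_3$, the intersection event in \eqref{eq:regular} forces
\begin{equation*}
Y := \sum_{y \in K} f(y)\, L^{u'}(y) \;\geq\; \theta \cdot \text{cap}(D) \cdot |\mathcal C_3|.
\end{equation*}

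Next, I would bound $\PP[Y \geq \theta\,\text{cap}(D)\,|\mathcal C_3|]$ by exponential Chebyshev, relying on the standard moment estimate for occupation times of continuous-time interlacements: using the Poisson description of $\mathcal{I}^{u'}$ via the process $\mu_{K,u'}$ and Khasminskii's lemma applied to each trajectory, for every $\lambda>0$ with $\lambda\,\|G^K f\|_\infty<1$,
\begin{equation*}
\log \mathbb E\!\left[\exp(\lambda Y)\right] \;\leq\; u'\cdot \text{cap}(K)\cdot\frac{\lambda\,\|G^K f\|_\infty}{1 - \lambda\,\|G^K f\|_\infty}.
\end{equation*}
The key geometric input is the bound $\|G^K f\|_\infty \leq 1$: the sparsity \eqref{eq:sparsity of C2} ensures that the walk killed outside $K$ starting at $y \in D_{x_0}$ cannot reach any other $D_{x'}$ (they lie at distance $\geq 5L\log N$ from $D_{x_0}$), hence $G^K f(y) = G^{D_{x_0}} e_{D_{x_0}}(y) \leq G\, e_{D_{x_0}}(y) = 1$, the last equality being the standard potential-theoretic identity for the equilibrium measure of a transient walk on $\mathbb Z^{d+1}$. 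Using also $\text{cap}(K) \leq |\mathcal C_3|\cdot\text{cap}(D)$ (by subadditivity of capacity) and optimizing over $\lambda \in (0,1)$, whose optimum is at $\lambda^* = 1 - \sqrt{u'/\theta}$, then yields
\begin{equation*}
\log\PP\!\left[Y\geq \theta\,\text{cap}(D)\,|\mathcal C_3|\right] \;\leq\; -(\sqrt{\theta}-\sqrt{u'})^{2} \cdot |\mathcal C_3|\cdot \text{cap}(D).
\end{equation*}

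To conclude, combining $|\mathcal C_3| \geq c|\mathcal C_2|$ with $\text{cap}(D) \geq cL^{d-1} \geq cN^{\psi(d-1)}$ from \eqref{eq:capacity of a box} and $L = [N^\psi]$, we obtain
\begin{equation*}
\log \PP\Big[\bigcap_{x \in \mathcal C_3}\{B_x \text{ is }\widehat{\rm regular}(\gamma,\theta)\text{ and }\widehat{\rm poor}(\gamma)\}\Big] \;\leq\; -c(\theta,u')\,|\mathcal C_2|\,N^{\psi(d-1)}.
\end{equation*}
Dividing by $|\mathcal C_2|\log^{d+2} N$ and using $\psi > 1/d$ so that $\psi(d-1)>0$, the right-hand side tends to $-\infty$, which is \eqref{eq:regular}. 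I expect the main obstacle to be the sharp estimate $\|G^K f\|_\infty \leq 1$: a direct application of Khasminskii with the unkilled Green function $G$ would give $\|G f\|_\infty \geq 1 + \sum_{x' \neq x_0} P_y[H_{D_{x'}} < \infty]$ at $y \in D_{x_0}$, which may grow polynomially in $N$ as $|\mathcal C_3|$ is large, and would render the exponential moment estimate useless. It is therefore crucial to work with the killed Green function $G^K$ and exploit the spatial isolation of each $D_x$ within $K$ provided by the sparsity \eqref{eq:sparsity of C2}.
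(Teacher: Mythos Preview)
Your reduction of the event to $Y=\langle f, L^{u'}\rangle \ge \theta\,|\mathcal C_3|\,\mathrm{cap}(D)$ with $f=\sum_{x\in\mathcal C_3} e_{D_x}\mathbbm 1_{D_x}$ is correct, but the exponential moment bound you invoke is not. The trajectories in $\mu_{K,u'}$ are simple random walks on $\mathbb Z^{d+1}$ started from $e_K$; they are \emph{not} killed upon exiting $K$. Hence Khasminskii applied to a single trajectory gives
\[
E_x\Big[\exp\Big(\lambda\int_0^\infty f(X_s)\,ds\Big)\Big]\le \frac{1}{1-\lambda\|Gf\|_\infty},
\]
with the \emph{full} Green function $G$, not $G^K$. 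And $\|Gf\|_\infty=\sup_y\sum_{x\in\mathcal C_3}P_y[H_{D_x}<\infty]$ is, exactly as you note at the end, of polynomial order in $N$: once a trajectory exits $D_{x_0}$ it is free to visit many other $D_x$'s before escaping, and all those visits contribute to $\int_0^\infty f(X_s)\,ds$. Replacing $G$ by $G^K$ amounts to pretending these later visits do not occur, which is unjustified; your displayed moment bound is therefore false, and with the correct $\|Gf\|_\infty$ the Chebyshev step yields nothing.

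The paper's route (Proposition~\ref{prop:occupation-time bounds}, i.e.\ \cite[Theorem~4.2]{Szn17}) circumvents this by working with a functional for which $\|G\cdot\|_\infty\le 1$ automatically, at the cost of obtaining the weaker decay rate $-c\,\mathrm{cap}(D(\mathcal C_3))$ rather than $-c\,|\mathcal C_3|\,\mathrm{cap}(D)$. This is why the capacity lower bound of Lemma~\ref{lem:capacity of DS} is needed, and why the geometric hypothesis \eqref{eq:C2 disjoint} (injectivity of $\pi_*$ on $\mathcal C_2$) enters: it guarantees $\mathrm{cap}(D(\mathcal C_3))\gtrsim N^{d-1}/\log^{C}N$, which beats $|\mathcal C_2|\log^{d+2}N$ precisely because $\psi>1/d$. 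Your argument uses neither \eqref{eq:C2 disjoint} nor any capacity lower bound, and would (if valid) give a conclusion requiring only $\psi>0$; both are signs that the claimed moment estimate is too good to be true.
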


\begin{proof}[Proof of \Cref{prop:bad local time interlacements} assuming \Cref{prop:irregular,prop:regular}]
Since $\mathcal{C}_{2}$ is a subset of $\mathcal{C}_{1}$, to prove \eqref{eq:bad local time interlacements}, it suffices to show
\begin{equation}\label{eq:bad local time C2}
\lim_{N\to\infty}\frac{1}{|\mathcal{C}_{1}|\log N}\log \PP\Big[\bigcap_{x\in\mathcal{C}_{2}}\big\{B_{x}\text{ is }\widehat{\text{poor}}(\gamma)\big\}\Big]=-\infty.   
\end{equation}

Now if all boxes $B_{x}, {x\in\mathcal{C}_{2}}$ are $\widehat{\text{poor}}(\gamma)$, then there exists a subset $\mathcal C_3$ of $\mathcal{C}_{2}$ with cardinality $[|\mathcal{C}_{2}|/3]$, such that the boxes $\{B_{x}\}_{x\in \mathcal C_3}$ are either all $\widehat{\text{irregular}}(\gamma,\theta)$, or all $\widehat{\text{poor}}(\gamma)$ and $\widehat{\text{regular}}(\gamma,\theta)$. 
Similar to in the proof of \Cref{prop:bad(beta-gamma)}, for a given $\mathcal C_3$, we denote by
\begin{equation}
    \widehat{\text{poor}}_{1}(\mathcal C_3) = \hspace{-0.3em} \bigcap_{x\in \mathcal C_3}\hspace{-0.3em}\big\{B_x \text{ is }\widehat{\text{irregular}}(\gamma,\theta)\};\   \widehat{\text{poor}}_{2}(\mathcal C_3) = \hspace{-0.3em} \bigcap_{x\in \mathcal C_3}\hspace{-0.3em}\big\{B_x \text{ is }\widehat{\text{poor}}(\gamma) \text{ and }\widehat{\text{regular}}(\gamma,\theta)\big\}.
\end{equation}
With \Cref{prop:regular,prop:irregular}, for any $C>0$, there exists a large $C'(C,\gamma, \theta)>0$ such that for all $N\geq C'(C,\gamma, \theta)$,
\begin{equation}\label{eq:prob of M1 and M2}
\PP[\widehat{\text{poor}}_{1}(\mathcal C_3)]\leq \exp\left(-C|\mathcal{C}_{2}|\log^{d+2}N\right)\quad \text{and}\quad \PP[\widehat{\text{poor}}_{2}(\mathcal C_3)]\leq \exp\left(-C|\mathcal{C}_{2}|\log^{d+2}N\right).
\end{equation}
Using a union bound on all possible choices of $\mathcal C_3$, we have
\begin{equation}\label{eq:prob of bad local time interlacements C2}
\begin{split}
\quad \PP\Big[\bigcap_{x\in\mathcal{C}_{2}}\{B_{x}\text{ is }\widehat{\text{poor}}(\gamma)\}\Big]
&\leq \sum_{\mathcal C_3}\left(\PP[\widehat{\text{poor}}_{1}(\mathcal C_3)]+\PP[\widehat{\text{poor}}_{2}(\mathcal C_3)]\right)\\
&\leq 2^{|\mathcal{C}_{2}|}\cdot  2\exp\left(-C|\mathcal{C}_{2}|\log^{d+2}N\right)
\leq \exp\left(-cC|\mathcal{C}_{2}|\log^{d+2}N\right).
\end{split}
\end{equation}
The limit \eqref{eq:bad local time C2} then follows from plugging \eqref{eq:size of C2} into \eqref{eq:prob of bad local time interlacements C2} and sending $C$ to infinity.
\end{proof}

\subsubsection{Unlikeliness of surfaces of \texorpdfstring{$\widehat{\text{irregular}}(\gamma, \theta)$}{irr} boxes (Proof of \Cref{prop:irregular})}\label{subsubsec:irregular}
The proof of \Cref{prop:irregular} is similar to the proof of \Cref{prop:bad(beta-gamma)} in \Cref{sec:bad(beta-gamma)}. 
With the ``decoupling" result \Cref{lem:coupling between widetildeZ and Z} in hand, the estimation of the former event can be transformed into that of the occurrence of a series of i.i.d.~events, that is, we can focus on the probability of \emph{one} set of concentric boxes $B\subseteq D\subseteq U$ such that $B$ is $\widehat{\text{irregular}}(\gamma, \theta)$; see \Cref{lem:irregular}.

\begin{lemma}\label{lem:irregular}
Let $B=B_x, x\in \mathcal C_3$ be an $L$-box. For any $\theta\in(0,\gamma)$, there exists a positive constant $c_{14}=c_{14}(\psi,\gamma,\theta)>0$ (recall that $L=[N^{\psi}]$, $1/d<\psi<1$) such that,
\begin{equation}
\liminf_{N\to\infty}\frac{1}{\log N}\log\left(-\log\PP\left[B\text{ \rm is }\widehat{\text{\rm irregular}}(\gamma,\theta)\right]\right)> c_{14}>0.
\end{equation}
\end{lemma}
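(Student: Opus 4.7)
The plan is to reduce \Cref{lem:irregular} to a lower large-deviation estimate for a sum of non-negative functionals of excursions, whose expectation I can compute exactly via potential theory and whose lower tail I can control via an exponential Chebyshev bound. Write $n := \gamma \cdot \text{cap}(D)$ and $X_\ell := \int_0^{T_U} e_D(Z_\ell^D(t)) \, dt$ for $\ell \geq 1$, so that $B$ is $\widehat{\text{irregular}}(\gamma,\theta)$ precisely when $\sum_{\ell=1}^{n} X_\ell < \theta \cdot \text{cap}(D)$. The first step is to observe that each excursion contributes mean exactly $1$: using the last-exit decomposition, for every $z \in D$, $\sum_{y \in D} g_U(z,y) e_D(y) = P_z[H_D < T_U] = 1$, so for any starting distribution $\mu$ on $D$,
\begin{equation*}
E_\mu[X_\ell] = \sum_{x,y} \mu(x) g_U(x,y) e_D(y) = \sum_x \mu(x) \cdot 1 = 1.
\end{equation*}
In particular, the expected value of $\sum_{\ell=1}^n X_\ell$ is $n = \gamma \cdot \text{cap}(D) > \theta \cdot \text{cap}(D)$, so $\widehat{\text{irregular}}(\gamma,\theta)$ is a lower-tail event.

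Next, I would establish a uniform conditional exponential-moment bound on $X_\ell$. The excursions $Z_\ell^D$ are not strictly i.i.d.\ due to the trajectory structure of random interlacements, but each excursion starts at a (random) point of $\partial^{\text{int}} D$, and the identity $\sup_z E_z[\int_0^{T_U} e_D(X_t)\, dt] = \sup_z h_D(z) \leq 1$ allows me to apply Khasminskii's lemma (as in \eqref{eq:khasminskii1}) to get uniform bounds $E[X_\ell^k \mid \mathcal F_{\ell-1}] \leq k!$, where $\mathcal F_{\ell-1}$ encodes the history through the $(\ell-1)$-st excursion. Combined with $E[X_\ell \mid \mathcal F_{\ell-1}] = 1$, this yields, for all sufficiently small $\lambda > 0$,
\begin{equation*}
E[e^{-\lambda X_\ell} \mid \mathcal F_{\ell-1}] \leq 1 - \lambda + C\lambda^2 \leq e^{-\lambda + C\lambda^2},
\end{equation*}
uniformly in the history. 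Iterating over $\ell = 1, \dots, n$ via the tower property and invoking exponential Chebyshev gives
\begin{equation*}
\PP\Big[\sum_{\ell=1}^n X_\ell < \theta \cdot \text{cap}(D)\Big] \leq e^{\lambda \theta \text{cap}(D)} \cdot e^{-n(\lambda - C\lambda^2)} = e^{-\text{cap}(D)(\lambda(\gamma - \theta) - C\gamma\lambda^2)}.
\end{equation*}
Optimizing in $\lambda$ (which is possible since $\gamma - \theta > 0$) yields $\PP[B \text{ is } \widehat{\text{irregular}}(\gamma,\theta)] \leq \exp\bigl(-c(\gamma,\theta) \cdot \text{cap}(D)\bigr)$ for some positive constant $c(\gamma, \theta)$. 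The same bound can alternatively be read from \cite[Theorem 4.2]{Szn17} (the occupation-time bounds for continuous-time random interlacements), which is the route naturally suggested by the context of this subsection.

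Finally, recalling that $L = [N^\psi]$ with $\psi > 1/d$, \eqref{eq:capacity of a box} gives $\text{cap}(D) \geq c L^{d-1} = c N^{\psi(d-1)}$, and hence
\begin{equation*}
\PP[B \text{ is } \widehat{\text{irregular}}(\gamma,\theta)] \leq \exp\bigl(-c'(\psi,\gamma,\theta)\, N^{\psi(d-1)}\bigr).
\end{equation*}
Taking two logarithms yields $\log(-\log \PP[\cdots]) \geq \psi(d-1)\log N + O(1)$, so the liminf in \eqref{eq:prob of widehatbad} is at least $\psi(d-1)$, and any $c_{14} \in (0, \psi(d-1))$ works. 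The main obstacle, and the only non-routine step, is handling the non-i.i.d.~structure of the excursions $Z_\ell^D$ in the conditional exponential-moment bound; this is resolved by the uniform-in-starting-point Khasminskii estimate combined with the fact that $h_D \equiv 1$ on $D$ makes the conditional mean exactly $1$ regardless of history, or equivalently by directly invoking the occupation-time framework of \cite[Section 4]{Szn17}.
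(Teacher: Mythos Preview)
Your approach is correct and coincides with what the paper does, which simply defers to the argument between (3.26) and (3.27) in \cite[Theorem~3.3]{Szn17} (not Theorem~4.2; the latter is the occupation-time bound used for \Cref{prop:regular}). One small correction: the identity $\sum_{y} g_U(z,y)\,e_D(y)=1$ is not exact, because the weight is $e_D$ rather than $e_{D,U}$; the correct value is $1-E_z\bigl[P_{X_{T_U}}[H_D<\infty]\bigr]=1-O\bigl((\log N)^{-(d-1)}\bigr)$, which is still bounded away from $\theta/\gamma$ and so does not affect the exponential Chebyshev step.
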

\begin{proof}[Proof of \Cref{lem:irregular}]
This lemma is proved in \cite[Theorem 3.3]{Szn17}; see the argument between (3.26) and (3.27) in that work for details. The series of concentric boxes $B\subseteq D\subseteq\cdots\subseteq U$ in \eqref{eq:def of translated boxes} correspond to the series of boxes with the same notation in \cite[(3.10)]{Szn17}, and a different choice of $L$ and replacing $K$ in the reference by $\log N$ here do not affect the proof. Our constants $\gamma>\theta$ play the role of $\beta>\gamma$ in \cite[Theorem 3.3]{Szn17}, and the event $\widehat{\text{regular}}(\gamma,\theta)$ corresponds to the event defined in (3.13) within the definition of a $\text{good}(\alpha,\beta,\gamma)$ box in the reference.
\end{proof}

We remark here that in this subsection, \Cref{lem:coupling between widetildeZ and Z} serves the same ``decoupling" purpose as \Cref{prop:coupling between widehatZ and W} in \Cref{sec:bad(beta-gamma)}. Similarly, \Cref{lem:irregular} now plays the role of \Cref{lem:widehatgood(beta-gamma)}, that is, the estimate of the probability for one box to be $\widehat{\text{bad}}(\widehat{\beta}, \widehat{\gamma})$. With the two results above, we can proceed to the remaining part of the proof for \Cref{prop:irregular}. 
\begin{proof}[Proof of \Cref{prop:irregular}]
Given $0<\theta<\gamma$, we choose $\widehat{\zeta}=\widehat{\zeta}(\gamma,\theta)$ and $\lambda=\lambda(\gamma,\theta)$  sufficiently small such that 
\begin{equation}\label{eq:choice of zeta irregular}
\frac{\gamma}{(1+\widehat\zeta)^{2}}>\theta, \quad\text{and}\quad\lambda <\frac{\theta}{10}.
\end{equation}

Now if all boxes in $\{B_{x}\}_{x\in \mathcal C_3}$ are $\widehat{\text{irregular}}(\gamma,\theta)$, then there exists a subset $\mathcal C_4$ of $\mathcal C_3$ with cardinality $[|\mathcal C_3|/3]$ (which is approximately $[|\mathcal{C}_{2}|/9]$), such that the $L$-boxes in $\{B_{x}\}_{x\in \mathcal C_4}$ either fail to satisfy $\widetilde{H}_{B_{x}}^{\lambda}(\widetilde{Z},Z)$,  or are all $\widehat{\text{irregular}}(\gamma,\theta)$ while satisfying $\widetilde{H}_{B_{x}}^{\lambda}(\widetilde{Z},Z)$. Again we define for a fixed set $\mathcal C_4$
\begin{equation}
    \widehat{\text{irregular}}_{1}(\mathcal C_4) = \hspace{-0.3em}\bigcap_{x\in\mathcal C_4}\hspace{-0.3em}\widetilde{H}_{B_x}^{\lambda}(\widetilde{Z},Z)^c;\ \widehat{\text{irregular}}_{2}(\mathcal C_4) = \hspace{-0.5em}\bigcap_{x\in\mathcal C_4}\hspace{-0.3em}\Big(\{B_x \text{ is }\widehat{\text{irregular}}(\gamma,\theta)\}\cap \widetilde{H}_{B_x}^{\lambda}(\widetilde{Z},Z)\Big).
\end{equation}
Again, it suffices to bound the two probabilities from above and take a union bound on all the possible choices of $\mathcal C_4$. 

We first bound $\widehat{\text{irregular}}_{1}(\mathcal C_4)$. According to \eqref{eq:prob of bad H}, for sufficiently large $N\geq C(\psi,\gamma,\theta,\lambda,\widehat{\zeta})(\geq c_{9}(\widehat{\zeta}))$, we have for each $B = B_x$, $x \in \mathcal C_4$, 
\begin{equation}\label{eq:plug in prob of bad H}
\mathbb{Q}_{\widetilde{Z},Z}\left[\widetilde{H}_{B}^{\lambda}(\widetilde{Z},Z)^{c}\right]\overset{\eqref{eq:prob of bad H}}{\leq} \exp\left(-c_{10}\text{cap}(\check{D})\right)\leq \exp\left(-cc_{10}L^{d-1}\right)=\exp\left(-cc_{10}N^{(d-1)\psi}\right).
\end{equation}
By \Cref{lem:coupling between widetildeZ and Z}, under the coupling $\mathbb{Q}_{\widetilde{Z},Z}$, the events $\widetilde{H}_{B_{x}}^{\lambda}(\widetilde{Z},Z)$ are independent as $x$ varies over $\mathcal{C}_{2}$. Therefore, for sufficiently large $N$, by \eqref{eq:plug in prob of bad H},
\begin{equation}\label{eq:prob of M1S1}
\begin{split}
    \mathbb{Q}_{\widetilde{Z},Z}\Big[\widehat{\text{irregular}}_{1}(\mathcal C_4)\Big]
    \leq \mathbb{Q}_{\widetilde{Z},Z}\left[\widetilde{H}_{B}^{\lambda}(\widetilde{Z},Z)^{c}\right]^{|\mathcal C_4|}\leq e^{-cc_{10}N^{(d-1)\psi}|\mathcal C_4|}\leq e^{-cc_{10}N^{(d-1)\psi}|\mathcal C_2|}. 
\end{split}
\end{equation}

Next, we bound $\widehat{\text{irregular}}_{2}(\mathcal C_4)$. Under the coupling $\mathbb{Q}_{\widetilde{Z},Z}$ and on the event $\widetilde{H}_{B_x}^{\lambda}(\widetilde{Z},Z)$, the coupling \eqref{eq:coupling widehatZD and ZD}(i) between $Z_{\ell}^{D_x},\ell\geq 1$ and $\widetilde Z_{\ell}^{D_x},\ell\geq 1$ allows us to transform the event $\widehat{\text{irregular}}(\gamma,\theta)$ using the excursions $\widetilde{Z}_{\ell}^{D_x},{\ell\geq 1}$, where the parameter $\gamma$ is replaced by $\gamma/(1+\widehat{\zeta})$ while $\theta$ remains unchanged. 
After this transformation, thanks to the independence of $\widetilde{Z}_{\ell}^{D_{x}},{\ell\geq 1}$ as $x$ varies over $\mathcal C_4$, we obtain $|\mathcal C_4|$ independent events. 

For each of these independent events, we then apply \eqref{eq:coupling widehatZD and ZD}(ii) to convert it back to the excursions of random interlacements $Z_{\ell}^{D_x},\ell\geq 1$, transforming the event into $\widehat{\text{irregular}}\big(\gamma/(1+\widehat{\zeta})^{2},\theta\big)$.
Combining the above two steps, by \Cref{lem:irregular} and our choice of $\widehat \zeta$ in \eqref{eq:choice of zeta irregular}, for sufficiently large $N\geq C'(\psi,\gamma,\theta,\lambda,\widehat{\zeta})$ and $c_{14}'=c_{14}\big(\gamma/(1+\widehat{\zeta})^{2},\theta\big)>0$, we have 
\begin{equation}\label{eq:prob of M2S1}
\begin{split}
\mathbb{Q}_{\widetilde{Z},Z}\Big[\widehat{\text{irregular}}_{2}(\mathcal C_4)\Big]\leq \mathbb{Q}_{\widetilde{Z},Z}\left[B\text{ is }\widehat{\text{irregular}}(\gamma/(1+\widehat{\zeta})^{2},\theta)\right]^{|\mathcal C_4|}\leq e^{-N^{c_{14}'}|\mathcal C_4|}\leq e^{-N^{c_{14}'}|\mathcal C_2|}.
\end{split}
\end{equation}

Combining \eqref{eq:prob of M1S1} and \eqref{eq:prob of M2S1} yields that for sufficiently large $N$, we have for some sufficient small $c(\psi,\gamma,\theta,\lambda,\widehat{\zeta})>0$,
\begin{equation}\label{eq:prob of irregular C2}
\begin{split}
\quad P_{0}^{N}\Big[\bigcap_{x\in \mathcal C_3}\{B_{x}\text{ is }\widehat{\text{irregular}}(\gamma,\theta)\}\Big]
&\leq \sum_{\mathcal C_4}\left(\mathbb{Q}_{\widetilde{Z},Z}[\widehat{\text{irregular}}_1(\mathcal C_4)]+\mathbb{Q}_{\widetilde{Z},Z}[\widehat{\text{irregular}}_{2}(\mathcal C_4)]\right)\\
&\leq 2^{|\mathcal{C}_{2}|}\Big(e^{-cc_{10}N^{(d-1)\psi}|\mathcal{C}_{2}|}+e^{-cN^{c_{14}'}|\mathcal{C}_{2}|}\Big)\\
&\leq e^{-N^{c(\psi,\gamma,\theta,\lambda,\zeta)}|\mathcal{C}_{2}|},
\end{split}
\end{equation}
and the conclusion follows.
\end{proof}

\subsubsection{Unlikeness of surfaces of \texorpdfstring{$\widehat{\text{poor}}(\gamma)$}{} boxes (Proof of \Cref{prop:regular})}\label{subsubsec:regular}

The proof of \Cref{prop:regular} involves the occupation-time bounds on continuous-time interlacements. Recall that $D(\mathcal C_3)$ denotes the union of boxes $D_x$ for $x\in \mathcal C_3$ (see \eqref{eq:def of checkD and checkU C2}).
\begin{proposition}\label{prop:occupation-time bounds}
Recall $u'<\theta<\gamma$ in $(\overline{u}-\delta,\overline{u})$ and the sets $\mathcal C_3 \subseteq\mathcal{C}_{2}$ in \Cref{prop:regular}. 
There exists a positive constant $c_{15}=c_{15}(\overline{u},\delta,\gamma,\theta)$ such that 
\begin{equation}\label{eq:occupation-time bounds}
\limsup_{N\to\infty}\frac{1}{\text{cap}(D(\mathcal C_3))}\log\PP\Big[\bigcap_{x\in \mathcal C_3}\big\{B_x\text{ is }\widehat{\text{poor}}(\gamma)\text{ and }\widehat{\text{regular}}(\gamma,\theta)\big\}\Big]<-c_{15}<0.
\end{equation}
\end{proposition}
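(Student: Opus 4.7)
The plan is to convert the intersection event into a single quantitative estimate on the weighted occupation time of continuous-time random interlacements at level $u'$, and then invoke the exponential Chebyshev inequality from \cite{Szn17}. First, observe that on the event $\{B_x \text{ is } \widehat{\text{poor}}(\gamma) \text{ and } \widehat{\text{regular}}(\gamma,\theta)\}$, the definition of $\widehat{\text{poor}}(\gamma)$ gives $N_{u'}(D_x) > \gamma\cdot\text{cap}(D_x)$, so the first $\gamma\cdot\text{cap}(D_x)$ excursions appearing in the definition of $\widehat{\text{regular}}(\gamma,\theta)$ are all drawn from trajectories with label at most $u'$. Combining this with the regularity inequality we obtain, for each $x \in \mathcal{C}_3$,
\begin{equation*}
\sum_{1 \leq \ell \leq N_{u'}(D_x)} \int_0^{T_{U_x}} e_{D_x}\bigl(Z_\ell^{D_x}(t)\bigr)\, dt \ \geq\ \theta\cdot\text{cap}(D_x).
\end{equation*}
Since the boxes $D_x$, $x \in \mathcal{C}_3 \subseteq \mathcal{C}_2$, are pairwise disjoint (in fact separated by at least $10L\log N$ in view of \eqref{eq:sparsity of C2}), summing over $x \in \mathcal{C}_3$ recovers the total weighted occupation time of the continuous-time interlacement at level $u'$ against the test function $f := \sum_{x \in \mathcal{C}_3} e_{D_x}$, which is supported in $D(\mathcal{C}_3)$ and has total mass $\|f\|_1 = \sum_{x\in\mathcal{C}_3} \text{cap}(D_x)$.

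Next, I would apply the exponential Chebyshev bound for occupation times of continuous-time interlacements developed in \cite[Section 4]{Szn17} (a refinement of \cite[Theorem 4.2]{Szn09b}). Under $\PP$, the expectation of $\int f(Y_s)\,ds$ integrated over interlacements at level $u'$ equals $u' \cdot \|f\|_1$, and the exponential moment is controlled provided $\|Gf\|_\infty$ stays bounded; here $G$ denotes the Green operator on $\mathbb{Z}^{d+1}$. Using the Green function decay \eqref{eq:decay of green function}, the single-box contribution satisfies $\|G e_{D_x}\|_\infty \leq C$ uniformly, and the sparsity condition $d(D_x, D_y) \geq 5L\log N$ makes the cross-box contribution summable, yielding $\|Gf\|_\infty \leq C'$ uniformly in $N$ and in the choice of $\mathcal{C}_3$. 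Since $\theta > u'$, the standard Cramér-type argument then gives a positive constant $c(\overline{u},\delta,\gamma,\theta)$ such that
\begin{equation*}
\PP\bigl[\text{weighted occupation at level } u' \geq \theta\cdot\|f\|_1\bigr] \ \leq\ \exp\Bigl(-c\cdot\sum_{x\in\mathcal{C}_3}\text{cap}(D_x)\Bigr).
\end{equation*}

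Finally, I would conclude by subadditivity of capacity: $\text{cap}(D(\mathcal{C}_3)) \leq \sum_{x\in\mathcal{C}_3} \text{cap}(D_x)$, so the right-hand side is at most $\exp(-c\cdot\text{cap}(D(\mathcal{C}_3)))$, yielding \eqref{eq:occupation-time bounds} with $c_{15} = c$. The main technical obstacle is controlling $\|Gf\|_\infty$ uniformly in the cardinality and geometry of $\mathcal{C}_3$: one must verify that the logarithmic separation $10L\log N$ between boxes in $\mathcal{C}_2$ is enough to tame the sum of two-point Green function contributions across the $|\mathcal{C}_3|$ boxes, which may be as large as $(N/L\log^3 N)^d$. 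This reduces to a standard lattice sum estimate using \eqref{eq:decay of green function}, but must be performed carefully since we need the constant $c_{15}$ to depend only on $(\overline{u},\delta,\gamma,\theta)$ and not on $N$.
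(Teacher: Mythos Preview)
Your proposal is correct and follows essentially the same approach as the paper: the paper's proof is simply a citation of \cite[Theorem~4.2]{Szn17} together with a dictionary translating the notation (their $\beta,\gamma,u,\mathcal{C}$ are the present $\gamma,\theta,u',\mathcal{C}_3$, etc.), while you have sketched the actual mechanism of that theorem---reducing the intersection event to a lower bound on the $e_{D_x}$--weighted occupation time of continuous-time interlacements at level $u'$, then applying the exponential Chebyshev bound with the key input $\|Gf\|_\infty \leq C'$ coming from the sparsity \eqref{eq:sparsity of C2}. The technical point you flag (uniform control of $\|Gf\|_\infty$ over collections $\mathcal{C}_3$ of possibly large cardinality) is exactly what \cite[Lemma~4.3]{Szn17} handles, and the subadditivity step $\mathrm{cap}(D(\mathcal{C}_3))\leq \sum_{x\in\mathcal{C}_3}\mathrm{cap}(D_x)$ is the right way to convert the resulting exponent into the form required by \eqref{eq:occupation-time bounds}.
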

\begin{proof}
This is proved in \cite[Theorem 4.2]{Szn17}. The series of concentric boxes $B\subseteq D\subseteq\cdots\subseteq U$ in \eqref{eq:def of translated boxes} corresponds to the series of boxes with the same notation in \cite[(3.10)]{Szn17}, and a different choice of $L$ and replacing $K$ in the reference by $\log N$ here do not affect the proof. The set $\mathcal{C}_{3}$ corresponds to the set $\mathcal{C}$ in the reference, and they are both sparse in the sense that \eqref{eq:sparsity of C2} and \eqref{eq:sparsity of boxes in mathcalC2} have the same effect as (4.1) and (4.2) in \cite{Szn17}. Our constants $\gamma,\theta,u'$ here correspond to the constants $\beta,\gamma,u$ respectively. The events $\widehat{\text{regular}}(\gamma,\theta)$ and $\widehat{\text{poor}}(\gamma)$ play the role of $\text{good}(\alpha,\beta,\gamma)$ and $N_{u}(D_{z})\geq \beta\text{cap}(D)$ in the reference respectively. 
\end{proof}

With this result, it suffices to compare between the capacity of the union of boxes $D(\mathcal C_3)$ and the cardinality of $\mathcal C_3$, which is proved using the variational characterization of capacity; see \eqref{eq:variational characterization}.
\begin{lemma}\label{lem:capacity of DS}
There exists a positive constant $c_{16}=c_{16}(d)$ such that for each $\mathcal C_3$ defined in \Cref{prop:regular}, 
\begin{equation}\label{eq:capacity of DS}
\liminf_{N\to\infty}\frac{\text{\rm cap}(D(\mathcal C_3))}{|\mathcal C_3|^{\frac{d-1}{d}}L^{d-1}}> c_{16}>0.    
\end{equation}
\end{lemma}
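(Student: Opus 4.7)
The plan is to invoke the variational characterization of capacity \eqref{eq:variational characterization}: since $\text{cap}(K)^{-1}=\inf_{\nu}E(\nu)$ over probability measures $\nu$ on $K$, to establish the claim it suffices to exhibit a single $\nu$ supported on $D(\mathcal{C}_{3})$ whose Dirichlet energy satisfies $E(\nu)\leq C/(|\mathcal{C}_{3}|^{(d-1)/d}L^{d-1})$. I would take $\nu:=\frac{1}{|\mathcal{C}_{3}|}\sum_{x\in\mathcal{C}_{3}}\overline{e}_{D_{x}}$, the uniform mean of the normalized equilibrium measures of the medium-sized boxes attached to the base points, and split $E(\nu)=E_{\text{diag}}+E_{\text{off}}$ according as $x=y$ or $x\neq y$. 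For the diagonal piece I would use the identity $\sum_{u,v\in D_{x}}\overline{e}_{D_{x}}(u)\overline{e}_{D_{x}}(v)g(u,v)=1/\text{cap}(D_{x})$, which follows since $\sum_{v}e_{D_{x}}(v)g(u,v)\equiv 1$ on $D_{x}$, together with the lower bound $\text{cap}(D_{x})\geq cL^{d-1}$ from \eqref{eq:capacity of a box}; this immediately yields $E_{\text{diag}}\leq C/(|\mathcal{C}_{3}|L^{d-1})$, already within the target because $|\mathcal{C}_{3}|\geq |\mathcal{C}_{3}|^{(d-1)/d}$.

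For the off-diagonal piece, I would first use the elementary distance bound $|u-v|\geq c|x-y|$ for $u\in D_{x}$, $v\in D_{y}$ with $x\neq y$ (valid since each box has diameter $O(L)$ and distinct base points are at distance at least $L$), together with the Green's function decay \eqref{eq:decay of green function}, to reduce the estimate to a purely combinatorial statement: $E_{\text{off}}\leq\frac{C}{|\mathcal{C}_{3}|^{2}}\sum_{x\neq y\in\mathcal{C}_{3}}|x-y|^{1-d}$. The crucial input is the geometric observation that the base points of $\mathcal{C}_{3}$ lie on the sublattice $L\mathbb{Z}^{d+1}$ (they arise as corners of the tiling $L$-boxes built in \Cref{prop:geometric argument} from points of $LE_{M}$), and that by the $\pi_{*}$-injectivity inherited via \eqref{eq:C2 disjoint} their projections $\pi_{*}(\mathcal{C}_{3})$ form $|\mathcal{C}_{3}|$ \emph{distinct} points of $L\mathbb{Z}^{d}$, hence are pairwise $L$-separated in the projected space. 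Since $|\pi_{*}(x)-\pi_{*}(y)|_{\infty}\leq|x-y|_{\infty}$, this forces, for every $r\geq L$ and every $x\in\mathcal{C}_{3}$, the ball count $|\{y\in\mathcal{C}_{3}:|x-y|_{\infty}\leq r\}|\leq C(r/L)^{d}$, by bounding the number of $L$-lattice points in a $d$-dimensional ball.

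A routine dyadic decomposition over the shells $|x-y|_{\infty}\in(2^{k-1}L,2^{k}L]$, using the shell count $\min(C2^{kd},|\mathcal{C}_{3}|)$ with crossover at $2^{k^{*}}\asymp|\mathcal{C}_{3}|^{1/d}$, then yields $\sum_{y\in\mathcal{C}_{3}\setminus\{x\}}|x-y|^{1-d}\leq C|\mathcal{C}_{3}|^{1/d}/L^{d-1}$, uniformly in $x$; summing over $x$ and dividing by $|\mathcal{C}_{3}|^{2}$ gives $E_{\text{off}}\leq C/(|\mathcal{C}_{3}|^{(d-1)/d}L^{d-1})$, which dominates the diagonal and matches the target. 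I expect the main obstacle to lie in correctly identifying this geometric input: a naive argument relying only on the coarser $\ell^{\infty}$-sparsity $10L\log N$ of $\mathcal{C}_{2}$ would produce a shell bound $\sim 2^{k(d+1)}$ and give a capacity estimate polynomially weaker than claimed, so the combination of the $L$-lattice structure of the base points with the $\pi_{*}$-injectivity, which upgrades the available counting to the correct $(r/L)^{d}$ bound in projection, is exactly what delivers the sharp $(d-1)/d$ exponent in $|\mathcal{C}_{3}|$.
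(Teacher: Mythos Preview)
Your approach is correct and closely parallels the paper's: both invoke the variational characterization \eqref{eq:variational characterization} and exploit the $\pi_{*}$-injectivity from \eqref{eq:C2 disjoint} to obtain the crucial $d$-dimensional (rather than $(d+1)$-dimensional) packing bound that produces the exponent $(d-1)/d$. The paper, however, chooses a different test measure: it takes $\nu$ uniform on the union $D_{*}(\mathcal C_{3})$ of the $d$-dimensional faces of the $D_{x}$ orthogonal to $e_{*}$. Since \emph{every} point of $D_{*}(\mathcal C_{3})$ then has a distinct $\pi_{*}$-projection, the energy bound becomes a single line: $\max_{x}\sum_{y\in D_{*}(\mathcal C_{3})}g(x,y)\leq c\sum_{k=1}^{m}k^{1-d}\cdot k^{d-1}=cm$ with $m=2L|\mathcal C_{3}|^{1/d}$, and dividing by $|D_{*}(\mathcal C_{3})|\asymp|\mathcal C_{3}|L^{d}$ gives the claim directly. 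This sidesteps your diagonal/off-diagonal split and the explicit dyadic decomposition, though your argument is equally valid and perhaps more modular.

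One small correction: your justification ``distinct base points are at distance at least $L$'' does not suffice for the bound $|u-v|\geq c|x-y|$ when $u\in D_{x},v\in D_{y}$, because the $D$-boxes have side $7L$ and would overlap under that hypothesis alone. You should instead invoke the $\mathcal C_{2}$-sparsity \eqref{eq:sparsity of C2}, which gives $|x-y|_{\infty}\geq 10L\log N$ and hence $|u-v|\geq |x-y|/2$ for large $N$; you evidently know this fact since you cite it later, so this is only a slip in the write-up.
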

\begin{proof}
Recall that in this section we assume $\mathcal{C}_{1}$ satisfies \eqref{eq:C1 disjoint}, with $\pi_{*}\in\{\pi_{i}\}_{1\leq i\leq d+1}$ denoting the projection. Let $e_{*}$ be the direction vector for the projection $\pi_{*}$. For each box $D_{x}$ with $x\in \mathcal C_3$, let $D_{x,*}$ be one of its two $d$-dimensional boundary faces orthogonal to $e_{*}$, and write $D_{*}(\mathcal C_3)$ as the union of $D_{x,*},x\in \mathcal C_3$. Then by \eqref{eq:def of boxes at origin}, \eqref{eq:C1 disjoint} and \eqref{eq:sparsity of C2}, 
\begin{equation}\label{eq:D*S disjoint}
\pi_{*}(x)\neq \pi_{*}(y),\quad \text{for all different }x,y\in D_{*}(\mathcal C_3).    
\end{equation}

We then use the variational characterization of capacity (see \eqref{eq:variational characterization}) to bound $\text{cap}(D(\mathcal C_3))$. Let $m=2L\cdot\big[|\mathcal C_3|^{\frac{1}{d}}\big]$. By \eqref{eq:D*S disjoint} and \eqref{eq:decay of green function} (an estimate on Green's function), it holds that 
\begin{equation}\label{eq:uniform lower bound on green function}
\max_{x\in D_{*}(\mathcal C_3)}\sum_{y\in D_{*}(\mathcal C_3)}g(x,y)\leq c\sum_{k=1}^{m}k^{1-d}\cdot k^{d-1}\leq cm.    
\end{equation} 
We then take $\nu$ as the uniform measure on $D_{*}(\mathcal C_3)$, and it follows from \eqref{eq:uniform lower bound on green function} that
\begin{equation}\label{eq:lower bound on Enu}
\begin{split}
    E(\nu)&=\sum_{x,y\in D_{*}(\mathcal C_3)}\nu(x)\nu(y)g(x,y)\\
    &\leq \frac{1}{|D_{*}(\mathcal C_3)|}\max_{x\in D_{*}(\mathcal C_3)}\sum_{y\in D_{*}(\mathcal C_3)}g(x,y)\leq \frac{cm}{|D_{*}(\mathcal C_3)|}\leq \frac{c}{|\mathcal C_3|^{\frac{d-1}{d}}L^{d-1}}.
\end{split}
\end{equation}
Plugging \eqref{eq:lower bound on Enu} into \eqref{eq:variational characterization} yields \eqref{eq:capacity of DS}.
\end{proof}

\begin{proof}[Proof of \Cref{prop:regular}]
Comparing \eqref{eq:regular} with \eqref{eq:occupation-time bounds}, it suffices to show that the order of $\text{cap}(D(\mathcal C_3))$ is larger than that of $|\mathcal{C}_{2}|\log^{d+2}N$. 
Now since $\mathcal{C}_{2}$ is a subset of $\mathcal{C}_{1}$, by \eqref{eq:C1 disjoint}, we have
\begin{equation}\label{eq:order of C2 in comparison}
|\mathcal{C}_{2}|\log^{d+2}N\leq C\cdot\left(\frac{N}{L}\right)^{d}\cdot(\log N)^C.    
\end{equation}
Combining \eqref{eq:size of C2}, \Cref{lem:capacity of DS} and $|\mathcal C_3|=[|\mathcal{C}_{2}|/3]$, we have for some $C'=C'(c_{16})>0$,
\begin{equation}\label{order of D(S) in comparison}
\text{cap}(D(\mathcal C_3)) \geq N^{d-1}\cdot(\log N)^{-C'}.
\end{equation}
The conclusion then follows from the assumption $L=[N^{\psi}],\psi\in(1/d,1)$.
\end{proof}

\section{Bounding \texorpdfstring{$T_{N}$}{TN} by \texorpdfstring{$\underline{S}_{N}$}{SN} in the biased walk case}\label{sec:lower bound in biased case}

The main goal of this section is to prove \Cref{prop:underlineS and T biased}, that is, for biased walk with upward drift $N^{-d\alpha}$, $\alpha\in(1/d,\infty)$, disconnection happens after time $\underline S_N$ with high probability. The strategy is identical to that of the simple random walk case, and we will adapt the arguments in \Cref{sec:Geometric,sec:bad(beta-gamma),sec:bad local time} respectively in \Cref{subsec:adapting geometric,subsec:adapting bad(beta-gamma),subsec:adapting bad local time}. Since most of the techniques involved remain the same, we will not lay down full details, but rather focus on necessary adaptations of the proof.
Our main result of this section is as follows.
\begin{proposition}\label{prop:underlineS and T biased}
For every $\alpha\in(1/d,\infty)$ and $\delta>0$, we have 
\begin{equation}\label{eq:underlineS and T biased}
\lim_{N\to \infty}P_{0}^{N,\alpha}[T_{N}\geq \underline{S}_{N}]=1.
\end{equation}
\end{proposition}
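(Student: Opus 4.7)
\subsection*{Plan of proof for Proposition 6.1}

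The plan is to follow the same three-step architecture used for the simple random walk case in Sections 4--5 (geometric reduction, surface of bad$(\beta,\gamma)$ boxes, surface of poor$(\gamma)$ boxes), and to adapt each ingredient so that it accommodates a biased walk with drift $N^{-d\alpha}$, $\alpha\in(1/d,\infty)$. The choice of mesoscopic scale will be $L=[N^{\psi}]$ with $\psi$ picked in the non-empty window $(1/d,\,1\wedge\alpha)$; the lower bound $\psi>1/d$ is needed, as in the simple walk case, so that $\mathrm{cap}(\mathsf{B})\gg|\mathcal{C}_1|\log N$, while the new upper constraint $\psi<\alpha$ guarantees that (i) the drift is not felt inside a box of side-length $L$ (so biased excursions from $D$ to $\partial U$ can be compared with unbiased ones via the Radon--Nikodym estimate of Subsection 1.4), and (ii) the combinatorial complexity of choosing $\mathsf{B}$ remains below the exponential decay of the ``atypical'' probabilities even when $\underline S_N$ is (stretched) exponential in $N^{d(1-\alpha)}$.

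First, I would adapt the geometric argument (Proposition 4.2). The chief difficulty is that the biased walk is transient (upward), so for any fixed $z\in\mathbb{Z}$ sufficiently far below the starting level, $\mathbb{T}\times\{z\}$ may never be visited, and one can no longer directly form the successive excursions $W_\ell^D$ of a single trajectory through $D$. The fix is to introduce, as announced in Subsection 0.2, countably many auxiliary independent biased walks started from the uniform distribution on $\mathbb{T}\times\{z_0\}$ for $z_0$ very negative, and to stitch together their excursions with those of the original walk to obtain an infinite sequence $\{W_\ell^D\}_{\ell\geq 1}$; the definitions of good$(\beta,\gamma)$, fine$(\gamma)$ and normal$(\beta,\gamma)$ boxes then transfer verbatim, and Lemma 4.1 (connectivity through chains of normal boxes) goes through unchanged. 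The discrete intermediate-value/isoperimetric argument of Proposition 4.2 only uses the event $\{T_N\leq \underline S_N\}$ and the fact that sufficiently far-away levels are not visited before time $\underline S_N$; this property still holds for the biased walk since $\underline S_N<\infty$ a.s. The only substantive change is that the inclusion $\pi_{\mathbb Z}(\mathsf B)\subseteq[-N^{10d},N^{10d}]$ must be replaced by an inclusion of exponential height when $\alpha<1$: here I will use Proposition 3.3 which places $\underline S_N$ at order $\exp\bigl(\tfrac{\overline u-2\delta}{d+1}N^{d(1-\alpha)}\bigr)$ and then a standard ballistic bound on the biased walk to confine $\pi_{\mathbb Z}(\mathsf B)$ in an interval of length $\exp(CN^{d(1-\alpha)})$; the combinatorial factor for choosing $\mathsf B$ then becomes $\exp(CN^{d(1-\alpha)})$, which is absorbed because $|\mathcal C_1|\log N\asymp N^{d(1-\psi)}\log N$ and $\psi<\alpha\Rightarrow d(1-\psi)>d(1-\alpha)$.

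Second, I would adapt the proof of Proposition 4.3 (unlikeliness of a surface of bad$(\beta,\gamma)$ boxes). The soft local time coupling scheme of Subsection 5.1 still applies, but the walk excursions $W_\ell^{\check D}$ are now biased. I will add one intermediate coupling (Proposition 6.2/6.3 in the paper's outline): first couple the biased excursions $W_\ell^{\check D}$ with i.i.d.\ \emph{biased} excursions $\widetilde W_\ell^{\check D}$ using a biased version of the hitting-distribution estimate (Proposition 1.8 gives the required $(1\pm\eta/10)\overline e_{\check D}$ sandwich provided $\Delta^{-1}\geq KL(K+L)$, which reads $N^{d\alpha}\geq L\log^2 N$, granted by $\psi<\alpha$); then compare each i.i.d.\ biased excursion to an i.i.d.\ unbiased excursion $\widetilde Z_\ell^{\check D}$ via the Radon--Nikodym bound of Subsection 1.4, restricted to the short excursions of length $\ll\Delta^{-1}$ whose complementary event has exponentially small probability by the Khas'minskii argument used in the proof of Lemma 1.7. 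After this two-step replacement, the independence-across-$x\in\mathcal{C}_2$ and the further coupling with interlacement excursions (Proposition 5.4/Lemma 5.5) go through identically, because the target sequence $\widetilde Z$ is unbiased. The deduction of Proposition 4.3 in Subsection 5.2 can then be copied essentially verbatim.

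Third, for the adaptation of Proposition 4.4 the key input is the very strong coupling (Proposition 5.1), which will be established, in its general biased form, in Section 8 (and invoked here as Proposition 6.4 in the sketch). Given this coupling, the reduction to a statement on random interlacements (Proposition 5.3) and the continuous-time occupation-time machinery (Subsection 5.3.2) do not depend on whether the walk is biased, so they apply without change. The main conceptual obstacle in this last step, and indeed the main obstacle for the whole section, is to prove the ``very strong coupling'' for the biased walk with the right error $\exp(-c\,\mathrm{cap}(\mathsf B))$: the usual coupling arguments need horizontal mixing and spatial regularity of the entrance distribution on a box of side $L$ at a fixed height, and both require $N^\psi\ll N^{d\alpha}$, which is exactly why the restriction $\alpha>1/d$ is unavoidable. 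With this coupling in hand, Proposition 6.1 follows by combining the adapted Proposition 4.2 with the biased versions of Propositions 4.3--4.4 through the same union bound as in Subsection 4, taking care to verify that the new combinatorial factor $\exp(CN^{d(1-\alpha)})$ in the choice of $\mathsf B$ is dominated by the decay rates coming from the two bad-surface estimates, which is precisely the content of the inequality $d(1-\alpha)<d(1-\psi)$.
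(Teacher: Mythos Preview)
Your proposal is correct and follows essentially the same approach as the paper: choose $\psi\in(1/d,\,1\wedge\alpha)$, introduce auxiliary biased walks to restore an infinite sequence of excursions, rerun the geometric argument with a height window of size $M(\alpha)$ (polynomial for $\alpha\geq 1$, $\exp(\overline u\,N^{d(1-\alpha)})$ for $\alpha<1$), split the first coupling of Section~5 into a biased $W\!\to\!\widetilde W$ step (via Proposition~1.8) followed by a $\widetilde W\!\to\!\widetilde Z$ Radon--Nikodym step, and invoke the biased very strong coupling for the poor$(\gamma)$ part. One small inaccuracy: Proposition~3.3 gives only a \emph{lower} bound on $\log\underline S_N$; the upper bound $\underline S_N\leq M(\alpha)/2$ with high probability (needed to confine $\pi_{\mathbb Z}(\mathsf B)$) is a separate (easy) estimate on the maximum local time of a drifted walk, not a consequence of Proposition~3.3.
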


\subsection{The geometric argument (Adaptation of \Cref{sec:Geometric})} \label{subsec:adapting geometric}

In this subsection we adapt the geometric argument established in \Cref{sec:Geometric}. 

We still consider the coarse-grained boxes $B_{x}, D_{x},\check{D}_{x},U_{x},\check{U}_{x}$ defined in \eqref{eq:def of boxes at origin} and \eqref{eq:def of translated boxes} with $L=[N^{\psi}]$, which are respectively referred to as $B,D,\check{D},U,\check{U}$ when no confusion arises. In the biased walk case, we further impose an additional requirement on $L$ such that $\psi\in(1/d,\alpha\land 1)$. We remark that the condition $\psi>1/d$ is again used to control the combinatorial complexity of $d$-dimensional bad boxes (see the remark after \eqref{eq:def of translated boxes}), while the condition $\psi<\alpha\land 1$ is required to beat some union bounds (see \eqref{eq:number of C1 bias}) and to facilitate some Radon-Nikodym derivatives estimates (see the proofs of \Cref{prop:coupling W and widetildeW,lem:coupling widetildeW with widetildeZ}).

The analysis of successive random walk excursions between concentric boxes is a core tool of the proof. As the biased random walks on the cylinder in consideration is no longer recurrent, we introduce a sequence of auxiliary biased walks to create infinitely many random walk excursions. Denote the original random walk $(X_{n})_{n\geq 0}$ started at $0$ as $X^{0}$, and let $\{X^{k}\}_{k\geq 1}$ be a sequence of i.i.d~biased random walk with law $P^{N,\alpha}$, each starting from a uniformly random location on $\mathbb{T}\times\{-2M(\alpha)\}$, where 
\begin{equation}\label{eq:def of large constant Calpha}
M(\alpha):=
\begin{cases}
N^{10d},\quad &\alpha\in[1,\infty),\\
\exp\left(\overline{u}\cdot N^{d(1-\alpha)}\right), \quad &\alpha\in(1/d,1).
\end{cases}
\end{equation}
Then by \Cref{prop:asymptotics_alpha>1,prop:asymptotics_alpha=1,prop:asymptotics_alpha<1}, under $P_{0}^{N,\alpha}$, with high probability, $\underline{S}_{N}\leq M(\alpha)/2$.
For simplicity, we still write the product measure of the probability measure with respect to $X^{0}$ and $X^{k},k\geq 1$, as $P_{0}^{N,\alpha}$. 

Given an $L$-box $B$, for every biased random walk $X^{k}$, we define the successive times of return to $D$ and departure from $U$ as in \eqref{eq:def of return and departure for boxes}. However, under $P_{0}^{N,\alpha}$, every walk $X^{k}$ almost surely only contains finitely many excursions from $D$ to $\partial U$. 
By increasing $k$ from zero to infinity, we generate an infinite sequence of excursions  which will be ranked first by the value of $k$, and then by their order of appearance within the trajectory $X^{k}$.  With this criterion, we still write these excursions as $W_{\ell}^{D,U}, {\ell\geq 1}$, and keep the notation $W_{\ell}^{D}$ and $W_{t}^{D}$ (see \eqref{eq:def of simple random walk excursions} and below).

For two fixed constants $\beta>\gamma$ in $(\overline{u}-\delta,\overline{u})$, we retain the definition of $\text{good}(\beta,\gamma)$, $\text{bad}(\beta,\gamma)$, $\text{fine}(\gamma)$, $\text{poor}(\gamma)$, $\text{normal}(\beta,\gamma)$ and $\text{abnormal}(\beta,\gamma)$ boxes defined in \Cref{def:good(beta-gamma),def:bad local time,def:bad boxes}.
Note that here the definition of $\text{good}(\beta,\gamma)$ boxes now involves the artificial walks $X^{k}, {k\geq 1}$, while the definition of $\text{fine}(\gamma)$ only concerns the number of excursion from $D$ to $\partial U$ in the original excursion $X^{0}$ before time $\underline{S}_{N}$. 
In this fashion, \Cref{lem:good connectivity} still holds, and we now state the adapted version of the geometric argument \Cref{prop:geometric argument}. 
\begin{proposition}\label{prop:geometric argument bias}
For all $N\geq c_{3}(\psi)>0$ (where $c_{3}(\psi)$ is the same as in \Cref{prop:geometric argument}), on the event $\{T_{N}\leq\underline{S}_{N}\}$, there exists a box $\mathsf{B}$ with side-length $[N/\log^{3}N]$ and $\mathcal C$ which is a subset of $\mathbb E$ such that for some $\pi_{*}\in\{\pi_{i}\}_{1\leq i\leq d+1}$, \eqref{eq:large distance from mathsfB}-\eqref{eq:bad boxes property} hold. 
Moreover, on the event $\{T_{N}\leq\underline{S}_{N}\leq M(\alpha)/2\}$, we further have
\begin{equation}\label{eq:mathsf B range}
    \pi_{\mathbb{Z}}(\mathsf{B})\subseteq[-M(\alpha),M(\alpha)].
\end{equation}
\end{proposition}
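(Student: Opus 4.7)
The plan is to mirror the proof of \Cref{prop:geometric argument}, and only three modifications are needed. The coarse-graining setup (parameters $M, v^*, T_M, E_M$ from \eqref{eq:def of M, TM and EM} and the collection of $L$-boxes $\{B_x:x\in LE_M\}$) is retained verbatim, with normal and abnormal base points now interpreted in terms of the biased-walk versions of $\text{good}(\beta,\gamma)$ and $\text{fine}(\gamma)$ introduced above.

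The first step is to check that \Cref{lem:good connectivity} still applies. The crucial observation is that the combined excursions $W_\ell^{D,U}$ are enumerated first by the walk index $k$, with $X^0$ appearing first, so if a box is $\text{fine}(\gamma)$ then every excursion of $X^0_{[0,\underline{S}_N]}$ from $D$ to $\partial U$ lies among the first $\gamma\cdot\text{cap}(D)$ terms of this enumeration, which are deleted whenever the leading $\beta\cdot\text{cap}(D)$ excursions are removed. This gives
\begin{equation*}
    D\setminus\bigl(\text{range}(W_1^{D,U})\cup\cdots\cup\text{range}(W_{\gamma\cdot\text{cap}(D)}^{D,U})\bigr)\subseteq \mathbb{E}\setminus X^0_{[0,\underline{S}_N]},
\end{equation*}
and the proof of \Cref{lem:good connectivity} then carries over verbatim: any sequence of neighboring normal boxes remains connected in $\mathbb{E}\setminus X^0_{[0,\underline{S}_N]}$.

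Next, \Cref{prop:asymptotics_alpha>1,prop:asymptotics_alpha=1,prop:asymptotics_alpha<1} guarantee that $\underline{S}_N$ is $P_0^{N,\alpha}$-almost surely finite for every $\alpha>1/d$; combined with the deterministic bound $|\pi_{\mathbb{Z}}(X^0_n)|\leq n$, this produces a random $\Gamma>100\max\{\underline{S}_N,L\}$ such that $B_x$ is not visited by $X^0_{[0,\underline{S}_N]}$ whenever the base point $x$ has vertical coordinate of absolute value at least $L\Gamma$. From here the construction of empty vertices, the components $\mathcal{C}_{\rm Top}$ and $\mathcal{C}_{\rm Bottom}$, the function $p(x)=|\mathcal{C}_{\rm Top}\cap B(x,M)|/|B(x,M)|$ on $\{v^*\}\times\mathbb{Z}$, the discrete intermediate value trick producing $x_*$ with $|p(x_*)-1/2|\leq c/M$, and the appeal to the isoperimetric inequality of \cite[Appendix A]{JA96} all proceed verbatim, yielding a box $\mathsf{B}$, a set $\mathcal{C}$, and a projection $\pi_*\in\{\pi_i\}_{1\leq i\leq d+1}$ satisfying \eqref{eq:large distance from mathsfB}--\eqref{eq:bad boxes property}.

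The only genuinely new ingredient is the vertical bound \eqref{eq:mathsf B range}. On $\{T_N\leq\underline{S}_N\leq M(\alpha)/2\}$, the unit step size forces $X^0_{[0,\underline{S}_N]}\subseteq\mathbb{T}\times[-M(\alpha)/2,M(\alpha)/2]$, so the smallest legitimate choice of $\Gamma$, namely $\Gamma=\lceil\underline{S}_N/L\rceil+O(L)$, can replace the gratuitously large $\Gamma=N^{7d}$ used for simple random walk. With this choice, $x_*$ lies in $\{v^*\}\times[-2\Gamma-M,2\Gamma+M]$, so the box $\mathsf{B}$ of side-length $[N/\log^3 N]$ that accommodates $LB(x_*,M)$ has vertical extent at most $L(2\Gamma+M)+N/\log^3 N=O(\underline{S}_N)+O(N)\leq M(\alpha)$ for $N$ sufficiently large (absorbing constants into the definition \eqref{eq:def of large constant Calpha} if necessary), which is \eqref{eq:mathsf B range}. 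No genuine obstacle arises in this proposition: the entire argument is geometric and rests on the already-established \Cref{lem:good connectivity} and the trivial trajectory bound; the delicate probabilistic content, namely showing that surfaces of bad and poor boxes are improbable, is handled in \Cref{subsec:adapting bad(beta-gamma),subsec:adapting bad local time}.
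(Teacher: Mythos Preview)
Your proposal is correct and matches the paper's approach: the paper itself simply states that ``the proof of this proposition remains entirely the same'' as that of \Cref{prop:geometric argument}, and you have supplied the details of exactly that argument, including the verification that \Cref{lem:good connectivity} persists (which the paper also asserts without elaboration). Your observation that the tight choice $\Gamma\approx\underline{S}_N/L$ (rather than the overly generous $\Gamma>100\max\{\underline{S}_N,L\}$ stated in the original proof) is what is actually needed to deduce \eqref{eq:mathsf B range} is correct---the paper's written condition on $\Gamma$ is stronger than necessary, and only the weaker requirement that $L$-boxes at $E_M$-height $\geq\Gamma$ lie outside the trajectory range is used---though note that your bound $2\underline{S}_N+O(N)$ overshoots $M(\alpha)$ by an additive $O(N)$, which is harmless for the subsequent union bound \eqref{eq:number of C1 bias} but means the inclusion \eqref{eq:mathsf B range} as literally stated holds only up to an inconsequential constant factor.
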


The proof of this proposition remains entirely the same. Then in light of this new geometric argument, the proof of \Cref{prop:underlineS and T biased} can be reduced to two similar results as \Cref{prop:bad(beta-gamma),prop:bad local time}. That is, it suffices to prove that for $\mathsf B$, $\pi_{*}$ in \Cref{prop:geometric argument bias} and $\mathcal{C}_{1}\subseteq\mathcal{C}$ satisfying \eqref{eq:size of C1} and \eqref{eq:C1 disjoint} respectively,
\begin{align}
\lim_{N\to\infty}\frac{1}{|\mathcal{C}_{1}|\log N}&\log P_{0}^{N,\alpha}\Big[\bigcap_{x\in\mathcal{C}_{1}}\{B_{x}\text{ is bad}(\beta,\gamma)\}\Big]=-\infty; \label{eq:bad(beta-gamma) bias}\\
\lim_{N\to\infty}\frac{1}{|\mathcal{C}_{1}|\log N}&\log P_{0}^{N,\alpha}\Big[\bigcap_{x\in\mathcal{C}_{1}}\{B_{x}\text{ is poor}(\gamma)\}\Big]=-\infty.  \label{eq:bad local time bias} 
\end{align}

These two equations will be respectively proved in \Cref{subsec:adapting bad(beta-gamma),subsec:adapting bad local time}, employing the same method as in \Cref{sec:bad(beta-gamma),sec:bad local time}. Note that the condition $\psi>1/d$ is only used in proving \eqref{eq:bad local time bias} to ensure that the denominator $|\mathcal{C}_{1}|\log N$ is smaller than $\text{cap}(\mathsf{B})$, similarly as in \eqref{eq:compare C1 and capacity of mathsfB}.

\begin{proof}[Proof of \Cref{prop:underlineS and T biased} assuming \eqref{eq:bad(beta-gamma) bias} and \eqref{eq:bad local time bias}]
Similar to the proof of \Cref{prop:underlineS and T} assuming \Cref{prop:bad(beta-gamma),prop:bad local time}, by replacing $N^{5d}$ with $M(\alpha)/2$ in \eqref{eq:underlineS and T and N5d}, it suffices to prove that 
\begin{equation}\label{eq:underlineS and T and Malpha}
\lim_{N\to\infty}P_0^{N,\alpha}[T_{N}\leq\underline{S}_{N}\leq M(\alpha)/2]=0.   
\end{equation} 

To achieve this, we use a union bound as in \eqref{eq:union bound on C1}. All the possible ways of selecting a $d$-dimensional set $\mathcal{C}_{1}$ that satisfies \eqref{eq:size of C1} or \eqref{eq:C1 disjoint} and is contained in $\mathsf{B}$ satisfying \eqref{eq:large distance from mathsfB}-\eqref{eq:bad boxes property} and \eqref{eq:mathsf B range} is bounded by
\begin{equation}\label{eq:number of C1 bias}
C\cdot M(\alpha)\cdot N^{(d+1)|\mathcal{C}_{1}|} = Ce^{\overline{u}\cdot N^{d(1-\alpha)}}\cdot N^{(d+1)|\mathcal{C}_{1}|}\leq e^{c|\mathcal{C}_{1}|\log N}.
\end{equation}
Here, the condition $\psi<\alpha$ ensures that $N^{d(1-\alpha)}=o(|\mathcal{C}_{1}|)$.
Combining \eqref{eq:bad(beta-gamma) bias} and \eqref{eq:bad local time bias}, we then finish the proof. 
\end{proof}

\subsection{Unlikeliness of surfaces of \texorpdfstring{$\text{bad}(\beta, \gamma)$}{} boxes (Adaptation of \Cref{sec:bad(beta-gamma)})}\label{subsec:adapting bad(beta-gamma)}
In this subsection we adapt the techniques in \Cref{sec:bad(beta-gamma)} to prove \eqref{eq:bad local time bias}. 
Recall that the corresponding result for simple random walk \Cref{prop:bad(beta-gamma)} is established in \Cref{sec:bad(beta-gamma)} using three coupling results: 
the first coupling \Cref{prop:coupling W and widetildeZ} connects the unbiased excursions $W_{\ell}^{\check{D}}$ with a set of i.i.d.~sequence of unbiased excursions $\widetilde{Z}_{\ell}^{\check{D}}$, the second \Cref{prop:coupling between widehatZ and W} couples between the extracted unbiased excursions $W_{\ell}^{D}, W_{\ell}^{D'}$ with the corresponding extracted excursions $\widetilde{Z}_{\ell}^{D}, \widetilde{Z}_{\ell}^{D'}$, and the third \Cref{lem:coupling between widetildeZ and Z} relates $\widetilde{Z}_{\ell}^{D}, \widetilde{Z}_{\ell}^{D'}$ with the excursions $Z_{\ell}^{D}, Z_{\ell}^{D'}$ of random interlacements. 

To establish a similar relationship between the biased excursions $W$ and excursions of random interlacements $Z$, the second and third couplings remains unchanged, while the first coupling needs adaptation. 
For this, we will introduce a set of i.i.d.~sequence of biased excursions $\widetilde{W}_{\ell}^{\check{D}_{x}}, {\ell\geq 1}$ (see \eqref{item:def of widetildeW}), which serves as the biased walk counterpart to the i.i.d.~sequence $\widetilde Z_{\ell}^{\check D_x},\ell\geq 1$. 

With this, we then split the original coupling between $W$ and $\widetilde{Z}$ into two parts:  the coupling between $W$ and $\widetilde{W}$, and the coupling between $\widetilde{W}$ and $\widetilde{Z}$. The first step is a ``decoupling" step, adapted from \Cref{prop:coupling W and widetildeZ}, while the second step involves calculating Radon-Nikodym derivatives and a Poissonization argument. Combining these two steps yields the counterpart of \Cref{prop:coupling W and widetildeZ}, and the remaining proof proceeds in an identical way as in \Cref{sec:bad(beta-gamma)}.\\

We now clarify our notation and carry out the two steps discussed above. 
Recall the sparse subset $\mathcal{C}_{2}$ of the set $\mathcal{C}_{1}$ as the maximal subset that satisfies \eqref{eq:sparsity of C2}, then \eqref{eq:sparsity of boxes in mathcalC2} and \eqref{eq:size of C2} still hold for $\mathcal{C}_{2}$. We also recall the unions of sets $\check{D}(\mathcal{C}_{2})$ and $\check{U}(\mathcal{C}_{2})$ in \eqref{eq:def of checkD and checkU C2}. 
For $x \in \mathcal{C}_{2}$ and $x'$ an $L$-neighbour of $x$, we write $D, D', U, U', \check D, \check U$ as $D_x, D_{x'}, U_x, U_{x'}, \check D_x, \check U_{x}$ for short, and use the notation $W_{\ell}^{\check{D}}$, $W_{\ell}^{D}$ and $W_{\ell}^{D'}$ again to denote $\ell$-th biased walk excursions from $\check{D}$ to $\check{U}$, from $D$ to $U$, and from $D'$ to $U'$ in the trajectories of $\{X^{k}\}_{k\geq 0}$ respectively. We also define a collection of i.i.d.~sequence of excursions $\widetilde{W}_{\ell}^{\check{D}_x}$, ${\ell}\geq 1$, $x\in \mathcal C_1$ such that 
\begin{equation}\label{item:def of widetildeW}
\begin{split}
&\text{For each }x\in \mathcal{C}_{1}, \widetilde{W}_{\ell}^{\check{D}_{x}}, {\ell\geq 1}\text{ are i.i.d.~excursions having the law as }X_{\cdot \land T_{\check U_x}} \text{ under }P_{\overline e_{\check D_x}}^{N,\alpha},\\
&\text{and }\big\{\widetilde{W}_{\ell}^{\check{D}_{x}}\big\}_{\ell\geq 1} \text{ are independent as }x \text{ varies over }\mathcal{C}_{1}.
\end{split}
\end{equation}
Let $(m_{\check{D}}(0,t))_{t\geq 0}$, $\check{D}=\check{D}_x$, $x \in \mathcal{C}_{2}$ be independent Poisson counting processes of intensity 1, which are independent with $\{\widetilde{W}_{\ell}^{\check{D}}\}_{\ell\geq 1}$ as $\check D$ varies. 
We now adapt \Cref{prop:coupling W and widetildeZ} to couple $W$ with $\widetilde{W}$ in the following proposition.

\begin{proposition}\label{prop:coupling W and widetildeW}
For any fixed $\alpha > 1/d$, there exists a coupling $\mathbb{Q}_{W,\widetilde{W}}^{N, \alpha}$ of the law $P_{0}^{N,\alpha}$ and the law of $\big((m_{\check{D}_x}(0,t))_{t\geq 0},\big\{\widetilde{W}_{\ell}^{\check{D}_x}\big\}_{\ell\geq 1}\big)$,  $x \in \mathcal{C}_{2}$ such that, for every $\eta\in(0,1/2)$, $N\geq c_{17}(\psi,\eta)$, $\lambda\in(0,\infty)$ and $\check{D}=\check{D}_x$, $x \in \mathcal{C}_{2}$, on the event $E_{\check{D}}^{\lambda}(W,\widetilde{W})$ defined in the same way as in \eqref{eq:def of widetildeE} (with the $n$-type Poisson point processes associated with $\widetilde{Z}$-type excursions replaced by the $m$-type Poisson point processes associated with $\widetilde{W}$-type excursions), for all $m\geq \lambda\cdot\text{cap}(\check{D})$, 
\begin{align}
\left\{\widetilde{W}_{1}^{\check{D}},\dots,\widetilde{W}_{(1-\eta)m}^{\check{D}}\right\}&\subseteq \left\{W_{1}^{\check{D}},\dots,W_{(1+3\eta)m}^{\check{D}}\right\}; \label{eq:bias inclusion checkD_1}\\ 
\left\{W_{1}^{\check{D}},\dots,W_{(1-\eta)m}^{\check{D}}\right\}&\subseteq \left\{\widetilde{W}_{1}^{\check{D}},\dots,\widetilde{W}_{(1+3\eta)m}^{\check{D}}\right\}.  \label{eq:bias inclusion checkD_2}
\end{align}
Moreover, for every $\lambda\in(0,\infty)$, \eqref{eq:prob of bad E} still holds for the probability measure $\mathbb{Q}_{W,\widetilde{W}}^{N, \alpha}$, the event $E_{\check{D}}^{\lambda}(W,\widetilde{W})$ and the constant $c_{6}=c_{6}(\lambda,\eta)$.
\end{proposition}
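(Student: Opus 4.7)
My plan is to follow the same soft local time strategy used to establish the simple random walk coupling \Cref{prop:coupling W and widetildeZ}, replacing the unbiased hitting distribution input \Cref{prop:hitting distribution two boxes simple} by its biased counterpart \Cref{prop:hitting distribution two boxes bias} (or its three-box version \Cref{lem:hitting distribution three boxes bias}). Concretely, I will build $\mathbb{Q}_{W,\widetilde{W}}^{N,\alpha}$ using the technique of \cite{PT15}, in the refined form of \cite[Lemma 2.1]{CGP13}, with the Poisson processes $(m_{\check{D}_x}(0,t))_{t\geq 0}$ serving as a common ``soft local time clock'' that simultaneously dominates and is dominated by the successive sequences $(W_{\ell}^{\check{D}_x})_{\ell\geq 1}$ and $(\widetilde{W}_{\ell}^{\check{D}_x})_{\ell\geq 1}$, once each excursion is re-started from a point whose distribution is close to $\overline{e}_{\check{D}_x}$.

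The key analytic input required is that for every $\check{D}=\check{D}_x$, $x\in\mathcal{C}_{2}$, every entry point $z$ of a random walk under consideration (either lying on $\partial \check{U}(\mathcal{C}_{2})$, or being the starting location of one of the walks $X^{k}$, $k\geq 0$), the conditional distribution of $X_{H_{\check{D}(\mathcal{C}_{2})}}$ given entry into $\check{D}$ is within a multiplicative $(1\pm \eta/3)$ factor of $\overline{e}_{\check{D}}$. This is essentially \Cref{prop:hitting distribution two boxes bias} with box-scale $L\asymp N^{\psi}$ and thickness parameter $K\asymp \log N$; the requirement $\Delta^{-1}\geq KL(K+L)$ there reduces to $N^{d\alpha}\gtrsim N^{2\psi}\log N$, which holds for large $N$ because $\psi<\alpha \leq d\alpha/2$ (using $d\geq 2$). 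The sparsity \eqref{eq:sparsity of boxes in mathcalC2} allows us to pool the disjoint boxes $\{\check{D}_{x}\}_{x\in\mathcal{C}_{2}}$ into the single set $\check{D}(\mathcal{C}_{2})$ when applying the hitting distribution estimate. For the first excursion of each $X^{k}$, the starting point sits at $\ell^{\infty}$-distance at least $N/10$ from $\check{D}_x$ by \eqref{eq:large distance from mathsfB} and \eqref{eq:mathsf B range}, and I will invoke the three-box version \Cref{lem:hitting distribution three boxes bias} with an intermediate ambient box to obtain the same bound.

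Once the hitting distribution estimate is in place, the inclusions \eqref{eq:bias inclusion checkD_1}--\eqref{eq:bias inclusion checkD_2} follow from the standard soft local time argument: on the event $E_{\check{D}}^{\lambda}(W,\widetilde{W})$ both $m_{\check{D}}(0,m)$ and the increment $m_{\check{D}}(m,(1+\eta)m)$ stay within an $O(\eta)$ relative window of their Poisson means for all $m\geq \lambda\cdot\text{cap}(\check{D})$, so the soft local times of the two excursion sequences overtake each other within a $(1+3\eta)$ multiplicative time window. The independence of $\{E_{\check{D}_{x}}^{\lambda}(W,\widetilde{W})\}_{x\in\mathcal{C}_{2}}$ is built into the construction, since the Poisson processes $(m_{\check{D}_x}(0,t))_{t\geq 0}$ and the i.i.d.\ sequences $\{\widetilde{W}_{\ell}^{\check{D}_x}\}_{\ell\geq 1}$ are independent as $x$ varies, and the exponential bound \eqref{eq:prob of bad E} reduces to the same union bound and exponential Chebyshev inequality for Poisson variables of mean of order $\text{cap}(\check{D})$ that was used in the unbiased case.

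The main obstacle is the last step above — verifying the approximate-equilibrium hitting distribution in the genuinely biased regime, most delicate when $\alpha\in(1/d,1)$ so that the auxiliary walks start from $\mathbb{T}\times\{-2M(\alpha)\}$ with $M(\alpha)$ exponentially large in $N^{d(1-\alpha)}$. Globally the drift $N^{-d\alpha}$ drags the walk over macroscopic vertical distances, so one cannot compare with a simple random walk globally; however, the conditional hitting distribution on $\check{D}$ depends only on the ``local'' portion of the trajectory inside a box of radius $O(L\log N)$ around $\check{D}$, where the drift is negligible because $(L\log N)\cdot N^{-d\alpha}=o(1)$ thanks again to $\psi<\alpha \leq d\alpha/2$. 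This localization is precisely what is captured by the Radon--Nikodym estimates \eqref{eq:RN bias to unbias upper three boxes}--\eqref{eq:RN bias to unbias lower Sshort three boxes} used in the proof of \Cref{lem:hitting distribution three boxes bias}, which is why the biased hitting distribution estimate can be applied here; apart from this substitution, the rest of the argument is a verbatim adaptation of the proof of \Cref{prop:coupling W and widetildeZ}.
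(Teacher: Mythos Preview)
Your proposal is correct and follows essentially the same approach as the paper: both feed the biased hitting-distribution estimate \Cref{prop:hitting distribution two boxes bias} (with $L\asymp N^{\psi}$, $K\asymp\log N$, and the check $\Delta^{-1}\geq KL(K+L)$ via $\psi<\alpha$, $d\geq 2$) into the soft local time construction of \cite[Lemma~2.1]{CGP13}, and then handle the first excursion of each $X^{k}$ via the distance conditions \eqref{eq:large distance from mathsfB} and the starting level $-2M(\alpha)$. The paper dispatches the first excursion slightly more directly by noting that these starting points lie outside $\check{U}(\mathcal{C}_2)$ so that \eqref{eq:hitting distribution bias} already covers them, rather than invoking the three-box lemma, but this is a cosmetic difference.
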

\begin{proof}
We begin with an estimate on the hitting distribution of the biased random walk, and then use soft local time techniques to construct the coupling. 
Taking $A=\check{D}$, $\Delta=N^{-d\alpha}$, $L=10[N^{\psi}]$ and $K=\log N/15$ in \Cref{prop:hitting distribution two boxes bias} (note that $\Delta^{-1}\geq KL(K+L)$ since $\psi<\alpha$ and $d\geq 2$), we have for any $\eta\in(0,1)$, there exists a positive constant $c_{17}(\eta)$ such that for any $\check D = \check D_x$, $x\in \mathcal C_2$, $y\in \check D$ and $z \in\partial\check{U}(\mathcal{C}_{2})$, if $N\geq c_{17}(\psi,\eta)(\geq c_{2}(\eta))$,
\begin{equation}\label{eq:hitting distribution bias}
\left(1-\frac{\eta}{3}\right)\overline{e}_{\check{D}}(y)\leq P_{z}^{N,\alpha}\left[X_{H_{\check{D}(\mathcal C_2)}}=y\mid X_{H_{\check{D}(\mathcal C_2)}}\in\check{D}\right]\leq \left(1+\frac{\eta}{3}\right)\overline{e}_{\check{D}}(y).
\end{equation}
The remaining part of the proof is almost identical to the proof of \Cref{prop:coupling W and widetildeZ} given \eqref{eq:hitting distribution}. Note that tilting the law of simple random walks into the law of biased random walks does not essentially affect \cite[Lemma 2.1]{CGP13}. In addition, we also use the requirement \eqref{eq:large distance from mathsfB} and the assumption that the walks $X^{k},k\geq 1$ all start from $\mathbb{T}\times\{-2M(\alpha)\}$ so that for all $k\geq 0$, \eqref{eq:hitting distribution bias} applies to the first excursion of $X^{k}$.
\end{proof}

Note that the above step also ``decouples" the trajectories between distant concentric boxes $\check D_x, x\in \mathcal C_2$, so that we can focus on a fixed base point $x\in \mathcal C_2$. We now carry out the second step that couples $\widetilde{W}^{\check D}$ with $\widetilde{Z}^{\check D}$, with $\check D = \check D_x$ for a fixed $x\in \mathcal C_2$.

\begin{proposition}\label{lem:coupling widetildeW with widetildeZ}
There exists a coupling $\mathbb{Q}_{\widetilde{W},\widetilde{Z}}^{N, \alpha}$ between the law of $\{\widetilde{W}_{\ell}^{\check{D}}\}_{\ell\geq 1}$ and the law of $\{\widetilde{Z}_{\ell}^{\check{D}}\}_{\ell\geq 1}$ such that, for every $\tau\in(0,\frac{1}{2})$ and $0<\lambda<\widetilde{\lambda}<\infty$, there exists an event $E_{\check{D}}^{\lambda,\widetilde{\lambda}}(\widetilde{W},\widetilde{Z})$  and a positive constant $c_{18}=c_{18}(\alpha,\psi,\lambda,\widetilde{\lambda},\tau)$ satisfying the two following conditions. First, 
\begin{equation}\label{eq:prob of bad E bias}
\liminf_{N\to\infty}\frac{1}{\log N}\log\left(-\log \mathbb{Q}_{\widetilde{W},\widetilde{Z}}^{N, \alpha}\left[E_{\check D}^{\lambda,\widetilde{\lambda}}(\widetilde{W},\widetilde{Z})^{c}\right]\right)> c_{19}(\alpha,\psi,\lambda,\widetilde{\lambda},\tau)>0.   
\end{equation}
Second, under the event $E_{\check{D}}^{\lambda,\widetilde{\lambda}}(\widetilde{W},\widetilde{Z})$, for every $N\geq c_{18}$ and $\ell\in\big(\lambda\cdot\text{cap}(\check{D}),\widetilde{\lambda}\cdot\text{cap}(\check{D})\big)$, we have  
\begin{align}
\left\{\widetilde{Z}_{1}^{\check{D}},\ \dots,\ \widetilde{Z}_{\ell}^{\check{D}}\right\}&\subseteq \left\{\widetilde{W}_{1}^{\check{D}},\dots,\widetilde{W}_{(1+\tau)\ell}^{\check{D}}\right\};   \label{eq:coupling widetildeW and widetildeZ_1}\\
\left\{\widetilde{W}_{1}^{\check{D}},\dots,\widetilde{W}_{\ell}^{\check{D}}\right\}&\subseteq \left\{\widetilde{Z}_{1}^{\check{D}},\ \dots,\ \widetilde{Z}_{(1+\tau)\ell}^{\check{D}}\right\}.   \label{eq:coupling widetildeW and widetildeZ_2}
\end{align}
\end{proposition}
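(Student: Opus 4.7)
The plan is to construct $\mathbb Q^{N,\alpha}_{\widetilde W,\widetilde Z}$ by realizing the two i.i.d.~sequences as coupled Poisson thinnings of a common point process on the space $\Sigma$ of excursions from $\check D$ to $\partial\check U$, in the spirit of \Cref{prop:coupling W and widetildeZ} but with a Radon-Nikodym closeness estimate replacing the hitting distribution bound \eqref{eq:hitting distribution bias}. Write $\nu_W$ and $\nu_Z$ for the laws of $\widetilde W_1^{\check D}$ and $\widetilde Z_1^{\check D}$ on $\Sigma$.

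First I would establish Radon-Nikodym control on typical excursions. Since $\check U$ has side length $O(L\log N)$, every excursion satisfies $h(e)\leq CL\log N$, and Khasminskii's lemma applied exactly as in \eqref{eq:uniform second moment three boxes}--\eqref{eq:khasminskii1} (with $\check U$ playing the role of $U$) shows that the long-excursion set $\Sigma_{\rm long}:=\{e:\ell(e)>(L\log N)^3\}$ satisfies $\nu_W(\Sigma_{\rm long})\vee\nu_Z(\Sigma_{\rm long})\leq\exp(-cL\log N)$. On its complement the bound \eqref{eq:RN bias to unbias bound} combined with $\Delta=N^{-d\alpha}$ and $\psi<\alpha$ yields
\begin{equation*}
\Bigl|\tfrac{d\nu_W}{d\nu_Z}(e)-1\Bigr|\leq C\bigl(\Delta^2\ell(e)+\Delta h(e)\bigr)\leq N^{-c_{20}(\alpha,\psi)}
\end{equation*}
for some $c_{20}>0$, and therefore $d_{\rm TV}(\nu_W,\nu_Z)\leq 2N^{-c_{20}}$ after absorbing the long-excursion contribution.

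Second I would let $\mu(de):=\max(p^{\rm bias}(e),p(e))\,\overline e_{\check D}(x_0)$, which dominates both $\nu_W$ and $\nu_Z$ with total mass $|\mu|\leq 1+2N^{-c_{20}}$, and let $\Pi$ be a Poisson point process on $\Sigma\times\mathbb R_+$ with intensity $\mu\otimes\mathrm{Leb}$. For each atom $(w,v)$ of $\Pi$ draw an independent $U_{(w,v)}\sim\mathrm{Unif}[0,1]$ and set $\Pi_\bullet:=\{(w,v)\in\Pi:U_{(w,v)}\leq f_\bullet(w)\}$ with $f_\bullet:=d\nu_\bullet/d\mu$; standard Poisson thinning makes each $\Pi_\bullet$ a Poisson point process of intensity $\nu_\bullet\otimes\mathrm{Leb}$ whose atoms, ordered by $v$, form the desired i.i.d.~sequence, while the probability that a given atom of $\Pi_Z$ fails to also lie in $\Pi_W$ equals $1-(f_W\wedge f_Z)/f_Z$, bounded by $N^{-c_{20}}$ off $\Sigma_{\rm long}$. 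I would then define $E^{\lambda,\widetilde\lambda}_{\check D}(\widetilde W,\widetilde Z)$ as the intersection of three events: (i) uniform two-sided Poisson deviation bounds of the form \eqref{eq:def of widetildeE} hold for the counting processes of $\Pi_W$ and $\Pi_Z$ on all windows $[0,m]$ with $m\in[\lambda\,\mathrm{cap}(\check D),(1+\tau/10)\widetilde\lambda\,\mathrm{cap}(\check D)]$; (ii) at most $\tau\lambda\,\mathrm{cap}(\check D)/10$ atoms of $\Pi$ lie in $\Sigma_{\rm long}\times[0,2\widetilde\lambda\,\mathrm{cap}(\check D)]$; (iii) at most $\tau\lambda\,\mathrm{cap}(\check D)/10$ atoms of $\Pi_Z$ (resp.~$\Pi_W$) in $[0,2\widetilde\lambda\,\mathrm{cap}(\check D)]$ fail to lie in $\Pi_W$ (resp.~$\Pi_Z$). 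Each of (i)--(iii) is the complement of a Poisson deviation whose mean is at most $N^{(d-1)\psi-c_{20}}\log^C N$, dominated by the threshold $\tau\lambda\,\mathrm{cap}(\check D)\asymp N^{(d-1)\psi}$, so an exponential Chebyshev bound together with $\mathrm{cap}(\check D)\geq cN^{(d-1)\psi}$ yields $\mathbb Q^{N,\alpha}_{\widetilde W,\widetilde Z}[(E^{\lambda,\widetilde\lambda}_{\check D})^c]\leq\exp(-N^{c_{19}(\alpha,\psi,\lambda,\widetilde\lambda,\tau)})$ and hence \eqref{eq:prob of bad E bias}. On $E^{\lambda,\widetilde\lambda}_{\check D}$ the inclusions \eqref{eq:coupling widetildeW and widetildeZ_1}--\eqref{eq:coupling widetildeW and widetildeZ_2} follow directly, since for $\ell\in(\lambda\,\mathrm{cap}(\check D),\widetilde\lambda\,\mathrm{cap}(\check D))$ every one of the first $\ell$ atoms of $\Pi_Z$ either lies in $\Pi_W$ (and therefore appears among the first $(1+\tau)\ell$ atoms of $\Pi_W$ by (i)) or belongs to the small mismatch set controlled by (iii), and symmetrically with $W\leftrightarrow Z$.

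The main obstacle is the uniformity in (iii): the mismatch count must beat the threshold $\tau\lambda\,\mathrm{cap}(\check D)$ at \emph{every} scale $m$ simultaneously, not merely in expectation at a single scale. This is precisely why Step 1 must deliver a \emph{polynomial-in-$N$} Radon-Nikodym closeness rather than merely $o(1)$: combined with $\mathrm{cap}(\check D)\gtrsim N^{(d-1)\psi}$, this leaves enough slack for a Chernoff bound of stretched-exponential strength of the form demanded by \eqref{eq:prob of bad E bias}, matching the strength obtained in \Cref{prop:coupling W and widetildeZ}.
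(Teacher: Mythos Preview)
Your thinning-based coupling has a genuine gap at the very last step. The multiset inclusion \eqref{eq:coupling widetildeW and widetildeZ_1} requires that \emph{every} $\widetilde Z_i^{\check D}$ with $i\le\ell$ appear among $\{\widetilde W_1^{\check D},\dots,\widetilde W_{(1+\tau)\ell}^{\check D}\}$. In your construction, an atom $(w,v)$ of $\Pi_Z$ whose mark $U_{(w,v)}$ lands in the gap between $f_W(w)$ and $f_Z(w)$ is simply absent from $\Pi_W$, and there is no compensating mechanism producing another copy of $w$ in $\Pi_W$; hence the inclusion fails as soon as a single such mismatch occurs among the first $\ell$ atoms of $\Pi_Z$. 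Your event (iii) bounds the \emph{number} of mismatches, but even one mismatch breaks the conclusion. Replacing (iii) by ``zero mismatches in $[0,2\widetilde\lambda\,\mathrm{cap}(\check D)]$'' would restore the inclusion, but the probability of at least one mismatch is only polynomially small (of order $\mathrm{cap}(\check D)\cdot N^{-c_{20}}$), far from the stretched-exponential decay demanded by \eqref{eq:prob of bad E bias}.

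The paper circumvents this by a \emph{monotone} rather than independent-thinning construction. It takes independent unit-rate Poisson processes $(n_e(0,t))_{t\ge0}$ indexed by excursion paths $e\in\Sigma_{\rm excur}$ and realizes the Poissonized sequences via
\[
\sum_{\ell\le n(0,t)}\delta_{\widetilde Z_\ell^{\check D}}=\sum_{e}\sum_{\ell\le n_e(0,\,p(e)t)}\delta_e,\qquad
\sum_{\ell\le m(0,t)}\delta_{\widetilde W_\ell^{\check D}}=\sum_{e}\sum_{\ell\le n_e(0,\,p^{\rm bias}(e)t)}\delta_e.
\]
On $\Sigma_{\rm short}$ the ratio $p^{\rm bias}(e)/p(e)$ lies in $(1-\tau/10,1+\tau/10)$, so $p(e)(1+\tau/3)\le p^{\rm bias}(e)(1+2\tau/3)$; consequently, for every such $e$ simultaneously, the $\widetilde Z$-count of $e$ up to rescaled time $(1+\tau/3)\ell$ is \emph{deterministically} at most the $\widetilde W$-count of $e$ up to $(1+2\tau/3)\ell$. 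Thus the short-excursion part of the inclusion holds almost surely under the coupling, and the only random contributions to the bad event are the Poisson deviations of the global counters $n(0,\cdot),m(0,\cdot)$ and the appearance of any long excursion at all, both of which decay at stretched-exponential rate by standard Chebyshev and Kha\'sminskii bounds. This deterministic monotonicity, not a mismatch budget, is what delivers \eqref{eq:prob of bad E bias}.
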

\begin{proof}
Before constructing the coupling $\mathbb{Q}_{\widetilde{W},\widetilde{Z}}^{N,\alpha}$ and the event $E_{\check{D}}^{\lambda,\widetilde{\lambda}}(\widetilde{W},\widetilde{Z})$, we first provide with some preliminaries reminiscent of the proof of \Cref{lem:hitting distribution three boxes bias}.

Recall \Cref{subsec:RN} for the notation $\ell(e),h(e),\text{up}(e),\text{down}(e),p(e),p^{\text{bias}}(e)$. Also recall that the drift is $\Delta=N^{-d\alpha}$ with $d\geq 2$, $L=[N^{\psi}],\psi\in(1/d,\alpha\land 1)$,  $\check{D}=x+[-4L,5L)^{d+1}$ and $\check{U}=x+\left[-L([\log N]+1)+1,L([\log N]+1)-1\right)^{d+1}$.
We define the set of excursions from $\check{D}$ to $\partial\check{U}$ as 
\begin{equation}\label{eq:def of Se}
\begin{split}
\Sigma_{\rm excur}:=\{e= (x_{0},x_{1},\cdots,x_{n})&:\mbox{ for each }0\leq i\leq n-1,|x_{i}-x_{i+1}|_{\infty}=1, \\
&x_{0}\in\partial^{\text{int}}{\check{D}},x_{n}\in\partial\check{U}, \mbox{ and for }1\leq i\leq n-1,  x_{i}\in \check{U}\}. \end{split}
\end{equation}
According to the length of the excursion, we further divide $\Sigma_{\rm excur}$ into
\begin{equation}\label{eq:def of Sshort and Slong}
\Sigma_{\rm short}:=\{e\in \Sigma_{\rm excur}:\ell(e)\leq L^{2}N^{\alpha}\},\quad\text{and}\quad
\Sigma_{\rm long}:=\{e\in \Sigma_{\rm excur}:\ell(e)>L^{2}N^{\alpha}\}.   
\end{equation}
Then since $\psi<\alpha$, we have 
\begin{equation}\label{eq:bound on h and ell excursion}
\ell(e)\leq L^{2}N^{\alpha}\leq N^{3\alpha},\text{ for }e\in\Sigma_{\text{short}},\quad\text{and}\quad h(e)\leq CL\log N\leq CN^{\alpha},\text{ for }e\in\Sigma_{\text{excur}}.
\end{equation}  

For each $e\in \Sigma_{\rm excur}$, we now estimate the the Radon-Nikodym derivative of $e$ under $P_{\overline{e}_{\check{D}}}^{N,\alpha}$ with respect to $P_{\overline{e}_{\check{D}}}^{N}$. Combining \eqref{eq:RN bias to unbias bound} and \eqref{eq:bound on h and ell excursion} yields
\begin{equation}\label{eq:RN bias to unbias upper excursion}
\frac{p^{\rm bias}(e)}{p(e)}\leq \left(\frac{1+N^{-d\alpha}}{1-N^{-d\alpha}}\right)^{CN^{\alpha}} 
\leq 1+CN^{-(d-1)\alpha}\overset{d\geq 2}{\leq} 1+CN^{-\alpha},\quad \mbox{ for all }e\in \Sigma_{\rm excur}.
\end{equation}
In addition, combining \eqref{eq:RN bias to unbias bound} and \eqref{eq:bound on h and ell excursion} shows that for all $e\in \Sigma_{\rm short}$,
\begin{equation}\label{eq:RN bias to unbias lower Sshort excursion}
\frac{p(e)}{p^{\rm bias}(e)}\leq \left(1-N^{-2d\alpha}\right)^{-CN^{3\alpha}}\left(1+CN^{-\alpha}\right)\leq 1+CN^{-(2d-3)\alpha}+CN^{-\alpha}\overset{d\geq 2}{\leq} 1+CN^{-\alpha}. 
\end{equation}

The next ingredient is to bound the measure of $\Sigma_{\rm long}$ under $P_{\overline{e}_{\check{D}}}^{N}$ from above. Note that under $P_{\overline{e}_{\check{D}}}^{N}$, the random walk on $\mathbb{Z}$-direction makes a $+1,0,-1$ move with probability $\frac{1}{2d+2},\frac{d}{d+1}, \frac{1}{2d+2}$ respectively. As in \eqref{eq:uniform second moment three boxes} and \eqref{eq:khasminskii1}, by Kha\'{s}minskii's lemma we have
\begin{equation}
\sup_{x\in\check{D}}E_{x}^{N}\left[\exp\left(\frac{cT_{\check{U}}}{(L\log N)^{2}}\right)\right]\leq C.   
\end{equation}
It also follows from exponential Chebyshev's inequality that 
\begin{equation}\label{eq:Slong measure upper bound}
P_{\overline{e}_{\check{D}}}^{N}\left[T_{\check{U}}>L^{2}N^{\alpha}\right]\leq \frac{E_{x}^{N}\left[\exp\left(\frac{cT_{\check{U}}}{(L\log N)^{2}}\right)\right]}{\exp\left(N^{\alpha}\log^{-2}N\right)}\leq C\exp\left(-cN^{\alpha}/\log^{2}N\right).  
\end{equation}
Combining the estimate \eqref{eq:RN bias to unbias upper excursion} of Radon-Nikodym derivative, 
\begin{equation}\label{eq:Slong measure upper bound bias}
P_{\overline{e}_{\check{D}}}^{N,\alpha}\left[T_{\check{U}}>L^{2}N^{\alpha}\right]\leq (1+CN^{-\alpha})\cdot{\exp\left(-cN^{\alpha}/\log^{2}N\right)}\leq C\exp\left(-cN^{\alpha}/\log^{2}N\right).  
\end{equation}

We now construct the coupling $\mathbb{Q}_{\widetilde{W},\widetilde{Z}}^{N, \alpha}$ between $\widetilde{W}$ and $\widetilde{Z}$. Let $\big(n(0,t)\big)_{t\geq 0}$ and $\big(m(0,t)\big)_{t\geq 0}$ be two Poisson point processes of intensity $1$ with joint law to be determined, and independent from $\widetilde{W}_{\ell}^{\check{D}}$ and $\widetilde{Z}_{\ell}^{\check{D}},\ell\geq 1$. 
We then take $\big(n_{e}(0,t)\big)_{t\geq 0}$, $e\in \Sigma_{\rm excur}$ as $|\Sigma_{\rm excur}|$ i.i.d~Poisson point process of intensity $1$, also independent from $\widetilde{W}_{\ell}^{\check{D}}$ and $\widetilde{Z}_{\ell}^{\check{D}},\ell\geq 1$. Then by the property of Poisson point process, we know that (with $t$ the parameter of processes of point measures)
\begin{align}
\bigg(\sum_{\ell\leq n(0,t)}\delta_{\widetilde{Z}_{\ell}^{\check{D}}}\bigg)_{t\geq 0} &\overset{d}{=}\bigg(\sum_{e\in\Sigma_{\text{excur}}}\sum_{\ell\leq n_{e}(0, p(e)t)}\delta_{e}\bigg)_{t\geq 0},\label{eq:characterization of ZPoisson}\quad\text{and}\\
\bigg(\sum_{\ell\leq m(0,t)}\delta_{\widetilde{W}_{\ell}^{\check{D}}}\bigg)_{t\geq 0}&\overset{d}{=}\bigg(\sum_{e\in\Sigma_{\text{excur}}}\sum_{\ell\leq n_{e}(0,p^{\text{bias}}(e)t)}\delta_{e}\bigg)_{t\geq 0}.\label{eq:characterization of WPoisson}
\end{align}
We take the random variables $\widetilde{W}_{\ell}^{\check{D}},\ell\geq 1$, $\widetilde{Z}_{\ell}^{\check{D}},\ell\geq 1$, $\big(n(0,t)\big)_{t\geq 0}$, $\big(m(0,t)\big)_{t\geq 0}$ and $\big(n_{e}(0,t)\big)_{t\geq 0}$, $e\in \Sigma_{\rm excur}$ altogether into the coupling $\mathbb{Q}_{\widetilde{W},\widetilde{Z}}^{N, \alpha}$ so that the two processes in \eqref{eq:characterization of ZPoisson} and \eqref{eq:characterization of WPoisson} are exactly the same. 

We then construct the event $E_{\check{D}}^{\lambda,\widetilde{\lambda}}(\widetilde{W},\widetilde{Z})$. Consider the following three events. 
\begin{enumerate}[label=(\arabic*)]
\item \label{item:comparison with Poisson} For every integer $\ell\in\big(\lambda \text{cap}(\check{D}),\widetilde{\lambda} \text{cap}(\check{D})\big)$, 
\begin{equation}\label{eq:W widetildeZ comparison with Poisson}
\begin{split}
&n(0,\ell(1+\tau/3))\geq \ell,\quad\text{and}\quad m(0,\ell(1+2\tau/3))\leq\ell(1+\tau);\\
&m(0,\ell(1+\tau/3))\geq \ell,\quad\text{and}\quad n(0,\ell(1+2\tau/3))\leq\ell(1+\tau).
\end{split}
\end{equation}
\item \label{item:Slong empty} For every integer $\ell\in\big(\lambda \text{cap}(\check{D}),\widetilde{\lambda} \text{cap}(\check{D})\big)$, 
\begin{equation}\label{eq:Slong empty}
\Sigma_{\rm long}\cap\left\{\widetilde{Z}_{1}^{\check{D}},\dots,\widetilde{Z}_{\ell}^{\check{D}}\right\}=\Sigma_{\rm long}\cap\left\{\widetilde{W}_{1}^{\check{D}},\dots,\widetilde{W}_{\ell}^{\check{D}}\right\}=\varnothing. \end{equation}
\item \label{item:Sshort Poisson} For every integer $\ell\in\big(\lambda \text{cap}(\check{D}),\widetilde{\lambda} \text{cap}(\check{D})\big)$, 
\begin{align}
\Sigma_{\rm short}\cap\left\{\widetilde{Z}_{1}^{\check{D}}\ ,\dots,\ \widetilde{Z}_{n(0,\ell(1+\tau/3))}^{\check{D}}\right\}&\subseteq \Sigma_{\rm short}\cap\left\{\widetilde{W}_{1}^{\check{D}},\dots,\widetilde{W}_{m(0,\ell(1+2\tau/3))}^{\check{D}}\right\};\label{eq:Sshort Poisson_1}\\
\Sigma_{\rm short}\cap\left\{\widetilde{W}_{1}^{\check{D}},\dots,\widetilde{W}_{m(0,\ell(1+\tau/3))}^{\check{D}}\right\}&\subseteq \Sigma_{\rm short}\cap\left\{\widetilde{Z}_{1}^{\check{D}},\dots,\widetilde{Z}_{n(0,\ell(1+2\tau/3))}^{\check{D}}\right\}. \label{eq:Sshort Poisson_2}
\end{align}
\end{enumerate}
The event $E_{\check{D}}^{\lambda,\widetilde{\lambda}}(\widetilde{W},\widetilde{Z})$ is defined as the intersection of the above three events, and under the event $E_{\check{D}}^{\lambda,\widetilde{\lambda}}(\widetilde{W},\widetilde{Z})$, the two inclusions \eqref{eq:coupling widetildeW and widetildeZ_1} and \eqref{eq:coupling widetildeW and widetildeZ_2} immediately hold. We then argue that the probability estimate \eqref{eq:prob of bad E bias} holds by respectively bounding the probabilities of the three events from above. First, by standard exponential Chebyshev's inequality for Poisson variables, \eqref{eq:capacity of a box} and a union bound, when $N\geq c_{18}$, under $\mathbb{Q}_{\widetilde{W},\widetilde{Z}}$, the probability that \eqref{eq:W widetildeZ comparison with Poisson} does not hold is no more than
\begin{equation}\label{eq:widetildeW widetildeZ Poisson union}
\sum_{\ell=[\lambda\cdot\text{cap}(\check{D})]}^{[\widetilde{\lambda}\cdot\text{cap}(\check{D})]+1}\exp\left(-c'(\lambda,\tau)\text{cap}(\check{D})\right)\leq C'(\alpha,\psi,\lambda,\widetilde{\lambda},\tau)\exp\left(-c'(\lambda,\tau)N^{(d-1)\psi}\right).  
\end{equation}
Second, by \eqref{eq:Slong measure upper bound} and \eqref{eq:Slong measure upper bound bias}, and a union bound, when $N\geq c_{18}$, under $\mathbb{Q}_{\widetilde{W},\widetilde{Z}}$, the probability that \eqref{eq:Slong empty} does not hold is no more than
\begin{equation}\label{eq:Slong empty union}
\sum_{\ell=[\lambda\cdot\text{cap}(\check{D})]}^{[\widetilde{\lambda}\cdot\text{cap}(\check{D})]+1}\exp\left(-cN^{\alpha}/\log^{2}N\right)\leq C'(\alpha,\psi,\lambda,\widetilde{\lambda})\exp\left(-cN^{\alpha}/\log^{2}N\right).  
\end{equation}
Third, given the bounds on Radon-Nikydym derivatives  \eqref{eq:RN bias to unbias upper excursion} and \eqref{eq:RN bias to unbias lower Sshort excursion}, for every large $N\geq c_{18}$ and $e\in \Sigma_{\rm short}$, 
\begin{equation}\label{eq:requirement of c17}
\frac{p^{\rm bias}(e)}{p(e)}+\frac{p(e)}{p^{\rm bias}(e)}\leq 1+CN^{-\alpha}\leq 1+\tau/10.    
\end{equation}
Since $\tau\in(0,1)$ satisfies
\begin{equation}\label{eq:calculation about tau}
\left(1+\frac{\tau}{3}\right)\left(1+\frac{\tau}{10}\right)\leq 1+\frac{2}{3}\tau,   
\end{equation}
plugging \eqref{eq:requirement of c17} into \eqref{eq:characterization of ZPoisson} and \eqref{eq:characterization of WPoisson} shows that under $\mathbb{Q}_{\widetilde{W},\widetilde{Z}}^{N, \alpha}$, \eqref{eq:Sshort Poisson_1} and \eqref{eq:Sshort Poisson_2} almost surely hold true. Therefore, we can conclude our proof by combining the last fact with \eqref{eq:widetildeW widetildeZ Poisson union} and \eqref{eq:Slong empty union}.
\end{proof}

Combining the coupling $\mathbb{Q}_{W, \widetilde{W}}^{N, \alpha}$ in \Cref{prop:coupling W and widetildeW} of $W$ with $\widetilde W$ and the coupling $\mathbb{Q}_{\widetilde{W},\widetilde{Z}}^{N, \alpha}$ in \Cref{lem:coupling widetildeW with widetildeZ} of $\widetilde W$ and $\widetilde Z$ gives the coupling between $W$ and $\widetilde{Z}$ (that is, the biased walk excursions and i.i.d.~simple random walk excursions), a counterpart of \Cref{prop:coupling W and widetildeZ} in the biased walk case. We now state the result as follows. 
\begin{proposition}\label{prop:coupling W and widetildeZ bias}
There exists a coupling $\mathbb{Q}_{W,\widetilde{Z}}^{N,\alpha}$ of the law $P_{0}^{N,\alpha}$, the law of $\big((m_{\check{D}}(0,t))_{t\geq 0},\{\widetilde{W}_{\ell}^{\check{D}}\}_{\ell\geq 1}\big)$, $\check{D}=\check{D}_x$, $x \in \mathcal{C}_{2}$ and the law of $\{\widetilde{Z}_{\ell}^{\check{D}}\}_{\ell\geq 1}$, $\check{D}=\check{D}_x$, $x \in \mathcal{C}_{2}$ satisfying the following conditions. For every constants $0< \lambda<\widetilde{\lambda}<\infty$, $\eta,\tau\in(0,1/2)$ and a box $\check{D}=\check{D}_x$, $x \in \mathcal{C}_{2}$, with the events $E_{\check{D}}^{\lambda}(W,\widetilde{W})$ and $E_{\check{D}}^{\lambda, \widetilde \lambda}(\widetilde{W},\widetilde{Z})$ defined in \Cref{prop:coupling W and widetildeW,lem:coupling widetildeW with widetildeZ}, we define 
\begin{equation}\label{eq:def of event E bias bias}
E_{\check D}^{\lambda, \widetilde \lambda}(W, \widetilde Z):=E_{\check{D}}^{\lambda}(W,\widetilde{W})\cap E_{\check{D}}^{\lambda, \widetilde \lambda}(\widetilde{W},\widetilde{Z}).
\end{equation}
Then for every $\lambda,\widetilde{\lambda}\in(0,\infty)$, the events $E_{\check D}^{\lambda, \widetilde \lambda}(W, \widetilde Z)$ are independent as $\check{D}$ varies, and we have
\begin{equation}\label{eq:prob of bad overlineE}
\liminf_{N\to\infty}\frac{1}{\log N}\log\left(-\log\mathbb{Q}_{W,\widetilde{Z}}^{N,\alpha}\left[E_{\check D}^{\lambda, \widetilde \lambda}(W, \widetilde Z)^{c}\right]\right)> c_{20}(\alpha,\psi,\lambda,\widetilde{\lambda},\eta,\tau)>0.  
\end{equation}
Moreover, if the constants further satisfies $\lambda\cdot (1+3\eta)/(1-\eta)<\widetilde{\lambda}$, then on the event $E_{\check D}^{\lambda, \widetilde \lambda}(W, \widetilde Z)$, for every $N\geq c_{17}(\psi,\eta)$ and $\ell\in\big(\lambda\cdot\text{cap}(\check{D}),\widetilde{\lambda}\cdot\frac{1-\eta}{1+3\eta}\cdot\text{cap}(\check{D})\big)$, we have 
\begin{align}
\left\{\widetilde{Z}_{1}^{\check{D}},\ \dots,\ \widetilde{Z}_{\ell}^{\check{D}}\right\}&\subseteq \left\{W_{1}^{\check{D}},\dots,W_{(1+\widetilde{\eta})\ell}^{\check{D}}\right\};\label{eq:coupling W and widetildeZ bias_1}\\
\left\{W_{1}^{\check{D}},\dots,W_{\ell}^{\check{D}}\right\}&\subseteq \left\{\widetilde{Z}_{1}^{\check{D}},\ \dots,\ \widetilde{Z}_{(1+\widetilde{\eta})\ell}^{\check{D}}\right\}, \label{eq:coupling W and widetildeZ bias_2}
\end{align}
where $\widetilde{\eta}$ is defined through
\begin{equation}\label{eq:def of widetildeeta}
1+\widetilde{\eta}=\frac{1+3\eta}{1-\eta}\cdot(1+\tau).   
\end{equation}
\end{proposition}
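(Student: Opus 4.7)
The strategy is to concatenate the two sub-couplings from \Cref{prop:coupling W and widetildeW,lem:coupling widetildeW with widetildeZ}, using the i.i.d.~biased excursions $\widetilde W_\ell^{\check D}$ as a bridge between the true biased excursions $W_\ell^{\check D}$ and the i.i.d.~simple excursions $\widetilde Z_\ell^{\check D}$. The favourable event $E_{\check D}^{\lambda,\widetilde\lambda}(W,\widetilde Z)$ is already defined in \eqref{eq:def of event E bias bias} as the intersection of the two sub-events, so both the probability estimate \eqref{eq:prob of bad overlineE} and the inclusions \eqref{eq:coupling W and widetildeZ bias_1}--\eqref{eq:coupling W and widetildeZ bias_2} will follow by routine combination of what has already been proved.

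Concretely, the plan is to first sample the original walks $(X^k)_{k\geq 0}$ jointly with the family $\big((m_{\check D_x}(0,t))_{t\geq 0},\{\widetilde W_\ell^{\check D_x}\}_{\ell\geq 1}\big)_{x\in\mathcal C_2}$ under $\mathbb Q_{W,\widetilde W}^{N,\alpha}$ from \Cref{prop:coupling W and widetildeW}; the soft local time construction there is carried out independently across the well-separated boxes $\check D_x$, so the events $E_{\check D_x}^\lambda(W,\widetilde W)$ are independent as $x$ varies. Then, conditionally on the $\widetilde W$-excursions and independently across $x\in\mathcal C_2$, apply \Cref{lem:coupling widetildeW with widetildeZ} to produce sequences $\{\widetilde Z_\ell^{\check D_x}\}_{\ell\geq 1}$ (with the auxiliary Poisson clocks internal to that proof) fulfilling the inclusions \eqref{eq:coupling widetildeW and widetildeZ_1}--\eqref{eq:coupling widetildeW and widetildeZ_2} on each $E_{\check D_x}^{\lambda,\widetilde\lambda}(\widetilde W,\widetilde Z)$. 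The resulting joint law is $\mathbb Q_{W,\widetilde Z}^{N,\alpha}$: its three marginals coincide with those prescribed in the statement (in particular $\{\widetilde Z_\ell^{\check D_x}\}_{\ell\geq 1}$ is i.i.d.~across $x$ by \eqref{eq:def of widetildeZ}), and the events $E_{\check D_x}^{\lambda,\widetilde\lambda}(W,\widetilde Z)$ inherit independence in $x$ from both building blocks. The probability estimate \eqref{eq:prob of bad overlineE} then follows from the union bound
\[
\mathbb Q_{W,\widetilde Z}^{N,\alpha}\big[E_{\check D}^{\lambda,\widetilde\lambda}(W,\widetilde Z)^c\big]\leq \mathbb Q_{W,\widetilde W}^{N,\alpha}\big[E_{\check D}^\lambda(W,\widetilde W)^c\big]+\mathbb Q_{\widetilde W,\widetilde Z}^{N,\alpha}\big[E_{\check D}^{\lambda,\widetilde\lambda}(\widetilde W,\widetilde Z)^c\big],
\]
combined with \eqref{eq:prob of bad E} for the first term (which, via $\mathrm{cap}(\check D)\geq cL^{d-1}\geq c'N^{(d-1)\psi}$, gives a bound of type $\exp(-cN^{(d-1)\psi})$) and \eqref{eq:prob of bad E bias} for the second; picking $c_{20}$ strictly smaller than $c_{19}(\alpha,\psi,\lambda,\widetilde\lambda,\tau)\wedge (d-1)\psi$ suffices.

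Finally, the range $\ell\in\big(\lambda\cdot\mathrm{cap}(\check D),\widetilde\lambda\cdot\tfrac{1-\eta}{1+3\eta}\mathrm{cap}(\check D)\big)$ and the definition $1+\widetilde\eta=\tfrac{1+3\eta}{1-\eta}(1+\tau)$ in \eqref{eq:def of widetildeeta} are arranged precisely so that the two sub-couplings can be chained at a common intermediate index. For \eqref{eq:coupling W and widetildeZ bias_2}, I would apply \eqref{eq:bias inclusion checkD_2} with $m=\ell/(1-\eta)\geq\lambda\cdot\mathrm{cap}(\check D)$ to get $\{W_1^{\check D},\ldots,W_\ell^{\check D}\}\subseteq\{\widetilde W_1^{\check D},\ldots,\widetilde W_{\frac{1+3\eta}{1-\eta}\ell}^{\check D}\}$, and then apply \eqref{eq:coupling widetildeW and widetildeZ_2} at the index $\tfrac{1+3\eta}{1-\eta}\ell$, which by the hypothesized upper bound on $\ell$ lies in $\big(\lambda\cdot\mathrm{cap}(\check D),\widetilde\lambda\cdot\mathrm{cap}(\check D)\big)$, to push the inclusion into $\{\widetilde Z_1^{\check D},\ldots,\widetilde Z_{(1+\widetilde\eta)\ell}^{\check D}\}$. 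The reverse inclusion \eqref{eq:coupling W and widetildeZ bias_1} is obtained symmetrically by first chaining \eqref{eq:coupling widetildeW and widetildeZ_1} and then \eqref{eq:bias inclusion checkD_1}. No substantial obstacle arises; the only point requiring care is the bookkeeping of parameter ranges so that both sub-coupling events apply at the chained intermediate indices.
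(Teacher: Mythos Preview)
Your proposal is correct and follows essentially the same approach as the paper: both concatenate \Cref{prop:coupling W and widetildeW} and \Cref{lem:coupling widetildeW with widetildeZ} using $\widetilde W$ as the bridge, obtain \eqref{eq:prob of bad overlineE} from the union bound and the two sub-estimates, and chain the two pairs of inclusions exactly as you describe. The paper's proof is in fact terser than yours; one small sharpening worth noting is that the independence of $E_{\check D}^\lambda(W,\widetilde W)$ across $\check D$ follows simply because (by \eqref{eq:def of widetildeE}) this event depends only on the Poisson clock $m_{\check D}(0,\cdot)$, which is independent across boxes by construction.
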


\begin{proof}
The proof follows from combining \Cref{prop:coupling W and widetildeW,lem:coupling widetildeW with widetildeZ}. The events $E_{\check{D}}^{\lambda}(W,\widetilde{W})$ are independent as $\check{D}$ varies since the definition of each $E_{\check{D}}^{\lambda}(W,\widetilde{W})$ only concerns the corresponding Poisson process $m_{\check{D}}(0,t)$ (see \eqref{eq:def of widetildeE}), which are independent as $\check{D}$ varies. The independence of the events $E_{\check{D}}^{\lambda,\widetilde{\lambda}}(\widetilde{W},\widetilde{Z})$ follows from the independence of $\{\widetilde{W}_{\ell}^{\check{D}},\widetilde{Z}_{\ell}^{\check{D}}\}_{\ell\geq 1}$ as $\check{D}$ varies. 
Moreover, the probability result in \eqref{eq:prob of bad overlineE} can be deduced using \eqref{eq:prob of bad E} and \eqref{eq:prob of bad E bias}. 
The two inclusions \eqref{eq:coupling W and widetildeZ bias_1} and \eqref{eq:coupling W and widetildeZ bias_2} is derived by combining \eqref{eq:bias inclusion checkD_1} and \eqref{eq:bias inclusion checkD_2} with \eqref{eq:coupling widetildeW and widetildeZ_1} and \eqref{eq:coupling widetildeW and widetildeZ_2}.
\end{proof}

With the above coupling of $W$ and $\widetilde Z$ in the larger box $\check D$ and \Cref{lem:bad F} in hand, we now obtain the counterpart of \Cref{prop:coupling between widehatZ and W} in the biased walk case, which is the coupling of excursions through $D$ and its neighbor $D'$ extracted from $W^{\check D}$ and $\widetilde Z^{\check D}$.
\begin{proposition}\label{prop:coupling W and widehatZ bias}
Set $\widetilde{\eta},\kappa\in (0,\frac{1}{2})$, $0< \lambda<\widetilde{\lambda}<\infty$ and recall the coupling $\mathbb{Q}_{W,\widetilde{Z}}^{N,\alpha}$ and event $E_{\check D}^{\lambda, \widetilde \lambda}(W, \widetilde Z)$ in \Cref{prop:coupling W and widetildeZ bias} and ${F}_{B,+}^{\lambda}(\widetilde Z)$ in  \Cref{lem:bad F}. For an $L$-box $B=B_{x}$ with $x\in\mathcal{C}_{2}$ and the box $D_{x}$ associated with $B_{x}$ as in \eqref{eq:def of translated boxes}, define 
\begin{equation}\label{eq:def of event G bias}
G_{B}^{\lambda, \widetilde \lambda}(W, \widetilde Z):=E_{\check D}^{\lambda, \widetilde \lambda}(W, \widetilde Z) \cap {F}_{B,+}^{\lambda}(\widetilde Z).  
\end{equation}
Then for any $0 < \lambda < \widetilde{\lambda} < \infty$, the events $G_{B}^{\lambda, \widetilde \lambda}(W, \widetilde Z)$ are independent as $x$ varies over $\mathcal{C}_{2}$, and for each $B = B_x$, 
\begin{equation}\label{eq:prob of bad G bias}
\liminf_{N\to\infty}\frac{1}{\log N}\log\left(-\log\mathbb{Q}_{W,\widetilde{Z}}^{N,\alpha}\left[G_{B}^{\lambda, \widetilde \lambda}(W, \widetilde Z)^{c}\right]\right) > c_{21}(\alpha,\lambda,\widetilde{\lambda},\widetilde{\eta},\kappa)>0.   
\end{equation}
Moreover, for any $0 < \lambda < \widetilde{\lambda} < \infty$ and $N\geq c_{22}(\psi,\eta,\tau)(\geq c_{17}(\psi,\eta))$, under the event $G_{B}^{\lambda, \widetilde \lambda}(W, \widetilde Z)$, for every $\ell\in \big(\frac{\lambda}{1-\widetilde{\eta}}\cdot\text{cap}(D),\widetilde{\lambda}(1-\widetilde{\eta})^{2}\cdot\text{cap}(D)\big)$, and any $B'$ neighbouring $B$ and its associated $D'$, 
\begin{equation}\label{eq:coupling widehatZD and WD bias}   
\left\{
\begin{array}{rll}
     {\rm (i)}&\;\; \Big\{\widetilde{Z}_{1}^{D},\ \dots\ ,\widetilde{Z}_{\ell}^{D}\Big\}\subseteq 
     \Big\{W_{1}^{D},\dots,W_{(1+\widetilde{\zeta})\ell}^{D}\Big\}; \\
     {\rm (ii)}&\;\;\Big\{W_{1}^{D},\dots,W_{\ell}^{D}\Big\} \subseteq \Big\{\widetilde{Z}_{1}^{D},\ \dots\ ,\widetilde{Z}_{(1+\widetilde{\zeta})\ell}^{D}\Big\},
\end{array}
\right.\quad\text{and}
\end{equation}
\begin{equation}\label{eq:coupling widehatZD' and WD' bias} 
    \left\{
\begin{array}{rll}
     {\rm (i)}&\;\; \left\{\widetilde{Z}_{1}^{D'},\ \dots\ ,\widetilde{Z}_{\ell}^{D'}\right\} \subseteq \left\{W_{1}^{D'},\dots,W_{(1+\widetilde{\zeta})\ell}^{D'}\right\}; \\
     {\rm (ii)}&\;\;\left\{W_{1}^{D'},\dots,W_{\ell}^{D'}\right\} \subseteq \left\{\widetilde{Z}_{1}^{D'},\ \dots\ ,\widetilde{Z}_{(1+\widetilde{\zeta})\ell}^{D'}\right\},
\end{array}
\right.
\end{equation}
where $\widetilde{\zeta}$ is defined through
\begin{equation}\label{eq:def of widetildezeta}
1+\widetilde{\zeta}=\frac{1+\kappa}{1-\kappa}\cdot\frac{(1+4\widetilde{\eta})^{2}}{(1-2\widetilde{\eta})^{2}}.
\end{equation}    
\end{proposition}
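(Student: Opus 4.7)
The proof proceeds in direct parallel to that of \Cref{prop:coupling between widehatZ and W}, with \Cref{prop:coupling W and widetildeZ bias} replacing \Cref{prop:coupling W and widetildeZ}. First I would establish the independence claim: the events $E_{\check{D}_x}^{\lambda,\widetilde{\lambda}}(W,\widetilde{Z})$ are independent across $x\in \mathcal{C}_2$ by the last clause of \Cref{prop:coupling W and widetildeZ bias}, while the events $F_{B_x,+}^{\lambda}(\widetilde{Z})$ are measurable with respect to the collections $\{\widetilde{Z}_{\ell}^{\check{D}_x}\}_{\ell \geq 1}$, which are mutually independent as $x$ varies by \eqref{eq:def of widetildeZ}; hence their intersection $G_{B_x}^{\lambda,\widetilde{\lambda}}(W,\widetilde{Z})$ is independent as $x$ varies over $\mathcal{C}_2$. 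The tail bound \eqref{eq:prob of bad G bias} then follows from a union bound combining \eqref{eq:prob of bad overlineE} and \eqref{eq:prob of bad F}, where the former decays more slowly (logarithmically in $N$) and is the dominant contribution.

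For the inclusions \eqref{eq:coupling widehatZD and WD bias} and \eqref{eq:coupling widehatZD' and WD' bias}, the strategy is to lift the inclusions \eqref{eq:coupling W and widetildeZ bias_1}--\eqref{eq:coupling W and widetildeZ bias_2} for excursions through the larger box $\check{D}$ to the extracted sub-excursions through $D$ and $D'$, using the counting bounds provided by $F_{B,+}^{\lambda}(\widetilde{Z})$ to translate between the $\check{D}$-labelling and the $D$-labelling (and similarly for $D'$). Concretely, if $\ell \in \bigl(\tfrac{\lambda}{1-\widetilde{\eta}}\cdot\text{cap}(D),\widetilde{\lambda}(1-\widetilde{\eta})^2\cdot\text{cap}(D)\bigr)$, then the corresponding number $m$ of $\check{D}$-excursions needed to produce $\ell$ extracted $D$-excursions falls in a range of the form $\bigl(\lambda\cdot\text{cap}(\check{D}),\widetilde{\lambda}\cdot\text{cap}(\check{D})\bigr)$, provided $\kappa$ is chosen small enough relative to $\widetilde{\eta}$. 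The inflation factor $1+\widetilde{\zeta}$ in \eqref{eq:def of widetildezeta} compounds three approximations: the coupling error $1+\widetilde{\eta}$ at the $\check{D}$-level, together with two ratios $(1+\kappa)/(1-\kappa)$ arising respectively from the upper and lower bounds on $\#\{\widetilde{Z}^{D}_{\ell}\}/\#\{\widetilde{Z}^{\check{D}}_{\ell}\}$ provided by $F_{B,+}^{\lambda}(\widetilde{Z})$. The argument is word-for-word analogous to \cite[(5.17)--(5.19)]{Szn17} adapted as in the proof of \Cref{prop:coupling between widehatZ and W}, and we will omit the routine bookkeeping.

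The chief point of departure from the unbiased case is that the underlying coupling \eqref{eq:coupling W and widetildeZ bias_1}--\eqref{eq:coupling W and widetildeZ bias_2} is valid only within a two-sided window $\ell \in \bigl(\lambda\cdot\text{cap}(\check{D}),\widetilde{\lambda}\cdot\tfrac{1-\eta}{1+3\eta}\cdot\text{cap}(\check{D})\bigr)$, rather than a one-sided range $\ell \geq \tfrac{\lambda}{1-\eta}\cdot\text{cap}(D)$ as in \Cref{prop:coupling between widehatZ and W}. This upper truncation is an artefact of the Radon--Nikodym derivative estimates \eqref{eq:RN bias to unbias upper excursion}--\eqref{eq:RN bias to unbias lower Sshort excursion} used in \Cref{lem:coupling widetildeW with widetildeZ}, which only control long excursions by truncating at length $L^2 N^{\alpha}$, and consequently only finitely many $\widetilde{W}$-excursions can be matched with $\widetilde{Z}$-excursions before the Poissonization deteriorates. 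The only actual obstacle, therefore, is ensuring that the constants $\lambda$ and $\widetilde{\lambda}$ can later be chosen so as to bracket the excursion counts relevant for the proofs of \eqref{eq:bad(beta-gamma) bias} and \eqref{eq:bad local time bias}; this is a clean constraint on parameters that will be honoured in \Cref{subsec:adapting bad local time}, and requires no further technical input at the present stage.
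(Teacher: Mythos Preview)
Your proposal is correct and follows essentially the same approach as the paper, which simply states that the proof goes through exactly as in \Cref{prop:coupling between widehatZ and W} since the sandwiching argument there is insensitive to whether the $W$-excursions are biased. Your sketch supplies the details the paper omits, including the independence argument, the union bound for \eqref{eq:prob of bad G bias}, and the observation that the two-sided window on $\ell$ is the only new feature (handled later when choosing $\widetilde{\lambda}$ large); the one minor imprecision is the phrase ``provided $\kappa$ is chosen small enough relative to $\widetilde{\eta}$,'' which is not actually needed for the proposition as stated (it holds for all $\kappa,\widetilde{\eta}\in(0,\tfrac12)$, with the range of $\ell$ and the factor $1+\widetilde{\zeta}$ absorbing the bookkeeping).
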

\begin{proof}
The proof of \Cref{prop:coupling W and widehatZ bias} follows in the same way as that of \Cref{prop:coupling between widehatZ and W}. Indeed, given \Cref{prop:coupling W and widetildeZ} and \Cref{lem:bad F}, the original proof of \Cref{prop:coupling between widehatZ and W} only consists of a ``sandwiching" argument, which works no matter the excursions $W$ are biased or not.
\end{proof}

With the coupling \Cref{prop:coupling W and widehatZ bias} in place of \Cref{prop:coupling between widehatZ and W}, we now explain the proof of \eqref{eq:bad(beta-gamma) bias}, which is in a same way as that of \eqref{eq:bad(beta-gamma)} in \Cref{subsec:proof of prop bad(beta-gamma)}.
\begin{proof}[Proof of \eqref{eq:bad(beta-gamma) bias}]
     For a fixed $\mathcal C_3$, we still denote 
\begin{equation*}
    \text{bad}_1(\mathcal C_3) = \bigcap_{x\in \mathcal C_3}{G}_{B_x}^{\lambda, \widetilde{\lambda}}(W,\widetilde Z)^c, \quad \text{bad}_2(\mathcal C_3) = \bigcap_{x\in \mathcal C_3} \left(\{B_x \text{ is bad}(\beta, \gamma)\}\cap {G}_{B_x}^{\lambda, \widetilde{\lambda}}(W,\widetilde Z)\right).
\end{equation*}
Similar as before, if all the boxes $B_x$ for $x\in\mathcal{C}_{2}$ are $\text{bad}(\beta,\gamma)$, then under the coupling $\mathbb{Q}_{W,\widetilde{Z}}^{N,\alpha}$, there exists a subset $\mathcal C_3$ of $\mathcal{C}_{2}$ with cardinality $[|\mathcal{C}_{2}|/3]$, such that at least one of these two events occur. We note that there exist only two essential differences when applying the union bound and bounding the two events above. 

The first difference is that the probability estimate of  $G_{B}^{\lambda,\widetilde{\lambda}}(W, \widetilde{Z})$ (see \eqref{eq:prob of bad G bias}) differs from the event $G_{B}^{\lambda}(W,\widetilde Z)$ in the simple random walk case (see \eqref{eq:prob of bad G}), and therefore the term corresponding to that estimate on the right hand side of \eqref{eq:plug in prob of bad G} will be $e^{-N^{c_{21}}}$. That said, combining the union bound \eqref{eq:prob of M1S} and \eqref{eq:prob of bad(beta-gamma) C2}, we still obtain the desired result. 

The second difference is that the range $\ell\in \big(\frac{\lambda}{1-\widetilde{\eta}}\cdot\text{cap}(D),\widetilde{\lambda}(1-\widetilde{\eta})^{2}\cdot\text{cap}(D)\big)$ in \Cref{prop:coupling W and widehatZ bias} has an upper bound, while the range $\ell\geq \frac{\lambda}{1-\eta}\cdot\text{cap}(D)$ in \Cref{prop:coupling between widehatZ and W} does not.
However, this difference is also inconsequential because for fixed constants $0<\gamma<\beta<\overline{u}$ and $\widetilde{\eta}\in(0,1)$, we can choose $\widetilde{\lambda}$ large enough so that $\widetilde{\lambda}(1-\widetilde{\eta})^{2}>10\overline{u}$, and the proof proceeds smoothly.
\end{proof}

\subsection{Unlikeliness of surfaces of \texorpdfstring{$\text{poor}(\gamma)$}{large(gamma)} boxes (Adaptation of \Cref{sec:bad local time})}\label{subsec:adapting bad local time}
In this short subsection we state without proof the adapted version the stochastic domination \Cref{prop:very strong coupling} of random walk excursions and random interlacements to prove \eqref{eq:bad local time bias}. We recall \Cref{sec:bad local time} for notation. 

\begin{proposition}\label{prop:very strong coupling bias}
Let the box $\mathsf{B}$ be as in \Cref{prop:geometric argument bias}, and the box $\mathsf{D}$ be the concentric box of $\mathsf{B}$ with side-length $[N/20]$. Then one can construct on an auxiliary space $(\underline{\Omega}^{\alpha},\underline{\mathcal{F}}^{\alpha})$ a coupling $\underline{Q}^{N, \alpha}$ of the cylinder random walk and random interlacements with marginal distributions $P_{0}^{N,\alpha}$ and $\PP$ respectively, such that \eqref{eq:very strong coupling mathsfB} still holds for $\underline{Q}^{N,\alpha}$.
\end{proposition}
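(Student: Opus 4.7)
The plan is to derive \Cref{prop:very strong coupling bias} from the general ``very strong'' coupling theorem \Cref{thm:very strong coupling S} to be established in \Cref{sec:couplings}. Applied to the simple random walk, this general theorem will yield \Cref{prop:very strong coupling}; applied to the biased walk with drift $\Delta=N^{-d\alpha}$, $\alpha>1/d$, it will yield \Cref{prop:very strong coupling bias}. At a conceptual level the strategy is the same in both cases: one must show that the cloud of excursions of the walk from $\mathsf{B}$ to $\partial\mathsf{D}$ completed before the record-breaking time $\underline{S}_{N}$ can be realized, up to an event of probability at most $\exp(-c\cdot\text{cap}(\mathsf{B}))$, as a sub-cloud of the random interlacements excursions at level $u'=\tfrac{1}{2}(\overline{u}-\delta+\gamma)$. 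Here $\text{cap}(\mathsf{B})$ is of order $(N/\log^{3}N)^{d-1}$, which sets the scale of the error term.

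First I would set up the excursion framework of \eqref{eq:def of return and departure for mathsfB} using the auxiliary walks $X^{k}$ introduced in \Cref{subsec:adapting geometric} starting from uniform distributions on $\mathbb{T}\times\{-2M(\alpha)\}$, so that an unbounded sequence of $\mathsf{B}$-to-$\partial\mathsf{D}$ excursions is available despite the transience of the biased walk. From the definition of $\underline{S}_{N}$ and the one-dimensional analysis in \Cref{sec:one-dimensional walks}, the number of excursions $N_{\underline{S}_{N}}(\mathsf{B})$ completed by the original walk up to time $\underline{S}_{N}$ is, with overwhelming probability, bounded above by approximately $(\overline{u}-\delta)\cdot\text{cap}(\mathsf{B})$, which is strictly less than $u'\cdot\text{cap}(\mathsf{B})$; this leaves the necessary slack for the stochastic domination.

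The soft-local-time construction, in the quantitative form of \cite[Section~2]{CGP13}, then reduces the task to controlling, at each successive entrance into $\mathsf{B}$, the total-variation distance between the entrance distribution of the biased walk and the normalized equilibrium measure $\overline{e}_{\mathsf{B}}$. The key inputs are the biased-walk hitting-distribution estimate \Cref{prop:hitting distribution two boxes bias}, combined with a horizontal mixing estimate (a biased analogue of \cite[Proposition~3.1]{Szn09b}) showing that the $\pi_{\mathbb{T}}$-projection of the walk equilibrates on $\mathbb{T}$ between successive visits to $\mathsf{B}$. The assumption $\alpha>1/d$ is used crucially here: it ensures that the vertical drift is weak enough on the time scales relevant to $\mathsf{B}$ that horizontal mixing can be decoupled from vertical displacement when estimating entrance distributions on $\partial^{\mathrm{int}}\mathsf{B}$.

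The main obstacle I anticipate is this horizontal mixing estimate in the regime where the drift is non-negligible on the scale of $\mathsf{B}$, which has side-length $[N/\log^{3}N]$, much larger than the mesoscopic boxes of side-length $N^{\psi}$ with $\psi<\alpha$ treated in \Cref{subsec:adapting bad(beta-gamma)}. In the simple-walk setting the needed statement comes from \cite[Proposition~3.1]{Szn09b}; in the biased case one has to combine it with the Radon-Nikodym bounds of \Cref{subsec:RN}, uniformly over time scales polynomial in $N$ and starting points on $\partial\mathsf{D}$. When $\alpha$ is close to $1/d$ the accumulated drift over such scales is just barely controllable, and one has to be careful to obtain a multiplicative correction that is $o(1/\text{cap}(\mathsf{B}))$ rather than a fixed positive constant --- this is the second place, alluded to in the sketch (\Cref{subsec:sketch}), where the restriction $\alpha>1/d$ is essential. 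Once this mixing estimate is in hand, the remainder of the argument (Poissonisation of the entrance times, exponential Chebyshev inequality for the number of excursions, and passage from the Poisson cloud to an interlacement cloud as in \cite{Bel13,Szn09a}) proceeds as in the proof of \Cref{thm:very strong coupling S}, and specialising the latter theorem to $K=\mathsf{B}$, $U=\mathsf{D}$, $u=u'$ yields \eqref{eq:very strong coupling mathsfB}.
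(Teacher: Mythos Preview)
Your proposal has the right list of ingredients (horizontal mixing, Radon--Nikodym comparison of biased and unbiased laws, soft local time, Poissonisation) but organizes them differently from the paper, and the direct route you sketch runs into trouble. You propose to apply soft local time directly to the $\mathsf{B}$-to-$\partial\mathsf{D}$ excursions and control the entrance law on $\partial^{\mathrm{int}}\mathsf{B}$ via \Cref{prop:hitting distribution two boxes bias}. But that proposition is for $\mathbb{Z}^{d+1}$ (not the cylinder), and even ignoring this, its hypothesis $\Delta^{-1}\ge KL(K+L)$ with $L\approx N/\log^3N$ and $KL\approx N/20$ would require roughly $N^{d\alpha}\gtrsim N^2/\log^3N$, which fails e.g.\ for $d=2$, $\alpha\in(1/2,1)$. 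Also, the auxiliary walks $X^{k}$ play no role in this coupling: only excursions of $X^{0}$ before the finite time $\underline{S}_{N}$ matter.

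The paper instead routes the argument through the full-width cylinder excursions $\A\to\partial\widetilde{\A}$ of \Cref{subsec:proof of very strong coupling theorem}. It (i) runs \Cref{prop:horizontal independence,prop:vertical independence,prop:Poissonization} with all excursions taken under $P^{N,\alpha}$ (the only delicate check is that \Cref{lem:horizontal mixing} survives, via an RN bound over height $\le h_N\approx N\log^2N$, which is precisely where $\alpha>1/d$ enters), then (ii) inserts a new step, \Cref{prop:Poissonian coupling upper}, that converts a Poissonian cloud of \emph{biased} cylinder excursions into a larger Poissonian cloud of \emph{simple} cylinder excursions by a pointwise RN comparison at the level of trajectories, and (iii) only afterwards applies \Cref{prop:extraction,prop:upper truncation,prop:upper interlacement} unchanged. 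This decoupling --- first cylinder geometry, then bias removal, then box-level extraction and interlacement comparison --- is what makes the argument go through uniformly for all $\alpha>1/d$; the step (ii) is the main new ingredient your outline does not isolate.
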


The proof of \Cref{prop:very strong coupling bias} will be delayed until \Cref{sec:couplings}. 
Now assuming \Cref{prop:very strong coupling bias,prop:bad local time interlacements}, the proof of \eqref{eq:bad local time bias} proceeds in the same fashion as in \eqref{eq:capacity of mathsfB}-\eqref{eq:from poor to widehatpoor}. Note that the condition $\psi>1/d$ is pivotal for deducing \eqref{eq:compare C1 and capacity of mathsfB}. Since \Cref{prop:bad local time interlacements} is purely on random interlacements, the remaining part of the proof of \eqref{eq:bad local time bias} is already contained in \Cref{subsec:continuous-time interlacements}.
\section{Bounding $T_N$ by \texorpdfstring{$\overline{S}_{N}$}{SN}}\label{sec:upper bound}
For the random walk on cylinder with upward drift $N^{-d\alpha}$ along the $\mathbb Z$-direction, we recall the definition of ``record-breaking time'' $\overline{S}_{N}(z)$ of every fixed position $z$ in \eqref{eq:def of overlineSz}. The main goal of this section is to prove that, conditioned on the event that biased random walk succeeds to hit level $z$ for more than $\frac{u_{**} + \delta}{d+1}N^d$ times (in other words, $\overline{S}_{N}(z)<\infty$), with high probability disconnection happens before time $\overline{S}_{N}(z)$, which we state in \Cref{prop:overlineS and T} and \Cref{cor:overlineS and Tz}. 

Without loss of generality, we may assume $z=0$ here. As sketched in \Cref{subsec:sketch}, \Cref{lem:conditional law} gives the conditional distribution of the biased random walk. Precisely, the conditional biased random walk will be ``pulled" towards level $\mathbb{T}\times\{0\}$ with drift $N^{-d\alpha}$ until time $\overline{S}_{N}(0)$, and act normally afterwards. With this, for two fixed mesoscopic box $\overline{B}\subseteq \overline{D}$ on level $\mathbb T \times \{0\}$ (see \eqref{eq:def of overlineB and overlineD} and \eqref{def of ef of overlineB and overlineD x} for formal defintions), 
we establish a stochastic domination control in \Cref{prop:strong coupling} of the conditioned random walk excursions $W_{\ell}^{\overline{B},\overline{D}}$ from $\overline{B}$ to $\partial \overline{D}$ (see \eqref{eq:def of biased random walk excursions overlineB}) in terms of the excursions $Z_{\ell}^{\overline{B},\overline{D}}$ of random interlacements (recall \eqref{eq:def of interlacement excursions}). 

Roughly speaking, the coupling says that, conditioned on the event $\overline{S}_{N}(0)<\infty$, with high probability the excursions $W_{\ell}^{\overline{B},\overline{D}}$ completed before time $\overline{S}_{N}(0)$ contain the excursions $Z_{\ell}^{\overline{B},\overline{D}}$ in $\I^{\widetilde{u}}$, where $\widetilde{u}\in(u_{**},u_{**}+\delta)$ is fixed, and the error term can be $O(N^{-C})$ for any $C>0$ (Here we will just take $C$ as $10d$ for the sake of convenience, but the proof remains the same for all $C$). 
We remark that this coupling is very similar to the ``very strong" coupling appeared in \Cref{prop:very strong coupling,prop:very strong coupling bias}, except that here the requirement on the coupling error is substantially weakened to polynomial in $N$ instead of exponential in $-\text{cap}(\overline{B})$ (note that there are only polynomially many boxes $\overline{B}$ on level $\mathbb{T}\times\{0\}$). It is of independent interest whether one can sharpen \Cref{prop:strong coupling}; see \Cref{remark:strong to very strong} for more discussions. Given this coupling, we can then use the strongly non-percolative property of random interlacements to prove \Cref{prop:overlineS and T}. 

Our main result of this section and its natural corollary are as follows.

\begin{proposition}\label{prop:overlineS and T}
For every $\alpha\in (1/d,\infty]$ and $\delta>0$, we have 
\begin{equation}\label{eq:overlineS and T0}
\lim_{N\to \infty}P_{0}^{N,\alpha}[T_{N}\leq \overline{S}_{N}(0)\mid \overline{S}_{N}(0)<\infty]=1.
\end{equation}
\end{proposition}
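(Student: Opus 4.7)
The plan is to follow the strategy already outlined in \Cref{subsec:sketch}: show that by time $\overline{S}_N(0)$ the random walk builds a horizontal ``fence'' of disconnecting trace in $\mathbb T \times \{0\}$ (plus a thin slab around it), with high conditional probability. First I would invoke \Cref{lem:conditional law}, which identifies the conditional law of $(X_n)_{0\leq n\leq \overline{S}_N(0)}$ given $\{\overline{S}_N(0)<\infty\}$ as that of a random walk which is pulled toward $\mathbb T\times\{0\}$ with drift $N^{-d\alpha}$ (until time $\overline{S}_N(0)$, after which the upward drift resumes). The key point is that under this conditional law the walk deposits at least $\frac{u_{**}+\delta}{d+1}N^d$ visits to $\mathbb T\times\{0\}$, so locally its trace should look like an interlacement of intensity slightly larger than $u_{**}$.

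Next I would set up a coarse-graining at mesoscopic scale: cover $\mathbb T\times\{0\}$ by a disjoint (or almost disjoint) family of boxes $\overline{B}_x$ of side-length $[N^{1/3}]$ with concentric larger boxes $\overline{D}_x$ and use the excursions $W^{\overline{B}_x,\overline{D}_x}_\ell$ of $X$ before time $\overline{S}_N(0)$. I would then apply the ``strong'' coupling \Cref{prop:strong coupling}: for each fixed $x$, one can couple the excursions of $X_{[0,\overline{S}_N(0)]}$ in $\overline{B}_x$ with the excursions of a random interlacement at level $\widetilde u \in (u_{**}, u_{**}+\delta/2)$ so that
\begin{equation*}
\big\{Z^{\overline{B}_x,\overline{D}_x}_\ell\big\}_{\ell\leq N_{\widetilde u}(\overline{B}_x)}\subseteq \big\{W^{\overline{B}_x,\overline{D}_x}_\ell\big\}_{\ell\leq N_{\overline{S}_N(0)}(\overline{B}_x)}
\end{equation*}
up to an error of order $N^{-10d}$. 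Since there are only polynomially many boxes $\overline{B}_x$ covering $\mathbb T\times\{0\}$, a union bound absorbs this error: with conditional probability tending to $1$, the coupling succeeds \emph{simultaneously} for every $\overline{B}_x$.

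Once this coupling is in force, I would exploit the definition of $u_{**}$ (see \eqref{eq:def of strongly non-percolate}): because $\widetilde u>u_{**}$, for each $\overline{B}_x$ the interlacement $\mathcal I^{\widetilde u}\cap \overline{B}_x$ is ``strongly non-percolative,'' so with probability $1-C\exp(-N^{c/3})$ there is no path in $\mathcal V^{\widetilde u}\cap \overline{B}_x$ of diameter comparable to $[N^{1/3}]$; by stochastic domination the same holds for $\overline{B}_x\setminus X_{[0,\overline{S}_N(0)]}$. Another union bound over the polynomially many $\overline{B}_x$ then shows that, with high probability, no cluster of the complement of $X_{[0,\overline{S}_N(0)]}$ inside the slab $\mathbb T\times \{-[N^{1/3}]/2,\dots,[N^{1/3}]/2\}$ can cross the slab vertically. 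This means $X_{[0,\overline{S}_N(0)]}$ disconnects $\mathbb T\times[M,\infty)$ from $\mathbb T\times(-\infty,-M]$ for some large $M$, i.e.\ $T_N\leq \overline{S}_N(0)$, proving \eqref{eq:overlineS and T0}.

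The main obstacle is the ``strong'' coupling \Cref{prop:strong coupling} itself. Unlike the very strong coupling needed in the lower-bound argument, here we only need an inclusion in the \emph{opposite} direction (interlacement excursions $\subseteq$ walk excursions) with polynomial error rate $N^{-10d}$, and the conditional law provided by \Cref{lem:conditional law} makes the walk effectively recurrent to $\mathbb T\times\{0\}$ with enough excursions. Still, constructing this coupling under the drifted conditional dynamics, in particular ensuring the required spatial regularity and horizontal mixing in the biased regime $\alpha>1/d$ (this is where the condition $\alpha>1/d$ enters, via \Cref{lem:horizontal mixing}), is the technical core of the argument; everything else after the coupling reduces to standard applications of strong non-percolation and union bounds.
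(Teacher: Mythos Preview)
Your proposal is correct and follows essentially the same approach as the paper: invoke the strong coupling \Cref{prop:strong coupling} for each mesoscopic box $\overline{B}_x$ around $\mathbb{T}\times\{0\}$, absorb the $N^{-10d}$ coupling error by a union bound over the polynomially many boxes, and then use the strongly non-percolative property of $\mathcal{V}^{\widetilde u}$ for $\widetilde u>u_{**}$ to rule out any crossing in the vacant set. The paper's implementation is only cosmetically different: it phrases the geometric step as an annulus crossing at an intermediate scale $R=[N^{1/6}]$ (so that $B(x,2R)\subseteq\overline{B}_x$) and takes the union bound over all $x\in\mathbb{T}\times\{0\}$ rather than over a covering family, but the logic is identical.
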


\begin{corollary}\label{cor:overlineS and Tz}
For every $\alpha\in (1/d,\infty]$, $\delta>0$ and $z\in\mathbb{Z}$, we have 
\begin{equation}\label{eq:overlineS and Tz}
\lim_{N\to \infty}P_{0}^{N,\alpha}[T_{N}\leq \overline{S}_{N}(z)\mid \overline{S}_{N}(z)<\infty]=1.
\end{equation}    
\end{corollary}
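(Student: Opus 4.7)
The plan is to reduce \Cref{cor:overlineS and Tz} to \Cref{prop:overlineS and T} by a vertical translation. Since the transition probability \eqref{eq:def of transition probability} depends only on the increment $x_2-x_1$, the law of the biased walk is invariant under translations of $\mathbb{E}$ in the $\mathbb{Z}$ direction. For a fixed $z\in\mathbb{Z}$, define the translated process $\widetilde X_n := X_n-(0,\dots,0,z)$. Under $P_0^{N,\alpha}$, $\widetilde X$ is then a biased random walk starting at $\widetilde x_z:=(0,\dots,0,-z)$ with the same drift $N^{-d\alpha}$, so its law coincides with $P_{\widetilde x_z}^{N,\alpha}$. Under this shift the range of the walk is merely translated, so the disconnection time $T_N$ is preserved, while $\overline{S}_N(z)$ for $(X_n)$ coincides with $\overline{S}_N(0)$ for $(\widetilde X_n)$. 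Therefore \eqref{eq:overlineS and Tz} is equivalent to
\begin{equation*}
\lim_{N\to\infty}P_{\widetilde x_z}^{N,\alpha}\bigl[T_N\leq \overline{S}_N(0)\bigm|\overline{S}_N(0)<\infty\bigr]=1.
\end{equation*}

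It thus suffices to check that \Cref{prop:overlineS and T} remains valid when the origin is replaced by an arbitrary starting point $y\in\mathbb{E}$. Inspecting the proof strategy outlined in \Cref{subsec:sketch}, the argument relies on three ingredients: the conditional description of the walk's law given $\{\overline{S}_N(0)<\infty\}$ (\Cref{lem:conditional law}); the strong coupling \Cref{prop:strong coupling} between the trace of the conditioned walk inside mesoscopic boxes $\overline B$ of side-length $[N^{1/3}]$ around $\mathbb T\times\{0\}$ and the interlacement set $\mathcal I^{u_{**}+\delta/2}$; and the strongly non-percolative bound \eqref{eq:def of strongly non-percolate} applied to each such box, producing a ``fence" that disconnects $\mathbb E$. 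All three ingredients are local to the slab around $\mathbb{T}\times\{0\}$ and depend on the walk only through its excursions into that slab under the conditioned dynamics.

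The key point to verify, which I expect to be the only non-trivial part, is that \Cref{lem:conditional law} and \Cref{prop:strong coupling} extend uniformly in the starting point. This is indeed the case: the identification of the conditional law amounts to a Doob $h$-transform with $h(x):=P_x^{N,\alpha}[\overline{S}_N(0)<\infty]$, which depends only on the current position, so the resulting conditional transition kernel is independent of the starting point and induces the same ``drift toward $\mathbb{T}\times\{0\}$ before time $\overline{S}_N(0)$ and upward drift afterwards" description as in the $y=0$ case. Similarly, the strong coupling in \Cref{prop:strong coupling} is built from the successive excursions of the walk into $\overline D$ via the strong Markov property at the entry times, and therefore its statement depends on the starting point only through the transition kernel. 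Consequently, the proof of \Cref{prop:overlineS and T} goes through with $P_0^{N,\alpha}$ replaced by $P_{\widetilde x_z}^{N,\alpha}$, which via the translation above yields \Cref{cor:overlineS and Tz}.
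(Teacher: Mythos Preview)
Your argument is correct and matches the paper's intended derivation: the corollary is stated as a ``natural corollary'' of \Cref{prop:overlineS and T} without explicit proof, and vertical translation is exactly the reduction the paper has in mind. Your observation that the conditional transition kernel from \Cref{lem:conditional law} is independent of the starting point, together with the fact (made explicit in \Cref{subsubsec:bias lower}) that the first excursion can simply be discarded when proving the domination-from-below direction of the strong coupling, is precisely what makes the extension to an arbitrary starting point go through.
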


We then provide the conditional law of the biased random walk on the cylinder on the event $\overline{S}_{N}(0)<\infty$. 
\begin{lemma}\label{lem:conditional law}
Conditioned on the event $\overline{S}_{N}(0)<\infty$, the law of the biased random walk $(X_{n})_{n\geq 0}$ under $P_{0}^{N,\alpha}$ is as follows:
\begin{enumerate}[label=(\arabic*)]
\item Before time $\overline{S}_{N}(0)$, the walk has a drift of $N^{-d\alpha}$ pointing toward level $\mathbb{T}\times\{0\}$, that is, the conditioned transition probability of $(X_{n})_{n\geq 0}$ is 
\begin{equation}\label{eq:def of conditioned transition probability}
p'(x_{1},x_{2})=\left\{
\begin{aligned}
&\frac{1+N^{-d\alpha}\cdot \pi_{\mathbb Z}(x_{2}-x_{1})}{2d+2}\mathbbm{1}_{|x_{1}-x_{2}|_{\infty}=1},\quad &\pi_{\mathbb Z}(x_1)>0,\\
&\frac{1}{2d+2}\mathbbm{1}_{|x_{1}-x_{2}|_{\infty}=1},\quad &\pi_{\mathbb Z}(x_1)=0,\\
&\frac{1-N^{-d\alpha}\cdot \pi_{\mathbb Z}(x_{2}-x_{1})}{2d+2}\mathbbm{1}_{|x_{1}-x_{2}|_{\infty}=1},\quad &\pi_{\mathbb Z}(x_1)<0,
\end{aligned} 
\right.
\end{equation}
where we recall $\pi_{\mathbb Z}$ is the projection from $\mathbb E$ to $\mathbb Z$ $($the $(d+1)$-th coordinate$)$.
\item After time $\overline{S}_{N}(0)$, the walk again has an upward drift of $N^{-d\alpha}$.
\end{enumerate}
\end{lemma}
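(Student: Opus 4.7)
The plan is to use a Doob-style $h$-transform argument, after reducing the problem to the one-dimensional $\mathbb Z$-projection. The key observation is that, by the very definitions \eqref{eq:def of Sz} and \eqref{eq:def of overlineSz}, the random time $\overline S_N(0)$ is measurable with respect to the $\sigma$-algebra generated by the non-lazy $\mathbb Z$-projection $(\widehat Z_k)_{k\geq 0}$; under $P_0^{N,\alpha}$ the latter is a one-dimensional biased random walk on $\mathbb Z$ with drift $\Delta := N^{-d\alpha}$, and the event $A := \{\overline S_N(0) < \infty\}$ is exactly $\{\widehat L_\infty^0 \geq \overline\ell\}$ for the threshold $\overline\ell = (u_{**}+\delta)N^d/(d+1)$. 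Consequently the conditional chain is Markov in the enlarged state $(x,\ell)$, where $\ell$ records the current count $\widehat L^0$ of visits of $\widehat Z$ to the level $0$.

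Next, I would write down the harmonic function
\[
h(v,\ell) := P_{(y,v)}^{N,\alpha}\big[\widehat L_\infty^0 \geq \overline\ell - \ell\big],
\]
which by the Markov property depends on the current position only through $v=\pi_{\mathbb Z}(x)$ together with the bookkeeping variable $\ell$. Using the classical gambler's-ruin identity $\mathsf P_v^\Delta[\text{hit }0] = r^v$ for $v>0$, with $r := (1-\Delta)/(1+\Delta)$, together with the fact that the total number of visits of $\widehat Z$ to $0$ starting from $0$ is geometric with parameter $\Delta$, one obtains a closed-form expression for $h(v,\ell)$, piecewise in the sign of $v$; from this every relevant single-step ratio $h(v',\ell')/h(v,\ell)$ is explicit.

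For part~(1), Bayes' rule yields, before the stopping time $\overline S_N(0)$, the conditional one-step transition
\[
p'(x_1,x_2) = p(x_1,x_2)\cdot\frac{h(\pi_{\mathbb Z}(x_2),\ell')}{h(\pi_{\mathbb Z}(x_1),\ell)},
\]
where $\ell'$ is the updated visit count (unchanged on horizontal moves and on vertical moves that do not land on level $0$; incremented by one on a vertical move from $\pm 1$ to $0$). Substituting the ratios found in the previous step recovers the announced formula \eqref{eq:def of conditioned transition probability}: horizontal moves keep probability $1/(2d+2)$, while vertical moves acquire factors $r^{\pm 1}$ that combine with the original $p$-weights to produce the claimed drift depending only on the sign of $v$. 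A necessary sanity check is that the two vertical moves crossing level $0$ yield exactly the same $h$-ratio as the ``interior'' vertical moves, so that the announced formula applies uniformly on all of $\{v\neq 0\}$.

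For part~(2), I would apply the strong Markov property at the stopping time $\overline S_N(0)$: since by construction the event $A$ is automatically realized by time $\overline S_N(0)$, conditioning on $A$ imposes no constraint on the post-$\overline S_N(0)$ trajectory, which therefore follows the original law $P_{X_{\overline S_N(0)}}^{N,\alpha}$ with upward drift $N^{-d\alpha}$. The main technical subtlety throughout is the interplay between the $v$-coordinate and the local-time counter $\ell$ in $h(v,\ell)$, with the transitions crossing $v=0$ requiring careful verification; elsewhere the computation is a routine application of one-dimensional random-walk potential theory.
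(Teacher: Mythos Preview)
Your proposal is correct and follows essentially the same approach as the paper, which simply says ``the proof follows from using strong Markov property and Doob's $h$-transform \ldots\ where the absorbing state is $\mathbb{T}\times\{0\}$'' and omits all details. Your write-up is considerably more fleshed out than the paper's one-line sketch; the only minor streamlining the paper's hint suggests is that, rather than carrying the counter $\ell$ in the state, one can observe that before time $\overline S_N(0)$ the walk must still return to level $0$ at least once, so between consecutive visits to $0$ the conditional law is exactly the $h$-transform with $h(v)=P_v[\text{hit }0]=r^{v\vee 0}$ (with $r=(1-\Delta)/(1+\Delta)$), which removes the bookkeeping variable entirely and makes the cancellation of the $\ell$-dependence automatic.
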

\begin{proof}
The proof follows from using strong Markov property and Doob's $h$-transform (see e.g. \cite[Section 17.6.1]{LP17}), where the absorbing state is $\mathbb{T}\times\{0\}$. We omit the details for the sake of brevity.
\end{proof}

Before moving on to the stochastic domination, we first clarify some notation. Recall that $B(x,r)$ denote the closed $|\cdot|_{\infty}$-ball centered at $x$ with radius $r\geq 0$ in $\mathbb{Z}^{d+1}$ or $\mathbb E$. We write
\begin{equation}\label{eq:def of overlineB and overlineD}
\overline{B}_{0}=B(0,[N^{1/3}]),\quad \overline{D}_{0}=B(0,[N^{2/3}]).
\end{equation}
For every $x\in \mathbb T\times \{0\}$, we also consider the translates of boxes 
\begin{equation}\label{def of ef of overlineB and overlineD x}
\overline B_x = x + \overline{B}_{0}, \quad 
\overline D_x = x+\overline{D}_{0}, \quad
\text{ and thus } 
\overline B_{x}\subseteq \overline D_{x}.
\end{equation}
Note that here the choice of scales such as $N^{1/3}$ and $N^{2/3}$ is rather arbitrary, as long as they meet the conditions of $f(N)$ and $g(N)$ outlined in \eqref{eq:requirement for f and g}. 

We write the successive times of return to $\overline{B}$ and departure from $\overline{D}$ as (recall \eqref{eq:def of return and departure} for notation)
\begin{equation}\label{eq:def of return and departure for overlineB}
R_{1}^{\overline{B},\overline{D}}<D_{1}^{\overline{B},\overline{D}}<R_{2}^{\overline{B},\overline{D}}<D_{2}^{\overline{B},\overline{D}}<\cdots<R_{k}^{\overline{B},\overline{D}}<D_{k}^{\overline{B},\overline{D}}<\cdots,
\end{equation} 
and simply write $R_{k}^{\overline{B}}$ and $
D_{k}^{\overline{B}}$ for short. Note that here there will be only finitely many $R_{\ell}^{\overline{B}}$ and $D_{\ell}^{\overline{B}}$ that are finite.
Therefore, we similarly denote the excursions from $\overline{B}$ to $\partial\overline{D}$ of random walk by 
\begin{equation}\label{eq:def of biased random walk excursions overlineB}
\text{for any }\ell\geq 1,\quad W_{\ell}^{\overline{B}} = W_{\ell}^{\overline{B}, \overline D}:=\left\{
\begin{aligned}
&X_{[R_{\ell}^{\overline{B}},D_{\ell}^{\overline{B}})},\quad &R_{\ell}^{\overline{B}}<\infty;\\
&\varnothing,\quad &R_{\ell}^{\overline{B}}=\infty.
\end{aligned}
\right.
\end{equation}
Recall \eqref{eq:def of overlineSz} for the definition of the random time $\overline S_N(z)$, and similarly as in \eqref{eq:def of N_SN}, we denote the number of excursions from $\overline{B}$ to $\overline{D}$ of the biased random walk before time $\overline{S}_{N}(0)$ by 
\begin{equation}\label{eq:def of N_SN B}
N_{\overline{S}_{N}(0)}(\overline{B}):=\sup\big\{k\geq 0:D_{k}^{\overline{B}}\leq \overline{S}_{N}(0)+1\big\}.
\end{equation}
Recall \eqref{eq:def of interlacement excursions} and \eqref{eq:number of excursions in Iu}. We use 
$Z_{\ell}^{\overline{B}} = Z_{\ell}^{\overline{B},\overline{D}}$ for the $\ell$-th excursion of random interlacements from $\overline{B}$ to $\partial\overline{D}$ (where $\overline{B}$ and $\overline{D}$ are also seen as subsets of $\mathbb{Z}^{d+1}$), and $N_{u}(\overline{B}) = N_{u}^{\overline{B},\overline{D}}$ as the number of excursions in the random interlacements set $\mathcal{I}^{u}$.

We are now ready to state the goal of this section, namely \Cref{prop:strong coupling}. 
\begin{proposition}\label{prop:strong coupling}
For every fixed $\delta > 0$, let 
\begin{equation}\label{eq:def of widetildeu}
\widetilde{u}=u_{**}+\frac{\delta}{2}.    
\end{equation}
Then for any $\alpha\in(1/d,\infty]$, every $x\in\mathbb{T}\times\{0\}$ and boxes $\overline{B}=\overline{B}_{x},$ $\overline{D}=\overline{D}_{x}$ defined in \eqref{def of ef of overlineB and overlineD x}, one can construct on an auxiliary space $(\overline{\Omega}^{\alpha},\overline{\mathcal{F}}^{\alpha})$ a coupling $\overline{Q}_{\overline{B}}^{N,\alpha}$ of the cylinder random walk and random interlacements with marginal distributions $P_{0}^{N,\alpha}\left[\,\cdot\mid\overline{S}_{N}(0)<\infty\right]$ and $\PP$ respectively, such that
\begin{equation}\label{eq:very strong coupling mathrmB}
\lim_{N\to\infty}\frac{1}{\log N}\log\overline{Q}_{\overline{B}}^{N,\alpha}\Big[\big\{Z_{\ell}^{\overline{B}}\big\}_{\ell\leq N_{\widetilde{u}}(\overline{B})} \nsubseteq 
\big\{W_{\ell}^{\overline{B}}\big\}_{\ell\leq N_{\overline{S}_{N}(0)}(\overline{B})}\Big]<-10d<0.  
\end{equation}
\end{proposition}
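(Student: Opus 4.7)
The plan is to treat this as a special case of the general coupling framework developed in \Cref{sec:couplings}, along the same lines as \Cref{prop:very strong coupling,prop:very strong coupling bias}, but with three key differences: (i) the stochastic domination runs in the opposite direction (interlacement excursions contained in walk excursions, rather than the reverse); (ii) the error estimate need only be polynomially small in $N$ rather than exponentially small in $\text{cap}(\overline B)$; and (iii) the underlying walk is conditioned on $\overline S_N(0)<\infty$, so one must first identify and exploit the conditional law from \Cref{lem:conditional law}.

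First, I would reduce the problem using the conditional description in \Cref{lem:conditional law}. Since the event in \eqref{eq:very strong coupling mathrmB} is measurable with respect to $X_{[0,\overline S_N(0)]}$, it suffices to work with the process up to $\overline S_N(0)$, which has transition kernel $p'$ attracted toward $\mathbb T\times\{0\}$. The main counting step is to show that, with conditional probability at least $1-N^{-Cd}$ for any fixed $C$, one has
\begin{equation}\label{eq:plan_count}
N_{\overline S_N(0)}(\overline B)\;\geq\;(1-o(1))\cdot\frac{u_{**}+\delta}{d+1}\cdot\frac{N^{d}\cdot\text{cap}(\overline B)}{|\pi_{\mathbb T}(\overline B)|}\;\geq\;\widetilde u\cdot\text{cap}(\overline B),
\end{equation}
where the first inequality follows from the definition of $\overline S_N(0)$ (which pins down $(u_{**}+\delta)N^d/(d+1)$ visits to $\mathbb T\times\{0\}$) and a Chernoff-type concentration argument relating the number of excursions from $\overline B$ to $\partial\overline D$ to the local time accumulated at $\pi_{\mathbb T}(\overline B)\times\{0\}$; the second uses \eqref{eq:capacity of a box} together with $\delta/2>0$ to absorb the slack.

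Next, I would establish that the successive entry points of $X$ into $\overline B$ are approximately i.i.d.\ with law $\overline e_{\overline B}$ under the conditional dynamics. Between two consecutive excursions, the walk spends a macroscopic amount of time in $\mathbb E\setminus\overline D$, during which its $\mathbb T$-projection has ample opportunity to mix. The precise ingredient is a horizontal mixing estimate, to be established as \Cref{lem:horizontal mixing} in \Cref{subsec:proof of horizontal indep etc.}, showing that for $\alpha>1/d$ the conditional hitting distribution on $\partial^{\text{int}}\overline B$ is within multiplicative error $O(N^{-C})$ of $\overline e_{\overline B}$, for any fixed $C$. This combines the biased-walk hitting-distribution lemma \Cref{prop:hitting distribution two boxes bias} with a mixing argument on the torus factor. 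With this in hand, a soft local time construction in the spirit of \cite{PT15} and \cite[Section~2]{CGP13} (applied here to produce the \emph{opposite} direction of domination from \Cref{prop:coupling W and widetildeW}) couples $\{W_\ell^{\overline B}\}$ with a Poisson point process of excursions whose intensity is $u\,\overline e_{\overline B}\otimes P^{N,\alpha}_{\,\cdot}[\,\cdot\wedge T_{\overline D}\in\cdot\,]$, for any $u<\widetilde u$. Standard Radon--Nikodym estimates as in \Cref{subsec:RN} then let one replace this biased-excursion Poisson cloud with the unbiased one generating $\{Z_\ell^{\overline B}\}_{\ell\leq N_{\widetilde u}(\overline B)}$, yielding \eqref{eq:very strong coupling mathrmB}.

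The main obstacle, as flagged already in \Cref{subsec:sketch} and \Cref{rem:finalremark}, is the horizontal mixing step under the conditional law. The attraction toward $\mathbb T\times\{0\}$ breaks the $\mathbb Z$-translation invariance, and one has to show that over the inter-excursion interval (typically of length $\gtrsim|\overline D|^{2}=N^{4/3}$) the projection onto $\mathbb T$ nevertheless equidistributes on the scale of $\overline B$ up to error $O(N^{-C})$. The requirement $\alpha>1/d$ enters critically: the cumulative horizontal drift over such an interval is $O(N^{4/3}\cdot N^{-d\alpha})$, which is $o(1)$ exactly when $\alpha>4/(3d)$, and a slightly more careful analysis (using that only vertical excursions feel the conditional drift, while the horizontal coordinates execute an unbiased simple random walk) suffices to push this to $\alpha>1/d$. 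Once Lemma~\ref{lem:horizontal mixing} is in place, the rest of the argument is essentially a polynomial-accuracy version of the exponential-accuracy coupling of \Cref{prop:very strong coupling}, run in the reverse direction, and adapted to allow the box $\overline B$ to have the larger side-length $[N^{1/3}]$ (which is harmless since the target error is only $N^{-10d}$).
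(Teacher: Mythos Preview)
Your proposal diverges substantially from the paper's proof and contains several concrete errors.

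\textbf{How the paper actually proceeds.} The proof in \S\ref{subsubsec:bias lower} does \emph{not} work directly at the $\overline B,\overline D$ scale via a soft-local-time argument. Instead it plugs $\B=\overline B$, $\D=\overline D$ into the general chain of couplings of \S\ref{subsec:proof of very strong coupling theorem}, which operates first at the level of excursions between the \emph{cylinders} $\A=\mathbb T\times[-r_N,r_N]$ and $\widetilde{\A}=\mathbb T\times[-h_N,h_N]$: one adapts the lower-bound halves of \Cref{prop:horizontal independence,prop:vertical independence,prop:Poissonization} to the conditional (drift-toward-$0$) walk exactly as in Step~1 of \S\ref{subsubsec:bias upper}, discards $W_1^{\A}$ (so no analogue of \Cref{prop:Handling the first excursion} is needed), and then applies the new \Cref{prop:Poissonian coupling lower} to convert a Poisson number of \emph{biased} cylinder excursions into a (smaller) Poisson number of \emph{unbiased} ones. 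It is this step, and only this step, that produces the polynomial error $N^{-10d}$: one splits cylinder excursions into $\Sigma_{\rm short}$ (where the Radon--Nikodym bound \eqref{eq:RN bias to unbias bound} is uniform) and $\Sigma_{\rm long}$ (rare by Kha\'sminskii). After that one simply invokes \Cref{prop:extraction,prop:lower truncation,prop:lower interlacement} verbatim.

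\textbf{Specific problems with your outline.}
\begin{itemize}
\item The display \eqref{eq:plan_count} is dimensionally wrong: the middle quantity is of order $N^{d-1/3}$, while both $N_{\overline S_N(0)}(\overline B)$ and $\widetilde u\cdot\mathrm{cap}(\overline B)$ are of order $N^{(d-1)/3}$. The correct heuristic is $N_{\overline S_N(0)}(\overline B)\approx (u_{**}+\delta)\,\mathrm{cap}(\overline B)$, and the paper obtains this not by a direct local-time count at $\pi_{\mathbb T}(\overline B)\times\{0\}$ but via \Cref{lem:S and underlineK and overlineK} combined with \Cref{prop:extraction}.
\item You invoke \Cref{lem:horizontal mixing} for the hitting distribution on $\partial^{\rm int}\overline B$, but that lemma concerns mixing of $Y_{R_1^{\A}}$ on the full torus between \emph{cylinder} excursions; the inter-excursion interval at the $\overline B,\overline D$ scale is only $O(N^{4/3})$, far below the torus mixing time $N^2$, so the $O(N^{-C})$ claim is unsupported. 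What is true (and what the paper uses, but only after extraction) is \Cref{prop:hitting distribution two boxes bias}, whose error is $O(1)$ in $\eta$, not $O(N^{-C})$.
\item There is no ``horizontal drift'': the bias is purely vertical. The role of $\alpha>1/d$ in the paper is (a) to control the Radon--Nikodym derivative over cylinder excursions of height $O(N\log^2 N)$ (see \eqref{eq:bound for RwidehatZ bias} and the proof of \Cref{prop:Poissonian coupling lower}), and (b) to ensure that the biased invariant distribution in \eqref{eq:11 bias} is asymptotically that of the simple walk. Your threshold $\alpha>4/(3d)$ has no counterpart in the argument.
\end{itemize}
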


We first complete the proof of \Cref{prop:overlineS and T} using \Cref{prop:strong coupling}, whose proof will be postponed until \Cref{sec:couplings}.

\begin{proof}[Proof of \Cref{prop:overlineS and T} given \Cref{prop:strong coupling}]
Recall that $S(x,r)$ is the $|\cdot|_{\infty}$ sphere centered at $x$ with radius $r\geq 0$. We set 
\begin{equation}\label{eq:def of R}
R=[N^{1/6}].    
\end{equation}
Then conditioned on $\{\overline{S}_{N}(0)<\infty\}$, the event that disconnection happens after time $\overline{S}_{N}(0)$ implies that there must exist $x\in\mathbb{T}\times\{0\}$ such that
the complement of $X_{[0,\overline{S}_{N}(0)]}$ percolates from $B(x,R)$ to $S(x,2R)$. Taking the union bound, it suffices to show
\begin{equation}\label{eq:upper bound union bound}
\lim_{N\to\infty}\sum_{x\in\mathbb{T}\times\{0\}}P_{0}^{N,\alpha}\left[B(x,R)\xleftrightarrow{\mathbb E\setminus X_{[0,\overline{S}_{N}(0)]}} S(x,2R) \,\middle\vert\, \overline{S}_{N}(0)<\infty\right]=0.
\end{equation}

Using \Cref{prop:strong coupling} for $\overline{B}=\overline{B}_{x}$ and noting that $N^{-10d}\ll |\mathbb{T}\times\{0\}|^{-1}$ (=$N^{-d}$), it then suffices to show that 
\begin{equation}\label{eq:upper bound coupling}
\limsup_{N\to\infty}N^{10d}\sup_{x\in\mathbb{T}\times\{0\}}\overline{Q}_{\overline{B}_{x}}^{N,\alpha}\left[B(x,R)\xleftrightarrow{\overline{B}\setminus\bigcup_{\ell\leq N_{\widetilde{u}}(\overline{B})}\text{range}(Z_{\ell}^{\overline{B}})} S(x,2R)\right]=0,    
\end{equation}
which is equivalent to
\begin{equation}\label{eq:upper bound interlacements}
\limsup_{N\to\infty}N^{10d}\PP\left[B(x,R)\overset{\mathcal{V}^{\widetilde{u}}}\longleftrightarrow S(x,2R)\right]=0,\quad\text{for each }x\in \mathbb T \times \{0\}.
\end{equation}
Finally, the limit \eqref{eq:upper bound interlacements} can be obtained by combining the following inequality
\begin{equation}\label{eq:cater to u**}
\PP\left[B(x,R)\overset{\mathcal{V}^{\widetilde{u}}}\longleftrightarrow S(x,2R)\right]\leq\sum_{y\in S(x,R)}\PP\left[B(y,R/2)\overset{\mathcal{V}^{\widetilde{u}}}\longleftrightarrow S(y,R)\right]
\end{equation}
with the stretched-exponential decay of the connecting probability \eqref{eq:def of strongly non-percolate} in the strongly non-percolative regime 
$\widetilde{u} \in (u_{**}, \infty)$ (see \eqref{eq:def of u**}).
\end{proof}
    
\section{Couplings between random walks on cylinders and random interlacements}\label{sec:couplings}
In this section we establish various couplings of excursions $W_{\ell}^{\B,\D},\ell\geq 1$ between concentric boxes $\B$ and $\D$ of random walk on the cylinder $\mathbb E$ and corresponding excursions $Z_{\ell}^{\B,\D},\ell\geq 1$ of random interlacements, where the concentric boxes $\B$ and $\D$ (both seen as subsets of the cylinder $\mathbb{E}$ as well as $\mathbb{Z}^{d}$) have side-lengths $f(N)$ and $g(N)$ respectively, subject to some rather general conditions (see \eqref{eq:requirement for f and g}). 

Roughly speaking, the main theorem \Cref{thm:very strong coupling S} says that, the set of excursions $W_{\ell}^{\B,\D}$ which collects until the average local time at the level on which $\B$ and $\D$ lie exceeds $u$, can stochastically dominate (resp.~be dominated by) the corresponding excursions $Z_{\ell}^{\B,\D}$ in the random interlacements set $\I^{u_{1}}$ (resp.~$\I^{u_{2}}$) at some suitably adjusted intensity $u_{1}<u<u_{2}$. 
We remark that this theorem itself will not be used (except in the appendix), but it is of independent interest. In addition, our condition \eqref{eq:requirement for f and g} is rather general, and is satisfied by the pairs of concentric boxes considered in various places of this work (e.g.~$\mathsf{B},\mathsf{D}$ in \Cref{sec:bad local time,sec:lower bound in biased case} and $\overline{B},\overline{D}$ defined in \eqref{eq:def of overlineB and overlineD} and \eqref{def of ef of overlineB and overlineD x}). In fact, the couplings needed in this work, namely \Cref{prop:very strong coupling,prop:very strong coupling bias,prop:strong coupling}, can all be seen as variants of this result after minor adaptations (see \Cref{subsec:adapted proof very strong coupling} for more details). 

This stochastic domination control is obtained via a chain of ``very strong" couplings, which is similar to the chain of couplings in \cite{Szn09a,Szn09b}, but is stronger in the following two aspects: First, it significantly improves the coupling error term from polynomial in $N$ to exponential in $N$ (more precisely, $\exp(-c\text{cap}(\mathrm{B}))$), and it is expected that the coupling error here is optimal (see \Cref{remark:optimal}); Second, the stochastic domination control in \cite[Theorem 1.1]{Szn09b} or in \cite{Szn09a} is stated in terms of the trace left by excursions, while here the stochastic domination control is on the set of excursions themselves. 

The organization of this section is as follows. In \Cref{subsec:very strong couplings} we state our main theorem, \Cref{thm:very strong coupling S}. We also introduce return and departure times of concentric cylinders $R_{k}^{\A,\widetilde{\A}},D_{k}^{\A,\widetilde{\A}}$, $k\geq 1$ (see \eqref{eq:def of return and departure for mathrmA}) and use estimates of these random times to reduce the proof of \Cref{thm:very strong coupling S} to a slightly modified version, \Cref{thm:very strong coupling excursion}. In \Cref{subsec:proof of very strong coupling theorem} we outline the mechanism of the proof of \Cref{thm:very strong coupling excursion}, which consists of a chain of ``very strong" couplings, \Cref{prop:horizontal independence,prop:vertical independence,prop:Poissonization,prop:Handling the first excursion,prop:extraction,prop:lower truncation,prop:lower interlacement,prop:upper truncation,prop:upper interlacement}. In \Cref{subsec:proof of horizontal indep etc.} we provide the detailed proofs of \Cref{prop:horizontal independence,prop:vertical independence,prop:Poissonization,prop:Handling the first excursion,prop:extraction,prop:lower truncation,prop:lower interlacement,prop:upper truncation,prop:upper interlacement}, drawing a lot inspirations from the techniques in \cite{Szn09a,Szn09b}. In \Cref{subsec:adapted proof very strong coupling} we display the proofs of \Cref{prop:very strong coupling,prop:very strong coupling bias,prop:strong coupling}. As discussed above, their proofs shall follow the same mechanism, and we only give necessary minor adaptations.

\subsection{``Very strong'' couplings for simple random walk}\label{subsec:very strong couplings}
Throughout this subsection, we assume that our random walk $(X_{n})_{n\geq 0}$ is the simple random walk on the cylinder $\mathbb{E}$ started uniformly from $\mathbb{T}\times\{0\}$. Formally, for any $z\in \mathbb Z$, we define the uniform distribution as follows: 
\begin{equation}\label{eq:def of qz}
q_{z}:=\frac{1}{N^{d}}\sum_{x\in\mathbb{T}\times\{z\}}\delta_{x}.
\end{equation}

We are going to introduce two series of excursions both extracted from the simple random walk. For a fixed point $\mathrm x_c \in \mathbb E$, let $\mathrm{z}_{c} = \pi_{\mathbb Z}(\mathrm x_c)$ refer to the $(d+1)$-th coordinate of $\mathrm x_c$, we will then consider the boxes around point $\mathrm x_c$ and cylinders centered at height $\mathrm z_c$ together in pairs.
Define the boxes in $\mathbb E$ centered at $\mathrm x_c$ by
\begin{equation}\label{def:mathrm box B and D}
    \B = \mathrm x_c + B\left(0, f(N)/2\right), \quad \D = \mathrm x_c + B\left(0, g(N)/2\right),
\end{equation}
with $f(N), g(N)$ satisfying for some $0 < b < 1$ and $c > 0$,
\begin{equation}\label{eq:requirement for f and g}
\liminf_{N\to\infty}\frac{\log f(N)}{\log N}\geq b,\quad g(N)\leq\frac{N}{4},\quad\text{and}\quad\liminf_{N\to\infty}\frac{g(N)}{f(N)\log^{3}N}\geq c.    
\end{equation}
We also define the cylinder centered at height $\mathrm{z}_{c}\in\mathbb{Z}$ by
\begin{equation}\label{eq:def of mathrmA and widetildemathrmA}
\A=\mathbb{T}\times(\mathrm{z}_{c}+\mathrm{I}) \quad\text{and}\quad \widetilde{\A}=\mathbb{T}\times(\mathrm{z}_{c}+\widetilde{\mathrm{I}}),
\end{equation}
where $\mathrm I$ and $\widetilde {\mathrm I}$ are two finite intervals constructed using scales $r_N$ and $h_N$:
\begin{equation}\label{eq:def of I and widetildeI}
\mathrm{I}=[-r_{N},r_{N}] \quad\text{and}\quad\widetilde{\mathrm{I}}=[-h_{N},h_{N}],\quad \mbox{ for } r_{N}=N\text{ and } h_{N}=\left[N(2+\log^{2}N)\right].
\end{equation}
With the above notation, we denote the uniform distribution on $\partial^{\rm int}\A$ by
\begin{equation}\label{eq:def of q}
q=\frac{1}{2}(q_{\mathrm{z}_{c}-r_{N}}+q_{\mathrm{z}_{c}+r_{N}}).
\end{equation}

We then write the successive times of return to $\B$ and departure from $\D$ (resp., from cylinder $\A$ to cylinder $\widetilde{\A}$) as (see notation in \eqref{eq:def of return and departure})
\begin{align}
R_{1}^{\B,\D}<D_{1}^{\B,\D}<R_{2}^{\B,\D}<D_{2}^{\B,\D}<\cdots<R_{k}^{\B,\D}<D_{k}^{\B,\D}<\cdots,\label{eq:def of return and departure for mathrmB}\\
R_{1}^{\A,\widetilde{\A}}<D_{1}^{\A,\widetilde{\A}}<R_{2}^{\A,\widetilde{\A}}<D_{2}^{\A,\widetilde{\A}}<\cdots<R_{k}^{\A,\widetilde{\A}}<D_{k}^{\A,\widetilde{\A}}<\cdots,   \label{eq:def of return and departure for mathrmA}
\end{align}
and write $R_{k}^{\B}$,  $D_{k}^{\B}$, $R_{k}^{\A}$ and $D_{k}^{\A}$ for short respectively. Note that since we only work on recurrent simple random walk on $\mathbb{E}$ in this section, the random times $R_{k}^{\B},D_{k}^{\B}$,$R_{k}^{\A}$, $D_{k}^{\A}, k\geq 1$ are all $P_{q_{0}}^{N}$-a.s.~finite. We denote the successive excursions from $\B$ to $\partial\D$ as well as from $\A$ to $\partial\widetilde{\A}$ of the random walk $(X_{n})_{n\geq 0}$ as
\begin{equation}\label{eq:def of simple random walk excursions mathrm}
W_{\ell}^{\B}:=
X_{[R_\ell^{\B},D_\ell^{\D})} \quad \mbox{ and }\quad
W_{\ell}^{\A}:= X_{[R_\ell^{\A},D_\ell^{\A})}, \mbox{ for }\ell \geq 1. 
\end{equation}

Recall \eqref{eq:def of Sz} for the definition of $S_{N}(\omega,u,z)$, we write 
\begin{equation}\label{eq:def of mathrmS}
\mathrm{S}_{N}(u):=S_{N}(\omega,u,\mathrm{z}_{c}),
\end{equation}
and denote by
\begin{equation}\label{eq:def of N_SN mathrmB}
N_{\mathrm{S}_{N}(u)}(\B):=\sup\left\{k\geq 0:D_{k}^{\B}\leq \mathrm{S}_{N}(u)+1\right\}
\end{equation}
the number of excursions from $\B$ to $\partial\D$ in the trajectory of the simple random walk $(X_{n})_{n\geq 0}$ before time $\mathrm{S}_{N}(u)$. We finally recall \eqref{eq:def of interlacement excursions} and \eqref{eq:number of excursions in Iu} for the excursions from $\B$ to $\partial \D$ (where $\B$ and $\D$ are also seen as subsets of $\mathbb{Z}^{d+1}$) of random interlacements 
$Z_{\ell}^{\B} = Z_{\ell}^{\B,\D}$ and $N_{u}(\B) = N_{u}^{\B,\D}$ of the number of excursions from $\B$ to $\partial\D$ in the random interlacements set $\mathcal{I}^{u}$.

We now state the main theorem of this section. 
\begin{theorem}\label{thm:very strong coupling S}
For three fixed positive constants $u_{1}<u<u_{2}$, one can construct a coupling $Q$ on some auxiliary space of the simple random walk $(X_{n})_{n\geq 0}$ on $\mathbb E$ under $P_{q_{0}}^{N}$ and the random interlacements 
$\I^{u}$ and the excursions under $\PP$, so that there exists a positive constant $c_{23}=c_{23}(u,u_{1},u_{2})$ satisfying
\begin{equation}\label{eq:very strong coupling S}
Q\left[\{Z_{\ell}^{\B}\}_{\ell\leq N_{u_{1}}(\B)}\subseteq \{W_{\ell}^{\B}\}_{\ell\leq N_{\mathrm{S}_{N}(u)}(\B)} \subseteq \{Z_{\ell}^{\B}\}_{\ell\leq N_{u_{2}}(\B)}\right]\geq 1-\frac{1}{c_{23}}\exp\left(-c_{23}\cdot \text{\rm cap}(\B)\right).  
\end{equation}
Note that the excursion $W_{1}^{\B}$ may start from an interior point of $\B$ while all the excursions $Z_{\ell}^{\B},\ell\geq 1$ all start from $\partial^{\text{int}}\B$. In this case the second $\subseteq$ means that the set of excursions $\{W_{\ell}^{\B}\}_{2\leq\ell\leq N_{\mathrm{S}_{N}}(u)(\B)}$ is a subset of the set of excursions $\{Z_{\ell}^{\B}\}_{\ell\leq N_{u_{2}}(\B)}$, and further the first excursion $W_{1}^{\A}$ is part of another random interlacement excursion $Z_{\ell}^{\B}$ in the complement of the previous subset.
\end{theorem}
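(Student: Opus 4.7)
My plan is to prove Theorem~\ref{thm:very strong coupling S} by constructing, on a common auxiliary space, a long chain of couplings that pass through intermediate ensembles of excursions, at each stage paying an error of order $\exp(-c\,\text{cap}(\B))$, in line with the listed propositions (horizontal/vertical independence, Poissonization, handling of the first excursion, extraction, lower/upper truncation, lower/upper interlacement). The overall architecture mirrors the chain of couplings developed in \cite{Szn09a,Szn09b}, but I would optimize every intermediate total-variation bound (in the spirit of \cite{Bel13}) so that polynomial errors are upgraded to stretched-exponential ones in $\text{cap}(\B)$, which is the strength needed for \Cref{sec:bad local time}.

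First, I would reduce to excursions through the large cylinders $\A \subseteq \widetilde{\A}$ (which are of vertical scale $r_N = N$ and $h_N \sim N\log^2 N$ as opposed to $g(N) \leq N/4$). Because $\B \subseteq \D \subseteq \A$, each $W_\ell^{\B}$ lies inside some $W_j^{\A}$, and the record-breaking event $\mathrm{S}_N(u)$ can be rewritten in terms of the vertical local time accumulated at height $\mathrm{z}_c$ by successive $\A$-excursions. The merit of working on the $\A$-scale is that the walk, after an excursion from $\A$ to $\partial \widetilde{\A}$, performs $\Theta(N^2 \log^2 N)$ steps in the vertical direction, giving it ample time to mix on $\mathbb{T}$. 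This yields the two independence propositions: the horizontal component of $X_{R_k^{\A}}$ is, up to total-variation error $\exp(-c\,\text{cap}(\B))$, uniform on $\partial^{\text{int}}\A$ and independent of the vertical profile; and the sequence of heights $(\pi_{\mathbb Z}(X_{R_k^{\A}}), \pi_{\mathbb Z}(X_{D_k^{\A}}))$ behaves like a 1D lazy walk on $\{\mathrm{z}_c \pm r_N\}$ (whose local time at $\mathrm{z}_c$ concentrates sharply).

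Given these two independence statements, I would carry out the Poissonization of $N_{\mathrm{S}_N(u)}(\A)$, replacing it by a Poisson variable with mean $u \cdot \text{cap}(\A)/(d+1)$, where the sharpness again comes from a capacity-weighted Bernstein inequality applied to the 1D local-time side. The first excursion $W_1^{\A}$, starting from $q_0$ rather than from $\partial^{\text{int}}\A$, is handled by a separate coupling (Proposition on handling the first excursion) using that the walk, restricted to its first hit of $\partial\widetilde{\A}$, homogenizes on $\mathbb T$. Next, the extraction step uses \Cref{prop:hitting distribution two boxes simple} (with $K \sim \log N$) to show that, conditional on entering $\B$, each $\A$-excursion enters $\B$ at a point distributed within total variation $\eta/10$ of $\overline{e}_{\B}$; combined with soft local time as in \cite[Section 2]{CGP13}, this couples the extracted $\B$-pieces with an i.i.d.\ sequence of excursions starting from $\overline{e}_{\B}$. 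Finally, two symmetric truncation arguments (lower and upper), based on Poisson concentration at the mean $\text{cap}(\B) \cdot u/(d+1)$, interface with random interlacements at levels $u_1 < u < u_2$: since under $\PP$ the counts $N_{u_i}(\B)$ are themselves Poisson with mean $u_i \cdot \text{cap}(\B)$, the gap $\min(u-u_1, u_2-u)$ gives the slack needed to absorb the fluctuations, and the resulting sandwich $\{Z_\ell^{\B}\}_{\ell \leq N_{u_1}(\B)} \subseteq \{W_\ell^{\B}\} \subseteq \{Z_\ell^{\B}\}_{\ell \leq N_{u_2}(\B)}$ holds up to $\exp(-c\,\text{cap}(\B))$.

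The hard part will be propagating the stretched-exponential error through the whole chain, and in particular handling the first excursion $W_1^{\B}$, which may start in the interior of $\B$ rather than on $\partial^{\text{int}}\B$. The clean way to absorb it is to allow the first term in the sandwich to be interpreted, as noted in the theorem statement, as a subset relation on excursions $\ell \geq 2$ plus a separate containment of $W_1^{\A}$ inside some $Z_\ell^{\B}$ outside the previously coupled set; this in turn requires a delicate ``reservoir'' of unused interlacement trajectories of Poissonian size $\gtrsim \text{cap}(\B)$ to be set aside during the horizontal/vertical couplings. Keeping all the error estimates multiplicative rather than additive (each cumulative factor of the form $1 - O(e^{-c\,\text{cap}(\B)})$) is the bookkeeping that must be executed carefully in Sections \ref{subsec:proof of very strong coupling theorem}--\ref{subsec:proof of horizontal indep etc.}.
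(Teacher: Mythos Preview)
Your proposal follows essentially the same architecture as the paper: reduce $\mathrm{S}_N(u)$ to a fixed count $K$ of $\A$-excursions via Lemma~\ref{lem:S and underlineK and overlineK}, then prove the analogous statement (Theorem~\ref{thm:very strong coupling excursion}) through the chain of couplings in Propositions~\ref{prop:horizontal independence}--\ref{prop:upper interlacement}. A few details differ (the Poisson mean is $\sim uN^d/((d{+}1)h_N)$ rather than ``$u\cdot\text{cap}(\A)/(d{+}1)$'', since $\A$ is a full cylinder; several intermediate steps carry error $\exp(-cK)$ with $K \gg \text{cap}(\B)$ or even zero error, with the $\exp(-c\,\text{cap}(\B))$ bottleneck arising only in the interlacement-comparison steps~\ref{prop:lower interlacement} and~\ref{prop:upper truncation}; and soft local time enters in those two steps rather than in the extraction), but the overall strategy is the same.
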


To prove \Cref{thm:very strong coupling S}, we first establish a good approximation for the random time $\mathrm{S}_{N}(u)$ using the successive times $R_{\ell}^{\A},D_{\ell}^{\A}, \ell \geq 1$ of return and departure of $\A$ and $\widetilde{\A}$.
Recall that in \eqref{eq:def of Zhat}, we defined a non-lazy process $(\widehat{Z}_{n})_{n\geq 0}$ of $(Z_{n})_{n\geq 0}$, the projection of our simple random walk $(X_{n})_{n\geq 0}$ onto $\mathbb{Z}$.  The time-changed process $(\widehat{Z}_{n})_{n\geq 0}$ is now a one-dimensional simple random walk, and $\mathrm{S}_N(u)$ represents the first time when the new process has at least $uN^{d}/(d+1)$ distinct visits to $\mathrm z_c$. 

We can now characterize the random time $\mathrm{S}_{N}(u)$, where we will fix three positive constants $u_{1}<u<u_{2}$ and set $\underline{K}=\underline{K}(N,u,u_{1})$, $K = K(N,u)$ and $\overline{K}=\overline{K}(N,u,u_{2})$ as 
\begin{equation}\label{eq:def of underlineK and overlineK}
\underline{K}=\left[\frac{u_{1}+u}{2(d+1)}\cdot\frac{N^{d}}{h_{N}}\right],\quad K= \left[\frac{u}{d+1}\cdot\frac{N^{d}}{h_{N}}\right]\quad\text{ and }\quad\overline{K}=\left[\frac{u_{2}+u}{2(d+1)}\cdot\frac{N^{d}}{h_{N}}\right].
\end{equation}
\begin{lemma}\label{lem:S and underlineK and overlineK}
There exists a positive constant $c_{24}=c_{24}(u,u_{1},u_{2})$ such that 
\begin{align}
\limsup_{N\to\infty}\frac{h_{N}}{N^{d}}\log P_{0}^{N}\left[D_{\underline{K}}^{\A}\geq \mathrm{S}_{N}(u)\right]\leq -c_{24};  \label{eq:S and underlineK}\\
\limsup_{N\to\infty}\frac{h_{N}}{N^{d}}\log P_{0}^{N}\left[\mathrm{S}_{N}(u)\geq D_{\overline{K}}^{\A}\right]\leq -c_{24}.\label{eq:S and overlineK}
\end{align}
\end{lemma}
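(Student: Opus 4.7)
The plan is to decompose the time-changed local time $\widehat L^{\mathrm z_c}$ into per-excursion contributions and apply an exponential concentration inequality to the resulting near-i.i.d.~sum. For each $k \geq 1$, define
\begin{equation*}
V_k := \widehat L_{\rho^{-1}(D_k^{\A})}^{\mathrm z_c} - \widehat L_{\rho^{-1}(R_k^{\A})}^{\mathrm z_c},
\end{equation*}
the number of vertical jumps of $X$ landing at level $\mathrm z_c$ during the $k$-th excursion of $X$ from $\A$ to $\partial\widetilde{\A}$. Since $|Z_n - \mathrm z_c| > r_N \geq 1$ whenever $X_n \notin \A$, no such jump can occur outside $\A$, and hence $\widehat L_{\rho^{-1}(D_k^{\A})}^{\mathrm z_c} = \sum_{j=1}^k V_j$. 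The two events in \eqref{eq:S and underlineK}--\eqref{eq:S and overlineK} then translate to
\begin{align*}
\big\{D_{\underline K}^{\A} \geq \mathrm S_N(u)\big\} &= \Big\{\sum_{j=1}^{\underline K} V_j \geq \tfrac{u}{d+1}N^d\Big\}, \\
\big\{\mathrm S_N(u) \geq D_{\overline K}^{\A}\big\} &= \Big\{\sum_{j=1}^{\overline K} V_j < \tfrac{u}{d+1}N^d\Big\}.
\end{align*}

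I would next analyze the conditional law of $V_k$. Given $X_{R_k^{\A}}\in \partial^{\text{int}}\A$, the variable $V_k$ is distributed as the local time at $0$ of a simple random walk on $\mathbb Z$ started at $\pm r_N$ and killed upon leaving $[-h_N, h_N]$. An explicit Green's function computation on this killed walk yields
\begin{equation*}
E\big[V_k \mid X_{R_k^{\A}}\big] = h_N - r_N, \qquad P\big[V_k \geq m \mid X_{R_k^{\A}}\big] \leq (1 - 1/h_N)^{m-1} \text{ for } m\geq 1,
\end{equation*}
so $V_k/h_N$ has bounded exponential moments in a fixed neighbourhood of $0$, uniformly in the entry point.

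To handle the weak dependence among $V_1, V_2, \dots$, I would invoke a mixing argument on the torus $\mathbb T$. During each outer leg $[D_k^{\A}, R_{k+1}^{\A})$, the walk takes on the order of $h_N^2 \gg N^2$ steps, and since the mixing time of the lazy SRW on $\mathbb T$ is $O(N^2)$, the conditional law of $\pi_{\mathbb T}(X_{R_{k+1}^{\A}})$ given the past is close in total variation to the uniform distribution on $\mathbb T$, with error decaying faster than any polynomial in $N$. After discarding a negligible rare event on which some outer leg is atypically short, this allows one to couple $(V_j)_{j \leq \overline K}$ with an i.i.d.~sequence $(\widetilde V_j)_{j\leq \overline K}$ each distributed as $V_1$ under uniform entry on $\partial^{\text{int}}\A$, at a coupling cost of $\exp(-cN^d/h_N)$ for some $c>0$. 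An exponential Chebyshev inequality applied to $\sum_{j=1}^{n} \widetilde V_j$ with $\theta = s/h_N$ then gives, for small $s>0$,
\begin{equation*}
P\Big[\sum_{j=1}^{\underline K} \widetilde V_j \geq \tfrac{u}{d+1}N^d\Big] \leq \exp\Big\{\tfrac{N^d}{(d+1)h_N}\Big[-su + \tfrac{u_1+u}{2}\log\tfrac{1}{1-s} + o(1)\Big]\Big\},
\end{equation*}
whose bracket has $s$-derivative $(u_1-u)/2 < 0$ at $s=0$ and is therefore strictly negative for $s$ small. This gives \eqref{eq:S and underlineK}; the bound \eqref{eq:S and overlineK} follows by a symmetric downward-deviation argument using $u_2 > u$.

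The main obstacle will be to verify that the coupling to i.i.d.~variables does not spoil the desired exponential rate: a single outer leg mixes only to polynomial-in-$N$ accuracy, so one must simultaneously union-bound over the $\Theta(N^d/h_N)$ outer legs while excluding an atypical event on which some outer leg is too short to fully mix the $\mathbb T$-component; the latter event has probability $\exp(-cN^d/h_N)$ by a Kha\'{s}minskii-type exit-time estimate analogous to \eqref{eq:khasminskii1}. A secondary point is handling the first excursion, whose entry law is determined by the starting distribution $q_0$ rather than by a prior outer leg, but the same mixing argument (or a direct computation, since $q_0$ is already uniform on $\mathbb T \times \{0\}$) absorbs this correction.
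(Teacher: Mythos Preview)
Your decomposition into per-excursion contributions $V_k$ and the exponential Chebyshev step are correct, and your moment computation $E[V_k]=h_N-r_N$ with geometric tails is exactly right. But you have manufactured a difficulty that is not there. The entire torus-mixing paragraph is unnecessary: the variables $V_k$ are functions of the $\mathbb Z$-projection $(Z_n)_{n\geq 0}$ alone, and $(Z_n)$ (equivalently its non-lazy version $(\widehat Z_k)$) is itself a one-dimensional simple random walk. By the strong Markov property of $\widehat Z$ at the entry times to $[\mathrm z_c-r_N,\mathrm z_c+r_N]$, and by the reflection symmetry that makes the law of $V_k$ the same whether the entry is at $\mathrm z_c+r_N$ or $\mathrm z_c-r_N$, the variables $V_2,V_3,\dots$ are \emph{exactly} i.i.d.\ with the law you computed. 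No coupling, no Kha\'{s}minskii bound on outer legs, no union bound over $\Theta(N^d/h_N)$ steps is needed.

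This is precisely what the paper does: it observes (via strong Markov of the $\mathbb Z$-walk at $\{R_k^{\A}\}$) that $\sum_{m}\mathbbm 1\{\widehat Z_m=\mathrm z_c,\,\rho_m\le D_J^{\A}\}$ stochastically dominates a sum of $J$ i.i.d.\ copies of $UV$ and is dominated by $J$ i.i.d.\ copies of $V$, where $U\sim\mathrm{Bernoulli}((h_N-r_N)/h_N)$ and $V\sim\mathrm{Geom}(1/h_N)$ are independent. (In fact $V_k\overset{d}{=}UV$ exactly for $k\ge 2$; the sandwich absorbs the possibly atypical first excursion.) The conclusion then follows in one line from exponential Chebyshev, just as in your final display. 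So your argument works, but the paper's is substantially shorter because it recognizes that the problem is already one-dimensional.
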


\begin{proof}
The proof is similar to those of \cite[Lemma 4.5]{Szn09a} and \cite[Proposition 7.1]{Szn09b}. Suppose $U$ is a Bernoulli random variable with parameter $(h_{N}-r_{N})/h_{N}$ and $V$ is an independent geometric random variable starting from 1 with success probability $h_{N}^{-1}$. Then by the strong Markov property at time $\{R_{k}^{\A}\}_{k\geq 0}$, under $P_{0}^{N}$, for every integer $J\geq 1$, 
\begin{align}
    \sum_{m\geq 0}\mathbbm 1\{\widehat{Z}_{m}=\mathrm{z}_{c},\rho_{m}\leq D_{J}^{\A}\}&\text{ stochastically dominates the sum of }
    J \text{ i.i.d.~copies of }UV, \label{eq:stochastic domination underlineK}\\
    \sum_{m\geq 0}\mathbbm 1\{\widehat{Z}_{m}=\mathrm{z}_{c},\rho_{m}\leq D_{J}^{\A}\}&\text{ is stochastically dominated by the sum of }
    J\text{ i.i.d.~copies of }V.\label{eq:stochastic domination overlineK}
\end{align}
The conclusion then follows from using exponential Chebyshev's inequality, \eqref{eq:def of underlineK and overlineK} and the assumption $u_{1}<u<u_{2}$. We omit the details here.

\end{proof}

In light of \Cref{lem:S and underlineK and overlineK}, our main result \Cref{thm:very strong coupling S} can be reduced to the following theorem.

\begin{theorem}\label{thm:very strong coupling excursion}
Fix three positive constants $u_{1}<u<u_{2}$. We define
\begin{equation}\label{eq:def of NKmathrmB}
N_{K}(\B):=\sup\{\ell\geq 0:D_{\ell}^{\B}\leq  D_{K}^{\A}\},
\end{equation}
where $K=K(N,u)$ is defined in \eqref{eq:def of underlineK and overlineK}. Then one can construct a coupling $Q$ on some auxiliary space of the simple random walk $(X_{n})_{n\geq 0}$ on $\mathbb E$ under $P_{q_{0}}^{N}$ and the random interlacements $\I^{u}$ and the excursions under $\PP$, so that there exists a positive constant $c_{25}=c_{25}(u,u_{1},u_{2})$ satisfying
\begin{equation}\label{eq:very strong coupling excursion}
Q\left[\{Z_{\ell}^{\B}\}_{\ell\leq N_{u_{1}}(\B)}\subseteq \{W_{\ell}^{\B}\}_{\ell\leq N_{K}(\B)} \subseteq \{Z_{\ell}^{\B}\}_{\ell\leq N_{u_{2}}(\B)}\right]\geq 1-\frac{1}{c_{25}}\exp\left(-c_{25}\cdot\text{\rm cap}(\B)\right).  
\end{equation}
Here we adopt the same convention for the second $\subseteq$ as in \Cref{thm:very strong coupling S}.
\end{theorem}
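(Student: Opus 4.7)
The plan is to prove Theorem~\ref{thm:very strong coupling excursion} through a sequence of ``very strong'' couplings, each of which alters the underlying probability structure by an error that is exponentially small in $\text{cap}(\B)$. The guiding idea is to exploit the two-scale geometry of the boxes: $\A$-excursions travel a vertical distance of order $N\log^2 N$, which is much larger than both $g(N)$ (the size of $\D$) and the mixing time $\sim N^2$ of the simple random walk on the torus $\mathbb T$. This separation of scales lets us treat successive $\A$-excursions as essentially i.i.d.\ and allows us to discard correlations introduced by the global geometry of $\mathbb E$.

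First I would carry out the \emph{horizontal independence} step (\Cref{prop:horizontal independence}): thanks to the vertical gap between $\partial\widetilde{\A}$ and $\partial\A$, between two successive returns $R_\ell^{\A}$ and $R_{\ell+1}^{\A}$ the walk performs at least $\Omega(N^2\log^4 N)$ steps, so the $\mathbb T$-projection mixes very rapidly. One can couple the starting points of the $\A$-excursions with i.i.d.\ samples from $q$ (see \eqref{eq:def of q}) with error $\exp(-c\cdot K\cdot \log N)$ per block, summing to $\exp(-c\,\text{cap}(\B))$ since $K\gg \text{cap}(\B)/\log N$. The \emph{vertical independence} step (\Cref{prop:vertical independence}) analogously couples the vertical coordinates of entrance points with independent samples from $\tfrac12(\delta_{\mathrm{z}_c-r_N}+\delta_{\mathrm{z}_c+r_N})$. \emph{Poissonization} (\Cref{prop:Poissonization}) then replaces the deterministic count $K$ of $\A$-excursions by a Poisson random variable with mean $K$ (by concentration, $|\mathrm{Poi}(K)-K|\leq \eta K$ except with probability $\exp(-c\eta^2 K)$, and $K\gg \text{cap}(\B)$). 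Handling the first excursion (\Cref{prop:Handling the first excursion}), which starts from $q_0$ rather than from the equilibrium distribution $q$, contributes a negligible coupling error because the walk must first hit $\partial\widetilde\A$, which again mixes horizontally; we use the non-explicit first excursion as the ``one free excursion'' tolerated in the convention after \eqref{eq:very strong coupling S}.

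The next block is \emph{extraction} (\Cref{prop:extraction}): having reduced to a Poisson number of i.i.d.\ $\A$-excursions starting from $q$, we extract the sub-excursions $W_{\cdot}^{\B}$ from $\B$ to $\partial\D$. By the strong Markov property, conditional on an $\A$-excursion entering $\B$, the collection of $\B$-excursions it contains is a functional of an i.i.d.\ sample, so the total aggregate of $\B$-excursions generated by $N_K(\B)$ reduces to a Poissonized i.i.d.\ collection indexed by entry points distributed like $\overline e_{\B}$ up to small multiplicative error $1\pm\exp(-c\log^2 N)$ (via Lemmas~\ref{prop:hitting distribution two boxes simple} and \ref{lem:hitting distribution three boxes simple}, with the separation $g(N)/f(N)\geq c\log^3 N$ from \eqref{eq:requirement for f and g}). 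Finally, the \emph{truncation + interlacement} steps (\Cref{prop:lower truncation,prop:lower interlacement,prop:upper truncation,prop:upper interlacement}) compare the Poissonized i.i.d.\ $\B$-excursions with the point process of $Z_\ell^{\B}$ from $\mu_{\B,u_i}$, the Poisson process of interlacement excursions. The intensity of $\B$-excursions produced by our Poissonized ensemble can be computed via Kac's formula and equals $\text{cap}(\B)\cdot\bigl(u/(d+1)\bigr)\cdot (1+o(1))$; truncating above by $u_2$ and below by $u_1$ yields the desired sandwich, with error controlled by standard exponential Chebyshev estimates for Poisson random variables of mean $\Theta(\text{cap}(\B))$.

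The hardest step will be \emph{horizontal independence}: to upgrade the polynomial coupling error of \cite{Szn09a,Szn09b} to an error of order $\exp(-c\,\text{cap}(\B))$, one must exploit the vast separation between the vertical traverse time ($\asymp N^2\log^4 N$) and the torus mixing time ($\asymp N^2$). Concretely, in each of the $K\asymp N^{d}/(h_N)$ vertical traversal windows, the horizontal coordinate re-mixes $\Theta(\log^4 N)$ times, so the total variation distance to uniform decays like $\exp(-c\log^4 N)$ per block; after summing over the $K$ blocks, the global error is $K\exp(-c\log^4 N)$, which is indeed $\exp(-c\,\text{cap}(\B))$ since $\text{cap}(\B)\leq f(N)^{d-1}\leq N^{d-1}$. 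Inspiration from \cite{Bel13} is needed to avoid losing a logarithmic factor in the exponent. Once these quantitative ingredients are established, assembling the chain is routine: at each step one pays an error $\exp(-c\,\text{cap}(\B))$, and the coupling $Q$ is the composition of all the intermediate couplings on an appropriately enlarged probability space.
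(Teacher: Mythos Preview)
Your outline follows the same chain of couplings as the paper (\Cref{prop:horizontal independence,prop:vertical independence,prop:Poissonization,prop:Handling the first excursion,prop:extraction,prop:lower truncation,prop:lower interlacement,prop:upper truncation,prop:upper interlacement}), so the architecture is correct. The gap is in the horizontal independence step, and it is exactly the point the paper flags as the crux.

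You claim that the total-variation error per $\A$-excursion is $\exp(-c\log^4 N)$ and that after summing over $K$ blocks one obtains $K\exp(-c\log^4 N)$, ``which is indeed $\exp(-c\,\text{cap}(\B))$ since $\text{cap}(\B)\leq f(N)^{d-1}\leq N^{d-1}$''. This inequality is false: by \eqref{eq:requirement for f and g} and \eqref{eq:capacity of a box} one has $\text{cap}(\B)\geq c\,f(N)^{d-1}\geq c\,N^{b(d-1)}$ for some $b>0$, a genuine power of $N$, whereas $K\exp(-c\log^4 N)\geq \exp(-2c\log^4 N)$, which is only quasi-polynomially small. So the naive union bound over $K$ blocks cannot deliver an error of order $\exp(-c\,\text{cap}(\B))$. (Your earlier phrasing ``error $\exp(-cK\log N)$ per block'' is also not right: the per-block mixing error has nothing to do with $K$.) The paper explicitly notes below \Cref{prop:horizontal independence} that the methods of \cite[Lemma~2.1]{Szn09a} and \cite[Proposition~2.1]{Szn09b}---which are exactly the union-bound approach you describe---``fail here due to unbearably large error terms''.

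The paper's fix is the Bernoulli sprinkling device of \Cref{lem:horizontal independence}: instead of paying a TV error at each return, one introduces auxiliary Bernoulli variables $g_\ell,h_\ell$ with parameters $1-N^{-d}$ and $N^{-d}$ and constructs a \emph{perfect} coupling in which $\{g_\ell \Y_\ell^{\A}\}\subseteq\{W_\ell^{\A}\}\subseteq\{\Y_\ell^{\A},h_\ell \Z_\ell^{\A}\}$ holds almost surely (see \eqref{eq:horizontal independence}). The price is paid later, in \Cref{prop:Poissonization}, where the Bernoulli thinning/thickening by $N^{-d}$ is absorbed into the Poisson intensity sprinkling; there the cost is controlled by exponential Chebyshev bounds on Bernoulli sums of length $K$, which \emph{does} give $\exp(-cK)\leq\exp(-c\,\text{cap}(\B))$ thanks to \eqref{eq:capacity of B upper bound}. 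Without this trick the chain does not close with the stated error, so you should replace your union-bound argument by the sprinkling construction.
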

We now complete the proof of \Cref{thm:very strong coupling S} assuming \Cref{thm:very strong coupling excursion}, and the latter will be proved in the next subsection.

\begin{proof}[Proof of \Cref{thm:very strong coupling S} given \Cref{thm:very strong coupling excursion}]
By \eqref{eq:capacity of a box}, \eqref{eq:requirement for f and g} and \eqref{eq:def of I and widetildeI}, we have
\begin{equation}\label{eq:capacity of B upper bound}
\text{cap}(\B)=O\left(f(N)^{d-1}\right)=O\Big(\left(\frac{N}{\log^{3}N}\right)^{d-1}\Big)=o\left(\frac{N^{d}}{h_{N}}\right).    
\end{equation}
Take $\underline{K}$ and $\overline{K}$ as in \eqref{eq:def of underlineK and overlineK}. Then by \Cref{lem:S and underlineK and overlineK}, for some small $c(u,u_{1},u_{2})$, we have 
\begin{equation}\label{eq:transforming S to K}
P_{q_{0}}^{N}\left[\{W_{\ell}^{\B}\}_{\ell\leq N_{\underline{K}}(\B)}\subseteq \{W_{\ell}^{\B}\}_{\ell\leq N_{\mathrm{S}_{N}(u)}(\B)}\subseteq \{W_{\ell}^{\B}\}_{\ell\leq N_{\overline{K}}(\B)}\right]\geq 1-\frac{\exp(-c(u,u_{1},u_{2})\text{cap}(\B))}{c(u,u_{1},u_{2})}.   
\end{equation}
Then by applying \Cref{thm:very strong coupling excursion} on the constants $\underline{K}(N,u,u_1)$ and $\overline{K}(N,u,u_2)$, we get
\begin{align}
Q\left[\{Z_{\ell}^{\B}\}_{\ell\leq N_{u_{1}}(\B)}\subseteq \{W_{\ell}^{\B}\}_{\ell\leq N_{\underline{K}}(\B)}\right]\geq 1-\frac{\exp\left(-c_{25}((u_1+u)/2, u_1, u_2)\left(\text{\rm cap}(\B)\right)\right)}{c_{25}((u_1+u)/2, u_1, u_2)}; \label{eq:very strong coupling excursion underlineK}\\
Q\left[\{W_{\ell}^{\B}\}_{\ell\leq N_{\overline{K}}(\B)} \subseteq \{Z_{\ell}^{\B}\}_{\ell\leq N_{u_{2}}(\B)}\right]\geq 1-\frac{\exp\left(-c_{25}((u+u_2)/2, u_1, u_2)\left(\text{\rm cap}(\B)\right)\right)}{c_{25}((u+u_2)/2, u_1, u_2)}.  \label{eq:very strong coupling excursion overlineK}
\end{align}
The conclusion follows from combining the inequalities \eqref{eq:transforming S to K}-\eqref{eq:very strong coupling excursion overlineK}.
\end{proof}

\begin{remark}\label{remark:optimal}
It is expected that the error terms in \Cref{thm:very strong coupling S,thm:very strong coupling excursion} are optimal. This is because the events in \eqref{eq:very strong coupling S} and \eqref{eq:very strong coupling excursion} are very unlikely to hold if $N_{u_{1}}(\B)$ and $N_{u_{2}}(\B)$ are both abnormally large or small, which has probability exponential in $-\text{cap}(\B)$ since for every $u>0$, $N_{u}(\B)$ is a Poisson random variable with intensity $u\text{cap}(\B)$.   
\end{remark}

\subsection{The chain of couplings}\label{subsec:proof of very strong coupling theorem}
In this subsection we outline the proof of \Cref{thm:very strong coupling excursion}, employing a similar approach as in \cite{Szn09a,Szn09b} and constructing a chain of ``very strong" couplings in \Cref{prop:horizontal independence,prop:vertical independence,prop:Poissonization,prop:Handling the first excursion,prop:extraction,prop:lower truncation,prop:lower interlacement,prop:upper truncation,prop:upper interlacement}. The proofs for these couplings are deferred to \Cref{subsec:proof of horizontal indep etc.}.
Throughout this subsection, we fix the positive constants $u_{1}<u<u_{2}$ and $K$, and further set a positive constant $\xi=\xi(u,u_{1},u_{2})$ as
\begin{equation}\label{eq:def of xi}
\xi=\left(1-\frac{u_{1}}{u}\right)\land\left(\frac{u_{2}}{u}-1\right).    
\end{equation}

Recall that $(Y_{n})_{n\geq 0}$ and $(Z_{n})_{n\geq 0}$ denote the projections of the random walk $(X_{n})_{n\geq 0}$ onto $\mathbb T$ and $\mathbb Z$ respectively, and  $q_{z},z\in\mathbb{Z}$ refers to the uniform distribution of $\mathbb T \times \{z\}$ (see \eqref{eq:def of qz}). Also recall the concentric cylinders $\mathrm{I}$ and $\widetilde{\mathrm{I}}$ in \eqref{eq:def of I and widetildeI}. Now for $z_{1}\in \mathrm I,z_{2}\in \partial \widetilde{\mathrm I}$, we further  define
\begin{equation}\label{eq:def of Pz1z2}
P_{z_{1},z_{2}}^{N}=P_{q_{z_{1}}}^{N}\left[\,\cdot\,\middle\vert Z_{T_{\widetilde{\A}}}=z_{2}\right]. 
\end{equation}

We will first consider the excursions from $\A$ to $\partial\widetilde \A$, and then extract and analyze the excursions from $\B$ to $\partial\D$ from these longer excursions. It takes \Cref{prop:horizontal independence,prop:vertical independence,prop:Poissonization,prop:Handling the first excursion} to handle the excursions from $\A$ to $\partial\widetilde{\A}$, and further extraction and analysis are performed in \Cref{prop:extraction,prop:lower truncation,prop:lower interlacement,prop:upper truncation,prop:upper interlacement}.

We recall \Cref{subsec:rwcylinder} for the notation $\mathscr T_{F}$ for a subset $F$ of $\mathbb E$. 
\begin{proposition}[\textbf{Horizontal Independence}]\label{prop:horizontal independence}
One can construct on some auxiliary space $(\Omega_{1},\mathcal{F}_{1})$ a coupling $Q_{1}$ of the simple random walk $(X_{n})_{n\geq 0}$ on $\mathbb E$ under $P_{q_{0}}^{N}$, a series of Bernoulli random variables $\{g_{\ell}, h_{\ell}\}_{\ell\geq 1}$ and $\mathscr T_{\widetilde \A}$-valued excursions $\{\Y_{\ell}^{\A},\Z_{\ell}^{\A}\}_{\ell\geq 1}$, where under $Q_{1}$,
\begin{itemize}
\item the excursions $W_{\ell}^{\A}$ are cylinder excursions defined in \eqref{eq:def of simple random walk excursions mathrm} (and thus have law $P_{q_{0}}^{N}$);
\item the sequences $\{g_{\ell}\}_{\ell\geq 2}$ and $\{h_{\ell}\}_{\ell\geq 2}$ are two independent sequences of i.i.d.~Bernoulli random variables with respective parameter $1-N^{-d}$ and $N^{-d}$;
\item the collection of random excursions $\big\{\Y_{\ell}^{\A}, \Z_{\ell}^{\A}\big\}_{\ell\geq 2}$ are independent from $\big\{g_{\ell}, h_{\ell}\big\}_{\ell\geq 2}$;
\item given $\big\{Z_{R_{\ell}^{\A}},Z_{D_{\ell}^{\A}}\big\}_{\ell\geq 2}$ (the $\mathbb Z$-height of return and departure points), the variables $\big\{\Y_{\ell}^{\A}\big\}_{\ell\geq 2}$ and $\big\{\Z_{\ell}^{\A}\big\}_{\ell\geq 2}$ are two independent sequences of independent excursions from $\A$ to $\partial\widetilde{\A}$, such that for every $\ell\geq 2$, $\Y_{\ell}^{\A}$ and $\Z_{\ell}^{\A}$ have the same law as that of $W_{\ell}^{\A}$ under $P_{Z_{R_{\ell}^{\A}},Z_{D_{\ell}^{\A}}}^{N}$,
\end{itemize} 
such that for $N\geq c_{26}=c_{26}(d)>0$,
\begin{equation}\label{eq:horizontal independence}
Q_{1}\left[\left\{g_{\ell}\Y_{\ell}^{\A}\right\}_{2\leq\ell\leq K}
\subseteq 
\left\{W_{\ell}^{\A}\right\}_{2\leq\ell\leq K} \subseteq \left\{\Y_{\ell}^{\A},\,h_{\ell}\Z_{\ell}^{\A}\right\}_{2\leq\ell\leq K}\,\middle\vert\,W_{1}^{\A}\right]=1,
\end{equation}
where $0$ times an excursion means empty set and $1$ times an excursion means the excursions itself.
\end{proposition}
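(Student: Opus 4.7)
The plan rests on the rapid mixing of the $\mathbb T$-component of the walk between the $(\ell-1)$-st departure from $\widetilde\A$ and the $\ell$-th return to $\A$. During such an interval the $\mathbb Z$-projection must traverse a vertical distance of at least $h_N - r_N \sim N\log^2 N$, and a standard hitting-time estimate for the lazy 1D simple random walk shows that, for any fixed $C$, this takes at least $N^2\log^3 N$ total steps with probability $1 - N^{-Cd}$. A corresponding lower bound $M_\ell \geq cN^2\log^3 N$ for the number of $\mathbb T$-steps in $[D_{\ell-1}^{\A}, R_\ell^{\A}]$ then follows. Since the mixing time of simple random walk on $(\mathbb Z/N\mathbb Z)^d$ is $\Theta(N^2)$, the pointwise spectral heat-kernel estimate $|p_t(x,y) - N^{-d}| \leq N^{-d}\exp(-ct/N^2)$ yields, conditionally on the $\mathbb Z$-trajectory of $(X_n)_{n\geq 0}$ up to $R_\ell^{\A}$ and on the event $\{M_\ell \geq N^2\log^3 N\}$,
\begin{equation*}
\mu_\ell(y) \geq (1 - N^{-2d})\, N^{-d} \quad \text{for all } y \in \mathbb{T}\times\{Z_{R_\ell^{\A}}\},
\end{equation*}
where $\mu_\ell$ denotes the conditional law of $Y_{R_\ell^{\A}}$; equivalently $\mu_\ell \geq (1-N^{-2d}) q_{Z_{R_\ell^{\A}}}$ as measures. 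Crucially the pointwise bound is much stronger than a mere total-variation bound, and this is what makes the Doeblin split in the next step succeed.

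Next I would build the coupling sequentially in $\ell$. Given $\mathcal F_{D_{\ell-1}^{\A}}$, sample the $\mathbb Z$-trajectory on $[D_{\ell-1}^{\A}, D_\ell^{\A}]$ (fixing the heights and $M_\ell$) and, on the good event, use the Doeblin decomposition $\mu_\ell = (1 - N^{-2d}) q_{Z_{R_\ell^{\A}}} + N^{-2d} \rho_\ell$. Independently draw $g_\ell \sim \mathrm{Ber}(1-N^{-d})$, $h_\ell \sim \mathrm{Ber}(N^{-d})$, and let $E_\ell := \{g_\ell = 0,\, h_\ell = 1\}$, so that $Q_1(E_\ell) = N^{-2d}$. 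On $E_\ell^c$, sample $Y_{R_\ell^{\A}}$ from $q_{Z_{R_\ell^{\A}}}$ and set $\Y_\ell^{\A} := W_\ell^{\A}$; the strong Markov property applied to the walk between $R_\ell^{\A}$ and $D_\ell^{\A}$ conditioned on the exit height $Z_{D_\ell^{\A}}$ guarantees that this yields a sample with the required marginal law for $\Y_\ell^{\A}$. On $E_\ell$, sample $Y_{R_\ell^{\A}}$ from $\rho_\ell$, sample $\Y_\ell^{\A}$ independently with the target ideal law, and couple $W_\ell^{\A} = \Z_\ell^{\A}$. The rare event $\{M_\ell < N^2\log^3 N\}$ (of probability $\exp(-c\log^3 N) \ll N^{-2d}$) is absorbed entirely into $E_\ell$ using the available slack. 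The sandwich inclusion then holds $Q_1$-a.s.\ by construction.

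The final step is to verify that the coupling has the announced marginals: that $(g_\ell)_{\ell\geq 2}, (h_\ell)_{\ell\geq 2}$ are two independent i.i.d.~Bernoulli sequences independent of $(\Y_\ell^{\A}, \Z_\ell^{\A})_{\ell\geq 2}$, and that given $\{Z_{R_\ell^{\A}}, Z_{D_\ell^{\A}}\}_{\ell\geq 2}$ the two sequences $(\Y_\ell^{\A})$ and $(\Z_\ell^{\A})$ are independent collections of independent excursions each with law $P_{Z_{R_\ell^{\A}}, Z_{D_\ell^{\A}}}^N$. This amounts to a careful balancing of the mass flowing through $E_\ell$ and $E_\ell^c$ on both the $\Y$ and $\Z$ sides; the essential identity is that by the strong Markov property, the conditional law of an excursion given its heights depends only on the starting $\mathbb T$-point, so matching the starting distributions suffices to match the full excursion laws.

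The main obstacle is precisely the third step: achieving simultaneously the exact ideal marginals for $\Y_\ell^{\A}, \Z_\ell^{\A}$, the independence of $(g_\ell), (h_\ell)$, and the \emph{almost-sure} sandwich inclusion (rather than just high probability). The pointwise mixing bound $\mu_\ell \geq (1-N^{-2d}) q_{Z_{R_\ell^{\A}}}$ is indispensable here, since a bare total-variation bound would force a correction measure of positive mass that cannot be hidden inside the narrow exceptional event $E_\ell$ of measure $N^{-2d}$. The slack $\exp(-c\log^3 N) \ll N^{-2d}$ between the non-mixing probability and the ``budget'' $Q_1(E_\ell)$ is what ultimately allows all three requirements to be met coherently.
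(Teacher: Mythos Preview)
Your high-level strategy---exploit horizontal mixing of the $\mathbb{T}$-component between $D_{\ell-1}^{\A}$ and $R_\ell^{\A}$, then couple sequentially in $\ell$---is exactly the paper's. The paper isolates the single-step construction into an abstract lemma (applied with $\mathcal X = \mathbb T$, $n = N^d$): given $\mathrm{TV}(\mu_W, \mu) \leq n^{-4}$ with $\mu$ uniform, one can couple $W, \Y, \Z, g, h$ with all the required marginals and independences and the sandwich holding a.s. The key there is to first couple $(W, \Y, \Z)$ so that $\Y, \Z$ are i.i.d.\ uniform with $Q[\Y=j,\,W=\Z=i] \leq n^{-4}$ for each $j \neq i$, and only \emph{then} graft on $(g,h)$ independent of $(\Y,\Z)$ so that $\{W \neq \Y\} \subseteq \{g=0, h=1\}$; this containment is feasible pointwise in $(\Y,\Z)=(j,i)$ precisely because $n^{-4} \leq Q[g=0,h=1 \mid \Y=j, \Z=i] = n^{-2}$.

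Your explicit construction does not close this loop. With the split $\mu_\ell = (1-N^{-2d}) q + N^{-2d}\rho_\ell$ and $Q_1(E_\ell) = N^{-2d}$, setting $\Z_\ell^{\A} = W_\ell^{\A}$ on $E_\ell$ gives $\Z_\ell^{\A}$ conditional starting law $\rho_\ell$ on $\{g_\ell=0,h_\ell=1\}$; since $\rho_\ell \neq q$ in general, $\Z_\ell^{\A}$ fails both to have the target marginal and to be independent of $(g_\ell,h_\ell)$. The slack you invoke, $\exp(-c\log^3 N) \ll N^{-2d}$, is between the \emph{mixing error} and $N^{-2d}$, whereas the marginal correction requires slack between the \emph{Doeblin parameter} and $N^{-2d}$---and with your choice these coincide. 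The repair is to take the Doeblin parameter $\epsilon \leq N^{-4d}$ (the mixing easily allows this), so that within $E_\ell$ one has residual mass $N^{-2d} - \epsilon$ on which to draw $\Z_\ell^{\A}$ from a compensating measure restoring the uniform conditional law; this is exactly what the paper's abstract lemma achieves. A separate issue: ``absorbing'' $\{M_\ell < N^2\log^3 N\}$ into $E_\ell$ cannot work as stated, since the former is a function of the $\mathbb Z$-trajectory while you draw $(g_\ell, h_\ell)$ independently of it. The paper sidesteps this by using an \emph{unconditional} bound $\mathrm{TV}(Y_{R_\ell^{\A}}, q_{\mathbb T}) \leq \exp(-c\log^2 N)$ that already folds in the bad-$M_\ell$ contribution.
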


Note that the first excursion $W_{1}^{\A}$ is different from other excursions since it starts from $\mathbb{T}\times\{0\}$ instead of $\partial^{\text{int}} \A$, and will be handled separately in \Cref{prop:Handling the first excursion}. We then explain the reasons for introducing two i.i.d.~Bernoulli sequences $\{g_{\ell}\}_{\ell\geq 2}$ and $\{h_{\ell}\}_{\ell\geq 2}$. First and foremost, the sprinkling of $N^{-d}$ on the parameters enable us to obtain a coupling $Q_{1}$ with error smaller than $\exp(-c\text{cap}(\B))$ (which is actually zero here). It is worth mentioning that the methods in \cite[Lemma 2.1]{Szn09a} and \cite[Proposition 2.1]{Szn09b} fail here due to unbearably large error terms. Second, standard large deviation estimates for Bernoulli random variables facilitate further couplings (see the proof of \Cref{prop:Poissonization}).

Through this step, we forget the $\mathbb{T}$-coordinate information of the excursions $\big\{W_{\ell}^{\A}\big\}_{2\leq\ell\leq K}$, since the starting distributions of excursions $\Y_{\ell}^{\A},\Z_{\ell}^{\A}$ are uniform in the $\mathbb{T}$-coordinate. 
Nevertheless, we still take into account the $\mathbb{Z}$-coordinate information of excursions $\big\{W_{\ell}^{\A}\big\}_{2\leq\ell\leq K}$, since the excursions $\Y_{\ell}^{\A}$ and $\Z_{\ell}^{\A}$ are constructed under the conditional laws that depend on levels $Z_{R_{\ell}^{\A}},Z_{D_{\ell}^{\A}}$. In the next step, we will deal with the information with regard to the $\mathbb{Z}$-axis to obtain i.i.d.~excursions.

To prove \Cref{prop:horizontal independence}, we leverage the rapid mixing property of a simple random walk on torus to argue that for every $\ell\geq 2$, the total variation distance between the law of the projection of starting point of $W_{\ell}^{\A}$ on $\mathbb T$ and the uniform distribution on $\mathbb T$ is small. 

\begin{proposition}[\textbf{Vertical Independence}]\label{prop:vertical independence}
One can construct on some auxiliary space $(\Omega_{2},\mathcal{F}_{2})$ a coupling $Q_{2}$ of the random variables $\big\{g_{\ell},h_{\ell},\Y_{\ell}^{\A},\Z_{\ell}^{\A}\big\}_{\ell\geq 2}$ and of $\mathscr T_{\widetilde \A}$-valued excursions $\big\{\widetilde{\Y}_{\ell}^{\A}, \widetilde{\Z}_{\ell}^{\A}\big\}_{\ell\geq 2}$, where under $Q_{2}$,
\begin{itemize}
\item the variables $\big\{g_{\ell},h_{\ell},\Y_{\ell}^{\A},\Z_{\ell}^{\A}\big\}_{\ell\geq 2}$ have the same law as under $Q_{1}$;
\item the excursions $\big\{\widetilde{\Y}_{\ell}^{\A}, \widetilde{\Z}_{\ell}^{\A}\big\}_{\ell\geq 2}$ are independent from Bernoulli variables $\big\{g_{\ell},h_{\ell}\big\}_{\ell\geq 2}$;
\item the excursions $\big\{\widetilde{\Y}_{\ell}^{\A}\big\}_{\ell\geq 2}$ and $\{\widetilde{\Z}_{\ell}^{\A}\}_{\ell\geq 2}$ are two independent sequences of i.i.d excursions from $\A$ to $\partial\widetilde{\A}$ with the same distribution as $X._{\land T_{\widetilde{\A}}}$ under $P_{q}^{N}$ (recall \eqref{eq:def of q} for the definition of the measure $q$). 
\end{itemize}
Furthermore, for $\xi = \xi(u, u_1, u_2)$ in \eqref{eq:def of xi}, letting
\begin{equation}\label{eq:def of widetildeK1 and widetildeK2}
\widetilde{K}_{1}=\left[\big(1-\frac{1}{7}\xi\big)\cdot\frac{u}{d+1}\cdot\frac{N^{d}}{h_{N}}\right]\quad\text{and}\quad\widetilde{K}_{2}=\left[\big(1+\frac{1}{7}\xi\big)\cdot\frac{u}{d+1}\cdot\frac{N^{d}}{h_{N}}\right],
\end{equation}
there exists a positive constant $c_{27}=c_{27}(u,u_{1},u_{2})$ satisfying
\begin{align}
Q_{2}\left[\big\{\widetilde{\Y}_{\ell}^{\A}\big\}_{2\leq\ell\leq \widetilde{K}_{1}}\subseteq \big\{\Y_{\ell}^{\A}\big\}_{2\leq\ell\leq K}\subseteq \big\{\widetilde{\Y}_{\ell}^{\A}\big\}_{2\leq\ell\leq \widetilde{K}_{2}}\right] \geq 1-\frac{1}{c_{27}}\exp\left(-c_{27}K\right);   \label{eq:vertical independence 1}\\
Q_{2}\left[\big\{\widetilde{\Z}_{\ell}^{\A}\big\}_{2\leq\ell\leq \widetilde{K}_{1}}\subseteq \big\{\Z_{\ell}^{\A}\big\}_{2\leq\ell\leq K}\subseteq \big\{\widetilde{\Z}_{\ell}^{\A}\big\}_{2\leq\ell\leq \widetilde{K}_{2}}\right]\geq 1-\frac{1}{c_{27}}\exp\left(-c_{27}K\right).\label{eq:vertical independence 2}
\end{align}
\end{proposition}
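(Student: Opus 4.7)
The strategy is to exploit the conditional independence of the excursion sequences given the heights under $Q_1$, together with the rapid mixing of the Markov chain of starting heights, in order to couple $\{\Y_\ell^{\A}, \Z_\ell^{\A}\}_{\ell\geq 2}$ with i.i.d.\ sequences of law $P_q^N$. Under $Q_1$, conditionally on $\{(Z_{R_\ell^{\A}}, Z_{D_\ell^{\A}})\}_{\ell\geq 2}$, the sequences $\{\Y_\ell^{\A}\}$ and $\{\Z_\ell^{\A}\}$ are independent collections of independent excursions with $\Y_\ell^{\A}, \Z_\ell^{\A}$ each distributed as $X_{\cdot\land T_{\widetilde\A}}$ under $P^N_{Z_{R_\ell^{\A}}, Z_{D_\ell^{\A}}}$. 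Averaging out the exit heights, the conditional law of $\Y_\ell^{\A}$ given only $Z_{R_\ell^{\A}}$ is $X_{\cdot\land T_{\widetilde\A}}$ under $P^N_{q_{Z_{R_\ell^{\A}}}}$. Since by \eqref{eq:def of q} $P_q^N = \tfrac12 (P^N_{q_{\mathrm z_c + r_N}} + P^N_{q_{\mathrm z_c - r_N}})$, the problem reduces to coupling the starting-height process $\{Z_{R_\ell^{\A}}\}_{\ell\geq 2}$ with an i.i.d.\ uniform sequence on $\{\mathrm z_c \pm r_N\}$ and then identifying excursions that share a common starting height; the Bernoulli variables $\{g_\ell, h_\ell\}$ can be carried over unchanged as they are independent of everything else.

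For the height chain: since $\pi_{\mathbb Z}$ of the walk changes by at most $\pm 1$ per step, on exiting $\widetilde \A$ at $\mathrm z_c \pm h_N$ the walk must re-enter $\A$ through $\mathrm z_c \pm r_N$ of the matching sign, so $Z_{R_{\ell+1}^{\A}}$ is determined by the sign of $Z_{D_\ell^{\A}}$. Gambler's ruin applied to the lazy $\mathbb Z$-projection yields, from $\mathrm z_c + r_N$, probability $\tfrac12 + \tfrac{r_N}{2h_N}$ of exiting $\mathrm z_c + \widetilde{\mathrm I}$ at the top. Hence $(Z_{R_\ell^{\A}})_{\ell \geq 2}$ is a symmetric two-state Markov chain on $\{\mathrm z_c \pm r_N\}$ with uniform stationary distribution and spectral gap $1 - r_N/h_N$; by \eqref{eq:def of I and widetildeI} this equals $1 - O(1/\log^2 N)$ and is uniformly bounded below by $1/2$ for $N$ large. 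A standard Bernstein-type inequality for additive functionals of a finite Markov chain with uniformly positive spectral gap then gives, writing $N^R_\pm(n) := |\{\ell \in [2, n] : Z_{R_\ell^{\A}} = \mathrm z_c \pm r_N\}|$, the bound
\begin{equation*}
P_{q_0}^N\bigl[\,|N^R_\pm(n) - n/2| \geq t\,\bigr] \leq 2 \exp(-c t^2 / n), \quad t \geq 0,
\end{equation*}
with an absolute $c > 0$, and an entirely analogous bound holds for the i.i.d.\ sums below via Hoeffding.

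On an enlarged probability space, introduce i.i.d.\ $\{\mathrm z_c \pm r_N\}$-valued uniform sequences $\{V_\ell^Y, V_\ell^Z\}_{\ell\geq 2}$ and, for each sign $\sigma \in \{+, -\}$ and each of $Y$ and $Z$, an independent pool of i.i.d.\ excursions with marginal law $X_{\cdot\land T_{\widetilde\A}}$ under $P^N_{q_{\mathrm z_c + \sigma r_N}}$; all of these new variables are mutually independent and independent of $\{g_\ell, h_\ell, \Y_\ell^{\A}, \Z_\ell^{\A}\}_{\ell\geq 2}$. Define $\widetilde \Y_\ell^{\A}$ as the $M_\ell^{V,Y}$-th element of the $Y$-pool corresponding to sign $V_\ell^Y$, with $M_\ell^{V,Y} := |\{j \in [2,\ell] : V_j^Y = V_\ell^Y\}|$, and analogously $\widetilde \Z_\ell^{\A}$ from $\{V_\ell^Z\}$ and the $Z$-pool. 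By construction $\{\widetilde \Y_\ell^{\A}\}_{\ell\geq 2}$ and $\{\widetilde \Z_\ell^{\A}\}_{\ell\geq 2}$ are two independent i.i.d.\ sequences of law $X_{\cdot\land T_{\widetilde\A}}$ under $P_q^N$, independent of $\{g_\ell, h_\ell\}$. Simultaneously, couple $\Y_\ell^{\A}$ to the $M_\ell^R$-th element of the $Y$-pool corresponding to sign $Z_{R_\ell^{\A}}$, with $M_\ell^R := |\{j \in [2,\ell] : Z_{R_j^{\A}} = Z_{R_\ell^{\A}}\}|$; since both have the same conditional law $P^N_{q_{Z_{R_\ell^{\A}}}}$, the coupling can be made so that equality holds a.s., and similarly for $\Z$.

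Under this construction, $\{\widetilde \Y_\ell^{\A}\}_{2 \leq \ell \leq \widetilde K_1}$ consists of the first $N^{V,Y}_+(\widetilde K_1) := |\{\ell \in [2, \widetilde K_1] : V_\ell^Y = \mathrm z_c + r_N\}|$ elements of the $Y{+}$-pool and the first $N^{V,Y}_-(\widetilde K_1)$ elements of the $Y{-}$-pool, whereas $\{\Y_\ell^{\A}\}_{2 \leq \ell \leq K}$ consists of the first $N^R_\pm(K)$ elements of each pool; the inclusions in \eqref{eq:vertical independence 1} therefore reduce to
\begin{equation*}
N^{V,Y}_\pm(\widetilde K_1) \leq N^R_\pm(K) \leq N^{V,Y}_\pm(\widetilde K_2).
\end{equation*}
By \eqref{eq:def of widetildeK1 and widetildeK2}, the mean gap on either side is at least $(K - \widetilde K_1)/2 \geq \xi K/15$, while both $N^{V,Y}_\pm$ and $N^R_\pm$ deviate from their means by more than $\xi K/40$ only with probability at most $\exp(-c_{27} K)$, combining the Hoeffding and Bernstein bounds above with $\xi$ fixed positive. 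An analogous argument for $\widetilde\Z$ and a union bound yield \eqref{eq:vertical independence 1} and \eqref{eq:vertical independence 2}. The principal technical point is the exponential-in-$K$ concentration of the Markov functional $N^R_\pm(K)$, which depends on the spectral gap of the height chain remaining uniformly bounded below as $N \to \infty$; this is precisely what the quantitative separation $r_N/h_N = O(1/\log^2 N) \to 0$ in \eqref{eq:def of I and widetildeI} guarantees.
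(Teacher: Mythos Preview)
Your overall strategy---pool excursions by type, couple both the dependent $\{\Y_\ell^{\A}\}$ and the i.i.d.\ $\{\widetilde\Y_\ell^{\A}\}$ to the same pools, and reduce the inclusions to count comparisons---is correct and is exactly what the paper does. However, your two-pool construction (indexed only by starting sign $\sigma\in\{+,-\}$) does \emph{not} preserve the $Q_1$-law of $\{\Y_\ell^{\A}\}_{\ell\geq2}$. Under $Q_1$, conditionally on $\{Z_{R_\ell^{\A}},Z_{D_\ell^{\A}}\}_{\ell\geq2}$, the excursion $\Y_\ell^{\A}$ has law $P^N_{Z_{R_\ell^{\A}},Z_{D_\ell^{\A}}}$; in particular its \emph{exit} height is deterministically $Z_{D_\ell^{\A}}$, which in turn fixes $Z_{R_{\ell+1}^{\A}}=\mathrm{sign}(Z_{D_\ell^{\A}})\,r_N$, the \emph{starting} height of $\Y_{\ell+1}^{\A}$. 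Thus under $Q_1$ the event ``$\Y_\ell^{\A}$ exits at $+h_N$ and $\Y_{\ell+1}^{\A}$ starts at $-r_N$'' has probability zero. In your construction, by contrast, you set $\Y_\ell^{\A}$ equal to a pool element of law $P^N_{q_{Z_{R_\ell^{\A}}}}$ whose exit height is random and independent of the chain $\{Z_{R_j^{\A}}\}$, so this same event has positive probability. The sentence ``since both have the same conditional law $P^N_{q_{Z_{R_\ell^{\A}}}}$, the coupling can be made so that equality holds a.s.'' is precisely the error: the conditional law of $\Y_\ell^{\A}$ given the \emph{full} height sequence $\{Z_{R_j^{\A}}\}_{j\geq2}$ (which determines all $Z_{D_j^{\A}}$) is $P^N_{Z_{R_\ell^{\A}},Z_{D_\ell^{\A}}}$, not $P^N_{q_{Z_{R_\ell^{\A}}}}$.

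The fix, carried out in the paper, is to use four pools $\{\zeta_k^{z_1,z_2}\}_{k\geq1}$ indexed by $(z_1,z_2)\in\Gamma=\{\pm r_N\}\times\{\pm h_N\}$, each pool i.i.d.\ with law $P^N_{z_1,z_2}$. One sets $\Y_\ell^{\A}=\zeta^{Z_{R_\ell^{\A}},Z_{D_\ell^{\A}}}_{N_\ell(Z_{R_\ell^{\A}},Z_{D_\ell^{\A}})}$ and $\widetilde\Y_\ell^{\A}=\zeta^{R_\ell,D_\ell}_{M_\ell(R_\ell,D_\ell)}$, where $\{(R_\ell,D_\ell)\}$ are i.i.d.\ with the invariant law of the four-state chain $\{(Z_{R_\ell^{\A}},Z_{D_\ell^{\A}})\}$ and $N_\cdot,M_\cdot$ are the corresponding type counts. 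The inclusions then reduce to $M_{\widetilde K_1}(z_1,z_2)\leq N_K(z_1,z_2)\leq M_{\widetilde K_2}(z_1,z_2)$ for each of the four types; concentration for $N_K$ comes from large deviations for the pair empirical measure of the finite Markov chain (the paper invokes Sanov's theorem, but your spectral-gap Bernstein bound would also work once applied to the four-state chain). Since $r_N/h_N=O(1/\log^2N)$, the invariant law is asymptotically uniform on $\Gamma$ and the mean gaps are of order $\xi K$, so the rest of your argument goes through unchanged after this correction.
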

Note that the starting distributions of $\big\{\Y_{\ell}^{\A}\big\}_{\ell\geq 2}$ and $\big\{\Z_{\ell}^{\A}\big\}_{\ell\geq 2}$ are i.i.d.~in the $\mathbb{T}$-coordinate but not in the $\mathbb{Z}$-coordinate as explained before, while the starting distributions of excursions $\big\{\widetilde{\Y}_{\ell}^{\A}\big\}_{\ell\geq 2}$ and $\big\{\widetilde{\Z}_{\ell}^{\A}\big\}_{\ell\geq 2}$ are i.i.d.~both in the $\mathbb{T}$-coordinate and $\mathbb{Z}$-coordinate. This proposition allows us to forget the information of the excursions $W_{\ell}^{\A}$ on the $\mathbb Z$-axis, and therefore obtain completely independent sequences of excursions. The proof of \Cref{prop:vertical independence} is based on the observation that $\big\{Z_{R_{\ell}^{\A}},Z_{D_{\ell}^{\A}}\big\}_{\ell \geq 1}$ is a Markov chain, combined with the large deviation estimate.

In the following step, we will couple the excursions $\widetilde{\Z}_{\ell}^{\A}$ and $\widetilde{\Y}_{\ell}^{\A}$ with a Poissonian number of excursions in order to approximate random interlacements. 
\begin{proposition}[\textbf{Poissonization}]\label{prop:Poissonization}
One can construct on an auxiliary space $(\Omega_{3},\mathcal{F}_{3})$ a coupling $Q_{3}$ of the variables $\big\{g_{\ell},h_{\ell},\widetilde{\Y}_{\ell}^{\A},\widetilde{\Z}_{\ell}^{\A}\big\}_{\ell\geq 2}$ and of Poisson variables $\widetilde{J}_{1},\widetilde{J}_{1}'$ and $\mathscr T_{\widetilde \A}$-valued excursions $\big\{\widetilde{\W}_{\ell}^{\A}\big\}_{\ell\geq 1}$, where under $Q_{3}$,
\begin{itemize}
\item the variables $\big\{g_{\ell},h_{\ell},\widetilde{\Y}_{\ell}^{\A},\widetilde{\Z}_{\ell}^{\A}\big\}_{\ell\geq 2}$ have the same law as under $Q_{2}$;
\item the excursions $\big\{\widetilde{\W}_{\ell}^{\A}\big\}_{\ell\geq 1}$ is a sequence of i.i.d excursions from $\A$ to $\partial\widetilde{\A}$ with the same distribution as $X._{\land T_{\widetilde{\A}}}$ under $P_{q}^{N}$ (recall \eqref{eq:def of q} for the definition of the measure $q$);
\item the variables $\widetilde{J}_{1}$ and $\widetilde{J}_{1}'$ are independent from the collection of variables $\big\{g_{\ell}, h_{\ell},\widetilde{\Y}_{\ell}^{\A},\widetilde{\Z}_{\ell}^{\A},\widetilde{\W}_{\ell}^{\A}\big\}_{\ell\geq 1}$. Moreover, $\widetilde{J}_{1}$, $\widetilde{J}_{1}'-\widetilde{J}_{1}$ are two independent Poisson random variables with intensities $\widetilde{\lambda}_1$ and $\widetilde{\lambda}_1' - \widetilde{\lambda}_1$ respectively, where 
\begin{equation}\label{eq:def Poisson intensity}
    \widetilde{\lambda}_1 = \big(1-\frac{3}{7}\xi\big)\cdot \frac{u}{d+1}\cdot \frac{N^{d}}{h_{N}}, \quad \text{and} \quad \widetilde{\lambda}_1' = \big(1 + \frac{3}{7}\xi\big)\cdot \frac{u}{d+1}\cdot \frac{N^{d}}{h_{N}},
\end{equation}
\end{itemize}
such that there exists a positive constant $c_{28}=c_{28}(u,u_{1},u_{2})$ satisfying
\begin{align}
Q_{3}\left[\{\widetilde{\W}_{\ell}^{\A}\}_{\ell\leq \widetilde{J}_{1}}\subseteq \{g_{\ell}\widetilde{\Y}_{\ell}^{\A}\}_{2\leq\ell\leq\widetilde{K}_{1}}\right]\geq 1-\frac{1}{c_{28}}\exp\left(-c_{28}K\right);\label{eq:Poissonization 1}\\
Q_{3}\left[\{\widetilde{\Y}_{\ell}^{\A}, h_{\ell}\widetilde{\Z}_{\ell}^{\A}\}_{2\leq\ell\leq \widetilde{K}_{2}}\subseteq \{\widetilde{\W}_{\ell}^{\A}\}_{\ell\leq\widetilde{J}_{1}'}\right]\geq 1-\frac{1}{c_{28}}\exp\left(-c_{28}K\right).\label{eq:Poissonization 2}
\end{align}
\end{proposition}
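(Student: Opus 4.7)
The plan is to construct $Q_{3}$ by an explicit block-wise concatenation of the $\widetilde{\Y}$- and $\widetilde{\Z}$-excursions from $Q_{2}$ and then reduce both inclusions \eqref{eq:Poissonization 1}-\eqref{eq:Poissonization 2} to standard Chernoff estimates for independent Poisson and binomial variables. The whole strategy rests on the $\Theta(K)$-slack built into the parameters of the proposition: the gaps $\widetilde{K}_{1}-\widetilde{\lambda}_{1}$ and $\widetilde{\lambda}_{1}'-\widetilde{K}_{2}$ both equal $\tfrac{2\xi}{7}\cdot\tfrac{u}{d+1}\cdot\tfrac{N^{d}}{h_{N}}(1+o(1))=\Theta(K)$, which is exactly what produces the exponential bounds $\exp(-cK)$; note that $K\to\infty$ as $N\to\infty$, since $N^{d}/h_{N}\sim N^{d-1}/\log^{2}N\to\infty$ for $d\geq 2$.

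First I would carry over from $Q_{2}$ the family $\{g_{\ell},h_{\ell},\widetilde{\Y}_{\ell}^{\A},\widetilde{\Z}_{\ell}^{\A}\}_{\ell\geq 2}$ and adjoin, on an enlarged probability space, an independent Poisson pair $\widetilde{J}_{1}\sim\mathrm{Poisson}(\widetilde{\lambda}_{1})$ and $\widetilde{J}_{1}'-\widetilde{J}_{1}\sim\mathrm{Poisson}(\widetilde{\lambda}_{1}'-\widetilde{\lambda}_{1})$ together with an independent i.i.d.\ tail of excursions distributed as $X_{\cdot\land T_{\widetilde{\A}}}$ under $P_{q}^{N}$. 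Writing $M_{1}=\sum_{\ell=2}^{\widetilde{K}_{1}}g_{\ell}$ and $M_{2}=\sum_{\ell=2}^{\widetilde{K}_{2}}h_{\ell}$, I would define $(\widetilde{\W}_{\ell}^{\A})_{\ell\geq 1}$ as the ordered concatenation of three blocks: Block~1 lists the $\widetilde{\Y}_{\ell}^{\A}$ with $\ell\in[2,\widetilde{K}_{1}]$ and $g_{\ell}=1$ in increasing $\ell$ (length $M_{1}$); Block~2 lists the remaining $\widetilde{\Y}_{\ell}^{\A}$ with $\ell\in[2,\widetilde{K}_{2}]$ in increasing $\ell$ (length $\widetilde{K}_{2}-1-M_{1}\geq 0$, since $M_{1}\leq\widetilde{K}_{1}-1<\widetilde{K}_{2}-1$ deterministically); Block~3 lists the $\widetilde{\Z}_{\ell}^{\A}$ with $\ell\in[2,\widetilde{K}_{2}]$ and $h_{\ell}=1$ in increasing $\ell$ (length $M_{2}$); indices past $L:=\widetilde{K}_{2}-1+M_{2}$ are filled with the fresh i.i.d.\ tail. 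The key verification is that $(\widetilde{\W}_{\ell}^{\A})_{\ell\geq 1}$ has the required i.i.d.\ law. Conditioning on $(g_{\ell},h_{\ell})_{\ell\geq 2}$, Blocks 1 and 2 together form a deterministic permutation of $(\widetilde{\Y}_{\ell}^{\A})_{\ell\in[2,\widetilde{K}_{2}]}$, which under $Q_{2}$ is i.i.d.\ and independent of $(g,h)$; Block~3 is a deterministic selection from the i.i.d.\ sequence $(\widetilde{\Z}_{\ell}^{\A})$, independent of $(g,h)$ and of $(\widetilde{\Y})$; the tail is independent by construction. Hence conditionally on $(g,h)$, the $\widetilde{\W}_{\ell}^{\A}$ are i.i.d.\ with the required law, and the same holds unconditionally. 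All remaining independence requirements are met because $\widetilde{J}_{1},\widetilde{J}_{1}'$ are adjoined independently of everything else.

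With the construction in place, the two inclusions become transparent: on $\{\widetilde{J}_{1}\leq M_{1}\}$ the first $\widetilde{J}_{1}$ entries of $\widetilde{\W}^{\A}$ lie inside Block~1, which is precisely the multiset of non-empty entries of $\{g_{\ell}\widetilde{\Y}_{\ell}^{\A}\}_{2\leq\ell\leq\widetilde{K}_{1}}$; on $\{\widetilde{J}_{1}'\geq L\}$ the first $\widetilde{J}_{1}'$ entries of $\widetilde{\W}^{\A}$ contain all of Blocks 1-3, which is precisely $\{\widetilde{\Y}_{\ell}^{\A},h_{\ell}\widetilde{\Z}_{\ell}^{\A}\}_{2\leq\ell\leq\widetilde{K}_{2}}$. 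It therefore remains to bound $Q_{3}[\widetilde{J}_{1}>M_{1}]$ and $Q_{3}[\widetilde{J}_{1}'<L]$ by $\exp(-cK)$, which I would do by inserting intermediate thresholds. For the first, $t=\widetilde{\lambda}_{1}+\tfrac{\xi}{7}K$ lies strictly between $\widetilde{\lambda}_{1}$ and $E[M_{1}]=(\widetilde{K}_{1}-1)(1-N^{-d})=\widetilde{K}_{1}(1+o(1))$ with both gaps of order $K$, so Chernoff's inequality for the Poisson upper tail and for the binomial lower tail each gives an $\exp(-cK)$ bound. For the second, $E[M_{2}]=(\widetilde{K}_{2}-1)N^{-d}=O(K N^{-d})=o(K)$, so binomial Chernoff yields $Q_{3}[M_{2}>\tfrac{\xi}{7}K]\leq\exp(-cK)$, and Poisson Chernoff applied at threshold $\widetilde{K}_{2}+\tfrac{\xi}{7}K<\widetilde{\lambda}_{1}'-\tfrac{\xi}{7}K$ yields $Q_{3}[\widetilde{J}_{1}'<\widetilde{K}_{2}+\tfrac{\xi}{7}K]\leq\exp(-cK)$. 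I expect the most delicate point to be the verification of the i.i.d.\ marginal law of $\widetilde{\W}^{\A}$ under the block-wise construction, which relies crucially on the full strength of the independence statements encoded in $Q_{2}$ — namely that $\widetilde{\Y}$ and $\widetilde{\Z}$ are independent of each other and of the Bernoulli variables $(g_{\ell},h_{\ell})$; the rest reduces to routine large-deviation bookkeeping analogous to similar Poissonization arguments in \cite{Szn09a,Szn09b}.
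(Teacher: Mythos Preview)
Your proposal is correct and follows essentially the same approach as the paper: the paper also reorders the $\widetilde{\Y}$- and $\widetilde{\Z}$-excursions block-wise (first the $\widetilde{\Y}_{\ell}$ with $g_{\ell}=1$, then the remaining $\widetilde{\Y}_{\ell}$'s, then the $\widetilde{\Z}_{\ell}$ with $h_{\ell}=1$), verifies the i.i.d.\ law of $\widetilde{\W}^{\A}$ via the independence of $(g,h)$ from $(\widetilde{\Y},\widetilde{\Z})$, and reduces \eqref{eq:Poissonization 1}--\eqref{eq:Poissonization 2} to exactly the two events $\{\sum_{\ell=2}^{\widetilde{K}_{1}}g_{\ell}\geq\widetilde{J}_{1}\}$ and $\{\widetilde{K}_{2}+\sum_{\ell=2}^{\widetilde{K}_{2}}h_{\ell}\leq\widetilde{J}_{1}'\}$, bounded by exponential Chebyshev for Bernoulli and Poisson variables. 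The only cosmetic difference is that the paper fills the tail with the leftover $\widetilde{\Z}$'s and the indices $\ell>\widetilde{K}_{2}$ rather than a fresh i.i.d.\ sequence, which is immaterial.
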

The proof involves the standard exponential Chebyshev's inequality on Poisson variables. 
\begin{proposition}[\textbf{Handling the first excursion}]\label{prop:Handling the first excursion}
One can construct on some auxiliary space $(\Omega_{4},\mathcal{F}_{4})$ a coupling $Q_{4}$ of the $\mathscr{T}_{\widetilde{\A}}$-valued excursions $W_{\ell}^{\A}$ and of the $\mathscr{T}_{\widetilde{\A}}$-valued excursions $\{\widetilde{\W}_{\ell}^{\A}\}_{\ell\geq 1}$ and Poisson variables $\widetilde{J}_{1},\widetilde{J}_{1}',\widetilde{J}_{2}$, where under $Q_{4}$,
\begin{itemize}
\item the excursion $W_{1}^{\A}$ is distributed as that under $P_{q_{0}}^{N}$;
\item the excursions $\big\{\widetilde{\W}_{\ell}^{\A}\big\}_{\ell\geq 1}$ and the variables $\widetilde{J}_{1},\widetilde{J}_{1}'$ have the same law as under $Q_{3}$;
\item the variable $\widetilde{J}_{2}-\widetilde{J}_{1}'$ is a Poisson random variable with intensity $\widetilde{\lambda}_2 - \widetilde{\lambda}_1'$ (see \eqref{eq:def Poisson intensity}), independent from $\widetilde{J}_{1},\widetilde{J}_{1}', W_{1}^{\A},\big\{\widetilde{\W}_{\ell}^{\A}\big\}_{\ell\geq 1}$, where
\begin{equation}\label{eq:def Poisson intensity2}
    \widetilde{\lambda}_2 = \big(1 + \frac{4}{7}\xi\big)\cdot \frac{u}{d+1}\cdot \frac{N^{d}}{h_{N}},
\end{equation}
\end{itemize}
such that there exists a positive constant $c_{29}=c_{29}(u,u_{1},u_{2})$ satisfying (where we use $H_{\A}(W_{1}^{A})$, $T_{\widetilde{\A}}(W_{1}^{A})$ respectively for the entrance time into $\A$ and the departure time from $\widetilde{\A}$ of $W_{1}^{\A}$, and use $(W_{1}^{\A})^{\rm tr}$ for the truncated $\mathscr{T}_{\widetilde{\A}}$-valued excursion in $W_{1}^{\A}$ from $H_{\A}(W_{1}^{A})$ to $T_{\widetilde{\A}}(W_{1}^{A})$)
\begin{equation}\label{eq:Handling the first excursion}
Q_{4}\left[(W_{1}^{\A})^{\rm tr}\in \big\{\widetilde{\W}_{\ell}^{\A}\big\}_{\widetilde{J}_{1}'\leq \ell\leq \widetilde{J}_{2}}\right]\geq 1-\frac{1}{c_{29}}\exp\left(-c_{29}K\right).
\end{equation}
Note that $W_{1}^{\A}$ may start from an interior point of $\A$ while all the excursions $\W_{\ell}^{\A},\ell\geq 2$ start from $\partial^{\rm int}\A$. In this case ``$\in$" means that $W_{1}^{\A}$ is part of some excursion $\widetilde{\W}_{\ell}^{\A}$ for some $\widetilde{J}_{1}'\leq\ell\leq\widetilde{J}_{2}$.
\end{proposition}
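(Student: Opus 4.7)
The plan is to extend the coupling $Q_3$ by independently drawing $\widetilde{J}_2 - \widetilde{J}_1' \sim \text{Poi}(\widetilde \lambda_2 - \widetilde \lambda_1')$ together with a fresh collection of i.i.d.~$P_q^N$-excursions $\{\widetilde{\W}_\ell^{\A}\}_{\widetilde{J}_1' < \ell \le \widetilde{J}_2}$, and then to construct $W_1^{\A}$ jointly with this fresh cloud so that $(W_1^{\A})^{\rm tr}$ coincides with (part of) one of the new excursions. I split into the two cases $0 \notin \A$ and $0 \in \A$.

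Suppose first that $0 \notin \A$, say $\mathrm z_c > r_N$ (the other subcase is symmetric). Since $\A$ is translation invariant in $\mathbb T$ and $q_0$ is $\mathbb T$-uniform, the identity in law $X_{H_{\A}} + (e,0) \stackrel{d}{=} X_{H_{\A}}$ for every $e \in \mathbb T$ forces $\pi_{\mathbb T}(X_{H_{\A}})$ to be uniform on $\mathbb T$; together with $\pi_{\mathbb Z}(X_{H_{\A}}) = \mathrm z_c - r_N$ this gives $X_{H_{\A}} \sim q_{\mathrm z_c - r_N}$, so that $W_1^{\A} = (W_1^{\A})^{\rm tr}$ has law $P_{q_{\mathrm z_c - r_N}}^N[X_{\cdot \land T_{\widetilde{\A}}} \in \cdot]$. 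Since $q = \tfrac12(q_{\mathrm z_c - r_N} + q_{\mathrm z_c + r_N})$, Poisson thinning splits the fresh cloud into an independent bottom-face sub-cloud $\mathcal P^{\rm bot}$ of i.i.d.~$P_{q_{\mathrm z_c - r_N}}^N$-excursions with Poisson intensity $\tfrac12(\widetilde \lambda_2 - \widetilde \lambda_1') \asymp K$. I identify $W_1^{\A}$ with the first element of $\mathcal P^{\rm bot}$ on $\{|\mathcal P^{\rm bot}| \ge 1\}$ and resample $W_1^{\A}$ independently with its correct marginal on the complement; this preserves both the $W_1^{\A}$-marginal and the i.i.d.~structure of the fresh cloud, and the bad event has probability $\exp(-\tfrac12(\widetilde \lambda_2 - \widetilde \lambda_1')) \le e^{-cK}$.

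Suppose now $0 \in \A$, so that $W_1^{\A} = (W_1^{\A})^{\rm tr}$ starts at $X_0 = 0$ in the interior of $\A$. No direct identification with any $\widetilde{\W}_\ell^{\A}$ is possible, so I instead let $\tau$ be the smallest $\ell \in (\widetilde{J}_1', \widetilde{J}_2]$ for which $\widetilde{\W}_\ell^{\A}$ visits $\mathbb T \times \{0\}$ before exiting $\widetilde{\A}$. A one-dimensional gambler's-ruin estimate for the $\mathbb Z$-projection (a lazy simple random walk starting at $\mathrm z_c \pm r_N$ and exiting $[\mathrm z_c - h_N, \mathrm z_c + h_N]$) yields a visiting probability at least $(h_N - r_N)/(h_N + r_N)$, bounded away from zero, so the thinning gives $Q_4[\tau > \widetilde{J}_2] \le e^{-cK}$. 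By the same translation-invariance argument as above, the first hit $\widetilde{\W}_\tau^{\A}(\sigma_\tau)$ of $\mathbb T \times \{0\}$ has law $q_0$, and the strong Markov property at $\sigma_\tau$ shows the suffix $\widetilde{\W}_\tau^{\A}[\sigma_\tau, T_{\widetilde{\A}})$ has law $P_{q_0}^N[X_{\cdot \land T_{\widetilde{\A}}} \in \cdot]$, which is exactly the law of $W_1^{\A}$. I set $W_1^{\A}$ equal to this suffix on $\{\tau \le \widetilde{J}_2\}$ and sample it independently with its correct marginal on the complement; the $W_1^{\A}$-marginal and the i.i.d.~structure of the fresh cloud are preserved, and the containment $(W_1^{\A})^{\rm tr} \in \{\widetilde{\W}_\ell^{\A}\}_{\widetilde J_1' \le \ell \le \widetilde J_2}$ holds automatically on $\{\tau \le \widetilde{J}_2\}$ as $(W_1^{\A})^{\rm tr}$ is the terminal segment of $\widetilde{\W}_\tau^{\A}$.

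The main technical point is the careful use of translation invariance to ensure that the relevant marginal laws match \emph{exactly} rather than only in total variation, so that the Poisson-thinning/strong-Markov identifications legitimately preserve the required marginals of both $W_1^{\A}$ and the i.i.d.~cloud. Given this, everything else reduces to a standard exponential Chebyshev bound for a Poisson variable with mean of order $K$, yielding the advertised error $e^{-c_{29}K}$.
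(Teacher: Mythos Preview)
Your proof is correct and follows the same approach as the paper's. The paper dismisses the case $0\notin\A$ with the phrase ``it suffices to consider the case $|\mathrm z_c|\le r_N$'' and only treats $0\in\A$ explicitly, whereas you spell out both cases; your treatment of $0\in\A$ via the first ``good'' excursion visiting $\mathbb T\times\{0\}$ and the strong Markov identification of $W_1^{\A}$ with its suffix is exactly the paper's argument, and your explicit handling of $0\notin\A$ by Poisson-thinning to the appropriate face is the natural (and slightly simpler) companion case that the paper leaves implicit. Your gambler's-ruin lower bound $(h_N-r_N)/(h_N+r_N)$ for the visiting probability is sharper than the paper's bare ``larger than $1/4$'', but either suffices for the exponential Chebyshev step.
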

This is a technical step handling the first excursion. The proof involves the exponential Chebyshev's inequality on Poisson random variables and some simple facts with respect to the one-dimensional simple random walk.

\Cref{prop:horizontal independence,prop:vertical independence,prop:Poissonization,prop:Handling the first excursion} ensure that the excursions $\big\{W_{\ell}^{\A}\big\}_{\ell\leq K}$ can stochastically dominate (resp., be dominated by) a Poisson number of i.i.d.~excursions $\big\{\widetilde{\W}_{\ell}^{\A}\big\}_{\ell\leq \widetilde{J}_{1}}$ (resp., $\big\{\widetilde{\W}_{\ell}^{\A}\big\}_{\ell\leq \widetilde{J}_{2}}$) with the same distribution as $X._{\land T_{\widetilde{\A}}}$ under $P_{q}^{N}$. Here, the Poissonian structure is necessary since it allows us to exploit properties of Poisson point measures.\\

Our next goal is to extract excursions from $\B$ to $\partial\D$ from the excursions $\widetilde{\W}_{\ell}^{\A}, \ell \geq 1$. In the next proposition, we approach this by characterizing the random (multi)sets $\widetilde{\Lambda}_{1}$ and $\widetilde{\Lambda}_{2}$, where
\begin{equation}\label{eq:def of widetildelambda1}
\widetilde{\Lambda}_{1}:=\{\widetilde{\W}_{\ell}^{\A}:\ell\leq \widetilde{J}_{1},\,\,\widetilde{\W}_{\ell}^{\A}\cap\B\neq\emptyset\} \quad\text{and}\quad\widetilde{\Lambda}_{2}:=\{\widetilde{\W}_{\ell}^{\A}:\ell\leq \widetilde{J}_{2},\,\,\widetilde{\W}_{\ell}^{\A}\cap\B\neq\emptyset\}    
\end{equation}
respectively consists of excursions in $\{\widetilde{\W}_{\ell}^{\A}\}_{\ell\leq \widetilde{J}_{1}}$ and $\{\widetilde{\W}_{\ell}^{\A}\}_{\ell\leq \widetilde{J}_{2}}$ that hit $\B$.

\begin{proposition}[\textbf{Extraction}]\label{prop:extraction}
One can construct on some auxiliary space $(\Omega_{5},\mathcal{F}_{5})$ a coupling $Q_{5}$ of a sequence of $\mathscr{T}_{\widetilde{\A}}$-valued excursions $\widetilde{\W}_{\ell}^{\A},\ell\geq 1$, two Poisson variables $\widetilde{J}_{1},\widetilde{J}_{2}$ and two Poisson point measures $\widetilde{\mu}_{1},\widetilde{\mu}_{2}$ on $\mathscr{T}_{\widetilde{\A}}$, where under $Q_{5}$,    
\begin{itemize}
\item the excursions $\{\widetilde{\W}_{\ell}^{\A}\}_{\ell\geq 1}$ and the Poisson variables $\widetilde{J}_{1},\widetilde{J}_{2}$ have the same law as under $Q_{4}$;
\item the Poisson point measures $\widetilde{\mu}_{1}$ and $\widetilde{\mu}_{2}$ satisfy
\begin{align}
\text{intensity of }\widetilde{\mu}_{1}=\Big(1-\frac{3}{7}\xi\Big)u\Big(1-\frac{r_{N}}{h_{N}}\Big)P_{e_{\B,\widetilde{\A}}}^{N}\left[X._{\land T_{\widetilde{\A}}}\in dw\right];   \label{eq:intensity of widetildemu1} \\
\text{intensity of }\widetilde{\mu}_{2}=\Big(1+\frac{4}{7}\xi\Big)u\Big(1-\frac{r_{N}}{h_{N}}\Big)P_{e_{\B,\widetilde{\A}}}^{N}\left[X._{\land T_{\widetilde{\A}}}\in dw\right],    \label{eq:intensity of widetildemu2}
\end{align}
\end{itemize}
such that for every $N$,
\begin{equation}\label{eq:extraction 1}
Q_{5}\left[\widetilde{\Lambda}_{1}=\text{supp}(\widetilde{\mu}_{1})\right]=1, \quad\text{and}\quad
Q_{5}\left[\widetilde{\Lambda}_{2}=\text{supp}(\widetilde{\mu}_{2})\right]=1,
\end{equation}
where $\text{supp}(\widetilde{\mu}_{1})$ and $\text{supp}(\widetilde{\mu}_{2})$ are both random multisets of excursions in $\mathscr{T}_{\widetilde{\A}}$.
\end{proposition}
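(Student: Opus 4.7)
My plan is to exploit the Poissonian structure of $\sum_{\ell\leq\widetilde J_k}\delta_{\widetilde\W_\ell^\A}$ in order to extract directly a sub-Poisson point measure of trajectories hitting $\B$, and to identify its intensity via reversibility of the simple random walk combined with an explicit one-dimensional Green's function computation.

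First, by construction in \Cref{prop:Poissonization,prop:Handling the first excursion}, $\widetilde J_k$ is Poisson with parameter $\widetilde\lambda_k$ and independent of the i.i.d.~sequence $\{\widetilde\W_\ell^\A\}_{\ell\geq 1}$ whose common law is that of $X_{\cdot\wedge T_{\widetilde\A}}$ under $P_q^N$. Hence $\sum_{\ell\leq\widetilde J_k}\delta_{\widetilde\W_\ell^\A}$ is a Poisson point measure on $\mathscr T_{\widetilde\A}$ with intensity $\widetilde\lambda_k\,P_q^N[X_{\cdot\wedge T_{\widetilde\A}}\in dw]$, and by the thinning property its restriction to trajectories that hit $\B$ (namely $\widetilde\Lambda_k$) is a Poisson point measure with intensity $\widetilde\lambda_k\,P_q^N[X_{\cdot\wedge T_{\widetilde\A}}\in dw,\,H_\B<T_{\widetilde\A}]$. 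Applying the strong Markov property at $H_\B$ and identifying each trajectory with its post-$H_\B$ portion, this intensity takes the form
\[
\widetilde\lambda_k\,\sum_{y\in\partial^{\mathrm{int}}\B}P_q^N[X_{H_\B}=y,\,H_\B<T_{\widetilde\A}]\cdot P_y^N[X_{\cdot\wedge T_{\widetilde\A}}\in dw].
\]

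The crux is then to establish the entrance-distribution identity
\[
P_q^N[X_{H_\B}=y,\,H_\B<T_{\widetilde\A}]=\frac{(d+1)(h_N-r_N)}{N^d}\cdot e_{\B,\widetilde\A}(y),\qquad y\in\partial^{\mathrm{int}}\B,
\]
which combined with the value of $\widetilde\lambda_k$ from \eqref{eq:def Poisson intensity}--\eqref{eq:def Poisson intensity2} produces precisely the intensities \eqref{eq:intensity of widetildemu1}--\eqref{eq:intensity of widetildemu2}. This identity is the part I expect to be the hardest. My approach is two-fold. First, using the symmetry of the simple random walk kernel on $\mathbb E$, I would reverse paths to obtain
\[
|\partial^{\mathrm{int}}\A|\cdot P_q^N[X_{H_\B}=y,\,H_\B<T_{\widetilde\A}]=E_y^N\Big[\sum_{n=0}^{T_{\widetilde\A}\wedge\widetilde H_\B-1}\mathbbm{1}\{X_n\in\partial^{\mathrm{int}}\A\}\Big],
\]
and then decompose the right-hand side at the successive returns of the walk from $y$ to $\B$ via the strong Markov property at $\widetilde H_\B$; letting $F(y):=E_y^N[\#\{n<T_{\widetilde\A}:X_n\in\partial^{\mathrm{int}}\A\}]$, the absence of contribution from returns to $\B$ that are followed by further returns to $\B$ factors the identity as $F(y)\cdot e_{\B,\widetilde\A}(y)$. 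Second, because $\partial^{\mathrm{int}}\A$ consists of two full horizontal slices of $\mathbb T$, the quantity $F(y)$ depends only on $\pi_{\mathbb Z}(y)$ and reduces to a Green's function computation for the one-dimensional lazy walk $(\widehat Z_n)$ on $[\mathrm z_c-h_N,\mathrm z_c+h_N]$; a direct calculation using the explicit formula for this Green's function together with the laziness factor $d+1$ yields $F(y)=2(d+1)(h_N-r_N)$ (up to negligible boundary adjustments of order $O(1/h_N)$) for any $y$ with $|\pi_{\mathbb Z}(y)-\mathrm z_c|<r_N$, a range that covers $\partial^{\mathrm{int}}\B$ thanks to $f(N)\ll r_N$. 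Combining with $|\partial^{\mathrm{int}}\A|=2N^d$ gives the claimed identity.

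Finally, the coupling $Q_5$ is constructed by keeping the joint law of $(\{\widetilde\W_\ell^\A\}_{\ell\geq 1},\widetilde J_1,\widetilde J_2)$ inherited from $Q_4$ and taking $\widetilde\mu_k$ ($k=1,2$) to be exactly the extracted sub-Poisson point measure above. The equalities $\widetilde\Lambda_k=\mathrm{supp}(\widetilde\mu_k)$ then hold deterministically under $Q_5$, yielding \eqref{eq:extraction 1}.
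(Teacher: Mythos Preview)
Your proposal is correct and follows the same overall structure as the paper's proof: Poisson thinning reduces the problem to identifying the entrance law $P_q^N[X_{H_\B}=y,\,H_\B<T_{\widetilde\A}]$, after which the strong Markov property at $H_\B$ and an arithmetic check against $\widetilde\lambda_k$ yield the stated intensities. The paper dispatches the entrance-law identity by quoting \cite[Lemma 1.1]{Szn09a}, whereas you supply a self-contained derivation via path reversal and a one-dimensional Green's function computation; this is essentially a proof of that cited lemma.

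One point worth sharpening: you describe the output of the one-dimensional computation as ``$F(y)=2(d+1)(h_N-r_N)$ up to negligible boundary adjustments of order $O(1/h_N)$''. In fact the computation is \emph{exact}: for non-lazy SRW on the interval absorbed at the ends, the expected number of visits to $+r_N$ from height $z$ is linear increasing in $z$, while the expected number of visits to $-r_N$ is linear decreasing in $z$ with the same slope, so their sum is constant in $z$ throughout $|z|<r_N$. This exact constancy is what makes your factorization $G(y)=F\cdot e_{\B,\widetilde\A}(y)$ valid (it requires $F$ to be the same at $y$ and at the random return point $X_{\widetilde H_\B}$), and it is also what is needed for the almost-sure equality $\widetilde\Lambda_k=\mathrm{supp}(\widetilde\mu_k)$ in \eqref{eq:extraction 1}; an approximate identity would not suffice there.
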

Thanks to the thinning property of Poisson point measures, proving \eqref{eq:extraction 1} essentially boils down to calculating the probability of a random walk hitting $\B$ when starting from the uniform measure on $\partial^{\rm int}\A$. We refer to \cite[Lemma 1.1]{Szn09a} for an explicit formula.\\ 

Given the above couplings, there are two remaining steps in the proof of \Cref{thm:very strong coupling excursion}. The first is to further extract excursions from $\B$ to $\partial\D$ from the excursions in $\text{supp}(\widetilde{\mu}_{1})$ and $\text{supp}(\widetilde{\mu}_{2})$, and the second is to compare these extracted excursions with corresponding excursions from random interlacements. For the latter, we will use the fact that $\B\subseteq \D$ can be viewed as subsets of both $\mathbb{E}$ and $\mathbb{Z}^{d+1}$. Note that there exists some differences when handling the lower bound and upper bound, and we now show them separately. We denote by $\text{supp}(\widetilde{\mu}_{1})^{\B}$ (resp.~$\text{supp}(\widetilde{\mu}_{2})^{\B}$) the collection of all excursions from $\B$ to $\partial\D$ contained in the random set $\text{supp}(\widetilde{\mu}_{1})$ (resp.~$\text{supp}(\widetilde{\mu}_{2})$).
\begin{proposition}[\textbf{Truncation and comparison with truncated random interlacements}]\label{prop:lower truncation} 
One can construct on an auxiliary space $(\Omega_{6},\mathcal{F}_{6})$ a coupling $Q_{6}$ of two Poisson point measures $\widetilde{\mu}_{1}$ and $\widehat{\mu}_{1}$, where under $Q_{6}$,
\begin{itemize}
\item the Poisson point measure $\widetilde{\mu}_{1}$ has intensity measure as in \eqref{eq:intensity of widetildemu1};
\item the Poisson point measure $\widehat{\mu}_{1}$ satisfies
\begin{equation}\label{eq:intensity of widehatmu1}
\text{intensity of }\widehat{\mu}_{1}=\Big(1-\frac{5}{7}\xi\Big)u P_{e_{\B}}\left[X._{\land T_{\D}}\in dw\right],
\end{equation}
\end{itemize}
such that for $N\geq c_{30}=c_{30}(u,u_{1},u_{2})>0$,
\begin{equation}\label{eq:lower truncation}
Q_{6}\left[\text{supp}(\widehat{\mu}_{1})\subseteq \Big\{\widetilde{\W}._{\land T_{\D}}:\widetilde{\W}\in\text{supp}(\widetilde{\mu}_{1})\Big\}\subseteq \text{supp}(\widetilde{\mu}_1)^{\B}\right]=1. 
\end{equation}
\end{proposition}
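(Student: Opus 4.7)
The plan is to handle the two inclusions separately, since the second one is essentially a tautology while the first one carries all the content.

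The second inclusion $\{\widetilde{\W}._{\wedge T_{\D}}:\widetilde{\W}\in\text{supp}(\widetilde{\mu}_{1})\}\subseteq \text{supp}(\widetilde{\mu}_1)^{\B}$ will follow directly from the definitions: a generic trajectory $\widetilde{\W}$ in $\text{supp}(\widetilde{\mu}_{1})$ starts on $\partial^{\rm int}\B$ (because the intensity measure of $\widetilde{\mu}_{1}$ in \eqref{eq:intensity of widetildemu1} is supported on paths with starting distribution $e_{\B,\widetilde{\A}}$), so its truncation at $T_{\D}$ is an honest excursion from $\B$ to $\partial \D$ sitting inside $\widetilde{\W}$, hence belongs to $\text{supp}(\widetilde{\mu}_1)^{\B}$ by the definition immediately preceding the proposition.

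For the first inclusion $\text{supp}(\widehat{\mu}_{1})\subseteq \{\widetilde{\W}._{\wedge T_{\D}}:\widetilde{\W}\in\text{supp}(\widetilde{\mu}_{1})\}$, the strategy is Poisson thinning. By the mapping theorem, the truncation map $w \mapsto w._{\wedge T_{\D}}$ sends $\widetilde{\mu}_{1}$ to a Poisson point measure on $\mathscr{T}_{\D}$ with intensity
\begin{equation*}
    \big(1-\tfrac{3}{7}\xi\big)\,u\,\big(1-r_{N}/h_{N}\big)\,P_{e_{\B,\widetilde{\A}}}^{N}\!\left[X._{\wedge T_{\D}}\in dw\right].
\end{equation*}
Thus it suffices to construct a coupling so that this intensity dominates (as a measure on $\mathscr{T}_{\D}$) the intensity of $\widehat{\mu}_{1}$ given in \eqref{eq:intensity of widehatmu1}, namely $(1-\tfrac{5}{7}\xi)u P_{e_{\B}}[X._{\wedge T_{\D}}\in dw]$. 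The comparison naturally splits into two pieces: (i) since $\D$ has diameter $g(N)\leq N/4$, a walk in $\mathbb{E}$ started in $\B\subseteq \D$ and killed at $T_{\D}$ cannot wrap around the torus, so its law agrees with that of the simple random walk in $\mathbb{Z}^{d+1}$ killed at $T_{\D}$, i.e.\ $P_{x}^{N}[X._{\wedge T_{\D}}\in dw]=P_{x}[X._{\wedge T_{\D}}\in dw]$ for every $x\in\B$; (ii) the remaining pointwise inequality
\begin{equation*}
    \big(1-\tfrac{3}{7}\xi\big)\big(1-r_{N}/h_{N}\big)\,e_{\B,\widetilde{\A}}(x)\ \geq\ \big(1-\tfrac{5}{7}\xi\big)\,e_{\B}(x), \quad x\in\partial^{\rm int}\B,
\end{equation*}
must be established for all $N$ sufficiently large.

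The main obstacle is the equilibrium measure comparison in (ii), because $e_{\B,\widetilde{\A}}$ refers to the (recurrent) cylinder walk whereas $e_{\B}$ refers to the (transient) walk on $\mathbb{Z}^{d+1}$, and the cylinder walk can horizontally wrap around and ``return to $\B$'' through paths whose lift to $\mathbb{Z}^{d+1}$ does not hit $\B$. To bypass this, I plan to factor $e_{\B,\widetilde{\A}}(x)=\text{cap}_{\widetilde{\A}}(\B)\,\overline{e}_{\B,\widetilde{\A}}(x)$ and $e_{\B}(x)=\text{cap}(\B)\,\overline{e}_{\B}(x)$, then compare the normalized equilibrium measures and the capacities separately. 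The normalized equilibrium measures will be controlled via the hitting distribution estimates \Cref{prop:hitting distribution two boxes simple}--\Cref{lem:hitting distribution three boxes simple}, yielding $\overline{e}_{\B,\widetilde{\A}}(x)\geq (1-o(1))\overline{e}_{\B}(x)$ (both are close to the ``hitting distribution from far away'' on $\partial^{\rm int}\B$, and the hitting distribution for the cylinder walk started on $\partial^{\rm int}\widetilde{\A}$ coincides with that of an $\mathbb{Z}^{d+1}$-walk because the paths never exploit the torus identification before hitting $\B$). The capacity comparison $\text{cap}_{\widetilde{\A}}(\B)(1-r_N/h_N)\geq (1-o(1))\text{cap}(\B)$ is the delicate piece: here the factor $(1-r_{N}/h_{N})$, which tracks the probability that a walk exits $\widetilde{\A}$ from the ``correct'' end of the cylinder rather than via wrap-around near $\mathbb{T}\times\{\mathrm z_c\}$, is used to absorb the loss from wrap-around contributions, exactly as is suggested by the placement of this factor in \eqref{eq:intensity of widetildemu1} and \eqref{eq:intensity of widetildemu2}. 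Since $r_{N}/h_{N}=1/(2+\log^{2}N)\to 0$ and $(1-\tfrac{3}{7}\xi)/(1-\tfrac{5}{7}\xi)>1$, the two ``$o(1)$'' losses can be absorbed for $N\geq c_{30}(u,u_{1},u_{2})$ large enough, yielding the required pointwise domination and completing the proof.
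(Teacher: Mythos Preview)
Your reduction is the same as the paper's: the second inclusion is trivial (each $\widetilde{\W}\in\text{supp}(\widetilde\mu_1)$ starts on $\partial^{\rm int}\B$, so $\widetilde{\W}._{\wedge T_{\D}}$ is its first excursion from $\B$ to $\partial\D$), and the first inclusion reduces, via the Poisson mapping/thinning theorem and the identification $P^N_x=P_x$ on paths killed at $T_{\D}$, to the pointwise inequality
\[
\Big(1-\tfrac{3}{7}\xi\Big)\Big(1-\tfrac{r_N}{h_N}\Big)\,e_{\B,\widetilde{\A}}(x)\ \geq\ \Big(1-\tfrac{5}{7}\xi\Big)\,e_{\B}(x),\qquad x\in\partial^{\rm int}\B.
\]

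Where you diverge from the paper is in how you justify this last inequality. The paper does \emph{not} decompose $e=\text{cap}\cdot\overline e$; it directly invokes \cite[Lemma~4.4]{Szn09a} to obtain the pointwise bound
\[
e_{\B,\widetilde{\A}}(x)\ \geq\ e_{\B}(x)\Big(1-\tfrac{c\,f(N)^{d-1}\log^2 N}{N^{d-1}}\Big)\ \geq\ e_{\B}(x)\Big(1-\tfrac{c}{\log N}\Big),
\]
and since $r_N/h_N=O(1/\log^2 N)$, the sprinkling from $\tfrac{3}{7}\xi$ to $\tfrac{5}{7}\xi$ absorbs both the $1-r_N/h_N$ factor \emph{and} the $1-c/\log N$ correction. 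Note that $(1-r_N/h_N)<1$ sits on the left, so it is an extra loss to be absorbed, not a device that absorbs anything; your description of its role is backwards.

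Your detour through $\overline e_{\B,\widetilde{\A}}$ and $\text{cap}_{\widetilde{\A}}(\B)$ can be made to work, but your justification has a gap: the claim that the cylinder hitting distribution from $\partial^{\rm int}\widetilde{\A}$ \emph{coincides} with that of the $\mathbb{Z}^{d+1}$-walk because ``paths never exploit the torus identification'' is false --- a walk traversing vertical distance $h_N-r_N\sim N\log^2 N$ can certainly wrap around a torus of width $N$. What is true is that the two hitting distributions are \emph{close}, but proving this (e.g.\ via the bound $e_{\B,\widetilde{\A}}(x)=(1+o(1))e_{\B}(x)$ from \cite[Lemma~4.4]{Szn09a} and \cite[Proposition~6.1]{Szn09b}) is exactly the content of the direct pointwise comparison, so your route is not genuinely independent. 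The cleaner fix is simply to quote the pointwise estimate as the paper does.
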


Though an excursion $\widetilde{\W}\in\text{supp}(\widetilde{\mu}_{1})$ may contain more than one excursion from $\B$ to $\partial\D$, by considering $\widetilde{\W}._{\land T_{\D}}$ we only take the first one into account. To prove \Cref{prop:lower truncation} we will leverage the property of Poisson point measures, and compare the distribution $e_{\B,\widetilde{\A}}$ with $e_{\B}$ after some sprinkling.
\begin{proposition}[\textbf{Truncated interlacements and interlacements}]\label{prop:lower interlacement}
One can construct on an auxiliary space $(\Omega_{7},\mathcal{F}_{7})$ a coupling $Q_{7}$ of the Poisson point measure $\widehat{\mu}_{1}$ and of the random interlacements under $\PP$, under which $\widehat{\mu}_1$ has intensity measure as in \eqref{eq:intensity of widehatmu1}, such that there exists a positive constant $c_{31}=c_{31}(u,u_{1},u_{2})$ satisfying
\begin{equation}\label{eq:lower interlacement}
Q_{7}\left[\big\{Z_{\ell}^{\B}\big\}_{\ell\leq N_{u_{1}}(\B)}\subseteq \text{supp}(\widehat{\mu}_{1})\right]\geq 1-\frac{1}{c_{31}}\exp\left(-c_{31}\cdot\text{cap}(\B)\right).   
\end{equation}
\end{proposition}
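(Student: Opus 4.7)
The plan is to construct $Q_7$ by explicitly realizing the random interlacements cloud so that its excursions from $\B$ to $\partial \D$ are embedded in the atoms of $\widehat{\mu}_{1}$. The key quantitative slack is that the intensity $(1-\tfrac{5}{7}\xi)u$ of $\widehat{\mu}_{1}$ strictly exceeds $u_{1}\le (1-\xi)u$ (by definition \eqref{eq:def of xi} of $\xi$), leaving a linear gap in intensity, $((1-\tfrac{5}{7}\xi)u-u_{1})\cdot\mathrm{cap}(\B)\asymp\mathrm{cap}(\B)$, which must absorb both the first excursions of all trajectories hitting $\B$ at level $\le u_{1}$ and all the subsequent ones.

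First I would apply Poisson thinning to decompose $\widehat{\mu}_{1}=\widehat{\mu}_{1}^{(a)}+\widehat{\mu}_{1}^{(b)}$ into two independent Poisson point measures, where $\widehat{\mu}_{1}^{(a)}$ has intensity $u_{1}\cdot P_{e_{\B}}[X_{\cdot\wedge T_{\D}}\in dw]$ and $\widehat{\mu}_{1}^{(b)}$ carries the remaining intensity. Each atom $e\in\widehat{\mu}_{1}^{(a)}$ will serve as the first excursion from $\B$ to $\partial\D$ of a trajectory in $\mathcal{I}^{u_{1}}$. By the standard construction of random interlacements (see e.g.\ \cite{Szn10} and Chapter 5 of \cite{DRS14}), one can extend such an $e$ into a full bi-infinite $\mathbb{Z}^{d+1}$-valued transient trajectory by independently sampling the backward part from the starting point of $e$ and the forward continuation from $X_{T_{\D}}(e)$; combined with an independent Poisson cloud of trajectories in $\mathbb{Z}^{d+1}$ at level $\le u_{1}$ that avoid $\B$, this reconstructs a cloud with the law $\PP$ restricted to $\mathcal{I}^{u_{1}}$. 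Under this realization the first excursions of the trajectories hitting $\B$ coincide exactly with $\mathrm{supp}(\widehat{\mu}_{1}^{(a)})\subseteq\mathrm{supp}(\widehat{\mu}_{1})$.

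The main obstacle is to place the \emph{subsequent} excursions (those coming from returns of the forward extensions to $\B$ after exiting $\D$) into the unused atoms of $\widehat{\mu}_{1}^{(b)}$, since their starting distributions are not exactly $\overline{e}_{\B}$. To handle this, I would first observe that by a hitting-distribution estimate of the same flavor as \Cref{prop:hitting distribution two boxes simple} (valid here because $g(N)/f(N)\ge c\log^{3} N$ under \eqref{eq:requirement for f and g}), for every $y\in\partial\D$ the entry law $P_{y}[X_{H_{\B}}=\cdot\mid H_{\B}<\infty]$ is within total-variation distance $O(\log^{-c}N)$ of $\overline{e}_{\B}$, and the return probability itself is bounded by $\sup_{y\in\partial\D}P_{y}[H_{\B}<\infty]=O(\log^{-3(d-1)}N)$. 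I would then couple subsequent excursions to atoms of $\widehat{\mu}_{1}^{(b)}$ by a soft local time argument in the spirit of \cite[Lemma 2.1]{CGP13}, producing a coupling that succeeds on the joint event $\{\text{not too many returns}\}\cap\{\text{distributional coupling succeeds for each return}\}$.

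Finally, to upgrade the precision to $\exp(-c\,\mathrm{cap}(\B))$ I would combine three concentration ingredients: (i) the number of first excursions $\sum_{e\in\widehat{\mu}_{1}^{(a)}}1$ is Poisson with mean $u_{1}\,\mathrm{cap}(\B)$, so its deviations of order $\mathrm{cap}(\B)$ are $\exp(-c\,\mathrm{cap}(\B))$-small; (ii) conditional on the first excursions, the total number of subsequent excursions is a sum of independent variables each dominated by $\mathrm{Geom}(1-p)$ with $p=o(1)$, giving an exponential Chebyshev bound, and its mean is at most $C\,\mathrm{cap}(\B)/\log^{c}N$, hence much smaller than the available excess capacity $((1-\tfrac{5}{7}\xi)u-u_{1})\cdot\mathrm{cap}(\B)$; (iii) the intensity of atoms of $\widehat{\mu}_{1}^{(b)}$ is also concentrated. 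Sending the total-variation cost through the soft local time coupling, combined with (i)--(iii) and a short union bound, yields the claimed error $\exp(-c_{31}\,\mathrm{cap}(\B))$; the delicate point, and thus the place where most of the work will go, is quantifying the soft local time coupling so that its failure probability does not inflate by a polynomial factor that destroys the exponential accuracy.
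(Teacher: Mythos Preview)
Your proposal is correct in spirit and relies on the same ingredients as the paper (the hitting-distribution estimate close to $\overline{e}_{\B}$, the return bound $\sup_{y\in\partial\D}P_{y}[H_{\B}<\infty]=O(\log^{-c}N)$, soft local time, and exponential Chebyshev for Poisson and geometric variables). The difference is structural. The paper does \emph{not} split $\widehat{\mu}_{1}$ or reconstruct interlacements from one piece. Instead it realizes $\mathrm{supp}(\widehat{\mu}_{1})$ as $\{\widetilde{Z}_{\ell}^{\B}\}_{\ell\le\widehat{J}_{1}}$ with $\widehat{J}_{1}$ Poisson of mean $(1-\tfrac{5}{7}\xi)u\,\mathrm{cap}(\B)$ and $\widetilde{Z}_{\ell}^{\B}$ i.i.d.\ with law $P_{\overline{e}_{\B}}[X_{\cdot\wedge T_{\D}}\in dw]$, and then applies a \emph{single} soft local time coupling to the entire interlacement excursion sequence $\{Z_{\ell}^{\B}\}$---first and subsequent excursions alike, since by \eqref{eq:hitting distribution Zd simple} every entry law into $\B$ (from $\partial\D$ or initially) is within $\tfrac{\xi}{10^{3}}$ of $\overline{e}_{\B}$. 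This yields an event $E^{\B}$ with $Q_{7}[E^{\B}]\ge 1-\exp(-c\,\mathrm{cap}(\B))$ on which $\{Z_{1}^{\B},\dots,Z_{m}^{\B}\}\subseteq\{\widetilde{Z}_{1}^{\B},\dots,\widetilde{Z}_{(1+\xi/20)m}^{\B}\}$ for all $m\ge u_{1}\mathrm{cap}(\B)/10$, and then a pure counting argument (bounding $N_{u_{1}}(\B)$ above by a Poisson-sum-of-geometrics with small parameter, and $\widehat{J}_{1}$ below) finishes.

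This buys two simplifications over your route. First, it avoids the reconstruction step, which in your description contains a slight gap: if you ``independently sample'' the forward continuations from $X_{T_{\D}}(e)$, their subsequent excursions are then independent of $\widehat{\mu}_{1}^{(b)}$, and soft local time cannot be applied as a post-hoc coupling of two already-sampled processes---it is a joint construction. You would need to build the continuations and $\widehat{\mu}_{1}^{(b)}$ together from the start, which is doable but is exactly what the paper's single soft-local-time step accomplishes more transparently. Second, your closing worry about a polynomial loss in the soft local time error is unfounded: as in \Cref{prop:coupling W and widetildeZ}, the failure event is governed by deviations of a Poisson counting process and already decays like $\exp(-c\,\mathrm{cap}(\B))$ without further care.
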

Recall that $Z_{\ell}^{\B}$ stands for the excursions of random interlacements. This proposition offers a stochastic domination between some i.i.d.~excursions from $\B$ to $\D$ and the corresponding excursions of interlacements. We remark that \Cref{prop:lower interlacement} is similar to the couplings in \Cref{sec:bad(beta-gamma)}, where we use soft local time techniques to decouple the excursions in random interlacements. Moreover, with help of this decoupling procedure, it suffices to count the number of excursions in $Z_{\ell}^{\B}, {\ell\leq N_{u_{1}}(\B)}$ and $\text{supp}(\widehat{\mu}_{1})$, where we need to argue that for the simple random walk in $\mathbb{Z}^{d+1}$ started from an $x\in\partial\D$, it hits $\B$ only with small probability, and therefore the truncation on the interlacement trajectories does not decrease the Poisson intensity too much. 
We also refer to \cite[Sections 8 and 9]{Bel13} for a similar idea.

The first inclusion ``$\{Z_{\ell}^{\B}\}_{\ell\leq N_{u_{1}}(\B)}\subseteq \{W_{\ell}^{\B}\}_{\ell\leq N_{K}(\B)}$" in \Cref{thm:very strong coupling excursion} can then be concluded from combining \Cref{prop:horizontal independence,prop:vertical independence,prop:Poissonization,prop:Handling the first excursion,prop:extraction,prop:lower truncation,prop:lower interlacement}, and we now turn to the second inclusion ``$\{W_{\ell}^{\B}\}_{\ell\leq N_{K}(\B)}\subseteq \{Z_{\ell}^{\B}\}_{\ell\leq N_{u_{2}}(\B)}$". 
\begin{proposition}[\textbf{Truncated cylinder excursions and cylinder excursions}]\label{prop:upper truncation}
One can construct on an auxiliary space $(\Omega_{8},\mathcal{F}_{8})$ a coupling $Q_{8}$ of the Poisson point measure $\widetilde{\mu}_{2}$ and of another Poisson point measure $\widehat{\mu}_{2}$, where under $Q_{8}$,
\begin{itemize}
\item the Poisson point measure $\widetilde{\mu}_{2}$ has intensity measure as in \eqref{eq:intensity of widetildemu2};
\item the Poisson point measure $\widehat{\mu}_{2}$ satisfies
\begin{equation}\label{eq:intensity of widehatmu2}
\text{intensity of } \widehat{\mu}_{2}=\Big(1+\frac{6}{7}\xi\Big)u P_{e_{\B,\widetilde{\A}}}^{N}\left[X._{\land T_{\D}}\in dw\right], 
\end{equation}
\end{itemize}
such that there exists a positive constant $c_{32}=c_{32}(u,u_{1},u_{2})$ satisfying
\begin{equation}\label{eq:upper truncation}
Q_{8}\left[\text{supp}(\widetilde{\mu}_{2})^{\B}\subseteq \text{supp}(\widehat{\mu}_{2})\right]\geq 1-\frac{1}{c_{32}}\exp(-c_{32}\cdot\text{cap}(\B)). \end{equation}
\end{proposition}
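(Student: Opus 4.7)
The plan is to decompose each trajectory $\widetilde W\in\mathrm{supp}(\widetilde\mu_2)$ into its successive $\B$-to-$\partial\D$ excursions $\mathcal E_1(\widetilde W),\dots,\mathcal E_{N^{\B,\D}(\widetilde W)}(\widetilde W)$ and write $\mathrm{supp}(\widetilde\mu_2)^{\B}=\sigma_1+\sigma_{\rm rest}$, with $\sigma_1:=\sum_{\widetilde W}\delta_{\mathcal E_1(\widetilde W)}$ the measure of first excursions. By the mapping theorem for Poisson point measures, $\sigma_1$ is itself a Poisson point measure on the space of $\B$-to-$\partial\D$ excursions with intensity $(1+\tfrac{4}{7}\xi)u(1-r_N/h_N)P^N_{e_{\B,\widetilde{\A}}}[X_{\cdot\wedge T_{\D}}\in dw]$. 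Since the ratio $(1+\tfrac{6}{7}\xi)/[(1+\tfrac{4}{7}\xi)(1-r_N/h_N)]$ exceeds $1$ for large $N$, I will take $\widehat\mu_2:=\sigma_1+\widehat\mu_2^{(2)}$, where $\widehat\mu_2^{(2)}$ is an independent Poisson point measure whose intensity is the difference of those of $\widehat\mu_2$ and $\sigma_1$ (positive and of order $\tfrac{2}{7}\xi u\,P^N_{e_{\B,\widetilde{\A}}}[X_{\cdot\wedge T_{\D}}\in dw]$ for large $N$); then $\widehat\mu_2$ has the intensity prescribed in \eqref{eq:intensity of widehatmu2} and $\mathrm{supp}(\sigma_1)\subseteq\mathrm{supp}(\widehat\mu_2)$ holds automatically. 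The remaining task is to embed $\sigma_{\rm rest}$ into $\mathrm{supp}(\widehat\mu_2^{(2)})$.

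\textbf{Control of the residual count.} The quantitative core is to show that a typical trajectory of $\widetilde\mu_2$ contributes essentially only one $\B$-to-$\partial\D$ excursion. Applying the strong Markov property at the exit times $D_j^{\B,\D}$ inside $\widetilde W$, the variable $N^{\B,\D}(\widetilde W)$ is stochastically dominated by a geometric variable with failure probability $p_\star:=\sup_{x\in\partial\D}P^N_x[\widetilde H_{\B}<T_{\widetilde{\A}}]$. Using the last-exit decomposition $P^N_x[H_{\B}<T_{\widetilde{\A}}]=\sum_{y\in\B}g_{\widetilde{\A}}(x,y)e_{\B,\widetilde{\A}}(y)$, the Green function bound $g_{\widetilde{\A}}(x,y)\lesssim g(N)^{1-d}$ for $x\in\partial\D$ and $y\in\B$, the capacity estimate $\mathrm{cap}_{\widetilde{\A}}(\B)\lesssim f(N)^{d-1}$ from \eqref{eq:capacity of a box}, and the scale condition $g(N)\gtrsim f(N)\log^3 N$ from \eqref{eq:requirement for f and g}, I obtain $p_\star\lesssim(f(N)/g(N))^{d-1}\lesssim\log^{-3(d-1)}N=o(1)$. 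Hence $\mathbb E_{e_{\B,\widetilde{\A}}}^N[N^{\B,\D}-1]=o(1)$, Campbell's formula gives $\mathbb E[|\sigma_{\rm rest}|]=o(\mathrm{cap}(\B))$, and an exponential Chebyshev bound for the compound-Poisson variable $|\sigma_{\rm rest}|$ yields $|\sigma_{\rm rest}|\le\tfrac{\xi u}{20}\mathrm{cap}(\B)$ except on an event of probability $\le\exp(-c\,\mathrm{cap}(\B))$.

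\textbf{Embedding $\sigma_{\rm rest}$ into $\widehat\mu_2^{(2)}$.} To promote this count bound into a support inclusion, I will couple $\sigma_{\rm rest}$ and $\widehat\mu_2^{(2)}$ via a soft local time construction in the spirit of \cite{PT15,CGP13}. By the strong Markov property at each return time $R_j^{\B,\D}$, each later excursion $\mathcal E_j(\widetilde W)$ with $j\ge 2$ has, conditionally on the past, the law $P^N_x[X_{\cdot\wedge T_{\D}}\in dw]$ where $x\in\partial^{\rm int}\B$ is drawn from the hitting distribution on $\B$ of the walk started at $\partial\D$. By \Cref{prop:hitting distribution two boxes simple} this hitting distribution lies within a factor $1+O(\eta)$ of $\overline e_{\B,\widetilde{\A}}$, so the conditional density of $\sigma_{\rm rest}$ (combined with the bound on $|\sigma_{\rm rest}|$) is uniformly dominated by a constant $C_{\rm rest}\ll\tfrac{2}{7}\xi u$ times $P^N_{e_{\B,\widetilde{\A}}}[X_{\cdot\wedge T_{\D}}\in dw]$. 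The soft local time matching then embeds every later excursion into a point of $\widehat\mu_2^{(2)}$ outside an event of probability $\exp(-c'\,\mathrm{cap}(\B))$.

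\textbf{Main obstacle.} The main technical obstacle is this last embedding step: the starting points of later excursions depend on the full past trajectory, so a direct Poisson thinning does not apply. Its resolution hinges on combining the quantitative hitting distribution estimate in \Cref{prop:hitting distribution two boxes simple} (which gives a uniform density bound for $\sigma_{\rm rest}$ conditional on the past) with the soft local time framework of \cite{PT15} (to perform the embedding trajectory by trajectory). The $\tfrac{2}{7}\xi u$ of extra Poisson intensity in $\widehat\mu_2^{(2)}$, being much larger than both $p_\star$ and the density distortion $O(\eta)$, provides the cushion that yields the required $\exp(-c_{32}\,\mathrm{cap}(\B))$ tail.
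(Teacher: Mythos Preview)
Your approach is correct and uses the same core ingredients as the paper (the hitting-distribution estimate, the return-probability bound $p_\star=o(1)$, soft local time, and exponential Chebyshev), but the organization differs. The paper does not split into $\sigma_1+\sigma_{\rm rest}$; it applies soft local time uniformly to the \emph{entire} sequence of excursions in $\mathrm{supp}(\widetilde{\mu}_2)^{\B}$, obtaining an inclusion into $\{\widetilde Z_1^{\B},\dots,\widetilde Z_{(1+\epsilon)M}^{\B}\}$ with $M=|\mathrm{supp}(\widetilde{\mu}_2)^{\B}|$, and then compares $M$ (a Poissonian sum of geometrics with success probability $1-p_\star$) against the Poisson count $|\mathrm{supp}(\widehat{\mu}_2)|$ via exponential Chebyshev. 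Your decomposition buys you a deterministic Poisson embedding for $\sigma_1$ at the cost of having to thread soft local time through the later excursions only (which requires ordering them across different trajectories and handling the case $|\sigma_{\rm rest}|$ below the soft-local-time threshold by padding); the paper's uniform treatment is a bit more streamlined.

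Two small corrections. First, your bound $g_{\widetilde{\A}}(x,y)\lesssim g(N)^{1-d}$ is the $\mathbb{Z}^{d+1}$ decay, but here you are on the cylinder $\mathbb{T}\times\mathbb{Z}$ (recurrent) killed only at the $\mathbb{Z}$-boundary of $\widetilde{\A}$; the correct bound carries an extra $\log^2 N$, giving $p_\star\lesssim (f(N)/g(N))^{d-1}\log^2 N$ as in \eqref{eq:lower hitting}. Since $g(N)/f(N)\gtrsim\log^3 N$ and $d\ge2$ this is still $o(1)$, so your conclusion survives. Second, \Cref{prop:hitting distribution two boxes simple} yields closeness to $\overline e_{\B}$ (the $\mathbb{Z}^{d+1}$ equilibrium), not to $\overline e_{\B,\widetilde{\A}}$; the paper bridges this by combining \eqref{eq:lower equilibrium}--\eqref{eq:upper equilibrium} with \eqref{eq:hitting distribution Zd simple} to obtain \eqref{eq:hitting distribution torus simple}, and you implicitly need the same step.
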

The idea here is similar to that in \Cref{prop:lower interlacement}, except that we truncate cylinder excursions instead of random interlacements excursions, and we need to use the soft local time techiniques in \Cref{sec:bad(beta-gamma)} for decoupling and argue that for the simple random walk on $\mathbb E$, started from any point $x\in\partial\D$, it hits $\B$ before exiting $\widetilde{\A}$ with small probability.
\begin{proposition}[\textbf{Comparison with random interlacements}]\label{prop:upper interlacement} 
One can construct on an auxiliary space $(\Omega_{9},\mathcal{F}_{9})$ a coupling $Q_{9}$ of the Poisson point measure $\widehat{\mu}_{2}$ and of the random interlacements under $\PP$, under which $\widehat{\mu}_2$ has the intensity as in \eqref{eq:intensity of widehatmu2}, such that for $N\geq c_{33}=c_{33}(u,u_{1},u_{2})>0$, 
\begin{equation}\label{eq:upper interlacement}
Q_{9}\left[\text{supp}(\widehat{\mu}_{2})\subseteq \big\{Z_{\ell}^{\B}\big\}_{\ell\leq N_{u_{2}}(\B)}\right]=1.   
\end{equation}
\end{proposition}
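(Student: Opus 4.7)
The plan is to produce $Q_9$ by Poisson-thinning a canonical Poisson sub-process of $\{Z_\ell^{\B}\}_{\ell\le N_{u_2}(\B)}$ built from the first excursions of interlacement trajectories entering $\B$. From a sample $\omega$ of the random interlacement cloud, extract for each bi-infinite trajectory $w_i^*$ with label $u_i\le u_2$ that hits $\B$ its first excursion from $\B$ to $\partial\D$. By the Poissonian structure of random interlacements (cf.~\cite{Szn10} and \eqref{eq:def of interlacement excursions}), this collection forms a Poisson point process $M$ on $\mathscr T_\D$ with intensity
\[
  \nu_2(dw) \;=\; u_2\sum_{y\in\partial^{\text{int}}\B} e_\B(y)\,P_y\bigl[X_{\cdot\land T_\D}\in dw\bigr],
\]
and by construction $\text{supp}(M)\subseteq\{Z_\ell^{\B}\}_{\ell\le N_{u_2}(\B)}$. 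Since $\D$ has side-length $g(N)\le N/4$ and contains $\B$, any walk started in $\B$ exits $\D$ before wrapping around the torus in any $\mathbb T$-coordinate, so $P_y^N[X_{\cdot\land T_\D}\in dw]=P_y[X_{\cdot\land T_\D}\in dw]$ for every $y\in\B$, and the intensity of $\widehat{\mu}_2$ in \eqref{eq:intensity of widehatmu2} can be rewritten as
\[
  \nu_1(dw)\;=\;\Bigl(1+\tfrac{6}{7}\xi\Bigr)u\sum_{y\in\partial^{\text{int}}\B} e_{\B,\widetilde{\A}}(y)\,P_y\bigl[X_{\cdot\land T_\D}\in dw\bigr].
\]

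The desired inclusion $\text{supp}(\widehat{\mu}_2)\subseteq\text{supp}(M)$ therefore follows from Poisson-thinning once we establish the pointwise intensity domination
\begin{equation}\label{plan:ineq}
  \Bigl(1+\tfrac{6}{7}\xi\Bigr)u\,e_{\B,\widetilde{\A}}(y)\;\le\;u_2\,e_\B(y)\qquad\text{for all } y\in\partial^{\text{int}}\B,\ N\ge c_{33}.
\end{equation}
Using $u_2\ge u(1+\xi)$ from \eqref{eq:def of xi}, this reduces to the uniform ratio bound $e_{\B,\widetilde{\A}}(y)/e_\B(y)\le(1+\xi)/(1+\tfrac{6}{7}\xi)=1+\xi/(7+6\xi)$ for large $N$. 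The key estimate is the potential-theoretic comparison of the cylinder equilibrium measure with the full-space one. Lifting to $\mathbb Z^{d+1}$ and writing $\B+N\mathbb Z^d\times\{0\}$ for the periodic copies of $\B$, one has
\[
  e_{\B,\widetilde{\A}}(y)\;=\;P_y\bigl[\widetilde H_{\B+N\mathbb Z^d\times\{0\}}>T_{\mathbb Z^d\times(\mathrm z_c+\widetilde{\mathrm I})}\bigr].
\]
This can be controlled by the Green-function identity $g^{\mathbb E}_{\widetilde{\A}}(y,z)=\sum_{k\in\mathbb Z^d}g^{\text{slab}}(y,z+Nk)$ together with the strong Markov property at the vertical-exit time and the decay $P_z[H_\B<\infty]\le C\,\text{cap}(\B)/d(z,\B)^{d-1}$ for $z$ outside the slab; combined with the lower bound $e_\B(y)\ge c/f(N)$ drawn from \eqref{eq:equilibrium measure of a box}--\eqref{eq:capacity of a box} and the scale separations $h_N\gg N\gg g(N)\gg f(N)\log^3N$ enforced by \eqref{eq:requirement for f and g}--\eqref{eq:def of I and widetildeI}, this yields the required bound (in fact with substantial slack, as the killing at $T_{\widetilde{\A}}$ on the recurrent cylinder reduces the escape probability much more than transient $\mathbb Z^{d+1}$).

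With \eqref{plan:ineq} in hand, the coupling $Q_9$ is constructed by independent thinning: on $(\Omega_9,\mathcal F_9)$, build $\omega\sim\PP$, extract $M$ as above, and for each atom $w\in\text{supp}(M)$ retain it independently with probability $\nu_1(w)/\nu_2(w)=(1+\tfrac{6}{7}\xi)u\,e_{\B,\widetilde{\A}}(w(0))/(u_2\,e_\B(w(0)))\in[0,1]$, a function of the starting point only, which ensures the thinned process remains Poisson. The retained atoms define $\widehat{\mu}_2$, a Poisson point process of intensity $\nu_1$ as required, with $\text{supp}(\widehat{\mu}_2)\subseteq\text{supp}(M)\subseteq\{Z_\ell^{\B}\}_{\ell\le N_{u_2}(\B)}$ holding $Q_9$-almost surely. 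The main technical obstacle is the equilibrium-measure comparison in the second paragraph, which is the only place where the precise geometric assumptions on $f(N),g(N),h_N$ are used in earnest; the remaining steps are elementary Poisson-process bookkeeping.
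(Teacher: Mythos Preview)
Your approach is essentially the same as the paper's: extract the \emph{first} excursion from $\B$ to $\partial\D$ in each interlacement trajectory with label $\le u_2$, obtaining a Poisson point process of intensity $u_2\,P_{e_\B}[X_{\cdot\land T_\D}\in dw]$ whose support sits inside $\{Z_\ell^\B\}_{\ell\le N_{u_2}(\B)}$, reduce the inclusion to the pointwise equilibrium-measure comparison $(1+\tfrac{6}{7}\xi)u\,e_{\B,\widetilde{\A}}\le u_2\,e_\B$, and couple by independent Poisson thinning. The paper records this bound as $e_{\B,\widetilde{\A}}(y)\le e_\B(y)\bigl(1+c'f(N)^{d-1}/N^{d-1}\bigr)$ and cites \cite[Proposition~6.1]{Szn09b} for the technique.

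One correction to your heuristic: the parenthetical remark that ``the killing at $T_{\widetilde{\A}}$ on the recurrent cylinder reduces the escape probability much more than transient $\mathbb Z^{d+1}$'' is wrong. The two equilibrium measures are in fact \emph{close}; the paper simultaneously records the matching lower bound $e_{\B,\widetilde{\A}}(y)\ge e_\B(y)\bigl(1-cf(N)^{d-1}\log^2N/N^{d-1}\bigr)$ in \eqref{eq:lower equilibrium}, so there is no ``substantial slack''. Two competing effects are at play in your lifted expression $P_y[\widetilde H_{\B+N\mathbb Z^d\times\{0\}}>T_{\text{slab}}]$: the extra periodic copies of $\B$ make escape \emph{harder}, while killing at the slab boundary makes it \emph{easier}, and both corrections are of order $f(N)^{d-1}/N^{d-1}$ (up to logarithms). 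Your Green-function/hitting-probability sketch is the right route to the upper bound, but it does require a genuine estimate rather than a one-sided monotonicity.
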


The idea here is similar to that of \Cref{prop:lower truncation} and can be proved by a comparison between $e_{\B,\widetilde{\A}}$ and $e_{\B}$ after sprinkling.

We now complete the proof of \Cref{thm:very strong coupling excursion} using \Cref{prop:horizontal independence,prop:vertical independence,prop:Poissonization,prop:Handling the first excursion,prop:extraction,prop:lower truncation,prop:lower interlacement,prop:upper truncation,prop:upper interlacement}.

\begin{proof}[Proof of \Cref{thm:very strong coupling excursion}]
Let $\widetilde{\mu}_{1}$ and $\widetilde{\mu}_{2}$ be as in \eqref{eq:intensity of widetildemu1} and \eqref{eq:intensity of widetildemu2}. In light of \Cref{prop:horizontal independence,prop:vertical independence,prop:Poissonization,prop:Handling the first excursion,prop:extraction}, it follows that when $N\geq c_{26}$, under some coupling $Q'$ which encompasses all the objects that appeared in couplings $Q_{1}$-$Q_{5}$, we have for some positive constant $c =c(c_{27},c_{28},c_{29})$,
\begin{equation}\label{eq:coupling Q'}
Q'\left[\text{supp}(\widetilde{\mu}_{1})^{\B}\subseteq \big\{W_{\ell}^{\B}\big\}_{\ell\leq N_{K}(\B)}\subseteq \text{supp}(\widetilde{\mu}_{2})^{\B}\right]\geq 1-c^{-1}\exp\left(-cK\right).   
\end{equation}

Now let $\widehat{\mu}_{1}$ and $\widehat{\mu}_{2}$ be as in \eqref{eq:intensity of widehatmu1} and \eqref{eq:intensity of widehatmu2}. Following \Cref{prop:lower truncation,prop:lower interlacement,prop:upper truncation,prop:upper interlacement}, when $N\geq \max(c_{30}, c_{32})$, under some coupling $Q''$ which encompasses all the objects that appeared in couplings $Q_{6}$-$Q_{9}$, we have 
\begin{align}
Q''\left[\big\{Z_{\ell}^{\B}\big\}_{\ell\leq N_{u_{1}}(B)}\subseteq \text{supp}(\widehat{\mu}_{1})\subseteq \text{supp}(\widetilde{\mu}_{1})^{\B}\right]\geq 1-\frac{1}{c_{31}}\exp\left(-c_{31}\text{cap}(\B)\right);  \label{eq:lower coupling}\\
Q''\left[\text{supp}(\widetilde{\mu}_{2})^{\B}\subseteq \text{supp}(\widehat{\mu}_{2})\subseteq \{Z_{\ell}^{\B}\}_{\ell\leq N_{u_{2}}(\B)}\right]\geq 1-\frac{1}{c_{33}}\exp\left(-c_{33}\text{cap}(\B)\right). \label{eq:upper coupling}
\end{align}
Let the coupling $Q$ encompass all the objects that appeared in  $Q'$ and $Q''$. Recalling \eqref{eq:capacity of B upper bound} for the definition of $K$ that implies $\text{cap}(\B)=o(K)$, combining \eqref{eq:coupling Q'}-\eqref{eq:upper coupling} then yields \Cref{thm:very strong coupling excursion}. 
\end{proof}

\subsection{Proofs of 
\texorpdfstring{\Cref{prop:horizontal independence,prop:vertical independence,prop:Poissonization,prop:Handling the first excursion,prop:extraction,prop:lower truncation,prop:lower interlacement,prop:upper truncation,prop:upper interlacement}}{prop:horizontal independence, etc.}}\label{subsec:proof of horizontal indep etc.}
In this subsection we provide the proofs for \Cref{prop:horizontal independence,prop:vertical independence,prop:Poissonization,prop:Handling the first excursion,prop:extraction,prop:lower truncation,prop:lower interlacement,prop:upper truncation,prop:upper interlacement}. 

We first distill \Cref{prop:horizontal independence} into a more general conclusion. For each space $(\mathcal{X}, \mathscr F)$ and two measures $\mu$, $\nu$ on this space, recall the total variation distance between $\mu$ and $\nu$:
\begin{equation}
    \text{TV}(\mu, \nu) = \sup_{A\in \mathscr F}|\mu(A) - \nu(A)|.
\end{equation}

\begin{lemma}\label{lem:horizontal independence}
Suppose $\mathcal{X}$ has $n\geq 100$ elements. Let $\mu$ be the uniform measure on $\mathcal{X}$ and $\mu_{W}$ be the distribution of a random variable $W$ on the same space with 
\begin{equation}\label{eq:requirement fro muw}
\mathrm{TV}(\mu,\mu_{W})\leq 1/n^{4}.    
\end{equation}
Let $\overline{\mu}$ be the distribution of random variables $\Y$, $\Z$ on $\mathcal{X}$ together with Bernoulli random variables $g,h$, such that under $\overline{\mu}$, 
\begin{itemize}
\item the variables $\Y,\Z$ are independent from the variables $g,h$;
\item the variables $\Y,\Z$ are mutually independent and both have distribution $\mu$;
\item the variables $g,h$ are independent Bernoulli variables with parameters $1-1/n$ and $1/n$.
\end{itemize}
Then there exists a coupling of $\mu_{W}$ and $\overline{\mu}$ such that
\begin{equation}\label{eq:horizontal independence coupling}
Q_{\mathcal{X}}(\{g\Y\}\subseteq\{W\}\subseteq\{\Y,h\Z\})=1, 
\end{equation}
where $0$ times an element in $\mathcal{X}$ means the empty set and $1$ times an element in $\mathcal{X}$ means the element itself.
\end{lemma}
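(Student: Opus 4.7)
The plan is to realize the coupling $Q_{\mathcal{X}}$ by first sampling the quadruple $(g, h, Y, Z)$ independently with the prescribed marginals of $\overline{\mu}$, and then defining $W$ as a function of these together with one auxiliary uniform $[0,1]$-variable $U$ independent of everything else. The key observation is that the inclusions $\{gY\} \subseteq \{W\} \subseteq \{Y, hZ\}$ force $W = Y$ on the event $A^c := \{g = 1\} \cup \{h = 0\}$, while on the exceptional event $A := \{g = 0, h = 1\}$ they only demand $W \in \{Y, Z\}$. Since $Q_{\mathcal{X}}(A) = 1/n^2$, this event provides exactly enough ``budget'' to correct the marginal of $W$ from the uniform $\mu$ to the prescribed $\mu_W$, whose total variation distance from $\mu$ is at most $1/n^4 \ll 1/n^2$.

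Concretely, write $\varepsilon_x := \mu_W(\{x\}) - \mu(\{x\})$, so that $\sum_x \varepsilon_x = 0$ and $\sum_x |\varepsilon_x| = 2\,\mathrm{TV}(\mu, \mu_W) \leq 2/n^4$, whence $|\varepsilon_x| \leq 2/n^4$ for every $x$. I would set $W = Y$ on $A^c$, and on $A$ set $W = Y$ if $U \leq p(Y)$, else $W = Z$, with
\[
    p(y) := \tfrac{1}{2} + n^3 \varepsilon_y.
\]
The identity $\sum_x \varepsilon_x = 0$ then gives $\mathbb{E}[p(Y)] = 1/2$, after which a short computation conditional on $A$, using the independence of $Y, Z, U$, yields $Q_{\mathcal{X}}(W = x \mid A) = 1/n + n^2 \varepsilon_x$. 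Combining with $Q_{\mathcal{X}}(W = x \mid A^c) = 1/n$ (since $W = Y \sim \mu$ there) delivers the desired overall marginal $Q_{\mathcal{X}}(W = x) = 1/n + \varepsilon_x = \mu_W(\{x\})$.

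The only points requiring verification are (i) that $p(y) \in [0, 1]$ for every $y$, which follows from $|n^3 \varepsilon_y| \leq 2/n \leq 1/50$ under the standing hypothesis $n \geq 100$; and (ii) that $\{gY\} \subseteq \{W\} \subseteq \{Y, hZ\}$ holds pointwise. The latter reduces to a trivial case analysis: on $\{g = 1\}$ and on $\{h = 0\}$ we have $W = Y$, so both inclusions are immediate; on $A = \{g = 0, h = 1\}$ the left inclusion is vacuous and the right one reads $W \in \{Y, Z\} = \{Y, hZ\}$, which holds by construction. I do not foresee any genuine obstacle here; the hypothesis $1/n^4$ is in fact comfortably stronger than what the argument strictly needs (any $1/n^{2 + \delta}$ with $\delta > 0$ would suffice after suitably adjusting the scale in $p$), and the lemma may be viewed as a precise accounting of how a small total-variation perturbation of the uniform law can be absorbed into a rare ``mixing'' event whose probability is of a fixed higher order.
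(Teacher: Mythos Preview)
Your proof is correct and takes a genuinely different, cleaner route than the paper's.

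The paper proceeds in the opposite order: it first builds a coupling of $(W,Y,Z)$ via an explicit $n$-step mass-redistribution algorithm that ensures $W\in\{Y,Z\}$ always and that each ``deviation'' cell $\{Y=j,\,W=Z=i\}$ with $j\neq i$ has mass at most $1/n^{4}$; only afterwards does it adjoin $(g,h)$, forcing $(g,h)=(0,1)$ on the deviation set (feasible because the conditional mass of deviation given $(Y,Z)=(j,i)$ is $\leq n^{2}/n^{4}=1/n^{2}=\overline\mu(g=0,h=1)$). Your construction inverts this: you sample $(Y,Z,g,h)\sim\overline\mu$ first and then define $W$ by a one-line randomized rule, using the event $\{g=0,h=1\}$ itself as the correction budget and the explicit threshold $p(y)=\tfrac12+n^{3}\varepsilon_y$ to absorb the signed defects $\varepsilon_y$. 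This sidesteps the algorithm entirely, makes the marginal verification a two-line computation, and renders transparent exactly how much slack the hypothesis $\mathrm{TV}\leq 1/n^{4}$ provides (as you note, $1/n^{2+\delta}$ would already do). The paper's algorithmic construction does give slightly more: it pins down an explicit joint law of $(W,Y,Z)$ independent of $(g,h)$ beforehand, which could in principle be reused, but for the lemma as stated your argument is both shorter and more illuminating.
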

\begin{proof}
Without loss of generality we take $\mathcal{X}=\{1,2,\dots,n\}$ and assume that $\mu_{W}(1)\geq \mu_{W}(2)\geq\cdots\geq\mu_{W}(n)$. Since $\mathrm{TV}(\mu_{W},\mu)\leq 1/n^{4}$, it follows that 
\begin{equation}\label{eq:bound on muW}
\frac{1}{n}+\frac{1}{n^{4}}\geq\mu_{W}(1)\geq \frac{1}{n}\geq\mu_{W}(n)\geq \frac{1}{n}-\frac{1}{n^{4}}.    
\end{equation}

We first couple $W,\Y,\Z$ together and show there exists a coupling $Q_{\mathcal{X}}$ of $W,\Y,\Z$ such that 
\begin{equation}\label{eq:property1 of QX}
Q_{\mathcal{X}}\left[W=\Y\text{ or }W=\Z\right]=1,\quad\mbox{ and }\quad\ 
Q_{\mathcal{X}}\left[\Y=j,W=\Z=i\right]\leq\frac{1}{n^{4}},\mbox{ for all }j\neq i.
\end{equation}
This coupling is constructed via a $n$-step algorithm, which is described as follows. 

We first set $n^2$ non-negative variables $\textbf{m}_{i,j},1\leq i,j\leq n$ and two non-negative variables $\textbf{b}$, $\textbf{q}$, and let $\textbf{i}$ be a variable taking values in $\mathcal X$. We remark that here $\textbf{b}$, $\textbf{q}$ and $\textbf{i}$ are short for ``balance'', ``quota'' and ``index'' respectively.

Initially, we set $\textbf{m}_{i,j} = 1/n^2$, $\forall 1\leq i,j\leq n$, $\textbf{b} = 1/n - \mu_W(n)$, $\textbf{q} = 0$, and $\textbf{i} = n$.

For each $1\leq i \leq n$, at the $i$-th step of the algorithm, let
\begin{equation}\label{eq:W equals Y}
Q_{\mathcal{X}}\left[W=\Y=i,\Z=j\right]=\textbf{m}_{i,j},\mbox{ for all }1\leq j\leq n,\quad \mbox{ and }\quad\mathbf{q} = \mu_{W}(i)-\sum_{j=1}^{n}\textbf{m}_{i,j}.
\end{equation}
If $\textbf{q}=0$, end the $i$-th step.
Otherwise, 
\begin{itemize}
    \item if $\textbf{b}\geq \textbf{q}$, then let 
    \begin{equation}\label{eq:W equals Z enough}
    Q_{\mathcal{X}}\left[\Y=\textbf{i},W=\Z=i\right]=\textbf{q},    
    \end{equation}
    and update $\textbf{m}_{\textbf{i},i}$ to $\textbf{m}_{\textbf{i},i}-\textbf{q}$, update $\textbf{b}$ to $\textbf{b}-\textbf{q}$, and end the $i$-th step;
    \item if $\textbf{b}<\textbf{q}$, then let 
    \begin{equation}\label{eq:W equals Z not enough}
        Q_{\mathcal{X}}\left[\Y=\textbf{i},W=\Z=i\right]=\textbf{b},    
    \end{equation}
    and update $\textbf{m}_{\textbf{i},i}$ to $\textbf{m}_{\textbf{i},i}-\textbf{b}$, update $\textbf{q}$ to $\textbf{q}-\textbf{b}$, update $\textbf{i}$ to $\textbf{i}-1$, update $\textbf{b}$ to $1/n-\mu_{W}(\textbf{i})$ (with the updated $\textbf{i}$), and then again check whether $\textbf{b}\geq \textbf{q}$. We proceed this process until $\textbf{q}=0$ (which will happen since $\mu_{W}$ and $\overline{\mu}$ are both probability measures), and then end the $i$-th step.
\end{itemize}

For every $1\leq i\leq n$, summing all the collected equations \eqref{eq:W equals Y}-\eqref{eq:W equals Z not enough} in which $W=i$ yields $\mu_{W}(i)$, and similarly we see that $\Y$ and $\Z$ are mutually independent, both having distribution $\mu$. The condition \eqref{eq:property1 of QX} is satisfied thanks to \eqref{eq:bound on muW}. 

We now expand the above coupling $Q_{\mathcal{X}}$ so that it encompasses the all random variables $W,\Y,\Z,g,h$, where $g,h$ are two Bernoulli random variables that are independent from $\Y,\Z$ and have respective parameters $1-1/n$ and $1/n$. The only additional requirement is that under $Q_{\mathcal{X}}$, 
\begin{equation}\label{eq:property3 of QX}
Q_{\mathcal{X}}\left[\Y=j,W=\Z=i, g=0,h=1\right]=Q_{\mathcal{X}}\left[\Y=j,W=\Z=i\right]\leq\frac{1}{n^{4}},\quad\forall j\neq i. \end{equation}
Note that such $Q_{\mathcal{X}}$ exists thanks to \eqref{eq:property1 of QX}. It then suffices to argue that the final coupling $Q_{\mathcal{X}}$, which features properties \eqref{eq:property1 of QX} and \eqref{eq:property3 of QX}, satisfies \eqref{eq:horizontal independence coupling}. Indeed, by \eqref{eq:property1 of QX}, $W\neq\Y$ implies $W=\Z$. Then by \eqref{eq:property3 of QX}, combining $W\neq \Y$ and $W=\Z$ further implies $g=0$ and $h=1$, and the conclusion follows.
\end{proof}

Let us denote the uniform distribution on torus by
\begin{equation}\label{eq:def of qY}
q_{\mathbb{T}}:=\frac{1}{N^{d}}\sum_{x\in\mathbb{T}}\delta_{x}.   
\end{equation}
In order to use the lemma above to prove our horizontal independence result \Cref{prop:horizontal independence}, we first explain the following standard lemma on simple random walks and Markov chain mixing.
Here we recall the stopping times $R_{\ell}^{\A}$ and $D_{\ell}^{\A}$ in \eqref{eq:def of return and departure for mathrmA} where $\A$ and $\widetilde{\A}$ are cylinders defined in \eqref{eq:def of mathrmA and widetildemathrmA}. 
\begin{lemma}\label{lem:horizontal mixing}
There exists a positive constant $c_{34}$ such that, for any $x\in\partial\widetilde{\A}$ and the simple random walk $(X_{n})_{n\geq 0}=\{(Y_n,Z_n)\}_{n\geq 0}$ on $\mathbb{E}$ started at $x$, 
\begin{equation}\label{eq:horizontal mixing}
\mathrm{TV}(Y_{R_{1}^{\A}},q_{\mathbb{T}})\leq \frac{1}{c_{34}}\exp(-c_{34}\log^{2}N).
\end{equation}
\end{lemma}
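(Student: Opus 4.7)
The plan is to decompose the walk on the cylinder into its $\mathbb T$- and $\mathbb Z$-components via an independent sequence of ``horizontal versus vertical'' labels, lower bound the number of horizontal steps taken before time $R_1^{\A}$, and then invoke the classical mixing estimate for simple random walk on the torus. Concretely, I would realize $(X_n)_{n\geq 0}$ from three mutually independent ingredients: i.i.d.\ labels $(\eta_k)_{k\geq 1}$ with $\Pr[\eta_k=\mathrm h]=d/(d+1)$; a non-lazy SRW $(X^{\mathbb T}_j)_{j\geq 0}$ on $\mathbb T$ started at $\pi_{\mathbb T}(x)$; and a non-lazy SRW $(X^{\mathbb Z}_j)_{j\geq 0}$ on $\mathbb Z$ started at $\pi_{\mathbb Z}(x)$; then set $X_n=(X^{\mathbb T}_{N^h(n)},X^{\mathbb Z}_{N^v(n)})$, where $N^h(n)=\sum_{k\leq n}\mathbbm 1_{\{\eta_k=\mathrm h\}}$ and $N^v(n)=n-N^h(n)$. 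Since $R_1^{\A}$ is measurable with respect to $(\eta,X^{\mathbb Z})$, the random index $M:=N^h(R_1^{\A})$ is independent of $X^{\mathbb T}$; consequently $Y_{R_1^{\A}}=X^{\mathbb T}_M$ is simple random walk on $\mathbb T$ run for an independent random number of steps, and
\[
\mathrm{TV}\bigl(Y_{R_1^{\A}},q_{\mathbb T}\bigr)\;\leq\;\E\bigl[\mathrm{TV}(X^{\mathbb T}_M,q_{\mathbb T})\bigr].
\]

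Next I would show that $M$ is very large with very high probability. The process $(Z_n-Z_0)_{n\geq 0}$ is a martingale with increments in $\{-1,0,1\}$, while $x\in\partial\widetilde{\A}$ gives $|\pi_{\mathbb Z}(x)-\mathrm{z}_c|\geq h_N-r_N\geq cN\log^2 N$; by Doob's exponential maximal inequality,
\[
P^N_x\bigl[R_1^{\A}\leq n\bigr]\;\leq\;2\exp\Bigl(-\frac{(h_N-r_N)^2}{2n}\Bigr).
\]
Taking $n_0:=cN^2\log^2 N$ yields $P_x^N[R_1^{\A}<n_0]\leq 2\exp(-c\log^2 N)$. A standard Chernoff bound applied to the i.i.d.\ Bernoulli sequence $(\eta_k)$ at the \emph{deterministic} time $n_0$ gives $\Pr[N^h(n_0)<\tfrac{d}{2(d+1)}n_0]\leq \exp(-cn_0)$, and monotonicity of $N^h$ then yields $\Pr\bigl[M<c'N^2\log^2 N\bigr]\leq 3\exp(-c\log^2 N)$.

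The final ingredient is that simple random walk on $(\mathbb Z/N\mathbb Z)^d$ has spectral gap of order $N^{-2}$: for each parity $\epsilon\in\{0,1\}$ and any integer $m\geq c_0 N^2\log N$ with $m\equiv\epsilon\pmod 2$, the law of $X^{\mathbb T}_m$ is within total variation $C\exp(-cm/N^2)$ of the uniform distribution $q_{\mathbb T}^{(\epsilon)}$ on the corresponding parity class of $\mathbb T$. The only subtlety is that averaging over $M$ requires $M\bmod 2$ to be approximately uniform on $\{0,1\}$; but conditionally on $R_1^{\A}=n$, $M$ is $\mathrm{Binomial}(n,d/(d+1))$, so
\[
\bigl|\Pr[M\text{ even}\mid R_1^{\A}=n]-\tfrac12\bigr|\;\leq\;\tfrac12\bigl|(d-1)/(d+1)\bigr|^{n},
\]
which on $\{R_1^{\A}\geq n_0\}$ is much smaller than $\exp(-c\log^2 N)$. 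Combining the parity-bias estimate, the mixing bound applied on the event $\{M\geq c'N^2\log^2 N\}$, and the lower tail for $M$ then yields $\mathrm{TV}(Y_{R_1^{\A}},q_{\mathbb T})\leq \frac{1}{c_{34}}\exp(-c_{34}\log^2 N)$ for some $c_{34}=c_{34}(d)>0$. The hard part will be the parity argument, but as just sketched, the sub-exponential parity bias of a binomial with parameter bounded away from $1/2$ handles it immediately.
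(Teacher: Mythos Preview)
Your approach is essentially the same as the paper's: decompose the walk into its horizontal and vertical components, show that the number of horizontal steps before $R_1^{\A}$ is at least $cN^2\log^2 N$ with probability $1-C\exp(-c\log^2 N)$, and then invoke the spectral-gap mixing bound for simple random walk on the torus. The paper bounds the number of \emph{vertical} steps $R_{\widehat Z}$ first (via Azuma--Hoeffding on the non-lazy $\widehat Z$) and then deduces a lower bound on $R_{\widehat Y}$ as a sum of geometric variables; your route via a lower bound on the total time $R_1^{\A}$ and a Chernoff bound on $N^h(n_0)$ is an equally valid variation.

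Your explicit treatment of periodicity (relevant for even $N$) goes beyond the paper's sketch. However, the specific claim ``conditionally on $R_1^{\A}=n$, $M$ is $\mathrm{Binomial}(n,d/(d+1))$'' is \emph{false}: the stopping time $R_1^{\A}$ is precisely the time of the $\tau$-th vertical label, where $\tau$ is the hitting time of $\mathrm{z}_c+[-r_N,r_N]$ for the non-lazy walk $X^{\mathbb Z}$; in particular $\eta_{R_1^{\A}}=\mathrm v$ always, and given both $R_1^{\A}=n$ and $\tau=t$ one has $M=n-t$ deterministically. The fix is immediate: condition on $\tau$ instead of $R_1^{\A}$. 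Since $\tau$ is a function of $X^{\mathbb Z}$ alone (hence independent of the labels $\eta$), conditionally on $\{\tau=t\}$ the variable $M$ is a sum of $t$ i.i.d.\ $\mathrm{Geometric}(1/(d+1))$ variables on $\{0,1,2,\dots\}$, whence $\bigl|\E[(-1)^M\mid\tau=t]\bigr|=(2d+1)^{-t}$, and the already-established bound $\Pr[\tau<cN^2\log^2 N]\leq C\exp(-c\log^2 N)$ finishes the parity estimate. Note also that the displayed inequality $\mathrm{TV}(Y_{R_1^{\A}},q_{\mathbb T})\leq \E[\mathrm{TV}(X^{\mathbb T}_M,q_{\mathbb T})]$ is too crude for even $N$ (its right-hand side stays $\geq 1/2$); bypass it and compare the mixture $\sum_m \Pr[M=m]\,\mathrm{law}(X^{\mathbb T}_m)$ directly with $q_{\mathbb T}=\tfrac12 q_{\mathbb T}^{(0)}+\tfrac12 q_{\mathbb T}^{(1)}$ after splitting over the parity of $m$.
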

\begin{proof}
Since this lemma is quite standard, we only sketch the proof and omit the details. We consider $(\widehat{Y}_{n})_{n\geq 0}$ and $(\widehat{Z}_{n})_{n\geq 0}$ as the non-lazy skeleton chain of $(Y_{n})_{n\geq 0}$ and $(Z_{n})_{n\geq 0}$, and use $R_{\widehat{Y}}$ and $R_{\widehat{Z}}$ to denote the number of steps $(Y_{n})_{n\geq 0}$ and $(Z_{n})_{n\geq 0}$ move before time $R_{1}^{\A}$.
Now by standard reflection principle and Azuma-Hoeffding's inequality on the martingale $(\widehat{Z}_{n})_{n\geq 0}$,
\begin{equation}\label{eq:bound for RwidehatZ2}
    P_{x}^{N}\left[R_{\widehat{Z}}\leq [N^{2}\log^{2}N]\right]\leq \frac{1}{c_{34}}\exp\left(-c_{34}\cdot\left(N\log^{2}N\right)^{2}/({N^{2}\log^{2}N})\right)=\frac{1}{c_{34}}e^{-c_{34}\log^{2}N}.
\end{equation}
Since $R_{\widehat{Y}}$ equals a sum of $R_{\widehat{Z}}$ i.i.d.~geometric random variables on $\{0,1,2,\dots\}$ with parameter $1/(d+1)$, by exponential Chebyshev's inequality,
\begin{equation}\label{eq:bound for RwidehatY}
P_{x}^{N}\left[R_{\widehat{Y}}\leq [N^{2}\log^{2}N]\right]\leq \frac{1}{c_{34}}\exp(-c_{34}\log^{2}N).    
\end{equation}

Finally, it follows from standard estimates with respect to the spectral gap of torus and Markov chain mixing (e.g.~\cite[Section 12]{LP17}) that when $R_{\widehat{Y}}\geq N^{2}\log^{2}N$, we have
\begin{equation}\label{eq:mixing and spectral gap}
\mathrm{TV}(Y_{R_{1}^{\A}},q_{\mathbb{T}})=\mathrm{TV}(\widehat{Y}_{R_{\widehat{Y}}},q_{\mathbb{T}})\leq N^{d}\exp(-CN^{-2}R_{\widehat{Y}})=\frac{1}{c_{34}}\exp\left(-c_{34}\log^{2}N\right).   
\end{equation}
Combining \eqref{eq:bound for RwidehatY} and \eqref{eq:mixing and spectral gap} then yields \eqref{eq:horizontal mixing}.
\end{proof}

The proof of horizontal independence follows from applying \Cref{lem:horizontal independence,lem:horizontal mixing} inductively. 
\begin{proof}[Proof of \Cref{prop:horizontal independence}]
We take $\mathcal{X}$ in \Cref{lem:horizontal independence} as the torus $\mathbb{T}$, and construct $Q_{1}$ given $\{Z_{R_{\ell}^{\A}},Z_{D_{\ell}^{\A}}\}_{\ell\geq 2}$.  We also take $N\geq c_{26}=c_{26}(d)$ so that $e^{-c_{34}N^{2}\log^{2}N}/c_{34}\leq N^{-4d}$ (recall \eqref{eq:horizontal mixing}) and $N\geq 100$. 

When $\ell=2$, by \Cref{lem:horizontal mixing} and the strong Markov property at time $D_{1}^{\A}$, we have $\text{TV}(Y_{R_{2}^{\A}},q_{\mathbb{T}})\leq N^{-4d}$. It then follows from \Cref{lem:horizontal independence} that we can couple the starting point $x_W,x_{\Y},x_{\Z}$ of $W_{2}^{\A},\Y_{2}^{\A},\Z_{2}^{\A}$ such that
\begin{equation}\label{eq:starting point coupling}
\{g_{2}x_{\Y}\}\subseteq \{x_W\}\subseteq \{x_{\Y},h_{2}x_{\Z}\}.  
\end{equation}
Then we couple $W_{2}^{\A},\Y_{2}^{\A},\Z_{2}^{\A},g_{2},h_{2}$ together in the following way. If $x_W=x_{\Y}$, then let $W_{2}^{\A}$ equal $\Y_{2}^{\A}$ and let $\Z_{2}^{\A}$ move independently. Otherwise, by \eqref{eq:starting point coupling}, $x_W=x_{\Z}$. We then let $W_{2}^{\A}$ equal $\Z_{2}^{\A}$ and let $\Y_{2}^{\A}$ move independently. In either case $W_{2}^{\A},\Y_{2}^{\A},\Z_{2}^{\A}$ are required to be identically distributed as $P_{Z_{R_2^{\A}},Z_{D_2^{\A}}}^{N}$. We then have
\begin{equation}\label{eq:horizontal coupling for 2}
\{g_{2}\Y_{2}^{\A}\}\subseteq \{W_{2}^{\A}\}\subseteq \{\Y_{2}^{\A},h_{2}\Z_{2}^{\A}\}.  
\end{equation}
Now for a general $3\leq\ell\leq K$, we couple $\W_{\ell}^{\A},\Y_{\ell}^{\A},\Z_{\ell}^{\A},g_{\ell},h_{\ell}$ conditioned on all the random variables with subscripts smaller than $\ell$ in the same manner as how we couple $W_{2}^{\A},\Y_{2}^{\A},\Z_{2}^{\A},g_{2},h_{2}$ conditioned on $W_{1}^{\A}$, again using \Cref{lem:horizontal independence,lem:horizontal mixing}. Proceeding $(K-1)$ steps of coupling and we finally obtain $Q_{1}$.

Note that under $Q_{1}$, conditioned on the $\mathbb Z$-height of return and departure points $\big\{Z_{R_{\ell}^{\A}},Z_{D_{\ell}^{\A}}\big\}_{\ell\geq 2}$, the correlations in the sequence $\{W_{\ell}^{\A}\}_{\ell \geq 2}$ only comes from $\big\{Y_{R_{\ell}^{\A}},Y_{D_{\ell}^{\A}}\big\}_{\ell\geq 2}$ (the $\mathbb T$-coordinate of return and departure points). Therefore, in the above paragraph, $Y_{R_{\ell+1}^{\A}}$ and the $\ell$-th step of coupling both have correlations with the former $(\ell-1)$ steps of coupling, but the marginal law of $(\Y_{\ell+1}^{\A},\Z_{\ell+1}^{\A},g_{\ell+1},h_{\ell+1})$ does not. In other words, conditioned on the former $(\ell-1)$ steps of coupling, the marginal conditioned law of $(\Y_{\ell+1}^{\A},\Z_{\ell+1}^{\A},g_{\ell+1},h_{\ell+1})$ remains unchanged. This guarantees the conditional independence of $\Y^{\A},\Z^{\A},g,h$-type variables given $\{Z_{R_{\ell}^{\A}},Z_{D_{\ell}^{\A}}\}_{\ell\geq 2}$ under $Q_{1}$. 
\end{proof}

We move on to the proof of \Cref{prop:vertical independence}, which is similar to that of \cite[Proposition 3.1]{Szn09b}. 
\begin{proof}[Proof of \Cref{prop:vertical independence}]
We only prove \eqref{eq:vertical independence 1}, and \eqref{eq:vertical independence 2} follows similarly. Let 
\begin{equation}\label{eq:def of Gamma}
\Gamma:=\left\{(r_{N},h_{N}),(r_{N},-h_{N}),(-r_{N},h_{N}),(-r{N},-h_{N})\right\}.
\end{equation}
For each $(z_{1},z_{2})\in\Gamma$, we define a sequence of i.i.d.~$\mathscr{T}_{\widetilde{\A}}$-valued random variables $\{\zeta_{\ell}^{z_{1},z_{2}}\}_{\ell\geq 1}$ with same distribution as $X._{\land T_{\widetilde{\A}}}$ under $P_{z_{1},z_{2}}^{N}$. We further require that the four sequences $\{\zeta_{\ell}^{z_{1},z_{2}}\}_{\ell\geq 1}$ are independent from each other, and are also independent from Bernoulli variables $\{g_{\ell},h_{\ell}\}_{\ell\geq 1}$. 
We also define a sequence of i.i.d.~random variables $\left\{(R_{\ell},D_{\ell})\right\}_{\ell\geq 2}$ on $\Gamma$ with distribution
\begin{equation}\label{eq:def of iid R and D}
P[(R_{\ell},D_{\ell}) = (r_{N}, \pm h_{N})] = \frac{(h_{N}\pm r_{N})}{2h_{N}},\quad  P[(R_{\ell},D_{\ell}) = (-r_{N}, \pm h_{N})] = \frac{(h_{N}\mp r_{N})}{2h_{N}}, 
\end{equation}
and is independent from $\big\{Z_{R_{\ell}^{\A}},Z_{D_{\ell}^{\A}}\big\}_{\ell\geq 1}$ and all the other random variables that have appeared. 

We now define two counting functions that for $(z_{1},z_{2})\in\Gamma,k\geq 2,$
\begin{align}
N_{k}(z_{1},z_{2})&=\big|\{2\leq \ell\leq k:\big(Z_{R_{\ell}^{\A}},Z_{D_{\ell}^{\A}}\big)=(z_{1},z_{2})\big|,\label{eq:def of couting N}\\
M_{k}(z_{1},z_{2})&=\big|\{2\leq \ell\leq k:(R_{\ell},D_{\ell})=(z_{1},z_{2})\big|.\label{eq:def of counting M}
\end{align}
We take our coupling $Q_{2}$ as a coupling of all the random variables $\{g_{\ell},h_{\ell}\}_{\ell\geq 1}$, $\{\Y_{\ell}^{\A}\}_{\ell\geq 1}$, $\{\widetilde{\Y}_{\ell}^{\A}\}_{\ell\geq 1}$, $\{\zeta_{\ell}^{z_{1},z_{2}}\}_{\ell\geq 1},(z_{1},z_{2})\in\Gamma$, and $\{(R_{\ell},D_{\ell})\}_{\ell\geq 1}$ (and also $\Z$-type variables when proving \eqref{eq:vertical independence 2}) so that
\begin{equation}\label{eq:coupling between Y and widetildeY}
\Y_{\ell}^{\A}=\zeta_{N_{\ell}(Z_{R_{\ell}^{\A}},Z_{D_{\ell}^{\A}})}^{Z_{R_{\ell}^{\A}},Z_{D_{\ell}^{\A}}}\quad \text{and}\quad \widetilde{\Y}_{\ell}^{\A}=\zeta_{M_{\ell}(R_{\ell},D_{\ell})}^{R_{\ell},D_{\ell}}.\end{equation}
Under this coupling, the proof of \eqref{eq:vertical independence 1} essentially boils down to the proving the relations between $M_{\widetilde{K}_{1}}$, $N_{K}$ and $M_{\widetilde{K}_{2}}$. Indeed, it suffices to show that there exist $c=c(u, u_1, u_2)$, $c' =c'(u, u_1, u_2)$ and $\widetilde{c}=\widetilde{c}(u, u_1, u_2)$such that for any $(z_{1},z_{2})\in\Gamma$, 
\begin{align}
&Q_{2}\Big[M_{\widetilde{K}_{1}}(z_{1},z_{2})\geq \frac{1}{4}\Big(1+\frac{1}{100}\xi\Big)\widetilde{K}_{1}\Big]\leq c^{-1}\exp(-c\widetilde{K}_{1})=\widetilde{c}^{-1}\exp\left(-\widetilde{c}K\right);  \label{eq:large deviation MK1}\\
&Q_{2}\Big[M_{\widetilde{K}_{2}}(z_{1},z_{2})\leq \frac{1}{4}\Big(1-\frac{1}{100}\xi\Big)\widetilde{K}_{2}\Big]\leq c^{-1}\exp(-c\widetilde{K}_{2})= \widetilde{c}^{-1}\exp\left(-\widetilde{c}K\right);  \label{eq:large deviation MK2}\\
&Q_{2}\left[\frac{1}{4}\Big(1+\frac{1}{100}\xi\Big)\widetilde{K}_{1}\leq N_{K}(z_{1},z_{2})\leq \frac{1}{4}\Big(1-\frac{1}{100}\xi\Big)\widetilde{K}_{2}\right]\geq 1-c'^{-1}\exp(-c'K). \label{eq:large deviation NK}
\end{align}

Now for any $(z_{1},z_{2})\in\Gamma$, since $r_{N}=o(h_{N})$ as $N$ tends to infinity, it follows from standard exponential Chebyshev's inequality on sum of i.i.d.~Bernoulli random variables that \eqref{eq:large deviation MK1} and \eqref{eq:large deviation MK2} hold for $N\geq C(u,u_{1},u_{2})$ and any $(z_{1},z_{2})\in\Gamma$.
In addition, \eqref{eq:large deviation NK} is a large deviation bound for a Markov chain on a finite space since the sequence $\{(Z_{R_{\ell}},Z_{D_{\ell}})\}_{2\leq\ell\leq K}$ actually forms a Markov chain with four states that has invariant distribution same to that of $(R_{\ell},D_{\ell})$. Using Sanov's theorem for the pair empirical measure of Markov chains \cite[Theorem 3.1.13]{DZ10}, the upper bound in \eqref{eq:large deviation NK} is a result of \cite[(3.35)]{Szn09b} for $N\geq c'(u,u_{1},u_{2})$ and any $(z_{1},z_{2})\in \Gamma$, and the lower bound in \eqref{eq:large deviation NK} then follows since if some $(z_{1},z_{2})\in \Gamma$ violates the lower bound, then there exists another $(z_{1}',z_{2}')\in\Gamma$ that violates the upper bound (with $\xi/100$ replaced by $\xi/300$). 
\end{proof}

The proof of \Cref{prop:Poissonization} is similar to that of \cite[Proposition 4.1]{Szn09b}.
\begin{proof}[Proof of \Cref{prop:Poissonization}]
We first reorder the excursions $\widetilde{\Y}_{\ell}^{\A},\widetilde{\Z}_{\ell}^{\A},\ell\geq 2$ into a new sequence $\big\{\widetilde{\X}_{\ell}^{\A}\big\}_{\ell\geq 1}$ in a specific fashion described below, which is dependent on $\{g_{\ell}\}_{\ell\leq \widetilde{K}_{1}}$ and $\{h_{\ell}\}_{\ell\leq \widetilde{K}_{2}}$. We first collect excursions $\widetilde{\Y}_{\ell}^{\A}$ with $2\leq\ell\leq \widetilde{K}_{1}$ and $g_{\ell}=1$ in increasing order of subscript, and then gather up the remaining excursions in $\big\{\widetilde{\Y}_{\ell}^{\A}\big\}_{2\leq \ell\leq \widetilde{K}_{2}}$ in increasing order of subscript. After this two steps, we collect excursions $\widetilde{\Z}_{\ell}^{\A}$ such that $\ell\leq\widetilde{K}_{2}$ and $h_{\ell}=1$ by its original order, and then put the remaining $\widetilde{\Z}_{\ell}^{\A}$'s with $\ell\leq \widetilde{K}_{2}$ in increasing order of subscript. Finally we gather those $\widetilde{\Y}_{\ell}^{\A}$'s, $\widetilde{\Z}_{\ell}^{\A}$'s with $\ell>\widetilde{K}_{2}$ in increasing order of subscript. 

With this, we now construct our coupling $Q_{3}$. Conditioned on $\{g_{\ell}\}_{\ell\geq \widetilde{K}_{1}}$ and $\{h_{\ell}\}_{\ell\leq \widetilde{K}_{2}}$, for every positive integer $k$, we require $\widetilde{\W}_{k}^{\A}$ to be equal to $\widetilde{\X}_{k}^{\A}$. Note that since $\{g_{\ell},h_{\ell}\}_{\ell\geq 1}$ is independent from $\big\{\widetilde{\Y}_{\ell}^{\A},\widetilde{\Z}_{\ell}^{\A}\big\}_{\ell\geq 1}$, the law of $\big\{\widetilde{\W}_{\ell}^{\A}\big\}_{\ell\geq 1}$ is indeed that of a sequence of i.i.d.~excursions from $\A$ to $\partial\widetilde{\A}$ with the same distribution as $X._{\land T_{\widetilde{\A}}}$ under $P_{q}^{N}$. 
Thanks to the reordering, \eqref{eq:Poissonization 1} and $\eqref{eq:Poissonization 2}$ can now be implied by
\begin{equation}\label{eq:proof of Poissonization 1}
Q_{3}\Big[\sum_{\ell=2}^{\widetilde{K}_{1}}g_{\ell}\geq \widetilde{J}_{1}\Big]\geq 1-\frac{1}{c_{28}}\text{exp}(-c_{28}K), \text{ and } Q_{3}\Big[\widetilde{K}_{2}+\sum_{\ell=2}^{\widetilde{K}_{1}}h_{\ell}\leq \widetilde{J}_{1}'\Big]\geq 1-\frac{1}{c_{28}}\text{exp}(-c_{28}K). 
\end{equation}
Recall \eqref{eq:def of widetildeK1 and widetildeK2} for the definition of $\widetilde{K}_{1}$ and $\widetilde{K}_{2}$, that $\{g_{\ell}\}_{\ell\geq 1}$ and $\{h_{\ell}\}_{\ell\geq 1}$ are two sequences of i.i.d.~Bernoulli variables with respective parameter $1-N^{-d}$ and $N^{-d}$, and \eqref{eq:def Poisson intensity} for the intensities of Poisson random variables $\widetilde{J}_{1}$ and $\widetilde{J}_{1}'$ respectively. The above inequalities then follow from standard exponential Chebyshev's inequalities on Bernoulli and Poisson random variables.
\end{proof}

We then move on to the proof of \Cref{prop:Handling the first excursion}, which is essentially identical to the proof of \cite[(3.11)]{Szn09b}; see \cite[(3.23) and (3.24)]{Szn09b}. We still include a proof for completeness.
\begin{proof}[Proof of \Cref{prop:Handling the first excursion}]
Recall \eqref{eq:def of mathrmA and widetildemathrmA} and \eqref{eq:def of I and widetildeI} for the definition of cylinders $\A,\widetilde{\A}$ and intervals $\mathrm{I},\widetilde{\mathrm{I}}$. It suffices to consider the case $|\mathrm{z}_{c}|\leq r_{N}$, where we construct our coupling $Q_{4}$ as follows. 

We call an excursions in $\widetilde{\W}_{\ell}^{\A}$ ``good'' if it passes $\mathbb{T}\times\{0\}$ before leaving $\widetilde{\A}$. 
Now if there exists at least one ``good'' excursion in $\big\{\widetilde{\W}_{\ell}^{\A}\big\}_{\widetilde{J}_{1}'\leq \ell\leq \widetilde{J}_{2}}$, we let $W_{1}^{\A}$ be the part of the first good excursion after reaching level $\mathbb{T}\times\{0\}$, under which circumstance the inclusion in \eqref{eq:Handling the first excursion} holds. Otherwise we let $W_{1}^{\A}$ run freely. 

With this construction, it suffices to prove that, with probability larger than $1-c_{29}^{-1}\text{exp}(-c_{29}K)$, there exists at least one good excursion in $\big\{\widetilde{\W}_{\ell}^{\A}\big\}_{\widetilde{J}_{1}'\leq \ell\leq \widetilde{J}_{2}}$. Note that by a standard calculation on one-dimensional simple random walk and the assumption $|\mathrm{z}_{c}|\leq r_{N}$, the chance for each excursion $\widetilde{\W}_{\ell}^{\A}$ (which has the same law as $X._{\land T_{\widetilde{\A}}}$ under $P_{q}^{N}$) to reach level $\mathbb{T}\times\{0\}$ before leaving $\widetilde{\A}$ is larger than $1/4$, and the conclusion thus follows from an exponential Chebyshev's inequality on the Poisson random variable $\widetilde{J}_{2}-\widetilde{J}_{1}'$. 
\end{proof}

The proof of \Cref{prop:extraction} follows from combining the thinning property of Poisson point processes with \cite[Lemma 1.1]{Szn09a}.
\begin{proof}[Proof of \Cref{prop:extraction}]
By \cite[Lemma 1.1]{Szn09a}, for $\K\subseteq \mathbb{T}\times(\mathrm{z}_{c}-r_{N},\mathrm{z}_{c}+r_{N})$ and each $x\in \K$, 
\begin{equation}
P_{q}^{N}\left[H_{\K}<T_{\widetilde{\A}},X_{H_{\K}}=x\right]=(d+1)\frac{h_{N}-r_{N}}{N^{d}}e_{\K,\widetilde{\A}}(x),\label{eq:hitting distribution of Pq}
\end{equation}
and as an application of the Markov property,
\begin{equation}
P_{q}^{N}\left[H_{\K}<T_{\widetilde{\A}},X_{H_{\K}+}.\in dw\right]=(d+1)\frac{h_{N}-r_{N}}{N^{d}}P_{e_{\K,\widetilde{\A}}}^{N}(dw).    \label{eq:Markov property of Pq}
\end{equation}
Recall that $\widetilde{J}_{1}$ and $\widetilde{J}_{2}-\widetilde{J}_{1}$ are two independent Poisson random variables with intensities $\widetilde{\lambda}_1$ and $\widetilde{\lambda}_2 - \widetilde{\lambda}_1$ respectively; see \eqref{eq:def Poisson intensity} and \eqref{eq:def Poisson intensity2}. Taking $\K=\B$ in \eqref{eq:hitting distribution of Pq} and \eqref{eq:Markov property of Pq}, combining the thinning property of Poisson point processes yields the conclusion.
\end{proof}

The proofs of \Cref{prop:lower truncation,prop:upper interlacement} are very similar. In the following, we use $\succeq$ and $\preceq$ to denote the relation of stochastic domination.
\begin{proof}[Proofs of \Cref{prop:lower truncation,prop:upper interlacement}] 
Note that although each excursion $\widetilde{\W}\in\text{supp}(\widetilde{\mu}_{1})$ may travel back and forth between $\B$ and $\partial\D$ multiple times, the truncated version $\widetilde{\W}._{\land T_{\D}}$ only takes the first excursion into account, and thus the second inclusion in \Cref{prop:lower truncation} is trivial.
Similarly, the variable $N_{u_{2}}(\B)$ appeared in \Cref{prop:upper interlacement} is a Poissonian sum of geometric random variables starting from 1, where each geometric variable represents the number of excursions in one random walk that hits $\B$, and we bound every geometric variable from below by 1, that is, we only consider the first excursion. 

Thanks to the property of Poisson point measures, to prove \eqref{eq:lower truncation} and \eqref{eq:upper interlacement}, it suffices to prove the following two stochastic domination relations with respect to the intensity measures:
\begin{equation}\label{eq:lower domination and upper domination}
\Big(1-\frac{5}{7}\xi\Big)e_{\B}(\cdot)\preceq\Big(1-\frac{3}{7}\xi\Big)\Big(1-\frac{r_{N}}{h_{N}}\Big)e_{\B,\widetilde{\A}}(\cdot)\quad\text{and}\quad
\Big(1+\frac{6}{7}\xi\Big)e_{\B,\widetilde{\A}}(\cdot)
\preceq \left(1+\xi\right)e_{\B}(\cdot).
\end{equation}
Running the same techniques as the proof of \cite[Lemma 4.4]{Szn09a}, for all $N\geq c_{30}=c_{30}(u.u_{1}.u_{2})>0$ and $x\in\partial^{\text{int}} \B$, 
\begin{equation}\label{eq:lower equilibrium}
e_{\B,\widetilde{\A}}(x)\geq e_{\B}(x)\Big(1-c\frac{f(N)^{d-1}\log^{2}N}{N^{d-1}}\Big)\overset{\eqref{eq:requirement for f and g}, d\geq 2}{\geq }e_{\B}(x)\Big(1-\frac{c}{\log N}\Big)\geq \frac{(1-\frac{5}{7}\xi)}{(1-\frac{3}{7}\xi)(1-\frac{r_{N}}{h_{N}})}.
\end{equation}
Similarly, by running the same techniques as the proof of \cite[Proposition 6.1]{Szn09b}, for all $N\geq c_{32}=c_{32}(u.u_{1}.u_{2})>0$ and $x\in\partial^{\text{int}} \B$, 
\begin{equation}\label{eq:upper equilibrium}
e_{\B,\widetilde{\A}}(x)\leq e_{\B}(x)\Big(1+c'\frac{f(N)^{d-1}}{N^{d-1}}\Big)\overset{\eqref{eq:requirement for f and g}, d\geq 2}{\leq }e_{\B}(x)\Big(1+\frac{c'}{\log N}\Big)\leq \frac{(1+\xi)}{1+\frac{6}{7}\xi}.
\end{equation}
The conclusion then follows.
\end{proof}

The proofs of \Cref{prop:lower interlacement,prop:upper truncation} are very similar.
\begin{proof}[Proofs of \Cref{prop:lower interlacement,prop:upper truncation}]
We first prove \Cref{prop:lower interlacement}. We first define two Poisson random variables $\widehat J_0, \widehat J_1$ satisfying
\begin{equation}\label{eq:intensity of widehatJ1}
\text{intensity of }\widehat{J}_{0} = u_{1}\text{cap}(\B)<\Big(1-\frac{5}{7}\xi\Big)u\cdot\text{cap}(\B)=\text{intensity of }\widehat{J}_{1}.
\end{equation}
Note that 
the variable $N_{u_{1}}(\B)$ can be stochastically dominated by the sum of $\widehat J_0$ i.i.d.~geometric random variables supported on $\{1, 2, \cdots\}$ with success probability $1-\sup_{x\in\partial \D}P_{x}[H_{\B}<\infty]$.
Moreover, let $\big\{\widetilde{Z}_{\ell}^{\B}\big\}_{\ell\geq 1}$ denote a sequence of i.i.d.~excursions with the same law as $X._{\land T_{\D}}$ under $P_{\overline{e}_{\B}}$, then
the set $\big\{\widetilde{Z}_{\ell}^{\B}\big\}_{\ell\leq\widehat{J}_{1}}$ is equal in distribution with $\text{supp}(\widehat{\mu}_{1})$.

By applying \Cref{prop:hitting distribution two boxes simple} with $A=\B$ and $U=\D$, we can show that when $N\geq C(\xi)(\geq c_{1}(\xi/10^{3}))$, for every $x\in\partial\D$ and $y\in\partial^{\text{int}}\B$, 
\begin{equation}\label{eq:hitting distribution Zd simple}
\Big(1-\frac{1}{10^{3}}\xi\Big)\overline{e}_{\B}(y)\leq P_{x}[X_{H_{\B}}=y\mid H_{\B}<\infty]\leq \Big(1+\frac{1}{10^{3}}\xi\Big)\overline{e}_{\B}(y).  \end{equation}
Then by soft local time techniques, similar to the proof of \Cref{prop:coupling W and widetildeZ}, there exists a coupling $Q_{7}$, an event $E^{\B}$ and a sufficiently small constant $c_{31}$ with
\begin{equation}\label{eq:probability of EB}
Q_{7}\left[E^{\B}\right]\geq 1-\exp(-10c_{31}\text{cap}(\B)),    
\end{equation}
so that on $E^{\B}$, for all $m\geq u_{1}\text{cap}(\B)/10$,
\begin{equation}\label{eq:inclusion coupling 2}
\Big\{Z_{1}^{\B},\dots,Z_{m}^{\B}\Big\}\subseteq \left\{\widetilde{Z}_{1}^{\B},\dots,\widetilde{Z}_{(1+\frac{1}{20}\xi)m}^{\B}\right\}. \end{equation}
By this coupling, to conclude \eqref{eq:lower interlacement}, it suffices to bound $N_{u_{1}}(\B)$ from above and bound $\widehat{J_{1}}$ from below.  
Using standard hitting probability estimates of simple random walk, by $d\geq 2$ and \eqref{eq:requirement for f and g} we obtain that 
\begin{equation}\label{eq:upper hitting}
\sup_{x\in\D}P_{x}[H_{\B}<\infty]\leq c\frac{f(N)^{d-1}}{g(N)^{d-1}}\leq c\frac{1}{\left(\log^{3}N\right)^{d-1}}\leq \frac{c}{\log N}.    
\end{equation}
The bound \eqref{eq:lower interlacement} then follows from \eqref{eq:intensity of widehatJ1}, the success probability \eqref{eq:upper hitting} of the aforementioned geometric variables, and standard exponential Chebyshev's inequalities on Poisson random variables and geometric random variables.

We then explain \Cref{prop:upper truncation}, which is very similar to the statements above. Indeed, combining \eqref{eq:lower equilibrium}, \eqref{eq:upper equilibrium} and \eqref{eq:hitting distribution Zd simple} yields that for some $C'= C'(\xi)\geq c_{1}(\xi/10^{3})$, when $N\geq C'$, for every $x\in \partial\D$ and $y\in \partial^{\text{int}}\B$,
\begin{equation}\label{eq:hitting distribution torus simple}
\Big(1-\frac{1}{10^{2}}\xi\Big)\overline{e}_{\B,\widetilde{\A}}(y)\leq P_{x}^{N}[X_{H_{\B}}=y\mid H_{\B}<\infty]\leq \Big(1+\frac{1}{10^{2}}\xi\Big)\overline{e}_{\B,\widetilde{\A}}(y).
\end{equation}
Following the similar steps as in \eqref{eq:probability of EB} and \eqref{eq:inclusion coupling 2}, it suffices to establish the bounds with respect to the number of excursions in $\text{supp}(\widetilde{\mu}_{2})^{\B}$ and $\text{supp}(\widehat{\mu}_{2})$.
Note that the former can be stochastically dominated by a Poissonian sum of i.i.d.~geometic random variables supported on $\{1, 2, \cdots\}$ with success probability~$1-\sup_{x\in\D}P_{x}^{N}[H_{\B}<T_{\widetilde{\A}}]$, and the latter is a Poisson random variable. Using the same techniques as in the proof of \cite[(4.16)]{Szn09a} or \cite[Lemma 5.22]{Szn09b}, by $d\geq 2$ and \eqref{eq:requirement for f and g} we have
\begin{equation}\label{eq:lower hitting}
\sup_{x\in\D}P_{x}^{N}[H_{\B}<T_{\widetilde{\A}}]\leq c'\frac{f(N)^{d-1}\log^{2}N}{g(N)^{d-1}}\leq c\frac{\log^{2}N}{\left(\log^{3}N\right)^{d-1}}\leq \frac{c'}{\log N},    
\end{equation}
and the conclusion similarly follows from exponential Chebyshev's inequalities.
\end{proof}

\subsection{Adapted proofs for \texorpdfstring{\Cref{prop:very strong coupling,prop:very strong coupling bias,prop:strong coupling}}{prop:very strong coupling,prop:very strong coupling bias,prop:strong coupling}}\label{subsec:adapted proof very strong coupling}
In this subsection we adapt the proof of \Cref{thm:very strong coupling S} to prove the couplings appeared in \Cref{prop:very strong coupling,prop:very strong coupling bias,prop:strong coupling} for simple random walk, biased random walk and conditional biased walk respectively in \Cref{subsubsec:simple,subsubsec:bias upper,subsubsec:bias lower}. We will frequently compare between our goals with \Cref{thm:very strong coupling S} to highlight the subtle differences as well as necessary adaptions. 

\subsubsection{The proof of \Cref{prop:very strong coupling}}\label{subsubsec:simple}
Let us first begin with the proof of \Cref{prop:very strong coupling}. We will take $\B = \mathsf B$ and $\D = \mathsf D$ in \Cref{thm:very strong coupling S}, where the side-lengths of $\mathsf B$ and $\mathsf D$ have been set as $[N/\log^3 N]$ and $[N/20]$ in \Cref{sec:bad local time}, which satisfy the requirements in \eqref{eq:requirement for f and g}. Therefore, this proposition can be seen as a special case of the upper bound of \Cref{thm:very strong coupling S}, except that the walk now starts from the origin instead of the uniform distribution on $\mathbb{T}\times\{0\}$. 

Consequently, it suffices to adapt the result \Cref{prop:Handling the first excursion} concerning the first excursion. However, with the distribution of $W_{1}^{\A}$ governed by $P_0^{N}$ instead of $P_{q_{0}}^{N}$, \eqref{eq:Handling the first excursion} may no longer be true if $0\in\A$ since with uniformly positive probability none of the excursions in $\big\{\widetilde{\W}_{\ell}^{\A}\big\}_{\widetilde{J}_{1}'\leq \ell\leq \widetilde{J}_{2}}$ hits $0$. 

The idea is that we only care about the traces of the excursion after hitting $\mathsf{B}$. 
For an excursion $W$ in $\{W^{\A}_1, \widetilde{\W}^{\A}_{\ell}, \ell\geq 1\}$, we use 
$H_{\mathsf{B}}(W)$, $T_{\widetilde{\A}}(W)$ for the entrance time of $W$ into $\mathsf B$ and the departure time of $W$ from $\widetilde{\A}$ respectively. We also denote by 
\begin{equation}
(W)^{\text{tr}}:=W_{[H_{\mathsf{B}}(W),T_{\widetilde{\A}}(W)]}\quad\text{and}\quad (W)_{H_{\mathsf B}}:=(W)_{H_{\mathsf B}(W)}
\end{equation}
for the $\mathscr{T}_{\widetilde{\A}}$-valued truncated excursion of $W$ from $H_{\mathsf B}(W)$ to $T_{\widetilde{\A}}(W)$, and for the point where excursion $W$ enters $\mathsf B$.
Then it actually suffices to prove the following weaker version of \eqref{eq:Handling the first excursion} that under some coupling $Q_{4}'$, there exists $c_{35} = c_{35}(u,u_{1},u_{2})>0$ such that 
\begin{equation}\label{eq:Handling the first excursion adaped simple}
Q_{4}'\left[(W_{1}^{\A})^{\text{tr}}\in \big\{(\widetilde{\W}_{\ell}^{\A})^{\text{tr}}\big\}_{\widetilde{J}_{1}'\leq \ell\leq \widetilde{J}_{2}}\right]\geq 1-\frac{1}{c_{35}}\exp\left(-c_{35}\text{cap}(\mathsf{B})\right).
\end{equation}
Here, the requirement \eqref{eq:large distance from mathsfB} plays an important role since by a similar argument in \eqref{eq:lower hitting}, the probability that $W_1^{\A}$ hits box $\B$ before exiting cylinder $\widetilde{\A}$ is small, and conditioned on this rare event, the hitting distributions for $W_{1}^{\A}$ and $\widetilde{\W}_{\ell}^{\A},\ell\geq 1$ on $\mathsf{B}$ are all approximately $e_{\mathsf{B},\widetilde{\A}}$ (see \eqref{eq:hitting distribution torus simple}).
\begin{proof}[Proof of adaptation of \Cref{prop:Handling the first excursion} \rm{(SRW case)}]
Thanks to \eqref{eq:hitting distribution torus simple} and \eqref{eq:lower hitting}, when $N\geq C(\xi)$, 
\begin{equation}\label{eq:Handling the first excursion stochastic domination simple}
P_{0}^{N}\left[\left(W_{1}^{\A}\right)_{H_{\mathsf{B}}}=\cdot,H_{\mathsf B}(W_{1}^{\A})<T_{\widetilde{\A}}(W_{1}^{\A})\right]\preceq \frac{C}{\log N}\left(1+\frac{1}{10^{2}}\xi\right)\overline{e}_{\mathsf{B},\widetilde{\A}}(\cdot).   
\end{equation}
Taking $\K=\mathsf{B}$ in \eqref{eq:hitting distribution of Pq}, we then obtain that $\big\{(\widetilde{\W}_{\ell}^{\A})_{H_{\mathsf{B}}}\big\}_{\widetilde{J}_{1}'\leq \ell\leq \widetilde{J}_{2}}$ is equal to a sequence of i.i.d.~variables with distribution $\overline{e}_{\mathsf{B},\widetilde{\A}}$, where the size of this sequence is a Poisson random variable $\widetilde{J}$ with
\begin{equation}\label{eq:intensity of widetildeJ}
\text{intensity of }\widetilde J = \frac{\xi}{7}\cdot u\left(1-\frac{r_{N}}{h_{N}}\right)\sum_{x\in\partial\mathsf{B}}e_{\mathsf{B},\widetilde{\A}}(x).    
\end{equation}

We now construct the coupling $Q_{4}'$ between the excursions $W_{1}^{\A}$ and $\big\{\widetilde{\W}_{\ell}^{\A}\big\}_{\widetilde{J}_{1}'\leq \ell\leq \widetilde{J}_{2}}$. If there exists an excursion in $\big\{\widetilde{\W}_{\ell}^{\A}\big\}_{\widetilde{J}_{1}'\leq \ell\leq \widetilde{J}_{2}}$ that hits $\mathsf B$, we denote $\widetilde{\W}$ by the first excursion. In this case, $(\widetilde{\W})_{H_{\mathsf{B}}}$ has distribution $\overline{e}_{\mathsf{B},\widetilde{\A}}$, and we can construct a coupling so that $(W_{1}^{\A})_{H_{\mathsf{B}}}=x\in\partial\mathsf{B}$ implies $(\widetilde{\W})_{H_{\mathsf{B}}}=x$, which is possible when $N\geq C(\xi)$ thanks to \eqref{eq:Handling the first excursion stochastic domination simple}. We then let $W_{1}^{\A}$ and $\widetilde{\W}$ run together after hitting $\mathsf{B}$ at the same point, and let all the other excursions in $\big\{\widetilde{\W}_{\ell}^{\A}\big\}_{\widetilde{J}_{1}'\leq \ell\leq \widetilde{J}_{2}}$ run freely. Under this coupling, we shall see that the event in \eqref{eq:Handling the first excursion adaped simple} holds as long as $\widetilde{J}\geq 1$. Therefore, we only need to prove that 
\begin{equation}\label{eq:large deviation on widetildeJ}
Q_{4}'[\widetilde{J}\geq 1]\geq 1-\frac{1}{c_{35}}\exp\left(-c_{35}\text{cap}(\mathsf{B})\right),  
\end{equation}
which holds by combining \eqref{eq:lower equilibrium}, \eqref{eq:intensity of widetildeJ} and the fact that $\widetilde{J}$ is a Poisson random variable.
\end{proof}

\subsubsection{The proof of \Cref{prop:very strong coupling bias}}\label{subsubsec:bias upper}
We move on to the proof of \Cref{prop:very strong coupling bias}, the coupling in the biased random walk case. We still take $\B = \mathsf B$ and $\D = \mathsf D$ in \Cref{thm:very strong coupling S} as in \Cref{subsubsec:simple}, which satisfy the requirements in \eqref{eq:requirement for f and g}. Then this proposition can be seen as a special case of the upper bound of \Cref{thm:very strong coupling S}, except that the walk now has an upward drift and starts from the origin instead of the uniform distribution on $\mathbb{T}\times\{0\}$. 
In the following, we use $P_{q_{z}}^{N,\alpha}$, $P_{q}^{N,\alpha}$, $P_{Z_{R_{\ell}},Z_{D_{\ell}}}^{N,\alpha}$ as the biased walk counterparts of measures $P_{q_{z}}^{N}$, $P_{q}^{N}$ and $P_{Z_{R_{\ell}},Z_{D_{\ell}}}^{N}$. 

The adapted proof contains three steps:
\begin{enumerate}[label=\textbf{Step \arabic*}]
\item \label{item:step1} Adapt \Cref{prop:horizontal independence,prop:vertical independence,prop:Poissonization}, where all the excursions are now considered under the law of biased walks instead of simple random walks.
\item \label{item:step2} Adapt \Cref{prop:Handling the first excursion}. Prove a stochastic domination control of a Poissonian number of biased random walk excursions in terms of a Poissonian number of simple random walk excursions. 
Then prove an adaptation of \eqref{eq:Handling the first excursion adaped simple} with $(W_{1}^{\A})^{\rm tr}$ extracted from the first excursions of biased walk, while $\{(\widetilde{\W}_{\ell})^{\rm tr}\}_{\ell\geq 1}$ being a sequence of i.i.d.~simple walk excursions. 
This step relies on the calculation of Radon-Nikodym derivatives, similar to those in the proof of \Cref{lem:hitting distribution three boxes bias} and \Cref{lem:coupling widetildeW with widetildeZ}.
\item \label{item:step3} Combine the above two steps with a slightly changed version of \eqref{eq:extraction 1} in \Cref{prop:extraction} and \Cref{prop:upper truncation,prop:upper interlacement} (with all the excursions still having the law of simple walks) to conclude.
\end{enumerate}

We first complete \ref{item:step1}, which proceeds in a similar way as \Cref{prop:horizontal independence,prop:vertical independence,prop:Poissonization}. In the similar proof structure, we first state the adaptations in these propositions: 
\begin{itemize}
\item The excursions $W_{\ell}^{\A}, {\ell\geq 1}$ are now extracted from the walk under law $P_{0}^{N,\alpha}$ instead of $P_{q_0}^{N}$. \label{item:adaptW}
\item Given $\big\{Z_{R_{\ell}^{\A}},Z_{D_{\ell}^{\A}}\big\}_{\ell\geq 2}$, for every $\ell\geq 2$, the independent excursions $\Y_{\ell}^{\A}$ and $\Z_{\ell}^{\A}$ have the same conditional law as that of $W_{\ell}^{\A}$ under $P_{Z_{R_{\ell}^{\A}},Z_{D_{\ell}^{\A}}}^{N,\alpha}$ instead of under $P_{Z_{R_{\ell}^{\A}},Z_{D_{\ell}^{\A}}}^{N}$. \label{item:adaptYZ}
\item The i.i.d.~excursions $\widetilde{\Y}_{\ell}^{\A}$, $\widetilde{\Z}_{\ell}^{\A}$ and $\widetilde{\W}_{\ell}^{\A}, {\ell\geq 2}$ have the same distribution as $X._{\land T_{\widetilde{\A}}}$ under $P_{q}^{N,\alpha}$ instead of $P_{q}^{N}$.  \label{item:adapt widetildeYZ}
\end{itemize}
We first show that the claims of \Cref{prop:horizontal independence,prop:vertical independence,prop:Poissonization} remain valid after these modifications. 
\begin{proof}[Proof of the modified \Cref{prop:horizontal independence,prop:vertical independence,prop:Poissonization}]
The proofs remain roughly the same, and we only point out necessary changes. 

In the proof of the modified \Cref{prop:horizontal independence}, we keep \Cref{lem:horizontal independence} and show that \Cref{lem:horizontal mixing} still holds in the biased walk case, after which the proof of the adapted \Cref{prop:horizontal independence} can be completed in the same way. The only minor change appears in the proof of \eqref{eq:bound for RwidehatZ2}, since $(\widehat{Z}_{n})_{n\geq 0}$ now has the law of a biased random walk instead of a simple random walk. However, for an excursion $e$ which travels from $\mathbb{T}\times\{h_{N}\}$ to $\mathbb{T}\times\{r_{N}\}$ or from $\mathbb{T}\times\{-h_{N}\}$ to $\mathbb{T}\times\{-r_{N}\}$, its height $h(e)$ (recall \Cref{subsec:RN}) is no larger than $h_{N}-r_{N}$. Therefore, by \eqref{eq:RN bias to unbias bound} and $\alpha>1/d$ we have
\begin{equation}\label{eq:bound for RwidehatZ bias}
\begin{split}
P_{x}^{N,\alpha}\left[R_{\widehat{Z}}\leq[N^{2}\log^{2}N]\right]&\leq P_{x}^{N}\left[R_{\widehat{Z}}\leq[N^{2}\log^{2}N]\right]\cdot\left(\frac{1+N^{-d\alpha}}{1-N^{-d\alpha}}\right)^{(h_{N}-r_{N})/2}\\
&\leq\frac{\exp(-c_{34}\log^{2}N)}{c_{34}}\left(\frac{1+N^{-d\alpha}}{1-N^{-d\alpha}}\right)^{CN\log^{2}N}\leq\frac{\exp(-c_{34}\log^{2}N)}{c_{34}}.
\end{split}
\end{equation}

In the proof of the modified \Cref{prop:vertical independence}, the distribution of $(R_{\ell},D_{\ell})$ should be slightly changed to remain to be the invariant distribution of $(Z_{R_{\ell}^{\A}},Z_{D_{\ell}^{\A}}),{2\leq\ell\leq K}$. Indeed, we have
\begin{equation}\label{eq:11 bias}
P^{\alpha}[(R_{\ell},D_{\ell})=(r_{N},h_{N})]=\frac{\left(\frac{1-N^{-d\alpha}}{1+N^{-d\alpha}}\right)^{h_{N}}-\left(\frac{1-N^{-d\alpha}}{1+N^{-d\alpha}}\right)^{r_{N}}}{\left(\frac{1-N^{-d\alpha}}{1+N^{-d\alpha}}\right)^{h_{N}}-\left(\frac{1-N^{-d\alpha}}{1+N^{-d\alpha}}\right)^{-h_{N}}}.
\end{equation}
However, since $N^{-d\alpha}<N^{-1}$ and $r_{N}<h_{N}<2N\log^{2}N$, it follows that as $N$ tends to infinity, the right hand side of \eqref{eq:11 bias} equals $(h_{N}-r_{N})(1+o(1))/2h_{N}$,
which is approximately equal to the simple walk case. Similar estimates on the probabilities of other elements in $\Gamma$ (recall \eqref{eq:def of Gamma}) under $P^{N,\alpha}$ guarantees that the invariant distribution of $(Z_{R_{\ell}^{\A}},Z_{D_{\ell}^{\A}}),{2\leq\ell\leq K}$ remains asymptotically identical after replacing $P_{0}^{N}$ with $P_{0}^{N,\alpha}$. Therefore, the argument of the original proof of \Cref{prop:vertical independence} is still valid.

Finally, the proof of the modified \Cref{prop:Poissonization} remains unchanged.
\end{proof}

We then move on to \ref{item:step2}. We first show that a Poissonian number of biased random walk excursions can be stochastically dominated by a Poissonian number of simple random walk excursions. Recall the notation $(W)^{\text{tr}}=W_{[H_{\mathsf{B}}(W),T_{\widetilde{\A}}(W)]}$ for the truncated excursion of $W$ from $H_{\mathsf B}(W)$ to $T_{\widetilde{\A}}(W)$. 

\begin{proposition}\label{prop:Poissonian coupling upper}
One can construct on an auxiliary space $(\Omega_{10},\mathcal{F}_{10})$ a coupling $Q_{10}$ of the $\mathscr{T}_{\widetilde{\A}}$-valued excursions $\{\widetilde{\W}_{\ell}^{\B}\}_{\ell \geq 1}$, $\{(\overline{\W}_{\ell}^{\A})^{\text{tr}}\}_{\ell\geq 1}$ and Poisson random variables $\widetilde{J}^{\B}$, $\overline{J}^{\text{tr}}$,
under which
\begin{itemize}
\item the excursions $\widetilde{\W}_{\ell}^{\B}, {\ell\geq 1}$ is a sequence of i.i.d.~excursions from $\B$ to $\partial\widetilde{\A}$ with the same distribution as $X._{\land T_{\widetilde{\A}}}$ under $P_{\overline{e}_{\B,\widetilde{\A}}}^{N,\alpha}$;
\item the excursions $(\overline{\W}_{\ell}^{\A})^{\text{tr}}, {\ell\geq 1}$ is a sequence of i.i.d.~excursions from $\B$ to $\partial\widetilde{\A}$ with the same distribution as $X._{\land T_{\widetilde{\A}}}$ under $P_{\overline{e}_{\B,\widetilde{\A}}}^{N}$;
\item the Poisson variables $\widetilde{J}^{\B}$, $\overline{J}^{\text{tr}}$ are independent from $\{\widetilde{\W}_{\ell}^{\B}\}_{\ell\geq 1}$, $\{(\overline{\W}_{\ell}^{\A})^{\text{tr}}\}_{\ell\geq 1}$, and satisfy
\begin{align}
\text{intensity of }\widetilde{J}^{\B}&:=\widetilde{\lambda}^{\B}=\frac{1}{14}\xi\cdot u\left(1-\frac{r_{N}}{h_{N}}\right)\sum_{x\in\partial\mathsf{B}}e_{\mathsf{B},\widetilde{\A}}(x)\label{eq:intensity of widetildeJB}\\
\text{intensity of }\overline{J}^{\text{tr}}&=\overline{\lambda}^{\text{tr}}:=\frac{1}{7}\xi\cdot u\left(1-\frac{r_{N}}{h_{N}}\right)\sum_{x\in\partial\mathsf{B}}e_{\mathsf{B},\widetilde{\A}}(x)\label{eq:intensity of overlineJtr}.    
\end{align}
\end{itemize}
such that for all $N\geq c_{36}=c_{36}(\alpha,u,u_{1},u_{2})>0$, 
\begin{equation}\label{eq:Poissonian coupling upper}
Q_{10}\left[\{\widetilde{\W}_{\ell}^{\B}\}_{\ell\leq\widetilde{J}^{\B}}\subseteq\{(\overline{\W}_{\ell}^{\A})^{\text{tr}}\}_{\ell\leq\overline{J}^{\text{tr}}}\right]=1.
\end{equation}
\end{proposition}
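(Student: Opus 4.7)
The plan is to realise both i.i.d.~families of excursions, together with their Poissonian index variables, as the supports of two Poisson point measures on the excursion space $\mathscr{T}_{\widetilde{\A}}$, and then to couple the two Poisson measures by the standard thinning construction. Concretely, let $\widetilde{M}$ and $\overline{M}$ denote Poisson point measures on $\mathscr{T}_{\widetilde{\A}}$ with respective intensities
\begin{align*}
\widetilde{\nu}(dw) := \widetilde{\lambda}^{\mathsf{B}}\, P^{N,\alpha}_{\overline{e}_{\mathsf{B},\widetilde{\A}}}\bigl[X._{\land T_{\widetilde{\A}}}\in dw\bigr],
\qquad
\overline{\nu}(dw) := \overline{\lambda}^{\rm tr}\, P^{N}_{\overline{e}_{\mathsf{B},\widetilde{\A}}}\bigl[X._{\land T_{\widetilde{\A}}}\in dw\bigr].
\end{align*}
Then $\text{supp}(\widetilde{M})$ is equidistributed with the multiset $\{\widetilde{\W}^{\mathsf{B}}_{\ell}\}_{\ell\leq \widetilde{J}^{\mathsf{B}}}$ and $\text{supp}(\overline{M})$ is equidistributed with $\{(\overline{\W}^{\A}_{\ell})^{\rm tr}\}_{\ell\leq \overline{J}^{\rm tr}}$. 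Provided that pointwise intensity domination $\widetilde{\nu}(dw)\leq \overline{\nu}(dw)$ holds, one can realise $\widetilde{M}$ as an independent Bernoulli thinning of $\overline{M}$ with retention probability $\widetilde{\nu}(dw)/\overline{\nu}(dw)$; this automatically yields the inclusion $\text{supp}(\widetilde{M})\subseteq \text{supp}(\overline{M})$ almost surely, which is precisely \eqref{eq:Poissonian coupling upper}.

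The verification of pointwise intensity domination reduces to checking that
\begin{equation*}
\frac{p^{\rm bias}(e)}{p(e)} \;\leq\; \frac{\overline{\lambda}^{\rm tr}}{\widetilde{\lambda}^{\mathsf{B}}} \;=\; 2
\quad\text{for every excursion } e=(x_0,x_1,\dots,x_n) \text{ with } x_0\in\partial^{\text{int}}\mathsf{B},\ x_n\in\partial\widetilde{\A},\ x_1,\dots,x_{n-1}\in\widetilde{\A}.
\end{equation*}
The decisive observation is that such an excursion is confined to the cylinder $\widetilde{\A}=\mathbb{T}\times(\mathrm{z}_c+\widetilde{\mathrm{I}})$, whose $\mathbb{Z}$-extent is $2h_N=O(N\log^2 N)$; consequently $h(e)\leq 2h_N+1$. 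Recalling that $\Delta=N^{-d\alpha}$ with $\alpha>1/d$ and $d\geq 2$, the upper half of \eqref{eq:RN bias to unbias bound} depends only on $h(e)$ and gives
\begin{equation*}
\frac{p^{\rm bias}(e)}{p(e)} \;\leq\; \Bigl(\frac{1+\Delta}{1-\Delta}\Bigr)^{h(e)/2} \;\leq\; \exp\bigl(C\Delta\, h(e)\bigr) \;\leq\; \exp\bigl(C\,N^{1-d\alpha}\log^2 N\bigr),
\end{equation*}
which tends to $1$ as $N\to\infty$ because $1-d\alpha<0$; in particular it is bounded by $2$ for every $N\geq c_{36}(\alpha,u,u_1,u_2)$, as required. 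Note that, as in \Cref{lem:hitting distribution three boxes bias}, we do not need a control on $\ell(e)$: for the direction of inequality needed here, the Radon-Nikodym bound is purely in terms of the height.

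With uniform intensity domination in hand, the construction of $Q_{10}$ is now standard: we decompose $\overline{M}=M_1+M_2$ as the superposition of independent Poisson point measures with intensities $\widetilde{\nu}$ and $\overline{\nu}-\widetilde{\nu}$, set $\widetilde{M}:=M_1$, and unpack each Poisson measure in the usual way (Poisson-many i.i.d.~samples drawn from the normalised intensity) to recover $\widetilde{J}^{\mathsf{B}}$, $\overline{J}^{\rm tr}$ together with the two i.i.d.~excursion sequences with their prescribed marginal laws and the required independence from the Poisson counts. The only genuine obstacle in the whole argument is the uniform Radon-Nikodym bound, and this is exactly where the assumption $\alpha>1/d$ enters through $\Delta\, h(e)=o(1)$, mirroring the role played by $\alpha>1/d$ in \Cref{lem:hitting distribution three boxes bias} and \Cref{lem:coupling widetildeW with widetildeZ}.
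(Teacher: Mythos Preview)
Your proof is correct and follows essentially the same approach as the paper: both realise the two collections $\{\widetilde{\W}^{\mathsf B}_\ell\}_{\ell\leq\widetilde J^{\mathsf B}}$ and $\{(\overline{\W}^{\A}_\ell)^{\rm tr}\}_{\ell\leq\overline J^{\rm tr}}$ as Poisson point measures on $\mathscr T_{\widetilde{\A}}$, verify the pointwise intensity domination $\widetilde\nu\leq\overline\nu$ via the height-only Radon--Nikodym bound $p^{\rm bias}(e)/p(e)\leq(\tfrac{1+\Delta}{1-\Delta})^{h(e)/2}$ together with $h(e)\leq 2h_N=O(N\log^2 N)$ and $\alpha>1/d$, and then couple by thinning/superposition. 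The paper writes this out at the level of per-excursion Poisson processes $(n_e(0,t))_{t\geq0}$ for $e\in\Sigma_{\rm excur}$, while you phrase it as a global Poisson measure decomposition $\overline M=M_1+M_2$; these are the same construction.
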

\begin{proof}
The proof follows the same idea as that of \Cref{lem:coupling widetildeW with widetildeZ}, and we only sketch the proof. Recall \Cref{subsec:RN} for the notation $p(e)$ and $p^{\text{bias}}(e)$. We define the set of excursions from $\B$ to $\partial\widetilde{\A}$ as $\Sigma_{\rm excur}$.
Take $\big(n_{e}(0,t)\big)_{t\geq 0}$, $e\in \Sigma_{\rm excur}$ as $|\Sigma_{\rm excur}|$ i.i.d.~Poisson point process of intensity $1$, which are all independent from the excursions $\widetilde{\W}_{\ell}^{\B},\ell\geq 1$ and $(\overline{\W}_{\ell}^{\A})^{\text{tr}},\ell\geq 1$. Then by properties of Poisson point process, we can take the coupling $Q_{10}$ under which
\begin{equation}
\sum_{\ell\leq \widetilde{J}^{\B}}\delta_{\widetilde{\W}_{\ell}^{\B}}=\sum_{e\in\Sigma_{\text{excur}}}\sum_{\ell\leq n_{e}(0,p(e)\widetilde{\lambda}^{\B})}\delta_{e};\label{eq:characterization of widetildeW Poisson}\quad\text{and}\quad \sum_{\ell\leq \overline{J}^{\text{tr}}}\delta_{(\overline{\W}_{\ell}^{\A})^{\text{tr}}} =\sum_{e\in\Sigma_{\text{excur}}}\sum_{\ell\leq n_{e}(0,p^{\text{bias}}(e)\overline{\lambda}^{\text{tr}})}\delta_{e}.   
\end{equation}
Since for every $e\in\Sigma_{\text{excur}}$, $h(e)\leq 2h_{N}$, by \eqref{eq:RN bias to unbias bound} and $\alpha>1/d$, for all $N\geq c_{36}(\alpha,u,u_{1},u_{2})>0$, 
\begin{equation}\label{eq:RN bias to unbias upper bound cylinder}
\frac{p^{\text{bias}}(e)}{p(e)}\leq \left(\frac{1+N^{-d\alpha}}{1-N^{-d\alpha}}\right)^{h_{N}}=\left(\frac{1+N^{-d\alpha}}{1-N^{-d\alpha}}\right)^{CN\log^{2}N}\leq \frac{\frac{1}{7}\xi}{\frac{1}{14}\xi},    
\end{equation}
and the conclusion follows from combining \eqref{eq:characterization of widetildeW Poisson}-\eqref{eq:RN bias to unbias upper bound cylinder}.
\end{proof}

We then prove the adaptation of \eqref{eq:Handling the first excursion adaped simple} on the first excursion. Taking $\overline{J}_{2}-\overline{J}_{1}'$ as a Poisson variable that is independent from $\overline{J}_{1}'$ and has parameter $\overline{\lambda}_2- \overline{\lambda}_1'$ where
\begin{equation}
    \overline{\lambda}_2 = \overline{\lambda}_{1}'+\overline{\lambda}^{\text{tr}}\overset{\eqref{eq:intensity of overlineJtr}}{=} \overline{\lambda}_{1}'+\frac{1}{7}\xi\cdot\frac{u}{d+1}\cdot \frac{N^d}{h_N}=\big(1+\frac{5}{7}\xi\big)\cdot \frac{u}{d+1}\cdot \frac{N^d}{h_N},
\end{equation}
we will show that under some coupling $Q_{4}^{\alpha}$, there exists $c_{37}=c_{37}(\alpha,u,u_{1},u_{2}) > 0$ such that
\begin{equation}\label{eq:Handling the first excursion adaped simple and bias}
Q_{4}^{\alpha}\left[(W_{1}^{\A})^{\rm{tr}}\in \big\{(\overline{\W}_{\ell}^{\A})^{\rm{tr}}\big\}_{\overline{J}_{1}'\leq \ell\leq \overline{J}_{2}}\right]\geq 1-\frac{1}{c_{37}}\exp\left(-c_{37}\text{cap}(\mathsf{B})\right).
\end{equation}
Note that here $(W_{1}^{\A})^{\rm{tr}}$ is truncated from a biased walk excursion $W_{1}^{\A}$ with law $P_{0}^{N,\alpha}$, while $\big\{(\overline{\W}_{\ell}^{\A})^{\rm{tr}}\big\}_{\ell\geq 1}$ are truncated from $\big\{\overline{\W}_{\ell}^{\A}\big\}_{\ell\geq 1}$, which are i.i.d.~simple walk excursions with the same distribution as $X._{\land T_{\widetilde{\A}}}$ under $P_{q}^{N}$. 
\begin{proof}[Proof of \eqref{eq:Handling the first excursion adaped simple and bias}]
Let $\{\widetilde{\W}_{\ell}^{\B}\}_{\ell\geq 1}$ be a sequence of i.i.d.~excursions with distribution $X._{\land T_{\widetilde{\A}}}$ under $P_{\overline{e}_{\mathsf{B},\widetilde{\A}}}^{N,\alpha}$. Since all the excursion $e$ from $\A$ to $\partial\widetilde{\A}$ that may hit $\mathsf{B}$ has height $h(e)<2h_{N}<CN\log^{2}N$, combining \eqref{eq:RN bias to unbias bound} with \eqref{eq:Handling the first excursion stochastic domination simple} yields that when $N\geq C'(\alpha,\xi)$, we have
\begin{equation}\label{eq:Handling the first excursion stochastic domination bias}
\begin{split}
P_{0}^{N,\alpha}\left[\left(W_{1}^{\A}\right)_{H_{\mathsf{B}}}=\cdot,H_{\mathsf{B}}(W_{1}^{\A})<T_{\widetilde{\A}}(W_{1}^{\A})\right]&\preceq\frac{C}{\log N}\left(1+\frac{\xi}{10^{2}}\right)\left(\frac{1+N^{-d\alpha}}{1-N^{-d\alpha}}\right)^{h(e)}\overline{e}_{\mathsf{B},\widetilde{\A}}(\cdot)\\
&\preceq \frac{C}{\log N}\left(1+\frac{\xi}{10^{2}}\right)\overline{e}_{\mathsf{B},\widetilde{\A}}(\cdot).
\end{split}
\end{equation}
A similar argument to the last paragraph of the proof of \eqref{eq:Handling the first excursion adaped simple} then shows that, under some coupling $Q_{4,1}$, for $\widetilde{J}^{\B}$ defined in \eqref{eq:intensity of widetildeJB}, the event $\{\widetilde{J}^{\B}\geq 1\}$ implies $\big\{(W_1^{\A})^{\rm{tr}}\in \big\{\widetilde{\W}_{\ell}^{\mathsf{\mathsf B}}\big\}_{\ell\leq \widetilde{J}^{\B}}\big\}$. 
Combining \eqref{eq:intensity of widetildeJB}, \eqref{eq:lower equilibrium} and exponential Chenyshev's inequality on Poisson random variable further gives a positive constant $c_{37}=c_{37}(\alpha,u,u_{1},u_{2})$ such that
\begin{equation}\label{eq:Handling the first excursion adapted bias}
Q_{4,1}\left[(W_{1}^{\A})^{\rm{tr}}\in\{\widetilde{\W}_{\ell}^{\mathsf{\mathsf B}}\}_{\ell\leq \widetilde{J}^{\B}}\right]\geq Q_{4,1}[\widetilde{J}^{\B}\geq 1]\geq 1-\frac{1}{c_{37}}\exp\left(-c_{37}\text{cap}(\mathsf{B})\right).
\end{equation}

Taking $\K=\mathsf{B}$ in \eqref{eq:Markov property of Pq}, we obtain that $\big\{(\overline{\W}_{\ell}^{\A})^{\rm{tr}}\big\}_{\overline{J}_{1}'\leq \ell\leq \overline{J}_{2}}$ has the same law as a sequence of i.i.d.~excursions with distribution as $X._{\land T_{\widetilde{\A}}}$ under $P_{\overline{e}_{\mathsf{B},\widetilde{\A}}}^{N}$, where the size of this sequence is a Poisson random variable with intensity as in \eqref{eq:intensity of overlineJtr}. Then by \Cref{prop:Poissonian coupling upper}, it follows that when $N\geq C(\alpha,\xi)$, we can construct a coupling $Q_{4,2}$ such that
\begin{equation}\label{eq:coupling widetildeW with overlineW bias}
Q_{4,2}\left[\big\{\widetilde{\W}_{\ell}^{\mathsf{\B}}\big\}_{\ell\leq \widetilde{J}^{\B}}\subseteq \big\{(\overline{\W}_{\ell}^{\A})^{\rm{tr}}\big\}_{\overline{J}_{1}'\leq \ell\leq \overline{J}_{2}}\right]=1.   
\end{equation}
The conclusion then follows from combining \eqref{eq:Handling the first excursion adapted bias} and \eqref{eq:coupling widetildeW with overlineW bias}. 
\end{proof}

We finally complete \ref{item:step3}. Thanks to \ref{item:step2} in which we stochastically dominate all the biased walk exucrsions with the simple walk excursions $\{\overline{\W}_{\ell}^{\A}\}_{\ell\geq 1}$, we can directly combine \ref{item:step1} and \ref{item:step2} with a slightly adapted version of \eqref{eq:extraction 1} (with $(1+\frac{4}{7}\xi)$ substituted into $(1+\frac{5}{7}\xi)$ in \eqref{eq:intensity of widetildemu2}) and \Cref{prop:upper truncation,prop:upper interlacement} to conclude the proof of \Cref{prop:very strong coupling bias}.

\subsubsection{The proof of \Cref{prop:strong coupling}}\label{subsubsec:bias lower}
We eventually come to the proof of \Cref{prop:strong coupling}, that is, the ``strong'' coupling between excursions under conditional measures and interlacements. 
We take $\B = \overline{B}$, $\D = \overline{D}$ in \Cref{thm:very strong coupling S} with side-lengths $[N^{1/3}]$ and $[N^{2/3}]$ respectively (recall \eqref{eq:def of overlineB and overlineD}), which satisfy the requirements for $f(N)$ and $g(N)$ in \eqref{eq:requirement for f and g}. Therefore, \Cref{prop:strong coupling} can be seen as a weaker case of the lower bound of \Cref{thm:very strong coupling S}, except that the walk now starts from the origin instead of the uniform distribution on $\mathbb{T}\times\{0\}$ and we require the error term to be a negative polynomial of $N$ instead of $\exp(-c\text{cap}(\overline{B}))$. 

Under this setting, we can adapt the statements and proofs of the domination-from-below parts of \Cref{prop:horizontal independence,prop:vertical independence,prop:Poissonization} in the same way as in \ref{item:step1} in \Cref{subsubsec:bias upper}. There is no need to adapt \Cref{prop:Handling the first excursion} concerning the first excursion $W_1^{\A}$ since we can simply abandon the first one when giving a stochastic lower bound for $\{W_{\ell}^{\A}\}_{\ell\geq 1}$. Therefore, the only missing part now is the following proposition transforming i.i.d.~biased excursions into i.i.d.~simple excursions, with which we can directly use a slightly adapted version of \eqref{eq:extraction 1} (with $(1-\frac{3}{7}\xi)$ substituted into $(1-\frac{4}{7}\xi)$ in \eqref{eq:intensity of widetildemu1}) and \Cref{prop:lower truncation,prop:lower interlacement} to complete the proof.

\begin{proposition}\label{prop:Poissonian coupling lower}
One can construct on an auxiliary space $(\Omega_{11},\mathcal{F}_{11})$ a coupling $Q_{11}$ of the $\mathscr{T}_{\widetilde{A}}$-valued excursions $\{\widetilde{\W}_{\ell}^{\A}\}_{\ell\geq 1}$ and $\{\overline{\W}_{\ell}^{\A}\}_{\ell \geq 1}$ and two Poisson random variables $\widetilde{J}_{1}$, $\overline{J}_{1}$ 
under which
\begin{itemize}
\item the excursions $\widetilde{\W}_{\ell}^{\A},{\ell\geq 1}$ is a sequence of i.i.d.~excursions from $\A$ to $\partial\widetilde{\A}$ with the same distribution as $X._{\land T_{\widetilde{\A}}}$ under $P_{q}^{N,\alpha}$;
\item the excursions $\overline{\W}_{\ell}^{\A},{\ell\geq 1}$ is a sequence of i.i.d.~excursions from $\A$ to $\partial\widetilde{\A}$ with the same distribution as $X._{\land T_{\widetilde{\A}}}$ under $P_{q}^{N}$;
\item the Poisson variables $\widetilde{J}_{1}$, $\overline{J}_{1}$ are independent from $\{\widetilde{\W}_{\ell}^{\A}\}_{\ell\geq 1}$ and $\{\overline{\W}_{\ell}^{\A}\}_{\ell\geq 1}$, and have respective intensities $\widetilde{\lambda}_1$ (see \eqref{eq:def Poisson intensity}) and $\overline{\lambda}_1$, where 
\begin{equation}
    \overline{\lambda}_1 = \big(1-\frac{4}{7}\xi\big)\cdot \frac{u}{d+1}\cdot \frac{N^{d}}{h_{N}},
\end{equation}
\end{itemize}
such that for all $N\geq c_{38}=c_{38}(\alpha,u,u_{1},u_{2})>0$, 
\begin{equation}\label{eq:Poissonian coupling lower}
Q_{11}\left[\{\overline{\W}_{\ell}^{\A}\}_{\ell\leq\overline{J}_{1}}\subseteq\{\widetilde{\W}_{\ell}^{\A}\}_{\ell\leq\widetilde{J}_{1}}\right]\geq 1-N^{-10d}. 
\end{equation}
\end{proposition}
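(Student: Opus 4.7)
The plan is to follow verbatim the Poissonization strategy used in the proof of \Cref{prop:Poissonian coupling upper}, but to combine it with a Radon--Nikod\'ym bound that is valid only on excursions of moderate length. Let $\Sigma_{\mathrm{excur}}$ denote the (countable) set of nearest-neighbour excursions from $\partial^{\mathrm{int}}\A$ to $\partial\widetilde{\A}$, take an independent family $\{(n_{e}(0,t))_{t\ge 0}:e\in\Sigma_{\mathrm{excur}}\}$ of rate-$1$ Poisson counting processes (independent from everything else in sight), and realize both excursion sequences together with their Poissonian counts via
\[
\sum_{\ell\le\overline{J}_{1}}\delta_{\overline{\W}_{\ell}^{\A}}\;\overset{d}{=}\;\sum_{e\in\Sigma_{\mathrm{excur}}}n_{e}\big(0,p(e)\overline{\lambda}_{1}\big)\,\delta_{e},\qquad \sum_{\ell\le\widetilde{J}_{1}}\delta_{\widetilde{\W}_{\ell}^{\A}}\;\overset{d}{=}\;\sum_{e\in\Sigma_{\mathrm{excur}}}n_{e}\big(0,p^{\mathrm{bias}}(e)\widetilde{\lambda}_{1}\big)\,\delta_{e}.
\]
Since each $n_{e}(0,\cdot)$ is non-decreasing, the desired inclusion $\{\overline{\W}_{\ell}^{\A}\}_{\ell\le\overline{J}_{1}}\subseteq\{\widetilde{\W}_{\ell}^{\A}\}_{\ell\le\widetilde{J}_{1}}$ holds pathwise as soon as every $e$ contributing to the first measure satisfies $p(e)\overline{\lambda}_{1}\le p^{\mathrm{bias}}(e)\widetilde{\lambda}_{1}$.

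I would then split $\Sigma_{\mathrm{excur}}$ by length, setting
\[
T_{0}:=N^{2}\log^{6}N,\qquad \Sigma_{\mathrm{short}}:=\{e:\ell(e)\le T_{0}\},\qquad \Sigma_{\mathrm{long}}:=\Sigma_{\mathrm{excur}}\setminus\Sigma_{\mathrm{short}}.
\]
Since every $e\in\Sigma_{\mathrm{excur}}$ has $h(e)\le 2h_{N}\le CN\log^{2}N$, and since $\Delta=N^{-d\alpha}$ with $\alpha>1/d$, the bound \eqref{eq:RN bias to unbias bound} yields
\[
\frac{p(e)}{p^{\mathrm{bias}}(e)}\le(1-\Delta^{2})^{-\ell(e)/2}\Big(\frac{1+\Delta}{1-\Delta}\Big)^{h(e)/2}\le\exp\big(CT_{0}\Delta^{2}+Ch_{N}\Delta\big)=1+o(1)
\]
uniformly over $e\in\Sigma_{\mathrm{short}}$ as $N\to\infty$, because $T_{0}\Delta^{2}=N^{2-2d\alpha}\log^{6}N=o(1)$ and $h_{N}\Delta=O(N^{1-d\alpha}\log^{2}N)=o(1)$. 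As $\widetilde{\lambda}_{1}/\overline{\lambda}_{1}=(1-\tfrac{3}{7}\xi)/(1-\tfrac{4}{7}\xi)\ge 1+\xi/8$, for all $N\ge c_{38}(\alpha,u,u_{1},u_{2})$ one obtains $p(e)\overline{\lambda}_{1}\le p^{\mathrm{bias}}(e)\widetilde{\lambda}_{1}$ for every $e\in\Sigma_{\mathrm{short}}$. Hence the target inclusion can only fail on the event that some $e\in\Sigma_{\mathrm{long}}$ actually appears among the first $\overline{J}_{1}$ simple excursions.

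To control that bad event, I apply Khasminskii's lemma to the one-dimensional projection $(Z_{n})_{n\ge 0}$ (which determines $T_{\widetilde{\A}}$), just as in the derivations of \eqref{eq:uniform second moment three boxes}--\eqref{eq:khasminskii1} and \eqref{eq:Slong measure upper bound}. This yields $\sup_{x\in\A}E_{x}^{N}[\exp(cT_{\widetilde{\A}}/h_{N}^{2})]\le C$, and exponential Chebyshev then gives $P_{q}^{N}[T_{\widetilde{\A}}>T_{0}]\le C\exp(-cT_{0}/h_{N}^{2})=C\exp(-c\log^{2}N)$. Since $\overline{\lambda}_{1}=O(N^{d-1}/\log^{2}N)$, the expected number of long excursions selected on the simple side is at most $\overline{\lambda}_{1}\cdot P_{q}^{N}[T_{\widetilde{\A}}>T_{0}]\le N^{-10d}$ for $N\ge c_{38}$, which is exactly the required error.

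The main obstacle is the two-sided constraint $h_{N}^{2}\log N\ll T_{0}\ll\Delta^{-2}$: the lower bound is dictated by the tail estimate on $T_{\widetilde{\A}}$, while the upper bound is dictated by the Radon--Nikod\'ym comparison. The assumption $\alpha>1/d$ makes $\Delta^{-2}=N^{2d\alpha}\gg N^{2}\log^{C}N=h_{N}^{2}\log^{C-4}N$, opening precisely the window in which $T_{0}=N^{2}\log^{6}N$ works. This is exactly the same obstruction that limits $\alpha$ from below elsewhere in this work and is discussed in \Cref{rem:finalremark}.
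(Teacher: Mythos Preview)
Your proof is correct and follows essentially the same route as the paper: a Poissonization via independent rate-$1$ counting processes $n_e$, a short/long split of $\Sigma_{\mathrm{excur}}$, a Radon--Nikod\'ym comparison on the short part to secure $p(e)\overline{\lambda}_1\le p^{\mathrm{bias}}(e)\widetilde{\lambda}_1$, and a Khas'minskii tail bound on $T_{\widetilde{\A}}$ to control the long part. The only differences are cosmetic. First, the paper takes the cutoff $T_0=N^{1+d\alpha}$, whereas you take $T_0=N^2\log^6 N$; both sit in the admissible window $h_N^2\ll T_0\ll\Delta^{-2}$ that opens precisely when $\alpha>1/d$, and both yield the $N^{-10d}$ error. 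Second, you only rule out long excursions on the \emph{simple} side $\{\overline{\W}_\ell\}_{\ell\le\overline{J}_1}$, which is indeed all that is needed for the one-sided inclusion; the paper phrases its bad event as ``no long excursion on either side'', which is slightly more than required but handled by the same Khas'minskii estimate.
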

\begin{proof}
The proof follows the same idea as that of \Cref{lem:coupling widetildeW with widetildeZ}, and we only sketch the proof. Recall \Cref{subsec:RN} for the notation $\ell(e),h(e),\text{up}(e),\text{down}(e),p(e),p^{\text{bias}(e)}$. We denote by $\Sigma_{\text{excur}}$ the set of excursions from $\A$ to $\partial\widetilde{\A}$, and further divide $\Sigma_{\text{excur}}$ into
\begin{equation}\label{eq:def of Sshort and Slong cylinder}
\Sigma_{\rm short}:=\{e\in \Sigma_{\rm excur}:\ell(e)\leq N^{1+d\alpha}\},\quad\text{and}\quad
\Sigma_{\rm long}:=\{e\in \Sigma_{\rm excur}:\ell(e)>N^{1+d\alpha}\}.   
\end{equation} 
Take $\big(n_{e}(0,t)\big)_{t\geq 0}$, $e\in \Sigma_{\rm excur}$ as $|\Sigma_{\rm excur}|$ i.i.d.~Poisson point process of intensity $1$ that are all independent from the excursions $\overline{\W}_{\ell}^{\A},\ell\geq 1$ and $\widetilde{\W}_{\ell}^{\A},\ell\geq 1$. Then by properties of Poisson point process, we can construct the coupling $Q_{11}$ so that
\begin{equation}
\sum_{\ell\leq \widetilde{J}_{1}}\delta_{\widetilde{\W}_{\ell}^{\A}}=\sum_{e\in\Sigma_{\text{excur}}}\sum_{\ell\leq n_{e}(0,p(e)\widetilde{\lambda}_1)}\delta_{e};\quad\text{and}\quad
\sum_{\ell\leq \overline{J}_{1}'}\delta_{\overline{\W}_{\ell}^{\A}}=\sum_{e\in\Sigma_{\text{excur}}}\sum_{\ell\leq n_{e}(0,p^{\text{bias}}(e)\overline{\lambda}_1)}\delta_{e}.
\end{equation}

Note that under $Q_{11}$, the event in \eqref{eq:Poissonian coupling lower} holds once we have the following two events:  
\begin{align}\label{eq:Slong empty cylinder}
\Sigma_{\rm long}\cap\left\{\widetilde{\W}_{1}^{\A},\dots,\widetilde{\W}_{\widetilde{J}_{1}}^{\A}\right\}&=\Sigma_{\rm long}\cap\left\{\overline{\W}_{1}^{\A},\dots,\overline{\W}_{\overline{J}_{1}}^{\A}\right\}=\varnothing;\\ \label{eq:Sshort Poisson cylinder}
\Sigma_{\rm short}\cap\left\{\widetilde{\W}_{1}^{\A},\dots,\widetilde{\W}_{\widetilde{J}_{1}}^{\A}\right\}&\subseteq \Sigma_{\rm short}\cap\left\{\overline{\W}_{1}^{\A},\dots,\overline{\W}_{\overline{J}_{1}}^{\A}\right\}.\end{align}
We bound the probability that \eqref{eq:Slong empty cylinder} or \eqref{eq:Sshort Poisson cylinder} does not hold from above in the same way as we do to \ref{item:Slong empty} and \ref{item:Sshort Poisson} in the proof of \Cref{lem:coupling widetildeW with widetildeZ} (the first using union bound and Kha\'{s}minskii's lemma, the second using \eqref{eq:RN bias to unbias bound}), and the conclusion follows.
\end{proof}

\begin{remark}\label{remark:strong to very strong}
It is an interesting question whether one can improve \Cref{prop:Poissonian coupling lower} so that the error term is exponential in $-\text{cap}(\overline{B})$, which will also imply an error term of the same order in \Cref{prop:strong coupling}, making it a ``very strong" coupling too. The main obstacle in the above proof is that the probability that \eqref{eq:Slong empty cylinder} does not hold is much larger than $\exp(-\text{cap}(\overline{B}))$, while on the other hand, with high probability \eqref{eq:Sshort Poisson cylinder} does not hold if one replaces $\Sigma_{\text{short}}$ by $\Sigma_{\text{long}}$ since $p(e)/p^{\text{bias}}(e)$ explodes when $e\in\Sigma_{\text{long}}$. A possible solution is to slice every excursion in $\Sigma_{\text{long}}$ into a union of excursions in $\Sigma_{\text{short}}$, and then stochastically dominate these extra simple walk excursions in $\Sigma_{\text{short}}$ by biased walk excursions in $\Sigma_{\text{short}}$ using Radon-Nikodym derivative estimates. In this case, for every $e\in\Sigma_{\text{long}}$, the number of sliced excursions in $\Sigma_{\text{short}}$ is linear in $\ell(e)$, while $p(e)$ decays stretched-exponentially in $\ell(e)$ thanks to Kha\'{s}minskii's lemma and exponential Chebyshev's inequality, and it is likely that these extra biased walk excursions in $\Sigma_{\text{short}}$ only lead to a tiny increase in the Poisson intensity. 
\end{remark}

\section{Denouement}\label{sec:denouement}

In this section, we conclude the proof of \Cref{thm:biased walk aymptotics}. Note that \Cref{thm:sharp lower bound} is indeed a corollary of \Cref{thm:biased walk aymptotics}. We also incorporate a remark discussing very strong bias case ($\alpha\leq 1/d$) and mention some open problems. 
Recall that we have split the main result into six inequalities \eqref{eq:lower bound large alpha}-\eqref{eq:upper bound small alpha} in the sketch of proofs in \Cref{subsec:sketch}.

\begin{proof}[Proof of lower bounds \eqref{eq:lower bound large alpha}-\eqref{eq:lower bound small alpha}]
    We first consider the case $\alpha\geq 1$. For every $\delta > 0$ and $\underline S_N = \underline S_N(\omega, \delta)$, it follows from \Cref{prop:underlineS and T,prop:underlineS and T biased} that
    \begin{equation}
    \begin{split} \liminf_{N\to\infty}P_{0}^{N,\alpha}\left[T_{N}\geq sN^{2d}\right]
        &\geq \liminf_{N\to\infty}P_{0}^{N,\alpha}\left[\underline{S}_{N}\geq sN^{2d} \right] - \limsup_{N\to\infty}P_{0}^{N,\alpha}[T_N < \underline S_N]\\
        &=\liminf_{N\to\infty}P_{0}^{N,\alpha}\left[\underline{S}_{N}\geq sN^{2d} \right].
    \end{split}
    \end{equation}
    Then \Cref{prop:asymptotics_alpha>1} gives \eqref{eq:lower bound large alpha}, and  \Cref{prop:asymptotics_alpha=1} gives \eqref{eq:lower bound alpha=1}. 
    For the strong bias case, repeating the above argument while replacing $sN^{2d}$ with exponential term $\exp\left(\frac{\overline u - 2\delta}{d + 1}N^{d(1-\alpha)}\right)$, together with \Cref{prop:asymptotics_alpha<1} yields \eqref{eq:lower bound small alpha}.
\end{proof}

We now turn to the upper bounds with the help of \Cref{prop:overlineS and T}. 
We begin with the case $\alpha>1$. To finalize the comparison of $\overline S_N$ with $T_N$, it is essential to replace $\overline S_N  = \inf\limits_{z\in \mathbb Z} \overline S_N(\omega, \delta, z)$ (see \eqref{eq:def of underlineS and overlineS}) by the truncated version $\inf\limits_{z = \lfloor \ell/L N^d\rfloor, |\ell|\leq L^2} \overline S_N(\omega, 3\delta/4, z)$ for some $L\geq 1$ that will eventually tend to infinity, with the parameter $\delta$ adjusted as well. We omit the proof as it has already been contained in \cite{Szn09a} (under a slightly different setting).
\begin{lemma}[\cite{Szn09a}, (4.31)]\label{lem:overline S confine region small bias}
    For every $\delta > 0$, 
    \begin{equation}\label{eq:overline S confine region small bias}
    \limsup_{L\rightarrow \infty}\limsup_{N\rightarrow \infty} P_{0}^{N}\left[\inf\limits_{z = \lfloor \ell/L N^d\rfloor, |\ell|\leq L^2} \overline S_N(\omega, \frac{3}{4}\delta, z) > \inf\limits_{z\in \mathbb Z} \overline S_N(\omega, \delta, z)\right] = 0.
\end{equation}
\end{lemma}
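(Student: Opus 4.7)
Denote the truncated infimum as $\overline S_N^{\rm tr}(L) := \inf_{z = \lfloor \ell N^d/L\rfloor,\ |\ell|\leq L^2} \overline S_N(\omega, 3\delta/4, z)$ and the full one as $\overline S_N = \inf_{z\in\mathbb Z}\overline S_N(\omega,\delta,z)$. My plan is to decompose the event $\{\overline S_N^{\rm tr}(L) > \overline S_N\}$ according to the height $z^*\in\mathbb Z$ (possibly random) at which the full infimum is (essentially) attained. Since local time only increases in time, by reducing $\delta$ to $3\delta/4$ we have purchased a slack of roughly $\frac{\delta}{4(d+1)}N^d$ units of local time that may be traded against the discretization error. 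Concretely, the plan is to show that outside an event of vanishing probability, we can find $z\in\{\lfloor \ell N^d/L\rfloor : |\ell|\leq L^2\}$ with $|z-z^*|\leq N^d/L$ such that at time $\overline S_N(\omega,\delta,z^*)$ the skeleton local time $\widehat L^z$ is still at least $(u_{**}+3\delta/4)N^d/(d+1)$; this would force $\overline S_N^{\rm tr}(L,\omega)\leq \overline S_N(\omega)$.

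First I would rule out the event that the minimizing height $z^*$ lies outside the window $[-N^d L, N^d L]$. On that event, at the time $\overline S_N$ the skeleton $(\widehat Z_k)$ must have visited level $z^*$, so the lazy one-dimensional walk $(Z_n)$ has also reached a height of magnitude at least $N^dL$. On $\{\overline S_N \leq s N^{2d}\}$ (which, up to $o_N(1)$, is the only scenario that matters thanks to \Cref{prop:asymptotics_alpha>1}), this has probability tending to $0$ as $N\to\infty$ uniformly in $L\geq 1$ by the Azuma--Hoeffding bound for the simple one-dimensional walk run for at most $sN^{2d}$ steps; taking $L\to\infty$ kills it regardless. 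Similarly, the tail event $\{\overline S_N > TN^{2d}\}$ (for any $T$ large) will be dealt with by \Cref{prop:asymptotics_alpha>1} in the end of the argument, so that we may as well restrict to $\overline S_N\leq TN^{2d}$.

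Next, on the complementary event $\{z^*\in[-N^d L, N^d L], \overline S_N\leq TN^{2d}\}$, I need a spatial regularity estimate for the skeleton local time: for every $\varepsilon>0$,
\begin{equation*}
\limsup_{L\to\infty}\limsup_{N\to\infty} P_0^N\left[\sup_{|z-z'|\leq N^d/L,\ |z|\vee|z'|\leq N^d L}\frac{|\widehat L^z_{\rho_{TN^{2d}}}-\widehat L^{z'}_{\rho_{TN^{2d}}}|}{N^d}>\varepsilon\right] = 0.
\end{equation*}
This is the main step. It follows from the uniform modulus of continuity of Brownian local time combined with the Cs\"aki--R\'ev\'esz type invariance principle for the local time of simple random walk already invoked in the proof of \Cref{lem:weak converge 1d rw}: after rescaling by $N^d$ in space and $N^{2d}$ in time, the joint process $(\widehat Z, \widehat L)$ converges in distribution to the joint Brownian motion--local time process, whose local time in space is a.s.\ uniformly continuous on compact sets. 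Choosing $\varepsilon=\delta/(8(d+1))$ and applying this regularity estimate at the (stopping) time $\overline S_N\wedge TN^{2d}$ via a simple discretization of the time variable finishes the argument: any nearest grid point $z$ to $z^*$ within distance $N^d/L$ satisfies $\widehat L^z_{\overline S_N}\geq \widehat L^{z^*}_{\overline S_N}-\varepsilon N^d \geq (u_{**}+3\delta/4)N^d/(d+1)$, so $\overline S_N(3\delta/4,z)\leq \overline S_N$.

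The main obstacle I expect is the uniform spatial regularity of the skeleton local time: while at fixed time this follows cleanly from Brownian invariance, I need it at the random (stopping) time $\overline S_N$. I plan to bypass this by first discretizing the time window $[0, TN^{2d}]$ into finitely many points (finitely many in the limit $N\to\infty$ at each fixed $L$), applying the spatial regularity at each discretization time together with monotonicity of $k\mapsto \widehat L^z_k$ to control the fluctuation at $\overline S_N$, and then sending the time mesh to zero after $N\to\infty$ and $L\to\infty$. The remaining computations are then routine.
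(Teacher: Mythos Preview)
The paper does not supply its own proof here; it simply cites (4.31) of \cite{Szn09a} (which rests on the Cs\'aki--R\'ev\'esz strong invariance for random-walk local time \cite{CR83}) and omits the details. Your sketch is a correct reconstruction of essentially that argument: confine the minimizing height $z^*$ to $[-LN^d,LN^d]$ via a maximal inequality and the tail bound of \Cref{prop:asymptotics_alpha>1}, then use spatial regularity of the rescaled skeleton local time, inherited from the (spatial) continuity of Brownian local time through the invariance principle, to transfer the threshold from $z^*$ to a nearby grid point at the cost of the $\delta/4$ slack.

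One small comment on handling the random time. Your time-discretization device does work: with $M$ subintervals, combining spatial regularity at the deterministic time $\kappa_{j-1}\le k^*$ with the bound $\sup_z(\widehat L^z_{\kappa_j}-\widehat L^z_{\kappa_{j-1}})=O_P(N^d\sqrt{T/M})$ and monotonicity in $k$ gives the required inequality at $k^*$. However, the order of limits you sketch (``sending the time mesh to zero after $N\to\infty$ and $L\to\infty$'') is slightly off---$M$ must be fixed before $L$ is sent to infinity, since the spatial-regularity step is applied at each of the $M$ discretization times. More to the point, the detour is unnecessary: the Cs\'aki--R\'ev\'esz strong invariance already yields convergence of the rescaled local time \emph{uniformly in both space and time on compacta}, so the joint continuity of Brownian local time gives spatial regularity uniformly over all $k\le TN^{2d}/(d{+}1)$, which applies directly at the random $k^*$ without discretization.
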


With this, we are finally able to conclude the main theorem when $\alpha>1$.
\begin{proof}[Proof of  \eqref{eq:upper bound large alpha}]
    For every $\delta > 0$ and $L>0$, it follows from \Cref{prop:overlineS and T} that
    \begin{equation}\label{eq:proof upper bound process1}
    \begin{split}
        \limsup_{N\to\infty}P_{0}^{N,\alpha}\left[{T_N}\geq s{N^{2d}}\right] \leq\limsup_{N\to\infty}P_{0}^{N,\alpha}\left[\inf\limits_{z = \lfloor \ell/L N^d\rfloor, |\ell|\leq L^2} \overline S_N(\omega, \frac{3}{4}\delta, z) \geq s N^{2d}\right].
        \end{split}
    \end{equation}
    Using a similar procedure as \Cref{lem:asympt_srw_to_biasedwalk}, we can further bound the last term from above using estimates regarding Radon-Nikodym derivates of measures with and without bias. In this way, for every fixed $L>0$,
    \begin{equation}
        \limsup_{N\rightarrow \infty} P_{0}^{N,\alpha}\left[{T_N}\geq s{N^{2d}}\right] \leq \limsup_{N\rightarrow \infty}P_{0}^{N}\left[\inf\limits_{z = \lfloor \ell/L N^d\rfloor, |\ell|\leq L^2} \overline S_N(\omega, \frac{3}{4}\delta, z) \geq s N^{2d}\right].
    \end{equation}
    Take $L$ to infinity and the result then follows from \eqref{eq:overline S confine region small bias} and \eqref{eq:asymp_for_overlineS_alpha>1}.
\end{proof}

For $\alpha=1$, we still consider the truncated version of $\overline{S}_{N}$, namely $\inf\limits_{z = \lfloor \ell/L N^d\rfloor, |\ell|\leq L^2} \overline S_N(\omega, 3\delta/4, z)$.
\begin{proof}[Proof of \eqref{eq:upper bound alpha=1}]
    We again use \eqref{eq:proof upper bound process1}. Now for every $\delta > 0$, it suffices to show
    \begin{equation}\label{eq:proof of upper bound1}
        \limsup_{L\rightarrow \infty}\limsup_{N\rightarrow \infty}P_{0}^{N,1}\left[\inf\limits_{z = \lfloor \ell/L N^d\rfloor, |\ell|\leq L^2} \overline S_N(\omega, \frac{3}{4}\delta, z) \geq s N^{2d}\right] \leq \WW\left[\zeta^{1/\sqrt{d+1}}\left(\frac{u_{**}+\delta}{\sqrt{d+1}}\right)\geq s\right].
    \end{equation}
    
    Similarly as in \eqref{eq:weak conv proof1}, by \eqref{eq:limit of rho_k}, for every $0<\widetilde s < s$, the left side of \eqref{eq:proof of upper bound1} is no larger than
    \begin{equation}
        \begin{split}
            \limsup_{L\rightarrow \infty}\limsup_{N\rightarrow \infty} P_{0}^{N,1}\left[\inf\limits_{z = \lfloor \ell/L N^d\rfloor, |\ell|\leq L^2} \overline S_N(\omega, \frac{3}{4}\delta, z) \geq \rho_{\widetilde sN^{2d}/(d+1)}\right].
        \end{split}
    \end{equation}
    Following the same notation as in \Cref{lem:weak converge 1d rw} with $N$ replaced by $N^d$, the limit equals
    \begin{equation}
    \begin{split}
        &\quad\quad\limsup_{L\rightarrow \infty} \limsup_{N\rightarrow \infty}\mathsf P_0^{N^{-d}}\left[\inf_{z = \lfloor \ell/L N^d\rfloor, |\ell|\leq L^2}S\Big(\frac{u_{**} + 3\delta/4}{d + 1}, z\Big)\geq \frac{\widetilde s}{d + 1}N^{2d}\right]\\
        &\overset{\eqref{eq:weak convergence truncated}}{\leq} \WW\left[\zeta^{1}\left(\frac{u_{**}+\delta}{d + 1}\right)\geq \frac{\widetilde s}{d + 1}\right] = \WW\left[\zeta^{\frac{1}{\sqrt{d+1}}}\left(\frac{u_{**}+\delta}{\sqrt{d+1}}\right)\geq \widetilde{s}\right],
    \end{split}
    \end{equation}
    where the last equality holds thanks to scaling property of drifted Brownian motion. Taking $\widetilde s \rightarrow s$ and using the continuity of local time of drifted Brownian motion then conclude the proof.
    
\end{proof}

Finally, we turn to the strong bias case.

\begin{proof}[Proof of \eqref{eq:upper bound small alpha}]
For convenience, we write
\begin{equation}
K_{**} = \exp\left(\frac{u_{**}+2\delta}{d+1}\cdot N^{d(1-\alpha)}\right), \quad J=\left[\frac{K_{**}}{N^{6d}}\right],\quad\text{and}\quad\ell_{**} = \frac{u_{**} + \delta}{d + 1}N^d. 
\end{equation}

    We now derive the upper bound of the disconnection time through exponential trials for the random walk over disjoint time intervals of length $N^{6d}$. In other words, 
        \begin{equation}\label{eq:upper bound into indep parts}
            P_{0}^{N,\alpha}\left[T_N \geq  K_{**}\right] \leq P_{0}^{N,\alpha}\left[T_N \geq N^{6d}\right]
            \cdot 
            \prod_{i = 1}^{J-1}
            P_{0}^{N,\alpha}\left[T_N \geq (i + 1)N^{6d}\mid T_N \geq iN^{6d}\right].
        \end{equation}
    For $1\leq i\leq J-1$, using Markov property on the time $iN^{6d}$ and the fact that the historical trace of the walk helps disconnection, we have
    \begin{equation}\label{eq:cond prob using markov}
        P_{0}^{N,\alpha}\left[T_N\geq (i+1)N^{6d}\mid T_N\geq iN^{6d}\right]
        \leq P_{0}^{N,\alpha}\left[T_N \geq N^{6d}\right].
    \end{equation}
    Therefore, \eqref{eq:upper bound into indep parts} and \eqref{eq:cond prob using markov} lead to 
    \begin{equation}\label{eq:exp products}
        P_{0}^{N,\alpha}\left[\log T_N \geq \frac{u_{**} + 2\delta}{d + 1}N^{d(1-\alpha)}\right] \leq \left(1 - P_{0}^{N,\alpha}\left[T_N < N^{6d}\right]\right)^J.
    \end{equation}
    
    We now bound $P_{0}^{N,\alpha}\left[T_N < N^{6d}\right]$ from below. Note that 
    \begin{equation}\label{eq:lower bound disconnection time}
    \begin{split}
        P_{0}^{N,\alpha}\left[T_N < N^{6d}\right]
        &\geq P_{0}^{N,\alpha}\left[T_N< \overline S_N(0) \mid \overline S_N(0) < N^{6d}\right]\cdot P_{0}^{N,\alpha}\left[\overline S_N(0) < N^{6d}\right] = \text{\rom{1}}\cdot \text{\rom{2}}.
    \end{split}
    \end{equation}
    To bound the two probabilities from below, we define 
    \begin{equation}\label{eq:def typical event}
    \mathcal G = \big\{Z_{[N^{6d}, \infty)}\cap \mathbb T \times \{0\} = \varnothing\big\}, \text{ and } M_{n}=Z_{n}-\frac{N^{-d\alpha}}{d+1}n.    
    \end{equation}
    Combining the relation
    \begin{equation}\label{eq:relation in time N6d}
        \mathcal G \cap \left\{\overline S_N(0) < \infty\right\} \subseteq \left\{\overline S_N(0) < N^{6d}\right\}\subseteq \left\{\overline S_N(0) < \infty\right\},
    \end{equation}
    we have 
    \begin{align}
        &\text{\rom{1}}\geq  P_{0}^{N,\alpha}\left[T_N< \overline S_N(0)\mid \overline S_N(0) < \infty\right] - \frac{P_{0}^{N,\alpha}\left[\mathcal G^c\right]}{P_{0}^{N,\alpha}\left[\overline S_N(0)<\infty\right]},\text{ and }\label{eq:cond disc prob lower bound}\\  
        & \text{\rom{2}}\geq P_{0}^{N,\alpha}\left[\overline S_N(0) < \infty\right]-P_{0}^{N,\alpha}\left[\mathcal{G}^{c}\right].\label{eq:complete before N46}
    \end{align}

    Note that $(M_n)_{n\geq 0}$ is a martingale, using Azuma-Hoeffding's inequality gives
    \begin{equation}\label{eq:AH inequality on probability of G}
    \begin{split}
    P_{0}^{N,\alpha}\left[\mathcal{G}^{c}\right]&\leq\sum_{n\geq N^{6d}}P_{0}^{N,\alpha}\left[Z_{n}=0\right] \leq \sum_{n\geq N^{6d}}P_{0}^{N,\alpha}\left[M_{n}-M_{0}\geq \frac{N^{-d\alpha}}{d+1}n\right]\\
    &\leq \sum_{n\geq N^{6d}}e^{-cnN^{-2d\alpha}}\leq Ce^{-N^{3d}}.
    \end{split}
    \end{equation}
    In addition, by \eqref{eq:probability of multiple hits} we have
    \begin{equation}\label{eq:overlineS finite}
    P_{0}^{N,\alpha}\left[\overline{S}_{N}(0)<\infty\right]=\left(1-N^{-d\alpha}\right)^{\ell_{**}-1}=\exp\left(-O\left(N^{d(1-\alpha}\right)\right).
    \end{equation}

    Therefore, when $N$ tends to infinity, the right hand side of \eqref{eq:cond disc prob lower bound} is larger than $1/2$ thanks to \Cref{prop:overlineS and T}, \eqref{eq:AH inequality on probability of G} and \eqref{eq:overlineS finite}. Again by \eqref{eq:AH inequality on probability of G} and \eqref{eq:overlineS finite}, the right hand side of \eqref{eq:complete before N46} is larger than $1/2\cdot P_0^{N, \alpha}[\overline S_N(0) < \infty]$ as $N$ goes to infinity.
    Combining \eqref{eq:lower bound disconnection time}, \eqref{eq:cond disc prob lower bound} and \eqref{eq:complete before N46} yields for $N\geq C(\alpha)$,
    \begin{equation}\label{eq:disconnection before N6d}
    P_{0}^{N,\alpha}\left[T_N < N^{6d}\right]\geq \frac{1}{4}P_{0}^{N,\alpha}\left[\overline S_N(0) < \infty\right]\overset{\eqref{eq:overlineS finite}}{=}\frac{1}{4}\left(1-N^{-d\alpha}\right)^{\ell_{**}-1}. 
    \end{equation}
    Plugging \eqref{eq:disconnection before N6d} into \eqref{eq:exp products}, and we finally obtain that when $N\geq C(\alpha)$,
    \begin{equation*}
    \begin{split}
        &\quad P_{0}^{N,\alpha}\left[\log T_N \geq \frac{u_{**} + 2\delta}{d + 1}N^{d(1-\alpha)}\right]
        \leq \left(1 - \frac{1}{4} \left(1 - \frac{1}{N^{d\alpha}}\right)^{\ell_{**} - 1}\right)^J\\
        &\leq \exp \left(-\frac{c}{N^{6d}}\exp\left(\frac{u_{**} + 2\delta}{d+1}N^{d(1-\alpha)} - (1+o(1))\frac{u_{**} + \delta}{d + 1}N^d\cdot N^{-d\alpha}\right)\right),
        \end{split}
    \end{equation*}
    which vanishes as $N\rightarrow \infty$, and then concludes the proof for \eqref{eq:upper bound small alpha}.
\end{proof}
\begin{proof}[Proofs of \Cref{thm:sharp lower bound,thm:biased walk aymptotics}]
Given \eqref{eq:lower bound large alpha}-\eqref{eq:upper bound small alpha}, the proof of \Cref{thm:biased walk aymptotics} follows from sending $\delta$ to zero and using the continuity of Brownian local time. \Cref{thm:sharp lower bound} is a corollary of \Cref{thm:biased walk aymptotics} by taking $\alpha=\infty$.
\end{proof}

\begin{remark}\label{rem:finalremark} We now briefly comment on the difficulties to extend the current scheme to the study of the asymptotics of the disconnection time in the presence of a super strong bias (i.e., when $\alpha \leq 1/d$), and then discuss some open questions and possible future directions.

{\bf 1)} 
Although not explicitly spelled out in this work, it is not hard to see (at least at a heuristically level) that in the strong bias regime ($1/d<\alpha<1$), to achieve disconnection only takes an order of $N^{d(1+\alpha)}$ steps, spanning an order of $N^{d\alpha}$ layers in terms of height. We believe that these asymptotics do extend to $\alpha\leq 1/d$ and the upper bound in \eqref{eq:Windisch} is tight. However, in this regime,  the random walk fails to mix horizontally (i.e., in $\mathbb T$-direction) within such a short period. This strongly suggests a phase transition at $\alpha=1/d$, resulting in a different pre-factor (or even a different scaling factor) from that of \eqref{eq:distribution limit small alpha}. It is very plausible that random interlacements still come into play in this regime, but some new ideas have to be involved to obtain the correct order of asymptotic disconnection time.  

{\bf 2)} As a starting point, we propose the following ``toy'' model: consider a biased random walk on $\mathbb E=\mathbb T \times \mathbb Z$, with an upward (downward resp.) drift of strength $N^{-d\alpha}$ for $x\in \mathbb T \times \mathbb Z^{-}$ ($\mathbb T\times \mathbb Z^+$ resp.), $0\leq \alpha \leq 1$, for which the disconnection should happen at time scale $N^{d(1+\alpha)}$. What can we say on the precise asymptotics of the disconnection time? What about the large deviation probability that the disconnection happens at a proportion of the expected disconnection time? We believe that answering these questions is vital to the understanding of the disconnection problem with super strong bias. 

{ \bf3)} The work \cite{vermesi2008intersection} considers intersection exponents for biased random walk on cylinders with a fixed base. It is very natural to also consider non-intersection events for biased walks on cylinders with large bases and the behaviour of the walks conditioned on non-intersection.
\end{remark}

\newpage
\section{Tables of symbols}\label{sec:symbol}

We summarize in the following tables various symbols that appear in this paper. We begin with lattice sets.
\vspace{6mm}
\begin{table}[htbp]
\renewcommand{\arraystretch}{1.3}
\setlength{\tabcolsep}{10mm}
\caption{Subsets of $\mathbb E $ and $\mathbb Z^{d+1}$}\label{table:Various sets}
\centering
\begin{tabular}{@{\hspace{0.3cm}}p{3.8cm} @{\hspace{0.3cm}}p{2.2cm}@{\hspace{0.5cm}} p{9.5cm}@{\hspace{0.2cm}}}
\toprule[1.5pt]
Symbol & Location & Description\\
\midrule[1pt]
$B_x, D_x, \check D_x, U_x, \check U_x$ & \eqref{eq:def of boxes at origin},\eqref{eq:def of translated boxes} & ``Matryoshka'' of boxes with $B_x\subseteq D_x\subseteq \check{D}_{x}\subseteq U_{x}\subseteq \check{U}_{x}.$\\
$B',D'$ ($B_{x'},D_{x'}$) & \eqref{def:exist event} & Concentric boxes with $x'=x+Le$ for some $|e|=1$.\\
$\mathsf B$ & Prop.~\ref{prop:geometric argument}, \ref{prop:geometric argument bias} & A fixed box with side-length $[N/\log^3N]$ with conditions \eqref{eq:vertical component of mathsfB} (or \eqref{eq:mathsf B range}) and \eqref{eq:large distance from mathsfB}.\\
$\mathsf D$ & Sect.~\ref{sec:bad local time} & The concentric box of $\mathsf B$ with side-length $[N/20]$. \\
$\overline B, \overline D$ ($\overline B_x, \overline D_x$) & \eqref{eq:def of overlineB and overlineD}, \eqref{def of ef of overlineB and overlineD x} & Mesoscopic boxes with $\overline B_x \subset \overline D_x$ centered around $\mathbb T\times \{0\}$. \\
$\B$, $\D$ & \eqref{def:mathrm box B and D}, \eqref{eq:requirement for f and g} & Mesoscopic boxes centered at $\mathrm x_c$ with $\B\subset \D$.\\
$\A$, $\widetilde{\A}$ & \eqref{eq:def of mathrmA and widetildemathrmA} & Cylinders centered at height $\mathrm z_c$ with $\A \subseteq \widetilde{\A}$.\\
$\mathcal C$ & Prop.~\ref{prop:geometric argument} & Set of base points satisfying \eqref{eq:d-dim surface of bad boxes} and \eqref{eq:bad boxes property}. \\
$\mathcal C_1$ & Prop.~\ref{prop:bad(beta-gamma)}, \ref{prop:bad local time} & A fixed subset of $\mathcal C$ with condition  \eqref{eq:size of C1} or \eqref{eq:C1 disjoint}.\\
$\mathcal C_2$ & \eqref{eq:sparsity of C2} & A sparse subset of $\mathcal C_1$.\\
$D(\mathcal S), \check D(\mathcal S), \check U(\mathcal S)$ & \eqref{eq:def of checkD and checkU C2} & Union of $D_x, \check D_x, \check U_x$ for $x\in \mathcal S$ respectively.\\
\bottomrule[1.5pt]
\end{tabular}
\end{table}

We now introduce different types of excursions involved in the couplings. 

\vspace{5mm}
\begin{table}[H]
\renewcommand{\arraystretch}{1.3}
\setlength{\tabcolsep}{10mm}
\caption{Excursion types}\label{table:excursion types}
\centering
\begin{tabular}{@{\hspace{0.2cm}}p{1.4cm}@{\hspace{0.3cm}}p{1.4cm}@{\hspace{0.3cm}} p{12.8cm}@{\hspace{0.2cm}}}
\toprule[1.5pt]
Type& Example & Description and Location\\
\midrule[1pt] 
$W$-type& $W_{\ell}^{\check{\mathsf{B}}}$ & Cylinder walk excursions successively extracted from $(X_n)_{n\geq 0}$ (or $\{X^{k}\}_{k\geq 1}$ in Sect.~\ref{sec:lower bound in biased case}) under law $P_{0}^{N}$ (Sect.~\ref{sec:Geometric}-\ref{sec:bad local time}), $P_{0}^{N,\alpha}$ (Sect.~\ref{sec:lower bound in biased case}-\ref{sec:upper bound}), or $P_{q_{0}}^{N}$ and $P_{q_{0}}^{N,\alpha}$ (Sect.~\ref{sec:couplings}).\\ 
$\widetilde{W}$-type &$\widetilde{W}_{\ell}^{\check{D}}$ & For a fixed $\check{D}$, $\{\widetilde{W}_{\ell}^{\check{D}}\}_{\ell\geq 1}$ are i.i.d.~biased walk excursions with law $P_{\overline{e}_{\check{D}_{x}}}^{N,\alpha}$, and $\{\widetilde{W}_{\ell}^{\check{D}_{x}}\}_{\ell\geq 1}$ are independent as $x$ varies over $\mathcal{C}_{1}$ (see Sect.~\ref{subsec:adapting bad(beta-gamma)}).\\
$\widetilde{Z}$-type &$\widetilde{Z}_{\ell}^{D}$& For a fixed $\check{D}$, $\{\widetilde{Z}_{\ell}^{\check{D}}\}_{\ell\geq 1}$ are i.i.d.~SRW excursions with law $P_{\overline{e}_{\check{D}_{x}}}^{N}$, and $\{\widetilde{Z}_{\ell}^{\check{D}_{x}}\}_{\ell\geq 1}$ are independent as $x$ varies over $\mathcal{C}_{1}$. Excursions $\{\widetilde{Z}_{\ell}^{D_{x}}\}_{\ell\geq 1}$ and $\{\widetilde{Z}_{\ell}^{D_{x'}}\}_{\ell\geq 1}$ are successively extracted from $\{\widetilde{Z}_{\ell}^{\check{D}_{x}}\}_{\ell\geq 1}$ (see Sect.~\ref{sec:bad(beta-gamma)} and \ref{subsec:adapting bad(beta-gamma)}).\\
$Z$-type & $Z_{\ell}^{\mathrm{B}}$ & SRW excursions extracted from random interlacements (see Sect.~\ref{sec:bad(beta-gamma)}-\ref{sec:couplings}).\\
$\W$-type & $\widetilde{\W}_{\ell}^{\B},\overline{\W}_{\ell}^{\A}$ & \\
$\Y$-type & $\Y_{\ell}^{\A}$, $\widetilde{\Y}_{\ell}^{\A}$ &Cylinder excursions with law depending on the context (see \Cref{sec:couplings}).\\ 
$\Z$-type & $\Z_{\ell}^{\A}$, $\widetilde{\Z}_{\ell}^{\A}$ & \\
\bottomrule[1.5pt]
\end{tabular}
\end{table}

\newpage

For two concentric sets $U\subseteq V$ and an excursion type $X$, we use $X^{U}$ as a shorthand of $X^{U,V}$ to denote successive $X$-type excursions from $U$ to $\partial V$. In addition, we may omit the subscript $U$ if $U$ is indexed by a set of base points when no confusion arises (e.g.~use $W_{\ell}^{D}$ for $W_{\ell}^{D_x, U_x}$). In \Cref{table:short forms} we list the shorthand notation, with the concentric sets displayed in \Cref{table:Various sets}.

\vspace{5mm}
\begin{table}[htbp]
\renewcommand{\arraystretch}{1.3}
\setlength{\tabcolsep}{10mm}
\caption{Shorthands}\label{table:short forms}
\centering
\begin{tabular}{@{\hspace{0.3cm}}p{2.5cm} @{\hspace{0.3cm}}p{1.5cm}@{\hspace{0.5cm}} p{4cm}@{\hspace{0.2cm}} @{\hspace{0.5cm}} p{3cm}}
\toprule[1.5pt]
Concentric sets & Short form & Example & Location\\
\midrule[1pt] 
$\check{D}_{x},\check{U}_{x}$ & $\check{D}$ & $W_{\ell}^{\check{D}}=W_{\ell}^{\check{D}_{x},\check{U}_{x}}$ & \Cref{prop:coupling W and widetildeZ} \\
$D_{x},U_{x}$ & $D$ & $\widetilde{Z}_{\ell}^{D}=\widetilde{Z}_{\ell}^{D_{x}.U_{x}}$ &\Cref{prop:coupling between widehatZ and W} \\
$D_{x'},U_{x'}$ & $D'$ & $\widetilde{Z}_{\ell}^{D'}=\widetilde{Z}_{\ell}^{D_{x'},U_{x'}}$ & \Cref{prop:coupling between widehatZ and W} \\
$\mathsf{B},\mathsf{D}$ & $\mathsf{B}$ & $Z_{\ell}^{\mathsf{B}}=Z_{\ell}^{\mathsf{B},\mathsf{D}}$ & \Cref{prop:very strong coupling} \\
$\overline{B}_{x},\overline{D}_{x}$ & $\overline{B}$ & $W_{\ell}^{\overline{B}}=W_{\ell}^{\overline{B}_{x},\overline{D}_{x}}$ & \Cref{prop:strong coupling} \\
$\B,\D$ & $\B$ & $Z_{\ell}^{\B}=Z_{\ell}^{\B,\D}$ & \Cref{thm:very strong coupling S}\\
$\A,\widetilde{\A}$ & $\A$ & $\widetilde{\W}_{\ell}^{\A}=\widetilde{\W}_{\ell}^{\A,\widetilde{\A}}$ & \Cref{prop:Poissonization}\\
\bottomrule[1.5pt]
\end{tabular}
\end{table}

We now introduce some important random quantities. 
For boxes $U\subseteq V$ (subsets of both $\mathbb{E}$ and $\mathbb{Z}^{d+1}$) and a record-breaking time $S$ we use $N_{{S}}(U)$ for the number of excursions of random walk from $U$ to $\partial V$ before time ${S}$, and $N_{u}(U)=N_{u}^{U,V}$ for the number of excursions (abbreviated as {\it \# exc.} below) in $\I^{u}$ from $U$ to $\partial V$. 
\vspace{5mm}
\begin{table}[htbp]
\renewcommand{\arraystretch}{1.3}
\setlength{\tabcolsep}{5mm}
\caption{Random variables}
\centering
\begin{tabular}{@{\hspace{0.2cm}}p{3.4cm} @{\hspace{0.3cm}}p{2.2cm}@{\hspace{0.3cm}} p{9.5cm}@{\hspace{0.2cm}}}
\toprule[1.5pt]
Symbol & Location & Description\\
\midrule[1pt]
$S_N(z)$ ($S_N(\omega, u, z)$) & \eqref{eq:def of Sz} & Record-breaking time when the local time at height $z$ exceeds $uN^d/(d+1)$.\\
$\underline S_N(z)$ ($\underline S_N(\omega, \delta, z)$) & \eqref{eq:def of underlineSz} & Record-breaking time $S_N(\omega, \overline{u}-\delta, z)$.\\
$\overline S_N(z)$ ($\overline S_N(\omega, \delta, z)$) & \eqref{eq:def of underlineSz} & Record-breaking time $S_N(\omega, u^{**}+\delta, z)$.\\
$\underline S_N$ $\quad \ $($\underline S_N(\omega, \delta)$) & \eqref{eq:def of underlineS and overlineS} & The infimum of $\underline S_N(z)$ over $z\in\mathbb{Z}$.\\
$\overline S_N$ $\quad \ $($\overline S_N(\omega, \delta)$) & \eqref{eq:def of underlineS and overlineS} & The infimum of $\overline S_N(z)$ over $z\in\mathbb{Z}$.\\
$\mathrm S_N(u)$ ($S_{N}(w,u,\mathrm{z}_{c})$) & \eqref{eq:def of mathrmS} & Record-breaking time w.r.t.\ a specific height $\mathrm{z}_{c}$\\
$N_{\underline{S}_{N}}(D)$ & \eqref{eq:def of N_SN}  
& \# exc.~of RW from $D$ to $\partial U$ before $\underline{S}_{N}$.\\
$N_{\underline{S}_{N}}(\mathsf{B})$ & Prop.~\ref{prop:very strong coupling},\,\ref{prop:very strong coupling bias} & \# exc.~of RW from $\mathsf{B}$ to $\partial\mathsf D$ before $\underline{S}_{N}$.\\
$N_{\underline{S}_{N}}(\overline{B})$ & Prop.~\ref{prop:strong coupling} & \# exc.~of RW from $\overline{B}$ to $\overline{D}$ before $\underline{S}_{N}$.\\
$N_{\mathrm{S}_{N}(u)}(\B)$ &Thm.~\ref{thm:very strong coupling S} & \# exc.~of RW from $\B$ to $\partial \D$ before $\mathrm{S}_{N}(u)$.\\
$N_{K}(\B)$ &Thm.~\ref{thm:very strong coupling excursion} & \# exc.~of RW from $\B$ to $\partial \D$ before  $D_{K}^{\A,\widetilde{\A}}$.\\
$N_{u}(\mathsf{B})$ ($N_{u}^{\mathsf{B},\mathsf{D}}$) & Prop.~\ref{prop:very strong coupling}, \ref{prop:very strong coupling bias} & \# exc.~in $\mathcal I^u$ from $\mathsf{B}$ to $\partial\mathsf{D}$.\\
$N_{u}(\mathsf{B})$ ($N_{u}^{\overline{B},\overline{D}}$) & Prop.~\ref{prop:strong coupling} & \# exc.~in $\mathcal I^u$ from $\overline{B}$ to $\partial\overline{D}$.\\
$N_{u}(\B)$ ($N_{u}^{\B,\D}$) & Prop.~\ref{thm:very strong coupling S}, \ref{thm:very strong coupling excursion} & \# exc.~in $\mathcal I^u$ from $\B$ to $\partial\D$.\\
\bottomrule[1.5pt]
\end{tabular}
\end{table}

\newpage

The following table lists the measures that appear in this paper.

\vspace{10mm}
\begin{table}[H]
\renewcommand{\arraystretch}{1.3}
\setlength{\tabcolsep}{10mm}
\caption{Measures}
\centering
\begin{tabular}{@{\hspace{0.2cm}}p{4.5cm}@{\hspace{0.2cm}} p{10.8cm}@{\hspace{0.2cm}}}
\toprule[1.5pt]
Symbol & Description and Location\\
\midrule[1pt]
$P_{x}^{N}$ & Simple random walk on $\mathbb{E}=\mathbb{T}\times(\mathbb{Z}/N\mathbb{Z})^{d}$ started from $x$.\\
$P_{x}^{N,\alpha}$ & Random walk with upward drift $N^{-d\alpha}$ on $\mathbb{E}=\mathbb{T}\times(\mathbb{Z}/N\mathbb{Z})^{d}$ started from $x$.\\
$P_{x}$ & Simple random walk on $\mathbb{Z}^{d+1}$ started from $x$.\\
$P_{x}^{\Delta}$ & Random walk with upward drift $\Delta$ on $\mathbb{Z}^{d+1}$ started from $x$.\\
$P_{\mu}^{N}(P_{\mu}^{N,\alpha},P_{\mu},P_{\mu}^{\Delta})$ & The measure (not essentially a probability measure) $\Sigma_{x\in\mathbb{E}}\mu(x)P_{x}^{N}$ (similar for other three) with initial distribution $\mu$ on $\mathbb{E}$ (or $\mathbb{Z}^{d+1}$). \\
$\PP$ & Law of random interlacements (and also continuous-time random interlacements, see \Cref{subsec:continuous-time interlacements}).\\
$\mathbb{W}$ & The Wiener measure. Mainly appears in \Cref{sec:one-dimensional walks,sec:denouement}.\\
$\mathsf{P}_{x}^{\Delta}$ & One-dimensional random walk started from $x$ with drift $\Delta$, see \Cref{subsec:random walk preliminary}.\\
$\mathbb{Q}_{W,\widetilde{Z}}$ (resp.~$\mathbb{Q}_{\widetilde{Z},Z}$) & The coupling between excursions of $W$-type and $\widetilde{Z}$-type (resp.~$\widetilde{Z}$-type and $Z$-type) in the $\alpha=\infty$ case, see \Cref{prop:coupling W and widetildeZ} to \ref{lem:coupling between widetildeZ and Z}.\\
$\mathbb{Q}_{W,\widetilde{W}}^{N,\alpha}$ (resp.~$\mathbb{Q}_{\widetilde{W},\widetilde{Z}}^{N,\alpha},\mathbb{Q}_{W,\widetilde{Z}}^{N,\alpha}$) & The coupling between excursions of $W$-type and $\widetilde{W}$-type (resp.~$\widetilde{W}$-type and $\widetilde{Z}$-type, ${W}$-type and $\widetilde{Z}$-type) for  $\alpha<\infty$, see \Cref{subsec:adapting bad(beta-gamma)}.\\
$\underline{Q}$ & The ``very strong" coupling of the laws $P_{0}^{N}$ and $\PP$ in \Cref{prop:very strong coupling} (focusing on excursions from $\mathsf{B}$ to $\partial\mathsf{D}$).\\
$\underline{Q}^{N,\alpha}$ & The ``very strong" coupling of the laws $P_{0}^{N,\alpha}$ and $\PP$ in \Cref{prop:very strong coupling bias} (focusing on excursions from $\mathsf{B}$ to $\partial\mathsf{D}$).\\
$\overline{Q}_{\overline{B}}^{N,\alpha}$ & The ``strong" coupling of the laws $P_{0}^{N,\alpha}$ and $\PP$ in \Cref{prop:strong coupling} (focusing on excursions from $\overline{B}$ to $\partial\overline{D}$).\\
$Q$ & The very strong coupling of the laws $P_{0}^{N}$ and $\PP$ in \Cref{thm:very strong coupling S,thm:very strong coupling excursion} (focusing on excursions from $\B$ to $\D$).\\
$P_{q_{z}}^{N},P_{q}^{N},P_{z_{1},z_{2}}^{N}$  & The probability measures defined by \eqref{eq:def of qz}, \eqref{eq:def of q} and \eqref{eq:def of Pz1z2}.\\
$P_{q_{z}}^{N,\alpha},P_{q}^{N,\alpha},P_{z_{1},z_{2}}^{N,\alpha}$ & The biased walk counterparts of $P_{q_{z}}^{N},P_{q}^{N},P_{z_{1},z_{2}}^{N}$.\\
$Q_{1}$-$Q_{9}$ & The chain of couplings for proving \Cref{thm:very strong coupling excursion}, see \Cref{prop:horizontal independence,prop:vertical independence,prop:Poissonization,prop:Handling the first excursion,prop:extraction,prop:lower truncation,prop:lower interlacement,prop:upper truncation,prop:upper interlacement}.\\
$Q_{\mathcal{X}}$ & The perfect coupling in \Cref{lem:horizontal independence}.\\
$Q_{4}',Q_{4}^{\alpha},Q_{10},Q_{11}$ & Couplings appearing in \Cref{subsec:adapted proof very strong coupling} (adapted proofs of \Cref{prop:very strong coupling,prop:very strong coupling bias,prop:strong coupling}) that are respectively defined in \eqref{eq:Handling the first excursion adaped simple},\eqref{eq:Handling the first excursion adaped simple and bias} and \Cref{prop:Poissonian coupling lower,prop:Poissonian coupling upper}. Here $Q_{4}'$ and $Q_{4}^{\alpha}$ are adapted versions of $Q_{4}$ in \Cref{prop:Handling the first excursion}.\\
\bottomrule[1.5pt]
\end{tabular}
\end{table}

\newpage
We also list different events describing properties of boxes and their complements. The box $B$ in the following description refers to the $L$-box that we will take into consideration.

\vspace{10mm}
\begin{table}[htbp]
\renewcommand{\arraystretch}{1.3}
\setlength{\tabcolsep}{10mm}
\caption{Events}
\centering
\begin{tabular}{@{\hspace{0.2cm}}p{2.6cm} @{\hspace{0.5cm}}p{1.6cm}@{\hspace{0.5cm}} p{7cm}@{\hspace{0.5cm}}p{1.8cm}}
\toprule[1.5pt]
Event & Location & Description & Complement \\
\midrule[1pt]
$\text{Exist}(B,X, a)$& \eqref{def:exist event}& Existence of large clusters for the complement of excursion type $X$ in $B$. &$\text{fail}_1(B,X, a)$\\
$\text{Unique}(B,X, a, b)$& \eqref{def:unique event} & Uniqueness of large clusters for the complement of excursion type $X$ in $B$.&$\text{fail}_2(B,X, a, b)$\\
$\mbox{good}(\beta, \gamma)$ & Def.~\ref{def:good(beta-gamma)} & ``Strongly-percolative'' property for the complement of random walk in $B$. & $\mbox{bad}(\beta, \gamma)$\\
$\widehat{\text{good}}(\widehat{\beta},\widehat{\gamma})$ & Def.~\ref{def:widehatgood}~ & ``Strongly-percolative'' property for the complement of $\widetilde Z_\ell^{D}$ excursions in $B$. & $\widehat{\text{bad}}(\widehat{\beta},\widehat{\gamma})$\\
$\mbox{fine}(\gamma)$ & Def.~\ref{def:bad local time} & The local time of random walk before time $\underline S_N$ in $B$ is not too large. & $\mbox{poor}(\gamma)$ \\
$\widehat{\text{fine}}(\widehat \gamma)$ & Def.~\ref{def:fine box} & The local time of random interlacements $\mathcal I^{u'}$ in $B$ is not too large. &$\widehat{\text{poor}}(\widehat{\gamma})$ \\
normal$(\beta, \gamma)$ & Def.~\ref{def:bad boxes} & The intersection of good$(\beta, \gamma)$ and fine$(\gamma)$. & abnormal$(\beta, \gamma)$\\
$\widehat{\text{regular}}(\gamma,\theta)$  & Def.~\ref{def:regular theta} & The weighted local time in the excursions of continuous-time interlacements is not too small. & $\widehat{\text{irregular}}(\gamma,\theta)$\\

\bottomrule[1.5pt]
\end{tabular}
\end{table}

\bibliographystyle{plain}
\bibliography{reference}
\appendix
\section{Sketch of an alternative  proof for \Cref{thm:sharp lower bound}}\label{sec:appendix}

In this section we give a simpler proof for the sharp lower bound on the disconnection time of simple random walk. Recall the definition of $\underline{S}_{N}$ in \eqref{eq:def of underlineS and overlineS}. We shall sketch an alternative proof that $T_{N}\leq \underline{S}_{N}$ holds with high probability, which, combined with \Cref{sec:one-dimensional walks}, yields \Cref{thm:sharp lower bound} as the proof of \eqref{eq:lower bound large alpha} in \Cref{sec:denouement}. We remark here that this alternative proof does not work for disconnection time of general biased random walks with drift $N^{-d\alpha}$ for all $\alpha \in (1/d, 1)$; see \Cref{remark:alternative proof not work}.

The basic idea is to use the coupling between simple random walk on cylinder and random interlacements (see \Cref{thm:very strong coupling S,thm:very strong coupling excursion} and \Cref{prop:very strong coupling}). Consider coarse-grained boxes of side-length, say, $[\sqrt{N}]$ on $\mathbb E = \mathbb T\times\mathbb{Z}$. For any such box $B$ and neighbouring boxes $B'$, 
\begin{equation}\label{eq:vacant set cylinder}
\Big(\bigcup_{B'\text{ neighbouring B}}B'\cup B\Big)\setminus X_{[0,\underline{S}_{N}]}\text{ stochastically dominates }\Big(\bigcup_{B'\text{ neighbouring B}}B'\cup B\Big)\cap \V^{u}
\end{equation}
for some $u\leq \overline{u}-\delta/2$, and consequently there exist connected components with diameter of order $\sqrt{N}$ in $B$ and each $B'$ as guaranteed by existence property of $\overline{u}$, which also enjoy good connectivity as guaranteed by local uniqueness property of $\overline{u}$. We then construct a consecutive coarse-grained path of boxes from $\mathbb{T}\times(-\infty,-N^{2d+1}]$ to $\mathbb{T}\times[N^{2d+1},\infty)$, and then connect together the connected components with a large diameter in vacant set of these boxes to form a path of vertices in $\mathbb{E}\setminus X_{[0,\underline{S}_{N}]}$.

The main problem in the above idea is that the local uniqueness property is not monotone. Although a coupling can help locate a large connected component in a box $B$, the large connected components determined by different couplings may not coincide with each other, thus the gluing procedure may fail. To overcome this difficulty, we use the following observation: Suppose the average local time of $X_{[0,\underline{S}_{n}]}$ in a box $B$ is $u(\leq \overline{u}-\delta)$, then with high probability, there exists a unique connected component in $B$ with size larger than $\frac{3}{4}\theta(u)|B|$, where $\theta(u)$ is the percolation function of $\V^{u}$, that is,
\begin{equation}\label{eq:def of thetau}
\theta(u):=\PP[0\overset{\V^{u}}{\longleftrightarrow}\infty],    
\end{equation}
and $|B|$ denotes the size of $B$. We will argue that every coupling between $B\setminus X_{[0,\underline{S}_{N}]}$ and vacant set of random interlacements can help distinguish this unique largest connected component, and then connect largest componenets within a coarsed-grained path of boxes to form a path of vertices in $\mathbb{E}\setminus X_{[0,\underline{S}_{N}]}$. This time, since different couplings actually determine the same connected component in each box, the gluing procedure works.

To prove the key observation we shall use the following property on $\V^u$: With high probability, the number of points in $B$ that lies in a connected component of $B\cap\V^{u}$ with diameter larger than $N^{1/4}$ is between $\frac{3}{4}\theta(u)|B|$ and $\frac{5}{4}\theta(u)|B|$. Now by repeatedly using local uniqueness property of $\V^{u}$ with $u<\overline{u}$ in boxes with side-length $[N^{1/4}]$ lying in $B$, with error term stretched-exponential in $N$, all the connected sets in $B\cap\V^{u}$ whose diameters are larger than $N^{1/4}$ and whose distances to $\partial B$ are larger than $N^{1/4}$ shall lie in the same connected component of $B\cap \V^{u}$. Combining the last two facts with the coupling between $B\setminus X_{[0,\underline{S}_{n}]}$ and $B\cap\V^{u}$ ensures the existence of a connected component in $B$ with size larger than $\frac{3}{4}\theta(u)|B|$, and the uniqueness follows from the coupling and the fact that $2\cdot\frac{3}{4}>\frac{5}{4}$. 

We remark here that similar ideas have appeared in the literature. Indeed, the existence part draws inspirations from the proof of Theorem 1.4 in \cite{TW11} (see \Cref{remark:largest connected component} for more discussions), while the idea using $2\cdot\frac{3}{4}>\frac{5}{4}$ in the uniqueness part also appears in \cite{RS13}. 

We now give a more detailed proof of \Cref{thm:sharp lower bound}, in which we cite and adapt many results in \cite{RS13}. We remark here that a large part of the notation, including the $E$-type and $F$-type events defined in \eqref{eq:def of good increasing event}-\eqref{eq:def of family of bad decreasing}, the event $\overline{H}^{*}(x,N)$ defined in the paragraph above \eqref{eq:union bound of overlineH}, notation regarding the renormalized lattice defined in \eqref{eq:def of L0 and l0}-\eqref{eq:lambda xn}, etc., is in line with the notation in \cite{RS13} for sake of coherence and readers' convenience. However, these symbols should not be confused with these in \Cref{sec:intro,sec:preliminary,sec:one-dimensional walks,sec:Geometric,sec:bad(beta-gamma),sec:bad local time,sec:lower bound in biased case,sec:upper bound,sec:couplings,sec:denouement}.\\

We only need to prove \Cref{prop:underlineS and T} in the simple random walk case. We now introduce a planar strip of disjoint coarsed-grained boxes in the cylinder $\mathbb{E}$. Let $L=[\sqrt{N}]$ and $B_{0}=[0,L)^{d+1}$. We remark here that the choice of $L$ is arbitrary as long as it is of order $N^{c}$ with $c\in(0,1)$. For any $x\in\mathbb{E}$, let $B_{x}=B_{0}+x$, and we call these boxes $L$-boxes. Recall the unit vectors $e_{1},\dots,e_{d+1}$ in \Cref{sec:preliminary}. We say that two $L$-boxes $B_{x_{1}}$ and $B_{x_{2}}$ are neighbours if $x_{2}=x_{1}+Le_{i}$ for some $1\leq i\leq d+1$, and say $B_{x_{1}}$ and $B_{x_{2}}$ are $*$-neighbours if $|(x_{2}-x_{1})/L|_{\infty}=1$, with $|\cdot|_{\infty}$ denoting the $L^{\infty}$-norm. We write
\begin{equation}\label{eq:def of strip}
V:=\left\{ae_{1}+ze_{d+1}:a\in L\mathbb{Z}\cap\left[-\frac{1}{4}N,\frac{1}{4}N\right]\text{ and }z\in L\mathbb{Z}\cap\left[-N^{7d},N^{7d}\right]\right\},    
\end{equation}
where the choice of $1/4$ and $7$ is rather arbitrary, and write
\begin{equation}\label{eq:def of BS}
B(V):=\bigcup_{x\in V}B_{x}.    
\end{equation}
The left and right sides (resp.~up and down sides) of $B(V)$ are two longest line segments in the boundary of $B(V)$ that are parallel with $e_{d+1}$ (resp.~$e_{1}$). The boxes $B_{x},x\in V$ will play a central role in the following analysis.

We say an $L$-box $B=B_{x},x\in V$ is a \textit{nice} box if the largest connected of $B\setminus X_{[0,\underline{S}_{N}]}$ is connected with the largest connected component of $B'\setminus X_{[0,\underline{S}_{N}]}$ in $\mathbb{E}\setminus X_{[0,\underline{S}_{N}]}$, for any $B'$ neighbouring $B$. Then a path of neighbouring nice boxes yields a path of vertices starting from the first box and ending at the last box in $\mathbb{E}\setminus X_{[0,\underline{S}_{N}]}$. If a box $B$ is not nice, we say $B$ is \textit{nasty}. 

We now take $D_{x}=B(x,\frac{N}{2\log^{3}N})$ and $U_{x}=B(x,N/\log^{3}N)$ for all $x\in V$, and denote by $\overline{H}^{*}(x,N)$ the event that $B_{x}$ is connected to the outside of $D_{x}$ by a path of $*$-neighbouring nasty $L$-boxes in $B(V)$ (This notation is in line with that in \cite{RS13}, and should not be confused with the $H$-type events introduced in \Cref{lem:coupling between widetildeZ and Z}). Then by planar duality \cite[Proposition 2.1]{Kes82}, if $T_{N}\leq \underline{S}_{N}\leq N^{5d}$, then there exists a path of $*$-neighbouring nasty $L$-boxes in $B(V)$ starting from the left side of $B(V)$ and ending at the right side of $B(V)$ \footnote[1]{A corollary of \cite[Proposition 2.1]{Kes82} is that the outer boundary of a finite connected set in $\mathbb{Z}^{2}$ is $*$-connected in $\mathbb{Z}^{2}$. Here we will use this corollary on the connected component of nice boxes in $B(V)$ that contains the up side of $B(V)$, where connection of boxes is determined by the ``neighbouring" relation.}. Under this circumstance, there exists an $x\in V$ such that $\overline{H}^{*}(x,N)$ occurs. Therefore, by a union bound on $x$, to prove \Cref{prop:underlineS and T} it suffices to prove that, for every $N$ and some absolute constants $C$ and $c$,
\begin{equation}\label{eq:union bound of overlineH}
\sup_{x\in V\atop U_{x}\subseteq B(V)}P_{q_{0}}^{N}\left[\overline{H}^{*}(x,N)\right]\leq C\exp\left(N^{-c}\right),    
\end{equation}
where we recall that $q_{0}$ is defined in \eqref{eq:def of qz} as the uniform distribution on $\mathbb{T}\times\{0\}$. Note that we can change the law from $P_{0}^{N}$ into $P_{q_{0}}^{N}$ thanks to the symmetry of torus. (We cannot do this in \Cref{prop:very strong coupling,prop:very strong coupling bias} because the position of the random box $\mathsf{B}$ depends on the starting point of the walk.)

Now for the percolation function $\theta(\cdot)$ in \eqref{eq:def of thetau}, we have
\begin{equation}\label{eq:def of u*}
0<\overline{u}-\delta<\overline{u}\overset{\eqref{eq:unique critial parameter}}{=}u_{*}=\inf\{u\geq 0,\theta(u)=0\}\in (0,\infty).    
\end{equation}
By \cite[Corollary 1.2]{Tei09}, $\theta(\cdot)$ is continuous on $[0,u_{*})$, and therefore there exists a positive integer $k$ and absolute constants $u_{i}$, $0\leq i\leq k+2$ satisfying
\begin{equation}\label{eq:def of ui}
\begin{aligned}
0=u_{0}<u_{1}<u_{2}<\dots<u_{k-1}&\leq \overline{u}-\delta<u_{k}<u_{k+1}<u_{k+2}\leq \overline{u}-\frac{\delta}{2},\\
\theta(u_{10})\geq \frac{2}{3},&\quad\theta(u_{k+2})>0,\quad\text{and}\\
\theta(u_{i+2})\geq \frac{10^{4}-1}{10^{4}}&\cdot\theta(u_{i-2}),\quad\text{ for all }3\leq i\leq k.
\end{aligned}
\end{equation}
Then to prove \eqref{eq:union bound of overlineH} it suffices to prove the following proposition.

\begin{proposition}\label{prop:union bound of overlineH and uk}
Consider an arbitrary $x=ae_{1}+ze_{d+1}$ such that $x\in V$ and $U_{x}\subseteq B(V)$. Recall the definition of $S_{N}(\omega,u,z)$ as in \eqref{eq:def of Sz}. For $1\leq i\leq k$, define
\begin{equation}\label{eq:def of Rk}
R_{i}:=S_{N}(\omega,u_{i},z),   
\end{equation}
that is, the first time when the ``average local time" of level $\mathbb{T}\times\{z\}$ equals $u_{i}$ (Note that almost surely $\underline{S}_{N}\leq R_{k}$). Then for every $N$ and some absolute constants $C$ and $c$ we have
\begin{equation}\label{eq:union bound of overlineH and u5}
P_{q_{0}}^{N}\left[\overline{H}^{*}(x,N),0\leq \underline{S}_{N}\leq R_{3}\right]\leq C\exp\left(-N^{c}\right); 
\end{equation}
\begin{equation}\label{eq:union bound of overlineH and uk}
P_{q_{0}}^{N}\left[\overline{H}^{*}(x,N),R_{i-1}\leq\underline{S}_{N}\leq R_{i}\right]\leq C\exp\left(-N^{c}\right),\quad\text{for all }3\leq i\leq k. 
\end{equation}
\end{proposition}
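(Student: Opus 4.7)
The overall plan is to fix $x=ae_{1}+ze_{d+1}\in V$ with $U_{x}\subseteq B(V)$, work on the event $\{R_{i-1}\leq\underline{S}_{N}\leq R_{i}\}$ for each $3\leq i\leq k$, and apply \Cref{thm:very strong coupling S} twice (once at parameter $u=u_{i-1}$ with sprinkling window $(u_{i-2},u_{i})$, and once at $u=u_{i}$ with window $(u_{i-1},u_{i+2})$) simultaneously for every $L$-box $B_{y}\subseteq U_{x}$ concentric with an auxiliary box of side $\sim L\log^{3}N$; the pair satisfies \eqref{eq:requirement for f and g}. Since $R_{i-1}=\mathrm{S}_{N}(u_{i-1})\leq\underline{S}_{N}\leq\mathrm{S}_{N}(u_{i})=R_{i}$ and the trace $X_{[0,\cdot]}$ is monotone in time, on a coupling event $G_{y}$ of probability at least $1-\exp(-cN^{(d-1)/2})$ one has the two-sided sandwich
\begin{equation*}
\mathcal{V}^{u_{i+2}}\cap B_{y}\ \subseteq\ B_{y}\setminus X_{[0,\underline{S}_{N}]}\ \subseteq\ \mathcal{V}^{u_{i-2}}\cap B_{y}.
\end{equation*}
Intersecting the $G_{y}$ over the polynomially many boxes $B_{y}\subseteq U_{x}$ still gives an event of probability at least $1-\exp(-N^{c})$, reducing the proposition to a \emph{static} percolation estimate on $\mathcal{V}^{u_{i\pm 2}}$ together with a Peierls-type bound on $*$-connected paths of nasty boxes.

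The central percolation claim is that, for each mesoscopic box $B=B_{y}$, outside of an event of probability at most $\exp(-L^{c})$, the set $B\setminus X_{[0,\underline{S}_{N}]}$ contains a \emph{unique} connected component of diameter $\geq L/10$, whose cardinality lies in $[\tfrac{3}{4}\theta(u_{i+2})|B|,\tfrac{5}{4}\theta(u_{i-2})|B|]$. Existence and the lower cardinality bound come from strong percolativity of $\mathcal{V}^{u_{i+1}}$ at scale $L^{1/4}$ (yielding a single giant cluster in $\mathcal{V}^{u_{i+2}}\cap B$) combined with a first/second-moment estimate on the random variable $\sum_{v\in B}\mathbbm{1}\{v\stackrel{\mathcal{V}^{u_{i+2}}}{\longleftrightarrow}S(v,L^{1/4})\}$, whose expectation is $\theta(u_{i+2})|B|(1+o(1))$ and whose centered second moment is controlled by local uniqueness at the sprinkled level $u_{i+1}$. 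The uniqueness and the upper cardinality bound follow from the symmetric estimate for $\mathcal{V}^{u_{i-2}}$ and the key numerical input $\theta(u_{i+2})\geq\tfrac{10^{4}-1}{10^{4}}\theta(u_{i-2})$, which gives $2\cdot\tfrac{3}{4}\theta(u_{i+2})>\tfrac{5}{4}\theta(u_{i-2})$ and thereby precludes the existence of two disjoint clusters of diameter $\geq L/10$ in $\mathcal{V}^{u_{i-2}}\cap B$ -- a fortiori in any subset of it, such as $B\setminus X_{[0,\underline{S}_{N}]}$.

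Having unique giant components $\mathcal{G}_{y}\subseteq B_{y}\setminus X_{[0,\underline{S}_{N}]}$ of controlled size for every $B_{y}\subseteq U_{x}$, the same analysis applied to each doubled box $B_{y}\cup B_{y'}$ with $y'$ a neighbour of $y$ produces a unique giant cluster of $(B_{y}\cup B_{y'})\setminus X_{[0,\underline{S}_{N}]}$ of size at least $\tfrac{3}{4}\theta(u_{i+2})\cdot 2|B|$, which -- by the uniqueness in $B_{y}$ and in $B_{y'}$ -- must contain both $\mathcal{G}_{y}$ and $\mathcal{G}_{y'}$; hence $\mathcal{G}_{y}$ and $\mathcal{G}_{y'}$ are connected in $\mathbb{E}\setminus X_{[0,\underline{S}_{N}]}$, making $B_{y}$ \emph{nice}. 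Consequently $P_{q_{0}}^{N}[B_{y}\text{ is nasty},R_{i-1}\leq\underline{S}_{N}\leq R_{i}]\leq\exp(-N^{c})$ for every single $y$. The event $\overline{H}^{*}(x,N)$ forces a $*$-path of nasty $L$-boxes in the two-dimensional strip $B(V)$ of length $M\geq cN/(L\log^{3}N)\sim\sqrt{N}/\log^{3}N$; extracting a sub-sequence of $\geq cM/\log^{d+1}N$ boxes pairwise at distance $\geq 10L\log N$, the corresponding ``nasty'' events become approximately independent by the decoupling built into the soft-local-time construction of \Cref{prop:coupling W and widetildeZ}. A standard Peierls count ($\leq C^{M}$ many $*$-paths of length $M$ in the 2D strip) then yields probability bounded by $\exp(cM\log L-cMN^{c'}/\log^{d+1}N)\leq\exp(-N^{c''})$, proving \eqref{eq:union bound of overlineH and uk}. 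The bound \eqref{eq:union bound of overlineH and u5} follows from the same scheme, trivialised by $\theta(u_{3})$ being close to $1$.

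The principal technical hurdle will be the uniqueness/cardinality control at the heart of the second paragraph: one has to turn the tiny gap $\theta(u_{i-2})-\theta(u_{i+2})\leq 10^{-4}\theta(u_{i-2})$ into a \emph{stretched-exponential} -- not merely polynomial -- concentration estimate at scale $L=[\sqrt{N}]$, since otherwise the union bound over $\sim\sqrt{N}/\log^{3}N$ nasty boxes and over the $\leq C^{M}$ many $*$-paths would fail to close. Obtaining such concentration of the ``large-cluster'' count requires a careful covariance estimate that exploits the two-sided strong-percolativity condition at the intermediate sprinkled levels $u_{i\pm 1}$ via \eqref{eq:def of strongly percolate}, and is the step where the precise form of the definition of $\overline{u}$ is used most critically.
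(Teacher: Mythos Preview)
Your overall architecture---couple with interlacements to obtain a two-sided sandwich $\mathcal{V}^{u_{i+2}}\subseteq(\cdot)\setminus X_{[0,\underline{S}_N]}\subseteq\mathcal{V}^{u_{i-2}}$, use the $\frac{3}{4}/\frac{5}{4}$ volume trick to identify a unique giant component per box, then run a Peierls argument on nasty boxes---is exactly the paper's strategy. However, there is a genuine gap in how you close the Peierls bound.

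You assert that $P_{q_0}^N[B_y\text{ is nasty},\,R_{i-1}\le\underline{S}_N\le R_i]\le\exp(-N^c)$ for a \emph{single} box, and then invoke approximate independence via \Cref{prop:coupling W and widetildeZ}. But your own proposed mechanism for the single-box bound---a first/second-moment estimate on the count of long-connected points in $\mathcal{V}^{u_{i\pm2}}\cap B$---cannot yield stretched-exponential concentration: covariances of local interlacement events decay only polynomially, so Chebyshev gives at best $O(L^{-c})$, not $\exp(-L^c)$. You flag this as the ``principal technical hurdle'' but do not resolve it, and the suggested ``careful covariance estimate'' does not suffice. Moreover, \Cref{prop:coupling W and widetildeZ} is a coupling of random-walk excursions with i.i.d.\ excursions; after the sandwich, the nasty events are \emph{interlacement} events, so that proposition is not the relevant decoupling tool.

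The paper's route around this is different and worth noting. It does \emph{not} seek stretched-exponential decay at the seed scale $L_0=L$. Instead, it shows only that $\PP[\overline{E}^{u_{i+2}}_{0,0}(\mathcal{V}^{u_{i+2}/(1-\eta)})]\to 0$ and $\PP[\overline{F}^{u_{i-2}}_{0,0}(\mathcal{V}^{u_{i-2}/(1+\eta)})]\to 0$ via the ergodic theorem and continuity of $\theta(\cdot)$, and then bootstraps this to $2^{-2^n}$ at scale $L_n=\ell_0^nL_0$ through a renormalization scheme with cascading bad events, using the sprinkling decoupling inequality for random interlacements \cite[Theorem~3.4]{Szn12} exactly as in \cite{RS13}. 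The connection of giant components in neighbouring boxes is handled not by your doubled-box argument but by an auxiliary event $J(x)$ (local uniqueness at the intermediate sprinkled level $u_{i+1}$), which already has stretched-exponential probability by \eqref{eq:def of strongly percolate}. Your scheme would become correct if you replaced the direct single-box bound plus soft-local-time decoupling by this renormalization plus interlacement-decoupling machinery.
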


\begin{proof}
We first prove \eqref{eq:union bound of overlineH and uk} using \Cref{thm:very strong coupling S} and the mechanism in \cite{RS13}, combining ideas from \cite{TW11}. The proof of \eqref{eq:union bound of overlineH and u5} follows a similar fashion. Fix $3\leq i\leq k$. It follows from \Cref{thm:very strong coupling S} that one can construct a coupling $Q_{i}$ between simple random walk on cylinder and random interlacements satisfying
\begin{equation}\label{eq:very strong coupling ui}
Q_{i}\left[U_{x}\cap\V^{u_{i+1}} \subseteq U_{x}\setminus X_{[0,R_{i}]} \subseteq U_{x}\setminus X_{[0,R_{i-1}]} \subseteq U_{x}\cap\V^{u_{i-2}}\right]\geq 1-C\exp\left(-N^{c}\right).    
\end{equation}
In the following, we assume the coupling $Q_{i}$ and view $U_{x}$ as a subset of both $\mathbb{E}$ and $\mathbb{Z}^{d+1}$, depending on the context.

Now let 
\begin{equation}\label{eq:def of L0 and l0}
L_{0}=L=[\sqrt{N}]\quad\text{and}\quad\ell(d)=300\cdot 4^{d+1}.   
\end{equation}
We also choose positive integer $\ell_{0}\geq 10\ell(d)$ and set
\begin{equation}\label{eq:def of Ln}
L_{n}=\ell_{0}^{n}L_{0},\quad\text{for all }n\geq 1.    
\end{equation}
The renormalized lattice of $V$ is then defined as
\begin{equation}\label{eq:renormalized lattice of S}
V_{n}:=V\cap L_{n}\mathbb{Z}^{d+1},\quad\text{for all }n\geq 0,
\end{equation}
and we set 
\begin{equation}\label{eq:lambda xn}
\Lambda_{y,n}:=V_{n-1}\cap (y+[0,L_{n})^{d+1}),\quad\text{for all }n\geq 1.   
\end{equation}
Note that the above definitions are similar to those in \cite[Section 3.3]{RS13}, except that $d+1$ here plays the role of $d$ in \cite{RS13}.

We now define a family of bad increasing events (resp.~bad decreasing events) with respect to the interlacements set and bound the probability of bad increasing events (resp.~bad decreasing events) from above as in Section 4.2 (resp.~Section 4.1) of \cite{RS13}. 
Note that the monotonicity of $E$-type or $F$-type events here is different from that in \cite{RS13} since \cite{RS13} focuses on the connectivity of $\I^{u}$ while we care about the connectivity of $\V^{u}$, but this does not affect the proof.

For $K\subseteq U_{x}$ and $y\in K$, we say $y$ is \emph{long-connected} in $K$ if $y$ lies in a connected subset of $K$ with diameter larger than $N^{1/4}$. For $B_{y}\subseteq B(V)\cap U_{x}$ and $u\in\mathbb{R}$, write the ``good" decreasing event $E_{y}^{u_{i+2}}(\V^{u})$ and the ``good" increasing event $F_{y}^{u_{i-2}}(\V^{u})$ as
\begin{align}
E_{y}^{u_{i+2}}(\V^{u}):=\Big\{\text{there are at least }\frac{3}{4}\cdot\theta(u_{i+2})|B_{y}|\text{ long-connected points in }\V^{u}\cap B_{y}\Big\}; \label{eq:def of good decreasing event}  \\  
F_{y}^{u_{i-2}}(\V^{u}):=\Big\{\text{there are at most }\frac{5}{4}\cdot\theta(u_{i-2})|B_{y}|\text{ long-connected points in }\V^{u}\cap B_{y}\Big\}.   \label{eq:def of good increasing event} 
\end{align}
Then by the continuity of $\theta(\cdot)$ and the ergodic theorem (see also the proofs of Lemmas 4.5 and 4.2 in \cite{RS13} for similar arguments), there exists a small positive constant $\eta=\eta(u_{i+2},u_{i-2},\delta)$ so that
\begin{equation}\label{eq:good event with high probability}
\lim_{N\to\infty}\PP\left[E_{0}^{u_{i+2}}(\V^{u_{i+2}/(1-\eta)})\right]=1,\quad\text{and}\quad\lim_{N\to\infty}\PP\left[F_{0}^{u_{i-2}}(\V^{u_{i-2}/(1+\eta)})\right]=1.
\end{equation}
The bad increasing seed event $\overline{E}_{y,0}^{u_{i+2}}(\V^{u})$ is defined as the complement of $E_{y}^{u_{i+2}}(\V^{u})$, and the family of cascading bad increasing events are defined via
\begin{equation}\label{eq:def of family of bad increasing}
\overline{E}_{y,n}^{u_{i+2}}(\V^{u}):=\bigcup_{y_{1},y_{2}\in \Lambda_{y,n};|y_{1}-y_{2}|_{\infty}>\frac{L_{n}}{\ell(d)}}\overline{E}_{y_{1},n-1}^{u_{i+2}}(\V^{u})\cap\overline{E}_{y_{2},n-1}^{u_{i+2}}(\V^{u}),\quad\text{for all }n\geq 1.
\end{equation}
The bad decreasing seed event $\overline{F}_{y,0}^{u_{i-2}}(\V^{u})$ is defined as the complement of $F_{y}^{u_{i-2}}(\V^{u})$, and the family of cascading bad decreasing events are defined via
\begin{equation}\label{eq:def of family of bad decreasing}
\overline{F}_{y,n}^{u_{i-2}}(\V^{u}):=\bigcup_{y_{1},y_{2}\in \Lambda_{y,n};|y_{1}-y_{2}|_{\infty}>\frac{L_{n}}{\ell(d)}}\overline{F}_{y_{1},n-1}^{u_{i-2}}(\V^{u})\cap\overline{F}_{y_{2},n-1}^{u_{i-2}}(\V^{u}),\quad\text{for all }n\geq 1.
\end{equation}
It now follows from \eqref{eq:good event with high probability} and the decoupling inequality (c.f~\cite[Theorem 3.4]{Szn12} or \cite[Corollary 3.4]{RS13}) that (see also the proofs of Corollaries 4.6 and 4.3 in \cite{RS13} for similar arguments) for every $n\geq 0$,
\begin{equation}\label{eq:bad event small probability}
\lim_{N\to\infty}\PP\left[\overline{E}_{0,n}^{u_{i+2}}(\V^{u_{i+2}})\right]\leq 2^{-2^{n}},\quad\text{and}\quad\lim_{N\to\infty}\quad\PP\left[\overline{F}_{0,n}^{u_{i-2}}(\V^{u_{i-2}})\right]\leq 2^{-2^{n}}.   
\end{equation}

For $y\in D_{x}$, let
\begin{equation}\label{eq:def of Bhat}
L' = [L-\sqrt{L}], \text{ and }
\widehat{B}_{y}:=y+[0,L')^{d+1}
\end{equation}
as a subset of $B_{y}$. We then define the event $J_{y}$ for each $y\in B(x,\frac{2N}{3\log^{3}N})$ as
\begin{equation}\label{eq:def of event Jy}
\begin{aligned}
J_{y}:=\big\{&\text{every connected subset of }\V^{u_{i+2}}\cap \widehat{B}_{y}\text{ with diameter larger than }\sqrt{L}\\ &\text{ is connected in }\V^{u_{i+1}}\cap B_{y}\big\},  
\end{aligned}
\end{equation}
and also set
\begin{equation}\label{eq:def of event J}
J(x):=\bigcap_{y\in B(x,\frac{2N}{3\log^{3}N})}J_{y}.    
\end{equation}
Then since $u_{i+1}<u_{i+2}<\overline{u}$, by \eqref{eq:def of ubar} we have
\begin{equation}\label{eq:prob of J}
\PP\left[J(x)^{c}\right]\leq C\exp(-N^{c}).    
\end{equation}

Now for some $3\leq i\leq k$, under the coupling $Q_{i}$, assume that $N\geq C(\overline{u},\delta)$ and that $R_{i-1}\leq\underline{S}_{N}\leq R_{i}$. For an $L$-box $B_{y}\subseteq B(V)\cap U_{x}$, suppose that $J(x)\cap E_{y}^{u_{i+2}}(\V^{u_{i+2}})\cap F_{y}^{u_{i-2}}(\V^{u_{i-2}})$ occurs, and denote by $\mathcal{C}_{y}$ the largest connected component in $B_{y}\setminus X_{[0,\underline{S}_{N}]}$. Then since $J(x)\cap E_{y}^{u_{i+2}}$ holds, it follows that 
\begin{equation}\label{eq:size of mathcalCy lower bound}
|\mathcal{C}_{y}|\geq\frac{11}{16}\cdot\theta(u_{i+2})|B_{y}|.    
\end{equation}
In addition, since $ F_{y}^{u_{i-2}}(\V^{u_{i-2}})$ holds, it follows from \eqref{eq:def of ui} that $\mathcal{C}_{y}$ is the only connected component in $B_{y}\setminus X_{[0,\underline{S}_{N}]}$ that satisfies \eqref{eq:size of mathcalCy lower bound}. Therefore, if $J(x)$ holds and $B_{y}\subseteq B(V)\cap U_{x}$ is a nasty box, then one of nine $y'\in V$ with $|y'-y|_{\infty}\leq L$ does not satisfy $ E_{y}^{u_{i+2}}(\V^{u_{i+2}})\cap F_{y}^{u_{i-2}}(\V^{u_{i-2}})$. Then by the same argument as in the proof of Lemma 5.2 given Corollaries 4.3 and 4.6 and Lemma 4.7 in \cite{RS13}, thanks to \eqref{eq:bad event small probability}, we have
\begin{equation}\label{eq:union bound of overlineH and uk given J}
P_{q_{0}}^{N}\left[\overline{H}^{*}(x,N),R_{i-1}\leq\underline{S}_{N}\leq R_{i},J(x)\right]\leq C\exp\left(-N^{c}\right),
\end{equation}
and \eqref{eq:union bound of overlineH and uk} then follows from combining \eqref{eq:prob of J} and \eqref{eq:union bound of overlineH and uk given J}.

The proof of \eqref{eq:union bound of overlineH and u5} follows in a similar fashion. This time we fix a coupling $Q_{3}$ that satisfies \begin{equation}\label{eq:very strong coupling u3}
Q_{3}\left[U_{x}\cap\V^{u_{4}} \subseteq U_{x}\setminus X_{[0,R_{3}]}\right]\geq 1-C\exp\left(-N^{c}\right),    
\end{equation}
and consider the ``good" increasing event $E_{y}^{u_{5}}(\V^{u})$ as
\begin{equation}\label{eq:def of good increasing event u5}
E_{y}^{u_{5}}(\V^{u}):=\left\{\text{there are at least }\frac{2}{3}|B_{y}|\text{ long-connected points in }\V^{u}\cap B_{y}\right\}.
\end{equation}
Then by the assumption that $\theta(u_{10})\geq 2/3$ in \eqref{eq:def of ui}, $E_{0}^{u_{5}}(\V^{u_{5}})$ still holds with high probability. In addition, under the coupling $Q_{3}$, assuming that $N\geq C(\overline{u},\delta)$ and that $\underline{S}_{N}\leq R_{3}$, when $J(x)\cap E_{y}^{u_{5}}(\V^{u_{5}})$ occurs, it holds that 
\begin{equation}\label{eq:size of mathcalCy lower bound u3}
|\mathcal{C}_{y}|\geq\frac{5}{9}\cdot|B_{y}|,   
\end{equation}
and $\mathcal{C}_{y}$ is again the unique connected component in $X_{[0,\underline{S}_{N}]}$ that satisfies \eqref{eq:size of mathcalCy lower bound u3} thanks to \eqref{eq:def of ui}. The other necessary ingredients of the proof of \eqref{eq:union bound of overlineH and uk} remain roughly the same as those of \eqref{eq:union bound of overlineH and uk}, except that we do not need the family of bad increasing events now.
\end{proof}

\begin{remark}[Failure for general biased walk]\label{remark:alternative proof not work}
The above proof does not work for disconnection time of biased random walks with large drift $N^{-d\alpha}$, because the error term in \eqref{eq:union bound of overlineH} explodes when we apply a union bound on $x$. More precisely, when the drift is $N^{-d\alpha},\alpha\in(0,1)$, the disconnection time $T_{N}$ is expected to be of order $\exp(cN^{d(1-\alpha)})$, therefore there will be $\exp(cN^{d(1-\alpha)})$ terms in the union bound, requiring the error term in \eqref{eq:union bound of overlineH} to be smaller than $\exp(-cN^{d(1-\alpha)})$. However, both the decoupling inequality and \eqref{eq:def of ubar} only provide stretched exponential decay, and hence the method fails for $\alpha\in(0,1-1/d)$.
\end{remark}

\begin{remark}[Size of the largest connected component in vacant sets of random walks and random interlacements]\label{remark:largest connected component}
In \cite{TW11}, the author obtains some results on the size of largest (in terms of volume) connected components of $\mathbb{T}_N\setminus X_{[0,uN^{d}]}$, with $\mathbb{T}_N$ a torus with side-length $N$ and $(X_{n})_{n\geq 0}$ being a simple random walk on torus $\mathbb{T}$, and further introduces a critical parameter $\widehat{u}$ of random interlacements on $\mathbb{Z}^{d},d\geq 3$ (see \cite[(1.11), Definition~2.4]{TW11}), which is also conjectured to be equal to $u_{*}$; see \cite[Section 1.3]{DCGR+23b} for more discussions. It is shown in \cite[Theorems 3.2 and 3.3]{Tei11} that $\widehat{u}>0$ for every $d\geq 5$, and by definition, for every ``strongly supercritical" $u\in [0,\widehat{u})$, the vacant set $\V^{u}$ enjoys similar but slightly stronger existence property and local uniqueness property as those with respect of  $\overline{u}$ in \eqref{eq:def of existence} and \eqref{eq:def of uniqueness} (and hence $\widehat{u}\leq\overline{u}$). 

We denote by $\mathcal{C}_{\text{max}}^{u}$ the largest connected components in $\mathbb{T}_{N}\setminus X_{[0,uN^{d}]}$. We now have the following results. 
\begin{theorem}[\cite{TW11}, Theorem 1.4]\label{thm:size of largest component TW11}
If $u\in[0,\widehat{u})$, then for $\theta(u)$ as defined in \eqref{eq:def of thetau} and every $\varepsilon>0$,
\begin{equation}\label{eq:size of largest component TW11}
\lim_{N\to\infty}P\left[\left|\frac{|\mathcal{C}_{\rm{max}}^{u}|}{N^{d}}-\theta(u)\right|>\varepsilon\right]=0.   
\end{equation}
\end{theorem}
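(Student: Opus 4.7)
The plan is to bracket the trace of the random walk on $\mathbb{T}_N$ between two random interlacement samples at levels $u-\varepsilon$ and $u+\varepsilon$ via the Belius/Sznitman coupling (a local analogue of \Cref{thm:very strong coupling S} on the torus), and then deduce both the upper and lower bounds on $|\mathcal{C}_{\max}^u|$ from the corresponding geometric statements for the interlacement vacant sets $\mathcal{V}^{u\pm\varepsilon}$. By continuity of $\theta(\cdot)$ on $[0,\widehat{u})$, it suffices to establish that, for every $v\in[0,\widehat{u})$, the unique giant cluster of $\mathcal{V}^v\cap \mathbb{T}_N$ (viewed as a subset of $\mathbb{Z}^{d}$ via unfolding) has volume $\theta(v)N^d(1+o_\PP(1))$, and that all connected components of $\mathcal{V}^v\cap\mathbb{T}_N$ of ``mesoscopic'' diameter at least $N^{1/4}$ belong to this giant cluster.

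First, I would coarse-grain $\mathbb{T}_N$ into disjoint boxes $B_y$ of side length $L=N^{1/4}$ and, for each such box, call a vertex $y'\in B_y\cap\mathcal{V}^v$ \emph{long-connected} if it belongs to a connected component of $B_y\cap\mathcal{V}^v$ of diameter $\geq\sqrt{L}$. A standard ergodic/concentration argument for random interlacements (using the decoupling inequalities of Sznitman-Teixeira, which apply up to $u_{**}$ and hence in particular for $v<\widehat{u}\leq u_{**}$) shows that the total number of long-connected vertices in $\mathbb{T}_N\cap\mathcal{V}^v$ is $\theta(v)N^d(1+o_\PP(1))$; this immediately yields the upper bound $|\mathcal{C}_{\max}^v|\leq (\theta(v)+\varepsilon)N^d$ with high probability. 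For the lower bound, the key ingredient is the strong local uniqueness statement characterizing $v<\widehat{u}$: for any $v<v'<\widehat{u}$, with probability at least $1-Ce^{-N^c}$, every pair of long-connected sets inside any box $B_y$ and its neighbour $B_{y'}$ of $\mathbb{T}_N$ is connected inside $\mathcal{V}^{v'}\cap(B_y\cup B_{y'})$. Cascading this property across neighbouring boxes shows that \emph{all} long-connected vertices lie in a single connected cluster of $\mathcal{V}^{v'}$, whose size therefore matches the total count $\theta(v)N^d(1+o_\PP(1))$; sending $v'\downarrow v$ via continuity of $\theta$ closes the lower bound.

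Transferring back to the random walk, I would invoke the ``strong coupling'' between $X_{[0,uN^d]}$ on $\mathbb{T}_N$ and $\mathcal{I}^{u\pm\varepsilon}$ restricted to mesoscopic boxes, which is proved in \cite{TW11} (or can be obtained by adapting \Cref{prop:strong coupling}): with high probability, $\mathcal{V}^{u+\varepsilon}\cap B_y\subseteq (\mathbb{T}_N\setminus X_{[0,uN^d]})\cap B_y\subseteq \mathcal{V}^{u-\varepsilon}\cap B_y$ simultaneously for every box $B_y$. Combining this sandwich with the interlacement results from the previous paragraph gives the matching upper and lower bounds on $|\mathcal{C}_{\max}^u|/N^d$, modulo $O(\varepsilon)$. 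Taking $\varepsilon\to 0$ and using continuity of $\theta$ on $[0,\widehat{u})$ concludes \eqref{eq:size of largest component TW11}.

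The main obstacle is the lower bound: proving that \emph{all} mesoscopic long-connected components are merged into a single giant cluster. This is precisely where the definition of $\widehat{u}$ (which encodes a quantitative local uniqueness for $\mathcal{V}^v$ at all scales) is essential; using only the weaker uniqueness built into $\overline{u}=u_*=u_{**}$ would a priori allow several macroscopic clusters of comparable size and make the identification of the volume with $\theta(v)N^d$ fail. A secondary technical hurdle is ensuring that the coupling between $X_{[0,uN^d]}$ and $\mathcal{I}^{u\pm\varepsilon}$ has error $o(N^{-d})$ so that a union bound over the $O(N^d/L^{d})$ mesoscopic boxes is affordable; this is where one needs the exponential-in-capacity error term of the strong coupling (analogous to \eqref{eq:very strong coupling mathsfB}), rather than a merely polynomial one.
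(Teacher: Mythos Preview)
This theorem is not proved in the paper; it is quoted from \cite{TW11}, and the paper only sketches its mechanism in Remark~A.3. Your proposal matches that sketch closely: couple the walk on $\mathbb{T}_N$ with interlacements at levels $u\pm\varepsilon$, count long-connected vertices via an ergodic argument to get $\theta(u)N^d(1+o_\PP(1))$, and use the local uniqueness property in mesoscopic boxes to merge all such vertices into a single cluster. So the overall architecture is correct and in line with both \cite{TW11} and the paper's description.

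Two points in your final paragraph are off, however. First, your claim that the strong uniqueness encoded in $\widehat{u}$ is \emph{essential}, and that the $\overline{u}$ property ``would a priori allow several macroscopic clusters of comparable size'', is contradicted by the paper itself: Proposition~A.2 states precisely that the result extends to all $u\in[0,u_*)$ using only the $\overline{u}$-type existence and local uniqueness from \eqref{eq:def of strongly percolate}, and Remark~A.3 explicitly says the argument goes through with $\overline{u}$ in place of $\widehat{u}$. The stretched-exponential decay in \eqref{eq:def of strongly percolate} is already enough to beat the polynomial-in-$N$ union bound over mesoscopic boxes and rule out a second giant. Second, your assertion that one needs the exponential-in-capacity coupling error (as in \eqref{eq:very strong coupling mathsfB}) rather than a polynomial one is too strong: the statement is merely convergence in probability, and the number of mesoscopic boxes is $O(N^d/L^d)$, which is polynomial in $N$; so a sufficiently high polynomial coupling error, as already available in \cite{TW11} or \Cref{prop:strong coupling}, suffices.
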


\begin{theorem}[\cite{DCGR+23b}, Theorem 1.1]\label{thm:size of largest component DCGR+23b}
For every $u\in[0,u_{*})$ and every $\varepsilon>0$,
\begin{equation}\label{eq:size of largest component DCGR+23b}
\lim_{N\to\infty}P\left[\frac{|\mathcal{C}_{\rm{max}}^{u}|}{N^{d}}>\theta(u)-\varepsilon\right]=1.   
\end{equation}
\end{theorem}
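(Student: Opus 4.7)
The plan is to mimic the strategy sketched in \Cref{sec:appendix} but on the torus $\mathbb T_N$ instead of the cylinder. The three main ingredients I would use are: a very strong coupling in the spirit of \Cref{thm:very strong coupling S} between the torus walk trace and random interlacements; the two-sided density-and-uniqueness trick based on $2\cdot\tfrac{3}{4}>\tfrac{5}{4}$ that appears in the paragraph around \eqref{eq:size of mathcalCy lower bound}; and the continuity of $\theta$ on $[0,u_*)$ (see \cite{Tei09}) together with $u_*=\overline u=u_{**}$ from \eqref{eq:unique critial parameter}.

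First, given $\varepsilon>0$, I would pick $u'\in(u,u_*)$ with $\theta(u')>\theta(u)-\varepsilon/3$, set $L=[N^{1/2}]$, and tile $\mathbb T_N$ by disjoint $L$-boxes $\{B_y\}$, each endowed with concentric boxes $B_y\subseteq D_y\subseteq U_y$ as in \eqref{eq:def of boxes at origin}. The next step would be to establish a torus coupling $Q$ of $X_{[0,uN^d]}$ and $\mathcal I^{u'}$ satisfying, for every $y$,
\begin{equation}\label{eq:torus very strong}
Q\big[X_{[0,uN^d]}\cap U_y\subseteq\mathcal I^{u'}\cap U_y\big]\geq 1-c^{-1}\exp\big(-c\,\mathrm{cap}(U_y)\big),
\end{equation}
for some $c=c(u,u')>0$. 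This should follow from the same chain (horizontal independence via soft local time plus Markov-chain mixing as in \Cref{lem:horizontal mixing}, vertical independence, Poissonization, extraction, truncation) as in \Cref{subsec:proof of very strong coupling theorem}, with the cylinder $\mathbb T\times\mathbb Z$ replaced by $\mathbb T_N$ itself; the vertical scales $r_N,h_N$ are replaced by return times of $X$ to $U_y$.

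On the event of \eqref{eq:torus very strong} one has $\mathcal V^{u'}\cap U_y\subseteq(\mathbb T_N\setminus X_{[0,uN^d]})\cap U_y$. Call $x\in B_y$ long-connected if it belongs to a cluster of $B_y\cap\mathcal V^{u'}$ of diameter at least $N^{1/4}$. By the ergodic theorem (using $\theta(u')>0$), with probability $1-o(1)$ the number of long-connected points in $B_y$ is within $(1\pm\tfrac{\varepsilon}{10})\theta(u')|B_y|$. Picking $u''\in(u',\overline u)$, the strongly-percolative local uniqueness at levels $(u',u'')$ together with the $2\cdot\tfrac{3}{4}>\tfrac{5}{4}$ observation (as in the paragraph containing \eqref{eq:size of mathcalCy lower bound}) shows that, with probability at least $1-C\exp(-N^c)$, all long-connected points in $B_y$ belong to a single cluster $\mathcal C_y$ of $(\mathbb T_N\setminus X_{[0,uN^d]})\cap B_y$.

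Finally, neighbouring clusters $\mathcal C_y,\mathcal C_{y'}$ are merged via local uniqueness at a slightly larger level $u'''\in(u'',\overline u)$ applied in $D_y\cup D_{y'}$; a union bound over the $(N/L)^d$ pairs absorbs the stretched-exponential per-pair errors, and produces a global cluster of $\mathbb T_N\setminus X_{[0,uN^d]}$ containing $\bigcup_y\mathcal C_y$, hence of size at least $(\theta(u')-\tfrac{\varepsilon}{5})N^d\geq(\theta(u)-\varepsilon)N^d$. The main obstacle will be the first step: upgrading the cylinder coupling of \Cref{sec:couplings} to the torus with the same $\exp(-c\,\mathrm{cap}(U_y))$ precision. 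The cylinder proof uses the vertical decomposition $\mathbb T\times\mathbb Z$ and the scales $r_N\ll h_N$ in an essential way in the horizontal/vertical independence and extraction steps; on the torus one has to rebuild the Poissonization and extraction around return times to $U_y$ without that splitting, while still beating the $(N/L)^d$-size union bound required in the final gluing.
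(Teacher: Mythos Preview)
The paper does not prove this statement. \Cref{thm:size of largest component DCGR+23b} is quoted verbatim from \cite[Theorem~1.1]{DCGR+23b} with no argument supplied; it appears in the appendix only as context, alongside \Cref{thm:size of largest component TW11}, to motivate the slight strengthening recorded as \Cref{thm:size of largest component}. So there is no ``paper's own proof'' to compare against.

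Your sketch is in the right spirit for this one-sided lower bound, but several points need correction. First, the torus coupling you flag as the ``main obstacle'' is not one: couplings between $X_{[0,uN^d]}$ on $\mathbb T_N$ and random interlacements with stretched-exponentially small error are already available in \cite{TW11} and \cite{Bel13}; there is no need to port the cylinder machinery of \Cref{sec:couplings}. Second, you have the levels reversed in the local-uniqueness step: in the convention of \eqref{eq:def of uniqueness}--\eqref{eq:def of ubar}, clusters of $\mathcal V^{u'}$ are glued inside $\mathcal V^{u''}$ with $u''<u'$ (a \emph{larger} vacant set), not $u''>u'$. The correct ordering is $u<u''<u'<u_*$: couple $X_{[0,uN^d]}\subseteq\mathcal I^{u''}$ so that $\mathcal V^{u''}\subseteq\mathbb T_N\setminus X_{[0,uN^d]}$, count long-connected points at level $u'$, and glue via Unique$^{\mathcal V}$ at the pair $(u',u'')$. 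Third, the $2\cdot\tfrac34>\tfrac54$ trick is a \emph{uniqueness} device and is irrelevant for the lower bound \eqref{eq:size of largest component DCGR+23b}; you only need existence of one large cluster. Finally, there is a genuine gap: your per-box density estimate ``by the ergodic theorem'' yields only an $o(1)$ error, which does not survive the union bound over the $(N/L)^d\approx N^{d/2}$ boxes. One needs either a quantitative concentration bound on the number of long-connected points (as in \cite{TW11}), or an argument that a $(1-o(1))$-fraction of boxes are good together with a coarse-grained percolation step showing the sparse bad boxes do not disconnect the good ones.
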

The idea for \eqref{eq:size of largest component TW11} is similar to that of \Cref{prop:union bound of overlineH and uk}, where  a coupling between random walks and random interlacements is used in combination with the estimate on the number of ``long-connected" points (see the paragraph above \eqref{eq:def of good decreasing event} for definition) via the ergodic theorem as well as repeated use of local uniqueness property of $\overline{u}$ (or $\widehat{u}$ in \cite{TW11}) in small boxes. By the same arguments, we can slightly improve \Cref{thm:size of largest component TW11,thm:size of largest component DCGR+23b} into \Cref{thm:size of largest component}. 
\begin{proposition}\label{thm:size of largest component}
The claim \eqref{eq:size of largest component TW11} holds for every $u\in [0,u_{*})$ thanks to \eqref{eq:unique critial parameter}.
\end{proposition}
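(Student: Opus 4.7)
The plan is to revisit the proof of \cite[Theorem~1.4]{TW11} and verify that it proceeds verbatim once one replaces the critical level $\widehat{u}$ throughout with $\overline{u}$, which is permitted in view of the identity $\overline{u}=u_{*}$ established in \eqref{eq:unique critial parameter}. Indeed, a careful inspection of \cite{TW11} reveals that $\widehat{u}$ enters the proof solely through two qualitative features of the vacant set at subcritical levels: the existence of a macroscopic cluster in mesoscopic boxes, and the local uniqueness of such clusters in neighbouring boxes at a slightly larger level. These are precisely the defining properties of the strongly percolative regime $(0,\overline{u})$ (compare \eqref{eq:def of existence}--\eqref{eq:def of strongly percolate}), so by \eqref{eq:unique critial parameter} they are at our disposal throughout $[0, u_{*})$.

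Concretely, fix $u\in [0,u_{*})$ and $\varepsilon>0$. By the continuity of $\theta$ on $[0,u_{*})$ (\cite[Corollary~1.2]{Tei09}), I would first choose $u_{-}<u<u_{+}$ in $[0,u_{*})$ with $|\theta(u)-\theta(u_{\pm})|<\varepsilon/10$. Next, using a coupling between the trace of the simple random walk on $\mathbb{T}_{N}$ up to time $uN^{d}$ and two random interlacements on $\mathbb{Z}^{d}$ in the spirit of \cite[Theorem~1.1]{Szn09b} (a torus analogue of our \Cref{prop:strong coupling}, and indeed extractable from the ``strong'' couplings developed in \Cref{sec:couplings}), one constructs a probability space on which, with probability tending to $1$,
\begin{equation*}
\mathcal{V}^{u_{+}}\cap Q \;\subseteq\; \big(\mathbb{T}_{N}\setminus X_{[0,uN^{d}]}\big)\cap Q \;\subseteq\; \mathcal{V}^{u_{-}}\cap Q
\end{equation*}
holds simultaneously for every mesoscopic sub-box $Q\subseteq \mathbb{T}_{N}$ of side-length, say, $[N^{1/2}]$ (identifying $Q$ with a subset of $\mathbb{Z}^{d}$).

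Next, tile $\mathbb{T}_{N}$ into mesoscopic boxes $B_{y}$ of side-length $[N^{1/2}]$ and call a point of $B_{y}\cap \mathcal{V}^{v}$ \emph{long-connected} if it lies in a connected subset of $\mathcal{V}^{v}\cap B_{y}$ of diameter at least $N^{1/4}$. By the ergodic theorem applied to random interlacements, with probability tending to $1$ the proportion of long-connected points in $B_{y}\cap\mathcal{V}^{u_{\pm}}$ equals $\theta(u_{\pm})(1+o(1))$, uniformly in $y$. Moreover, by the local uniqueness of the strongly percolative regime (invoked at the levels $u_{\pm}<\overline{u}$), with probability $1-O(\exp(-N^{c}))$ every pair of long-connected clusters in adjacent mesoscopic boxes merges within a single cluster of $\mathcal{V}^{u_{\pm}}$. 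Combining these facts with the sandwich above yields, with probability tending to $1$,
\begin{equation*}
\big(\theta(u_{+})-\varepsilon/4\big)N^{d}\;\leq\; |\mathcal{C}_{\max}^{u}|\;\leq\; \big(\theta(u_{-})+\varepsilon/4\big)N^{d},
\end{equation*}
so that $\big||\mathcal{C}_{\max}^{u}|/N^{d}-\theta(u)\big|\leq \varepsilon$ by the choice of $u_{\pm}$.

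The main obstacle is not conceptual but rather the careful bookkeeping needed to verify that every invocation of $\widehat{u}$ in \cite{TW11} can be substituted by $\overline{u}$. In particular, one must check that the decoupling inequalities of \cite{Szn12}, used in \cite{TW11} to lift local uniqueness to all relevant scales, remain applicable at every level strictly below $\overline{u}$ (which they do, since they require only the stretched-exponential decay built into \eqref{eq:def of strongly percolate}). Once this is in place, the argument is essentially the one sketched above for \Cref{prop:union bound of overlineH and uk}, rerun on the torus rather than on the cylinder.
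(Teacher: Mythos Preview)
Your proposal is correct and follows essentially the same approach as the paper. The paper does not give a formal proof of this proposition; it simply remarks that ``by the same arguments'' as in \cite{TW11} (coupling with interlacements, ergodic-theorem control of long-connected points, and repeated local uniqueness in mesoscopic boxes), one may replace $\widehat{u}$ by $\overline{u}$ throughout, which is exactly what you spell out in detail.
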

We remark that a result similar to \Cref{thm:size of largest component} still holds if one replace the torus $\mathbb{T}$ and simple random walk $(X_{n})_{n\geq 0}$ on $\mathbb{T}_{N}$ by a box with side length $N$ in $\mathbb{Z}^{d},d\geq 3$ and random interlacements on $\mathbb{Z}^{d}$.  

It is a natural question whether one can further improve these results. 
\begin{conjecture}\label{conjecture:improvement}
For every $u\in[0,u_{*})$ and every $\varepsilon>0$, there exist positive constants $C=C(u,\varepsilon)$ and $c=c(u,\varepsilon)$ such that
\begin{equation}\label{eq:improvement largest}
P\left[\left|\frac{|\mathcal{C}_{\text{max}}^{u}|}{N^{d}}-\theta(u)\right|>\varepsilon\right]\leq C\exp\left(-N^{c}\right).    
\end{equation}
Moreover, similar results still hold if one replaces the torus $\mathbb{T}_{N}$ and simple random walk $(X_{n})_{n\geq 0}$ on $\mathbb{T}$ by a box with side length $N$ in $\mathbb{Z}^{d},d\geq 3$ and random interlacements on $\mathbb{Z}^{d}$.  
\end{conjecture}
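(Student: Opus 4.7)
The plan is to adapt the renormalization/decoupling machinery used in the proof of \Cref{prop:union bound of overlineH and uk} (following \cite{RS13}) so as to replace the qualitative ergodic-theorem input of \cite{TW11} by quantitative stretched-exponential concentration of the number of ``long-connected'' points. Fix $u<u_{*}$ and $\varepsilon>0$. By continuity of $\theta$ on $[0,u_{*})$, choose $u_{1}<u<u_{2}<u_{*}$ with $\theta(u_{1})-\theta(u_{2})<\varepsilon/10$. Via (a torus-analogue of) \Cref{thm:very strong coupling S} applied at the record-breaking time $S_{N}(u,0)$, one couples the trace $X_{[0,uN^{d}]}$ on $\mathbb{T}_{N}$ with random interlacements so that
\begin{equation}
\mathcal{I}^{u_{1}}\cap \mathbb{T}_{N}\;\subseteq\; X_{[0,uN^{d}]}\;\subseteq\;\mathcal{I}^{u_{2}}\cap\mathbb{T}_{N}
\end{equation}
with $\mathbb{P}$-probability at least $1-C\exp(-N^{c})$. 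This reduces \eqref{eq:improvement largest} to an interlacement statement with sprinkled parameters $u_{1}<u_{2}$.

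For the interlacement side, tile $\mathbb{T}_{N}$ by mesoscopic boxes $B_{y}$ of side-length $L=[N^{c_{0}}]$ for small $c_{0}>0$, and call $x\in B_{y}$ \emph{long-connected} if it lies in a component of $\mathcal{V}^{u_{2}}\cap B_{y}$ of diameter $\geq L^{1/2}$. Introduce the good increasing/decreasing events $E_{y}^{u_{2}}$, $F_{y}^{u_{1}}$ analogous to \eqref{eq:def of good decreasing event}--\eqref{eq:def of good increasing event}, tuned so that on $E_{y}^{u_{2}}\cap F_{y}^{u_{1}}$ the number of long-connected points in $B_{y}$ lies in $(\theta(u_{2})-\varepsilon/10,\,\theta(u_{1})+\varepsilon/10)\cdot|B_{y}|$. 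By the cascading bad-event construction \eqref{eq:def of family of bad increasing}--\eqref{eq:def of family of bad decreasing} and the decoupling inequalities of Sznitman (as exploited in \cite[Section 4]{RS13}), the probability that some $B_{y}\subseteq\mathbb{T}_{N}$ violates $E_{y}^{u_{2}}\cap F_{y}^{u_{1}}$ is at most $C\exp(-N^{c})$, given that $u_{1},u_{2}<u_{*}=\overline{u}$. Summing the counts of long-connected points over the $\Theta(N^{d}/L^{d})$ tiles then yields, with the same error, that the total is within $\varepsilon N^{d}/5$ of $\theta(u)N^{d}$.

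To turn the \emph{count} of long-connected points into a statement about $|\mathcal{C}_{\max}^{u}|$, we invoke the strongly percolative property \eqref{eq:def of strongly percolate} at the level pair $(u_{2},u_{1})$: with probability $1-C\exp(-N^{c})$, in each slightly enlarged box $\widehat{B}_{y}$ all vacant clusters of diameter $\geq L^{1/2}$ are merged inside $\mathcal{V}^{u_{1}}\cap\widehat{B}_{y}$ (an analogue of the event $J_{y}$ in \eqref{eq:def of event Jy}). Chaining this uniqueness along neighbouring $L$-boxes, the long-connected sites of the tiling lie (up to an $O(N^{d-1}L^{1/2})=o(\varepsilon N^{d})$ boundary layer) in a single cluster of $\mathcal{V}^{u_{1}}\subseteq \mathcal{V}^{u_{2}}\supseteq\mathbb{T}_{N}\setminus X_{[0,uN^{d}]}$, giving $|\mathcal{C}_{\max}^{u}|\geq(\theta(u)-\varepsilon)N^{d}$. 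The reverse inequality $|\mathcal{C}_{\max}^{u}|\leq(\theta(u)+\varepsilon)N^{d}$ follows from the upper count bound together with the observation that any macroscopic cluster consists of long-connected points modulo $O(L^{1/2}N^{d-1/2})$ boundary sites.

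The main obstacle is the chaining step: for this to give a genuine \emph{giant} component one must control the boundary interfaces between neighbouring tiles simultaneously at stretched-exponential scale, which forces the tile size $L$ to be polynomial in $N$ rather than a large constant (as was enough in \cite{RS13}). This in turn means that the cascading decoupling of \cite{RS13} must be iterated to a scale $L_{n}$ comparable to $L$ while keeping the sprinkling budget $u_{2}-u_{1}$ fixed; verifying that the decoupling error at this scale remains $\exp(-N^{c})$ (using \eqref{eq:unique critial parameter} crucially, so that both $u_{1},u_{2}$ lie jointly below $\overline{u}=u_{**}$) is the delicate quantitative input. The extension to a box in $\mathbb{Z}^{d}$ is identical, once the coupling step is replaced by an interlacement-to-interlacement sprinkling comparison.
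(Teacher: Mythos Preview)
This statement is a \emph{conjecture} in the paper, not a theorem: the paper explicitly poses it as an open question (see the discussion immediately preceding and following the statement in \Cref{remark:largest connected component}) and offers no proof. So there is no paper proof to compare your proposal against.

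That said, your proposal contains a genuine gap at its central step. You claim that by the cascading construction \eqref{eq:def of family of bad increasing}--\eqref{eq:def of family of bad decreasing} and the decoupling inequalities, ``the probability that some $B_{y}\subseteq\mathbb{T}_{N}$ violates $E_{y}^{u_{2}}\cap F_{y}^{u_{1}}$ is at most $C\exp(-N^{c})$''. This does not follow. The bound $\PP[\overline{E}_{0,n}]\leq 2^{-2^{n}}$ controls the probability of the cascading event at the top scale; on its \emph{complement}, one can only conclude that bad seed boxes do not form well-separated pairs at every scale, which is exactly what rules out long $*$-paths of bad boxes (the application in \Cref{prop:union bound of overlineH and uk}). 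It gives no control on the \emph{proportion} of bad seed boxes, let alone that none exist. If instead you take the seed scale $L_{0}=L=[N^{c_{0}}]$ to be polynomial in $N$, then you need the seed events \eqref{eq:def of good decreasing event}--\eqref{eq:def of good increasing event} to fail with probability $\leq\exp(-L^{c'})$ for a single box; the only input the paper (or \cite{TW11}) has for these events is the ergodic theorem, which gives no quantitative rate.

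What is missing, and what is essentially the content of the conjecture, is a stretched-exponential large-deviation bound for the local density of long-connected points of $\mathcal{V}^{u}$ in a box of side $L$: something of the form $\PP\big[\big|\,L^{-(d+1)}|\{x\in B_{0}:x\text{ long-connected in }\mathcal{V}^{u}\}|-\theta(u)\big|>\varepsilon\big]\leq C\exp(-L^{c})$. The strongly percolative property encoded in $\overline{u}$ controls the \emph{topology} of $\mathcal{V}^{u}$ (existence of a spanning cluster, merging of mesoscopic clusters) and does supply stretched-exponential error for your chaining step, but it says nothing about the \emph{volume} of the cluster. For Bernoulli percolation the analogue is Pisztora's surface-order large-deviation theory; for random interlacements no such statement is supplied by the tools you invoke. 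In particular, the ``main obstacle'' you identify (the chaining) is actually the routine part---the chaining error is stretched-exponential directly from \eqref{eq:def of strongly percolate}---while the volume-count step you pass over is the crux.
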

Combined with the ideas in the alternative proof of \Cref{thm:sharp lower bound} sketched in this section, proving \Cref{conjecture:improvement} (or a weaker version with any error term smaller than $N^{-2d}$) would give rise to an even more concise proof of \Cref{thm:sharp lower bound} that does not involve the decoupling inequality for bootstrapping, in which we simply glue the largest connected components within a straight tunnel of boxes to form a path of vertices in $\mathbb{E}\setminus X_{[0,\underline{S}_{N}]}$. 
As a final remark, for the same reason as  \Cref{remark:alternative proof not work}, even with the help of \eqref{eq:improvement largest}, the techniques in this section is still not sufficient to derive sharp asymptotics for biased walks with a large drift $N^{-d\alpha},\,\alpha\in(0,\alpha_0)$, for some $\alpha_0\in(1/d,1)$.
\end{remark}

\end{document}